\newcommand{\Cyl}{\text{Cyl}}
\newcommand{\Lip}{\text{Lip}}
\newcommand{\Ric}{\text{Ric}}
\newcommand{\Ent}{\text{Ent}}
\newcommand{\inj}{\text{inj}}
\newcommand{\dC}{\mathds{C}}
\newcommand{\dN}{\mathds{N}}
\newcommand{\dQ}{\mathds{Q}}
\newcommand{\dR}{\mathds{R}}
\newcommand{\dZ}{\mathds{Z}}
\newcommand{\bt}{\text{\bf{t}}}
\newcommand{\bv}{\text{\bf{v}}}
\newcommand{\bw}{\text{\bf{w}}}
\newcommand{\bx}{\text{\bf{x}}}
\newcommand{\by}{\text{\bf{y}}}
\newcommand{\cB}{\mathcal{B}}
\newcommand{\cD}{\mathcal{D}}
\newcommand{\cF}{\mathcal{F}}
\newcommand{\cH}{\mathcal{H}}
\newcommand{\cP}{\mathcal{P}}
\newcommand{\cR}{\mathcal{R}}
\newcommand{\cS}{\mathcal{S}}
\newtheorem{theorem}{Theorem}[section]
\newtheorem{proposition}[theorem]{Proposition}
\newtheorem{lemma}[theorem]{Lemma}
\newtheorem{corollary}[theorem]{Corollary}
\theoremstyle{definition}
\newtheorem{definition}[theorem]{Definition}
\theoremstyle{remark}
\newtheorem{remark}{Remark}[section]
\theoremstyle{remark}
\newtheorem{example}{Example}[section]
\theoremstyle{remark}
\theoremstyle{remark}
\theoremstyle{remark}
\begin{document}

\title{Characterizations of Bounded Ricci Curvature\\ on Smooth and NonSmooth Spaces}

\author{Aaron Naber}

\date{\today}
\maketitle

\begin{abstract}
There are two primary goals to this paper.  In the first part of the paper we study smooth metric measure spaces $(M^n,g,e^{-f}dv_g)$ and give several ways of characterizing bounds $-\kappa g\leq \Ric+\nabla^2f\leq \kappa g$ on the Ricci curvature of the manifold.  In particular, we see how bounded Ricci curvature on $M$ controls the analysis of path space $P(M)$ in a manner analogous to how lower Ricci curvature controls the analysis on $M$.  In the second part of the paper we develop the analytic tools needed to in order to use these new characterizations to give a definition of bounded Ricci curvature on general metric measure spaces $(X,d,m)$.  We show that on such spaces many of the properties of smooth spaces with bounded Ricci curvature continue to hold on metric-measure spaces with bounded Ricci curvature.

In more detail, in this paper we see that bounded Ricci curvature can be characterized in terms of the metric-measure geometry of path space $P(M)$.  The correct notion of geometry on path space is the one induced by what we call the parallel gradient, and the measures on path space of interest are the classical Wiener measures.  Our first characterization shows that bounds on the Ricci curvature are equivalent to certain parallel gradient estimates on path space.  These turn out to be infinite dimensional analogues of the Bakry-Emery gradient estimates.  Our second characterization relates bounded Ricci curvature to the stochastic analysis of path space.  In particular, we see that bounds on the Ricci curvature are equivalent to the appropriate $C^{\frac{1}{2}}$-time regularity of martingales on $P(M)$.  Our final characterization of bounded Ricci curvature relates Ricci curvature to the analysis on path space.  Specifically, we study the Ornstein-Uhlenbeck operator, a form on infinite dimensional laplacian on path space, and some twisted generalizations of it.  We prove sharp spectral gap and log-Sobolev estimates under the assumption of bounded Ricci curvature for these operators.  These estimates again turn out to be equivalent to bounds on the Ricci curvature.  We have analogous results for $d$-dimensional bounded Ricci curvature.

In the second part of the paper we study metric measure spaces $(X,d,m)$ and use the structure of the first part of the paper to define the notion of bounded Ricci curvature.  A primary technical difficulty is to describe the notion of the parallel gradient in such a setting.  Even in the smooth case one requires some deep ideas from stochastic analysis, namely the stochastic parallel translation map, to deal with this.  Our replacement for this allows us to sidestep the need for the stochastic parallel translation map, and in particular works on an arbitrary metric space.  After this is introduced and studied we spend the rest of the paper proving various structural properties of metric-measure spaces with bounded Ricci curvature.  Among others, we will see that spaces with Ricci curvature bounded by $\kappa$ have lower Ricci curvature bounded from below by $-\kappa$ in the sense of Lott-Villani-Sturm.  We will see that spaces with bounded Ricci curvature continue to have well behaved martingales.  Further, we will see that not only can one define the Ornstein-Uhlenbeck operator on path space, which still behaves as an infinite dimensional laplacian on path space, but that on spaces with bounded Ricci curvature these operators still enjoy poincare and log-sobolev estimates.  In particular, these tools allow us to do analysis on the path space of metric-measure spaces.
\end{abstract}
\newpage

\tableofcontents

\section{Introduction}\label{s:Intro}

The purpose of this series of papers is to give new characterizations of bounded Ricci curvature on a smooth manifold, and to introduce the necessary techniques in order to use these as motivation for a definition of bounded Ricci curvature in the case of nonsmooth spaces.  \\

More specifically, given a $n$-dimensional smooth metric measure space $(M^n,g,e^{-f}dv_g)$ recall that the Ricci curvature, or Bakry-Emery-Ricci curvature, of the metric measure space is defined to be the tensor
\begin{align}\label{e:ricci_curvature}
\Ric+\nabla^2f\, .
\end{align}
More generally, one often considers the $d$-dimensional Ricci tensor defined by
\begin{align}\label{e:d_ricci_curvature}
\Ric+\nabla^2f-\frac{1}{d-n}\nabla f\otimes \nabla f\, .
\end{align}

A good deal of effort has been spent on understanding these objects, and specifically on the geometric and analytic consequences of lower or upper Ricci curvature bounds on the manifold.  In particular, there has been a lot of work in recent years on understanding equivalences between lower bounds on the Ricci curvature and other geometric estimates on $M$.  In the context of the heat flow, the most natural point of view is the Bakry-Emery criteria and the corresponding gradient estimates.  In short, these may be used to equate a lower Ricci curvature bound of the space to certain gradient estimates of the heat flow.  More recently there has been a slew of work \cite{McCann_BBL_inequality},\cite{LV_OptimalRicci},\cite{Sturm_GeomMetricMeasSpace} in relating the Ricci curvature to the geometry of the space of probability measures on $M$.  In particular, Lott-Villani \cite{LV_OptimalRicci} and Sturm \cite{Sturm_GeomMetricMeasSpace} have been able to use these ideas to provide very reasonable definitions of lower Ricci curvature bounds on nonsmooth metric measure spaces, and prove many properties about such spaces.  More recently the work of \cite{Ambrosio_Ricci},\cite{Sturm_KE_LVS} has shown that in reasonable situations these notions of a lower Ricci curvature bound are the same even on nonsmooth spaces.  Since they play an important role in what we do here, we will review more completely the ideas involved in understanding lower Ricci curvature in Section \ref{s:smooth_lower_ricci}.\\

On the other hand, what has not been studied essentially at all at this point are ways of characterizing two sided bounds on the Ricci curvature.  Philosophically, such equivalences are quite important.  On the most basic level, they give rise to new understanding of the meaning of Ricci curvature.  More practically, such equivalences give rise to new tools and structures which may be used to study such spaces.  However, when it comes to understanding the structure of spaces with bounded Ricci curvature there are essentially only two tools available to bounded Ricci curvature that are not available to lower Ricci curvature.  The first are $\epsilon$-regularity theorems \cite{Anderson_Einstein},\cite{CCT_eps_reg}, which play a crucial role in the regularity theory of spaces with bounded Ricci curvature, see \cite{CheegerNaber_Ricci}.  The second tool, available only in the K\"ahler case, identifies the Ricci with the first chern class of the canonical line bundle.  This has been exploited quite deeply in many ways, see \cite{Tian_EinsteinSurfaces}, \cite{Donaldson_PositiveEinsteinI}, \cite{Tian_PositiveEinstein}.\\

This first paper in the series therefore focuses on smooth metric measure spaces $(M^n,g,e^{-f}dv_g)$, and asks the question if there are estimates analogous to those for lower Ricci curvature which characterize bounded Ricci curvature.  In Section \ref{s:smooth_bounded_ricci_intro} we answer this question in the affirmative and introduce several such characterizations of bounded Ricci curvature.  These characterizations are in several forms, but all come down to a better understanding of the metric-measure geometry of path space $P(M)\equiv C^0([0,\infty),M)$ of $M$.  Our first characterization of bounded Ricci curvature in Section \ref{sss:char_ricci_gradient} directly relates the functional analysis of path space $P(M)$ to the functional analysis of $M$.  Specifically, once we have defined the correct metric-measure geometry on path space we will see a space has bounded Ricci curvature if and only if a certain functional analytic estimate holds.  We will see how these directly generalize the Bakry-Emery gradient estimate to an infinite dimensional setting.  In our second characterization of bounded Ricci curvature in Section \ref{sss:char_ricci_quad_intro}, we relate bounded Ricci curvature to the stochastic analysis of path space $P(M)$.  To be a little more precise, we will see that bounded Ricci curvature on $M$ is characterized by the time regularity of martingales on path space.  In particular a typical martingale, viewed as a one parameter family $F^t$ of $L^2$ functions on path space, will be precisely $C^{\frac{1}{2}}$-H\"older with estimates precisely characterized by the Ricci curvature of $M$.  For our third characterization of bounded Ricci curvature we study the analysis of path space.  Specifically, one can define the Ornstein-Uhlenbeck operator on path space, which is a form of infinite dimensional laplacian, as well as some related operators which we refer to as twisted Ornstein-Uhlenbeck operators.  We will show that bounded Ricci curvature is equivalent to the existence of a spectral gap or log-Sobolev inequality for these operators.  We will prove analogous results in order to characterize smooth manifolds with bounded $d$-dimensional Ricci curvature.  See Section \ref{s:smooth_bounded_ricci_intro} for a more complete introduction to this first paper.\\

In the second part of the paper we analyze a more general class of metric measure spaces $(X,d,m)$.  We will use the characterizations of bounded Ricci curvature given in the first part of the paper to motivate definitions of bounded Ricci curvature in the more general setup.  As we will see, a key difficulty is to define the notion of the parallel gradient on a general metric space.  Even on a smooth manifold this is a subtle point which requires stochastic analysis ideas of Malliavin in \cite{Malliavin_Geometrie} by using the stochastic parallel translation map \cite{Driver_Survey}.  We will see in Section \ref{s:parallel_gradient_nonsmooth} how go about the construction in a manner which avoids such tools and allows us to generalize the construction to an essentially arbitrary metric space.  These ideas will also allow us to make sense of the Ornstein-Uhlenbeck operator on an almost arbitrary metric-measure space.  Once we have made rigorous sense of bounded Ricci curvature on a metric-measure space, we spend the rest of the paper proving properties about such spaces.  As the most basic result we will see that a metric-measure space with bounded Ricci curvature has a lower Ricci curvature bound in the sense of either Bakry-Emery or Lott-Villani-Sturm.  In fact, such spaces will have the even stronger lower Ricci curvature bound in the sense of \cite{Ambrosio_Ricci}.  More generally, we will see that many results proved in the first part of the paper for smooth spaces with bounded Ricci curvature continue to hold for metric-measure spaces.  In particular, we will see that metric-measure space with bounded Ricci curvature have a well behaved martingales, and we will be able to define and study generalizations of the Ornstein-Uhlenbeck operator on path space.  See Section \ref{s:nonsmooth_bounded_ricci_intro} for a more complete introduction to this part of the paper.  \\

\section{Introduction to Part I: The Smooth Bounded Ricci Case}\label{s:smooth_bounded_ricci_intro}

In this section we consider a smooth metric measure space 
\begin{align}
\big(M^n,g,e^{-f}dv_g\big)\, ,
\end{align}
and discuss the main results of the first part of the paper, which give various characterizations for bounds on the Ricci curvature tensor.  The outline of this Section is as follows.  In Section \ref{ss:prelim_intro} we discuss a few preliminaries at their most basic level.  This will be expanded on in Section \ref{s:prelim_smooth}, as we describe the preliminaries only enough here in order to state our results precisely.  In Section \ref{ss:char_ricci_intro} we discuss our main results as they pertain to the Ricci curvature $\Ric+\nabla^2f$ of a metric measure space, and in Section \ref{ss:char_d_ricci_intro} we describe our main results with respect to the $d$-Ricci curvature operator $\Ric+\nabla^2f-\frac{1}{d-n}\nabla f\otimes \nabla f$.  The main results of this part of the paper are summarized by Theorem \ref{t:smooth_bounded_ricci} and Theorem \ref{t:smooth_bounded_d_ricci}.

\subsection{Preliminaries}\label{ss:prelim_intro}

Because what is done in this paper lies in the intersection of several areas, the notation can be intense.  Often times notational standards of one area differ slightly from others, so we begin here by clarifying our terminology on some relatively standard ideas for use throughout the paper.  This section is relatively brief, we refer to Section \ref{s:prelim_smooth} for a more complete description of our notation.

\subsubsection{The $f$-Laplace Operator and Heat Flow.}\label{sss:f_laplace_intro}
Given a smooth metric measure space $(M^n,g,e^{-f}dv_g)$ let us begin by remarking that there is a canonical geometric differential operator associated to the triple given by the $f$-laplacian 
\begin{align}\label{d:f_laplace}
\Delta_f u \equiv \Delta u - \langle \nabla f,\nabla u\rangle\, .
\end{align}
Notice of course that $\Delta_f$ is a self adjoint operator on the Hilbert space of functions $L^2(M,e^{-f}dv_g)$, which is defined by the inner product
\begin{align}
\langle u,v\rangle_{L^2_f} \equiv \int_M uv\, e^{-f}dv_g \, .
\end{align}

We can define the heat flow $H_t:L^2(M,e^{-f}dv_g)\to L^2(M,e^{-f}dv_g)$ as the flow generated by the operator $\frac{1}{2}\Delta_f$.  The choice of $\frac{1}{2}$ conflicts with most papers on geometric analysis, though is consistent with that of stochastic analysis.  With respect to the heat flow we have the heat kernel $\rho_t(x,dy)\equiv \rho_t(x,y) e^{-f}dv_g(y)$ which is defined by
\begin{align}
H_tu(x) = \int_{M} u(y) \rho_t(x,dy)\, .
\end{align}

\subsubsection{Path Space on $M$ and the Diffusion Measures.}\label{sss:Diffusion_Measure_intro}

Now we introduce the path space $P(M)$ on a smooth manifold and the construction of the diffusion measures on $P(M)$ through the Wiener construction.  Again, we only give a brief overview here, we will discuss these issues more carefully in Section \ref{s:prelim_smooth}.  To begin with, let us introduce the path space of a manifold.  In fact there are many variants that will be interesting to us, the most broad version of path space on $M$ is the total path space given by 
\begin{align}
P(M)\equiv C^0([0,\infty),M)\, .
\end{align}

Notice that this is the collection of continuous unbased paths in $M$.  Our goal will be to do analysis on path space.  To do this will require two basic tools, measures over path space and a geometry over path space.  The geometry we will need over path space is the one induced by the parallel gradient and is not standard, and so we will introduce it in Section \ref{sss:parallel_grad_intro}.  The measures we will study are the diffusion measures.  As we will see in Section \ref{ss:diffusion_measures} these are the measures on path space most naturally associated to the heat flow on $M$.  \\

Recall that path space comes equipped with a very canonical collection of mappings, namely the evaluation mappings.   Specifically, given any partition of times
\begin{align}
\bt=\{0\leq t_1<t_2<\cdots<t_k<\infty\}\, ,
\end{align}
there exists the corresponding evaluation map 
\begin{align}
e_\bt:P(M)\to M\times \cdots \times M = M^{k}\equiv M^{|\bt|}\, ,
\end{align}
given by
\begin{align}
e_\bt(\gamma) = (\gamma(t_1),\ldots,\gamma(t_k))\, .
\end{align}

It is not hard to show that the $\sigma$-algebra generated by all the evaluation maps is the standard Borel $\sigma$-algebra on $P(M)$.  More generally, for each interval $[t,T]$ we could look at the $\sigma$-algebra $\cF^T_t$ generated by the evaluations maps $e_\bt$, where $\bt$ is a partition of $[t,T]$.  In the case where $t=0$ we write $\cF^T$ for the $\sigma$-algebra induced by the partitions of $[0,T]$.  This family of $\sigma$-algebras will play an important role.  Note that a function $F$ on $P(M)$ which is measurable with respect to $\cF_t^T$ only depends on the curve $\gamma$ in the region $[t,T]$.  In particular, such a function can be viewed as living on time restricted path space $P^T(M)\equiv C^0([0,T],M)$.\\

We will define the diffusion measures in general in Section \ref{ss:diffusion_measures}, here we will simply introduce the Wiener measures.  The Wiener measures are a family of measures $\Gamma_x$ on $P(M)$ indexed by $x\in M$.  As we will see, $\Gamma_x$ are the unique measures on $P(M)$ such that for each partition $\bt$ we have that the pushforward measures $e_{\bt,*}\Gamma_x$ are given by
\begin{align}\label{e:WM}
e_{\bt,*}\Gamma_x = \rho_{t_1}(x,dy_1)\rho_{t_2-t_1}(y_1,dy_2)\cdots\rho_{t_k-t_{k-1}}(y_{k-1},dy_k)\, .
\end{align}
See Section \ref{s:prelim_smooth} for more details.  Let us observe that for $x\in M$ fixed, then for $\Gamma_x$-a.e. $\gamma\in P(M)$ we have that $\gamma(0)=x$.  In particular, if we equipped $P(M)$ with the measure $\Gamma_x$ then we have focused ourselves on the based path space $P_x(M)\equiv \{\gamma\in P(M):\gamma(0)=x\}$.  Finally let us denote the total Wiener measure $\Gamma_f$ on path space $P(M)$ defined by $\Gamma_f(U)\equiv \int_M \Gamma_x(U)\,dv_g$, where $U$ is Borel.

\subsubsection{Cylinder Functions}\label{sss:cylinder_fun_intro}

To do analysis on path space $P(M)$ the starting point is to have a natural and easy class of functions that one can work with and are dense in the various function spaces.    In this way one can consider most functional analytic constructions on this subspace and then extend by continuity to more general functions.  In our context an especially natural collection of functions on path space $P(M)$ are the smooth cylinder functions.  These are the functions $F:P(M)\to \dR$ of the form
\begin{align}
F\equiv e_\bt^*u\, ,
\end{align}
where $e_\bt:P(M)\to M^{|\bt|}$ is an evaluation map and $u:M^{|\bt|}\to\dR$ is a smooth function with compact support.  One can imagine from the definition of the Wiener measures (\ref{e:WM}) why the cylinder functions are especially natural choices.  The collection of smooth cylinder functions are dense in essentially every function space we will be interested in throughout this paper.  We will discuss this more in Section \ref{ss:functions_pathspace}.

\subsection{Characterizations of Ricci Curvature:}\label{ss:char_ricci_intro}

In this Section we describe our first main results, which give characterizations of bounds on $\Ric+\nabla^2f$ in terms of the geometry and measure theory of $P(M)$.  These characterizations will fall into several distinct categories, but all will require understanding the relationship between the metric-measure geometry on $M$ and the metric-measure geometry on $P(M)$.  In Section \ref{ss:char_d_ricci_intro} we describe analogous characterizations for the $d$-dimensional Ricci curvature tensor.

The relationship between the measures on $M$ and the measures on $P(M)$ that we will consider was described previously in Section \ref{sss:Diffusion_Measure_intro}, and is given through the construction of the diffusion measures.  In order to discuss the relationship of Ricci curvature with the geometry on path space we still need to describe the correct notion of a gradient for functions on path space.  This will turn out to be what we call the parallel gradient, which acts as an almost finite dimensional gradient on the infinite dimensional path space, and we will give a brief description of it in Section \ref{sss:parallel_grad_intro}.  In Section \ref{ss:parallel_gradient} we will discuss this notion more completely and describe some of its properties.  In particular, we will see in Section \ref{ss:parallel_gradient} how to recover from the parallel gradients the more standard $H^1_0$-gradient on path space.

In Section \ref{sss:char_ricci_gradient} we will be in a position to give our first characterization of bounded Ricci curvature, which relates bounds on the Ricci curvature to the functional analysis of $P(M)$.  Speficially, using the parallel gradient we will see that the eigenvalue bounds $|\Ric+\nabla^2f|\leq \kappa$ are equivalent to the appropriate gradient estimate on path space.  We will see that this gradient estimate acts as the infinite dimensional generalization of the Bakry-Emery gradient estimates on $M$.  In Section \ref{sss:char_ricci_gradient} we will give several versions of the gradient estimate.

In Section \ref{sss:char_ricci_quad_intro} we give our second characterization of bounded Ricci curvature, which relates the bounds on the Ricci curvature to the stochastic analysis of $P(M)$.  To do this we recall the notion of a martingale and its quadratic variation, and then see how using the parallel gradient one can equate bounds on the Ricci curvature with bounds on the quadratic variation of a martingale on $P(M)$.

In Section \ref{sss:char_ricci_OU} we discuss our third characterization of bounded Ricci curvature, which describes the Ricci curvature in terms of the infinite dimensional analysis of $P(M)$.  We begin by recalling briefly the construction of the Ornstein-Uhlenbeck operator $L_x$ on path space, and then we will discuss the construction of the twisted Ornstein-Uhlenbeck operators $L_{t_0,\kappa}^{t_1}$.  We will then see that the eigenvalue bound $|\Ric+\nabla^2f|\leq \kappa$ is equivalent to the spectral gaps for these operators.  We will see in particular that this implies the spectral gap $\lambda_1(L_{x})\geq 2\big(e^{\kappa T}+1\big)^{-1}$ for the classical Ornstein-Uhlenbeck operator.  In fact, we will see that the Ricci curvature bound $|\Ric+\nabla^2f|\leq \kappa$ holds if and only if the twisted Ornstein-Uhlenbeck operators have a log-Sobolev bound of $2e^{\frac{\kappa}{2}\big(T-t_0\big)}$, see Section \ref{sss:char_ricci_OU} for more precision.

\subsubsection{The Parallel Gradients on $P(M)$}\label{sss:parallel_grad_intro}

In order to give our characterizations of bounded Ricci curvature we need to introduce the correct geometry on path space, and in particular we need to discuss the notion of a gradient for functions on path space.  The construction of a gradient on the path space of $\dR^n$ is a very straightforward matter.  However, even on a smooth {\it non-flat} manifold $M$ there are nontrivial issues involved, see \cite{Driver_CM},\cite{Malliavin_StocAnal}, \cite{Hsu_book}, \cite{Stroock_book} and Section \ref{ss:parallel_gradient}.  For simplicity, in the introduction we will mostly skirt these issues and give only semi-rigorous definitions.  In Section \ref{ss:parallel_gradient} we will give on a smooth manifold more rigorous definitions that are line with classical constructions, while in Part II of the paper we will introduce an entirely new approach which will allow us to handle the constructions on nonsmooth metric spaces.\\

There are several notions of a gradient on path space $P(M)$ that will play a role.  The most fundamental of these for us is are the parallel gradients of a function.  It's definition is similar in spirit to the more commonly used $H^1_0$-gradient, though it has a more finite dimensional flavor to it.  We will discuss it and the $H^1_0$-gradient in full detail in Section \ref{ss:H1_gradient}.  In particular, we will show there how to recover the infinite dimensional $H^1_0$-gradient from the {\it family} of 'finite dimensional' parallel gradients.  This will be especially important on nonsmooth metric measure spaces.

To define the parallel gradient first recall that $P(M)$ is a smooth Banach manifold, and that the tangent space of a curve $\gamma\in P(M)$ can be naturally identified with the continuous vector fields $V\in C^0(\gamma^*TM)$.  Thus, if we are given a reasonable mapping $F:P(M)\to \dR$, for instance a smooth cylinder function as in Section \ref{sss:cylinder_fun_intro}, then we can define its partial derivative $D_VF(\gamma)$ at a curve $\gamma$ in the direction $V$.  The norm of the parallel gradient of $F$ is then defined by
\begin{align}\label{e:parallelgradient_0}
|\nabla_0 F|(\gamma)\equiv \sup\big\{D_VF: |\nabla_{\dot\gamma} V| = 0\,\, , |V|(0)=1\big\}\, .
\end{align}
That is, we are taking the supremum over the directional derivatives of $F$ in directions that are parallel translation invariant, which is an $n$-dimensional subspace of the vector fields on $\gamma$.  

More generally, we will define a family of gradients $\nabla_s$.  We will introduce this more carefully in Section \ref{ss:parallel_gradient}, however in short let us notice that for a smooth cylinder function $F$ the directional derivatives $D_VF$ are well defined even for only right continuous vector fields.  In particular, to define $|\nabla_s F|$, instead of maximizing over all parallel translation invariant vector fields, we can maximize over all vector fields $V(t)$ which vanish for $t<s$ and are parallel translation invariant for $t\geq s$.  That is,

\begin{align}\label{e:parallelgradient_s}
|\nabla_s F|(\gamma)\equiv \sup\big\{D_VF: V(t)=0\text{ if }t<s\text{ and otherwise } |\nabla_{\dot\gamma} V| = 0\,\, , |V|(s)=1\big\}\, .
\end{align}

Now let us quickly point out the extreme, but standard, subtlety in these definitions.  Namely, we have that $\gamma\in P(M)$ is only a continuous curve, and in general it will not be anywhere differentiable.  Thus, it is not at all clear what is meant by the parallel translation invariant condition $|\nabla_{\dot\gamma} V|=0$.  In fact, this is not an easy point.  It was first circumvented by Malliavin in \cite{Malliavin_Geometrie} with the use of the stochastic parallel translation map in order to define the $H^1_0$-gradient.  We will discuss the stochastic parallel translation map in Section \ref{ss:stoc_par_trans}, and then use Malliavin's technique to make rigorous the above definition.  We will also handle this issue in a very different manner in Part II of the paper, without the use of the stochastic parallel translation map.  This will allow us to define the parallel and $H^1_0$-gradients of a function on even nonsmooth metric spaces.\\

\subsubsection{Characterizing Bounded Ricci Curvature and Gradient Estimates}\label{sss:char_ricci_gradient}

Now we are in a position to discuss our first characterization of bounded Ricci curvature on $M$.  Let us begin by recalling the classic gradient estimates of Bakry-Emery on the heat flow.  Their estimates tell us that the lower Ricci curvature bound $\Ric+\nabla^2f\geq -\kappa$ is equivalent to the gradient estimate on the heat flow given by
\begin{align}\label{e:BE_gradient}
|\nabla H_t u|\leq e^{\frac{\kappa}{2}t}H_t|\nabla u|\, ,
\end{align}
where $H_t$ is the heat flow associated to the operator $\frac{1}{2}\Delta_f$ on $M$.  We will construct in this Section a path space version of this estimate which gives rise to a characterization of bounded Ricci curvature on $M$.  We will show in Section \ref{ss:r1_r2} how we may recover (\ref{e:BE_gradient}) by applying the path space estimate to essentially the simplest type of function on path space.\\

To describe the characterization let $F\in C^0(P(M))$ be a continuous function on path space, for instance a smooth cylinder function, and let us observe that by letting the diffusion measures $\Gamma_x$ act on $F$ we can construct a continuous function on $M$ by considering
\begin{align}
\int_{P(M)} F \,d\Gamma_x\, ,
\end{align}
as a function of $x$.  This method takes continuous functions on $P(M)$ to continuous functions on $M$, and it is reasonable to ask what else we know about $\int F \,d\Gamma_x$ as a function on $M$ in terms of $F$ as a function on $P(M)$.  In particular, when is it a lipschitz function on $M$, and can we control the gradient of $\int F \,d\Gamma_x$ as a function on $M$ in terms of the gradient of $F$ as a function on $P(M)$.  In this case it of course matters a great deal what we mean by {\it gradient} of $F$ on $P(M)$.  It turns out that if we mean the parallel gradient, as defined in Section \ref{sss:parallel_grad_intro}, then the estimate
\begin{align}\label{e:ricciflat_gradient}
|\nabla \int_{P(M)} F \,d\Gamma_x|\leq \int_{P(M)}|\nabla_0 F|\, d\Gamma_x\, ,
\end{align}
is equivalent to the smooth metric measure space being Ricci flat, that is, we will show that (\ref{e:ricciflat_gradient}) holds if and only if $\Ric+\nabla^2f=0$.  More generally, we will see in Theorem \ref{t:smooth_bounded_ricci} that $|\Ric+\nabla^2 f|\leq \kappa$ if and only if
\begin{align}\label{e:ricci_gradient}
|\nabla \int_{P(M)} F \,d\Gamma_x|\leq \int_{P(M)}|\nabla_0 F|+\int_0^\infty \frac{\kappa}{2}e^{\frac{\kappa}{2}s}|\nabla_s F|\, ds\cdot d\Gamma_x\, .
\end{align}

In Section \ref{ss:r1_r2} we see how to apply the above estimate to the simplest function on path space, namely a cylinder function $F(\gamma)\equiv u(\gamma(t))$ where $u$ is a smooth function on $M$ and $t>0$ is fixed, to recover (\ref{e:BE_gradient}).

It will also be useful to consider a quadratic version of the (\ref{e:ricci_gradient}).  In this case we have, again recorded in Theorem \ref{t:smooth_bounded_ricci}, that the Ricci curvature bound $|\Ric+\nabla^2 f|\leq \kappa$ holds if and only if for every $\cF^T$ measurable function $F$ on $P(M)$ we have the estimate
\begin{align}
|\nabla_x \int_{P(M)} F \,d\Gamma_x|^2 \leq e^{\frac{\kappa}{2}T}\int_{P(M)}|\nabla_0 F|^2+\int_0^T \frac{\kappa}{2}e^{\frac{\kappa}{2}s}|\nabla_s F|^2\,ds\cdot d\Gamma_x\, .
\end{align}
\vspace{.5 cm}

\subsubsection{Characterizing Bounded Ricci Curvature and the Quadratic Variation}\label{sss:char_ricci_quad_intro}

Our second characterization of bounded Ricci curvature relates the bounds on the Ricci curvature to stochastic analysis on $M$.  In order to state the results let us briefly review the notion of a martingale on based path space $P_x(M)$, and its associated quadratic variation.  For simplicity we only consider $L^2$-martingales here, however we will review these notions more carefully in Section \ref{ss:mart_quad}, and they will play an especially important role in the structure theory in the second part of the paper.\\

Let us begin with an arbitrary function $F\in L^2(P_x(M),\Gamma_x)$ which is square integrable.  Recall from Section \ref{sss:Diffusion_Measure_intro}, discussed more completely in Section \ref{ss:pathspace_basics}, that path space comes equipped with a canonical family of $\sigma$-algebras $\cF^t$.  Thus for each $t>0$ we can consider the closed subspace $L^2(P^t_x(M),\Gamma_x)\subseteq L^2(P_x(M),\Gamma_x)$ formed by those functions which are $\cF^t$-measurable.  By the remarks of Section \ref{sss:Diffusion_Measure_intro} this closed subspace may be naturally identified with the space of $L^2$ functions on time restricted path space $P^t_x(M)\equiv \{\gamma\in C^0([0,t]:M):\gamma(0)=x\}$.  Now if $F\in L^2(P_x(M),\Gamma_x)$ is an arbitrary function we can denote by $F^t$ the one parameter family of functions obtained by projecting $F$ to the subspace of $\cF^t$-measurable functions.  This decomposition of $F$ into a one parameter family of functions $F^t$ is called the {\it martingale} on $P_x(M)$ generated by $F$, see Section \ref{ss:mart_quad} for more details.

The martingale $F^t$ is a measurement of how much of $F$, as a function on path space, depends only on first $[0,t]$ of a curve.  As a family of functions $F^t$ is highly nondifferentiable.  To see this note that for any partition $\bt=\{0\leq t_1<\cdots<t_{|\bt|}<\infty\}$ we have the identity
\begin{align}\label{e:quadratic_isometry}
||F||_{L^2}^2 = \sum ||F^{t_{k+1}}-F^{t_k}||^2_{L^2}\, .
\end{align}
From this it is clear that not only is the family $F^t$ not differentiable in the $t$-variable, but what we may hope to converge is the quadratic limit 
\begin{align}
\lim_{s\to 0} \frac{\big(F^{t+s}-F^{t}\big)^2}{s} = [dF^t]\, ,
\end{align}
in $L^1$, for at least $a.e.$ $t>0$.  For a general martingale one has to be a little careful about such limits, however from the right perspective this turns out to be true \cite{Kuo_book}, and in fact if $F$ is well behaved, say a smooth cylinder function, then the limit exists for every time $t$.  The infinitesimal quadratic variation $[dF^t]$ is the correct replacement for the time derivative of $F^t$.  Using (\ref{e:quadratic_isometry}) it is not surprising that we have the isometric identity
\begin{align}
\int_{P(M)}\big|F^t\big|^2\, d\Gamma_x -\bigg(\int_{P(M)}F\,d\Gamma_x\bigg)^2= \int_{P(M)}\bigg(\int_0^t[dF^s]\bigg)\, d\Gamma_x \equiv \int_{P(M)}[F^t]\, d\Gamma_x\, . 
\end{align}
We call the family of maps $[F^t]$ the quadratic variation of $F$.  A reasonable question would be what properties of $F$ control the quadratic variation $[F^t]$ and its infinitesimal $[dF^t]$.  Our main result in this Section is that the estimate
\begin{align}\label{e:var_est_intro0}
\int_{P(M)}\sqrt{[dF^t]}\, d\Gamma_x \leq \int_{P(M)} |\nabla_t F|+\int_t^T \frac{\kappa}{2}e^{\frac{\kappa}{2}(s-t)}|\nabla_s F|\, d\Gamma_x\, ,
\end{align}
for every $\cF^T$-measurable function $F$ and $t<T$ is equivalent to the Ricci curvature eigenvalue bound $|\Ric+\nabla^2f|\leq \kappa$.  As with the gradient estimate it will be useful, especially when considering the dimensional Ricci curvature, to consider the quadratic version of the above which states that the estimate
\begin{align}\label{e:var_est_intro}
\int_{P(M)}[dF^t]\, d\Gamma_x \leq e^{\frac{\kappa}{2}(T-t)}\int_{P(M)} |\nabla_t F|^2+\int_t^T \frac{\kappa}{2}e^{\frac{\kappa}{2}(s-t)}|\nabla_s F|^2\, d\Gamma_x\, ,
\end{align}
for every $\cF^T$-measurable function $F$ and $t<T$ is also equivalent to the Ricci curvature bound $-\kappa g\leq \Ric+\nabla^2f\leq \kappa g$.  \\

In fact, it is not hard to see that (\ref{e:var_est_intro}) implies a seemingly stronger pointwise estimate.  If $\gamma\in P_x(M)$ and $t\geq 0$ are fixed, then if we denote by $F_{\gamma_t}:P_{\gamma(t)}(M)\to\dR$ the function defined by $F_{\gamma_t}(\sigma)\equiv F(\gamma_{[0,t]}\circ\sigma)$, where $\circ$ denotes the concatenation of paths, then we have the pointwise estimate
\begin{align}
[dF^t](\gamma)&\leq e^{\frac{\kappa}{2}(T-t)}\int_{P(M)} |\nabla_t F|^2(\gamma_{[0,t]}\circ\sigma)+\int_t^{T}\frac{\kappa}{2}e^{\frac{\kappa}{2}(s-t)}|\nabla_{s} F|^2(\gamma_{[0,t]}\circ\sigma)\,d\Gamma_{\gamma(t)}\notag\\
&=e^{\frac{\kappa}{2}(T-t)}\int_{P(M)} |\nabla_0 F_{\gamma_t}|^2+\int_0^{T-t}\frac{\kappa}{2}e^{\frac{\kappa}{2}s}|\nabla_{s} F_{\gamma_t}|^2\,d\Gamma_{\gamma(t)}\notag\\
\end{align}
for $a.e.$ $\gamma\in P_x(M)$, see Section \ref{ss:r3_r5} for details.  In particular this tells us that for spaces with bounded Ricci curvature that a martingale $F^t$ induced by a well behaved function $F$, for instance a smooth cylinder function, is exactly $C^{\frac{1}{2}}$-H\"older when viewed as a family in $L^2(P_x(M),\Gamma_x)$.  

Let us also discuss the pointwise regularity $F^t(\gamma)$ of a martingale and its relationship to Ricci curvature.  In Theorem \ref{t:smooth_martingale_lowerricci} we will show that only under a lower Ricci curvature assumption that $F^t(\gamma)$ is a continuous function of time for each fixed $\gamma$.  In Theorem \ref{t:smooth_martingale_holder} we will show with a bound on the Ricci curvature that $F^t(\gamma)$ is $C^\alpha$-H\"older continuous in time for all $\alpha<\frac{1}{2}$.  These results will be especially important in the second part of the paper.

As a last remark let us compare these estimates to the lower Ricci curvature context, and in particular let us note that (\ref{e:var_est_intro}) implies a generalization of the Bakry-Emery gradient estimate when applied to the simplest functions on path space.  Specifically, when we apply (\ref{e:var_est_intro}) to the functions of the form $F(\gamma)\equiv u(\gamma(t))$, where $u$ is a smooth function on $M$ and $t$ is fixed, then we will get the estimate
\begin{align}\label{e:BE_gradient2}
H_t|\nabla H_{T-t}u|^2(x)\leq e^{\kappa(T-t)}H_T|\nabla u|^2(x)\, ,
\end{align}
for every smooth $u$ and all times $0\leq t\leq T$.  It is not hard to see that (\ref{e:BE_gradient2}) is equivalent to (\ref{e:BE_gradient}), and in particular is itself equivalent to the Ricci curvature lower bound $\Ric+\nabla^2 f\geq -\kappa g$.\\

\subsubsection{Characterizing Bounded Ricci Curvature and the Ornstein Uhlenbeck Operators}\label{sss:char_ricci_OU}

Our third characterization of bounded Ricci curvature shows how to equate bounds on the Ricci curvature of a smooth metric-measure space with the analysis on path space.  Specifically, we will define below the Ornstein-Uhlenbeck operators as well as its twisted variations, which are infinite dimensional laplacians on path space, and see how the spectral properties of these operators are equivalent to bounds on the Ricci tensor.\\

Spectral gap and log-Sobolev inequalities for the Ornstein-Uhlenbeck operator on path space have a long history.  In the context of path space on $\dR^n$ they were first proved by Gross \cite{Gross_LogSob}.  In this case one can approximate in a very strong sense the Ornstein-Uhlenbeck operator by finite dimensional operators and thus prove the estimate rather directly by more classical arguments.  In the case of path space on a smooth Riemannian manifold the Ornstein-Uhlenbeck operator was first defined in \cite{DriverRockner_Diff}, and its spectral gap and log-Sobolev properties were first studied in \cite{Fang_Poin}, \cite{Hsu_LogSob} and \cite{AidaElworthy_logsob}.  In \cite{AidaElworthy_logsob} it was proven that such estimates existed for an arbitrary compact Riemannian manifold.  To prove the result the manifold was isometrically embedded in Euclidean space, and therefore the spectral gap itself depended on the embedding.  In \cite{Fang_Poin},\cite{Hsu_LogSob} it was first understood that Ricci curvature could also be used to control the spectral gap and log-Sobolev inequalities.    The proof in \cite{Fang_Poin} was based on a clever manipulation of the martingale representation formula for manifolds, which itself was based on a combination of the classic Clark-Ocone-Haussmann formula and Driver's integration by parts formula for the Malliavin gradient \cite{Driver_CM}.  The proof in \cite{Hsu_LogSob} is based on a more inductive procedure.  We refer the reader to the useful book \cite{Hsu_book} for a more complete reference.\\

In this section we define a new class of operators, which include in them the classical Ornstein-Uhlenbeck operator.  Our main goal for this section is to see that bounds on the Ricci curvature are equivalent to a form of spectral gap and log-Sobolev estimates on these operators.  In fact, we can deduce from them estimates on the classical Ornstein-Uhlenbeck operator which are sharper than those currently in the literature.  These improved estimates are actually vital for our purposes, as the {\it equivalence} between these estimates and the bounds on the Ricci curvature fail for the weaker estimates currently in the literature.  The techniques will also generalize in the second part of the paper to allow us to prove the corresponding estimates for nonsmooth metric measure spaces which have generalized Ricci curvature bounds.  In this case the Ornstein-Uhlenbeck operators may {\it apriori} not even be linear operators, none-the-less we will be able to prove the correct log-Sobolev and Poincare estimates on them.\\

To explain all of this more carefully let us briefly discuss the $H^1_x$-gradient on path space, which was first introduced in \cite{Malliavin_Geometrie}.  We will be interested in what's to come in studying functions $F$ which are defined on based path space $P_x(M)$.  Normally it is easier to consider the constructions on smooth cylinder functions first, and then to extend more arbitrarily.  Classically, one defines the $H^1_x$-gradient on based path space in a manner similar to the parallel gradient (\ref{e:parallelgradient_0}) by
\begin{align}\label{e:H1gradient}
|\nabla F|_{H^1_x}(\gamma)\equiv \sup\big\{D_VF: \int_\gamma |\dot V|^2 = 1, V(0)=0\big\}\, .
\end{align}
Again, as with the parallel gradient we remind the reader of the subtlety of the definition, which we will discuss more carefully in Section \ref{ss:H1_gradient}.  In fact, it will often be more convenient for us to express the $H^1_x$-gradient in terms of the parallel gradient by the formula
\begin{align}
|\nabla F|^2_{H^1_x}(\gamma) \equiv \int_0^\infty |\nabla_s F|^2\,ds\, ,
\end{align}
see Section \ref{ss:H1_gradient} for the proof of the equivalence of the two definitions. 

Now on based path space $P_{x}(M)$ we have introduced both a natural geometry given by the $H^1_x$-gradient, and a canonical measure given by the Wiener measure $\Gamma_x$.  This allows us to define a Dirichlet form, from which the Ornstein-Uhlenbeck operator will be defined.  Namely, we define the closed symmetric bilinear form on $L^2(P_{x}(M),\Gamma_x)$ by the formula
\begin{align}
E[F]\equiv \frac{1}{2}\int_{P_{x}(M)} |\nabla F|_{H^1_x}^2 d\Gamma_x\, .
\end{align}
In fact, we have that the energy functional $E[F]$ is a Dirichlet form, see \cite{DriverRockner_Diff} for the smooth case and the second part of the paper for the nonsmooth case.   In particular, by the standard theory of Dirichlet forms \cite{MaRockner_DirichletForms}, there exists a unique, closed, nonnegative, self-adjoint operator
\begin{align}
L_{x}:L^2(P_{x}(M),\Gamma_x)\to L^2(P_{x}(M),\Gamma_x)\, ,
\end{align}
such that
\begin{align}
E[F] = \int_{P_{x}(M)} \langle F,L_{x}F\rangle\, d\Gamma_x\, .
\end{align}
The operator $L_{x}$ is the Ornstein-Uhlenbeck operator on $P_{x}(M)$.  Let us now generalize this to define the twisted Ornstein-Uhlenbeck operators.  These will in essence be the part of the Ornstein-Uhlenbeck operator restricted to the time interval $[t_0,t_1]$.  To be more precise, let us pick times $t_0<t_1$ as well as fix $\kappa\geq 0$, then we define the Dirichlet energies
\begin{align}
E^{t_1}_{t_0,\kappa}[F]\equiv \int_{P_x(M)}\Bigg( \int_{t_0}^{t_1}\cosh\big(\frac{\kappa}{2}(s-t_1)\big)|\nabla_s F|^2 +\big(1-e^{-\frac{\kappa}{2}(t_2-t_1)}\big)\int^{\infty}_{t_2} e^{\frac{\kappa}{2}(s-t_1)}|\nabla_s F|^2\Bigg)\,d\Gamma_x\, ,
\end{align}
and from this we define the twisted Ornstein-Uhlenbeck operators given by
\begin{align}
E^{t_1}_{t_0,\kappa}[F] = \int_{P_{x}(M)} \langle F,L_{t_0,\kappa}^{t_1}F\rangle\, d\Gamma_x\, .
\end{align}

Before continuing let us discuss these operators and their meaning.  For intuition purposes it is best to begin with the $\kappa\equiv 0$ case to see that
\begin{align}
E^{t_1}_{t_0,0}[F]\equiv \int_{P_x(M)}\Bigg( \int_{t_0}^{t_1}|\nabla_s F|^2\Bigg)\,d\Gamma_x\, .
\end{align}
Thus, we see that $L_{t_0}^{t_1}\equiv L_{t_0,0}^{t_1}$ is literally the part of $L_x$ which only looks at the piece of the gradient in the time range $[t_0,t_1]$.  In particular, we have that $L_{0}^{\infty}\equiv L_x$ is the classical Ornstein-Uhlenbeck operator itself. \\ 

Note that the kernel of $L_{t_0}^{t_1}$ contains the subspace 
\begin{align}
L^2(P^{t_0}_x(M),\Gamma_x)\subseteq \ker L_{t_0}^{t_1}\, ,
\end{align}
where recall $L^2(P^{t_0}_x(M),\Gamma_x)$ is the collection of $\cF^{t_0}$-measurable functions.  One might ask if this is the whole kernel, or even more if there is a spectral gap between the kernel and the rest of the spectrum of $L_{t_0}^{t_1}$.  What we will see in Theorem \ref{t:smooth_bounded_ricci} is that the weak spectral gap estimate
\begin{align}
\int_{P_xM} |F^{t_1}-F^{t_0}|^2 \leq \int_{P_xM} \langle F,L_{t_0}^{t_1} F\rangle\, ,
\end{align}
holds if and only if $\Ric+\nabla^2f\equiv 0$.  In the case where $t_1=\infty$, then this gives rise to a classical spectral gap for the operator $L^{\infty}_{t_0}$, as in this case we have $F^\infty=F$.  One can go further, and see that the log Sobolev estimate
\begin{align}
\int_{P_{x}(M)} |F^{2}|^{t_1}\ln |F^{2}|^{t_1}\,d\Gamma_x-\int_{P_{x}(M)} |F^{2}|^{t_0}\ln |F^{2}|^{t_0}\,d\Gamma_x  \leq 2\int_{P_{x}(M)}\langle F, L_{t_0}^{t_1} F\rangle\,d\Gamma_x\, ,
\end{align}
holds if and only if $\Ric+\nabla^2 f=0$, where $|F^2|^t$ is the martingale induced by $F^2$.  Note in particular that these imply spectral gap and log-sobolev estimates on the classical Ornstein-Uhlenbeck operator by using the observation $L_{0}^{\infty}=L_x$.\\

More generally, the operators $L^{t_1}_{t_0,\kappa}$ contain an additional twisting term to account for additional Ricci curvature, but should in principle be viewed in the same manner as $L^{t_1}_{t_0}$.  We will see that these operators control Ricci curvature bounded by $\kappa$ is the same manner that $L^{t_1}_{t_0}$ controls Ricci flat manifolds.  The main result of Theorem \ref{t:smooth_bounded_ricci} is that the spectral gap
\begin{align}\label{e:intro:OU:spectral_gap_k}
\int_{P_xM} |F^{t_1}-F^{t_0}|^2\leq e^{\frac{\kappa}{2}(T-t_0)}\int_{P_xM} \langle F,L_{t_0,\kappa}^{t_1} F\rangle\,d\Gamma_x\, ,
\end{align}
where $F$ is $\cF^T$-measurable, holds if and only if $|\Ric+\nabla^2f|\leq \kappa$.  More generally, the log-Sobolev
\begin{align}\label{e:intro:OU:log_sob_k}
\int_{P_{x}(M)} \big(F^2\big)^{t_1}\ln \big(F^2\big)^{t_1}\,d\Gamma_x-\int_{P_{x}(M)} \big(F^2\big)^{t_0}\ln \big(F^2\big)^{t_0}\,d\Gamma_x  \leq 2e^{\frac{\kappa}{2}(T-t_0)}\int_{P_{x}(M)}\langle F, L_{t_0,\kappa}^{t_1} F\rangle\,d\Gamma_x\, ,
\end{align}
holds for all $F\in L^2(P^T_x(M),\Gamma_x)$ if and only if $|\Ric+\nabla^2f|\leq \kappa$.  Let us remark that on time restricted path space $P^T_x(M)$ we have the estimate
\begin{align}
L_{0,\kappa}^{T}\leq \cosh\big(\frac{\kappa}{2}T\big)L_x\, ,
\end{align}
where recall $L_x$ is the standard Ornstein-Uhlenbeck operator.  In particular, the estimates \eqref{e:intro:OU:spectral_gap_k} and \eqref{e:intro:OU:log_sob_k} imply the following spectral gap and log-sobolev on the classical Ornstein-Uhlenbeck operators:
\begin{align}\label{e:OU_spectralgap}
&\int_{P_xM} |F-\int F|^2\leq \frac{1}{2}\big(e^{\kappa T}+1\big)\int_{P_xM} \langle F,L_x F\rangle\,d\Gamma_x\, ,
\end{align}
\begin{align}\label{e:OU_logsob}
&\int_{P_{x}(M)} |F|^2\ln |F|^2\,d\Gamma_x- \Big(\int F^2\Big) \ln \Big(\int F^2\Big)  \leq \big(e^{\kappa T}+1\big)\int_{P_{x}(M)}\langle F, L_{x} F\rangle\,d\Gamma_x\, . \\ \notag
\end{align}

We end the Section by discussing the relationship between this estimate and the lower Ricci curvature version.  To do this precisely we need to discuss the heat kernel laplacian.  Specifically, we have discussed in Section \ref{sss:f_laplace_intro} how given a metric measure space one can naturally associate a laplace operator.  Now with $x\in M$ fixed and $t>0$ one can consider the metric-measure space $(M^n,g,\rho_t(x,dy))$, where $\rho_t(x,dy)$ is the heat kernel measure.  The laplace operator associated to this triple is the heat kernel laplacian
\begin{align}\label{d:hk_laplace}
\Delta_{x,t} u \equiv \Delta u + \langle \nabla \ln\big(\rho_{x,t}e^{-f}\big),\nabla u\rangle\, .
\end{align}

We will see in Section \ref{ss:r4_r7} that when one applies (\ref{e:OU_spectralgap}) or (\ref{e:OU_logsob}) to the simplest functions on path space, namely functions of the form $F(\gamma)=u(\gamma(t))$, then one obtains the spectral gap 
\begin{align}\label{e:hk_laplace_spectralgap}
\lambda_1(-\Delta_{x,t})\geq \kappa\left( e^{\kappa t}-1\right)^{-1}\, ,
\end{align}
for every $x\in M$ and $t>0$, and the log-Sobolev
\begin{align}\label{e:hk_laplace_logsob}
\int_M u^2\ln u^2 \rho_t(x,dy) \leq 2 \kappa^{-1}\left(e^{\kappa t}-1\right)\int_M |\nabla u|^2 \rho_t(x,dy)\, ,
\end{align}
where $u$ is any function such that $\int_M u^2(y) \rho_t(x,dy)=1$.  A consequence of \cite{BakryLedoux_logSov} is that these estimates are themselves equivalent to the lower Ricci bound $\Ric+\nabla^2 f\geq -\kappa g$, and therefore we have again recovered the lower Ricci curvature from the path space estimate.\\

\subsubsection{Summary of Results}

Let us record the main statements of Section \ref{ss:char_ricci_intro} and some of the easy corollaries.  For the notation we refer back to Section \ref{ss:char_ricci_intro}.

\begin{theorem}\label{t:smooth_bounded_ricci}
Let $(M^n,g,e^{-f}dv_g)$ be a smooth metrically complete metric measure space, then the following are equivalent:
\begin{enumerate}
\item[(R1)] The Ricci curvature satisfies the bound 
\begin{align}
-\kappa g\leq \Ric+\nabla^2 f\leq \kappa g\, .
\end{align}

\item[(R2)] For any function $F\in L^2(P(M),\Gamma_{f})$ on the total path space $P(M)$ we have the estimate
\begin{align}
\big|\nabla \int_{P(M)} F\,d\Gamma_{x}\big| \leq \int_{P(M)} \bigg(|\nabla_0 F|+\int_0^\infty\frac{\kappa}{2}e^{\frac{\kappa}{2}s}\,|\nabla_s F|\,ds\bigg) d\Gamma_{x}\, .
\end{align}

\item[(R3)] For any function $F\in L^2(P(M),\Gamma_{f})$ on the total path space $P(M)$ which is $\cF^T$-measurable we have the estimate
\begin{align}
|\nabla \int_{P(M)} F \,d\Gamma_x|^2 \leq e^{\frac{\kappa}{2}T}\int_{P(M)}\,|\nabla_0 F|^2+\int_0^T \frac{\kappa}{2}e^{\frac{\kappa}{2}s}|\nabla_s F|^2\, ds\cdot d\Gamma_x\, .
\end{align} 

\item[(R4)] For any function $F\in L^2(P(M),\Gamma_{x})$ on based path space $P_x(M)$ we have the estimate
\begin{align}
\int_{P(M)}\sqrt{[dF^t]}\, d\Gamma_x \leq \int_{P(M)} |\nabla_t F|+\int_t^T \frac{\kappa}{2}e^{\frac{\kappa}{2}(s-t)}|\nabla_s F|\, d\Gamma_x\, ,
\end{align}

\item[(R5)] For any function $F\in L^2(P(M),\Gamma_{x})$ on based path space $P_x(M)$ which is $\cF^T$-measurable we have the estimate
\begin{align}
\int_{P(M)}[dF^t]\, d\Gamma_x \leq e^{\frac{\kappa}{2}(T-t)}\int_{P(M)} |\nabla_t F|^2+\int_t^T \frac{\kappa}{2}e^{\frac{\kappa}{2}(s-t)}|\nabla_s F|^2\, d\Gamma_x\, ,
\end{align}

\item[(R6)] The twisted Ornstein-Uhlenbeck operator $L_{t_0,\kappa}^{t_1}:L^2(P_{x}^T(M),\Gamma_x)\to L^2(P_{x}^T(M),\Gamma_x)$ on based path space $P_{x}^T(M)$ satisfies the spectral gap estimate 
\begin{align}
\int_{P_xM} |F^{t_1}-F^{t_0}|^2\leq e^{\frac{\kappa}{2}(T-t_0)}\int_{P_xM} \langle F,L_{t_0,\kappa}^{t_1} F\rangle\,d\Gamma_x\, .
\end{align}
In particular, the standard Ornstein-Uhlenbeck operator $L_x$ satisfies the spectral gap\newline $\int_{P_xM} |F|^2\leq \frac{1}{2}\big(e^{\kappa T}+1\big)\int_{P_xM} \langle F,L_{x} F\rangle\,d\Gamma_x\,$, for each $\int F=0$.
\item[(R7)] The twisted Ornstein-Uhlenbeck operator $L_{t_0,\kappa}^{t_1}:L^2(P_{x}^T(M),\Gamma_x)\to L^2(P_{x}^T(M),\Gamma_x)$ on based path space $P_{x}^T(M)$ satisfies the log-Sobolev estimate 
\begin{align}
\int_{P_{x}(M)} |F^2|^{t_1}\ln |F^2|^{t_1}\,d\Gamma_x-\int_{P_{x}(M)} |F^2|^{t_0}\ln |F^2|^{t_0}\,d\Gamma_x  \leq 2e^{\frac{\kappa}{2}(T-t_0)}\int_{P_{x}(M)}\langle F, L_{t_0,\kappa}^{t_1} F\rangle\,d\Gamma_x\, .
\end{align}
In particular, the standard Ornstein-Uhlenbeck operator $L_x$ satisfies the log-Sobolev\newline $\int_{P_xM} |F|^2\ln |F|^2\leq \big(e^{\kappa T}+1\big)\int_{P_xM} \langle F,L_{x} F\rangle\,d\Gamma_x\, ,$ for each $\int F^2=1$.
\end{enumerate}
\end{theorem} 
\begin{remark}
The assumption of completeness here only refers to the metric completeness.   Stochastic completeness, which is to say that $\Gamma_x$ is a probability measure, is then a consequence of any of the conditions $(R1)-(R7)$.  More precisely, once one knows the lower Ricci bound then stochastic completeness follows.
\end{remark}
\vspace{.5 cm}

An obvious but interesting corollary of the above is the following characterization of Ricci flat manifolds.

\begin{corollary}
Let $(M^n,g,e^{-f}dv_g)$ be a smooth metrically complete metric measure space, then the following are equivalent:
\begin{enumerate}
\item The space is Ricci flat, that is,  $\Ric+\nabla^2f=0$.

\item For any function $F$ on the total path space $P(M)$ we have the estimate $\big|\nabla \int_{P(M)} F\,d\Gamma_{x}\big| \leq \int_{P(M)} |\nabla_0 F|\, d\Gamma_{x}$.

\item For any function $F$ on based path space $P_x(M)$ we have the estimate $\int_{P_x(M)}\sqrt{[dF^t]}\, d\Gamma_x \leq \int_{P_x(M)} |\nabla_t F|\, d\Gamma_x$.

\item The twisted Ornstein-Uhlenbeck operator $L_{t_0}^{t_1}:L^2(P_{x}^T(M),\Gamma_x)\to L^2(P_{x}^T(M),\Gamma_x)$ on based path space $P_{x}^T(M)$ satisfies the spectral gap estimate $\int_{P_xM} |F^{t_1}-F^{t_0}|^2\leq \int_{P_xM} \langle F,L_{t_0}^{t_1} F\rangle\,d\Gamma_x$.

\item The twisted Ornstein-Uhlenbeck operator on based path space satisfies the log-Sobolev estimate $\int_{P_{x}(M)} |F^2|^{t_1}\ln |F^2|^{t_1}\,d\Gamma_x-\int_{P_{x}(M)} |F^2|^{t_0}\ln |F^2|^{t_0}\,d\Gamma_x  \leq 2\int_{P_{x}(M)}\langle F, L_{t_0}^{t_1} F\rangle\,d\Gamma_x$.
\end{enumerate}
\end{corollary}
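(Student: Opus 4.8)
The plan is to derive the corollary as the $\kappa = 0$ specialization of Theorem \ref{t:smooth_bounded_ricci}, so essentially no new argument is needed beyond matching conditions and one small density remark. First I would line up the five conditions of the corollary with conditions (R1), (R2), (R4), (R6), (R7) of the theorem with $\kappa$ set to zero. Condition (R1) reads $-\kappa g \leq \Ric + \nabla^2 f \leq \kappa g$, which at $\kappa = 0$ says precisely $\Ric + \nabla^2 f = 0$, i.e. condition (1) of the corollary.

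Next I would note that in each of (R2)--(R7) the parameter $\kappa$ enters only through the factors $e^{\frac{\kappa}{2}s}$, $e^{\frac{\kappa}{2}(T-t)}$, $\cosh(\frac{\kappa}{2}t)$, and $2(e^{\kappa T}+1)^{-1}$, all of which are continuous in $\kappa$ and perfectly well-defined at $\kappa = 0$, while the correction terms of the form $\int_0^\infty \frac{\kappa}{2} e^{\frac{\kappa}{2}s}|\nabla_s F|\, ds$ carry an overall factor $\kappa$ and vanish identically at $\kappa = 0$. Setting $\kappa = 0$ therefore turns (R2) into $|\nabla \int F\, d\Gamma_x| \leq \int |\nabla_0 F|\, d\Gamma_x$ (condition (2)), turns (R4) into $\int \sqrt{[dF^t]}\, d\Gamma_x \leq \int |\nabla_t F|\, d\Gamma_x$ (condition (3)), and turns (R7) into $\int F^2 \ln F^2\, d\Gamma_x \leq 2\int |\nabla F|^2_{H^1_x}\, d\Gamma_x$ (condition (5)).

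The one point deserving a separate word is condition (4). At $\kappa = 0$ condition (R6) gives $\lambda_1(L_x) \geq 2(e^0+1)^{-1} = 1$ on each time-restricted based path space $P_x^T(M)$, and this bound is independent of $T$. Since $L_x$ preserves $\cF^T$-measurable functions and $\bigcup_{T>0} L^2(P_x^T(M),\Gamma_x)$ is dense in $L^2(P_x(M),\Gamma_x)$, the spectral gap of $L_x$ on the full based path space $P_x(M)$ is the infimum over $T>0$ of the gaps on the $P_x^T(M)$, hence is $\geq 1$; conversely a gap $\geq 1$ on $P_x(M)$ restricts to one on every $P_x^T(M)$. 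So (4) is equivalent to (R6) at $\kappa = 0$. Combining all these identifications with the equivalence of (R1), (R2), (R4), (R6), (R7) furnished by Theorem \ref{t:smooth_bounded_ricci} gives the corollary.

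The only genuine obstacle I anticipate is whether the proof of Theorem \ref{t:smooth_bounded_ricci} is literally valid at $\kappa = 0$ or is set up only for $\kappa > 0$ --- for instance if an intermediate estimate such as the heat-kernel-Laplace spectral gap $\lambda_1(-\Delta_{x,t}) \geq \kappa(e^{\kappa t}-1)^{-1}$ is recorded with $\kappa$ in a denominator. If that is the case, rather than substituting $\kappa = 0$ I would instead let $\kappa \to 0^+$ in each of (R1)--(R7): every $\kappa$-dependent quantity converges (e.g. $\kappa(e^{\kappa t}-1)^{-1} \to t^{-1}$), the correction integrals tend to zero on cylinder functions, and a routine density argument promotes the limiting inequalities to all admissible $F$, yielding the same conclusion.
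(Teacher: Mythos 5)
Your proposal is correct and is exactly what the paper intends: the corollary is stated there as an "obvious" consequence of Theorem \ref{t:smooth_bounded_ricci}, obtained by setting $\kappa=0$ (with no further proof given). Your extra care about passing from the time-restricted spectral gap on $P_x^T(M)$ to the full based path space, and about the degenerate constants at $\kappa=0$, only makes the specialization more complete than the paper's one-line assertion.
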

\begin{remark}
In fact, by slightly changing the Wiener measure $\Gamma$ so that it is induced by the kernel of the operator $\big(\frac{d}{dt}-\frac{1}{2}\Delta +\frac{\kappa}{2}\big)u=0$, one can instead characterize solutions of the equation $\Ric+\nabla^2f=\kappa g$.
\end{remark}
\vspace{.5 cm}


\subsection{Characterizations of $d$-dimensional Ricci Curvature}\label{ss:char_d_ricci_intro}

Recall that in the context of lower Ricci curvature often the best estimates come not just from the lower Ricci curvature bound $\Ric+\nabla^2 f\geq -\kappa g$, but from the $d$-dimensional lower Ricci curvature bound $\Ric+\nabla^2f-\frac{1}{d-n}\nabla f\otimes \nabla f\geq -\kappa g$.  Now while a lower bound on this tensor is an improvement of a lower bound on the Ricci tensor, an upper bound on this tensor is strictly weaker than an upper bound on the Ricci tensor.  Therefore, we say a smooth metric-measure space $(M^n,g,e^{-f}dv_g)$ has $d$-dimensional Ricci curvature bounded by $\kappa$ if we have the bounds
\begin{align}
-\kappa g+ \frac{1}{d-n}\nabla f\otimes \nabla f\leq \Ric+\nabla^2f\leq \kappa g\, .
\end{align}

In this Section we extend the results of the previous Section to consider the case of the $d$-dimensional Ricci curvature bounds.  We use heavily the notation and ideas already introduced in Section \ref{ss:char_ricci_intro}.  

We saw in Section \ref{sss:char_ricci_gradient} that a bound on the Ricci curvature is tied with control over the gradient of functions of the form $\int F\,d\Gamma_x$.  When one controls bounds on the dimensional Ricci curvature, then our main results show that such bounds are equivalent to control over not only the gradient of $\int F\,d\Gamma_x$, but also of its laplacian.  To obtain such control we need to consider functions $F$ which are $\cF^T_t$-measurable, and typically the estimates will depend on $T$ and $t$. 

The following version of Theorem \ref{t:smooth_bounded_ricci} for the $d$-dimensional Ricci curvature is a relatively simple consequence of Theorem \ref{t:smooth_bounded_ricci} and the corresponding results on lower Ricci curvature, see \cite{BakryLedoux_logSov} and Section \ref{s:smooth_lower_ricci}.  It will be proved in Section \ref{s:smooth_d_ricci}.

\begin{theorem}\label{t:smooth_bounded_d_ricci}
Let $(M^n,g,e^{-f}dv_g)$ be a smooth metrically complete metric measure space, then the following are equivalent:
\begin{enumerate}
\item The $d$-dimensional Ricci curvature satisfies the bound 
\begin{align}
-\kappa g+\frac{1}{d-n}\nabla f\otimes \nabla f \leq \Ric+\nabla^2f\leq \kappa g\, .
\end{align}

\item For any function $F\in L^2(P(M),\Gamma_{f})$ on the total path space $P(M)$ which is $\cF^T_t$-measurable we have the estimate
\begin{align}
|\nabla_x \int_{P(M)} F \,d\Gamma_x|^2+\frac{e^{\kappa t}-1}{\kappa d}\,\big|\Delta_f \int_{P(M)}F\,d\Gamma_x\big|^2 \leq e^{\frac{\kappa}{2}T}\int_{P(M)}\bigg(|\nabla_0 F|^2+\int_0^T \frac{\kappa}{2}e^{\frac{\kappa}{2}s}|\nabla_s F|^2\, ds\bigg) d\Gamma_x\, .
\end{align} 

\item If $F\in L^2(P(M),\Gamma_x)$ is $\cF^T$-measurable, then for $\gamma\in P_x(M)$ if we denote by $t_-\geq 0$ the maximum $s$ such that $F_{\gamma_t}$ is $\cF^T_s$-measurable, then we have the estimate
\begin{align}
[dF^t](\gamma)+\frac{e^{\kappa t_-}-1}{\kappa d}\,\big|\Delta_f \int_{P(M)}F_{\gamma_t}\,d\Gamma_{\gamma(t)}\big|^2&\leq e^{\frac{\kappa}{2}(T-t)}\int_{P(M)} |\nabla_0 F_{\gamma_t}|^2+\int_0^{T-t}\frac{\kappa}{2}e^{\frac{\kappa}{2}s}|\nabla_{s} F_{\gamma_t}|^2\,d\Gamma_{\gamma(t)}
\end{align}

\end{enumerate}
\end{theorem} 
\begin{remark}
By using $(3)$ there are many variations of $(R6)$ and $(R7)$ which are provable.\\
\end{remark}

\section{Introduction to Part II: The NonSmooth Bounded Ricci Case}\label{s:nonsmooth_bounded_ricci_intro}

In this Section we will outline how to use the results of the first part of the paper in order to define the notion of bounded Ricci curvature on a metric measure space $(X,d,m)$.  Throughout the second part of the paper the minimal requires we make on the metric-measure space is that

\begin{align}\label{e:mms_assumptions_intro}
(X,&d,m) \text{ is a locally compact, complete length space such that}\notag\\ 
&\text{$m$ is a locally finite, $\sigma$-finite Borel measure with supp}\,m=X\, .
\end{align}

In fact, it is quite possible that these assumptions, in particular the local compactness, may be weakened, but we do not worry about this here.  A primary complication in defining the notion of bounded Ricci curvature is the construction of the geometry on path space, and in particular the parallel gradient, which was subtle even for a smooth manifold and required the stochastic parallel translation map.  After introducing some preliminaries in Section \ref{ss:prelim_intro_nonsmooth}, which will be discussed in more detail in Section \ref{s:prelim_nonsmooth}, we will give a brief introduction to the construction of the parallel gradient on $P(X)$ in Section \ref{ss:parallel_grad_intro_nonsmooth}.  We will only outline enough of it here in order to make the definition of bounded Ricci curvature and state the main theorems of the paper.  We will discuss it in more detail in Section \ref{s:parallel_gradient_nonsmooth}.  

In Section \ref{ss:ricci_intro_nonsmooth} we give our definitions of bounded Ricci curvature on a metric-measure space, and discuss some of the basic properties of such spaces.  In particular, we will see in Section \ref{ss:bounded_implies_lower_intro_nonsmooth} that a metric-measure space with Ricci curvature bounded by $\kappa$ have their lower Ricci curvatures bounded from below by $-\kappa$ in either the sense of Bakry-Emery or Lott-Villani-Sturm.  In fact, we will see that an even stronger notion of a lower Ricci curvature bound holds, in that a space with Ricci curvature bounded by $\kappa$ will be a $RCD(\kappa,\infty)$ space, see \cite{Ambrosio_Ricci} and Section \ref{s:lowerricci_nonsmooth}.

In Sections \ref{ss:boundedricci_martingales_intro} and \ref{ss:ricci_analysis_intro_nonsmooth} we will study properties on metric measure spaces with bounded Ricci curvature which we had studied in the first part for smooth spaces.  In particular in Section \ref{ss:boundedricci_martingales_intro} we see that  bounded Ricci curvature is tied to the regularity of martingales on $P(X)$.  On the other hand, in Section \ref{ss:ricci_analysis_intro_nonsmooth} we discuss more carefully the implications of bounded Ricci curvature on the analysis of path space $P(X)$.  Using the ideas of Section \ref{ss:parallel_grad_intro_nonsmooth} we show there exists an Ornstein-Uhlenbeck operator on path space, which still acts as an infinite dimensional laplacian.  Note however, that unlike the smooth case this operator may {\it apriori} no longer be a linear operator.  Regardless, as in the smooth case, we show that on metric-measure spaces with bounded Ricci curvature, that this operator has a spectral gap and log-sobolev inequality.

Finally in Section \ref{ss:examples_intro} we discuss various examples of metric-measure spaces which do and do not have bounds on their Ricci curvature.

\subsection{Preliminaries}\label{ss:prelim_intro_nonsmooth}

In this Section we briefly review a few concepts which will play an important role in the results of the second part of the paper.  We review these ideas more carefully in Section \ref{s:prelim_nonsmooth}.  In Section \ref{sss:weakly_riemannian_intro} we introduce the notion of a weakly Riemannian metric-measure space.  Roughly, on every metric-measure space one can define a laplace operator, see Section \ref{sss:cheeger_energy}, and a weakly Riemannian space is one for which this operator is linear.  In Section \ref{sss:diffusion_measure_intro} we see how the linearity of the laplace operator is equivalent to the existence of the diffusion measures on $X$.

\subsubsection{Gradients and Sobolev Spaces on $X$}\label{sss:sobolev_intro}

Given a function $u$ on $X$, a fundamental point is deciding what the gradient of $u$ should be.  Following \cite{Cheeger_DiffLipFun}, one good starting point is through the fundamental theorem calculus by defining the slope $|\partial u|$ to be the smallest function such that for every absolutely continuous curve $\gamma$ connecting $x,y\in X$ we have the inequality
\begin{align}
|u(x)-u(y)|\leq\int_{\gamma}|\partial u|\cdot|\dot\gamma|\, dt\, .
\end{align}
On a smooth manifold this is a good definition of gradient, however in general this breaks down as a definition because if one were to then consider the energy functional $\int_X |\partial u|^2 dm$ on $L^2(X,m)$, then unlike the smooth case the energy functional need not be lower semicontinuous.  In \cite{Cheeger_DiffLipFun} it was then decided to take the gradient $|\nabla u|$ to be the lower semicontinuous refinement of the slope $|\partial u|$.  The resulting energy functional $E_X[u]\equiv \int_X |\nabla u|^2 dm$ is sometimes called the Cheeger energy, and is lower semicontinuous and convex.  See Section \ref{sss:cheeger_energy} for more details.

From the Cheeger energy we can define the Sobolev space $W^{1,2}(X,m)$ as the complete Banach Space of functions in $L^2$ such that $|\nabla u|$ is also in $L^2$.  Since the energy $E_X[u]$ is convex and lower semicontinuous, standard function space theory tells us there is a densely defined gradient operator in $L^2$, which we denote by $\Delta_X :\cD(\Delta_X)\subseteq L^2(X,m)\to L^2(X,m)$ and call the laplace operator on $X$.

\subsubsection{Weakly Riemannian Metric-Measure Spaces}\label{sss:weakly_riemannian_intro}

Unlike the case of a smooth metric-measure space the Sobolev space $W^{1,2}(X,m)$ may in principal be only a Banach space, not a Hilbert space.  A weakly Riemannian metric-measure space is by definition a metric-measure space for which $W^{1,2}(X,m)$ is a Hilbert space, that is, the energy functional satisfies the parallelgram law.  We discuss such spaces more completely in Section \ref{sss:weakly_riemannian}, however let us remark that an equivalent condition for $X$ to be weakly Riemannian is that the laplacian $\Delta_X$ introduced in the last section is linear.  Equivalently, the heat flow $H_t:L^2(X,m)\to L^2(X,m)$ of $\frac{1}{2}\Delta_X$ is linear.  It is clear that every Riemannian manifold is weakly Riemannian, less trivial is that every Gromov Hausdorff limit of smooth Riemannian manifolds with lower Ricci curvature bounds is weakly Riemannian.  

An improvement on the notion of a weakly Riemannian space is that of an {\it almost} Riemannian space.  Namely, it is possible that a metric-measure space $(X,d,m)$ is weakly Riemannian for trivial reasons in that the laplacian $\Delta_X\equiv 0$ is identically zero, see Section \ref{sss:almost_riemannian_spaces} for an example.  More fundamentally, degeneracies like this can occur because if one were to consider for a lipschitz function $u$ the lipschitz slope
\begin{align}
|\Lip\, u|(x)\equiv \limsup_{y\to x}\frac{|u(x)-u(y)|}{d(x,y)}\, ,
\end{align}
then unlike for a smooth space, we may have that $|\nabla u|(x)\neq |\Lip\, u|(x)$ a.e.  We call a weakly Riemannian space $X$ an almost Riemannian space if for every lipschitz function $u$ we have that the slope and gradient agree $|\nabla u|=|\Lip\, u|$ a.e.  This will play an important role as we will show in Theorem \ref{t:br_basic_properties} that metric-measure spaces with bounded Ricci curvature are almost Riemannian.  See Section \ref{sss:almost_riemannian_spaces} for more on this.

\subsubsection{The Diffusion Measures on Path Space}\label{sss:diffusion_measure_intro}

As we saw in the first part of the paper, one of the key ingredients in characterizing bounded Ricci curvature involves the existence of the diffusion measures on path space $P(X)$.  On a general metric-measure space it is clear that the diffusion measures need not always exist.  In fact, it is not hard to check, and we will do this in Section \ref{ss:Diff_Meas_weaklyRiemannian}, that the existence of the diffusion measures is equivalent to the metric-measure space being weakly Riemannian.  

More precisely, the key point is that in this case the energy functional $E[u]$ becomes a regular Dirichlet form, see Section \ref{sss:cheeger_energy}.  The first implication of this is that in this case the heat flow $H_t$ can be written in terms of kernels.  Namely, for a continuous function $f\in C_c(X)$ we have 
\begin{align}
H_tf(x) = \int_X f(y)\rho_t(x,dy)\, ,
\end{align}
where $\rho_t:X\times \cB(X)\to \dR^+$ is such that $\rho_t(x,\cdot)$ is a measure for each $x\in X$ and $\rho_t(\cdot,U)$ is a measurable function for each Borel set $U\in \cB(X)$.  

Recall from Section \ref{ss:prelim_intro} that path space $P(X)$ is equipped the family of evaluation maps $e_\bt:P(X)\to X^{|\bt|}$, where $\bt$ is a finite partition of $[0,\infty)$.  Further, using these maps recall that we can construct the bi-family of $\sigma$-algebras $\cF^T_t$ generated by the evaluation maps $e_\bt$ with $\bt$ a partition of $[t,T]$.  Now exactly as in (\ref{e:WM}) we can associate to each measure $\mu$ on $X$ the associated diffusion measure $\Gamma_\mu$ on path space $P(X)$, which is uniquely determined by the formula
\begin{align}
e_{\bt,*}\Gamma_\mu = \int_M \rho_{t_1}(x,dy_1)\rho_{t_2-t_1}(y_1,dy_2)\cdots\rho_{t_k-t_{k-1}}(y_{k-1},dy_k) d\mu(x)\, .
\end{align}
See Section \ref{ss:Diff_Meas_weaklyRiemannian} for more details, and see \cite{Fukushima_DirichletForms} for a complete introduction to the subject of Dirichlet forms and diffusion measures.  The most common choices of diffusion measures that we will be using will come from either choosing $\mu\equiv \delta_x$ to be a dirac delta at a point, in which case $\Gamma_x\equiv \Gamma_{\delta_x}$ is the classical Wiener measure supported on based path space $P_x(X)$, or we will take $\mu\equiv m$ to obtain the diffusion measure $\Gamma_m$ on path space $P(X)$.

\subsection{The Parallel Gradient on Path Space}\label{ss:parallel_grad_intro_nonsmooth}

In Section \ref{sss:parallel_grad_intro}, and more carefully in Section \ref{ss:parallel_gradient}, we introduced the parallel gradient operators $\nabla_s$ on the path space of a smooth manifold.  The construction required two ingredients.  First by using the stochastic parallel translation map we identified parallel translation invariant vector fields $V(t)$ along a curve $\gamma$, and then we defined $|\nabla_0 F|(\gamma)$ to be the supremum of all directional derivatives over all such vector fields with $|V|(0)=1$.  In this Section we briefly discuss the tools needed to make sense of this construct on a general metric-measure space.  We do this more precisely in Sections \ref{s:variation} and \ref{s:parallel_gradient_nonsmooth}.

\subsubsection{Variations of a Curve}\label{sss:variations_intro}

In Section \ref{s:variation} we introduce the notion of a variation of a curve, which will take the place of a vector field.  On a smooth manifold a vector field $V$ along a curve $\gamma$ represents an element of the tangent space $TP(X)$, that is, it represents an infinitesmal deformation of the curve.  It is therefore natural to replace a vector field along $\gamma$ with a form of infinitesmal variation of $\gamma$.  In essence, a variation of a curve $\gamma$ will be an assignment to each point $\gamma(t)$ a Cauchy sequence which converges to $\gamma(t)$.  There will be an important equivalence relation defined on this set which will be particularly useful for studying regularity issues, and crucial in the study of parallel variations on a smooth space.

Once the variation of a curve is introduced, we will define in Sections \ref{ss:parallel_variation_rect_curv} the notion of a parallel variation and $s$-parallel variation on rectifiable curves.  These will take the place of parallel translation invariant vector fields along curves in a smooth space.  We will prove a variety of structure about such variations.  On a smooth space we will see in Section \ref{ss:parallel_variation_rect_curv} that the parallel variations, though defined in a completely geometric manner, agree up to equivalence at least for a.e. curve with the stochastic parallel translation invariant vector fields. 

\subsubsection{The Parallel Gradients}\label{sss:parallel_intro_nonsmooth}

Having defined the notions of the $s$-parallel variations in Section \ref{s:variation}, in Section \ref{s:parallel_gradient_nonsmooth} we use these ideas to define the parallel gradients.  As in the case of defining a gradient on $X$, we must first define the appropriate notion of the parallel slope, and then take the lower semicontinuous refinement in order to construct the parallel gradient.  However, unlike the slope on $X$, that the definition of the parallel slope agrees with the standard one on a smooth space is {\it apriori} not at all clear.

We will worry only about first defining the parallel gradients on a dense subset of $L^2(P(X),\Gamma_m)$, and then we will extend by standard methods.  On a general metric space the collection of functions on $P(X)$ that are best to work with in this context are the cylinder functions $Cyl(X)\subseteq C^0(P(X))$, which are the functions on path space given by the form $F\equiv e_{\bt}^*u$, where $e_\bt:P(X)\to X^{|\bt|}$ is an evaluation map and $u\in Lip_c(X^{|\bt|})$ is a lipschitz function with compact support.  

On this collection we will see in Section \ref{s:parallel_gradient_nonsmooth} how to give the directional derivative $D_VF$ of $F$ in the direction of a parallel variation $V$ a canonically well defined meaning along a piecewise geodesic curve.  If $\gamma\in P(X)$ is any continuous curve and $\bt\equiv\{0\leq t_1<t_2<\cdots<t_N<\infty\}$ is a partition, we call a piecewise geodesic $\gamma_\bt$ with vertices at $\bt$ a $\bt$-approximation of $\gamma$ if $\gamma_{\bt}(\bt)=\gamma(\bt)$.  Then we will define in Section \ref{ss:parallel_slope} the parallel slope by

\begin{align}\label{e:parallel_slope_intro}
|\partial_s F|(\gamma)\equiv \limsup_{\bt} |D_VF|(\gamma_\bt)\, ,
\end{align}
where the supremum is over all $s$-parallel variations $V$ of a $\bt$-approximation $\gamma_\bt$, and the limit is as $\bt$ becomes increasingly dense.  See Section \ref{ss:parallel_slope} for a precise statement.

Finally, in Section \ref{ss:parallel_gradient_nonsmooth} we follow ideas inspired by \cite{Cheeger_DiffLipFun},\cite{Ambrosio_Calculus_Ricci} in order to use the parallel slopes to define the corresponding parallel gradients $|\nabla_s F|$ of a general function $F\in L^2(P(X),\Gamma_{m})$.  One must be a little careful, because on a smooth manifold the slope (\ref{e:parallel_slope_intro}) may be taken directly as the definition of the parallel gradient.  As in the case of gradient on $X$, the fundamental issue with this is that the associated energy functions $\int_{P(X)} |\partial_s F|^2 \,d\Gamma_m$ need not be lower semi-continuous.  To fix this, in Section \ref{ss:parallel_gradient_nonsmooth} we define the parallel gradient $|\nabla_s F|$ to be the lower semi-continuous refinement of $|\partial_s F|$.  

We end by remarking on the following, possibly confusing, notational convention in the paper.  We will often work with expressions which involve integrals or other combinations of parallel gradients.  For instance, in Definition \ref{d:bounded_ricci_mms} we have $|\nabla_0 F|+\int_0^\infty\frac{\kappa}{2}e^{\frac{\kappa}{2}s}|\nabla_s F|$.  There are two ways to interpret such a formula.  One may interpret this directly as the integral of of the parallel gradients, or one may interpret this as the lower semi-continuous refinement of the expression $|\partial_0 F|+\int_0^\infty\frac{\kappa}{2}e^{\frac{\kappa}{2}s}|\partial_s F|$.  On a smooth metric-measure space the two are the same, and in general the first interpretation is always less than or equal to the second.  Our convention in this paper is to use the second convention, namely that such expressions are always the lower semi-continuous refinements of the corresponding slopes, see Section \ref{sss:expressions_parallel} for more on this.

\subsection{Bounded Ricci Curvature and Basic Properties}\label{ss:ricci_intro_nonsmooth}

Having discussed the metric-measure geometry of path space $P(X)$ in Section \ref{ss:prelim_intro_nonsmooth} and \ref{ss:parallel_grad_intro_nonsmooth} we are in a position to use the results of Part \ref{part:smooth} to make our definition of bounded Ricci curvature on a metric-measure space:\\

\begin{definition}\label{d:bounded_ricci_mms}
Let $(X,d,m)$ be a metric measure space which satisfies (\ref{e:mms_assumptions_intro}) and which is weakly Riemannian.  Then we say $X$ is a $BR(\kappa,\infty)$ space if for every function $F\in L^2(P(X),\Gamma_m)$ we have the inequality
\begin{align}\label{e:bounded_ricci}
|\Lip_x \int_X F\,d\Gamma_x|\leq \int_{P(X)} |\nabla_0 F|+\int_0^\infty \frac{\kappa}{2}e^{\frac{\kappa}{2}s}|\nabla_s F|\,d\Gamma_x\, ,
\end{align}
for $a.e.$ $x\in X$.
\end{definition}
\begin{remark}
Recall that $|\Lip_x\cdot|$ is the lipschitz slope as in Section \ref{sss:weakly_riemannian_intro}, and see Section \ref{sss:expressions_parallel} for the precise construction of $|\nabla_0 F|+\int_0^\infty \frac{\kappa}{2}e^{\frac{\kappa}{2}s}|\nabla_s F|$ .
\end{remark}

Let us begin with a few comments on the definition.  To begin with, it follows from \cite{Cheeger_DiffLipFun} that (\ref{e:bounded_ricci}) immediately implies the inequality
\begin{align}\label{e:bounded_ricci_weak}
|\nabla_x \int_X F\,d\Gamma_x|\leq \int_{P(X)} |\nabla_0 F|+\int_0^\infty \frac{\kappa}{2}e^{\frac{\kappa}{2}s}|\nabla_s F|\,d\Gamma_x\, ,
\end{align}
where $|\nabla_x\cdot|$ is the cheeger gradient as in Section \ref{sss:sobolev_intro}.  In fact, we will see in Theorem \ref{t:br_basic_properties} that a space $X$ with bounded Ricci curvature is almost Riemannian, and thus the cheeger gradient and lipschitz slope agree, and so it will not matter if we are talking about the lipschitz slope or cheeger gradient of a function.  In particular, the two inequalities (\ref{e:bounded_ricci}), (\ref{e:bounded_ricci_weak}) turn out to be the same inequality on such spaces.

Now from the definition it is easy to see, computing as in Section \ref{ss:r2_r3}, that for any function $F\in L^2(P(X),\Gamma_{m})$ which is $\cF^T$-measurable we have the estimate
\begin{align}
|\nabla \int_{P(X)} F \,d\Gamma_x|^2 \leq e^{\frac{\kappa}{2}T}\int_{P(X)}\,|\nabla_0 F|^2+\int_0^T \frac{\kappa}{2}e^{\frac{\kappa}{2}s}|\nabla_s F|^2\, ds\cdot d\Gamma_x\, ,
\end{align} 
which corresponds to the estimate $(R3)$ in Part I.  The rest of this Section will be devoted to stating and explaining the basic results obtained in the paper about such spaces.  Let us begin with a listing of some basic properties of the geometry and heat flow on such a space.  In fact, many of these properties will follow or be improved on in later theorems, but regardless the properties are of such a basic and important nature that they are worth listing separately.

\begin{theorem}\label{t:br_basic_properties}
Let $(X,d,m)$ be a $BR(\kappa,\infty)$-space, then the following hold
\begin{enumerate}
\item (Almost Riemannian) For $u\in W^{1,2}(X,m)$ we have that $|\nabla u|(x)=|\Lip\, u|(x)$ for a.e. $x\in X$.
\item (Stochastic Completeness)  If $\mu$ is a probability measure on $X$ then $\Gamma_\mu$ is a probability measure on $P(X)$.
\item (Strong Feller Property) If $f\in L^2(X)$, then for each $t>0$ we have that $|\nabla H_tf|$ is uniformly bounded.
\item (Continuous Martingale Property) Let $F^t\in L^1(P(X),\Gamma_m)$ be a martingale, then for a representative of $F^t$ we have for every $\gamma\in P(X)$ that $F^t(\gamma)$ is a continuous function of $t$.
\item (H\"older Martingale Property) Let $F^t\in L^2(P(X),\Gamma_m)$ be a martingale induced by a function $F\in L^2(P(X),\Gamma_m)$ with uniformly bounded parallel gradients, then for a representative of $F^t$ we have for every $\gamma\in P(X)$ that $F^t(\gamma)$ is $C^\alpha$-H\"older continuous in $t$ for every $\alpha<\frac{1}{2}$.
\item (Existence of Parallel Translation Invariant Variations) Given any lipschitz function $f:X\to \dR$ let $F(\gamma)\equiv f(\gamma(0))$.  Then for $m-a.e.$ $x\in X$ and $\Gamma_x-a.e.$ $\gamma\in P_x(X)$ we have that $|\nabla f|(x)=|\nabla_0 F|(\gamma)$.
\end{enumerate}
\end{theorem}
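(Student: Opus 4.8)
Each item should come out of feeding a cleverly chosen cylinder function into the defining inequality (\ref{e:bounded_ricci}), or into the quadratic $(R3)$-type consequence recorded right after Definition \ref{d:bounded_ricci_mms} (and an $(R5)$-type consequence for the quadratic variation, obtained exactly as in Sections \ref{ss:r2_r3} and \ref{ss:r3_r5}), and then combining with standard facts about strongly local regular Dirichlet forms and their diffusions. I would prove $(6)$ first and read off $(1)$ from the same chain. Apply (\ref{e:bounded_ricci}) to $F(\gamma)\equiv f(\gamma(0))$ with $f$ Lipschitz: since $\gamma(0)=x$ for $\Gamma_x$-a.e.\ $\gamma$ one has $\int_{P(X)}F\,d\Gamma_x=f(x)$, and since $F$ ignores $\gamma(t)$ for $t>0$ one has $|\nabla_sF|\equiv 0$ for $s>0$, so (\ref{e:bounded_ricci}) becomes $|\Lip f|(x)\le\int_{P_x(X)}|\nabla_0F|\,d\Gamma_x$. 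On the other hand the relaxation construction of the parallel gradient in Section \ref{s:parallel_gradient_nonsmooth}, together with the elementary bound $|\partial_0F|(\gamma)\le|\Lip f|(\gamma(0))$ (only the variation direction at the base point contributes to $D_VF$), gives $|\nabla_0F|(\gamma)\le|\nabla f|(\gamma(0))$ for $\Gamma_m$-a.e.\ $\gamma$. Since always $|\nabla f|\le|\Lip f|$ $m$-a.e., for $m$-a.e.\ $x$ we get the chain
\begin{align*}
|\nabla f|(x)\le|\Lip f|(x)\le\int_{P_x(X)}|\nabla_0F|\,d\Gamma_x\le\int_{P_x(X)}|\nabla f|(\gamma(0))\,d\Gamma_x=|\nabla f|(x)\, ,
\end{align*}
forcing all inequalities to be equalities. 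The outer one is $(1)$ for Lipschitz $f$ (the $W^{1,2}$ case by approximation); equality of the two integrals together with $|\nabla_0F|\le|\nabla f|\circ e_0$ forces $|\nabla_0F|(\gamma)=|\nabla f|(\gamma(0))$ for $\Gamma_x$-a.e.\ $\gamma$, which is $(6)$.

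For $(2)$ and $(3)$ I would specialize the $(R3)$-type estimate to $F(\gamma)\equiv f(\gamma(t))$, for which $\int_{P(X)}F\,d\Gamma_x=H_tf(x)$ and, by $(1)$/$(6)$, $|\nabla_sF|(\gamma)=|\nabla f|(\gamma(t))$ for $0\le s\le t$; the $s$-integral then collapses to give the Bakry-Emery estimate $|\nabla H_tf|^2\le e^{\kappa t}H_t(|\nabla f|^2)$, i.e.\ the lower bound $\Ric\ge-\kappa$ in the Bakry-Emery/Lott-Villani-Sturm sense (this is the computation behind (\ref{e:BE_gradient2})). Then $(2)$ is the stochastic completeness of $RCD(-\kappa,\infty)$ spaces: the heat semigroup is conservative, $H_t1=1$, so the finite-dimensional distributions defining $\Gamma_\mu$ carry total mass one. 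For $(3)$ one self-improves this gradient estimate in the style of Bakry-Ledoux to a bound $|\nabla H_tf|^2\le C(\kappa,t)\big(H_t(f^2)-(H_tf)^2\big)\le C(\kappa,t)\,H_t(f^2)$, which gives a uniform bound once $H_t(f^2)$ is bounded (the general case via $H_t=H_{t/2}\circ H_{t/2}$).

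For $(4)$: because the Cheeger energy of $X$ is a strongly local regular Dirichlet form, the canonical process on $(P(X),\Gamma_m)$ is a diffusion with continuous paths, whose natural filtration therefore admits only continuous martingales; since for a cylinder function $F$ the martingale $F^t$ is visibly continuous (in fact piecewise smooth) in $t$, a density argument through Doob's maximal inequality yields a continuous representative for an arbitrary $L^2$-martingale, and then an arbitrary $L^1$-martingale by truncation. For $(5)$ I would use the pointwise quadratic variation estimate, the nonsmooth analogue of the one displayed in Section \ref{sss:char_ricci_quad_intro} and proved as in Section \ref{ss:r3_r5}, which bounds $[dF^t](\gamma)\le C$ whenever $F$ has uniformly bounded parallel gradients; hence $[F^t]$ grows at most linearly in $t$, and Burkholder-Davis-Gundy followed by Kolmogorov-Chentsov produces a representative that is $C^\alpha$-H\"older in $t$ for every $\alpha<\tfrac12$. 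The main obstacle is not any of these steps but the parallel-gradient calculus they all rest on: one must know that the lower semi-continuously refined operators $|\nabla_sF|$ obey the compatibility $|\nabla_0(f\circ e_0)|=|\nabla f|\circ e_0$ used above, and that (\ref{e:bounded_ricci}) genuinely upgrades to its $(R3)$- and $(R5)$-forms with these refined gradients --- facts supplied by Sections \ref{s:variation} and \ref{s:parallel_gradient_nonsmooth}.
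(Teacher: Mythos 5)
Your proposal is correct and follows essentially the same route as the paper: items (1) and (6) come from the chain of inequalities obtained by feeding $F(\gamma)=f(\gamma(0))$ into the defining estimate together with the pointwise bound $|\nabla_0F|\le|\nabla f|\circ e_0$ (the paper's Lemma \ref{l:parallel_grad_comp}); (2) and (3) come from the Bakry--Emery consequences in Theorem \ref{t:boundedricci_implies_BE} (the paper proves conservativeness by inserting cutoffs $u_r$ with $|\nabla u_r|\le r^{-1}$ into the variance estimate, which is the concrete form of the step you delegate to ``$RCD(-\kappa,\infty)$ is conservative''); and (5) is the paper's $[dF^t]\le C$ bound plus moment estimates plus Kolmogorov, with your Burkholder--Davis--Gundy step replacing the paper's explicit It\^o induction on $\int|F^t-F^s|^{2k}$.

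The one place where you gloss over genuine content is item (4): the continuity of $t\mapsto F^t(\gamma)$ for a cylinder function $F$ and a \emph{fixed} continuous $\gamma$ is not ``visible,'' and it is certainly not piecewise smooth, since $F^t(\gamma)=\int v(y)\,\rho_{t_{k+1}-t}(\gamma(t),dy)$ moves both the time parameter and the base point $\gamma(t)$, and $\gamma$ is merely continuous. The paper needs the joint $W_2$-continuity of $(s,x)\mapsto\rho_s(x,dy)$ (Lemma \ref{l:heatkernel_continuity}), which rests on the Lipschitz contraction $W_2(\rho_s(x_0,\cdot),\rho_s(x_1,\cdot))\le e^{\kappa s}d(x_0,x_1)$ coming from the $RCD(-\kappa,\infty)$ property established via (1)--(3), together with the Feller property to know the inner integral $v$ is continuous with compact support. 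Likewise, the blanket assertion that the natural filtration of a continuous diffusion ``admits only continuous martingales'' is not used by the paper and should not be taken as a black box; the density-plus-Doob argument you give as backup is the actual proof, but it only works once the cylinder case has been genuinely established as above.
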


The first four properties above are fundamentally properties of lower Ricci curvature, see in particular Sections \ref{ss:bounded_implies_lower} and \ref{ss:martingale_lowerricci}.  The last two properties stated however are fundamentally properties of bounded Ricci curvature.  A typical application of Theorem \ref{t:br_basic_properties}.5 is to a cylinder function $F$.  Recall then that the induced martingale $F^t\in L^2(P(X),\Gamma_m)$ is the decomposition of $F$ obtained by projecting $F$ to its $\cF^t$-measurable pieces.  Then Theorem \ref{t:br_basic_properties}.5 tells us that pointwise $F^t$ is H\"older continuous.  In fact we will see in Theorem \ref{t:ricci_quad_nonsmooth} that viewing $F^t$ as a mapping into $L^2(P(X),\Gamma_m)$ that the mapping is exactly $\frac{1}{2}$-H\"older continuous.  This continuity in time of such families of functions is a key property of bounded Ricci curvature, and for a typical metric-measure space is not true.  Theorem \ref{t:br_basic_properties}.6 can be interpreted roughly as the statement that for $a.e.$ curve $\gamma$ and almost every variation $V(0)$ of $\gamma(0)$, there exists a parallel translation invariant variation $V$ of $\gamma$ which extends $V(0)$.  This is clear on a smooth space, but in general highly nontrue on a general metric-measure space.

We have listed a few basic properties of bounded Ricci curvature, let us also remark that in Section \ref{ss:bounded_implies_lower_intro_nonsmooth} we see how bounded Ricci curvature implies a lower bound on the Ricci curvature in the sense of either Lott-Villani-Sturm or Bakry-Emery.  In fact we will see spaces with bounded Ricci curvature satisfy the stronger $RCD$ condition of \cite{Ambrosio_Ricci}.  Before we get there we will spend some time discussing the other characterizations of bounded Ricci curvature given in Part \ref{part:smooth} of the paper.

\subsection{Bounded Ricci Curvature, Martingales and Quadratic Variation}\label{ss:boundedricci_martingales_intro}

In Section \ref{s:ricci_martingales} we study the relationship between bounded Ricci curvature on a metric-measure space and the regularity of martingales.  Let us begin by observing that in the smooth case we focused on martingales on based path space $P_x(X)$.  In the nonsmooth case we will focus on martingales on the total path space $P(X)$.  There is in fact no fundamental difference in that the restriction of a martingale $F^t$ on $P(X)$ to each fiber $P_x(X)$ induces a martingale on $P_x(X)$.   However, because of the measure theoretic aspect it is apriori more appropriate to study martingales on the full path space. 

Recall that if $F\in L^1(P(X),\Gamma_m)$ then we can consider the one parameter family of functions $F^t$ obtained by the formula
\begin{align}
F^t(\gamma)\equiv \int_{P(M)} F_{\gamma_t}\,d\Gamma_{\gamma(t)}\equiv \int_{P(M)}F(\gamma_{[0,t]}\circ \sigma) d\Gamma_{\gamma(t)}\, ,
\end{align}
which is by definition the martingale induced by $F$.  If $F\in L^2(P(X),\Gamma_m)$ then $F^t$ agrees with the projection of $F$ into the closed subspace $L^2(P^t(M),\Gamma_m)$ of $\cF^t$-measurable functions.  Our first structural theorem in Section \ref{ss:martingale_lowerricci} is to show that under only a lower Ricci curvature assumption, in the sense of \cite{Ambrosio_Ricci}, that any martingale $F^t$ is a continuous function of time.  In particular, in combination with Section \ref{ss:bounded_implies_lower_intro_nonsmooth} we will prove Theorem \ref{t:br_basic_properties}.4.\\

Now for any martingale, recall that one defines the infinitesmal quadratic variation
\begin{align}
[dF^t]=\lim_{s\to 0} \frac{\big(F^{t+s}-F^{t}\big)^2}{s}\, ,
\end{align}
which exists at least in measure for almost every $t$.  We saw in Part \ref{part:smooth} of the paper how bounds on the Ricci curvature could be used to estimate the quadratic variation.  We see the same estimates hold in the nonsmooth case, and in fact are still equivalent to the definition of bounded Ricci curvature on a metric-measure space.  Specifically:

\begin{theorem}\label{t:ricci_quad_nonsmooth}
Let $(X,d,m)$ be a metric-measure space.  If $X$ is a $BR(\kappa,\infty)$ space then for each $F\in L^2(P(X),\Gamma_m)$ and a.e. $\gamma\in P(X)$ we have 
\begin{align}\label{e:ricci_quad_est}
\sqrt{[dF^t]}(\gamma)&\leq \int_{P(X)} |\nabla_t F|(\gamma_{[0,t]}\circ \sigma)+\int_t^{\infty}\frac{\kappa}{2}e^{\frac{\kappa}{2}(s-t)}|\nabla_{s} F|(\gamma_{[0,t]}\circ \sigma)\,d\Gamma_{\gamma(t)}\, .
\end{align}
Further, if $X$ is an almost Riemannian metric-measure space then the converse holds.  That is, if (\ref{e:ricci_quad_est}) holds then $X$ is a $BR(\kappa,\infty)$ space.
\end{theorem}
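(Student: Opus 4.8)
The plan is to deduce Theorem \ref{t:ricci_quad_nonsmooth} from Definition \ref{d:bounded_ricci_mms} by faithfully transcribing the smooth argument that derives $(R4)$ from $(R1)$ (i.e.\ from $(R2)$), which is indicated in Part I to be carried out in Section \ref{ss:r3_r5}. The key structural fact is the semigroup/martingale relation: for $F \in L^2(P(X),\Gamma_m)$ and fixed $\gamma$, the function $F_{\gamma_t}$ on $P_{\gamma(t)}(X)$ satisfies $F^t(\gamma) = \int_{P(X)} F_{\gamma_t}\, d\Gamma_{\gamma(t)}$, so that $F^t$ is exactly the average of the ``tail function'' $F_{\gamma_t}$ against the Wiener measure based at $\gamma(t)$. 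Hence the infinitesimal quadratic variation $[dF^t](\gamma)$ should be identified — by a computation using the Markov property of the diffusion measures and the isometry $\|F\|_{L^2}^2 = \sum \|F^{t_{k+1}}-F^{t_k}\|_{L^2}^2$ — with the (lipschitz/Cheeger) slope squared of the map $x \mapsto \int_{P(X)} F_{\gamma_t}\, d\Gamma_x$ evaluated at $x = \gamma(t)$. More precisely I would show $\sqrt{[dF^t]}(\gamma) = |\Lip_x \int_X F_{\gamma_t}\, d\Gamma_x|\big|_{x=\gamma(t)}$ for a.e.\ $\gamma$, which reduces everything to applying \eqref{e:bounded_ricci} to the function $F_{\gamma_t}$.

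First I would set up the precise measure-theoretic statement: use the Markov property to write, for small $s>0$,
\begin{align}
\int_{P(X)} \big(F^{t+s}-F^t\big)^2\, d\Gamma_x
 = \int_{P(X)} \Big( \int_{P(X)}\!\big|\,\tilde F^{s}_{\gamma_t} - \textstyle\int \tilde F^{s}_{\gamma_t}\,\big|^2 d\Gamma_{\gamma(t)}\Big) d\Gamma_x\, ,
\end{align}
and extract from this, after dividing by $s$ and letting $s\to 0$ (invoking the convergence-in-measure statement for $[dF^t]$ recalled in the excerpt), the pointwise identity relating $[dF^t](\gamma)$ to the squared slope of the heat-averaged tail function at the point $\gamma(t)$. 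Here one must be careful that this limit exists a.e.\ $\gamma$ and identifies with the slope; on a general weakly Riemannian space this uses that $E_X$ is a regular Dirichlet form with kernel $\rho_t$, together with the standard fact that the $L^2$-derivative of $t\mapsto H_tg$ near a point is controlled by $|\nabla g|^2$. Once this identification is in place, Definition \ref{d:bounded_ricci_mms} applied to $F_{\gamma_t}$ — noting $|\nabla_s F_{\gamma_t}|(\sigma) = |\nabla_{t+s}F|(\gamma_{[0,t]}\circ\sigma)$ by the translation-covariance of the parallel gradients — yields exactly \eqref{e:ricci_quad_est}, after the change of variables $s \mapsto s-t$ in the integral and using $|\Lip\, u| = |\nabla u|$ on almost Riemannian spaces (which holds here by Theorem \ref{t:br_basic_properties}, though for the forward direction one only needs the slope, so this is not even required).

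For the converse, assume $X$ is almost Riemannian and \eqref{e:ricci_quad_est} holds for all $F \in L^2(P(X),\Gamma_m)$. I would specialize to cylinder functions $F \equiv e_{\bt}^* u$ with $\bt$ a partition starting at $t_1 = 0$, so that the restriction $F_{\gamma_0}$ is just $F$ itself viewed on $P_x(X)$, and evaluate \eqref{e:ricci_quad_est} at $t=0$. Then $\sqrt{[dF^0]}(\gamma)$ is the slope of $x \mapsto \int_X F\, d\Gamma_x$ at $x = \gamma(0)$, and since this slope agrees with the Cheeger gradient $|\nabla_x \int F\,d\Gamma_x|$ (almost Riemannian) and, by Theorem \ref{t:br_basic_properties}.1 again with the definition of $|\Lip|$, with the lipschitz slope $|\Lip_x \int F\,d\Gamma_x|$ a.e., estimate \eqref{e:ricci_quad_est} at $t=0$ becomes precisely \eqref{e:bounded_ricci}, i.e.\ the $BR(\kappa,\infty)$ condition. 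Extending from cylinder functions to all $F\in L^2$ uses density of $Cyl(X)$ and lower semicontinuity of the right-hand side (the parallel gradients being defined as lower semicontinuous refinements, per Section \ref{sss:expressions_parallel}).

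The main obstacle I expect is the rigorous identification of $[dF^t](\gamma)$ with the squared slope of the heat-averaged tail function at $\gamma(t)$ on a merely weakly Riemannian space — i.e.\ justifying the interchange of the $s\to 0$ limit with the spatial integrations and controlling the exceptional $\gamma$-null set uniformly in $t$. In the smooth case this is classical stochastic calculus (It\^o's formula / the Clark--Ocone representation), but here one must work purely with the Dirichlet-form heat kernel $\rho_t$ and the convergence-in-measure definition of quadratic variation, which is why Theorem \ref{t:ricci_quad_nonsmooth} is naturally placed after the martingale-continuity results of Section \ref{ss:martingale_lowerricci} that it presumably relies on. A secondary technical point is matching the integration endpoint: in the smooth $\cF^T$-measurable statement $(R4)$ the tail integral runs to $T$, whereas here $F$ is an arbitrary $L^2$ function and the integral runs to $\infty$ — this requires knowing $|\nabla_s F| \in L^1(ds\, d\Gamma)$ against the weight $e^{\kappa s/2}$, which should follow from the $BR$ estimate itself applied at successive times together with stochastic completeness (Theorem \ref{t:br_basic_properties}.2).
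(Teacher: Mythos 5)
Your overall strategy is the same as the paper's: the key lemma is the identification, for cylinder functions, of $[dF^t](\gamma)$ with $\big|\nabla_x \int_{P(X)} F_{\gamma_t}\,d\Gamma_x\big|^2(\gamma(t))$, obtained exactly as you sketch by writing $F^{t+s}$ via the heat kernel, dividing by $s$, and recognizing the limit as the Dirichlet-form energy measure $[w_0]$, which is then identified with $|\nabla w_0|^2 m$ using the result of Ambrosio--Gigli--Savar\'e for weakly Riemannian spaces. The forward direction then applies the $BR(\kappa,\infty)$ inequality to $F_{\gamma_t}$, and the converse is read off at $t=0$ from cylinder functions, just as you propose. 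So the architecture is right and the technical obstacle you name (justifying the $s\to 0$ limit on a merely weakly Riemannian space) is the correct one, resolved in the paper by the energy-measure theorem.

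However, there is one genuine gap in your argument, at the step where you write ``noting $|\nabla_s F_{\gamma_t}|(\sigma) = |\nabla_{t+s}F|(\gamma_{[0,t]}\circ\sigma)$ by the translation-covariance of the parallel gradients.'' This identity is \emph{not} available on a nonsmooth space, and the paper explicitly flags it as the subtle point of the proof. What is clear is the corresponding identity for the \emph{slopes}, $|\partial_s F_{\gamma_t}| = |\partial_{t+s} F|$, since the slope is defined pointwise through piecewise-geodesic approximations. But the parallel gradient is the lower semicontinuous refinement of the slope, and the refinement for $F_{\gamma_t}$ is taken in $L^2(P_{\gamma(t)}(X),\Gamma_{\gamma(t)})$ while the refinement for $F$ is taken in $L^2(P(X),\Gamma_m)$; these are minimizations over different approximating sequences against different measures, so the equality may a priori be destroyed. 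The paper's workaround is to first establish the estimate with the slopes $|\partial_t F| + \int_t^\infty \tfrac{\kappa}{2}e^{\frac{\kappa}{2}(s-t)}|\partial_s F|$ on the right-hand side for cylinder functions, and then choose a sequence of cylinder functions $F_j\to F$ for which the slope expressions converge strongly in $L^2$ and pointwise a.e.\ to the gradient expression (this is exactly what the lower semicontinuous refinement construction of Theorem \ref{t:minimal_upper_gradient} provides), while simultaneously arranging $[dF_j^t]\to[dF^t]$ a.e.\ after passing to a subsequence. Passing to the limit in the slope inequality then yields \eqref{e:ricci_quad_est}. You should replace the asserted translation-covariance with this two-step argument; as written, that step would fail (or at least is unjustified) on a general weakly Riemannian space.
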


Note that in the smooth case the above corresponds to Theorem \ref{t:pointwise_r4}, which was the pointwise version of $(R4)$.  From this one can immediately conclude for general metric measure spaces the estimates $(R4)$, $(R5)$ and the pointwise version 
\begin{align}
[dF^t](\gamma)&\leq e^{\frac{\kappa}{2}(T-t)}\int_{P(X)} |\nabla_t F|^2+\int_t^{\infty}\frac{\kappa}{2}e^{\frac{\kappa}{2}(s-t)}|\nabla_{s} F|^2\,d\Gamma_{\gamma(t)}\, .
\end{align}

Notice the interesting application that this implies that $F^t$, viewed as a mapping into $L^2(P(X),\Gamma_m)$ is a $C^{\frac{1}{2}}$-H\"older mapping.  It is not hard to check that this is sharp.

We prove the above Theorem in Section \ref{ss:martingale_quad}, while in Section \ref{ss:martingale_holder} we use the result to prove Theorem \ref{t:br_basic_properties}.5.  That is, for a martingale on $P(X)$ induced by a sufficiently nice function on $P(X)$, in particular a cylinder function, we will see that the martingale is not only continuous in time but H\"older continuous in time.

\subsection{Bounded Ricci Curvature and the Analysis on Path Space}\label{ss:ricci_analysis_intro_nonsmooth}

In Section \ref{s:bounded_ricci_analysis} we analyze how bounded Ricci curvature in the sense of Definition \ref{d:bounded_ricci_mms} can be used to do analysis on path space, and specifically we will define the Ornstein-Uhlenbeck operator on general metric-measure spaces and see how the Ricci curvature controls the operator in a way analogous to the smooth case.  As in the case of martingales, it will be more convenient to view the Ornstein-Uhlenbeck operator $L$ as acting on total path space $P(X)$, as opposed to based path space $P_x(X)$.  Also as in the martingale case, there is no fundamental difference as the restriction of $L$ to each fiber $P_x(X)$ will agree with the based path space Ornstein-Uhlenbeck operator.\\

Now recall in Section \ref{s:smooth_bounded_ricci_intro} we showed on a smooth metric measure space that bounded Ricci curvature is equivalent to spectral gap and log-sobolev inequalities of the Ornstein-Uhlenbeck operator.  The primary goal of Section \ref{s:bounded_ricci_analysis} is to prove that if $X$ is a $BR(\kappa,\infty)$-space, then this still implies the same spectral gap and log-sobolev inequalities on the Ornstein-Uhlenbeck operator on path space.

Of course, the first point we must address in Section \ref{s:bounded_ricci_analysis} is how to construct the Ornstein-Uhlenbeck operator on the path space of a general metric measure space.  Such an operator has only been constructed on smooth metric measure spaces.  The key technical point to this construction is the need to construct the $H^1_0$-gradient on path space.  We proceed in a manner which is motivated by Section \ref{ss:parallel_grad_intro_nonsmooth} and Proposition \ref{p:parallel_H1_relation}.  Namely, we begin in Section \ref{sss:H1_slope} by defining the $H^1_0$-slope of a cylinder function on path space $P(X)$ by the formula
\begin{align}
|\partial F|^2_{H^1_0}(\gamma) \equiv \int_0^\infty |\partial_s F|^2 ds\, ,
\end{align}
where $|\partial_s F|$ is the parallel slope defined in Section \ref{ss:parallel_slope}.  To define the $H^1_0$-gradient we then let $|\nabla F|_{H^1_0}$ be the lower semi-continuous refinement of $|\partial F|_{H^1_0}$ in $L^2(P(X),\Gamma_m)$, see Section \ref{sss:H10_gradient}.  Thus, we can define the energy function
\begin{align}
E[F]\equiv \int_{P(X)} |\nabla F|^2_{H^1_0} d\Gamma_m\, ,
\end{align}
on $L^2(P(X),\Gamma_m)$.  We see in Theorem \ref{t:Dirichlet_form_pathspace} that among other properties this defines a closed, convex and lower-semicontinuous Dirichlet form on $L^2(P(X),\Gamma_m)$.  In particular, by standard theory there is a dense subset $\cD(L)$ such that we can define the minimal gradient $\nabla E\equiv L$.  This defines for us the Ornstein-Uhlenbeck operator on $P(X)$, see Section \ref{ss:OU_nonsmooth_definition} for more details.  Notice that on a general metric measure space the Ornstein-Uhlenbeck operator may not be linear.  More specifically, linearity of the Ornstein-Uhlenbeck operator is equivalent to the energy functional $E[F]$ satisfying the parallelogram law.  In Section \ref{ss:OU_nonsmooth_definition} we will show that on a smooth metric-measure space this operator does in fact agree with the classic Ornstein-Uhlenbeck operator.

Now we can state the main theorem of this Section, namely that on a metric-measure space with bounded Ricci curvature the Ornstein-Uhlenbeck operator satisfies a spectral gap and log-sobolev inequality.

\begin{theorem}\label{t:OU_boundedricci}
Let $(X,d,m)$ be a $BR(\kappa,\infty)$ space, then the following hold:
\begin{enumerate}
\item (Spectral Gap)  Let $F\in L^2(P(X),\Gamma_m)$ be a $\cF^T$-measurable function, then we have for a.e. $x\in X$ the Poincare Estimate
\begin{align}
\int_{P_x(X)}\bigg(F-\int_{P_x(X)} F\bigg)^2 d\Gamma_x \leq e^{\frac{\kappa}{2}T}\int_{P_x(X)}\bigg(\int_0^T \cosh(\frac{\kappa}{2}t)|\nabla_t F|^2\, dt\bigg)\, d\Gamma_x\leq \frac{e^{\kappa T}+1}{2} \int_{P(X)} |\nabla F|^2_{H^1_0} d\Gamma_x\, .
\end{align}

\item (Log-Sobolev) Let $F\in L^2(P(X),\Gamma_m)$ be a $\cF^T$-measurable function, then for a.e. $x\in X$ we have the log-Sobolev estimate
\begin{align}
\int_{P_x(X)}F^2\ln F^2 d\Gamma_x - \bigg(\int_{P_x(X)} F\bigg)^2\ln\bigg(\int_{P_x(X)} F\bigg)^2 &\leq 2e^{\frac{\kappa}{2}T}\int_{P_x(X)}\bigg(\int_0^T \cosh(\frac{\kappa}{2}t)|\nabla_t F|^2\, dt\bigg)\, d\Gamma_x\notag\\
&\leq \big(e^{\kappa T}+1\big) \int_{P_x(X)} |\nabla F|^2_{H^1_0} d\Gamma_x\, .
\end{align}
\end{enumerate}
\end{theorem}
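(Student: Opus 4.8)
The plan is to establish both inequalities \emph{fiberwise}, i.e. at a.e. $x\in X$ on the based path space $(P_x(X),\Gamma_x)$, which is a probability space by stochastic completeness (Theorem \ref{t:br_basic_properties}.2). Fix such an $x$ and let $F\in L^2(P(X),\Gamma_m)$ be $\cF^T$-measurable, so that its induced martingale satisfies $F^T=F$, the parallel gradients $|\nabla_sF|$ vanish for $s>T$, and $|\nabla F|^2_{H^1_0}=\int_0^T|\nabla_sF|^2\,ds$ by the definition of the $H^1_0$-gradient (Section \ref{sss:H10_gradient}). The single input doing all of the geometric work is Theorem \ref{t:ricci_quad_nonsmooth}: since $X$ is $BR(\kappa,\infty)$ it gives, for a.e. $\gamma\in P_x(X)$, a pointwise bound on $\sqrt{[dF^t]}(\gamma)$, and a Cauchy--Schwarz on the probability measure $\Gamma_{\gamma(t)}$ (choosing the weight so that the constant $\int_t^T\frac{\kappa}{2}e^{\frac{\kappa}{2}(s-t)}\,ds=e^{\frac{\kappa}{2}(T-t)}-1$ appears) upgrades this to the squared form $[dF^t](\gamma)\le e^{\frac{\kappa}{2}(T-t)}\int_{P(X)}\big(|\nabla_tF|^2+\int_t^T\tfrac{\kappa}{2}e^{\frac{\kappa}{2}(s-t)}|\nabla_sF|^2\,ds\big)(\gamma_{[0,t]}\circ\sigma)\,d\Gamma_{\gamma(t)}(\sigma)$.

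For the spectral gap I would start from the martingale isometry $\int_{P_x(X)}\big(F-\int_{P_x(X)}F\,d\Gamma_x\big)^2 d\Gamma_x=\int_{P_x(X)}[F^T]\,d\Gamma_x=\int_0^T\int_{P_x(X)}[dF^t]\,d\Gamma_x\,dt$, insert the pointwise bound above, and collapse the inner $\Gamma_{\gamma(t)}$-integral using the Markov (concatenation) property of the diffusion measures, $\int_{P_x(X)}\int_{P(X)}H(\gamma_{[0,t]}\circ\sigma)\,d\Gamma_{\gamma(t)}(\sigma)\,d\Gamma_x(\gamma)=\int_{P_x(X)}H\,d\Gamma_x$ for $H\ge 0$, applied to $H=|\nabla_sF|^2$. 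Writing $g_s:=\int_{P_x(X)}|\nabla_sF|^2\,d\Gamma_x$ this leaves $\int_0^T e^{\frac{\kappa}{2}(T-t)}\big(g_t+\int_t^T\frac{\kappa}{2}e^{\frac{\kappa}{2}(s-t)}g_s\,ds\big)dt$; interchanging the order of integration, the coefficient of $g_t$ is $e^{\frac{\kappa}{2}(T-t)}+\int_0^t\frac{\kappa}{2}e^{\frac{\kappa}{2}(T-r)}e^{\frac{\kappa}{2}(t-r)}\,dr=\tfrac12 e^{\frac{\kappa}{2}(T-t)}+\tfrac12 e^{\frac{\kappa}{2}(T+t)}=e^{\frac{\kappa}{2}T}\cosh(\tfrac{\kappa}{2}t)$, which is exactly the middle term of part (1). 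Bounding $e^{\frac{\kappa}{2}T}\cosh(\tfrac{\kappa}{2}t)\le e^{\frac{\kappa}{2}T}\cosh(\tfrac{\kappa}{2}T)=\tfrac12(e^{\kappa T}+1)$ for $0\le t\le T$ and using $\int_0^T|\nabla_tF|^2\,dt=|\nabla F|^2_{H^1_0}$ produces the rightmost term, which via the Dirichlet form defining $L$ is exactly the spectral gap $\lambda_1(L)\ge 2(e^{\kappa T}+1)^{-1}$.

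For the log-Sobolev estimate the strategy is the same, but the quadratic isometry is replaced by the entropy identity $\int_{P_x(X)}\Phi\ln\Phi\,d\Gamma_x-\big(\int_{P_x(X)}\Phi\,d\Gamma_x\big)\ln\big(\int_{P_x(X)}\Phi\,d\Gamma_x\big)=\tfrac12\int_{P_x(X)}\int_0^T\frac{[d\Phi^t]}{\Phi^t}\,dt\,d\Gamma_x$ for $\Phi=F^2$, which is legitimate here because martingales on $P(X)$ are continuous in $t$ (Theorem \ref{t:br_basic_properties}.4) and have a well-defined quadratic variation. One then needs $[d\Phi^t]\le 4\,\Phi^t\cdot R^t$, where $R^t$ denotes the $\cF^t$-conditional expectation of $e^{\frac{\kappa}{2}(T-t)}\big(|\nabla_tF|^2+\int_t^T\tfrac{\kappa}{2}e^{\frac{\kappa}{2}(s-t)}|\nabla_sF|^2\,ds\big)$; morally this is a conditional Cauchy--Schwarz of Capitaine--Hsu--Ledoux type, obtained by writing the martingale part of $(F^2)^t$ through that of $F^t$ and then inserting the pointwise bound of Theorem \ref{t:ricci_quad_nonsmooth}. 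After $\Phi^t$ cancels, the Markov property and the same order-of-integration computation as for the spectral gap yield the middle term $2e^{\frac{\kappa}{2}T}\int_{P_x(X)}\int_0^T\cosh(\tfrac{\kappa}{2}t)|\nabla_tF|^2\,dt\,d\Gamma_x$ and hence the constant $e^{\kappa T}+1$. The most robust way to make this rigorous on a nonsmooth space, where neither a Clark--Ocone representation nor linearity of $L$ is available, is to first reduce to cylinder functions and discretize the filtration $\cF^{t_0}\subset\cdots\subset\cF^{t_N}$ so that $\Gamma_x$ becomes a finite Markov chain of heat-kernel steps; on each slice of width $\tau$ one applies the sharp heat-kernel log-Sobolev inequality \eqref{e:hk_laplace_logsob} with constant $2\kappa^{-1}(e^{\kappa\tau}-1)$ --- available because $BR(\kappa,\infty)$ forces $\Ric+\nabla^2f\ge-\kappa$ (Section \ref{ss:bounded_implies_lower_intro_nonsmooth}) and hence the Bakry--Ledoux estimate --- together with the parallel-gradient contraction between consecutive slices, and finally passes to the limit $N\to\infty$, where the telescoped exponential sums converge to the $\cosh$-weighted integral.

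I expect the log-Sobolev limiting step to be the main obstacle. One must track how the heat-kernel log-Sobolev inequalities, which control a single time-slice in terms of the honest gradient on $X$, convert successive slices into the parallel gradients $|\nabla_sF|$ while keeping all constants sharp; and since $|\nabla_sF|$ and $|\nabla F|_{H^1_0}$ are defined as lower-semicontinuous $L^2$-relaxations rather than by pointwise formulas (Section \ref{sss:H10_gradient}), one has to verify that the discretized right-hand sides converge to the relaxed quantities and not merely to the larger slopes, and that this limit commutes with the entropy functional. The spectral gap, by contrast, is essentially a bookkeeping consequence of Theorem \ref{t:ricci_quad_nonsmooth}, the Markov property of the diffusion measures, and the elementary integral identity that manufactures $\cosh(\tfrac{\kappa}{2}t)$, so the only genuine work there is the reduction to $\cF^T$-measurable functions and the already-noted fact (Theorem \ref{t:ricci_quad_nonsmooth}) that on almost Riemannian spaces such estimates are equivalent to the $BR$ condition.
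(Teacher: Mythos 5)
Your core argument is the paper's: the martingale isometry (resp.\ the Ito entropy identity for $H^t=(F^2)^t$), the pointwise quadratic-variation bound of Theorem \ref{t:ricci_quad_nonsmooth}, Cauchy--Schwarz on the probability measure $\Gamma_{\gamma(t)}$, the Markov/concatenation property, and the order-of-integration computation that manufactures $e^{\frac{\kappa}{2}T}\cosh(\frac{\kappa}{2}t)$ are exactly the steps in the paper's proof of the global estimate. Your discretization/tensorization fallback for the log-Sobolev step is a detour the paper does not need: the key inequality $[dH^t]\le H^t\cdot e^{\frac{\kappa}{2}(T-t)}\int\big(|\nabla_tF|^2+\int_t^T\frac{\kappa}{2}e^{\frac{\kappa}{2}(s-t)}|\nabla_sF|^2\big)\,d\Gamma_{\gamma(t)}$ is obtained directly by applying Theorem \ref{t:ricci_quad_nonsmooth} to the function $F^2$, using $|\nabla_tF^2|\le 2|F|\,|\nabla_tF|$, and then Cauchy--Schwarz --- no Clark--Ocone representation, no slicing, and no per-slice heat-kernel log-Sobolev inequalities are required, so the worry you raise about sharp constants surviving a limit $N\to\infty$ does not arise.

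The one place you genuinely diverge from the paper --- and where your write-up has a gap --- is the passage to ``a.e.\ $x$''. You set everything up fiberwise on $(P_x(X),\Gamma_x)$ from the outset, but on a nonsmooth space the quantities $|\nabla_sF|$, $|\nabla F|_{H^1_0}$ and $[dF^t]$ are defined only as lower-semicontinuous relaxations in $L^2(P(X),\Gamma_m)$, and Theorem \ref{t:ricci_quad_nonsmooth} is an a.e.-$\gamma$ statement with respect to $\Gamma_m$; it is not automatic that these objects and inequalities disintegrate correctly over the individual fibers $P_x(X)$, and the Ito entropy identity must likewise be justified fiberwise. The paper circumvents this by first proving the global inequality on $(P(X),\Gamma_m)$ (Theorem \ref{t:OU_global_logsob}) and then localizing: apply the global estimate to $G(\gamma)=F(\gamma)\varphi(\gamma(0))$ for a nonnegative lipschitz $\varphi$, note that the $\cF^0$-measurable factor satisfies $|\partial_tG|=\varphi\,|\partial_tF|$ for $t>0$ so it passes through all the gradient terms, and then let $\varphi$ run through approximations of characteristic functions of open sets to extract the inequality for a.e.\ $x$. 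You should either adopt this global-then-localize scheme or supply the missing disintegration argument for the relaxed gradients.
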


See also Theorem \ref{t:OU_global_logsob} for the global version of the above Theorem.

\subsection{Bounded Ricci Curvature Implies Lower Ricci Curvature}\label{ss:bounded_implies_lower_intro_nonsmooth}

With the basic properties established in Theorem \ref{t:br_basic_properties}, the next reasonable question is the about the relationship on metric measure spaces of bounded Ricci curvature and lower Ricci curvature.  It a consequence of Theorem \ref{t:smooth_bounded_ricci} that on a smooth metric measure space that bounded Ricci curvature in the sense of Definition \ref{d:bounded_ricci_mms} implies a lower Ricci curvature bound.  We ask in this Section if a general metric measure space with bounded Ricci curvature in the sense of Definition \ref{d:bounded_ricci_mms} has lower Ricci curvature bounds in the sense of Bakry-Emery \cite{BakryEmery_diffusions}, Lott-Villani-Sturm \cite{LV_OptimalRicci},\cite{Sturm_GeomMetricMeasSpace} or more generally Ambrosio-Gigli-Savare \cite{Ambrosio_Ricci}.  Our main result in this direction is an affirmative answer to all.  Let us begin with the basic results for the Bakry-Emery estimates:
\begin{theorem}\label{t:boundedricci_implies_BE}
Let $(X,d,m)$ be a $BR(\kappa,\infty)$-space, then $W^{1,\infty}(X,m)\cap \cD(\Delta_X)$ is dense in $L^2(X,m)$ and for all $u,w\in W^{1,\infty}(X,m)\cap \cD(\Delta_X)$ with $w\geq 0$ we have:
\begin{enumerate}
\item $\int_X \Delta_X w\cdot |\nabla u|^2\,dm\geq -2\kappa\int_X w|\nabla u|^2\,dm$.
\item $|\nabla H_t u|\leq e^{\frac{\kappa}{2}t}H_t|\nabla u|$.
\item $|\nabla H_t u|^2(x) \leq \frac{e^{\kappa t}}{\kappa^{-1}(e^{\kappa t}-1)}\bigg(H_tu^2(x)-(H_tu)^2(x)\bigg)$.
\item $H_tu^2(x)-(H_tu)^2(x) \leq \kappa^{-1}\left(e^{\kappa t}-1\right)H_t|\nabla u|^2(x)$.
\item $\int_M u^2\ln u^2 \rho_t(x,dy) \leq 2 \kappa^{-1}\left(e^{\kappa t}-1\right)\int_M |\nabla u|^2 \rho_t(x,dy)$ if $\int_M u^2\,\rho_t=1$.
\end{enumerate}
\end{theorem}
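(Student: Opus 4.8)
The plan is to reduce the whole theorem to the single gradient estimate (2), which I would extract from Definition \ref{d:bounded_ricci_mms} by feeding it the simplest possible cylinder functions, and then invoke the classical web of equivalences among Bakry--\'Emery type inequalities to deduce (1), (3), (4), (5). The density assertion will come for free from (3) together with the Markov property of the heat semigroup.

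\textbf{Extracting the gradient estimate.} Fix $u\in \mathrm{Lip}_c(X)$ and $t>0$ and apply the inequality (\ref{e:bounded_ricci}) --- or rather its Cheeger-gradient consequence (\ref{e:bounded_ricci_weak}) and the quadratic estimate recorded immediately after Definition \ref{d:bounded_ricci_mms} --- to the cylinder function $F(\gamma)\equiv u(\gamma(t))$. On the left side the pushforward identity $e_{\{t\},*}\Gamma_x=\rho_t(x,\cdot)$ gives $\int_{P(X)}F\,d\Gamma_x=H_tu(x)$, so the left side is $|\nabla_x H_tu|(x)$, respectively $|\nabla_x H_tu|^2(x)$. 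The content is the identification of the parallel gradients of $F$: by the construction of Section \ref{s:parallel_gradient_nonsmooth} --- the time-shifted form of the statement proved there and summarized in Theorem \ref{t:br_basic_properties}.6 --- one has, for $\Gamma_m$-a.e. (hence for a.e. $x$ and $\Gamma_x$-a.e.) $\gamma$, that $|\nabla_s F|(\gamma)=|\nabla u|(\gamma(t))$ for $s\in[0,t]$ and $|\nabla_s F|(\gamma)=0$ for $s>t$. Since $1+\int_0^t\frac{\kappa}{2}e^{\frac{\kappa}{2}s}\,ds=e^{\frac{\kappa}{2}t}$, substituting collapses the right side of (\ref{e:bounded_ricci_weak}) to $e^{\frac{\kappa}{2}t}H_t|\nabla u|(x)$ and (taking $T=t$) the right side of the quadratic estimate to $e^{\kappa t}H_t|\nabla u|^2(x)$. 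This yields (2), together with its $L^2$ strengthening $|\nabla H_tu|^2\le e^{\kappa t}H_t|\nabla u|^2$, for every $u\in\mathrm{Lip}_c(X)$ and $t>0$ --- the exact analogue of the smooth computation of Section \ref{ss:r1_r2} that recovers (\ref{e:BE_gradient}).

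\textbf{From the gradient estimate to (1), (3)--(5), and density.} Since $X$ is weakly Riemannian, the Cheeger energy is a strongly local, regular Dirichlet form (Sections \ref{sss:cheeger_energy}, \ref{sss:diffusion_measure_intro}), and, using the almost-Riemannian property $|\nabla\cdot|=|\Lip\cdot|$ (Theorem \ref{t:br_basic_properties}.1), $\mathrm{Lip}_c(X)$ is a dense algebra obeying the usual chain and Leibniz rules; this puts us in the setting where the Bakry--\'Emery $\Gamma$-calculus equivalences apply. There the pointwise estimate $|\nabla H_tu|^2\le e^{\kappa t}H_t|\nabla u|^2$ self-improves and is equivalent to each of the weak Bochner inequality (1), the $L^1$ gradient bound (2), the reverse local gradient estimate (3), the local Poincar\'e inequality (4), and the local log-Sobolev inequality (5) for the kernels $\rho_t(x,\cdot)$; see \cite{BakryLedoux_logSov} and the review in Section \ref{s:smooth_lower_ricci}. (Alternatively (4) and (5) can be obtained directly, exactly as (1)--(2) were, by applying the Ornstein--Uhlenbeck spectral gap and log-Sobolev estimates of Theorem \ref{t:OU_boundedricci} with $T=t$ to $F(\gamma)=u(\gamma(t))$, mirroring the smooth derivations of (\ref{e:hk_laplace_spectralgap}) and (\ref{e:hk_laplace_logsob}).) For the density statement: if $u\in L^2\cap L^\infty(X,m)$ and $t>0$, then $H_tu\in\mathcal D(\Delta_X)$, $\|H_tu\|_\infty\le\|u\|_\infty$ by Markovianity of $H_t$ (or Theorem \ref{t:br_basic_properties}.2), and $|\nabla H_tu|$ is bounded by (3) (equivalently the Strong Feller property, Theorem \ref{t:br_basic_properties}.3), so $H_tu\in W^{1,\infty}(X,m)\cap\mathcal D(\Delta_X)$; as $H_tu\to u$ in $L^2$ as $t\to0$ and $L^2\cap L^\infty$ is dense in $L^2$, the claim follows.

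\textbf{Main obstacle.} The crux is the parallel-gradient computation in the second step. In the smooth case the stochastic parallel translation map makes $|\nabla_s F|(\gamma)=|\nabla u|(\gamma(t))$ transparent, but in the nonsmooth setting this identity is precisely what the apparatus of variations, parallel variations, parallel slopes, and their lower-semicontinuous refinement in Section \ref{s:parallel_gradient_nonsmooth} is designed to deliver; in particular one must ensure the refinement does not push the parallel slope below $|\nabla u|(\gamma(t))$, which is where the almost-Riemannian property and the absolute continuity of $\rho_t(x,\cdot)$ with respect to $m$ are needed. Once this identification is secured, the rest is bookkeeping plus the standard Bakry--\'Emery/Bakry--Ledoux machinery.
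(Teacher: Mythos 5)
Your proposal is correct and follows essentially the same route as the paper: test the defining inequality (and its quadratic form) on $F(\gamma)=u(\gamma(t))$, control the parallel gradients of such $F$, deduce the Bakry--\'Emery gradient estimate (2), and then run the standard interpolation/$\Gamma$-calculus argument (using the identification $[u]=|\nabla u|^2m$ of Theorem \ref{t:energy_measure}) to get (1), (3)--(5) and density. One small simplification over what you wrote: since the parallel gradients sit on the majorizing side of (\ref{e:bounded_ricci_weak}), only the upper bound $|\nabla_s F|\leq|\nabla u|(\gamma(t))$ is needed (this is the paper's Lemma \ref{l:parallel_grad_comp}, obtained by testing the lower semicontinuous refinement with the sequence $e_t^*u_j$), so the concern in your final paragraph about the refinement pushing the slope \emph{below} $|\nabla u|(\gamma(t))$ does not arise here.
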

\begin{remark}
Recall that we are using the convention that the heat flow $H_t$ is the flow generated by the infinitesmal generator $\frac{1}{2}\Delta_X$.  This notational convention is in contrast with most papers which discuss lower Ricci curvature, though is consistent with most papers that discuss the stochastic properties of path space.
\end{remark}

We will define more carefully in Section \ref{s:lowerricci_nonsmooth} the notion of lower Ricci curvature as introduced in \cite{Ambrosio_Ricci}, and denoted by $RCD(\kappa,\infty)$.  For now it will suffice to say that is related to the {\it strong} convexity of the entropy functional $Ent_m[\rho m]\equiv \int_X \rho\ln\rho\,dm$ on the space of probability measures $\cP_2(X)$ on $X$, and that it is a strictly stronger notion of a lower Ricci curvature bound than that of Lott-Villani-Sturm introduced in \cite{LV_OptimalRicci},\cite{Sturm_GeomMetricMeasSpace}.  Now using \cite{Ambrosio_BE_vs_LVS}, Theorem \ref{t:boundedricci_implies_BE}, and Theorem \ref{t:br_basic_properties}.1 we can conclude the following:

\begin{theorem}\label{t:boundedricci_implies_LVS}
Let $(X,d,m)$ be a $BR(\kappa,\infty)$ space, then we have that $X$ is a $RCD(-\kappa,\infty)$ space.
\end{theorem}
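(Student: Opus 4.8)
The plan is to obtain $RCD(-\kappa,\infty)$ as a consequence of the by-now standard equivalence, due to Ambrosio--Gigli--Savar\'e \cite{Ambrosio_BE_vs_LVS} (see also \cite{Ambrosio_Ricci} and Section \ref{s:lowerricci_nonsmooth}), between the weak infinite-dimensional Bakry--\'Emery condition $BE(K,\infty)$ and the condition $RCD(K,\infty)$ on a metric measure space which (a) satisfies the standing assumptions (\ref{e:mms_assumptions_intro}), (b) is infinitesimally Hilbertian, and (c) enjoys the Sobolev-to-Lipschitz property. Accordingly the argument has three ingredients: first, placing a $BR(\kappa,\infty)$ space into the class of spaces to which this equivalence applies; second, verifying $BE(-\kappa,\infty)$ with $K=-\kappa$; third, invoking the equivalence.

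For the structural hypotheses: infinitesimal Hilbertianity is immediate, since by Definition \ref{d:bounded_ricci_mms} a $BR(\kappa,\infty)$ space is weakly Riemannian, i.e. $W^{1,2}(X,m)$ is a Hilbert space and $\Delta_X$ is linear. The Sobolev-to-Lipschitz property --- that every $u\in W^{1,2}(X,m)$ with $|\nabla u|\le 1$ $m$-a.e. has a $1$-Lipschitz representative --- I would deduce from Theorem \ref{t:br_basic_properties}.1: that theorem gives $|\nabla u|=|\Lip\, u|$ $m$-a.e., and since $(X,d)$ is a length space by (\ref{e:mms_assumptions_intro}), a standard telescoping/maximal-function argument along almost length-minimizing curves then produces the Lipschitz representative. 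If the particular formulation of the equivalence theorem also requires the $L^\infty$-to-$\Lip$ regularization of the heat semigroup, this is supplied by the strong Feller property of Theorem \ref{t:br_basic_properties}.3, while any volume-growth restriction on $m$ needed to make the framework of \cite{Ambrosio_Ricci} applicable should be extracted from the Bakry--\'Emery gradient estimate together with (\ref{e:mms_assumptions_intro}).

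For the curvature condition itself, Theorem \ref{t:boundedricci_implies_BE} is exactly what is needed: it first asserts that $W^{1,\infty}(X,m)\cap \cD(\Delta_X)$ is dense in $L^2(X,m)$, so this is a legitimate algebra of test functions; its item (1), namely $\int_X \Delta_X w\cdot |\nabla u|^2\,dm \ge -2\kappa\int_X w\,|\nabla u|^2\,dm$ for all such $u,w$ with $w\ge 0$, is the weak (integrated) Bochner inequality defining $BE(-\kappa,\infty)$; and its item (2), $|\nabla H_t u|\le e^{\frac{\kappa}{2}t}H_t|\nabla u|$, is the equivalent semigroup gradient estimate. Hence $X$ satisfies $BE(-\kappa,\infty)$.

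Feeding the structural hypotheses and $BE(-\kappa,\infty)$ into \cite{Ambrosio_BE_vs_LVS} then yields that $X$ is an $RCD(-\kappa,\infty)$ space, and in particular a $CD(-\kappa,\infty)$ space in the sense of Lott--Villani--Sturm. I expect the only genuine difficulty to be bookkeeping in the first ingredient: checking that the precise hypotheses in the cited equivalence --- the Sobolev-to-Lipschitz property, and whatever completeness, regularization or volume-growth conditions accompany it --- are all produced by Theorem \ref{t:br_basic_properties} and the standing assumptions (\ref{e:mms_assumptions_intro}). The analytic substance of the result, the Bakry--\'Emery estimate, is already contained in Theorem \ref{t:boundedricci_implies_BE}, which in turn rests on the path-space inequality of Definition \ref{d:bounded_ricci_mms}.
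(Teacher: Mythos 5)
Your proposal is correct and follows essentially the same route as the paper: the paper also deduces the result by combining the Bakry--\'Emery estimates of Theorem \ref{t:boundedricci_implies_BE} with the structural properties from Theorem \ref{t:br_basic_properties} (the paper explicitly invokes stochastic completeness, Theorem \ref{t:br_basic_properties}.2, alongside the almost Riemannian property) and then citing the Ambrosio--Gigli--Savar\'e equivalence. Your write-up merely spells out the hypotheses of that equivalence more explicitly than the paper's one-line conclusion does.
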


\subsection{$d$-dimensional Bounded Ricci Curvature}\label{ss:d_ricci_intro_nonsmooth}

In analogy with Section \ref{ss:char_d_ricci_intro} we define and study the notion of a $d$-dimensional Ricci curvature bound for a metric measure space.  Motivated by Theorem \ref{t:smooth_bounded_d_ricci} and using the structure of Section \ref{ss:parallel_grad_intro_nonsmooth} we make the following definition:

\begin{definition}\label{d:bounded_d_ricci_mms}
Let $(X,d,m)$ be a metric measure space which satisfies (\ref{e:mms_assumptions_intro}) and which is weakly Riemannian.  Then we say that $X$ is a $BR(\kappa,d)$ space if for every function $F\in L^2(P(X),\Gamma_m)$ which is $\cF^T_t$-measurable we have the inequality
\begin{align}
|\Lip_x \int_{P(X)} F \,d\Gamma_x|^2+\frac{e^{\kappa t}-1}{\kappa d}\,\big|\Delta_X \int_{P(X)}F\,d\Gamma_x\big|^2 \leq e^{\frac{\kappa}{2}T}\int_{P(X)}\bigg(|\nabla_0 F|^2+\int_0^T \frac{\kappa}{2}e^{\frac{\kappa}{2}s}|\nabla_s F|^2\, ds\bigg) d\Gamma_x\, .
\end{align}
for $a.e.$ $x\in X$.
\end{definition}

It is clear immediately from the definition that if $X$ has $d$-dimensional Ricci curvature bounded by $\kappa$, then $X$ has Ricci curvature bounded by $\kappa$ in the sense of Definition \ref{d:bounded_ricci_mms}.  More generally, in this case it is clear we have that $X$ has $d'$-dimensional Ricci curvature bounded by $\kappa$ for all $d'\geq d$.

Let us summarize the basic estimates of a space with $d$-dimensional Ricci curvature bounded.  The proof of the following will end up being the same as the proof of $d=\infty$ case combined with the modifications involved in the proof of Theorem \ref{t:smooth_bounded_d_ricci}.  We rely heavily in the next Theorem on the notation developed throughout the introduction:

\begin{theorem}\label{t:nonsmooth_bounded_d_ricci}
Let $(X,d,m)$ be a $BR(\kappa,d)$ space.  Then the following estimates hold:
\begin{enumerate}

\item For any function $F\in L^2(P(M),\Gamma_{m})$ on the total path space $P(X)$ which is $\cF^T_t$-measurable we have the estimate
\begin{align}
|\nabla_x \int_{P(X)} F \,d\Gamma_x|^2+\frac{e^{\kappa t}-1}{\kappa d}\,\big|\Delta_X \int_{P(X)}F\,d\Gamma_x\big|^2 \leq e^{\frac{\kappa}{2}T}\int_{P(M)}\bigg(|\nabla_0 F|^2+\int_0^T \frac{\kappa}{2}e^{\frac{\kappa}{2}s}|\nabla_s F|^2\, ds\bigg) d\Gamma_x\, .
\end{align} 

\item If $F\in L^2(P(M),\Gamma_m)$ is $\cF^T$-measurable, then for $a.e.$ $\gamma\in P(X)$ if $t^*$ is such that $F_{\gamma_t}$ is $\cF^{T-t}_{t^*}$-measurable, then we have the estimate
\begin{align}
[dF^t](\gamma)+\frac{e^{\kappa t^*}-1}{\kappa d}\,\big|\Delta_X \int_{P(X)}F_{\gamma_t}\,d\Gamma_{\gamma(t)}\big|^2&\leq e^{\frac{\kappa}{2}(T-t)}\int_{P(X)} |\nabla_0 F_{\gamma_t}|^2+\int_0^{T-s}\frac{\kappa}{2}e^{\frac{\kappa}{2}s}|\nabla_{s} F_{\gamma_t}|^2\,d\Gamma_{\gamma(t)}\, .
\end{align}

\item If $F$ is a $\cF^T_t$ measurable function on $P(X)$, then we have the Poincare estimate
\begin{align}
\int_{P_{x}(X)} \big(F-\int F\,d\Gamma_x\big)^2\,d\Gamma_x+\frac{e^{\kappa t}-1-\kappa t}{n\kappa^2}\,\big|\Delta_X \int_{P_x(X)}F\,d\Gamma_x\big|^2 &\leq e^{\frac{\kappa}{2}T}\int_{P_x(X)}\bigg(\int_0^T \cosh(\frac{\kappa}{2}t)|\nabla_t F|^2\, dt\bigg)\, d\Gamma_x \notag\\
&\leq \frac{e^{\kappa T}+1}{2}\int_{P_{x}(X)} |\nabla F|^2_{H^1_x}\,d\Gamma_x\, .
\end{align}
\end{enumerate}

\end{theorem}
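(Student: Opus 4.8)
The plan is to propagate the path-space inequality of Definition~\ref{d:bounded_d_ricci_mms} — which \emph{is} statement~(1), once we observe that $|\nabla_x\cdot|\le|\Lip_x\cdot|$ always holds, and that the two in fact agree $m$-a.e.\ because a $BR(\kappa,d)$ space is a fortiori a $BR(\kappa,\infty)$ space and hence almost Riemannian by Theorem~\ref{t:br_basic_properties}.1 — down to its pointwise form~(2) and its integrated form~(3). The overall strategy mirrors the way Theorem~\ref{t:smooth_bounded_d_ricci} was deduced from Theorem~\ref{t:smooth_bounded_ricci} in the smooth case, and the way Theorem~\ref{t:ricci_quad_nonsmooth} was deduced from Definition~\ref{d:bounded_ricci_mms} in the $d=\infty$ nonsmooth case; the only genuinely new bookkeeping is carrying the $\Delta_X$-correction term through those arguments.

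For~(2) I would first invoke the Markov property of the diffusion measures: conditioning a curve $\gamma$ on its restriction $\gamma_{[0,t]}$ identifies the law of the remaining path with $\Gamma_{\gamma(t)}$, so the martingale increment at time $t$ is governed by the shifted function $F_{\gamma_t}(\sigma)\equiv F(\gamma_{[0,t]}\circ\sigma)$ on $P_{\gamma(t)}(X)$. By the same computation used to prove Theorem~\ref{t:ricci_quad_nonsmooth} in Section~\ref{ss:martingale_quad}, the infinitesimal quadratic variation $[dF^t](\gamma)$ is then controlled by the squared Cheeger gradient at $x=\gamma(t)$ of the function $x\mapsto\int_{P(X)}F_{\gamma_t}\,d\Gamma_x$, where the almost-Riemannian property is what lets us move freely between the Lipschitz slope and the Cheeger gradient. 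Since $F_{\gamma_t}$ is $\cF^{T-t}_{t^*}$-measurable for the appropriate shifted parameter $t^*$, applying Definition~\ref{d:bounded_d_ricci_mms} to $F_{\gamma_t}$ and evaluating at the point $\gamma(t)$ produces both the right-hand side of~(2) and the extra nonnegative summand $\tfrac{e^{\kappa t^*}-1}{\kappa d}\big|\Delta_X\int F_{\gamma_t}\,d\Gamma_{\gamma(t)}\big|^2$ on the left, which is exactly~(2).

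For~(3) I would use the martingale decomposition $F-\int F\,d\Gamma_x=\int_0^T dM^t$ together with the $L^2$-isometry $\int_{P_x(X)}\big(F-\int F\big)^2\,d\Gamma_x=\int_0^T\!\int_{P_x(X)}[dF^t]\,d\Gamma_x\,dt$, bound each $[dF^t]$ by part~(2), and integrate in $t$ against the $\cosh(\tfrac{\kappa}{2}t)$ weight exactly as in the proof of Theorem~\ref{t:OU_boundedricci}. The new feature relative to the $d=\infty$ case is that each invocation of~(2) now carries its laplacian correction; these must be reassembled — using that $\Delta_X$ interacts with the conditional-expectation operators $F\mapsto F^t$ compatibly with the heat semigroup — into the single term with $\big|\Delta_X\int F\,d\Gamma_x\big|^2$ appearing in~(3), the time integral $\int_0^t(e^{\kappa s}-1)\,ds$ producing the factor $e^{\kappa t}-1-\kappa t$ and the dimensional constant appearing as stated, precisely as in the derivation of the dimensional Bakry--Ledoux Poincar\'e inequality from the curvature-dimension condition.

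The step I expect to be the main obstacle is this constant- and operator-bookkeeping for the $\Delta_X$-term in~(3): one must verify that the laplacians of the successively conditioned functions telescope correctly through the martingale decomposition, that the dimensional parameter enters with the stated power, and that integrating the dimensional corrections in time yields exactly $e^{\kappa t}-1-\kappa t$. By contrast, the inequality $[dF^t](\gamma)\le\big|\nabla_x\int F_{\gamma_t}\,d\Gamma_x\big|^2\big|_{x=\gamma(t)}$ at the conditioning point, which is the analytic heart of the passage from global to pointwise, is already available from the $d=\infty$ theory and requires no new idea here.
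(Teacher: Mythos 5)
Your proposal is correct and follows essentially the same route the paper intends: the paper's own "proof" is only the remark that the result is obtained by combining the $d=\infty$ nonsmooth arguments (Theorem \ref{t:ricci_quad_nonsmooth} and the Ito-isometry/$\cosh$-weight integration of Theorem \ref{t:OU_boundedricci}) with the shift-and-heat-semigroup modifications of Theorem \ref{t:smooth_bounded_d_ricci}, which is exactly the plan you lay out. You also correctly isolate the only genuinely new bookkeeping — commuting $\Delta_X$ through the conditioning/heat semigroup, applying Jensen, and time-integrating $\frac{e^{\kappa s}-1}{\kappa}$ to produce $\frac{e^{\kappa t}-1-\kappa t}{\kappa^2}$ — so no gap to report.
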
 
\begin{remark}
By using $(1)$ there are many variations of $(2)$ and $(3)$ which are provable.
\end{remark}

Now let us end this Section by remarking as in Section \ref{ss:bounded_implies_lower_intro_nonsmooth} that on a metric measure space with $d$-dimensional Ricci curvature bounded by $\kappa$, we have in particular that the Ricci curvature is bounded from below by $-\kappa$ is the sense of Bakry-Emery, Lott-Villani-Sturm, and more importantly is a $RCD(-\kappa,d)$ space, see Section \ref{s:lowerricci_nonsmooth} and \cite{Sturm_KE_LVS} for a definition:

\begin{theorem}\label{t:lower_implies_bounded_d}
Let $(X,d,m)$ be a $BR(\kappa,d)$ space.  Then $\{|\nabla u|\in L^\infty(X,m)\}\cap \{\Delta_X u\in L^\infty(X,m)\}$ is dense in $L^2(X,m)$, and for all $u,w\in \{|\nabla u|\in L^\infty(X,m)\}\cap \{\Delta_X u\in L^\infty(X,m)\}$ with $w\geq 0$ we have that:
\begin{enumerate}
\item $\int_X \Delta_X w\cdot |\nabla u|^2\,dm\geq \frac{2}{d}\int_X w|\Delta_X u|^2\,dm-2\kappa\int_X w|\nabla u|^2\,dm$.

\item $\kappa^{-1}(1-e^{-\kappa t})|\nabla H_t u|^2(x)+\frac{1}{d}\kappa^{-2}(1-\kappa t -e^{-\kappa t})|\Delta_X H_tu|^2(x) \leq H_tu^2(x)-(H_tu)^2(x)$.
\item $H_tu^2(x)-(H_tu)^2(x) \leq \kappa^{-1}\left(e^{\kappa t}-1\right)H_t|\nabla u|^2(x)$.
\item $X$ is a $RCD(-\kappa,d)$ space.
\end{enumerate}
\end{theorem}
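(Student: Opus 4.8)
The plan is to feed the path-space estimate of Theorem~\ref{t:nonsmooth_bounded_d_ricci}.1 its simplest inputs, read off a \emph{dimensional heat-kernel gradient estimate} on $X$, and then run the (nonsmooth) Bakry-Emery-Ledoux machinery, closing with the known equivalence between the dimensional Bakry-Emery condition and $RCD(-\kappa,d)$. First I record the cheap reductions: every $BR(\kappa,d)$ space is a $BR(\kappa,\infty)$ space, so by Theorems~\ref{t:br_basic_properties}, \ref{t:boundedricci_implies_BE} and \ref{t:boundedricci_implies_LVS} the space $X$ is almost Riemannian, stochastically complete, $RCD(-\kappa,\infty)$, and satisfies the $d=\infty$ Bakry-Emery estimates; in particular item (3) of the present theorem is literally Theorem~\ref{t:boundedricci_implies_BE}.4. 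Thus the real work is items (1), (2), the density claim, and upgrading $RCD(-\kappa,\infty)$ to $RCD(-\kappa,d)$.

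The engine is the estimate
\[
|\nabla H_{t}u|^2(x)+\frac{e^{\kappa t}-1}{\kappa d}\,|\Delta_X H_{t}u|^2(x)\;\le\;e^{\kappa t}\,H_{t}|\nabla u|^2(x)\qquad m\text{-a.e.}\ x .
\]
To derive it, apply Theorem~\ref{t:nonsmooth_bounded_d_ricci}.1 to the cylinder function $F(\gamma)\equiv u(\gamma(t_0))$ (for $u$ in a suitably regular dense class, e.g.\ $\cD(\Delta_X)\cap W^{1,\infty}(X,m)$, extending afterwards), which is $\cF^{t_0}_{t_0}$-measurable. Then $\int_{P(X)}F\,d\Gamma_x=H_{t_0}u(x)$, and the decisive point is the parallel-gradient computation $|\nabla_s F|(\gamma)=|\nabla u|(\gamma(t_0))$ for $s\le t_0$ and $|\nabla_s F|(\gamma)=0$ for $s>t_0$, valid for $\Gamma_x$-a.e.\ $\gamma$. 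Granting this, $\int|\nabla u|^2(\gamma(t_0))\,d\Gamma_x=H_{t_0}|\nabla u|^2(x)$ together with $\int_0^{t_0}\tfrac{\kappa}{2}e^{\kappa s/2}\,ds=e^{\kappa t_0/2}-1$ collapses the right-hand side of Theorem~\ref{t:nonsmooth_bounded_d_ricci}.1 to $e^{\kappa t_0}H_{t_0}|\nabla u|^2(x)$, which is the displayed estimate.

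Items (2) and (3) then follow by semigroup interpolation. Using $H_tu^2-(H_tu)^2=\int_0^tH_s\big(|\nabla H_{t-s}u|^2\big)\,ds$ (Leibniz rule for $\Delta_X$, valid by the calculus on $RCD$ spaces) together with the rearranged estimate $H_\tau|\nabla v|^2\ge e^{-\kappa\tau}|\nabla H_\tau v|^2+\tfrac{1-e^{-\kappa\tau}}{\kappa d}|\Delta_X H_\tau v|^2$ applied with $v=H_{t-s}u$, $\tau=s$, and integrating $s\in[0,t]$, one obtains item (2) (with $|\Delta_X H_tu|^2$-coefficient $\kappa^{-2}d^{-1}(\kappa t+e^{-\kappa t}-1)$); dropping the Laplacian term and using instead $|\nabla H_\tau v|^2\le e^{\kappa\tau}H_\tau|\nabla v|^2$ in the same identity gives item (3). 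With (2) and (3) available the density statement follows by heat-flow mollification exactly as for Theorem~\ref{t:boundedricci_implies_BE}: approximate $g\in L^2$ by $g_\varepsilon\in W^{1,\infty}(X,m)\cap\cD(\Delta_X)$, and note $H_tg_\varepsilon\to g_\varepsilon$ in $L^2$ with $|\nabla H_tg_\varepsilon|\le e^{\kappa t/2}\|\,|\nabla g_\varepsilon|\,\|_{L^\infty}$ and, by (2)+(3), $|\Delta_X H_tg_\varepsilon|^2\le\frac{\kappa d(e^{\kappa t}-1)}{\kappa t+e^{-\kappa t}-1}\,\|\,|\nabla g_\varepsilon|\,\|_{L^\infty}^2$, so $H_tg_\varepsilon$ lies in the required class. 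Item (1) comes from item (2): integrate (2) against $w\,dm$ (with $0\le w$ compactly supported in the class, then extend by a limit), transfer $H_t$ onto $w$ by self-adjointness, and expand both sides to second order in $t$ at $t=0$ (legitimate on the dense class); the $O(t)$-terms agree, the Bochner cross-terms $\int w\langle\nabla u,\nabla\Delta_X u\rangle\,dm$ cancel in the $O(t^2)$-terms, and what remains is precisely $\int_X\Delta_X w\cdot|\nabla u|^2\,dm\ge\tfrac2d\int_X w|\Delta_X u|^2\,dm-2\kappa\int_X w|\nabla u|^2\,dm$. Finally, (1)--(3) form a version of the dimensional Bakry-Emery condition for the Cheeger-Dirichlet form of $X$; since $X$ is infinitesimally Hilbertian and already $RCD(-\kappa,\infty)$, the equivalence of this condition with the reduced curvature-dimension condition on such spaces (cf.\ \cite{Ambrosio_BE_vs_LVS},\cite{Sturm_KE_LVS}) yields $RCD(-\kappa,d)$, which is item (4).

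The step I expect to be the main obstacle is the parallel-gradient computation for $F(\gamma)=u(\gamma(t_0))$: that the lower semicontinuously refined parallel gradients of such an $F$ equal $|\nabla u|$ evaluated along the curve for a.e.\ $\gamma$. On a smooth manifold this is immediate since parallel transport is a linear isometry, but on a general metric-measure space it is the assertion that sufficiently many parallel translation invariant variations exist and survive the refinement -- the heart of the constructions of Sections~\ref{s:variation} and~\ref{s:parallel_gradient_nonsmooth}, of which Theorem~\ref{t:br_basic_properties}.6 is the case $t_0=0$; the time-shifted version needed here should follow from the same machinery together with the Markov property of the diffusion measures. Everything downstream of the dimensional heat-kernel gradient estimate is routine semigroup interpolation plus the cited curvature-dimension equivalence.
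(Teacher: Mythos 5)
Your proposal is correct and follows essentially the route the paper intends (the paper itself only remarks that the proof is the $d=\infty$ argument of Section \ref{ss:bounded_implies_lower} combined with the modifications from the proof of Theorem \ref{t:smooth_bounded_d_ricci}): test the defining inequality on $F(\gamma)=u(\gamma(t_0))$, extract the dimensional gradient estimate, interpolate along the semigroup, and invoke \cite{Ambrosio_BE_vs_LVS},\cite{Sturm_KE_LVS} for $(4)$ — and your positive coefficient $\kappa^{-2}d^{-1}(\kappa t+e^{-\kappa t}-1)$ in $(2)$ is indeed the correct one. The step you flag as the main obstacle is not one: since the parallel gradients sit on the majorizing side of Definition \ref{d:bounded_d_ricci_mms}, only the one-sided bound $|\nabla_s F|\leq|\nabla u|(\gamma(t_0))$ is needed, and that is exactly Lemma \ref{l:parallel_grad_comp} (together with the lower-semicontinuous-refinement argument already carried out in the proof of Theorem \ref{t:boundedricci_implies_BE}.2), so no time-shifted analogue of Theorem \ref{t:br_basic_properties}.6 is required.
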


\subsection{Examples and Counter Examples}\label{ss:examples_intro}

Finally we would like to discuss examples of metric measure spaces with bounded Ricci curvature.  We begin in Section \ref{sss:cone_spaces} by studying metric cone spaces $C(N)$.  In Section \ref{sss:smooth_quotient} we consider a generalization of Theorems \ref{t:smooth_bounded_ricci} and \ref{t:smooth_bounded_d_ricci}.  Namely, we consider metric-measure spaces which are locally quotients of smooth manifolds, and we prove a classification theorem for which are $BR(\kappa,d)$-spaces.  We postpone the proofs of the results of this Section to an upcoming paper, which discusses a much more general context.

\subsubsection{Cone Spaces}\label{sss:cone_spaces}

We begin by studying singular spaces which are cone spaces, namely if $(N,h)$ is a Riemannian manifold then we consider the topological cone $C(N)$ equipped with the cone metric $g\equiv dr^2+r^2dv_h$.  Such examples appear frequently as singularity dilations.  We have two basic points to study, the first is the following:

\begin{theorem}
Let $(N^{n-1},h)$ be an Einstein manifold with $\Ric\equiv (n-2)h$ and $n\geq 3$, then $(C(N),g,dv_g)$ is a $BR(0,n)$-space.
\end{theorem}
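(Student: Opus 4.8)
The plan is to reduce the statement to an application of the smooth characterization, Theorem~\ref{t:smooth_bounded_ricci}, together with the $d$-dimensional refinement Theorem~\ref{t:smooth_bounded_d_ricci}, applied away from the cone point, and then to handle the cone point by an approximation/density argument. First I would recall that if $(N^{n-1},h)$ is Einstein with $\Ric_h=(n-2)h$, then the metric cone $(C(N),g)$ with $g=dr^2+r^2\,dv_h$ is Ricci-flat on the smooth locus $C(N)\setminus\{o\}$; this is the classical computation that the cone over an Einstein manifold with the Einstein constant normalized to $(n-2)$ (i.e.\ $\Ric_N=(n-2)h$, the value forced by the link of a Ricci-flat $n$-cone) has vanishing Ricci curvature. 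In particular $\Ric_g\equiv 0$ on the regular part, so in the notation of Definition~\ref{d:bounded_d_ricci_mms} we expect $\kappa=0$, and the dimensional parameter should be the topological dimension $n=\dim C(N)$, which is the origin of the claim that $(C(N),g,dv_g)$ is a $BR(0,n)$-space.

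Next I would verify that $(C(N),g,dv_g)$ satisfies the standing hypotheses (\ref{e:mms_assumptions_intro}): it is a complete, locally compact length space (the cone over a compact manifold is complete and proper), $dv_g$ is locally finite and $\sigma$-finite with full support, and—crucially—it is weakly Riemannian. The latter follows because a metric cone over a compact Riemannian manifold is a (non-collapsed, away from the vertex) limit-type space whose Cheeger energy is a quadratic form; alternatively one checks directly that the $W^{1,2}$ Sobolev space of the cone is Hilbert since it is Hilbert on the smooth locus and the vertex has zero capacity when $n\geq 3$. The condition $n\geq3$ is exactly what guarantees that the cone point is polar (codimension $\geq 3$, or more precisely that $\{o\}$ has vanishing $2$-capacity), so that functions, the heat kernel, the diffusion measures $\Gamma_x$, and the parallel gradients are all unaffected by the vertex and can be computed on $C(N)\setminus\{o\}$.

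With those preliminaries in place, the core of the argument is: for $F\in L^2(P(C(N)),\Gamma_m)$ which is $\cF^T_t$-measurable, establish the defining inequality of Definition~\ref{d:bounded_d_ricci_mms} with $\kappa=0$ and $d=n$, namely
\begin{align}
\Big|\Lip_x \int_{P(C(N))} F\,d\Gamma_x\Big|^2+\frac{t}{n}\,\Big|\Delta_X \int_{P(C(N))}F\,d\Gamma_x\Big|^2 \leq \int_{P(C(N))}\Big(|\nabla_0 F|^2+\int_0^T \tfrac{\kappa}{2}e^{\frac{\kappa}{2}s}|\nabla_s F|^2\,ds\Big)\,d\Gamma_x
\end{align}
where the $\kappa\to0$ limit of $\frac{e^{\kappa t}-1}{\kappa d}$ is $\frac{t}{n}$ and of $\frac{\kappa}{2}e^{\frac{\kappa}{2}s}$ is $0$, so the right side is just $\int |\nabla_0 F|^2\,d\Gamma_x$. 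I would prove this by restricting to cylinder functions $F$ supported (in the evaluation variables) in the regular part $C(N)\setminus\{o\}$, where $(C(N)\setminus\{o\},g,dv_g)$ is a genuine smooth Ricci-flat manifold, so Theorem~\ref{t:smooth_bounded_d_ricci} (with its own $\kappa=0$, $d=n$) gives exactly this inequality pointwise a.e.\ $x$; here one uses that the Wiener measure $\Gamma_x$ on the cone, for $x$ in the regular part, assigns zero measure to paths hitting $o$ (again by the $n\geq 3$ polarity), so the cone Brownian motion agrees with the smooth Ricci-flat Brownian motion $\Gamma_x$-a.s. Then I would extend to general $\cF^T_t$-measurable $F\in L^2(P(C(N)),\Gamma_m)$ by the standard density of such cylinder functions together with the lower-semicontinuity built into the definition of the parallel gradients (Section~\ref{sss:parallel_intro_nonsmooth}), passing to the lower-semicontinuous refinement on both sides.

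I expect the main obstacle to be the vertex analysis: one must show that neither the parallel gradient construction (Section~\ref{s:parallel_gradient_nonsmooth}) nor the diffusion measures see the cone point, i.e.\ that the cone Brownian motion from a regular starting point a.s.\ never reaches $o$ when $n\geq3$, that the heat kernel and hence $\Gamma_x$ are the smooth ones, and that $\Delta_X$ and $\Lip_x$ of functions of the form $\int F\,d\Gamma_x$ are computed by the smooth formulas a.e.\ This is where the hypothesis $n\geq3$ is essential (for $n=2$ the cone point has positive capacity and the statement would genuinely fail or require the angle condition), and it is the step that separates this from a routine pullback of Theorem~\ref{t:smooth_bounded_d_ricci}. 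A secondary technical point is checking that $(C(N),g,dv_g)$ is weakly Riemannian and in fact almost Riemannian, so that the Cheeger gradient and the lipschitz slope coincide and Definition~\ref{d:bounded_d_ricci_mms} is being verified in its intended form; this should follow from the capacity argument plus the fact that the claim will, a posteriori via Theorem~\ref{t:br_basic_properties}.1, force almost-Riemannianity, but it is cleaner to note it directly from the cone structure.
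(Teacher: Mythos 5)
The paper does not actually prove this theorem: Section \ref{ss:examples_intro} states explicitly that the proofs of the cone and quotient results are postponed to an upcoming paper, so there is no argument here to compare yours against. Your overall strategy --- reduce to the smooth, Ricci-flat regular part $C(N)\setminus\{o\}$ via Theorem \ref{t:smooth_bounded_d_ricci} and argue that the vertex is invisible --- is the natural one, and your identification of the $\kappa\to 0$ form of Definition \ref{d:bounded_d_ricci_mms} is correct.

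However, there is a genuine gap in the step you lean on to handle the vertex. You claim that $n\geq 3$ is ``exactly what guarantees that the cone point is polar'' and has vanishing $2$-capacity, and that for $n=2$ ``the cone point has positive capacity.'' This is false: a single point has zero $2$-capacity and is polar for Brownian motion already in dimension $2$ (as in $\dR^2$). Consequently your reduction, as written, does not use $n\geq 3$ in any essential way, and if it were sufficient it would equally ``prove'' that $\big(C(S^1(\ell)),g,dv_g\big)$ is a $BR(0,2)$-space for every $\ell$ --- note that $S^1(\ell)$ vacuously satisfies $\Ric\equiv(n-2)h=0$ --- contradicting the paper's own second cone theorem, which requires $\ell\leq 2\pi$. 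The polarity/capacity argument only controls the measure-theoretic objects ($\rho_t$, $\Gamma_x$, the $L^2$ theory); it does not control the metric quantities in (\ref{e:bounded_ricci}), namely the Lipschitz slope $|\Lip_x\int F\,d\Gamma_x|$ and the parallel gradients $|\nabla_s F|$, both of which are defined through geodesics, $\bt$-approximations and $\epsilon$-parallelograms and therefore genuinely see the geometry at the vertex even though Brownian motion never visits it.

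What actually separates $n\geq 3$ from $n=2$ is Bonnet--Myers: for $n\geq 3$ the hypothesis $\Ric_h=(n-2)h$ forces $\diam(N)\leq\pi$ (with equality only for the round sphere, in which case $C(N)=\dR^n$), so minimizing geodesics between regular points of the cone avoid the vertex and there is no geodesic branching; for $n=2$ the link is one-dimensional, Myers gives nothing, and when $\ell>2\pi$ minimizing geodesics do pass through (and branch at) the vertex, which is precisely what destroys the $BR(0,2)$ property. Your proof needs a step showing that the parallel slope (\ref{e:parallel_slope_intro}) and the Lipschitz slope of $\int F\,d\Gamma_x$ are computed entirely by configurations in the regular part, and that step must invoke the diameter bound on the link rather than the capacity of the vertex. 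Until that is supplied, the argument does not distinguish the true cases from the false ones.
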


In the above we studied cones over spaces which were at least $2$ dimensional.  In the one dimensional case a little more care is needed.  In the next theorem we denote by $S^1(\ell)$ the one dimensional circle of length $\ell$:

\begin{theorem}
Consider the metric-measure space $X\equiv \big(C\big(S^1(\ell)\big),g,dv_g\big)$.  Then $X$ is a $BR(0,2)$-space iff $\ell\leq 2\pi$.
\end{theorem}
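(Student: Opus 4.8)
The plan is to prove the two implications separately: the ``only if'' direction falls out quickly from the general structure theory of Part \ref{part:nonsmooth}, while the ``if'' direction carries the analytic content.

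\emph{The ``only if'' direction.} Suppose $X\equiv(C(S^1(\ell)),g,dv_g)$ is a $BR(0,2)$ space. By Theorem \ref{t:lower_implies_bounded_d}.4 (with $\kappa=0$, $d=2$) it is then an $RCD(0,2)$ space, hence in particular a $CD(0,2)$ space, and so it satisfies the Bishop--Gromov comparison with exponent $2$: for every $p\in X$ the map $r\mapsto m(B_r(p))\,r^{-2}$ is non-increasing. Let $o$ be the tip and pick $p$ with $d(p,o)=1$. Near $p$ the space is a smooth flat surface, so $\lim_{r\to 0}m(B_r(p))\,r^{-2}=\pi$; on the other hand $m(B_R(o))=\tfrac{\ell}{2}R^2$ by a direct computation in the cone, and since $B_{R-1}(o)\subseteq B_R(p)\subseteq B_{R+1}(o)$ we get $m(B_R(p))\,R^{-2}\to\tfrac{\ell}{2}$ as $R\to\infty$. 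Monotonicity forces $\tfrac{\ell}{2}\le\pi$, i.e. $\ell\le 2\pi$. (Geometrically, for $\ell>2\pi$ there are pairs of points joined \emph{only} by minimizing geodesics through $o$; these branch at $o$, and transporting a small ball along them concentrates it near $o$, so the entropy fails to be convex along the corresponding Wasserstein geodesic and even $RCD(0,\infty)$ fails. Alternatively, by Theorem \ref{t:boundedricci_implies_BE} a $BR(0,2)$ space satisfies $|\nabla H_tu|\le H_t|\nabla u|$, which fails for $\ell>2\pi$ since the harmonic function $r^{2\pi/\ell}$ has unbounded gradient at $o$ when $2\pi/\ell<1$.)

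\emph{The ``if'' direction.} Assume $\ell\le 2\pi$. Then $S^1(\ell)$ is a one-dimensional space of curvature $\ge 1$ precisely because its length does not exceed $2\pi$, so the cone $C(S^1(\ell))$ is an Alexandrov space of nonnegative curvature; by the known fact that finite-dimensional Alexandrov spaces with curvature bounded below are $RCD$ spaces, $X$ is $RCD(0,2)$. Thus $X$ satisfies (\ref{e:mms_assumptions_intro}), is weakly Riemannian (and in fact almost Riemannian, as Alexandrov spaces are), so that $BR(0,2)$ is meaningful for it, and, by the equivalence of lower $RCD$ bounds with Bakry--Émery bounds, its heat semigroup obeys the dimensional Bakry--Émery estimates of Theorem \ref{t:boundedricci_implies_BE} and Theorem \ref{t:lower_implies_bounded_d}.2 (in their $\kappa=0$, $d=2$ form) on all of $X$, including near $o$. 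Next, the tip $\{o\}$ is a polar set: its complement $X^{*}\equiv(0,\infty)\times S^1(\ell)$ is a smooth flat surface, and since a neighbourhood of $o$ is bi-Lipschitz to a planar disk the point $o$ has zero capacity, so Brownian motion started anywhere a.s. avoids $o$ at all positive times. Hence for $m$-a.e.\ $x$ the Wiener measure $\Gamma_x$, and the measure $\Gamma_m$ on $P(X)$, are concentrated on paths that never meet $o$; by the compatibility (established in Section \ref{s:parallel_gradient_nonsmooth}) of the parallel-variation construction with the classical one of Section \ref{ss:parallel_gradient}, the parallel gradients $|\nabla_sF|$ of a cylinder function $F=e_\bt^{*}u$ are, $\Gamma_m$-a.e., computed entirely inside the smooth \emph{flat} locus $X^{*}$ and agree there with the usual smooth parallel gradients.

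It then remains to verify the defining inequality of Definition \ref{d:bounded_d_ricci_mms} in its $\kappa=0$, $d=2$ form, $|\Lip_x\int_{P(X)}F\,d\Gamma_x|^2+\tfrac{t}{2}\big|\Delta_X\int_{P(X)}F\,d\Gamma_x\big|^2\le\int_{P(X)}|\nabla_0F|^2\,d\Gamma_x$, for $F$ in the dense class of cylinder functions. Writing $\int_{P(X)}F\,d\Gamma_x$ as the iterated heat-kernel integral (\ref{e:WM}), this reduces, by the very computation that proves Theorem \ref{t:smooth_bounded_d_ricci} in the smooth case, to an iterated application of the per-time-step dimensional heat-flow estimate above together with the parallel-gradient manipulations of that proof; by the previous paragraph the latter take place in the flat surface $X^{*}$, where $\Ric=0$ and $|\Hess v|^2\ge\tfrac12|\Delta v|^2$, and the former is exactly what $RCD(0,2)$ supplies. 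The step that needs genuine care --- the main obstacle --- is that $X^{*}$ is incomplete at the tip, so the integrations by parts underlying the smooth proof a priori leave a boundary contribution on the circles $\{r=\varepsilon\}$ as $\varepsilon\to 0$. For $\ell\le 2\pi$ this contribution vanishes: its mass is $O(\varepsilon)$, and the ``convexity'' of the tip --- equivalently the $RCD(0,2)$ property, equivalently the global gradient bound $|\nabla H_tu|\le H_t|\nabla u|$ --- keeps the functions $H_tv$, $|\nabla H_tv|^2$ and their derivatives bounded near $o$; for $\ell>2\pi$ precisely this breaks, in agreement with the ``only if'' direction. Assembling the pieces yields Definition \ref{d:bounded_d_ricci_mms} and hence that $X$ is $BR(0,2)$. (When $\ell=2\pi$ the claim is immediate since $X=\dR^2$; when $\ell=2\pi/k$ one may instead note $X=\dR^2/\dZ_k$ and invoke the descent of $BR(\kappa,d)$ under isometric quotients from Section \ref{sss:smooth_quotient}.)
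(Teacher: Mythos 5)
The paper itself contains no proof of this statement: the results of Section \ref{ss:examples_intro} are explicitly postponed ``to an upcoming paper,'' so there is no internal argument to compare yours against, and it must be judged on its own terms. Your ``only if'' direction is essentially complete and correct: $BR(0,2)\Rightarrow RCD(0,2)$ is Theorem \ref{t:lower_implies_bounded_d}.4, the Bishop--Gromov monotonicity with exponent $2$ is a standard consequence of the curvature-dimension condition, and comparing the germ of the volume at a smooth point ($\pi r^2$) with the large-scale growth ($\tfrac{\ell}{2}R^2$) correctly forces $\ell\le 2\pi$.

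The ``if'' direction, however, has a genuine gap exactly at the step you yourself flag as ``the main obstacle,'' and the resolution you offer does not hold up as stated. First, the claim that the $RCD(0,2)$ property ``keeps the functions $H_tv$, $|\nabla H_tv|^2$ and their derivatives bounded near $o$'' is false for $\pi<\ell<2\pi$: the non-radial modes of $H_tv$ behave like $r^{2\pi/\ell}$ at the tip, so $|\nabla H_tv|$ stays bounded but $\Hess\, H_tv$ blows up like $r^{2\pi/\ell-2}$, and $\partial_r|\nabla H_tv|^2\sim r^{2(2\pi/\ell)-3}$ is unbounded for $\ell>4\pi/3$. The boundary flux can still vanish by a scaling count --- length $O(\varepsilon)$ times $O(\varepsilon^{2(2\pi/\ell)-3})$ is $O(\varepsilon^{2(2\pi/\ell)-2})\to 0$ precisely when $\ell<2\pi$ --- but that computation is not in your argument, and the borderline case $\ell=2\pi$ (a mode $\sim r$) is covered only by your separate remark that $X=\dR^2$ there. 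Second, and more seriously, the phrase ``by the very computation that proves Theorem \ref{t:smooth_bounded_d_ricci} in the smooth case'' conceals where the real work lies: that computation rests on Lemma \ref{l:gradient_estimate}, i.e.\ on the Weitzenb\"ock/Feynman--Kac identity $\nabla H_tu=\tilde H_t\nabla u$ for the semigroup on $1$-forms and on the stochastic parallel translation map along $\Gamma_x$-a.e.\ path. On the incomplete flat surface $X^*=X\setminus\{o\}$ neither is automatic: the form Laplacian at a cone point admits inequivalent self-adjoint extensions, and one must show that the extension singled out by the metric-measure structure of $X$ is the one for which the derivative formula holds with $\phi_t\equiv\mathrm{Id}$ (so that no curvature concentrates at the tip, in contrast to the quotient cones $\ell<2\pi$ behaving as if they carried a nonnegative distributional curvature there, which is harmless for an upper bound by $0$ only after this identification is made). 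Establishing that identification --- not the zero-capacity statement, which is easy --- is the actual content of the theorem, and it is asserted rather than proved.
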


\subsubsection{Smooth Quotient Spaces}\label{sss:smooth_quotient}
Theorem \ref{t:smooth_bounded_ricci} tells us that a smooth metric measure space has bounded Ricci tensor iff it is a $BR(\kappa,\infty)$-space.  We begin in this subsection by showing a generalization of this point.  Once one moves into the world of singular spaces the next nicest collection of metric spaces are those which are locally isometric to quotients of smooth manifolds under isometric actions.  More precisely we have the following:

\begin{definition}
We say a metric-measure space $(X,d,m)$ is a smooth quotient space if for each $x\in X$ there exists a neighborhood $x\in U$ and a quadruple $(M,g,e^{-f}dv_g,G)$, where $(M,g,e^{-f}dv_g)$ is a smooth metric-measure space and $G$ is a compact Lie Group which acts isometrically on $M$ and preserves the volume form, such that $U\equiv M/G$ as metric-measure spaces.
\end{definition}

The nicest nonmanifold example of a smooth quotient space is a Riemannian orbifold, so that a neighborhood of each point is isometric to a finite quotient of $\dR^n$.  Smooth quotient spaces arise in a particularly natural way in Riemannian geometry, for instance the Gromov-Hausdorff limit of Riemannian manifolds with bounded sectional curvature are smooth quotient spaces by the work of Fukaya \cite{Fukaya_Haus_Conv}.  Note that if $(X,d,\mu)$ is a smooth quotient space then there exists an open dense subset $\cR(X)\subseteq X$ which is in fact a smooth metric-measure space.  Our basic result in this section is the following.

\begin{theorem}
Let $(X,d,m)$ be a complete smooth quotient space, then the following are equivalent:
\begin{enumerate}
\item The metric measure space $(X,d,m)$ is a $BR(\kappa,d)$ space.
\item $X$ is a smooth Riemannian orbifold such that $-\kappa g+\frac{1}{d-n}\nabla f\otimes\nabla f\leq \Ric+\nabla^2 f\leq \kappa g$ on the regular part $\cR(X)$.
\end{enumerate}
\end{theorem}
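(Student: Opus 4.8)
The plan is to prove the two implications separately, with $(1)\Rightarrow(2)$ split into a curvature statement on the regular part and a regularity statement forcing $X$ to be an orbifold.

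For $(2)\Rightarrow(1)$: since $X$ is a Riemannian orbifold, each $x\in X$ has a neighborhood $U\cong \tilde U/\Gamma$ with $\Gamma$ finite, acting by isometries on a smooth manifold $\tilde U$ and preserving $e^{-f}dv_g$; pulling back $f$ gives a smooth metric measure structure on $\tilde U$ whose $d$-Ricci tensor obeys the same two-sided bound, the bound on $\cR(X)$ propagating by continuity to all of $\tilde U$. By Theorem~\ref{t:smooth_bounded_d_ricci} the smooth cover $\tilde U$ then satisfies the $BR(\kappa,d)$ inequality. I would descend this to $X$ as follows: the heat kernel $\rho^X_t$ is the $\Gamma$-average of $\rho^{\tilde U}_t$, so the Wiener measure $\Gamma^X_x$ is the pushforward of $\Gamma^{\tilde U}_{\tilde x}$ under the covering of path spaces, a cylinder function $F$ on $P(X)$ lifts to a $\Gamma$-invariant cylinder function $\tilde F$ on $P(\tilde U)$, and --- because $\Gamma$ is finite and isometric --- the parallel variations of a curve in $\cR(X)$ lift bijectively to parallel variations upstairs, so the parallel slopes, hence after the lower-semicontinuous refinement of Section~\ref{ss:parallel_gradient_nonsmooth} the parallel gradients, agree $\Gamma_m$-almost everywhere (using that $\Gamma_m$-a.e. curve spends zero time in the singular locus, with the contribution of reflection walls handled by the reflection principle for the diffusion). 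To convert this chart-by-chart comparison into the single global inequality of Definition~\ref{d:bounded_d_ricci_mms}, one first works with cylinder functions whose spatial supports and time windows are small enough to sit inside one chart, and then reassembles the estimate for a general cylinder function using the Markov property of the diffusion measures together with the chain rule for the martingale decomposition of Section~\ref{ss:martingale_quad}; density of such cylinder functions in $L^2(P(X),\Gamma_m)$ finishes this direction.

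For $(1)\Rightarrow(2)$: restricting the $BR(\kappa,d)$ inequality to cylinder functions with spatial support and time window small and contained in the smooth open set $\cR(X)$, and invoking the locality of the diffusion measures (short-time heat kernel localization and the stochastic completeness of Theorem~\ref{t:br_basic_properties}.2) and of the parallel gradients, one sees that $(\cR(X),g,e^{-f}dv_g)$ satisfies the estimate of Definition~\ref{d:bounded_d_ricci_mms}; Theorem~\ref{t:smooth_bounded_d_ricci} then yields
\begin{align}
-\kappa g+\frac{1}{d-n}\nabla f\otimes\nabla f\le \Ric+\nabla^2 f\le \kappa g \qquad\text{on }\cR(X)\, .
\end{align}
It remains to show $X$ is an orbifold, i.e. that in the presentations $U\cong M/G$ all isotropy groups are finite. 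If some isotropy were positive-dimensional, then near such a point the principal-orbit dimension drops along a lower stratum $\Sigma$, and by the slice theorem the pushforward measure has density $\sim \dist(\cdot,\Sigma)^{a}$ with respect to $dv_g$ on $\cR(X)$ for an integer $a\ge 1$; hence $f=-a\log\dist(\cdot,\Sigma)+O(1)$, so $\nabla^2 f$ has a radial eigenvalue $\sim a\,\dist(\cdot,\Sigma)^{-2}$ while $\Ric$ stays bounded near $\Sigma$, so that $\Ric+\nabla^2 f$ is unbounded above near $\Sigma$, contradicting the upper bound just proved on $\cR(X)$. Therefore every isotropy group is finite, each $U\cong M/G$ is a finite quotient, and $X$ is a Riemannian orbifold.

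The main obstacles are two. In $(2)\Rightarrow(1)$ the delicate point is the descent of the parallel gradient: one must check that the geometric parallel variations of Section~\ref{s:variation} on a curve in the quotient are exactly the projections, modulo the equivalence relation, of parallel variations on the smooth cover, and that the lower-semicontinuous refinement commutes with this projection --- in effect, that the path-space energy on $P(X)$ is the $\Gamma$-invariant part of the path-space energy on $P(\tilde U)$. In $(1)\Rightarrow(2)$ the real work is the measure-density computation near lower-dimensional strata, namely pinning down the exact vanishing order $a$ of the pushforward density via the coarea formula and O'Neill's formulas for the Riemannian submersion onto the principal stratum; this is the point at which the broader framework of the companion paper is needed, and it is also where reflection strata must be treated with care so as not to mistakenly exclude orbifolds with reflection walls.
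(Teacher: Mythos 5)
First, an important caveat: the paper does not actually prove this theorem. It is stated in the examples portion of the introduction (Section \ref{sss:smooth_quotient}), and the author explicitly postpones the proofs of all results in that section to an upcoming paper ``which discusses a much more general context.'' So there is no in-paper argument to compare yours against; what follows is an assessment of your proposal on its own terms.

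Your overall architecture is the natural one, and the isotropy argument in $(1)\Rightarrow(2)$ --- positive-dimensional isotropy forces the pushforward density to vanish to some order $a\ge 1$ along a stratum $\Sigma$, so $f=-a\log\dist(\cdot,\Sigma)+O(1)$ and $\nabla^2 f$ acquires a radial eigenvalue $\sim a\,\dist^{-2}\to+\infty$, violating the upper bound --- is sound in outline. The genuine gap is the localization/descent step on which \emph{both} implications lean. The inequality in Definition \ref{d:bounded_d_ricci_mms} is a global statement about functionals on $P(X)$ with respect to the diffusion measures: Brownian paths exit any chart in arbitrarily short time with positive probability, so a cylinder function with small spatial support and time window does not produce a path-space integral confined to that chart, and ``reassembling via the Markov property'' really requires a stopping-time (strong Markov) decomposition that you do not carry out. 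Worse, $|\nabla_s F|$ is not manifestly local: it is defined (Section \ref{ss:parallel_gradient_nonsmooth}) as a lower-semicontinuous refinement over approximating sequences in all of $L^2(P(X),\Gamma_m)$, so the asserted identification of parallel gradients upstairs and downstairs ``modulo the singular set'' is exactly the statement that needs proof, not a consequence of the finiteness of $\Gamma$. This is most acute at reflection walls: a codimension-one singular set is hit by Brownian motion with positive probability and has positive capacity, so it cannot be dismissed as negligible for either the diffusion measures or the refinement procedure --- your parenthetical appeal to ``the reflection principle'' is precisely where the argument is incomplete, and it is presumably this circle of issues (locality of the path-space energy and of parallel variations across singular strata) that the author defers to the companion paper.
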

\begin{remark}
One can view the above theorem as a generalization of Theorem \ref{t:smooth_bounded_ricci}, which gives the similar statement for smooth metric measure spaces.
\end{remark}
\begin{remark}
Note a topological implication of the above.  In principle the singularities of a smooth quotent space $X$ may be much worse than orbifold, but a consequence of the Theorem is that if we already know the singularities are at worst quotient in nature, then those quotients are finite.
\end{remark}

{\it Acknowledgements:}  The author would like to that Robert Haslhofer for many corrections and comments, as well as the identification of errors in the first version.

\vspace{1cm}

\part{The Case of Smooth Metric-Measure Spaces}\label{part:smooth}

In this part of the paper we focus on smooth metric measure spaces 
$$(M^n,g,e^{-f}dv_g)\, ,$$ 
where $(M^n,g)$ is a complete Riemannian manifold and $f$ is a smooth function.  The primary goal of this part of the paper is to prove the results of Section \ref{s:smooth_bounded_ricci_intro}.\\

The outline of this part of the paper is as follows.  We begin in Section \ref{s:smooth_lower_ricci} by recalling the main results from \cite{BakryEmery_diffusions},\cite{BakryLedoux_logSov}, which give various characterizations of lower Ricci curvature in terms of the analysis on $M$.  We will use these as a point of comparison for the bounded Ricci curvature case, and we will also prove some very mild extensions of known classifications that will be used later.  Section \ref{s:prelim_smooth} is dedicated to a variety of preliminaries which will be needed to discuss the bounded Ricci curvature case.  Section \ref{s:smooth_bounded_ricci_gradient} is dedicated to proving our first characterization of bounded Ricci curvature by the gradient estimates $(R2)$,$(R3)$ of Theorem \ref{t:smooth_bounded_ricci}.  In Section \ref{s:bounded_ricci_stoc_anal_smooth} we discuss the stochastic analysis of bounded Ricci curvature and prove $(R4),(R5)$ of Theorem \ref{t:smooth_bounded_ricci}, while in Section \ref{s:smooth_br_OU} we focus on the analysis description of bounded Ricci curvature and prove $(R6),(R7)$ of Theorem \ref{t:smooth_bounded_ricci}.  Finally in Section \ref{s:finish_maintheorem} we finish the proof of Theorem \ref{t:smooth_bounded_ricci} by showing that any of the estimates of Theorem \ref{t:smooth_bounded_ricci} itself implies the correct corresponding bound on the Ricci curvature.  In Section \ref{s:smooth_d_ricci} we prove the $d$-dimensional of the main results, namely Theorem \ref{t:smooth_bounded_d_ricci}.

\section{Lower Ricci Curvature on Smooth Metric-Measure Spaces}\label{s:smooth_lower_ricci}

In this Section we discuss some analytic methods for characterizing lower Ricci curvature.  For those familiar with these results this Section may be skipped entirely, it is presented for convenience since we will want to compare directly the conditions on lower Ricci curvature with those on bounded Ricci curvature which were presented in Section \ref{s:smooth_bounded_ricci_intro}.  Many of these estimates were first observed on $\dR^n$ by Gross \cite{Gross_LogSob} as he studied gaussian measures on Euclidean space.  The generalizations of these results to more general metric-measure spaces go back primarily to \cite{BakryEmery_diffusions} and \cite{BakryLedoux_logSov}.  \\

Let us recall our notation from Section \ref{s:smooth_bounded_ricci_intro} that $\Delta_f u =\Delta u -\langle\nabla f,\nabla u\rangle$ is the $f$-laplacian associated to our smooth metric-measure space, $H_t:L^2(M,e^{-f}dv_g)\to L^2(M,e^{-f}dv_g)$ is the heat flow associated to $\frac{1}{2}\Delta_f$, and $\rho_t(x,dy)$ is the heat kernel measure associated to this flow.  Finally, for $x\in M$ and $t>0$ fixed if we view $(M^n,g,\rho_t(x,dy))$ as a metric measure space then we can denote by $\Delta_{x,t}=div_{x,t}\circ\nabla$ the associated laplace operator, where the divergence of $\nabla$ is with respect to the measure $\rho_t(x,dy)$.\\

\subsection{The Lower Ricci Bound $\Ric+\nabla^2f\geq -\kappa g$}

To understand the role of the Bakry-Emery Ricci curvature tensor on the analysis of $M$ one begins with a simple computation with the $f$-laplacian to obtain the Bochner formula
\begin{align}
\Delta_f |\nabla u|^2 = \langle\nabla u,\nabla\Delta_f u\rangle +2|\nabla^2 u|^2+2\big(\Ric+\nabla^2f\big)(\nabla u,\nabla u)\, ,
\end{align}
from which if we assume the lower Ricci bound $\Ric+\nabla^2f\geq -\kappa g$ we get the Bochner inequality
\begin{align}\label{e:Bochner}
\Delta_f |\nabla u|^2 \geq \langle\nabla u,\nabla\Delta_f u\rangle -2\kappa|\nabla u|^2\, .
\end{align}

These inequalities are equivalent to the lower bounds on the Ricci curvatures, and are the basis for the definition of lower Ricci curvature given by Bakry-Emery \cite{BakryEmery_diffusions}.  It should be pointed out that their precise condition applies to a much broader situation.  

From the Bochner formula many important estimates on the heat flow $H_t$ can be proved, which themselves turn out to be equivalent to the lower Ricci bound.  We summarize the results of \cite{BakryEmery_diffusions},\cite{BakryLedoux_logSov} in the following theorem.

\begin{theorem}[\cite{BakryEmery_diffusions},\cite{BakryLedoux_logSov}]\label{t:lower_ricci}
 Let $(M^n,g,e^{-f}dv_g)$ be a smooth metric-measure space, then the following are equivalent:
\begin{enumerate}
\item $\text{Ric}+\nabla^2 f\geq -\kappa g$.
\item $|\nabla H_t u|\leq e^{\frac{\kappa}{2}t}H_t|\nabla u|$.
\item $|\nabla H_t u|^2\leq e^{\kappa t}H_t|\nabla u|^2$.
\item $\lambda_1(-\Delta_{x,t})\geq \kappa (e^{\kappa t}-1)^{-1}$. 
\item $\int_M u^2\ln u^2 \rho_t(x,dy) \leq 2 \kappa^{-1}\left(e^{\kappa t}-1\right)\int_M |\nabla u|^2 \rho_t(x,dy)$ if $\int_M u^2\,\rho_t=1$.
\end{enumerate}
\end{theorem}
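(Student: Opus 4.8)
The plan is to run the classical Bakry--Emery $\Gamma_2$-argument, using as the single input the Bochner formula already recorded, and to close the cycle of implications in two loops: the core loop $(1)\Rightarrow(2)\Rightarrow(3)\Rightarrow(1)$, and a second loop $(1)\Rightarrow(5)\Rightarrow(4)\Rightarrow(1)$ that brings in the spectral gap and the log-Sobolev inequality. Throughout I work first with sufficiently nice $u$ (smooth and constant outside a compact set, say) so that the interpolations below are legitimate, and extend by density afterward.

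For $(1)\Rightarrow(2)$ and $(1)\Rightarrow(3)$, fix $t>0$ and $u$, and for $s\in[0,t]$ set $\psi(s)=H_s(|\nabla H_{t-s}u|)$ and $\phi(s)=H_s(|\nabla H_{t-s}u|^2)$. Differentiating in $s$, substituting $\partial_s H_{t-s}u=-\tfrac12\Delta_f H_{t-s}u$, and applying the Bochner formula together with the lower bound $(1)$ --- and, for $\psi$, Kato's inequality $|\nabla^2 w|^2\geq |\nabla|\nabla w||^2$ --- gives $\psi'\geq -\tfrac{\kappa}{2}\psi$ and $\phi'\geq -\kappa\phi$; integrating from $s=0$ to $s=t$ yields exactly $(2)$ and $(3)$. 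The non-smoothness of $w\mapsto|\nabla w|$ on its zero set is handled by running the argument with $\sqrt{|\nabla w|^2+\varepsilon}$ and letting $\varepsilon\to 0$. The step $(2)\Rightarrow(3)$ is immediate from the Markov property and Jensen, $|\nabla H_t u|^2\leq e^{\kappa t}(H_t|\nabla u|)^2\leq e^{\kappa t}H_t(|\nabla u|^2)$. For $(3)\Rightarrow(1)$ one differentiates $(3)$ at $t=0$: both sides agree there with value $|\nabla u|^2$, so comparing $t$-derivatives at $0$ and substituting the Bochner identity forces $|\nabla^2 u|^2+(\Ric+\nabla^2 f)(\nabla u,\nabla u)\geq -\kappa|\nabla u|^2$ pointwise; evaluating at a point $p$ with $u$ chosen so that $\nabla u(p)$ is an arbitrary prescribed vector and $\nabla^2 u(p)=0$ gives $(1)$.

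For the second loop, $(1)\Rightarrow(5)$ is the classical local log-Sobolev inequality: interpolating $s\mapsto H_s(|\nabla H_{r-s}g|^2/H_{r-s}g)$ for $g>0$ and using $(1)$ first yields the strengthened gradient estimate $|\nabla H_r g|^2/H_r g\leq e^{\kappa r}H_r(|\nabla g|^2/g)$; then with $g=u^2$ and $\Lambda(s)=H_s((H_{t-s}g)\ln H_{t-s}g)(x)$ one has $\Lambda'(s)=\tfrac12 H_s(|\nabla H_{t-s}g|^2/H_{t-s}g)(x)\leq 2e^{\kappa(t-s)}H_t(|\nabla u|^2)(x)$, and integrating over $[0,t]$ and using $\int_M u^2\,\rho_t(x,dy)=1$ gives $(5)$. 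The step $(5)\Rightarrow(4)$ is linearization: applying $(5)$ to $u=1+\varepsilon v$ and expanding to order $\varepsilon^2$ produces, for all $v$, the Poincare inequality $H_t(v^2)(x)-(H_tv(x))^2\leq \kappa^{-1}(e^{\kappa t}-1)H_t(|\nabla v|^2)(x)$, which --- after identifying $H_t(v^2)(x)=\int_M v^2\rho_t(x,dy)$, $H_tv(x)=\int_M v\,\rho_t(x,dy)$ and $H_t(|\nabla v|^2)(x)=\int_M |\nabla v|^2\rho_t(x,dy)$ --- is exactly the spectral gap $\lambda_1(-\Delta_{x,t})\geq \kappa(e^{\kappa t}-1)^{-1}$ of $(4)$. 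Finally $(4)\Rightarrow(1)$ follows by short-time asymptotics: as $t\to 0$ one has $\kappa^{-1}(e^{\kappa t}-1)=t+\tfrac{\kappa}{2}t^2+O(t^3)$, while the local variance of $v$ and $H_t(|\nabla v|^2)$ admit Taylor expansions whose coefficients through order $t^2$ involve only $|\nabla v|^2$, $\Delta_f|\nabla v|^2$ and $\langle\nabla v,\nabla\Delta_f v\rangle$; matching the $t^2$-coefficients and invoking Bochner recovers the pointwise inequality $(1)$.

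The real difficulty is not conceptual but the functional-analytic bookkeeping needed to make the interpolations rigorous on a complete, possibly noncompact, metric-measure space: justifying differentiation of $H_s(\cdot)$ in $s$, interchanging $H_s$ with $\nabla$ and with $\Delta_f$, controlling behavior at spatial infinity (most conveniently by restricting to a dense core of well-behaved functions and exhausting $M$, or by bounded-geometry approximation), together with the $\varepsilon$-regularization at zeros of $|\nabla w|$. Since the statement is essentially that of \cite{BakryEmery_diffusions},\cite{BakryLedoux_logSov}, I would present the $\Gamma_2$-computations in the form above and refer there for the remaining analytic details, the only new content being the mild repackaging needed for later comparison.
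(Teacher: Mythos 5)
Your proposal is correct and is essentially the paper's treatment: the paper cites \cite{BakryEmery_diffusions},\cite{BakryLedoux_logSov} for all the implications except $(4)\Rightarrow(1)$, which it proves by introducing the frequency function $N^u(x,t)= t\int_M|\nabla u|^2\,\rho_t(x,dy)\,/\,\big(\int_M u^2\rho_t-(\int_M u\,\rho_t)^2\big)$, computing $\frac{d}{dt}N^u(x,0)=\frac{1}{2}\big(|\nabla^2u|^2+(\Ric+\nabla^2f)(\nabla u,\nabla u)\big)$, and testing against functions with $\nabla u(p)=v$ and $\nabla^2u(p)=0$ --- which is precisely your short-time Taylor-coefficient matching of the variance against the Dirichlet energy, in different packaging. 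The $\Gamma_2$-interpolation arguments you sketch for the remaining implications are the standard ones the citation is meant to cover.
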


The implication
\begin{align}
\Ric+\nabla^2f\geq \kappa g \implies \lambda_1(\Delta_{x,t})\geq \frac{\kappa}{e^{\kappa t}-1}\, ,
\end{align}
is well known \cite{BakryLedoux_logSov}.  The proof was first done by Gross in $\dR^n$ (in this case the estimate reduces to understanding a Poincare inequality for the gaussian measure).  The proof was generalized by Bakry-Ledoux \cite{BakryLedoux_logSov} to the more general case.  Other versions of this have been proved in the parabolic setting for the Ricci flow in \cite{HeinNaber_logSob}.  The proof of each of these cases is essentially the same.  

We will briefly describe how to prove the statement $\lambda_1(-\Delta_{x,t})\geq \frac{\kappa}{e^{\kappa t}-1}\implies \text{Ric}+\nabla^2 f\geq -\kappa g$.  Primarily, this gives us an excuse to introduce a little structure which will be useful later in the paper, in particular in the construction of test functions.\\

To prove the statement let us begin by introducing the following variation of the Almgren frequency function.  Namely, given a smooth function $u:M\to \dR$ we define for each $x\in M$ and $t>0$ the frequency function
\begin{align}\label{d:frequency1}
N^u(x,t)\equiv \frac{t\int_M |\nabla u|^2\, \rho_t(x,dy)}{\int_M u^2\, \rho_t(x,dy)- \big(\int_M u\, \rho_t(x,dy)\big)^2}\, .
\end{align}

The following sums up the properties of $N^u$ that we will require:

\begin{lemma}\label{l:lower_ricci:1}
Let $u$ be a smooth function on $M$ and $x\in M$ such that $|\nabla u|(x)=1$, then we have that
\begin{align}
\frac{d}{dt}N^u(x,0) = \frac{1}{2}\left(|\nabla^2 u|^2+(\Ric+\nabla^2 f)(\nabla u,\nabla u)\right)\, .
\end{align}
\end{lemma}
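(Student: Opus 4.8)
The plan is to express the frequency function through the heat semigroup, observe that at $t=0$ it is a $0/0$ expression whose numerator and denominator both vanish to first order, extract the next coefficient by a short-time expansion, and convert the result into the stated curvature quantity via the Bochner formula. Concretely, using $\int_M v\,\rho_t(x,dy)=H_tv(x)$ one writes
\begin{align*}
N^u(x,t)=\frac{t\,H_t(|\nabla u|^2)(x)}{H_t(u^2)(x)-(H_tu)^2(x)}=\frac{g(t)}{h(t)},
\end{align*}
where $g(t):=H_t(|\nabla u|^2)(x)$, $\Phi(t):=H_t(u^2)(x)-(H_tu)^2(x)$, and $h(t):=\Phi(t)/t$ for $t>0$, extended by $h(0):=\Phi'(0)$. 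I will check below that $g(0)=\Phi'(0)=|\nabla u|^2(x)=1$, so $N^u(x,t)\to 1$ as $t\to 0$ and the ordinary quotient rule gives
\begin{align*}
\frac{d}{dt}N^u(x,0)=\frac{g'(0)h(0)-g(0)h'(0)}{h(0)^2}=g'(0)-h'(0)=g'(0)-\tfrac12\,\Phi''(0).
\end{align*}
Thus the lemma is reduced to computing the three numbers $g'(0)$, $\Phi'(0)$ and $\Phi''(0)$.

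Each of these follows from $\partial_tH_t=\tfrac12\Delta_fH_t$ together with the Leibniz (diffusion) identity $\Delta_f(vw)=v\Delta_fw+w\Delta_fv+2\langle\nabla v,\nabla w\rangle$. One has $g'(0)=\tfrac12\Delta_f|\nabla u|^2(x)$ at once. For $\Phi$, writing $\Phi(t)=H_t(u^2)(x)-(H_tu(x))^2$ and using $\Delta_f(u^2)=2u\Delta_fu+2|\nabla u|^2$ gives $\Phi'(0)=|\nabla u|^2(x)$, and differentiating once more, now also expanding $\Delta_f^2(u^2)$ by a second application of the Leibniz identity, gives $\Phi''(0)=\langle\nabla u,\nabla\Delta_fu\rangle(x)+\tfrac12\Delta_f|\nabla u|^2(x)$; equivalently one may start from the identity $\Phi(t)=\int_0^tH_{t-s}(|\nabla H_su|^2)(x)\,ds$, which makes both derivatives transparent. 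Feeding these in,
\begin{align*}
\frac{d}{dt}N^u(x,0)=\tfrac12\Delta_f|\nabla u|^2(x)-\tfrac12\langle\nabla u,\nabla\Delta_fu\rangle(x)-\tfrac14\Delta_f|\nabla u|^2(x)=\tfrac14\Delta_f|\nabla u|^2(x)-\tfrac12\langle\nabla u,\nabla\Delta_fu\rangle(x),
\end{align*}
and substituting the Bochner formula $\Delta_f|\nabla u|^2=2\langle\nabla u,\nabla\Delta_fu\rangle+2|\nabla^2u|^2+2(\Ric+\nabla^2f)(\nabla u,\nabla u)$ cancels the $\langle\nabla u,\nabla\Delta_fu\rangle$ terms and leaves exactly $\tfrac12\big(|\nabla^2u|^2+(\Ric+\nabla^2f)(\nabla u,\nabla u)\big)$, the asserted value.

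The one delicate point, which I would state with care, is the legitimacy of these short-time expansions: the argument requires $u^2$ and $|\nabla u|^2$ to lie in the domain of the generator and $t\mapsto H_tv(x)$ to be twice differentiable from the right at $t=0$. For the functions $u$ relevant to the application in Lemma \ref{l:spec_gap_implies_lowerricci} (smooth with all derivatives controlled near $x$, say compactly supported) this is standard parabolic regularity on a complete manifold, so the manipulations above are rigorous and not merely formal. All the remaining content is routine semigroup bookkeeping together with the Bochner identity.
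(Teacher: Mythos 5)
Your proof is correct and follows essentially the same route as the paper: a short-time expansion of the frequency function using $\partial_t H_t=\tfrac12\Delta_f H_t$, the computation of the first two Taylor coefficients of the variance $H_t(u^2)-(H_tu)^2$, and a final substitution of the Bochner formula. The only difference is organizational — the paper differentiates $N^u(x,t)$ for general $t$ and obtains an identity in which $\frac{d}{dt}N(x,0)$ appears on both sides, whereas you extract the Taylor coefficients $g'(0)$, $\Phi'(0)$, $\Phi''(0)$ directly and apply the quotient rule, which amounts to the same computation.
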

\begin{proof}
A quick computation gives that
\begin{align}\label{e:lower_ricci:1}
\frac{d}{dt}N^u(x,t) = \frac{\frac{1}{2}t\int \Delta_f|\nabla u|^2\, dv_{x,t}}{\int_M u^2 \,dv_{x,t}- \big(\int_M u\, dv_{x,t}\big)^2} - N(x,t)\Bigg(\frac{N(x,t)-1}{t}+\frac{\int u\, dv_{x,t}\int \Delta_f u\, dv_{x,t}- \int u\Delta_f u\, dv_{x,t}}{\int_M u^2\, dv_{x,t}- \big(\int_M u\, dv_{x,t}\big)^2}\Bigg)\, .
\end{align}

Now in the case where $|\nabla u|(x)=1$ we have the estimates
\begin{align}
&N(x,t)\to 1\text{ as }t\to 0\, ,\notag\\
&\int_M u^2\, dv_{x,t}- \big(\int_M u\, dv_{x,t}\big)^2\approx t\text{ as }t\to 0\, .
\end{align}
Further, a similar computation yields the estimate
\begin{align}
&\frac{d}{dt}\bigg(\int u\, dv_{x,t}\int \Delta_f u\, dv_{x,t}- \int u\Delta_f u\, dv_{x,t}\bigg) \notag\\
&= \bigg(\int\Delta_f u\, dv_{x,t}\bigg)^2 + \int u\, dv_{x,t}\int \Delta_f\Delta_f u \, dv_{x,t} -\int \bigg((\Delta_f u)^2+ u\Delta_f\Delta_f u +\langle \nabla u,\nabla \Delta_f u\rangle\bigg)\, dv_{x,t}\, .
\end{align}

Combining all of this with (\ref{e:lower_ricci:1}) and letting $t\to 0$ gives us
\begin{align}
\frac{d}{dt}N(x,0) = \frac{1}{2}\Delta_f u(x)-\langle\nabla u,\nabla\Delta_f u\rangle - \frac{d}{dt}N(x,0)\, .
\end{align}
Rearranging and using the Bochner formula gives the result.
\end{proof}

We mention one more easy lemma.

\begin{lemma}\label{l:lower_ricci:2}
For each point $x\in M$ and unit vector $v\in T_xM$ there exists a smooth function $u$ with compact support in a neighborhood of $x$ such that $u(x)=0$, $\nabla u(x)=v$ and $\nabla^2 u(x)=0$.
\end{lemma}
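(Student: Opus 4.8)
The plan is to construct $u$ explicitly in geodesic normal coordinates centered at $x$. First I would fix normal coordinates $(y^1,\ldots,y^n)$ on a metric ball $B_r(x)$ with $r$ less than the injectivity radius at $x$, so that the coordinate vectors $\partial_i|_x$ form an orthonormal basis of $T_xM$. Writing $v=\sum_i v^i\,\partial_i|_x$, consider the coordinate-linear function $\ell(y)\equiv\sum_i v^i y^i$. Since $y^i(x)=0$ and $\partial_j y^i=\delta^i_j$, we get $d\ell|_x=\sum_i v^i\,dy^i|_x$, which is the metric dual of $v$ precisely because the $\partial_i|_x$ are orthonormal at $x$; hence $\nabla\ell(x)=v$. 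For the Hessian, recall that in any coordinate chart $(\nabla^2\ell)_{ij}=\partial_i\partial_j\ell-\Gamma_{ij}^k\,\partial_k\ell$. The first term vanishes because $\ell$ is affine in the coordinates, and at the origin of normal coordinates $\Gamma_{ij}^k(x)=0$; therefore $\nabla^2\ell(x)=0$.

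Second, to arrange compact support I would choose a cutoff $\varphi\in C^\infty_c(B_r(x))$ with $\varphi\equiv 1$ on $B_{r/2}(x)$, and set $u\equiv\varphi\cdot\ell$. On $B_{r/2}(x)$ we have $u=\ell$, so $u$ and $\ell$ have the same $2$-jet at $x$; in particular $\nabla u(x)=\nabla\ell(x)=v$ and $\nabla^2 u(x)=\nabla^2\ell(x)=0$, while $u$ is smooth with compact support contained in the neighborhood $B_r(x)$ of $x$, as required. If a coordinate-free description is preferred, the same function is $u(y)=\varphi(y)\,\langle v,\exp_x^{-1}(y)\rangle$ on $B_r(x)$.

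There is essentially no obstacle here. The only point needing the slightest care is the standard fact that the Christoffel symbols vanish at the center of normal coordinates, equivalently that the coordinate frame is parallel to first order at $x$; this is exactly what forces the coordinate Hessian of $\ell$ to agree with its covariant Hessian at $x$. Everything else is the routine localization of an affine function by a bump function.
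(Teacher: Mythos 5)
Your construction is correct and is exactly the argument the paper gives (the paper's proof is a one-line sketch: take a linear combination of exponential coordinate functions times a cutoff). Your write-up simply supplies the routine verifications — vanishing of the Christoffel symbols at the center of normal coordinates and the orthonormality of the coordinate frame at $x$ — so nothing further is needed.
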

\begin{proof}
Consider exponential coordinates in a neighborhood of $x$.  Let $u$ be a linear combination of the coordinate functions, multiplied by a cutoff function.
\end{proof}

Now we can easily prove the desired lower Ricci curvature bound:

\begin{proof}[Proof that $\lambda_1(-\Delta_{x,t})\geq \frac{\kappa}{e^{\kappa t}-1}\implies \text{Ric}+\nabla^2 f\geq -\kappa g$]
For any $x\in M$ and unit vector $v\in T_xM$ let $u$ be as in Lemma \ref{l:lower_ricci:2}.  By the spectral gap estimate we have the estimate
\begin{align}
N^u(x,t)\geq 1-\frac{\kappa}{2} t+o(t) \, ,
\end{align}
for each $t>0$.  Since $|\nabla u|(x)=1$ we have that $N^u(x,0)=1$, and therefore we have the estimate
\begin{align}
\frac{d}{dt}N(x,0)\geq -\frac{\kappa}{2}\, .
\end{align}
However by Lemma \ref{l:lower_ricci:1} and Lemma \ref{l:lower_ricci:2} we have
\begin{align}
\frac{d}{dt}N(x,0) = \frac{1}{2}\bigg(\Ric+\nabla^2 f\bigg)(v,v)\, .
\end{align}
Combining these and using that $x$ and $|v|=1$ were arbitrary gives the result.

\end{proof}
\vspace{.5 cm}

\subsection{The $d$-dimensional Lower Ricci Bound}

It is the dimensional form of a lower Ricci curvature bound that is needed for the most powerful applications, for instance the Harnack inequalities.  As an extension of the Bochner formula (\ref{e:Bochner}) it was shown in \cite{BakryEmery_diffusions} that the lower bound $\Ric+\nabla^2f-\frac{1}{d-n}\nabla f\otimes \nabla f\geq -\kappa$ is equivalent to the dimensional Bochner formula
\begin{align}
\Delta_f |\nabla u|^2 \geq \langle\nabla u,\nabla\Delta_f u\rangle+\frac{2}{d}|\Delta_f u|^2 -2\kappa|\nabla u|^2\, .
\end{align}
Again we have that many important estimates on the heat flow follow, which are themselves equivalent to the $d$-dimensional lower bound when said precisely.  The following summarizes these estimates:

\begin{theorem}[\cite{BakryEmery_diffusions},\cite{BakryLedoux_logSov}]\label{t:lower_d_ricci}
 Let $(M^n,g,e^{-f}dv_g)$ be a smooth metric-measure space, then the following are equivalent:
\begin{enumerate}
\item $\text{Ric}+\nabla^2 f-\frac{1}{d-n}\nabla f\otimes \nabla f\geq -\kappa g$.

\item $|\nabla H_t u|^2+\frac{e^{\kappa t}-1}{d\kappa}\big(\Delta_f H_t u\big)^2\leq e^{\kappa t}H_t|\nabla u|^2$.

\item $H_tu^2(x)-(H_tu(x))^2 + \frac{e^{\kappa t}-1-\kappa t}{d \kappa^2}|\Delta_fH_t u|\leq \frac{e^{\kappa t}-1}{\kappa}H_t|\nabla u|^2(x)$.

\end{enumerate}
\end{theorem}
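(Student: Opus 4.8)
The plan is to run the classical Bakry-Emery semigroup interpolation argument, now carrying along the extra $\tfrac{2}{d}|\Delta_f u|^2$ term that the curvature-dimension hypothesis $(1)$ contributes to the Bochner inequality. By the result of \cite{BakryEmery_diffusions} recalled just above the statement, $(1)$ is equivalent to the dimensional Bochner inequality $\tfrac12\Delta_f|\nabla u|^2 \geq \langle\nabla u,\nabla\Delta_f u\rangle + \tfrac1d|\Delta_f u|^2 - \kappa|\nabla u|^2$, so it suffices to prove $(1)\Rightarrow(2)$, $(1)\Rightarrow(3)$, and the converses $(2)\Rightarrow(1)$ and $(3)\Rightarrow(1)$. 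Throughout I would take $u$ in a core such as $C^\infty_c(M)$ and leave the justification of acting with, and differentiating under, $H_t$ on a complete manifold to the functional-analytic framework of \cite{BakryEmery_diffusions},\cite{BakryLedoux_logSov}.

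For $(1)\Rightarrow(2)$, fix $T>0$ and $x\in M$, and put $\Phi(s)\equiv H_s\big(|\nabla H_{T-s}u|^2\big)(x)$ on $[0,T]$. Writing $v_s\equiv H_{T-s}u$ and using that $H_s$ commutes with $\Delta_f$ and $\partial_s v_s = -\tfrac12\Delta_f v_s$, one obtains
\begin{align}
\Phi'(s) = H_s\Big(\tfrac12\Delta_f|\nabla v_s|^2 - \langle\nabla v_s,\nabla\Delta_f v_s\rangle\Big)\, .
\end{align}
Inserting the dimensional Bochner inequality and then the Cauchy-Schwarz bound $H_s(|\Delta_f v_s|^2)\geq (H_s\Delta_f v_s)^2 = (\Delta_f H_T u)^2$ gives the linear differential inequality $\Phi'(s) + \kappa\,\Phi(s) \geq \tfrac1d(\Delta_f H_T u)^2$; multiplying by $e^{\kappa s}$ and integrating over $[0,T]$, with $\Phi(0)=|\nabla H_T u|^2$ and $\Phi(T)=H_T|\nabla u|^2$, gives exactly $(2)$.

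For $(1)\Rightarrow(3)$ I would combine this with the unconditional identity $H_t u^2 - (H_t u)^2 = \int_0^t \Phi(s)\,ds$ (taking $T=t$ above), which follows from $\tfrac{d}{ds}H_s\big((H_{t-s}u)^2\big) = H_s(|\nabla H_{t-s}u|^2) = \Phi(s)$. Integrating the same differential inequality from $s$ to $t$ instead produces the pointwise bound $\Phi(s)\leq e^{\kappa(t-s)}H_t|\nabla u|^2 - \tfrac1{d\kappa}\big(e^{\kappa(t-s)}-1\big)(\Delta_f H_t u)^2$, and integrating this over $s\in[0,t]$, using $\int_0^t e^{\kappa(t-s)}\,ds=\kappa^{-1}(e^{\kappa t}-1)$ and $\int_0^t(e^{\kappa(t-s)}-1)\,ds=\kappa^{-1}(e^{\kappa t}-1)-t$, yields
\begin{align}
H_t u^2(x) - (H_t u)^2(x) + \frac{e^{\kappa t}-1-\kappa t}{d\kappa^2}\,(\Delta_f H_t u)^2(x) \leq \frac{e^{\kappa t}-1}{\kappa}\,H_t|\nabla u|^2(x)\, ,
\end{align}
which is $(3)$ (the printed $|\Delta_f H_t u|$ there should read $(\Delta_f H_t u)^2$). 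The converses come from Taylor expansion at $t=0$: both sides of $(2)$, and of $(3)$, agree to first order in $t$, so differentiating $(2)$ once, respectively $(3)$ twice, and evaluating at $t=0$ reproduces the dimensional Bochner inequality, hence $(1)$; the dimensional term survives because $\tfrac{d}{dt}\big(\tfrac{e^{\kappa t}-1}{d\kappa}(\Delta_f H_t u)^2\big)$ and $\tfrac{d^2}{dt^2}\big(\tfrac{e^{\kappa t}-1-\kappa t}{d\kappa^2}(\Delta_f H_t u)^2\big)$ both equal $\tfrac1d(\Delta_f u)^2$ at $t=0$.

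The elementary one-variable integrals and Taylor expansions are routine bookkeeping. The step that really does the work, and the one place I expect to need care with the precise constants, is the Cauchy-Schwarz collapse of $H_s(|\Delta_f v_s|^2)$ onto the $s$-independent quantity $(\Delta_f H_T u)^2$, since it is exactly this that lets the second integration close with the coefficient $\tfrac{e^{\kappa t}-1-\kappa t}{d\kappa^2}$ rather than something weaker; the other genuine subtlety, on a non-compact complete $M$, is the legitimacy of these semigroup manipulations, for which I would simply invoke \cite{BakryEmery_diffusions},\cite{BakryLedoux_logSov}.
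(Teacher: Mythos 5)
Your argument is correct, and it is essentially the canonical one: the paper itself offers no proof of this theorem, deferring entirely to \cite{BakryEmery_diffusions},\cite{BakryLedoux_logSov}, and your semigroup interpolation $\Phi(s)=H_s|\nabla H_{t-s}u|^2$ together with the Jensen collapse $H_s(|\Delta_f v_s|^2)\geq(\Delta_f H_t u)^2$ and the variance identity $H_tu^2-(H_tu)^2=\int_0^t\Phi(s)\,ds$ is exactly the mechanism those references use, and the same one the paper deploys in its nonsmooth analogues (Theorems \ref{t:boundedricci_implies_BE} and \ref{t:lower_implies_bounded_d}). Your Taylor-expansion converses likewise mirror the paper's own style of argument (compare Lemma \ref{l:spec_gap_implies_lowerricci}), and you are right that the $|\Delta_f H_t u|$ in item $(3)$ is a typo for $(\Delta_f H_t u)^2$.
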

\vspace{1cm}

\section{Preliminaries and Notation on Path Space}\label{s:prelim_smooth}

In this Section we discuss some of the basic structure of path spaces on a smooth manifold.  Many of the constructions of this Section are standard, though often we will need slight generalizations and extensions of the constructions to fit them better to the current paper.  Additionally since the topics of this paper cover multiple areas, which at times have conflicting notational norms, it seemed reasonable to collect together any possible points of notational confusion into one area.\\

The outline of this Section is as follows.  In Section \ref{ss:pathspace_basics} we recall the basic definitions and constructions on path space.  Section \ref{ss:function_spaces} is dedicated to briefly defining the function spaces on path space, and the most basic definition of a martingale.  The construction of useful functions on path space is an important topic, and in Section \ref{ss:functions_pathspace} we give several methods for constructing functions on path space which will be useful in the paper.  In Section \ref{ss:diffusion_measures} we define the general notion of a diffusion measure.  Finally in Section \ref{ss:stoc_par_trans} we briefly introduce the stochastic parallel translation map, which will be especially important in Section \ref{ss:parallel_gradient}.

\subsection{Path Space Basics}\label{ss:pathspace_basics}

We will generally be most interested in the space
\begin{align}
&P(M)\equiv C^0([0,\infty),M)\, ,
\end{align}
of continuous unbased paths in $M$.  Recall from Section \ref{sss:Diffusion_Measure_intro} that for each partition of
\begin{align}
\bt\equiv\{0\leq t_1<\cdots<t_{|\bt|}<\infty\}\, ,
\end{align}
that there is the corresponding evaluation map $e_\bt:P(M)\to M^{|\bt|}$ given by 
\begin{align}
e_\bt(\gamma)\equiv (\gamma(t_1),\ldots, \gamma(t_{|\bt|}))\, .
\end{align}
Further, for each interval $[t,T]$ we have the $\sigma$-algebra $\cF_t^T$ on $P(M)$ generated by the collection of evaluation maps $e_\bt$ whose associated partitions $\bt$ are partitions of $[t,T]$, that is $t_j\in [t,T]$ for each $t_j\in \bt$.  If we consider the time restricted path space
\begin{align}
&P^T(M)\equiv C^0([0,T],M)\, ,
\end{align}
then we see the measurable functions $F$ on $P^T(M)$ are in one-to-one correspondence with the $\cF^T\equiv \cF^T_0$-measurable functions on $P(M)$.  Similar statements of course hold for the based path spaces
\begin{align}
&P_x(M)\equiv \{\gamma\in P(M): \gamma(0)=x\}\, ,\notag\\
&P_{x}^T(M)\equiv \{\gamma\in P^T(M): \gamma(0)=x\}\, .
\end{align}

\subsection{Function Spaces and Martingales}\label{ss:function_spaces}

The structure of this Section holds for any measure $\Gamma$ on $P(M)$, however in principle we will only be interested in the diffusion measures which we will introduce in Section \ref{ss:diffusion_measures}.  Now given any measure $\Gamma$ we can canonically associate to $P(M)$ the Hilbert space
\begin{align}
L^2(P(M),\Gamma)\, .
\end{align}
Recall that the Borel $\sigma$-algebra on $P(M)$ comes equipped with the canonical families of subalgebras given by $\cF^T_t$.  In particular the family $\cF^T$ forms a filtration of $\sigma$-algebras on $P(M)$.  Associated to each $\sigma$-algebra $\cF^T_t$ is the closed Hilbert subspace
\begin{align}
L^2(P(M),\Gamma;\cF^T_t)\subseteq L^2(P(M),\Gamma)\, ,
\end{align}
of $L^2$ functions which are $\cF^T_t$-measurable.  Because $L^2(P(M),\Gamma;\cF^T_t)$ is a closed subspace we may naturally project to it, and if $F\in L^2(P(M),\Gamma)$ is a $L^2$ function then we denote by $F^T_t\in L^2(P(M),\Gamma;\cF^T_t)$ its projection.  As usual if $t\equiv 0$ then we write $F^T\equiv F^T_0$.  The function $F^T_t$ is characterized uniquely by the property that
\begin{align}
\int_U F^T_t d\Gamma = \int_U F d\Gamma\, ,
\end{align}
for every $\cF^T_t$ measurable subset $U\subseteq P(M)$.  That is, from the probability point of view $F^T_t$ is the expectation of $F$ given the $\sigma$-algebra $\cF^T_t$.  Notice however, that characterized in this fashion $F$ only needs to be a $L^1$-function in order to define $F^T_t$.

Let us end this Section with a standard definition.  We say a one parameter family of $F^t\in L^1(P^t(M),\Gamma)=L^1(P(M),\Gamma;\cF^t)$ is a {\it martingale} if for every $s<t$ we have that $\big(F^t\big)^s \equiv F^s$.  Note that given any function $F\in L^1(P(M),\Gamma)$ there is the associated martingale given by $F^t$. 

\subsection{Function Theory on Path Space}\label{ss:functions_pathspace}

To do analysis on path space $P(M)$, or indeed any space, it is important to have a class of functions with which one can work with especially easily, and which will be dense in the various function spaces.  In this way one can consider most functional analytic constructions on these subspaces, and then extend by continuity to a more general class of functions.

In this Section we will introduce three classes of functions on path space $P(M)$ which will play a role at some point in this paper.

\subsubsection{Cylinder Functions}\label{sss:cylinder_functions}

In the context of analysis an especially natural collection of functions on path space $P(M)$ are the smooth cylinder functions.  These are the functions $F:P(M)\to \dR$ of the form
\begin{align}
F\equiv e_\bt^*u\, ,
\end{align}
where $e_\bt:P(M)\to M^{|\bt|}$ is an evaluation map and $u:M^{|\bt|}\to\dR$ is a smooth function with compact support.  The collection of smooth cylinder functions are dense in $L^2(P(M),\Gamma_\mu)$ for any of the diffusion measures $\Gamma_\mu$ (briefly introduced in Section \ref{sss:Diffusion_Measure_intro}, and discussed more completely in the next Section).  

Let us make a few remarks about the cylinder functions.  By definition a cylinder function $F$ on path space only depends on the value of a curve at a fixed, finite number of times.  In particular, if the partition $\bt$ associated to the cylinder function is a subset of $[t,T]$, then $F$ is $\cF^T_t$-measurable.  The cylinder functions will be the primary functions on $P(M)$ that we compute with.  Namely, most constructions in this paper will be begin on the smooth cylinder functions, and then will be extended to a broader class though a continuity argument.

\subsubsection{Path Integral Functions}\label{sss:pathintegral_functions}

The cylinder functions from the previous Section have the property that they are derived from smooth functions on finite dimensional spaces.  In this section we consider other ways to associate functions on $P(M)$ from functions on $M$.  The next simplest manner is to integrate along a curve.  Namely, given a smooth function $u$ on $M$ and a fixed interval $[t,T]$ one can associate the function $F:P(M)\to \dR$ by
\begin{align}\label{e:pi1}
F(\gamma) \equiv \int_t^T u(\gamma(s))ds\, .
\end{align}
It is clear from the definition that $F$ is $\cF_t^T$-measurable.  

One can generalize the above construction as follows.  Fix smooth bounded functions $u,v$ on $M$ and an interval $[t,T]$.  For each continuous curve $\gamma\in P(M)$ let $\phi(s) = \phi_\gamma(s):[t,T]\to \dR$ be the solution to the ode
\begin{align}\label{e:pi2}
&\frac{d}{ds}\phi(s) = v(\gamma(s))\phi(s)+u(\gamma(s))\, ,\notag\\
&\phi(t)=0\, .
\end{align}
Then we define a function $F:P(M)\to \dR$ on path space by the formula
\begin{align}
F(\gamma)\equiv \phi_\gamma(T)\, .
\end{align}
If course if one has $v\equiv 0$ then this reduces to (\ref{e:pi1}).  One can check easily that $F$ is a continuous bounded function on $P(M)$.  The path integral functions, and certain generalizations that will be introduced once we have defined stochastic parallel translation, arise naturally in the context of studying pde's on $M$.

\subsubsection{Stochastic Integrals}\label{sss:Ito_integrals}

In comparison to the cylinder and path integral functions, the Ito and Stratonovich integal is much more subtle, though they form the backbone of stochastic analysis.  We will give a definition here, but refer the reader to \cite{Kuo_book},\cite{StrVar_book} for a more complete understanding.

We consider path space $P(M)$ equipped with a measure $\Gamma$.  We call a family of measurable mappings $X^t:P(M)\to \dR$ a stochastic process if $X^t$ is $\cF^t$-measurable, and recall from Section \ref{ss:function_spaces} we call $X^t$ a $L^2$-martingale if for every $t<T$ we have that $(X^T)^t\equiv X^t$, where $(X^T)^t$ is the $L^2$ projection of $X^T$ to the $\cF^t$-measurable functions. 

Now given a martingale $X^t$ and a stochastic process $Y^t$ with
\begin{align}
\int_{P(M)} \bigg(\int_0^T |Y^t|^2dt\bigg) d\Gamma<\infty\, ,
\end{align}
we define the Ito integral by the limit
\begin{align}\label{d:Ito_integral}
\int_0^\infty Y^t\, dX^t \equiv \lim_{\Delta t\to 0} \sum Y^{t_i}\big(X^{t_{i+1}}-X^{t_i}\big)\in L^2(P(M),\Gamma)\, ,
\end{align}
where $\Delta t\equiv\max |t_{k+1}-t_k|$ is the maximum step size of the partition.  It turns out the limit does exist in $L^2$.  Similarly, we may define the Stratonovich integral
\begin{align}\label{d:Strat_integral}
\int_0^\infty Y_t\circ dX_t \equiv \lim_{\Delta t\to 0} \sum \frac{1}{2}\big(Y_{t_{i+1}}+Y_{t_{i}}\big)\big(X_{t_{i+1}}-X_{t_i}\big)\in L^2(P(M),\Gamma)\, ,
\end{align}
where again the limit converges in $L^2(P(M),\Gamma)$.  It is a subtle point that these two limits are not equal.

\subsection{Diffusion Measures}\label{ss:diffusion_measures}

In this Section we discuss the construction of a canonical family of measures on path space $P(M)$ known as the diffusion measures, and state some well known properties about such measures.  Specifically, let us recall that given any Borel measure $\mu$ on $M$ there exists a unique measure $\Gamma_\mu$ on $P(M)$ whose pushforward's by the evaluation maps $e_\bt:P(M)\to M^{|\bt|}$ are given by 
\begin{align}\label{e:DM}
e_{\bt,*}\Gamma_\mu = \int_M\rho_{t_1}(x,dy_1)\rho_{t_2-t_1}(y_1,dy_2)\cdots\rho_{t_k-t_{k-1}}(y_{k-1},dy_k)d\mu(x)\, ,
\end{align}
where $\rho_t(x,dy)$ is the heat kernel measure associated to $\frac{1}{2}\Delta_f$.  Uniqueness of such a measure is not hard to see, for a proof to the existence of $\Gamma_\mu$ we refer the reader to \cite{Stroock_book}, or indeed any beginning text on stochastic analysis.  We call the measure $\Gamma_\mu$ a diffusion measure with initial probability $\mu$.  Let us recall the subtlety, first observed by Wiener in his original construction, that this measure exists on continuous path space but not even on $H^1$-path space.  \\

There are two primary families of diffusion measures which will interest us.  The first family are Wiener measures given by $\Gamma_x\equiv \Gamma_{\delta_x}$.  Notice that the measures $\Gamma_x$ are concentrated on the based paths $P_x(M)\subseteq P(M)$.  In particular the Hilbert spaces $L^2(P(M),\Gamma_x)$ and $L^2(P_x(M),\Gamma_x)$ are canonically isomorphic.  We will not always distinguish between the two.  The other diffusion measure of particular interest in this paper is given by $\Gamma_f\equiv \Gamma_{e^{-f}dv_g}$.  Notice that we can also interpret $\Gamma_f$ as the measure on $P(M)$ given by
\begin{align}
\Gamma_f \equiv \int_M \Gamma_x\, e^{-f}dv_g\, .
\end{align}\\

Now the formula (\ref{e:DM}) well defines the diffusion measure, however the next result gives a characterization of the diffusion measures in terms of their expectations on the $\sigma$-algebras $\cF^T$ as in Section \ref{ss:function_spaces}.  For a proof of the following we refer the reader to \cite{Hsu_book}:

\begin{theorem}\label{t:diffusion_measure_characterization}
Let $\Gamma$ be a measure on $P(M)$.  Then $\Gamma$ is a diffusion measure $\Gamma_\mu$ with respect to some measure $\mu$ on $M$ if and only if the family of functions 
$$
F^T(\gamma) = u(\gamma(T))-u(\gamma(0))-\frac{1}{2}\int_0^T\Delta_f u\,(\gamma(s))\,ds\, ,
$$
is a martingale for every smooth function $u:M\to \dR$.
\end{theorem}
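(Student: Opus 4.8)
The plan is to recognize this as the classical statement that the diffusion measures are exactly the solutions of the martingale problem for the generator $\tfrac{1}{2}\Delta_f$, and to prove the two implications separately; both come down to the semigroup structure already encoded in (\ref{e:DM}). For the direction that a diffusion measure $\Gamma=\Gamma_\mu$ makes $F^T$ a martingale, I would use that (\ref{e:DM}) together with the Chapman--Kolmogorov identity $\int_M\rho_s(x,dy)\,\rho_t(y,dz)=\rho_{s+t}(x,dz)$ expresses the Markov property: conditioned on $\cF^s$, the law of $\gamma|_{[s,\infty)}$ is $\Gamma_{\gamma(s)}$. Hence for $s<T$ one has $\mathbb{E}[u(\gamma(T))\mid\cF^s]=H_{T-s}u(\gamma(s))$, while, using $\Delta_f H_{r-s}=H_{r-s}\Delta_f$, $\tfrac{d}{dr}H_{r-s}u=\tfrac12\Delta_f H_{r-s}u$, and the fundamental theorem of calculus,
\begin{align}
\mathbb{E}\Big[\tfrac12\int_s^T\Delta_f u(\gamma(r))\,dr\ \Big|\ \cF^s\Big]=\tfrac12\int_s^T H_{r-s}(\Delta_f u)(\gamma(s))\,dr=H_{T-s}u(\gamma(s))-u(\gamma(s))\, .
\end{align}
Since $\tfrac12\int_0^s\Delta_f u(\gamma(r))\,dr$ is $\cF^s$-measurable, subtracting gives $\mathbb{E}[F^T\mid\cF^s]=F^s$, i.e. the martingale property.

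For the converse, I would set $\mu$ equal to the pushforward of $\Gamma$ under $\gamma\mapsto\gamma(0)$; since the evaluation maps generate the Borel $\sigma$-algebra of $P(M)$, uniqueness of diffusion measures reduces the problem to showing $e_{\bt,*}\Gamma$ is given by (\ref{e:DM}) for every partition $\bt$. The crucial step is to promote the time-independent martingale hypothesis to the time-dependent Markov identity
\begin{align}
\mathbb{E}\big[g(\gamma(T))\mid\cF^s\big]=H_{T-s}g(\gamma(s))\qquad\Gamma\text{-a.s.}\tag{$\star$}
\end{align}
for $0\le s\le T$ and $g$ smooth with bounded derivatives. To prove ($\star$) I would fix $s$, put $v(y,r):=H_{T-r}g(y)$ so that $\partial_r v=-\tfrac12\Delta_f v$, choose a partition $s=r_0<\cdots<r_m=T$, and split each increment of $r\mapsto v(\gamma(r),r)$ as
\begin{align}
v(\gamma(r_{j+1}),r_{j+1})-v(\gamma(r_j),r_j)=\big(v(\gamma(r_{j+1}),r_j)-v(\gamma(r_j),r_j)\big)+\big(v(\gamma(r_{j+1}),r_{j+1})-v(\gamma(r_{j+1}),r_j)\big)\, ;
\end{align}
the hypothesis applied to the time-independent function $v(\,\cdot\,,r_j)$ controls $\mathbb{E}[\,\cdot\mid\cF^{r_j}]$ of the first term, the fundamental theorem of calculus controls the second, and after summing over $j$, taking $\mathbb{E}[\,\cdot\mid\cF^s]$, and refining the partition, the resulting error — governed by the time-modulus and the bounded space-gradient of $\Delta_f v=H_{T-r}\Delta_f g$ together with the uniform continuity of the path $\gamma$ on the compact interval $[s,T]$ — tends to zero, which yields ($\star$).

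Finally I would compute $e_{\bt,*}\Gamma$ by induction on $|\bt|$: for $\bt=\{t_1<\cdots<t_{k+1}\}$ and smooth bounded $g_1,\dots,g_{k+1}$, conditioning on $\cF^{t_k}$ and using ($\star$) gives $\mathbb{E}\big[\prod_{i=1}^{k+1}g_i(\gamma(t_i))\big]=\mathbb{E}\big[\prod_{i=1}^{k-1}g_i(\gamma(t_i))\cdot(g_k\cdot H_{t_{k+1}-t_k}g_{k+1})(\gamma(t_k))\big]$, which by the inductive hypothesis equals the integral of $\prod_i g_i$ against $\int_M\rho_{t_1}(x,dy_1)\cdots\rho_{t_{k+1}-t_k}(y_k,dy_{k+1})\,d\mu(x)$; since such products are measure-determining, $e_{\bt,*}\Gamma$ has the form (\ref{e:DM}), so $\Gamma=\Gamma_\mu$. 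The routine parts are the Markov computation in the forward direction and this finite-dimensional induction; the main obstacle I expect is establishing ($\star$) — passing from the given time-independent test functions to the time-dependent solution $v$ and making the Riemann-sum errors vanish, which on a noncompact $M$ requires the standard boundedness and regularity estimates for the heat semigroup applied to compactly supported data.
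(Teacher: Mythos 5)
Your proposal is correct. The paper does not actually supply a proof of this theorem — it defers to the cited reference — and your argument is precisely the standard martingale-problem characterization found there: the forward direction via the Markov property encoded in (\ref{e:DM}) plus the heat equation, and the converse via upgrading the time-independent martingale hypothesis to the identity $(\star)$ for the backward solution $v(y,r)=H_{T-r}g(y)$ by a telescoping Riemann-sum argument, followed by induction on the finite-dimensional distributions. The one point worth making explicit is that applying the hypothesis to the test functions $v(\cdot,r_j)=H_{T-r_j}g$ requires these to lie in the admissible class of smooth functions (with the bounds $\|H_t\Delta_f g\|_\infty\le\|\Delta_f g\|_\infty$ supplying the domination needed for the refinement limit), which holds by parabolic regularity for $g$ smooth with compact support and suffices since such $g$ are measure-determining.
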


\subsection{Stochastic Parallel Translation}\label{ss:stoc_par_trans}

One of the challenges to doing analysis on path space, even on a smooth manifold other than $\dR^n$, is that one must consider {\it nice} variations of very irregular curves.  To put this into proper perspective let us note that given a based path space $P_{x}M$ equipped with the Wiener measure $\Gamma_x$, it is well known that almost every curve $\gamma\in P_{x}M$ has the property that the set $\gamma([0,T])$ has Hausdorff dimension 2 in $M$ for each $T>0$.  In particular, $\gamma$ is highly non-differentiable.  Nonetheless, one wants to consider along $\gamma$ vector fields $V(t)\in T_{\gamma(t)}M$ which are parallel translation invariant.  Apriori, on an arbitrary continuous curve it is not reasonable or possible to consider such a nice class of vector fields. 

The key technical tool on a smooth manifold to handle this issue is the stochastic parallel translation map, which is briefly introduced in this Section.  We refer the reader to \cite{Emery_StocAnalMan},\cite{Hsu_book}, and \cite{Stroock_book} for a more rigorous introduction and various interpretations.


Intuitively, one wants to take the parallel translation map for piecewise smooth curves, and by approximating a continuous curve by such curves, limit the resulting parallel translation maps to define a parallel translation map for the continuous curve.  More precisely, for each partition $\bt$ let $P^\bt_x(M)$ be the collection of piecewise geodesics in $M$ with vertices given by $\bt$ (not necessarily minimizing geodesics).  Notice that $P^\bt_x(M)$ is a smooth submanifold of $P_x(M)$, in fact is canonically diffeomorphic to $\dR^{n\cdot |\bt|}$.  Notice also that there is a canonical projection map $P_x(M)\to P_x^\bt(M)$ which maps each curve $\gamma$ to the piecewise geodesic curve $\gamma_\bt$ with vertices $\bt$ and $\gamma(\bt)=\gamma_\bt(\bt)$.  This projection mapping is well defined away from a set of measure zero.

To define the stochastic parallel translation map we define the stochastic horizontal lifting map $H:P_x(M)\to P_{\tilde x}(FM)$, where $FM$ is the orthonormal frame bundle and $\tilde x\equiv (x,F_x)$ is any fixed lifting of $x$.  Note for each partition $\bt$ there is the standard such lifting of $P^\bt_x(M)$, and that by composing with the projection map $P_x(M)\to P^\bt(M)$ we have for each partition the approximate horizontal lifting map
\begin{align}
H^\bt:P_x(M)\to P_{\tilde x}(FM)\, .
\end{align}
The main result is that there exists a mapping $H:P_x(M)\to P_{\tilde x}(FM)$ such that for any sequence of increasing dense partitions $\bt^j$ we have that that $H^{\bt^j}\to H$ in measure.  In particular a subsequence converges pointwise a.e.  See the second part of the paper for some more refined statements, and see \cite{Stroock_book} for more details.

Finally, to define the stochastic parallel translation maps fix $t>0$ and let $T_tP_x(M)=e^*_t TM$ be the vector bundle over $P_x(M)$ given by the pullback of the tangent bundle $TM$, where $e_t:P(M)\to M$ is the evaluation map at time $t$.  Thus a section of $T_tP_x(M)$ assigns to each curve $\gamma$ a vector in $T_{\gamma(t)}M$.  Then we define $P_t:T_tP_x(M)\to T_0 P_x(M)$ by the formula
\begin{align}
P_t(\gamma) = H(\gamma)(0)\cdot H(\gamma)^{-1}(t):T_{\gamma(t)}M\to T_{\gamma(0)}M\, .
\end{align}

Let us end this Section with the following observation.  The isometry $P_t(\gamma):T_{\gamma(t)}M\to T_x M$ induces corresponding isometries between the higher tensor spaces $P_t: T^{p,q}_{\gamma(t)}M\to T^{p,q}_xM$.  Given a tensor $A\in T^{p,q}_{\gamma(t)}M$ we will write $P_t A\in T^{p,q}_xM$ for the corresponding tensor above $x$.

\section{Bounded Ricci Curvature and Gradient Estimates}\label{s:smooth_bounded_ricci_gradient}

In this Section we discuss characterizations of bounded Ricci in terms of gradient estimates on path space.  The first point toward this end is the introduction of the parallel gradient in Section \ref{ss:parallel_gradient}.  With this in hand we prove the gradient estimate $(R2)$ of Theorem \ref{t:smooth_bounded_ricci} in Section \ref{ss:r1_r2}.  The converse statement will not be proved until Section \ref{s:finish_maintheorem}.  Finally in Section \ref{ss:r2_r3} we prove the gradient estimate $(R3)$.  The dimensional versions of these estimates will be discussed in Section \ref{s:smooth_d_ricci}.\\

\subsection{The Parallel Gradient}\label{ss:parallel_gradient}

The parallel gradient operators act as a form of finite dimensional gradient operators on the infinite dimensional path space.  In this Section we define a one parameter family of gradients $\nabla_s:L^2(P_x(M),\Gamma_x)\to L^2(T_0P_x(M),\Gamma_x)$.  This one parameter family of gradients will arise in many ways throughout the paper, and will be particularly important in various estimates.  Though each has a finite dimensional flavor to it, we will see in Section \ref{ss:H1_gradient} how as a family they recover the infinite dimensional $H^1_0$-gradient on path space.  We will begin in Section \ref{sss:0_parallel_gradient} by introducing the $0$-parallel gradient, which is the easiest to describe.  We will extend the construction in Section \ref{sss:s_parallel_gradient} to the the general $s$-parallel gradient operators.\\

\subsubsection{The $0$-Parallel Gradient.}\label{sss:0_parallel_gradient}

Given based path space $P_x(M)$ we consider the (trivial) vector bundle over $P_x(M)$ defined by
\begin{align}
T_0P_x(M)\equiv e_0^*TM\, ,
\end{align}
where $e_0:P(M)\to M$ given by $e_0(\gamma)=\gamma(0)$ is the evaluation map given by the partition $\bt=\{0\}$.  That is, a section of $T_0P_x(M)$ is a continuous mapping $P_x(M)\to T_{x}M$.  The bundle $T_0P_x(M)$ comes naturally equipped with an inner product, and therefore we have the canonical Hilbert space $L^2(T_0P_x(M),\Gamma_x)$ of $L^2$ sections of $T_0P_x(M)$ with respect to the diffusion measure $\Gamma_x$.\\

The parallel gradient is an unbounded, closed operator
\begin{align}
\nabla_0:L^2(P_x(M),\Gamma_x)\to L^2(T_0P_x(M),\Gamma_x)\, .
\end{align}

We will first define it on smooth cylinder functions, and then extend it to the rest of $L^2(P_x(M),\Gamma_x)$.  Now if $F:P_x(M)\to \dR$ is a smooth cylinder function, then for any $\gamma\in P_x(M)$ and any vector field $V(t)$ along $\gamma$ the directional derivative $D_VF$ is well defined.  We define the parallel gradient of $F$ at a curve $\gamma$ to be the unique vector $\nabla_0 F(\gamma) \in T_{x}M$ such that for every parallel translation invariant vector field $V(t)\equiv P_t^{-1}V_0$ along $\gamma$ we have that
\begin{align}
D_VF = \langle\nabla_0 F(\gamma), V_0\rangle_{T_{x}M}\, .
\end{align}
Recall that the stochastic parallel translation map $P_t$ is defined for $a.e.$ curve $\gamma$, and therefore the parallel gradient is well defined $a.e.$ in $P(M)$.  

The next Theorem tells us that the parallel gradient extends to a closed operator on $L^2$.  We postpone the proof until the next Section where we prove the more general statement for the $s$-parallel gradients. 

\begin{theorem}
The parallel gradient $\nabla_0 F$ extends to a closed operator $\nabla_0:L^2(P_x(M),\Gamma_x)\to L^2(T_0P_x(M),\Gamma_x)$ such that the smooth cylinder functions are dense in the domain $\cD(\nabla_0)$.
\end{theorem}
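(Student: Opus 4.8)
The essential content of this statement is the \emph{closability} of the densely defined operator $\nabla_0$ on the smooth cylinder functions; granting this, one takes $\nabla_0$ to be its minimal closed extension (the closure of its graph), and then the smooth cylinder functions form a core, i.e.\ are dense in $\cD(\nabla_0)$ for the graph norm, by construction. Their density in $L^2(P_x(M),\Gamma_x)$ itself is standard: the Borel $\sigma$-algebra of $P_x(M)$ is generated by the evaluation maps, the push-forwards $e_{\bt,*}\Gamma_x$ are mutually absolutely continuous with smooth positive densities (products of heat kernels), and a monotone class argument then gives $L^2$-density. So the plan is to prove: if $F_j$ are smooth cylinder functions with $F_j\to 0$ in $L^2(P_x(M),\Gamma_x)$ and $\nabla_0 F_j\to G$ in $L^2(T_0P_x(M),\Gamma_x)$, then $G=0$.

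To identify $G$ it suffices to pair against sections of the form $\gamma\mapsto\phi(\gamma)\,V_0$, with $\phi$ a bounded smooth cylinder function and $V_0\in T_xM$, since these span a dense subspace of $L^2(T_0P_x(M),\Gamma_x)$. Fix such a $\phi$ and $V_0$ and let $V=V(\gamma)$ denote the parallel translation invariant vector field along $\gamma$ with $V(0)=V_0$, i.e.\ $V(t)=P_t(\gamma)^{-1}V_0$, which is defined and measurable for $\Gamma_x$-a.e.\ $\gamma$ by the construction of the stochastic parallel translation map recalled in Section~\ref{ss:stoc_par_trans}. Then $D_VF_j$ is a well-defined element of $L^2(P_x(M),\Gamma_x)$ for each $j$, and by definition of the parallel gradient $\langle\nabla_0 F_j,V_0\rangle=D_VF_j$; hence $\int_{P_x(M)}\phi\,D_VF_j\,d\Gamma_x\to\int_{P_x(M)}\phi\,\langle G,V_0\rangle\,d\Gamma_x$.

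The main tool is an integration by parts formula on path space in the spirit of Bismut and Driver \cite{Driver_CM} (see also \cite{Malliavin_Geometrie}, \cite{Hsu_book}): for the parallel translation invariant vector field $V$ above there is a divergence $\delta(V)\in L^2(P_x(M),\Gamma_x)$ --- represented by a stochastic integral along the Brownian path whose integrand is built from $\Ric$ evaluated along $\gamma$ --- such that $\int_{P_x(M)}\Psi\,D_V\Phi\,d\Gamma_x=-\int_{P_x(M)}\Phi\,D_V\Psi\,d\Gamma_x+\int_{P_x(M)}\Phi\,\Psi\,\delta(V)\,d\Gamma_x$ for all smooth cylinder functions $\Phi,\Psi$. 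Applying this with $\Phi=F_j$ and $\Psi=\phi$, the right-hand side tends to $0$: the first term since $F_j\to 0$ in $L^2$ and $D_V\phi$ is bounded, the second since $\phi\,\delta(V)\in L^2$, so both are controlled by $\|F_j\|_{L^2}$ via Cauchy--Schwarz. Therefore $\int_{P_x(M)}\phi\,\langle G,V_0\rangle\,d\Gamma_x=0$ for every admissible $\phi,V_0$, which forces $G=0$, proving closability and hence the theorem.

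I expect the delicate point to be ensuring $\delta(V)\in L^2$ when $M$ is only assumed complete, so that $\Ric$ need not be bounded along the path. The remedy is localization: cut $\Gamma_x$ off to an exhausting sequence of relatively compact domains (equivalently, use the diffusion killed on the boundary of such a domain), establish the integration by parts identity with the cut-off in force, take $F_j$ and $\phi$ supported in a fixed such domain, and pass to the limit --- closability being a local and qualitative statement, no quantitative curvature bound is needed. The remaining routine points are the a.e.-existence and measurability of $t\mapsto P_t(\gamma)^{-1}V_0$ (exactly as in Section~\ref{ss:stoc_par_trans}) and the absolute continuity of the $e_{\bt,*}\Gamma_x$. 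Finally, the more general statement for the $s$-parallel gradients $\nabla_s$ announced before this theorem is proved by the identical argument, now pairing against right-continuous vector fields that vanish on $[0,s)$ and are parallel translation invariant on $[s,\infty)$; for these, $D_VF$ remains well defined on cylinder functions and the corresponding integration by parts formula is available.
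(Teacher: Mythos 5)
Your proposal is correct and follows essentially the same route as the paper: the paper proves an integration-by-parts formula for the $s$-parallel directional derivatives (Lemma \ref{l:parallel_ibp}, obtained from Driver's formula by approximating the parallel vector field with $H^1_0$ fields $y_\epsilon$) and then declares closedness to follow immediately, which is exactly the closability argument you spell out. The only imprecision is that for a parallel translation invariant field $V$ (which is not in the Cameron--Martin space, since $y(0)=V_0\neq 0$) the divergence is not simply multiplication by a fixed $\delta(V)$ but also contains a quadratic-covariation term $d[G^s,W^s]$ in the test function; this does not affect your argument, since the resulting expression still depends only on $\Psi$ and $V$ and lies in $L^2$.
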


\subsubsection{The $s$-Parallel Gradient}\label{sss:s_parallel_gradient}

As with the $0$-parallel gradient we begin by defining the $s$-parallel gradient on the cylinder functions.  So let $s>0$ be fixed and let $F=e_\bt^*u$ be a smooth cylinder function on $P_x(M)$.  Let us begin with the observation that for a cylinder function $F$ the directional derivative $D_V F$ is well defined not only for continuous vector fields but also for left continuous vector fields.  So for each $V_x\in T_{x}M$ we can define the $s$-parallel translation invariant vector field $V_s(t)$ given by $V_s(t)=0$ if $t<s$ and $V_s(t)=P_t^{-1}V_x$ if $t\geq s$.  The partial derivatives $D_{V_s}F$ is therefore well defined for each such vector field.

Now define the $s$-parallel gradient $\nabla_s F$ of $F$ at a curve $\gamma$ to be the unique vector $\nabla_s F(\gamma) \in T_{x}M$ such that for every $s$-parallel translation invariant vector field $V_s(t)$ along $\gamma$ we have that
\begin{align}
D_{V_s}F = \langle\nabla_s F(\gamma), V_x\rangle_{T_{x}M}\, .
\end{align}
Note in particular that we can write for a.e. $\gamma\in P(M)$ that
\begin{align}\label{e:parallel_slope_smooth}
|\nabla_s F|(\gamma)\equiv \sup\{|D_V F|: V\text{ is a }s\text{-parallel variation with }|V|(s)=1\}.
\end{align}

The next lemma is an integration by parts formula for the parallel gradients.  The statement of the result requires two standard tools from stochastic analysis which, however, will not be discussed with any care until later in the paper, namely the quadratic variation $[,]$ in Section \ref{ss:mart_quad} and the Brownian motion map $W^s$.  The lemma will be used for Theorem \ref{t:s_parallel_grad_closed} in order to prove that $\nabla_s$ extends to a closed operator, but otherwise is not required for the rest of the paper and may be skipped on a first read.

\begin{lemma}\label{l:parallel_ibp}
Let $F,G$ be smooth cylinder functions which are $\cF^T$-measurable with $V_x\in T_x M$.  Then for $s\geq 0$ if $V_s$ is the vector field on $P_x(M)$ such that for each $\gamma$ we have that $V_s(\gamma)$ is the $s$-parallel vector field induced by $V_x$, then 
\begin{align}
\int_{P_x(M)} D_{V_s}F\cdot G\,d\Gamma_x = \int_{P_x(M)} F\cdot\bigg(-D_{V_s}G+d[G^s,W^s]-\frac{1}{2}G\int_s^T \big\langle (\Ric+\nabla^2f)(V_s),dW^s\big\rangle\bigg)\,d\Gamma_x\,  .
\end{align}
\end{lemma}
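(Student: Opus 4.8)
The plan is to reduce the statement to the classical Driver–Bismut integration-by-parts formula for the $H^1_0$-gradient and then specialize to the $s$-parallel directions. First I would recall the setup: on based path space $P_x(M)$ with the Wiener measure $\Gamma_x$, the stochastic parallel translation $P_t$ and the anti-development give a Brownian motion $W^t$ (Section \ref{ss:brownian_motion}), and Driver's formula states that for a smooth cylinder function $F$ and a sufficiently regular adapted vector field $V(t)=P_t^{-1}v(t)$ with $v\in H^1_0$ one has
\begin{align}
\int_{P_x(M)} D_V F\, d\Gamma_x = \int_{P_x(M)} F\cdot \int_0^T \Big\langle \dot v(t) + \tfrac12\big(Rc+\nabla^2 f\big)_{\gamma(t)}\big(P_t^{-1}v(t)\big),\, dW^t\Big\rangle\, d\Gamma_x\, ,
\end{align}
where the Ricci term appears because the Markovian (Itô) parallel transport is used and the Bismut correction accounts for the curvature of the connection; the $f$-term enters because the reference measure is $e^{-f}dv_g$ and the drift of the diffusion is $-\tfrac12\nabla f$. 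I would state this as a known input, citing \cite{Driver_CM}, \cite{Hsu_book}, \cite{Stroock_book}.

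Next I would handle the fact that the $s$-parallel vector field $V_s$ is not an $H^1_0$-vector field — it has a jump at time $s$, being zero on $[0,s)$ and parallel-invariant on $[s,T]$. The right way to deal with the jump is to note that $V_s$ is a limit of $H^1_0$-vector fields $V^\epsilon$ which ramp up linearly from $0$ to $V_x$ (in parallel frame) over $[s,s+\epsilon]$ and are parallel-invariant afterward. For these $\dot v^\epsilon$ is concentrated on $[s,s+\epsilon]$ with mass $V_x$, so as $\epsilon\to 0$ the stochastic integral $\int_0^T\langle \dot v^\epsilon,dW^t\rangle$ converges in $L^2$ to the single increment $\langle V_x, W^{s+}-W^{s-}\rangle$ — but since $W$ is continuous this is not quite what we want; rather, the point is that $\int \dot v^\epsilon \cdot dW$ must be re-expressed. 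The cleaner route: integrate by parts in the Itô integral. Writing $v^\epsilon(t) = a^\epsilon(t) V_x$ with $a^\epsilon$ the ramp, we have $\int_0^T \langle \dot v^\epsilon, dW\rangle = \langle V_x, W^T\rangle - \langle V_x, W^s\rangle - (\text{correction})$ only in the smooth case; in the stochastic case one uses that for the specific adapted integrand, $\int_s^T \langle \dot a^\epsilon V_x, dW\rangle \to \langle V_x, dW\rangle$ does not converge, so instead I would directly identify the limiting object as $d[G^s, W^s]$. Concretely, the term $d[G^s,W^s]$ on the right is precisely what emerges when one computes $\lim_\epsilon \int F D_{V^\epsilon}(G)$-type cross terms via the product rule $D_{V^\epsilon}(FG) = (D_{V^\epsilon}F)G + F(D_{V^\epsilon}G)$ applied inside Driver's formula: moving the ramp-derivative onto the pair $(G,W)$ produces, in the limit, the infinitesimal quadratic covariation $d[G^s,W^s]$ of the martingale $G^s$ with the Brownian motion, by the very definition of $[\cdot,\cdot]$ in Section \ref{ss:mart_quad}.

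So the key steps in order are: (1) state Driver's $H^1_0$ integration-by-parts formula with the Itô/Bismut Ricci correction and the $f$-drift term; (2) approximate $V_s$ by the $\epsilon$-ramp vector fields $V^\epsilon\in H^1_0$ and apply (1) to the pair $FG$, splitting via the product rule for $D_{V^\epsilon}$; (3) take $\epsilon\to 0$: the parallel-invariant part of $V^\epsilon$ on $[s+\epsilon,T]$ gives $D_{V_s}F$ and $D_{V_s}G$ and the curvature integral $\int_s^T\langle (Rc+\nabla^2 f)(V_s),dW\rangle$, while the ramp part on $[s,s+\epsilon]$ contributes, in the limit and after the product-rule reshuffling, exactly $d[G^s,W^s]$; (4) collect terms and rearrange to isolate $\int F\big(-D_{V_s}G + d[G^s,W^s] - \tfrac12 G\int_s^T\langle(Rc+\nabla^2 f)(V_s),dW\rangle\big)$. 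I expect the main obstacle to be step (3): making rigorous that the $\epsilon$-ramp contribution converges to the quadratic-covariation term $d[G^s,W^s]$ rather than to some spurious boundary term — this requires care with the continuity properties of the cylinder function extended to right-continuous path space (as set up in Section \ref{sss:s_parallel_gradient}), uniform $L^2$-bounds on the ramp derivatives so that dominated convergence applies to the stochastic integrals, and the identification of the limit via the defining property of $[G^s,W^s]$. The curvature and $f$-terms themselves are a bookkeeping matter once Driver's formula is granted; the subtlety is entirely in the jump at $s$.
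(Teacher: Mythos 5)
Your proposal follows essentially the same route as the paper: quote Driver's integration-by-parts formula for $H^1_0$-directions, approximate the $s$-parallel field by the $\epsilon$-ramp fields $y_\epsilon$, and identify the limiting ramp contribution with the quadratic covariation $d[G^s,W^s]$. The one place you stop short — your step (3), which you correctly flag as the main obstacle — is handled in the paper by a single conditioning identity: since $\int_0^T\langle\dot y_\epsilon,dW\rangle=\epsilon^{-1}\langle V_x,W^{s+\epsilon}-W^s\rangle$, one rewrites
\begin{align}
\int_{P_x(M)}F\cdot G\,\frac{W^{s+\epsilon}-W^s}{\epsilon}\,d\Gamma_x=\int_{P_x(M)}F\cdot\frac{(G^{s+\epsilon}-G^s)(W^{s+\epsilon}-W^s)}{\epsilon}\,d\Gamma_x\, ,
\end{align}
i.e.\ the martingale increment $W^{s+\epsilon}-W^s$ lets you replace $G$ by its projection increment $G^{s+\epsilon}-G^s$, and the $\epsilon\to 0$ limit is then literally the defining expression for $d[G^s,W^s]$ from Section \ref{ss:mart_quad}; your digression about integrating by parts in the It\^o integral is a dead end and should be cut. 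Two smaller corrections: the Ricci correction in Driver's formula should enter with a minus sign, $\int_0^T\langle\dot y-\tfrac12(Rc+\nabla^2 f)(y),dW\rangle$, to be consistent with the lemma's conclusion (your $+\tfrac12$ would flip the sign of the curvature term); and no separate product-rule reshuffling of $D_{V^\epsilon}(FG)$ is needed — Driver's formula is applied directly in the form containing both $D_YF\cdot G$ and $-F\,D_YG$, and one only passes to the limit $D_{V^\epsilon}F\to D_{V_s}F$, $D_{V^\epsilon}G\to D_{V_s}G$, $y_\epsilon\to V_s$ in the curvature integral.
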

\begin{proof}
This is an application of Driver's integration by parts formula \cite{Driver_CM}.  Namely, let $y(t)\in T_xM$ be  a $H^1_0$-curve in $T_xM$ with $Y$ the vector field on $P_x(M)$ defined by $Y(t)=P_t^{-1}y(t)$.  Then in \cite{Driver_CM} it was proved that
\begin{align}
\int_{P_x(M)} D_{Y}F\cdot G\,d\Gamma_x = \int_{P_x(M)} F\cdot\bigg(-D_{Y}G+G\int_0^T \big\langle \dot y-\frac{1}{2}(\Ric+\nabla^2f)(y(t)),dW^s\big\rangle\bigg)\,d\Gamma_x\,  ,
\end{align}
where $W^s$ is the brownian motion map and as in Section \ref{ss:functions_pathspace} $\int\langle, dW^s\rangle$ is the associated Ito integral.  Now for $V_x\in T_xM$ fixed and each $\epsilon>0$ let $y_\epsilon(t)$ be defined by $y_\epsilon(t)=0$ for $t\leq s$, $y_{\epsilon}(t)=V_x\,\epsilon^{-1}(t-s)$ for $s\leq t\leq s+\epsilon$ and $y_\epsilon(t)=V_x$ otherwise.  Computing gives
\begin{align}
\int_{P_x(M)} &F\cdot\bigg(G\int_0^T \big\langle \dot y_\epsilon-\frac{1}{2}(\Ric+\nabla^2f)(y_\epsilon(t)),dW^s\big\rangle\bigg)\,d\Gamma_x\,\notag\\
&=\int_{P_x(M)} F\cdot\bigg(G\frac{W^{s+\epsilon}-W^s}{\epsilon}+ G\int_s^{T} \big\langle -\frac{1}{2}(\Ric+\nabla^2f)(y_\epsilon(t)),dW^s\big\rangle\bigg)\,d\Gamma_x\notag\\
&=\int_{P_x(M)} F\cdot\bigg(\frac{(G^{s+\epsilon}-G^s)(W^{s+\epsilon}-W^s)}{\epsilon}+ G\int_s^{T} \big\langle -\frac{1}{2}(\Ric+\nabla^2f)(y_\epsilon(t)),dW^s\big\rangle\bigg)\,d\Gamma_x\notag\\
&\to \int_{P_x(M)} F\cdot\bigg(d[G^s,W^s]+ G\int_s^{T} \big\langle -\frac{1}{2}(\Ric+\nabla^2f)(V_s),dW^s\big\rangle\bigg)\,d\Gamma_x\, ,
\end{align}
as claimed.
\end{proof}

Using the above we immediately have the following, which tells us that the $s$-parallel gradient operators extend to closed operators in $L^2$:

\begin{theorem}\label{t:s_parallel_grad_closed}
For each $s\geq 0$ the $s$-parallel gradient $\nabla_s F$ extends to a closed operator $\nabla_s:L^2(P_x(M),\Gamma_x)\to L^2(T_0P_x(M),\Gamma_x)$ such that the smooth cylinder functions are dense in the domain $\cD(\nabla_s)$.
\end{theorem}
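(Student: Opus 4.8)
The plan is to reduce the assertion to the \emph{closability} of $\nabla_s$ as an operator defined on the smooth cylinder functions, and then to take the closure. Recall that a densely defined operator between Hilbert spaces is closable exactly when its adjoint is densely defined, and that to verify this it suffices to produce a dense family of ``test sections'' $\Psi$ of $T_0P_x(M)$ for which the linear functional $F\mapsto \langle \nabla_s F,\Psi\rangle_{L^2}$, defined a priori only on smooth cylinder functions, is continuous in the $L^2(P_x(M),\Gamma_x)$-norm. Since the smooth cylinder functions are dense in $L^2(P_x(M),\Gamma_x)$ (Section \ref{sss:cylinder_functions}), once closability is known we may \emph{define} $\nabla_s$ on its domain $\cD(\nabla_s)$ to be the closure; the cylinder functions are then dense in $\cD(\nabla_s)$ for the graph norm by the very definition of the closure, and $\cD(\nabla_s)$ is dense in $L^2(P_x(M),\Gamma_x)$, which is everything the theorem asserts.

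For the test sections I would fix an orthonormal basis $e_1,\dots,e_n$ of $T_xM$ and, under the canonical identification of $T_0P_x(M)$ with the trivial bundle $P_x(M)\times T_xM$, take $\Psi=G\,e_i$ with $G$ a smooth cylinder function; finite linear combinations of such sections are dense in $L^2(T_0P_x(M),\Gamma_x)$. For a smooth cylinder function $F$ one has $\langle\nabla_s F,G\,e_i\rangle_{L^2}=\int_{P_x(M)}D_{V_s}F\cdot G\,d\Gamma_x$ with $V_x=e_i$, and Lemma \ref{l:parallel_ibp} rewrites this as $\int_{P_x(M)}F\cdot\Psi^*\,d\Gamma_x$, where
\begin{align}
\Psi^*\equiv -D_{V_s}G+d[G^s,W^s]-\tfrac{1}{2}\,G\int_s^T\big\langle (Rc+\nabla^2f)(V_s),dW^s\big\rangle\, .
\end{align}
Granting $\Psi^*\in L^2(P_x(M),\Gamma_x)$, the functional is continuous with norm at most $\|\Psi^*\|_{L^2}$, and closability is immediate: if $F_m\to 0$ in $L^2$ with $\nabla_s F_m\to\Phi$ in $L^2(T_0P_x(M),\Gamma_x)$, then $\langle\Phi,G\,e_i\rangle=\lim_m\langle\nabla_s F_m,G\,e_i\rangle=\lim_m\int_{P_x(M)}F_m\,\Psi^*\,d\Gamma_x=0$ for every choice of $G$ and $e_i$, so $\Phi=0$.

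The step I expect to be the real content is checking that $\Psi^*$ is square integrable, together with the bookkeeping of the time $T$ in Lemma \ref{l:parallel_ibp}, which requires both $F$ and $G$ to be $\cF^T$-measurable. The term $D_{V_s}G$ is again essentially a cylinder function, and $d[G^s,W^s]$ is controlled once the quadratic variation of Section \ref{ss:mart_quad} and the Brownian motion map of Section \ref{ss:brownian_motion} are in hand; the last term asks that $\int_s^T\langle(Rc+\nabla^2f)(V_s),dW^s\rangle\in L^2(\Gamma_x)$, i.e.\ by the It\^o isometry that $\mathbb{E}_{\Gamma_x}\!\int_s^T|Rc+\nabla^2f|^2(\gamma(r))\,dr<\infty$ --- which is precisely the integrability already needed for Driver's formula to apply in the proof of Lemma \ref{l:parallel_ibp}, and is automatic when $|Rc+\nabla^2f|$ is bounded. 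Because a general smooth cylinder function need not be $\cF^{T_0}$-measurable for one fixed $T_0$, the tidy way to organize the argument is to run it first inside each space $L^2(P^T_x(M),\Gamma_x)$ of $\cF^T$-measurable functions, where $\Psi^*$ is a fixed $L^2$ element, obtain a closable operator there, and then observe that these closures are compatible as $T$ grows and assemble to the desired closed operator on $L^2(P_x(M),\Gamma_x)$ having the smooth cylinder functions as a core. Everything else is the soft functional-analytic packaging recorded in the first paragraph.
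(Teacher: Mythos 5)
Your proposal is correct and is exactly the argument the paper intends: the paper derives Theorem \ref{t:s_parallel_grad_closed} "immediately" from the integration by parts formula of Lemma \ref{l:parallel_ibp}, which is precisely your closability-via-densely-defined-adjoint argument with $\Psi^*$ as the adjoint test object (the same route cited for the Malliavin gradient in Section \ref{ss:H1_gradient}). Your additional care about the square-integrability of $\Psi^*$ and the $\cF^T$-bookkeeping is a faithful elaboration of what the paper leaves implicit, not a different approach.
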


Let us remark on the following.  Given the above we can define the Dirichlet form on path space given by
\begin{align}
E_{x,s}[F,G] \equiv \int_{P_xM} \langle\nabla_s F,\nabla_s G\rangle\, d\Gamma_x\, ,
\end{align}
from which we can define the $s$-laplacian $\Delta_s:L^2(P_xM)\to L^2(P_xM)$, which is an unbounded operator defined uniquely by
\begin{align}
E_{x,s}[F,G] \equiv \int_{P_xM} \langle \Delta_s F, G\rangle\, d\Gamma_x\, .
\end{align}
Likewise, it clear by similar arguments that we may consider the $s$-parallel gradient and $s$-laplacian as closed unbounded operators $\nabla_s,\Delta_s:L^2(P(M),\Gamma_f)\to L^2(T_0P(M),\Gamma_f)$ on unbased path space.\\

The following is almost a tautology, however it is a sufficiently useful formula for computing the singular parallel gradient of a smooth cylinder function that we record it.

\begin{proposition}\label{p:parallel_gradient}
Let $F=e_\bt^*u$ be a smooth cylinder function on $P(M)$.  Then the parallel gradient $\nabla_s F$ is given by
\begin{align}
\nabla_s F(\gamma) = \sum_{t_j\geq s} P_{t_j}\nabla_j u\, ,
\end{align}
where $P_t:T_{\gamma(t)}M\to T_{\gamma(0)}M$ is the stochastic parallel translation map.
\end{proposition}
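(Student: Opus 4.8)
The plan is simply to unwind the definitions; as the statement already advertises, this is close to a tautology. First I would fix a curve $\gamma\in P(M)$ at which the stochastic parallel translation maps $P_t = P_t(\gamma)$ are all defined — this holds for $\Gamma_x$-a.e.\ $\gamma$ by Section \ref{ss:stoc_par_trans} — and fix an arbitrary vector $V_x\in T_{\gamma(0)}M$. Recall from Section \ref{sss:s_parallel_gradient} that the associated $s$-parallel translation invariant vector field along $\gamma$ is $V_s(t)=0$ for $t<s$ and $V_s(t)=P_t^{-1}V_x$ for $t\geq s$, and that for a smooth cylinder function $F=e_\bt^*u$ the directional derivative $D_{V_s}F(\gamma)$ is well defined even though $V_s$ is merely right continuous, because $F$ depends on $\gamma$ only through the finitely many values $\gamma(t_1),\dots,\gamma(t_N)$.

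Next I would compute $D_{V_s}F(\gamma)$ by the chain rule. Writing $F(\gamma)=u\big(\gamma(t_1),\dots,\gamma(t_N)\big)$, a first-order variation of $\gamma$ with initial velocity field $V_s$ displaces the $j$-th evaluation point with velocity $V_s(t_j)$, so
\begin{align}
D_{V_s}F(\gamma) = \sum_{j=1}^{N}\big\langle \nabla_j u, V_s(t_j)\big\rangle = \sum_{t_j\geq s}\big\langle \nabla_j u, P_{t_j}^{-1}V_x\big\rangle\, ,
\end{align}
where $\nabla_j u$ is the gradient of $u$ in its $j$-th slot evaluated at $e_\bt(\gamma)$, and the terms with $t_j<s$ drop out since $V_s$ vanishes there. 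Then I would use that each $P_{t_j}\colon T_{\gamma(t_j)}M\to T_{\gamma(0)}M$ is a linear isometry, so $\big\langle \nabla_j u, P_{t_j}^{-1}V_x\big\rangle = \big\langle P_{t_j}\nabla_j u, V_x\big\rangle$, to rewrite the right-hand side as $\big\langle \sum_{t_j\geq s}P_{t_j}\nabla_j u,\ V_x\big\rangle_{T_{\gamma(0)}M}$.

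Since $V_x\in T_{\gamma(0)}M$ was arbitrary, comparing with the defining property $D_{V_s}F(\gamma)=\langle\nabla_s F(\gamma),V_x\rangle_{T_xM}$ of the $s$-parallel gradient forces $\nabla_s F(\gamma)=\sum_{t_j\geq s}P_{t_j}\nabla_j u$, which is the claim. There is no real obstacle here beyond bookkeeping. The one point genuinely requiring care is not proved at this stage but imported from earlier: the a.e.-in-$\gamma$ existence of $P_t$ (Section \ref{ss:stoc_par_trans}), which is what makes the right-hand side a well-defined element of $L^2(T_0P_x(M),\Gamma_x)$, together with the measurability and closedness already established in Theorem \ref{t:s_parallel_grad_closed}.
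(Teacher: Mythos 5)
Your proof is correct and follows essentially the same route as the paper's: compute $D_{V_s}F$ by the chain rule, drop the terms with $t_j<s$, and use that each $P_{t_j}$ is an isometry to move it across the inner product. The extra remarks about a.e.\ well-definedness of $P_t$ are consistent with how the paper treats this point elsewhere.
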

\begin{proof}
Let $\gamma\in P(M)$ with $V_s(t)$ such that $V_s(t)\equiv 0$ is $t<s$ and such that $V_s(t)=P_t^{-1}v$ is parallel translation invariant for $t\geq s$.  Then we have that
\begin{align}
D_{V_s}F &= \sum \langle V(t_j),\nabla_j u\rangle = \sum_{t_j\geq s} \langle P^{-1}_{t_j}v,\nabla_j u\rangle\, ,\notag\\
&=\big\langle v,\sum P_{t_j}\nabla_j u\big\rangle\, ,
\end{align}
from which the result follows.
\end{proof}

\subsection{Proof that $(R1)\implies (R2)$}\label{ss:r1_r2}

Recall that for every $F\in L^2(P(M),\Gamma_f)$ that we have an induced $L^2(M,e^{-f}dv_g)$ function on $M$ given by $\int_{P(M)}F\,d\Gamma_x$ .  The question naturally arose about understanding the properties of $\int F\,d\Gamma_x$ as a function on $M$ in terms of the properties of $F$ as a function on $P(M)$.  If we consider the parallel gradient on $P(M)$, then the main result of this Section will be to prove the estimate
\begin{align}\label{e:ss:BR2}
\big|\nabla \int_{P(M)} F\,d\Gamma_{x}\big| \leq \int_{P(M)} \bigg(|\nabla_0 F|+\frac{\kappa}{2}\int_0^\infty e^{\frac{\kappa}{2}s}\,|\nabla_s F|\,ds\bigg) \,d\Gamma_{x}\, ,
\end{align}
under the assumption of the Ricci curvature bound 
\begin{align}
-\kappa g\leq \Ric+\nabla^2 f\leq \kappa g\, .
\end{align}\\

The proof is essentially an application of the Bochner formula in combination with the stochastic analogue of a vector bundle Feyman-Kac formula in infinite dimensions, which originally goes back to a host of authors including Bismut \cite{Bismut_Malliavin} and Stroock \cite{Stroock_book}.  To understand this we begin with the next lemma.  

\begin{lemma}\label{l:gradient_estimate}
Let $F:P(M)\to \dR$ be a smooth cylinder function, then we have that
\begin{align}
\nabla_x \int_{P(M)} F\,d\Gamma_x = \int_{P(M)} \nabla_0F + \int^{\infty}_{0} {\tiny \frac{d}{ds}}\phi_s \cdot\nabla_s F\, ds\, d\Gamma_x\, ,
\end{align}
where $\phi_t=\phi_t(\gamma):T_{\gamma(0)}M\to T_{\gamma(0)}M$ solves the ode $\frac{d}{dt}\phi = -\frac{1}{2}\phi P_{t}\big(\Ric+\nabla^2 f\big)P^{-1}_{t}$ with $\phi(0)=Id$.
\end{lemma}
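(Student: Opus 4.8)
The plan is to differentiate a suitable martingale interpolating between $F$ and its expectation, and to recognize the result as an Itô integral whose bracket with Brownian motion can be read off via the $s$-parallel gradients. Fix a smooth cylinder function $F$ that is $\cF^T$-measurable, and for $t\in[0,T]$ consider the martingale $F^t(\gamma)\equiv \int_{P(M)} F_{\gamma_t}\,d\Gamma_{\gamma(t)}$, which by Theorem \ref{t:diffusion_measure_characterization} and the cylinder structure is (for $a.e.$ $\gamma$) given by a smooth function $u^t$ evaluated along $\gamma$; that is, $F^t(\gamma)=u^t(\gamma(t))$ where $u^t=H_{T-t}u$ appropriately interpreted on the relevant product of copies of $M$. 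First I would record that $F^0(\gamma)=\int_{P(M)}F\,d\Gamma_{\gamma(0)}$, so $\nabla_x\int_{P(M)}F\,d\Gamma_x=\nabla u^0(x)$, and then propagate the gradient forward in $t$.

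The key computation is to track $G^t\equiv P_t\,\nabla u^t(\gamma(t))\in T_{\gamma(0)}M$, the stochastic-parallel transport back to time $0$ of the spatial gradient of the martingale's ``density''. Using the martingale property of $F^t$ (equivalently, that $u^t$ solves the backward heat equation $\partial_t u^t = -\tfrac12\Delta_f u^t$) together with the Bochner formula
\begin{align}
\Delta_f|\nabla u|^2 = \langle\nabla u,\nabla\Delta_f u\rangle + 2|\nabla^2 u|^2 + 2\big(\Ric+\nabla^2 f\big)(\nabla u,\nabla u)\, ,
\end{align}
and Itô's formula for the evolution of $P_t\nabla u^t$ along the diffusion, one obtains that $G^t$ is a semimartingale whose drift is exactly $\tfrac12 P_t(\Ric+\nabla^2 f)P_t^{-1}$ applied to $\nabla u^t$ (the $\nabla^2 u^t$ terms cancel by the defining property of stochastic parallel transport, which kills the ``covariant derivative'' contribution). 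Introducing the compensator $\phi_t$ solving $\tfrac{d}{dt}\phi_t = \tfrac12 P_t(\Ric+\nabla^2 f)P_t^{-1}\phi_t$, $\phi_0=\mathrm{Id}$, the process $\phi_t^{-1}G^t$ becomes a martingale, so taking expectations gives
\begin{align}
\nabla u^0(x) = \int_{P(M)}\phi_T^{-1}G^T\,d\Gamma_x = \int_{P(M)}\Big(\nabla_0 F - \int_0^T \tfrac{d}{ds}\phi_s\cdot\nabla_s F\,ds\Big)\,d\Gamma_x\, ,
\end{align}
where the last identity comes from integration by parts in $s$ against the martingale increments: $\phi_T^{-1}$ times the terminal value rewrites, via $d(\phi_s^{-1}) = -\phi_s^{-1}(\tfrac{d}{ds}\phi_s)\phi_s^{-1}\,ds$, as $\nabla_0 F$ (the $s=0$ contribution, which is the full bracket $d[F^s,W^s]$ summed, i.e.\ $\nabla_0 F$) minus the integral of $\tfrac{d}{ds}\phi_s$ against $\nabla_s F$, using that the bracket of the martingale $F^s$ with Brownian motion $W^s$ at scale $s$ is precisely $\nabla_s F$ by the identification in Proposition \ref{p:parallel_gradient} and Lemma \ref{l:parallel_ibp}. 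Letting $T\to\infty$ (harmless since $F$ is a cylinder function) gives the stated formula.

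The main obstacle is making rigorous the manipulation with the stochastic parallel transport $P_t$ and the Itô formula for $P_t\nabla u^t$ — verifying that the covariant/second-derivative terms genuinely cancel and that $G^t$ has the claimed drift requires the careful formalism of horizontal lifts on the frame bundle (Section \ref{ss:stoc_par_trans}), rather than the semi-rigorous continuous-curve picture. The identification of the Itô bracket $d[F^s,W^s]$ with $\nabla_s F$ is the second delicate point, but this is exactly what Lemma \ref{l:parallel_ibp} and Proposition \ref{p:parallel_gradient} are set up to provide, so I would lean on those. The Ricci bound $-\kappa g\le \Ric+\nabla^2 f\le \kappa g$ is not yet used here; it enters only when one estimates $\|\phi_s\|$ and $\|\tfrac{d}{ds}\phi_s\|\le \tfrac\kappa2 e^{\kappa s/2}$ in the subsequent passage from this lemma to \eqref{e:ss:BR2}.
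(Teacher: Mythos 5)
Your route is sound in outline but it is organized quite differently from the paper's, and it has one real gap. The paper proves the lemma by induction on $|\bt|$: the base case $F(\gamma)=u(\gamma(t))$ is handled by quoting the vector-bundle Feynman--Kac representation $\nabla H_t u(x)=\int_{P(M)}\phi_t\cdot P_t\nabla u(\gamma(t))\,d\Gamma_x$ of Bismut--Stroock (which follows from the Weitzenb\"ock commutation $\tfrac{d}{dt}\nabla H_t u-\tfrac12\Delta_f\nabla H_tu=\tfrac12(\Ric+\nabla^2f)\nabla H_tu$), and the inductive step splits off the first evaluation time $t_1$ and applies the same representation to the function $y\mapsto\int F_y\,d\Gamma_y$. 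What you propose is essentially to re-derive that Feynman--Kac input from scratch: your observation that $\phi_t^{-1}P_t\nabla u^t(\gamma(t))$ is a martingale \emph{is} the standard proof of the Bismut formula, so for $|\bt|=1$ your argument and the paper's coincide up to whether one cites or reproves the representation. What your version buys is a single continuous-in-$t$ computation in place of an induction, and it makes visible the link to the Clark--Ocone identification $d[F^s,W^s]\leftrightarrow\nabla_s F$ that the paper deliberately sets aside (see the remark after Proposition \ref{p:parallel_H1_relation}); the paper's choice has the advantage that its "morals carry over to the nonsmooth case." One cosmetic point: what you need is the Weitzenb\"ock commutator identity for one-forms, not the scalar Bochner formula you quote (the latter is its contraction against $\nabla u$); the $|\nabla^2u|^2$ term plays no role here.

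The genuine gap is your treatment of cylinder functions with $|\bt|>1$. For such $F$ the martingale is $F^t(\gamma)=v^t(\gamma(t_1),\dots,\gamma(t_k),\gamma(t))$ with $t_k$ the largest partition time $\le t$, and the gradient $G^t=P_t\nabla u^t(\gamma(t))$ you track (the gradient in the \emph{last} slot only) jumps at each $t_j$: crossing $t_j$ adds the contribution $P_{t_j}\nabla_j u$ frozen along the realized path. It is exactly these jumps that produce the step function $\nabla_sF=\sum_{t_j\ge s}P_{t_j}\nabla_j u$ of Proposition \ref{p:parallel_gradient}, and without bookkeeping them your final identification of $\phi_T^{-1}G^T$ with $\nabla_0F-\int_0^T\tfrac{d}{ds}\phi_s\cdot\nabla_sF\,ds$ does not follow — between jumps $\nabla_sF$ is constant in $s$, so the whole content of the formula for $|\bt|>1$ lives at the jump times. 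The paper's induction on $|\bt|$ is precisely the device that handles this (each inductive step inserts one jump via the identity $\nabla_0F_y\mapsto\nabla_{t_1}F$); if you want to keep your one-pass martingale argument you must add an explicit accounting of the jump of $G^t$ at each $t_j$ and verify that $\phi_{t_j}^{-1}$ applied to the accumulated jumps reassembles into the stated integral. I would not worry about the $\phi$ versus $\phi^{-1}$ and sign discrepancies between your display and the lemma's statement: the paper's own manipulation has the same ambiguity, and only the bound $\|\tfrac{d}{ds}\phi_s\|\le\tfrac{\kappa}{2}e^{\kappa s/2}$ is used downstream.
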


\begin{proof}

Let $e_\bt:P(M)\to M^{|\bt|}$ be an evaluation map and $F\equiv e_\bt^*u$ a smooth cylinder function.  The proof is by induction on $|\bt|$, see \cite{Hsu_LogSob} for related arguments.  

For $|\bt|=1$ we have that $F(\gamma)=u(\gamma(t))$ for some $t\geq 0$.  In this case we have that
\begin{align}
\int_{P(M)} F\,d\Gamma_x = \int_M u(y)\rho_t(x,dy) = H_tu(x)\, ,
\end{align}
as a function on $M$ determines the heat flow of $u$ at time $t$.  In essence this is the stochastic analogue of the Feyman-Kac formula.  Now the standard Weizenbrock formula tells us that $\nabla H_t u(x)$ solves the equation
\begin{align}\label{e:grad1}
\frac{d}{dt}\nabla H_tu(x) = \Delta_f \big(\nabla H_tu\big)+\frac{1}{2}\big(\Ric+\nabla^2 f\big)(\nabla H_tu)\, ,
\end{align}
which tells us that $\nabla H_tu=\tilde H_t\nabla u$, where $\tilde H_t$ is the heat flow operator associated to (\ref{e:grad1}).  The stochastic Feynman-Kac formula for vector bundles \cite{Bismut_Malliavin},\cite{Stroock_book} allows us to therefore write
\begin{align}
\nabla H_tu(x) = \int_{P(M)}\phi_t\cdot P_t\nabla u(\gamma(t)) d\Gamma_x\, ,
\end{align}
where $\phi_t=\phi(\gamma,t):T_xM\to T_xM$ solves $\frac{d}{dt}\phi = -\phi\frac{1}{2}P_{t}\big(\Ric+\nabla^2 f\big)P^{-1}_{t}$ along $\gamma$ with $\phi(0)=Id$.
Combining all of this and rewriting using Proposition \ref{p:parallel_gradient} gives us
\begin{align}
\nabla \int_{P(M)} F\,d\Gamma_x &= \int_{P(M)} (I+\int_0^t\frac{d}{ds}\phi_s)P_t\nabla u(\gamma(t))\,ds\,d\Gamma_x\, \notag\\
&= \int_{P(M)} \nabla_0 F+\int_0^\infty \frac{d}{ds}\phi_s\cdot \nabla_s F\,ds\,d\Gamma_x\, ,
\end{align}
as claimed.

Now for the inductive step we assume the result holds for all cylinder functions with order $|\bt|< N$, and let us denote by 
\begin{align}
F(\gamma)=e_{\bt}^*u(\gamma) = u(\gamma(t_1),\ldots,\gamma(t_N))\, ,
\end{align}
a smooth cylinder function of order $|\bt|=N$.  Now for $y\in M$ fixed let us define the smooth cylinder function $F_y(\gamma) \equiv u(y,\gamma(t_2-t_1),\ldots,\gamma(t_N-t_1))$.  Note then that we may rewrite
\begin{align}
\int_{P(M)} F\, d\Gamma_x &= \int_{M}\bigg(\int_{P(M)}F_y\,d\Gamma_y\bigg)\rho_{t_1}(x,dy)
\end{align}

Viewing $\int_{P(M)}F_y\,d\Gamma_y$ as a function on $M$ we can then apply Feynman-Kac formula for bundles again to write
\begin{align}
\nabla_x \int_{P(M)} F\, d\Gamma_x &= \nabla_x\int_{M}\bigg(\int_{P(M)}F_y\,d\Gamma_y\bigg)\rho_{t_1}(x,dy)\notag\\
&=\int_{P(M)}\phi_{t_1}\cdot P_{t_1}\nabla_y\bigg(\int_{P(M)}F_y\,d\Gamma_y\bigg)\,d\Gamma_x\, \notag\\
\end{align}
By viewing $F_y$ as a function on path space we can then use our inductive hypothesis to compute
\begin{align}
&=\int_{P(M)}\phi_{t_1}\cdot P_{t_1}\bigg(\int_{P(M)}\nabla_0F_y+\int_0^\infty\frac{d}{ds}\phi_s\cdot\nabla_s F_y\,d\Gamma_y\bigg)\,d\Gamma_x\notag\\
&=\int_{P(M)}\phi_{t_1}\cdot P_{t_1}\bigg(\int_{P(M)}\nabla_{t_1}F+\int_{t_1}^\infty\phi^{-1}_{t_1}\frac{d}{ds}\phi_s\cdot\nabla_s F\,d\Gamma_y\bigg)\,d\Gamma_x\notag\\
&= \int_{P(M)} \nabla_0 F+\int_0^\infty \frac{d}{ds}\phi_s\cdot \nabla_s F\,ds\,d\Gamma_x\, ,
\end{align}
which is the desired equality.

\end{proof}

Using the previous lemma we are in a position to prove the main statement of this Section:

\begin{proof}[Proof that Theorem \ref{t:smooth_bounded_ricci}.R2 $\implies$ Theorem \ref{t:smooth_bounded_ricci}.R3 ]

Let us note that if the eigenvalues of the Ricci curvature tensor satisfy the estimate
\begin{align}
-\kappa g\leq \Ric+\nabla^2f\leq \kappa g\, ,
\end{align}
then a standard application of Gronwall's inequality tells us that the solution $\phi$ of $$\frac{d}{dt}\phi = -\phi\frac{1}{2}P_{t}\big(\Ric+\nabla^2 f\big)P^{-1}_{t}$$ with $\phi(0)=I$ satisfies the eigenvalue estimate
\begin{align}
||\phi(t)||_{max} \leq e^{\frac{\kappa}{2} t}\, ,
\end{align}
where $||\cdot||_{max}$ is the maximum eigenvalue norm.  Plugging this back into the equation gives us the estimate
\begin{align}
||\frac{d}{dt}\phi(t)||_{max} \leq \frac{\kappa}{2}e^{\frac{\kappa}{2} t}\, ,
\end{align}
on $\frac{d}{dt}\phi$.  Applying Lemma \ref{l:gradient_estimate} immediately gives the result
\begin{align}
\big|\nabla_x \int_{P(M)} F\, d\Gamma_x\big| &\leq \int_{P(M)}\,|\nabla_0 F| + \int_0^\infty ||\frac{d}{dt}\phi||_{max}\cdot|\nabla_t F|\, dt\, d\Gamma_x\, ,\notag\\
& = \int_{P(M)}\,|\nabla_0 F| + \int^{\infty}_{0}\frac{\kappa}{2}e^{\frac{\kappa}{2}t}|\nabla_t F|\, dt \, d\Gamma_x\, ,
\end{align}
as claimed.
\end{proof}

The next Theorem applies the estimate $(R2)$ to the simplest functions on path space.  From this we will see how to recover the Bakry-Emery gradient estimate, and hence a lower Ricci curvature bound on $M$.

\begin{theorem}\label{t:boundedricci_BE_implies_lowerricci}
If the estimate
\begin{align}\label{e:boundedricci_BE}
\big|\nabla_x \int_{P(M)} F\, d\Gamma_x\big| &\leq \int_{P(M)}\,|\nabla_0 F| + \int^{\infty}_{0}\frac{\kappa}{2}e^{\frac{\kappa}{2}t}|\nabla_t F|\, dt\, ,
\end{align}
holds for every smooth cylinder function $F$ on $P(M)$, then for every smooth function $u$ on $M$ the estimate
\begin{align}\label{e:lowerricci_BE}
|\nabla H_t u|\leq e^{\frac{\kappa}{2}t}H_t|\nabla u|\, .
\end{align}
holds by applying (\ref{e:boundedricci_BE}) to function $F(\gamma)\equiv u(\gamma(t))$. 
\end{theorem}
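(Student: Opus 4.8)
The plan is to apply the hypothesized path-space gradient estimate \eqref{e:boundedricci_BE} to the simplest possible cylinder function, namely $F(\gamma)\equiv u(\gamma(t))$ for a fixed smooth function $u$ on $M$ and a fixed time $t>0$, and then identify all three terms appearing in the inequality. First I would record that $\int_{P(M)}F\,d\Gamma_x = H_tu(x)$ by the defining property \eqref{e:WM} of the Wiener measures, so the left-hand side is exactly $|\nabla H_tu|(x)$. Next I would compute the parallel gradients of this $F$ using Proposition \ref{p:parallel_gradient}: since the partition is $\bt=\{t\}$, we get $\nabla_s F(\gamma) = P_t\nabla u(\gamma(t))$ when $s\le t$ and $\nabla_s F(\gamma)=0$ when $s>t$. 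Because $P_t$ is a linear isometry, $|\nabla_s F|(\gamma) = |\nabla u|(\gamma(t))$ for all $s\le t$ and $|\nabla_s F|(\gamma)=0$ for $s>t$; in particular $|\nabla_0 F|(\gamma) = |\nabla u|(\gamma(t))$.

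Substituting these into the right-hand side of \eqref{e:boundedricci_BE} gives
\begin{align}
\int_{P(M)} |\nabla u|(\gamma(t)) + \int_0^t \frac{\kappa}{2}e^{\frac{\kappa}{2}s}|\nabla u|(\gamma(t))\,ds\, d\Gamma_x
= \Big(1 + \int_0^t \frac{\kappa}{2}e^{\frac{\kappa}{2}s}\,ds\Big)\int_{P(M)}|\nabla u|(\gamma(t))\,d\Gamma_x\, .
\end{align}
The elementary integral evaluates as $\int_0^t \frac{\kappa}{2}e^{\frac{\kappa}{2}s}\,ds = e^{\frac{\kappa}{2}t}-1$, so the prefactor is exactly $e^{\frac{\kappa}{2}t}$. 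Meanwhile $\int_{P(M)}|\nabla u|(\gamma(t))\,d\Gamma_x = \int_M |\nabla u|(y)\,\rho_t(x,dy) = H_t|\nabla u|(x)$, again by \eqref{e:WM}. Combining these identifications yields precisely $|\nabla H_tu|(x) \le e^{\frac{\kappa}{2}t}H_t|\nabla u|(x)$, which is \eqref{e:lowerricci_BE}.

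There is essentially no hard step here: the statement is a direct unwinding of definitions once Proposition \ref{p:parallel_gradient} is in hand. The only point requiring a small amount of care is the observation that for a cylinder function depending only on $\gamma(t)$ the parallel gradients $\nabla_s F$ are constant in $s$ for $s\in[0,t]$ and vanish for $s>t$ — this is where the structure of the path-space estimate "collapses" to the finite-dimensional Bakry-Émery estimate, and it is exactly what makes the weighting $\frac{\kappa}{2}e^{\frac{\kappa}{2}s}$ integrate up to the sharp constant $e^{\frac{\kappa}{2}t}$. (One should also note, as the surrounding text does, that \eqref{e:lowerricci_BE} is in turn equivalent via Theorem \ref{t:lower_ricci} to the lower bound $\Ric+\nabla^2 f \ge -\kappa g$, so this computation also exhibits how the path-space characterization recovers a genuine Ricci lower bound.)
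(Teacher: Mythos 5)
Your proposal is correct and follows essentially the same route as the paper: apply the estimate to $F(\gamma)=u(\gamma(t))$, identify $\int F\,d\Gamma_x=H_tu$, use Proposition \ref{p:parallel_gradient} to see $|\nabla_sF|=|\nabla u|(\gamma(t))$ for $s\le t$ and $0$ for $s>t$, and integrate the weight to get the factor $e^{\frac{\kappa}{2}t}$. No gaps.
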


\begin{proof}
Consider the function $F:P(M)\to\dR$ defined by
\begin{align}
F(\gamma)\equiv u(\gamma(t))\, ,
\end{align}
where $u$ is a smooth function on $M$.  Then by the definition of the Wiener measure in Section \ref{ss:diffusion_measures} we have the identity
\begin{align}
\int_{P(M)} F\, d\Gamma_x = \int_M u(y)\rho_t(x,dy) = H_tu(x)\, ,
\end{align}
and using Proposition \ref{p:parallel_gradient} we have for $s\leq t$ that
\begin{align}
|\nabla_s F|(\gamma) = |\nabla u|(\gamma(t))\, ,
\end{align}
with $|\nabla_s F| = 0$ for $s>t$.  Plugging these into (\ref{e:boundedricci_BE}) gives the estimate
\begin{align}
|\nabla H_tu|(x)\leq \int_M \bigg(|\nabla u|(y)+\big(e^{\frac{\kappa}{2}t}-1\big)|\nabla u|(y)\bigg)\,  \rho_t(x,dy) = e^{\frac{\kappa}{2}t}H_t|\nabla u|(x)\, ,
\end{align}
as claimed.
\end{proof}

\subsection{Proof that $(R2)\implies (R3)$}\label{ss:r2_r3}

In this Section we prove the quadratic gradient estimate, based on the assumption of a bound on the Ricci curvature tensor.

\begin{proof}[Proof that Theorem \ref{t:smooth_bounded_ricci}.R2 $\implies$ Theorem \ref{t:smooth_bounded_ricci}.R3 ]

The estimate is nothing more than a careful application of H\"older's inequality on (R2).  Specifically let $F$ be $\cF^T$ measurable, then we have

\begin{align}\label{e:br2_br3:1}
\big|\nabla_x \int_{P(M)} F&\, d\Gamma_x\big|^2 \leq \int_{P(M)}\bigg|\,|\nabla_0 F| + \int^{\infty}_{0}\frac{\kappa}{2}e^{\frac{\kappa}{2}t}|\nabla_t F|\, dt \bigg|^2 \, d\Gamma_x\, ,\notag\\
&= \int_{P(M)}\,|\nabla_0 F|^2 + 2 \,|\nabla_0 F|\,\bigg(\int^{T}_{0}\frac{\kappa}{2}e^{\frac{\kappa}{2}t}|\nabla_t F|\, dt\bigg) +\bigg(\int^{T}_{0}\frac{\kappa}{2}e^{\frac{\kappa}{2}t}|\nabla_t F|\, dt\bigg)^2 \, d\Gamma_x\, .\notag\\
\end{align}

We estimate the second term by
\begin{align}
2 \,|\nabla_0 F|&\,\bigg(\int^{T}_{0}\frac{\kappa}{2}e^{\frac{\kappa}{2}t}|\nabla_t F|\, dt\bigg)\leq 2\,|\nabla_0 F|\,\sqrt{e^{\frac{\kappa}{2}T}-1}\sqrt{\int_0^T\frac{\kappa}{2}e^{\frac{\kappa}{2}t}|\nabla_t F|^2\, dt }\,  ,\notag\\
&\leq \big(e^{\frac{\kappa}{2}T}-1\big)|\nabla_0 F|^2+\int_0^T\frac{\kappa}{2}e^{\frac{\kappa}{2}t}|\nabla_t F|^2\, dt \, .
\end{align}

Similarly we can estimate the third term of (\ref{e:br2_br3:1}) by
\begin{align}
\bigg(\int^{T}_{0}\frac{\kappa}{2}e^{\frac{\kappa}{2}t}|\nabla_t F|\, dt\bigg)^2 \leq \big(e^{\frac{\kappa}{2}T}-1\big)\int_0^T\frac{\kappa}{2}e^{\frac{\kappa}{2}t}|\nabla_t F|^2\, dt\, .
\end{align}
Combining these gives the estimate
\begin{align}
\big|\nabla_x \int_{P(M)} F\, d\Gamma_x\big|^2 \leq e^{\frac{\kappa}{2}T}\int_{P(M)}\,|\nabla_0 F|^2+\int_0^T \frac{\kappa}{2}e^{\frac{\kappa}{2}s}|\nabla_s F|^2\, ds\cdot d\Gamma_x\, ,
\end{align}
as claimed.
\end{proof}

\section{Bounded Ricci Curvature and Stochastic Analysis on $P(M)$}\label{s:bounded_ricci_stoc_anal_smooth}

Stochastic analysis already appeared in the proof of $(R2)$, however with a little work the estimate itself may be understood without it (see for instance the second paper).  In this Section we understand Ricci curvature in terms of the stochastic analysis of $M$ more completely by relating bounds on the Ricci curvature to the regularity of martingales on $P(M)$.  Specifically we will see how to relate bounded Ricci curvature to estimates on the quadratic variation of a martingale on $P(M)$ by proving the estimates $(R5),(R6)$ of Theorem \ref{t:smooth_bounded_ricci}, as well as some pointwise versions.  We begin in Section \ref{ss:mart_quad} by reviewing martingales and their quadratic variations.  In Sections \ref{ss:r3_r5}, \ref{ss:r2_r4} we prove the estimates $(R4),(R5)$.  Finally in Section \ref{ss:smooth_mart_cont} we study as an application of the estimates of this Section the continuity properties of martingales.  This will be especially interesting in the nonsmooth case.

\subsection{Martingales and Quadratic Variation}\label{ss:mart_quad}

We already briefly introduced martingales on $P_x(M)$ in Section \ref{ss:function_spaces}, regardless we will begin this Section with another interpretation of martingales on $P_x(M)$ (which only holds for the diffusion measures) and will be particularly useful later.  We will then introduce the quadratic variation and its infinitesimal.\\

If $F\in L^2(P_x(M),\Gamma_x)$ then in Section \ref{ss:function_spaces} we described the martingale induced by $F$ as the family of maps $F^t\in L^2(P^t_x(M),\Gamma_x)$, where $F^t$ is the projection of $F$ to the closed subspace $L^2(P^t_x(M),\Gamma_x)\subseteq L^2(P_x(M),\Gamma_x)$.  If $F$ is only $L^1$ then we can still define $F^t$ as the $\cF^t$-expectation of $F$.  Equivalently, we can write for every $t\geq 0$ and a.e. $\gamma\in P_x(M)$ that
\begin{align}\label{e:martingale}
F^t(\gamma) = \int_{P_x(M)} F(\gamma_{[0,t]}\circ\sigma)\,d\Gamma_{\gamma(t)}\equiv \int_{P_x(M)} F_{\gamma_t}(\sigma)\,d\Gamma_{\gamma(t)}\, ,
\end{align}
 where as before $F_{\gamma_t}:P_{\gamma(t)}(M)\to \dR$ is defined as above by $F_{\gamma(t)}(\sigma)=F(\gamma_{[0,t]}\circ\sigma)$.\\
 
Now a martingale $F^t$ a canonical decomposition of a function into pieces which are $\cF^t$-measurable.  A way of representing the size of the pieces is through the {\it quadratic variation} $[F^t]$ defined as the limit
\begin{align}
[F^t]\equiv \lim_{\bt\subseteq [0,t]} \sum \big(F^{t_{k+1}}-F^{t_k}\big)^2\, ,
\end{align}
where the limit is over partitions $\bt$ of $[0,t]$ with $\Delta\bt\equiv \sup|t_{k+1}-t_k|\to 0$.  The limit exists in measure by standard methods as in \cite{Kuo_book}, and under stronger assumptions on $F^t$ the limit exists in $L^p$ spaces.  Notice the quadratic variation is nonnegative, increasing in $t$, and has the property that
\begin{align}
\int_{P(M)} |F^t|^2\, d\Gamma_x = \int_{P(M)} [F^t]\,d\Gamma_x\, .
\end{align}
Note that for a martingale the quadratic variation is an absolutely continuous process.  In particular, one can construct from this for $t>0$ the $\cF^t$-measure infinitesimal quadratic variation $[dF^t]$ given by the nonnegative function
\begin{align}
[dF^t]\equiv \lim_{s\to 0} \frac{[F^{t+s}]-[F^t]}{s}\, .
\end{align}
Note then for a martingale that because $[F^t]$ is absolutely continuous in $t$ we have that $[dF^t]$ is $\cF^t$-measurable.  The infinitesimal quadratic variation is the appropriate replacement as a measurement of the time rate of change of $F^t$.  Note that the quadratic variation can be extended to a bilinear mapping on pairs of martingales $F^t$, $G^t$ by
\begin{align}
[F^t,G^t]\equiv \lim_{\bt\subseteq [0,t]} \sum \big(F^{t_{k+1}}-F^{t_k}\big)\big(G^{t_{k+1}}-G^{t_k}\big)\, .
\end{align}
In this case we of course have that $[F^t]\equiv [F^t,F^t]$.  We can still consider the infinitesimal $d[F^t,G^t]$, defined in the analogous manner.

\subsection{Proof that $(R3)\Leftrightarrow (R5)$}\label{ss:r3_r5}

In this Section we consider the quadratic variation of a function $F\in L^2(P_x(M),\Gamma_x)$, and study its relationship with bounded Ricci curvature.  Specifically, we prove estimate $(R5)$ in Theorem \ref{t:smooth_bounded_ricci}.  That is, under the assumption of the Ricci bound $-\kappa g\leq \Ric+\nabla^2f\leq \kappa g$ we prove the estimates
\begin{align}\label{e:ricci_quadvar2}
\int_{P(M)}[dF^t]\, d\Gamma_x \leq e^{\frac{\kappa}{2}(T-t)}\int_{P(M)} |\nabla_t F|^2+\int_t^T \frac{\kappa}{2}e^{\frac{\kappa}{2}(s-t)}|\nabla_s F|^2\, d\Gamma_x\, .
\end{align}
for every $\cF^T$-measurable function $F$ and $t<T$.  In fact, we will use directly the gradient estimate $(R3)$, which was proved in the previous Section, to prove the above estimates.  \\

We begin by proving $(R5)$ at $t=0$.

\begin{lemma}\label{l:r4_r5_t0}
For $F\in L^2(P(M),\Gamma_x)$ we have that the gradient estimate $(R3)$ is equivalent to 
$$[dF^0](x)\leq e^{\frac{\kappa}{2}T}\int_{P(M)} |\nabla_0 F|^2+\int_0^T \frac{\kappa}{2}e^{\frac{\kappa}{2}s}|\nabla_s F|^2\, d\Gamma_x\, .
$$
\end{lemma}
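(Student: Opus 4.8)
The plan is to establish the equivalence by computing the infinitesimal quadratic variation $[dF^0]$ explicitly and recognizing it as a squared gradient on $M$. The key observation is that, for a fixed curve, the martingale value $F^t(\gamma)$ depends on $\gamma$ through $\gamma(t)$ and the tail behavior, and at $t=0$ the first-order fluctuation of $F^t$ as $t$ increases from $0$ is governed by how $\int_{P(M)} F_y\, d\Gamma_y$ varies as a function of the base point $y\in M$. More precisely, I would start from the martingale representation
\begin{align}
F^t(\gamma) = \int_{P(M)} F_{\gamma_t}(\sigma)\, d\Gamma_{\gamma(t)}\, ,
\end{align}
so that the function $g(y)\equiv \int_{P(M)} F_{y}\, d\Gamma_{y}$ (which for cylinder functions is a smooth function of $y$, namely $g=H_{t_1}$ applied to the reduced cylinder function) satisfies $F^0 = g(x)$ is constant and $F^t(\gamma)$ near $t=0$ behaves like $g(\gamma(t))$. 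The standard Itô calculus for the diffusion then gives $[dF^0] = |\nabla g|^2(x)$ where $g(y)=\int_{P(M)} F_y\, d\Gamma_y = \int_{P(M)} F\, d\Gamma_y$.

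**The computation.** The technical content is the identity $[dF^0](\gamma) = \big|\nabla_x \int_{P(M)} F\, d\Gamma_x\big|^2$ for (say) smooth cylinder functions $F$, with the general $L^2$ case following by density and the isometry $\int [F^t]\, d\Gamma_x = \int |F^t|^2\, d\Gamma_x$. To see this, note that for a cylinder function $F = e_\bt^* u$ with $\bt=\{0<t_1<\cdots\}$ we have $\int_{P(M)} F\, d\Gamma_y = H_{t_1}\tilde u(y)$ where $\tilde u$ is the appropriate partial heat-flow average; writing $\Phi(y)\equiv \int_{P(M)} F\, d\Gamma_y$, the martingale $F^t$ for small $t$ is $F^t(\gamma) = \Phi(\gamma(t)) + (\text{bounded variation in }t)$, and Itô's formula for the $\frac12\Delta_f$-diffusion gives $d F^t = \langle \nabla\Phi(\gamma(t)), dW^t\rangle + (\ldots)dt$, whence $[dF^t] = |\nabla\Phi|^2(\gamma(t))$ and in particular $[dF^0](\gamma) = |\nabla\Phi|^2(x) = \big|\nabla_x\int_{P(M)} F\, d\Gamma_x\big|^2$. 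This is a pointwise (in $\gamma$) statement that holds for a.e.\ $\gamma$, and the value at $t=0$ is actually independent of $\gamma$ since it only sees $\gamma(0)=x$.

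**Concluding the equivalence.** Once $[dF^0] = \big|\nabla_x\int_{P(M)} F\, d\Gamma_x\big|^2$ is in hand, the lemma is immediate: the statement
\begin{align}
[dF^0]\leq e^{\frac{\kappa}{2}T}\int_{P(M)} |\nabla_0 F|^2 + \int_0^T \frac{\kappa}{2} e^{\frac{\kappa}{2}s}|\nabla_s F|^2\, d\Gamma_x
\end{align}
is literally the same inequality as $(R3)$,
\begin{align}
\big|\nabla_x\int_{P(M)} F\, d\Gamma_x\big|^2 \leq e^{\frac{\kappa}{2}T}\int_{P(M)} |\nabla_0 F|^2 + \int_0^T \frac{\kappa}{2} e^{\frac{\kappa}{2}s}|\nabla_s F|^2\, ds\cdot d\Gamma_x\, ,
\end{align}
after substituting the identity. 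Both directions follow at once, and I would record that the identity extends from cylinder functions to all $\cF^T$-measurable $F\in L^2(P(M),\Gamma_x)$ by the closability of the parallel gradients (Theorem \ref{t:s_parallel_grad_closed}) and the $L^2$-continuity of projections, together with the fact that $F\mapsto [dF^0]$ and $F\mapsto |\nabla_x\int F\, d\Gamma_x|^2$ are both continuous in the relevant norms on the domain where the right-hand side is finite.

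**Main obstacle.** The one genuinely delicate point is justifying the Itô-calculus computation of $[dF^0]$ for general $L^2$ (not smooth cylinder) functions, where $F^t$ need not be a semimartingale in the classical sense and the limit $[dF^t]=\lim_{s\to 0}(F^{t+s}-F^t)^2/s$ exists only in measure for a.e.\ $t$. I would handle this by proving the identity first for smooth cylinder functions (where everything is classical via the Feynman–Kac / Weitzenböck argument already used in Lemma \ref{l:gradient_estimate}), then passing to the limit: approximating $F$ in $L^2(P(M),\Gamma_x)$ by cylinder functions $F_k$, one gets $F_k^t\to F^t$ in $L^2$ uniformly in $t$ on the martingale side, and $\nabla_x\int F_k\, d\Gamma_x \to \nabla_x\int F\, d\Gamma_x$ in $L^2(M, e^{-f}dv_g)$ on the gradient side (using the gradient estimate $(R3)$ itself to control the approximation), so the $t=0$ value of the quadratic variation, being continuous under this approximation, inherits the identity. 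The subtlety is purely that of making this limiting argument clean; the geometric content is entirely in the cylinder-function case.
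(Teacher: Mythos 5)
Your proposal is correct and follows essentially the same route as the paper: reduce to smooth cylinder functions, observe that for small $t$ the martingale is $F^t=e_t^*v$ with $v$ the partial heat-flow average, establish the identity $[dF^0]=\big|\nabla_x\int_{P(M)}F\,d\Gamma_x\big|^2$, and note that the claimed inequality is then literally $(R3)$, with the general $L^2$ case by density. The only cosmetic difference is that you extract the identity via It\^o's formula while the paper computes the limit $\lim_{s\to 0}\frac{1}{s}\int_M\big(v(y)-\int_M v\,\rho_s(x,dz)\big)^2\rho_s(x,dy)=|\nabla v|^2(x)$ directly from the heat kernel; these are equivalent.
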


\begin{proof}
To prove the Lemma it is enough to study smooth cylinder functions, then the result follows for arbitrary functions in $L^2(P_x(M),\Gamma_x)$ through extension.  So let $F=e_\bt^*u$ be a smooth cylinder function given by
\begin{align}
F(\gamma)\equiv u(\gamma(t_1),\ldots,\gamma(t_N))\, .
\end{align}
Then for $t<t_1$ we can use (\ref{e:martingale}) to write the projection of $F$ to the $\cF^t$-measurable functions by
\begin{align}
F^t(\gamma) = \int_{M^{N}}u(y_1,\ldots,y_N)\rho_{t_{1}-t}(\gamma(t),dy_{1})\cdots\rho_{t_N-t_{N-1}}(y_{N-1},dy_{N})\, .
\end{align}
Note in particular that $F^t\equiv e_t^*v$ is itself a smooth cylinder function with $$v(y)\equiv \int_{M^{N}}u(y_1,\ldots,y_N)\rho_{t_{1}-t}(y,dy_{1})\cdots\rho_{t_N-t_{N-1}}(y_{N-1},dy_{N})\, .$$

Now we can compute
\begin{align}\label{e:r3_r5:equivalence}
[dF^0]&=\int_{P(M)}[dF^0]\,d\Gamma_x = \lim\int_{P(M)}\frac{\big(F^s-F^{0}\big)^2}{s}\,d\Gamma_x = \lim\int_{P(M)}\frac{\big(F^{s}-(F^{s})^0\big)^2}{s}\,d\Gamma_x\notag\\
&=\lim\frac{1}{s}\int_M\bigg(v(y_s)-\int_M v(z_s)\rho_s(x,dz_s)\bigg)^2\,\rho_s(x,dy_s)\notag\\
&=|\nabla v|^2(x) =|\nabla \int_{P(M)} F\,d\Gamma_x|^2(x)\, ,
\end{align}
from which the lemma follows.  In particular we have the estimate

\begin{align}
[dF^0]\leq e^{\frac{\kappa}{2}T}\int_{P(M)} |\nabla_0 F|^2+\int_0^T \frac{\kappa}{2}e^{\frac{\kappa}{2}s}|\nabla_s F|^2\, d\Gamma_x\, .
\end{align}

\end{proof}

In addition to proving $(R5)$ we will prove the following stronger pointwise versions.  These pointwise estimates will be especially important in the nonsmooth case.

\begin{theorem}\label{t:pointwise_r5}
The following are equivalent
\begin{enumerate}
\item The estimate $(R3)$.
\item For each $F\in L^2(P_x(M),\Gamma_x)$, $t\geq 0$ and $a.e.$ $\gamma\in P_x(M)$ we have the pointwise estimate
\begin{align}
[dF^t](\gamma)&\leq e^{\frac{\kappa}{2}(T-t)}\int_{P(M)} |\nabla_t F|^2(\gamma_{[0,t]}\circ\sigma)+\int_t^{T}\frac{\kappa}{2}e^{\frac{\kappa}{2}(s-t)}|\nabla_{s} F|^2(\gamma_{[0,t]}\circ\sigma)\,d\Gamma_{\gamma(t)}\, .
\end{align}\item For each $F\in L^2(P_x(M),\Gamma_x)$ and $t\geq 0$ we have the integral estimate $(R5)$:
\begin{align}\label{e:ricci_quadvar2}
\int_{P(M)}[dF^t]\, d\Gamma_x \leq e^{\frac{\kappa}{2}(T-t)}\int_{P(M)} |\nabla_t F|^2+\int_t^T \frac{\kappa}{2}e^{\frac{\kappa}{2}(s-t)}|\nabla_s F|^2\, d\Gamma_x\, ,
\end{align}
\end{enumerate}
\end{theorem}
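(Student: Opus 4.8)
The strategy is to establish the cyclic chain $(1)\Rightarrow(2)\Rightarrow(3)\Rightarrow(1)$, where the real content lies in $(1)\Rightarrow(2)$ and the reverse implication $(3)\Rightarrow(1)$, while $(2)\Rightarrow(3)$ is a pure integration argument. First I would handle $(1)\Rightarrow(2)$. The key observation is that the infinitesimal quadratic variation at time $t$ is intrinsically a ``restart'' quantity: for $a.e.$ $\gamma\in P_x(M)$ one has the identity
\begin{align}
[dF^t](\gamma)=[d(F_{\gamma_t})^0](\gamma_{[0,t]}\circ\,\cdot\,)\, ,
\end{align}
where $F_{\gamma_t}:P_{\gamma(t)}(M)\to\dR$ is the restarted function $F_{\gamma_t}(\sigma)=F(\gamma_{[0,t]}\circ\sigma)$ and the right-hand side is computed on based path space $P_{\gamma(t)}(M)$ with Wiener measure $\Gamma_{\gamma(t)}$. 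This follows from the Markov property of the diffusion measures together with formula (\ref{e:martingale}) for the martingale, which exhibits $F^t(\gamma)$ as the expectation of $F_{\gamma_t}$ under $\Gamma_{\gamma(t)}$; the increments $F^{t+s}-F^t$, restricted to paths agreeing with $\gamma$ on $[0,t]$, are exactly the increments $(F_{\gamma_t})^s-(F_{\gamma_t})^0$ of the restarted martingale. Granting $(1)$, i.e.\ the gradient estimate $(R3)$, we apply Lemma \ref{l:r4_r5_t0} to the function $F_{\gamma_t}$ on $P_{\gamma(t)}(M)$, which gives precisely
\begin{align}
[d(F_{\gamma_t})^0]\leq e^{\frac{\kappa}{2}(T-t)}\int_{P(M)} |\nabla_0 F_{\gamma_t}|^2+\int_0^{T-t}\frac{\kappa}{2}e^{\frac{\kappa}{2}s}|\nabla_s F_{\gamma_t}|^2\, d\Gamma_{\gamma(t)}\, ,
\end{align}
using that $F_{\gamma_t}$ is $\cF^{T-t}$-measurable when $F$ is $\cF^T$-measurable. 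Finally I translate the parallel gradients of $F_{\gamma_t}$ back to those of $F$ via the evident relation $|\nabla_s F_{\gamma_t}|(\sigma)=|\nabla_{s+t}F|(\gamma_{[0,t]}\circ\sigma)$, which holds because an $s$-parallel variation of $\sigma$ corresponds to an $(s+t)$-parallel variation of the concatenated curve (the variation vanishes on $[0,t]$ and on $[t,t+s]$, then is parallel-translated). After the substitution $s\mapsto s-t$ this yields exactly the pointwise estimate in $(2)$.

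For $(2)\Rightarrow(3)$ I simply integrate the pointwise inequality $d\Gamma_x$ over $P_x(M)$ and use the tower property: $\int_{P(M)}\big(\int_{P(M)}G(\gamma_{[0,t]}\circ\sigma)\,d\Gamma_{\gamma(t)}\big)d\Gamma_x=\int_{P(M)}G\,d\Gamma_x$ for $G$ any of $|\nabla_t F|^2$, $|\nabla_s F|^2$, since these depend only on $\sigma$ through the concatenated path. This converts the right-hand side of $(2)$ into the right-hand side of $(R5)$ in $(3)$, and the left-hand side integrates to $\int_{P(M)}[dF^t]\,d\Gamma_x$ by Fubini.

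For $(3)\Rightarrow(1)$ I specialize to $t=0$: the estimate $(R5)$ at $t=0$ reads $\int[dF^0]\,d\Gamma_x\leq e^{\frac{\kappa}{2}T}\int|\nabla_0F|^2+\int_0^T\frac{\kappa}{2}e^{\frac{\kappa}{2}s}|\nabla_sF|^2\,d\Gamma_x$, and by the computation inside the proof of Lemma \ref{l:r4_r5_t0} we have the identity $\int_{P(M)}[dF^0]\,d\Gamma_x=|\nabla_x\int_{P(M)}F\,d\Gamma_x|^2$ (the martingale's initial infinitesimal quadratic variation equals the squared gradient of the integrated function). Substituting gives exactly $(R3)$. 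The main obstacle I anticipate is making the ``restart'' identity $[dF^t](\gamma)=[d(F_{\gamma_t})^0]$ rigorous for $a.e.$ $\gamma$ and justifying that the limits defining the infinitesimal quadratic variation can be taken fiberwise --- this requires care with the mode of convergence (in measure on $P(M)$ versus pointwise $a.e.$) and an appeal to the Markov property of $\Gamma_x$ in the form that conditioning on $\cF^t$ produces $\Gamma_{\gamma(t)}$ on the tail; once this and the parallel-gradient bookkeeping $|\nabla_sF_{\gamma_t}|(\sigma)=|\nabla_{s+t}F|(\gamma_{[0,t]}\circ\sigma)$ are in place, everything else is routine.
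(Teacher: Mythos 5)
Your proposal is correct and follows essentially the same route as the paper: the implication $(1)\Rightarrow(2)$ via the restart identity $[dF^t](\gamma)=[dF^0_{\gamma_t}]$ together with Lemma \ref{l:r4_r5_t0} applied to the $\cF^{T-t}$-measurable function $F_{\gamma_t}$ and the bookkeeping $|\nabla_s F_{\gamma_t}|(\sigma)=|\nabla_{s+t}F|(\gamma_{[0,t]}\circ\sigma)$, then $(2)\Rightarrow(3)$ by integration and $(3)\Rightarrow(1)$ by specializing to $t=0$ where $(3)$ coincides with $(2)$ and hence with $(R3)$ by Lemma \ref{l:r4_r5_t0}. The subtlety you flag about making the restart identity rigorous is exactly the point the paper also leans on implicitly.
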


\begin{proof}
We saw in Lemma \ref{l:r4_r5_t0} that (1) was equivalent to (2) at $t=0$.  In particular we need then to see that (1) implies (2) for all time.  Now let $F(\gamma)=u(\gamma(t_1),\ldots,\gamma(t_N))$ be a smooth cylinder function with $t>0$ fixed.  Let us fix $\gamma\in P_x(M)$, and let us consider the smooth cylinder function $F_{\gamma_t}\in L^2(P(M),\Gamma_{\gamma(t)})$ by
\begin{align}
F_{\gamma_t}(\sigma)\equiv F(\gamma_{[0,t]}\circ\sigma) = F(\gamma(t_1),\ldots, \gamma(t_k),\sigma(t_{k+1}-t),\ldots,\sigma(t_N-t))\, ,
\end{align}
where $t_k$ is the largest element of the partition such that $t_k\leq t$.  Now applying Lemma \ref{l:r4_r5_t0} to the function $F_{\gamma_t}$, and recalling that $F_{\gamma_t}$ is $\cF^{T-t}$-measurable, yields the estimate
\begin{align}
[dF^0_{\gamma_t}]&\leq e^{\frac{\kappa}{2}(T-t)}\int_{P(M)} |\nabla_0 F_{\gamma_t}|^2+\int_0^{T-t}\frac{\kappa}{2}e^{\frac{\kappa}{2}s}|\nabla_s F_{\gamma_t}|^2\,d\Gamma_{\gamma(t)}\, .
\end{align}
Now let us observe the following.  First we have the equality
\begin{align}
[dF^t](\gamma) = [dF^0_{\gamma_t}]\, ,
\end{align}
and then combining this with the previous estimate gives us
\begin{align}
[dF^t](\gamma) &\leq e^{\frac{\kappa}{2}(T-t)}\int_{P(M)} |\nabla_0 F_{\gamma_t}|^2+\int_0^{T-t}\frac{\kappa}{2}e^{\frac{\kappa}{2}s}|\nabla_s F_{\gamma_t}|^2\,d\Gamma_{\gamma(t)}\notag\\
&=e^{\frac{\kappa}{2}(T-t)}\int_{P(M)} |\nabla_t F|^2(\gamma_{[0,t]}\circ\sigma)+\int_t^{T}\frac{\kappa}{2}e^{\frac{\kappa}{2}(s-t)}|\nabla_{s} F|^2(\gamma_{[0,t]}\circ\sigma)\,d\Gamma_{\gamma(t)}\, ,
\end{align}
as claimed.

That $(2)\implies (3)$ is immediate through integration, and finally we then need to see that $(3)\implies (1)$, which by Lemma \ref{l:r4_r5_t0} it is enough to show $(3)\implies (2)$ at $t=0$.  Now $(3)$ at $t=0$ is the same as $(2)$ at $t=0$, and hence the Theorem is proved.
\end{proof}

Let us end by comparing the estimates of this Section to the lower Ricci curvature case.  Namely, as in the previous Sections, let us see how by applying the estimate to the simplest functions on path space we recover the lower Ricci curvature estimate:

\begin{theorem}\label{t:boundedricci_BE2_implies_lowerricci}
If (R5) holds for every $\cF^T$-measurable function $F$ and $t\leq T$, then for every smooth function $u:M\to \dR$ we have the inequality
\begin{align}
H_t|\nabla H_{T-t}u|^2(x)\leq e^{\kappa(T-t)}H_T|\nabla u|^2(x)\, ,
\end{align}
by applying (R5) to function $F(\gamma)\equiv u(\gamma(T))$.
\end{theorem}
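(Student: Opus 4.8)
The plan is to substitute the cylinder function $F(\gamma)\equiv u(\gamma(T))$, which is $\cF^T$-measurable, into estimate $(R5)$ and simplify both sides, fixing throughout $0\le t\le T$. First I would record the parallel gradients of $F$: applying Proposition \ref{p:parallel_gradient} to the one-point partition $\bt=\{T\}$ gives $\nabla_s F(\gamma)=P_T\nabla u(\gamma(T))$ for $s\le T$ and $\nabla_s F(\gamma)=0$ for $s>T$, so that $|\nabla_s F|(\gamma)=|\nabla u|(\gamma(T))$ on $[0,T]$ and vanishes afterwards, since $P_T$ is an isometry. Next I would identify the martingale: by the defining property (\ref{e:WM}) of the Wiener measure together with (\ref{e:martingale}) one has $F^t(\gamma)=\int_M u(y)\,\rho_{T-t}(\gamma(t),dy)=H_{T-t}u(\gamma(t))$.

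The key step is the computation of the infinitesimal quadratic variation. For this I would note that $F_{\gamma_t}(\sigma)=F(\gamma_{[0,t]}\circ\sigma)=u(\sigma(T-t))$ is a smooth cylinder function on $P_{\gamma(t)}(M)$ with path-space average $\int_{P(M)}F_{\gamma_t}\,d\Gamma_{\gamma(t)}=H_{T-t}u(\gamma(t))$, and then invoke the identity $[dF^t](\gamma)=[dF^0_{\gamma_t}]$ from the proof of Theorem \ref{t:pointwise_r5} together with the computation inside the proof of Lemma \ref{l:r4_r5_t0}, which shows $[dG^0]=\bigl|\nabla\int G\,d\Gamma_y\bigr|^2$ for a cylinder function $G$. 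This yields $[dF^t](\gamma)=|\nabla H_{T-t}u|^2(\gamma(t))$, and integrating against $\Gamma_x$ and pushing forward by the evaluation map $e_t$ (using (\ref{e:WM}) with $\bt=\{t\}$) turns the left side of $(R5)$ into $H_t|\nabla H_{T-t}u|^2(x)$.

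Finally I would evaluate the right side of $(R5)$. For our $F$ the integrand $|\nabla_t F|^2+\int_t^T\frac{\kappa}{2}e^{\frac{\kappa}{2}(s-t)}|\nabla_s F|^2\,ds$ equals $|\nabla u|^2(\gamma(T))\bigl(1+\int_t^T\frac{\kappa}{2}e^{\frac{\kappa}{2}(s-t)}\,ds\bigr)=e^{\frac{\kappa}{2}(T-t)}|\nabla u|^2(\gamma(T))$, so the right side of $(R5)$ becomes $e^{\kappa(T-t)}\int_M|\nabla u|^2(y)\,\rho_T(x,dy)=e^{\kappa(T-t)}H_T|\nabla u|^2(x)$. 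Comparing the two sides gives the claimed inequality; at $t=T$ it degenerates to the equality $H_T|\nabla u|^2(x)=H_T|\nabla u|^2(x)$, consistent with $H_{T-t}u=u$ there. I do not anticipate a genuine obstacle, since the argument is a direct specialization of $(R5)$; the one point requiring a little care is the quadratic-variation identity $[dF^t](\gamma)=|\nabla H_{T-t}u|^2(\gamma(t))$, but this is precisely Lemma \ref{l:r4_r5_t0} transported to based path space $P_{\gamma(t)}(M)$ exactly as in the proof of Theorem \ref{t:pointwise_r5}.
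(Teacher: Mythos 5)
Your proposal is correct and follows essentially the same route as the paper: substitute the cylinder function $F(\gamma)=u(\gamma(T))$ into $(R5)$, identify $F^t=H_{T-t}u(\gamma(t))$, show $\int_{P(M)}[dF^t]\,d\Gamma_x=H_t|\nabla H_{T-t}u|^2(x)$, and evaluate the right-hand side to $e^{\kappa(T-t)}H_T|\nabla u|^2(x)$. The only cosmetic difference is that you obtain the quadratic variation via the pointwise identity $[dF^t](\gamma)=[dF^0_{\gamma_t}]$ and Lemma \ref{l:r4_r5_t0} and then integrate, whereas the paper carries out the integrated limit computation directly with the heat kernel; the two are interchangeable.
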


\begin{proof}
Let us consider the smooth cylinder function given by $F(\gamma)\equiv u(\gamma(T))$ and let $t\leq T$.  Then we see that
\begin{align}
F^t(\gamma) = \int_{M}u(y)\rho_{T-t}(\gamma(t),dy) = H_{T-t}u(x)\, .
\end{align}
A computation then gives us
\begin{align}
\int_{P(M)}[dF^t]\,d\Gamma_x &= \lim\int_{P(M)}\frac{\big(F^{t}-F^{t-s}\big)^2}{s}\,d\Gamma_x = \lim\int_{P(M)}\frac{\big(F^{t}-(F^{t})^{t-s}\big)^2}{s}\,d\Gamma_x\notag\\
&=\lim\frac{1}{s}\int_M\bigg(\int_M \bigg(H_{T-t}u(y_2)-\int_M H_{T-t}u(z)\rho_s(y_1,dz)\bigg)^2\rho_s(y_1,dy_2)\bigg)\rho_{t-s}(x,dy_1)\notag\\
&=\int_M |\nabla H_{T-t}u|^2\rho_{t}(x,dy) = H_t|\nabla H_{T-t}u|^2(x)\, ,
\end{align}
while a computation like that in Lemma \ref{l:lower_ricci:1} gives us
\begin{align}
e^{\frac{\kappa}{2}(T-t)}\int_{P(M)} |\nabla_t F|^2&+\int_t^T \frac{\kappa}{2}e^{\frac{\kappa}{2}(s-t)}|\nabla_s F|^2\, d\Gamma_x\notag\\
&= e^{\frac{\kappa}{2}(T-t)}\int_{M} |\nabla u|^2(y)+\big(\int_t^T \frac{\kappa}{2}e^{\frac{\kappa}{2}(s-t)}\big)|\nabla u|^2(y)\,\rho_T(x,dy)\notag\\
&= e^{\kappa(T-t)}H_T|\nabla u|^2(x)\, .
\end{align}
Substituting these into (R5) proves the Lemma.
\end{proof}

\subsection{Proof that $(R2)\Leftrightarrow (R4)$}\label{ss:r2_r4}

The statements of this Section are the analogous statements for $(R2)$ and $(R4)$ as were stated in the previous Section for $(R3)$ and $(R5)$.  In particular we are interested in seeing that a Ricci curvature bound implies the estimate
\begin{align}\label{e:ricci_quadvar1}
\int_{P(M)}\sqrt{[dF^t]}\, d\Gamma_x \leq \int_{P(M)} |\nabla_t F|+\int_t^T \frac{\kappa}{2}e^{\frac{\kappa}{2}(s-t)}|\nabla_s F|\, d\Gamma_x\, .
\end{align}

We state the main results, but since the proofs are completely analogous, and indeed nearly verbatim, to those of Section \ref{ss:r3_r5} we do not do them.  The main result of this Section is the following:

\begin{theorem}\label{t:pointwise_r4}
The following are equivalent
\begin{enumerate}
\item The estimate $(R2)$.
\item For each $F\in L^2(P(M),\Gamma_x)$, $t\geq 0$ and $a.e.$ $\gamma\in P_x(M)$ we have the pointwise estimate
\begin{align}
\sqrt{[dF^t]}(\gamma)&\leq \int_{P(M)} |\nabla_t F|(\gamma_{[0,t]}\circ\sigma)+\int_t^{T}\frac{\kappa}{2}e^{\frac{\kappa}{2}(s-t)}|\nabla_{s} F|(\gamma_{[0,t]}\circ\sigma)\,d\Gamma_{\gamma(t)}\, ,
\end{align}

\item For each $F\in L^2(P(M),\Gamma_x)$ and $t\geq 0$ we have the integral estimate $(R4)$:
\begin{align}
\int_{P(M)}\sqrt{[dF^t]}\, d\Gamma_x \leq \int_{P(M)} |\nabla_t F|+\int_t^T \frac{\kappa}{2}e^{\frac{\kappa}{2}(s-t)}|\nabla_s F|\, d\Gamma_x\, ,
\end{align}
\end{enumerate}
\end{theorem}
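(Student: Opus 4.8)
The plan is to follow verbatim the scheme used for Theorem \ref{t:pointwise_r5}, replacing the quadratic (Hölder-with-exponent-$2$) bookkeeping by the linear ($L^1$) one, using $(R2)$ in place of $(R3)$ throughout. First I would establish the $t=0$ case: for $F \in L^2(P(M),\Gamma_x)$ — and by density it suffices to treat a smooth cylinder function $F = e_\bt^* u$ — one computes, exactly as in Lemma \ref{l:r4_r5_t0}, that
\begin{align}
\sqrt{[dF^0]}(\gamma) = \Big|\nabla \int_{P(M)} F\, d\Gamma_x\Big|(x)\, ,
\end{align}
since the infinitesimal quadratic variation at time $0$ of a cylinder function is exactly the squared slope on $M$ of the function $x \mapsto \int F\, d\Gamma_x$. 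Combining this identity with the assumed estimate $(R2)$ gives
\begin{align}
\sqrt{[dF^0]}(\gamma) \leq \int_{P(M)} |\nabla_0 F| + \int_0^\infty \frac{\kappa}{2} e^{\frac{\kappa}{2}s} |\nabla_s F|\, ds \cdot d\Gamma_x\, ,
\end{align}
which is statement (2) at $t=0$. Conversely, statement (3) at $t=0$ is literally the same inequality, and reading it at an arbitrary base point recovers $(R2)$; so $(1) \Leftrightarrow (2)|_{t=0} \Leftrightarrow (3)|_{t=0}$.

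Next I would bootstrap from $t=0$ to general $t$ via the Markov/restriction trick already used in Theorem \ref{t:pointwise_r5}. Fix $\gamma \in P_x(M)$ and $t > 0$, and form the cylinder function $F_{\gamma_t}(\sigma) \equiv F(\gamma_{[0,t]} \circ \sigma)$ on $P_{\gamma(t)}(M)$, which is $\cF^{T-t}$-measurable. The key two facts are the identity $[dF^t](\gamma) = [dF^0_{\gamma_t}]$ (both sides measure the infinitesimal branching of the martingale at the point $\gamma(t)$), together with the change-of-variables relations $|\nabla_s F_{\gamma_t}|(\sigma) = |\nabla_{s+t} F|(\gamma_{[0,t]}\circ\sigma)$ for $s \geq 0$. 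Applying the $t=0$ case (which holds at the base point $\gamma(t)$ by $(R2)$) to $F_{\gamma_t}$ and translating the time variable back by $t$ yields
\begin{align}
\sqrt{[dF^t]}(\gamma) \leq \int_{P(M)} |\nabla_t F|(\gamma_{[0,t]}\circ\sigma) + \int_t^{T} \frac{\kappa}{2} e^{\frac{\kappa}{2}(s-t)} |\nabla_s F|(\gamma_{[0,t]}\circ\sigma)\, d\Gamma_{\gamma(t)}\, ,
\end{align}
which is (2) for general $t$. Then $(2) \Rightarrow (3)$ follows by integrating in $\gamma$ over $P_x(M)$ and using $\int_{P(M)} [dF^t]^{1/2} d\Gamma_x = \int_{P(M)} \big(\int [dF^t]^{1/2} d\Gamma_{\gamma(t)}\big) d\Gamma_x$ together with Jensen/Fubini; and $(3) \Rightarrow (1)$ is immediate since $(3)$ at $t=0$ is $(R2)$.

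The main obstacle, as in the quadratic case, is making the $t=0$ slope identity $\sqrt{[dF^0]} = |\nabla \int F\,d\Gamma_x|$ rigorous and controlling the passage to the limit $s \to 0$ in the definition of $[dF^0]$ — i.e., verifying that for a cylinder function the difference quotient $s^{-1}(F^s - (F^s)^0)^2$ genuinely converges (in the appropriate sense) to $|\nabla v|^2(x)$ where $v(x) = \int F\,d\Gamma_x$. For smooth cylinder functions this is the short-time heat-kernel variance asymptotic already used in Lemma \ref{l:r4_r5_t0}, so the calculation is essentially done there; the only extra care needed is to note that taking square roots is harmless because everything is nonnegative and $\sqrt{\,\cdot\,}$ is continuous, and that the density of cylinder functions in $L^2(P_x(M),\Gamma_x)$ together with the closedness of the parallel gradients (Theorem \ref{t:s_parallel_grad_closed}) lets us pass from cylinder functions to general $F$. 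I do not expect any genuinely new difficulty beyond what Section \ref{ss:r3_r5} already handles.
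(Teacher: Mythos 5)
Your proposal is correct and is essentially the paper's own argument: the paper omits the proof of this theorem precisely because it is the verbatim adaptation of Lemma \ref{l:r4_r5_t0} and Theorem \ref{t:pointwise_r5} with $(R2)$ in place of $(R3)$, which is exactly the route you take (the $t=0$ identity $\sqrt{[dF^0]}=|\nabla\int F\,d\Gamma_x|$, the restriction trick via $F_{\gamma_t}$ and $[dF^t](\gamma)=[dF^0_{\gamma_t}]$, then integration). No gaps.
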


Let us also observe that as in Theorem \ref{t:boundedricci_BE2_implies_lowerricci} that by applying $(R4)$ to the simplest functions on path space we can recover the analogous statements for lower Ricci curvature.

\begin{theorem}
If (\ref{e:ricci_quadvar1}) holds for every function $F$, then for every smooth function $u:M\to \dR$ we have the inequality
\begin{align}
H_t|\nabla H_{T-t}u|(x)\leq e^{\frac{\kappa}{2}(T-t)}H_T|\nabla u|(x)\, ,
\end{align}
by applying (\ref{e:ricci_quadvar1}) to function $F(\gamma)\equiv u(\gamma(T))$.
\end{theorem}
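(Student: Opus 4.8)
The plan is to mimic the computation already carried out in Theorem~\ref{t:boundedricci_BE2_implies_lowerricci}, where applying $(R5)$ to the cylinder function $F(\gamma)\equiv u(\gamma(T))$ produced the quadratic Bakry-Emery estimate; here we run the linear version of that argument using $(R4)$ in place of $(R5)$. So first I would fix a smooth function $u$ on $M$ and times $t\leq T$, and consider the smooth cylinder function $F(\gamma)\equiv u(\gamma(T))$ on $P(M)$. By the defining property of the Wiener measure (\ref{e:WM}) together with the Markov property of the heat kernel, one has $F^t(\gamma)=\int_M u(y)\,\rho_{T-t}(\gamma(t),dy)=H_{T-t}u(\gamma(t))$, exactly as in the proof of Theorem~\ref{t:boundedricci_BE2_implies_lowerricci}.

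Next I would compute the two sides of $(R4)$, i.e.\ of (\ref{e:ricci_quadvar1}), on this particular $F$. For the left-hand side, the computation of $\sqrt{[dF^t]}$ is essentially the square root of the computation of $[dF^t]$ already done in the proof of Theorem~\ref{t:boundedricci_BE2_implies_lowerricci}: since $[dF^t](\gamma)=[dF^0_{\gamma_t}]=|\nabla H_{T-t}u|^2(\gamma(t))$ by the argument of Lemma~\ref{l:r4_r5_t0} and Theorem~\ref{t:pointwise_r5}, one gets $\sqrt{[dF^t]}(\gamma)=|\nabla H_{T-t}u|(\gamma(t))$, and hence $\int_{P(M)}\sqrt{[dF^t]}\,d\Gamma_x=\int_M |\nabla H_{T-t}u|(y)\,\rho_t(x,dy)=H_t|\nabla H_{T-t}u|(x)$. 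For the right-hand side, using Proposition~\ref{p:parallel_gradient} one has $|\nabla_s F|(\gamma)=|\nabla u|(\gamma(T))$ for $s\leq T$ and $|\nabla_s F|=0$ for $s>T$, so the integrand becomes $\big(1+\int_t^T \frac{\kappa}{2}e^{\frac{\kappa}{2}(s-t)}\,ds\big)|\nabla u|(\gamma(T))=e^{\frac{\kappa}{2}(T-t)}|\nabla u|(\gamma(T))$, whence the right-hand side equals $e^{\frac{\kappa}{2}(T-t)}\int_M |\nabla u|(y)\,\rho_T(x,dy)=e^{\frac{\kappa}{2}(T-t)}H_T|\nabla u|(x)$. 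Substituting both sides into $(R4)$ yields exactly $H_t|\nabla H_{T-t}u|(x)\leq e^{\frac{\kappa}{2}(T-t)}H_T|\nabla u|(x)$, as claimed.

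There is essentially no hard step here: the entire proof is bookkeeping, reusing the heat-kernel and parallel-gradient identities for the simplest cylinder functions that were already established for the quadratic case. The only point requiring a word of care is the passage from the pointwise identity $\sqrt{[dF^t]}(\gamma)=|\nabla H_{T-t}u|(\gamma(t))$ back to the integrated form, and the fact that $[dF^t]$ genuinely exists pointwise for this smooth cylinder $F$ (rather than merely in measure); this is guaranteed because $F$ is a smooth cylinder function, as remarked in Section~\ref{ss:mart_quad}. One could also note in passing that the resulting inequality is itself equivalent, by Theorem~\ref{t:lower_ricci}, to the lower Ricci bound $\Ric+\nabla^2 f\geq -\kappa g$, so this again exhibits how the path-space estimate $(R4)$ recovers lower Ricci curvature on $M$.
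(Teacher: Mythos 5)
Your proposal is correct and is exactly the route the paper intends: the paper omits this proof, stating it is "completely analogous" to the quadratic case in Section \ref{ss:r3_r5}, and you carry out precisely that analogue — computing $F^t(\gamma)=H_{T-t}u(\gamma(t))$, identifying $\sqrt{[dF^t]}(\gamma)=|\nabla H_{T-t}u|(\gamma(t))$, and evaluating the right-hand side via Proposition \ref{p:parallel_gradient} to get $e^{\frac{\kappa}{2}(T-t)}H_T|\nabla u|(x)$. No gaps.
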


\subsection{Application to Continuity of Martingales}\label{ss:smooth_mart_cont}

In this Section we discuss applications of $(R4),(R5)$ to the time regularity of martingales.  In fact, we discuss two results in this Section, one of which applies to lower Ricci curvature bounds and the other applies to bounded Ricci curvature.  In particular, we see that moving from a lower Ricci curvature bound to an absolute bound results in increasing the regularity of martingales from continuous to H\"older continuous. We delay the proofs until the second part of the paper since we prove the same results in the much more complicated context of nonsmooth metric-measure spaces.  The importance of the results lies primarily in the nonsmooth case, where the proofs presented give the same martingale regularity in a context where nothing was previously understood.  

To talk about the time regularity of a martingale $F^t$ we need to address the issue that each $F^t$ is only well defined away from a set of measure zero.  In this direction we make the following definition:
\begin{definition}
Let $X^t,\tilde X^t:P_x(X)\to \dR$ be stochastic processes.  That is, each is a one parameter family of functions such that $X^t,\tilde X^t$ are $\cF^t$-measurable.  We say $X^t$ and $\tilde X^t$ are versions of each other if for each $t\geq 0$ we have that $X^t=\tilde X^t$ $\Gamma_x$-a.e.
\end{definition}

On $\dR^n$ it is a well known consequence of the Clark-Ocone theorem that every martingale $F^t$ on $P_0(\dR^n)$ has a pointwise time continuous version.  That is, for each $\gamma\in F^t$ we have that $F^t(\gamma)$ is a continuous function of time.  In the second part of the paper we will see how to prove this for metric-measure spaces with lower Ricci bounds, in fact, we will prove an effective version of it.  In particular we have the following result on smooth spaces (which can be proved by more standard means):

\begin{theorem}\label{t:smooth_martingale_lowerricci}
Let $(M^n,g,e^{-f}dv_g)$ be a complete smooth metric-measure space with $\Ric+\nabla^2 f\geq -\kappa g$, then every martingale $F^t$ has a pointwise time continuous version.
\end{theorem}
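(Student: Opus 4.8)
The plan is to reduce the problem, via the Markov property of the diffusion measures, to a local statement about continuity of a particular family of functions in time, and then to use the lower Ricci bound to produce a quantitative modulus of continuity of the martingale in $L^2$ (or $L^p$) which is strong enough to invoke the Kolmogorov–Čentsov continuity criterion. First I would reduce to the case of a cylinder function. If $F\in L^1(P_x(M),\Gamma_x)$ is arbitrary, approximate it in $L^1$ by smooth cylinder functions $F_j$; the associated martingales $F_j^t$ converge to $F^t$ uniformly in $t$ in $L^1$ by Doob's maximal inequality, so it suffices to show each $F_j^t$ has a continuous version and then pass to the limit along a fast-converging subsequence, using a Borel–Cantelli argument to get pointwise uniform convergence of versions. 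Hence from now on assume $F=e_\bt^* u$ is a smooth cylinder function with $u$ smooth (say with bounded derivatives), $\bt=\{0\le t_1<\cdots<t_N\}$.

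For such $F$, formula (\ref{e:martingale}) gives $F^t(\gamma)=\int_{P(M)}F_{\gamma_t}\,d\Gamma_{\gamma(t)}$, and when $t_k\le t<t_{k+1}$ this is an explicit integral of $u$ against iterated heat kernels starting from $\gamma(t)$; in particular on each such interval $F^t$ is itself a smooth cylinder function of the form $e_{\bt'}^* v_t$ where $v_t$ depends smoothly on $t$ and on the heat-kernel-regularized remaining variables. The key estimate I would then establish is a bound of the form
\begin{align}
\|F^{t}-F^{s}\|_{L^2(\Gamma_x)}^2 \le C\,(t-s)\, ,
\end{align}
for $s<t$ in a common interval $(t_k,t_{k+1})$, with $C$ depending on $\kappa$, on $\|\nabla u\|_\infty$ and on $\sup_j t_j$. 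This follows from the quadratic-variation identity: $\|F^{t}-F^{s}\|_{L^2}^2 = \int_{P(M)}([F^{t}]-[F^{s}])\,d\Gamma_x = \int_{P(M)}\big(\int_s^t [dF^r]\,dr\big)\,d\Gamma_x$, and by the same computation as in the proof of Theorem \ref{t:boundedricci_BE2_implies_lowerricci} one has, using only the Bakry–Emery gradient estimate (item (2) of Theorem \ref{t:lower_ricci}, valid under $Rc+\nabla^2 f\ge-\kappa g$), that $\int_{P(M)}[dF^r]\,d\Gamma_x = H_r|\nabla v_r|^2(x)$ where $v_r$ is the regularized cylinder function representing $F^r$, and this is bounded by $e^{\kappa(\sup t_j)}H_{\sup t_j}|\nabla u|^2(x)\le e^{\kappa\sup t_j}\|\nabla u\|_\infty^2<\infty$. (One also checks directly that $F^t$ is continuous in $L^2$ across each partition time $t_k$, since both one-sided limits equal the heat-kernel average that defines $F^{t_k}$.) Upgrading from $L^2$ to $L^p$ for large even $p$ is done the same way, using that iterated compositions of the Gaussian-type heat kernel give $\|F^t-F^s\|_{L^p}^p\le C_p (t-s)^{p/2}$; with $p>2$ this is exactly the hypothesis of the Kolmogorov continuity theorem on the Banach space $L^p$, which yields a version of $t\mapsto F^t$ that is (locally Hölder and in particular) continuous as an $L^p$-valued, hence measurable-in-$\gamma$, map. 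Finally one promotes this to a genuinely pointwise continuous version of $F^t(\gamma)$ for $\Gamma_x$-a.e. $\gamma$: since $F$ is a bounded cylinder function, $F^t$ is the conditional expectation of a bounded function and standard martingale regularity (the diffusion $\gamma$ being a strong Markov process with continuous paths, cf. \cite{Stroock_book},\cite{Hsu_book}) gives a càdlàg modification, and the $L^p$-continuity just proved rules out jumps, forcing the modification to be continuous for a.e. $\gamma$. Then the Borel–Cantelli passage to the limit in $F_j$ described above delivers the general case.

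The main obstacle I expect is the last step — passing from continuity of $t\mapsto F^t$ as an $L^p(\Gamma_x)$-valued curve to pointwise-in-$\gamma$ continuity, and doing so uniformly enough to survive the $F_j\to F$ approximation. On $\mathds{R}^n$ this is handled cleanly by the Clark–Ocone representation, which writes $F^t$ as a continuous stochastic integral; on a general manifold one instead leans on the Markov structure of the diffusion and the quantitative Hölder bound from the lower Ricci hypothesis, and some care is needed to make the "version" in the sense of the definition preceding the theorem genuinely independent of $t$. Everything else — the reduction to cylinder functions, the Bakry–Emery-based quadratic-variation bound, and the application of Kolmogorov's criterion — is routine given the tools already set up in Sections \ref{s:smooth_lower_ricci} and \ref{ss:mart_quad}. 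As noted in the text, the full argument is deferred to Section \ref{s:ricci_martingales}, where it is carried out in the harder nonsmooth setting and specializes to this statement.
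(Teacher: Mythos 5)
Your reduction to cylinder functions (approximate $F$ in $L^1$, apply Doob's maximal inequality to the difference martingales, pass to a subsequence to get a.e.-uniform convergence of versions) is exactly the paper's argument, so that half is fine. The gap is in the cylinder-function case, at precisely the step you flag as the main obstacle. First, the Kolmogorov--\v{C}entsov theorem is misapplied: from $\int_{P(M)}|F^t-F^s|^p\,d\Gamma_x\le C_p|t-s|^{1+\beta}$ with $\beta>0$ the theorem directly produces a modification that is \emph{pointwise} locally H\"older continuous in $t$ for a.e.\ $\gamma$; it does not merely give continuity of $t\mapsto F^t$ as an $L^p$-valued curve (that weaker statement is immediate from the moment bound and needs no theorem). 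Second, the argument you substitute for it --- ``a c\`adl\`ag modification whose law is $L^p$-continuous in $t$ has no jumps'' --- is false: the compensated Poisson process $N_t-t$ is a c\`adl\`ag $L^2$-martingale with $\|X_t-X_s\|_{L^2}^2=|t-s|$, hence H\"older continuous as an $L^2$-valued map, yet it jumps almost surely. Continuity in $L^p$ for fixed $t$ says nothing about jumps occurring at random times. Third, the input to Kolmogorov, namely $\|F^t-F^s\|_{L^p}^p\le C_p(t-s)^{p/2}$ for $p>2$, is asserted to follow ``the same way'' as the $L^2$ bound, but the quadratic-variation identity is an $L^2$ isometry and does not iterate to higher moments; one needs Burkholder--Davis--Gundy or an It\^o-formula argument, and the It\^o-formula route used later in the paper (proof of the H\"older martingale property) explicitly presupposes that the martingale and its quadratic variation are already continuous, so invoking it here without comment is circular. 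These three issues compound at the one step that actually carries the theorem.

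For comparison, the paper's proof of the cylinder case (Theorem \ref{t:martingale_continuous_lowerricci}) avoids stochastic calculus entirely. Writing $F^t(\gamma)=\int_X v(y)\,\rho_{t_{k+1}-t}(\gamma(t),dy)$ with $v$ continuous of compact support (by the Feller property, Corollary \ref{c:feller}), it uses that under the lower Ricci bound the map $(t,x)\mapsto\rho_t(x,dy)$ is jointly continuous into $\cP_2(X)$ --- a consequence of the Wasserstein contraction $W_2(\rho_t(x_0,dy),\rho_t(x_1,dy))\le e^{\kappa t}d(x_0,x_1)$ coming from the convexity of the entropy (Lemma \ref{l:heatkernel_continuity}). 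Since $\gamma$ is continuous, $F^t(\gamma)$ is then continuous in $t$ for \emph{every} $\gamma$, with no moment estimates and no modification argument needed. If you want to salvage your route instead, you must (i) actually prove the $L^p$ bound for a fixed even $p\ge4$, e.g.\ via BDG applied to the c\`adl\`ag martingale together with a uniform pointwise bound on the quadratic variation density obtained by iterating the Bakry--Emery gradient estimate through the heat-kernel representation of $F^t$, and (ii) then apply Kolmogorov--\v{C}entsov in its correct pointwise form and discard the ``no jumps'' argument.
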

\begin{proof}
See Section \ref{s:ricci_martingales}.
\end{proof}

In fact, if $M^n$ has {\it bounded} Ricci curvature then there is an even stronger result.  In the second part of the paper we will see that metric-measure spaces with bounded Ricci curvature that martingales with bounded parallel gradients have H\"older continuous versions.  In particular, this is new even on smooth spaces:

\begin{theorem}\label{t:smooth_martingale_holder}
Let $(M^n,g,e^{-f}dv_g)$ be a complete smooth metric-measure space with $-\kappa g\leq \Ric+\nabla^2 f\leq \kappa g$.  Then if $F:P_x(M)\to \dR$ is $\cF^T$-measurable with $|\nabla_s F|$ uniformly bounded independent of $s$, then the induced martingale $F^t$ has a version such that for each $\gamma\in P_x(M)$ we have that $F^t(\gamma)$ is $C^\alpha$-H\"older continuous for every $\alpha<\frac{1}{2}$.
\end{theorem}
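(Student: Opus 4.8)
The plan is to extract the time-H\"older regularity of the martingale $F^t$ from a uniform-in-time bound on its infinitesimal quadratic variation, which is exactly what the hypothesis on the parallel gradients buys us when combined with the already-established pointwise form of estimate $(R5)$. First I would pass to a pointwise time-continuous representative of $F^t$, which exists because $-\kappa g\leq \Ric+\nabla^2 f$ in particular is a lower Ricci bound, so Theorem \ref{t:smooth_martingale_lowerricci} applies. Next I would invoke Theorem \ref{t:pointwise_r5}: under $-\kappa g\leq \Ric+\nabla^2 f\leq \kappa g$, for $a.e.$ $\gamma\in P_x(M)$ and $a.e.$ $t$ one has
\begin{align}
[dF^t](\gamma)\leq e^{\frac{\kappa}{2}(T-t)}\int_{P(M)}\bigg(|\nabla_t F|^2(\gamma_{[0,t]}\circ\sigma)+\int_t^T\frac{\kappa}{2}e^{\frac{\kappa}{2}(s-t)}|\nabla_s F|^2(\gamma_{[0,t]}\circ\sigma)\,ds\bigg)\,d\Gamma_{\gamma(t)}(\sigma)\, .
\end{align}
Feeding in the assumption $|\nabla_s F|\leq C$ uniformly in $s$, the bracket is bounded by $C^2\big(1+(e^{\frac{\kappa}{2}(T-t)}-1)\big)$, so after multiplying by the prefactor the whole right-hand side is at most a constant $C_0=C_0(\kappa,T,C)$ (one may take $C_0=C^2e^{|\kappa|T}$). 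Hence $[dF^t]\leq C_0$ for $a.e.$ $t$ and $a.e.$ $\gamma$, and therefore the quadratic variation of $F^t$ satisfies $[F^t](\gamma)-[F^s](\gamma)\leq C_0(t-s)$ for $a.e.$ $\gamma$ whenever $s\leq t\leq T$.

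Second, I would convert this linear growth of $[F^t]$ into pointwise H\"older control. Applying the Burkholder--Davis--Gundy inequality to the martingale $r\mapsto F^{s+r}-F^s$, $r\in[0,t-s]$, together with the bound just obtained gives, for every integer $p\geq 1$ and all $s\leq t\leq T$,
\begin{align}
\int_{P_x(M)}|F^t-F^s|^{2p}\,d\Gamma_x\leq c_p\int_{P_x(M)}\big([F^t]-[F^s]\big)^{p}\,d\Gamma_x\leq c_p\,C_0^{\,p}\,|t-s|^{p}\, .
\end{align}
The Kolmogorov continuity theorem then produces a representative $\tilde F^t$ of the martingale such that, for \emph{every} $\gamma\in P_x(M)$, the map $t\mapsto\tilde F^t(\gamma)$ is locally $\alpha$-H\"older on $[0,T]$ for each $\alpha<\tfrac{p-1}{2p}$; intersecting the countably many full-measure events obtained as $p$ ranges over the positive integers yields a single representative which is $C^\alpha$ in $t$ for every $\alpha<\tfrac12$ and every $\gamma$. (In the smooth case one may instead argue concretely: by the martingale representation theorem on $P_x(M)$ one has $F^t=F^0+\int_0^t\langle\xi_s,dW_s\rangle$ with $|\xi_s|^2=[dF^s]\leq C_0$, so by Dambis--Dubins--Schwarz $F^t-F^0=B_{A_t}$ for a Brownian motion $B$ with $A_t=\int_0^t|\xi_s|^2\,ds$ Lipschitz in $t$, and L\'evy's modulus of continuity for $B$ gives the same conclusion for the canonical continuous representative.)

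Since the statement is deferred to Section \ref{s:ricci_martingales}, where it is proved for metric-measure spaces, I would carry out the Burkholder--Davis--Gundy plus Kolmogorov version, because it uses only the quadratic variation estimate of Theorem \ref{t:pointwise_r5} and soft martingale theory and therefore survives the passage to the nonsmooth setting; the representation-theorem argument is offered only as a shortcut in the smooth case. The genuine content of the theorem is already contained in the pointwise $(R5)$ estimate, which has been established; the remaining ingredients are the elementary observation that uniformly bounded parallel gradients make that estimate uniform in $t$ — so in particular it applies to every smooth cylinder function, since Proposition \ref{p:parallel_gradient} gives $|\nabla_s F|\leq\|\nabla u\|_\infty$ there — and the routine passage from a second-moment bound to all-order moment bounds. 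I expect the only point needing real care, and the main obstacle in the general treatment, to be ensuring that the quadratic variation $[F^t]$ and the Burkholder--Davis--Gundy inequality are genuinely available at the stated level of generality; in the smooth case both are classical, and in Section \ref{s:ricci_martingales} they are set up as part of the stochastic calculus on $P(X)$.
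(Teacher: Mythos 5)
Your proposal is correct and follows essentially the same route as the paper's proof (given in Section \ref{ss:martingale_holder}): a uniform bound $[dF^t]\leq C_0$ from the pointwise quadratic-variation estimate, then the moment bounds $\int|F^t-F^s|^{2p}\,d\Gamma\lesssim_p C_0^p|t-s|^p$, then the Kolmogorov H\"older continuity theorem with $p\to\infty$. The only difference is that you cite Burkholder--Davis--Gundy for the moment bounds where the paper proves exactly that estimate by hand, via an induction on $p$ using the It\^o formula applied to $x\mapsto x^{2(p+1)}$ (justified by the continuity of $F^t$ and $[F^t]$ from the lower Ricci bound), which is the self-contained version that transfers to the nonsmooth setting.
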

\begin{proof}
See Section \ref{s:ricci_martingales}.
\end{proof}
\vspace{.5 cm}

\section{Bounded Ricci Curvature and the Ornstein-Uhlenbeck Operator}\label{s:smooth_br_OU}

In this Section we discuss our third characterization of bounded Ricci curvature by studying the relationship between Ricci curvature and the analysis on path space $P_x(M)$.  Precisely, we prove the estimates $(R6),(R7)$ from Theorem \ref{t:smooth_bounded_ricci} for the Ornstein-Uhlenbeck operator and its twisted variations.  These operators act as infinite dimensional laplacians on path space.  The Ornstein-Uhlenbeck operator itself was first studied by Gross in \cite{Gross_LogSob} on $\dR^n$.  The definition and analysis of the Ornstein-Uhlenbeck operator on a smooth manifold took some time, the history of which was discussed some in the introduction.

We begin in Sections \ref{ss:H1_gradient} and \ref{ss:OU_smooth} by recalling the $H^1_x$-gradient and the construction of the Ornstein-Uhlenbeck operator, as well as some generalizations.  Then in Section \ref{ss:r5_r6} we prove the spectral gap estimate for these operators, and their equivalence to the martingale estimates of Section \ref{s:bounded_ricci_stoc_anal_smooth}.  In Section \ref{ss:r4_r7} we prove the log-Sobolev estimates under the assumption of $(R4)$.  Finally in Section \ref{ss:r7_r3} we will show that the log-Sobolev estimate $(R7)$ itself implies the gradient estimate $(R3)$.  This will be important in Section \ref{s:finish_maintheorem} when we finish the proof of Theorem \ref{t:smooth_bounded_ricci} by showing that the gradient estimate itself implies Ricci curvature bound $-\kappa g\leq \Ric+\nabla^2 f\leq \kappa g$. \\

\subsection{The $H^1_x$ Gradient on Path Space}\label{ss:H1_gradient}

In this Section we introduce the $H^1_x$-gradient on path space, sometimes also called the Malliavin gradient.  Although we will use other variations of this gradient as well, it seems worthwhile to introduce the $H^1_x$-gradient explicitly as similar constructions will be used to build other variations.  In essence, the definition of the $H^1_x$-gradient is the same as that for the parallel gradient, except one considers variations by $H^1$ vector fields, instead of parallel vector fields.

Throughout we will denote
\begin{align}
\cH\equiv H^1_0([0,\infty),T_xM)\, ,
\end{align}
as the collection of based $H^1_0$ paths in Euclidean space $T_xM$.  Now if $F:P_x(M)\to\dR$ is a smooth cylinder function then we define the Malliavin gradient $\nabla F:P_x(M)\to\cH$ for a.e. $\gamma\in P_x(M)$ as the unique element of $\cH$ such that
\begin{align}
\langle \nabla F, h\rangle_{H^1_0} = D_H F\text{ for all }h\in\cH\, ,
\end{align}
where $H=P_t^{-1}h(t)$ is the vector field along $\gamma$ associated to $h\in\cH$ under the stochastic parallel translation map.  It is a standard point now \cite{Hsu_book} that the operator $\nabla$ may be extended to a closed unbounded operator
\begin{align}
\nabla: L^2(P_{x}(M))\to L^2(P_{x}(M),\cH)\, ,
\end{align}
with the smooth cylinder functions as a dense subset of the domain $D(\nabla)$.  The proof of this point relies on Driver's integration by parts formula, see \cite{Driver_CM} and the related result of Lemma \ref{l:parallel_ibp}.  In Section \ref{ss:OU_smooth} we will use these ideas to construct the twisted Ornstein-Uhlenbeck operators.  

We will end this Section by remarking on the relationship between the Malliavin gradient and the parallel gradients.  In essence we see that the parallel gradients give a geometric interpretation of the time derivative of the Malliavian derivative, a point which is crucial in the nonsmooth case, and also plays a motivational role in the definition of the twisted Ornstein-Uhlenbeck operators. 

\begin{proposition}\label{p:parallel_H1_relation}
For $F$ a smooth cylinder function we have the following
\begin{enumerate}
\item For a.e. $\gamma\in P_x(M)$ we have that
\begin{align}
\frac{d}{dt}\nabla F(\gamma)=P_t^{-1}\nabla_t F(\gamma)\, ,
\end{align}
where $\nabla F$ is the $H^1_x$ gradient and $\nabla_t$ are the parallel gradients.
\item For a.e. $\gamma\in P_x(M)$ we have that
\begin{align}
&|\nabla F|^2(\gamma) = \int_0^\infty |\nabla_t F|^2\,dt\, .
\end{align}
\end{enumerate}
\end{proposition}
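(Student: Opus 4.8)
The plan is to establish (1) by testing the defining property of the $H^1_x$-gradient against an arbitrary Cameron--Martin direction $h\in\cH$, and then to read off (2) from (1) using that stochastic parallel translation is a fibrewise isometry. All manipulations are carried out for a fixed smooth cylinder function $F=e_\bt^*u$, with $\bt=\{0\le t_1<\dots<t_N<\infty\}$ and $u$ smooth, and on the $\Gamma_x$-full-measure set of curves $\gamma$ on which the stochastic parallel translation maps $P_t$ — hence also Proposition \ref{p:parallel_gradient} and the defining relation $\langle\nabla F,h\rangle_{H^1_0}=D_H F$ — are defined.

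First I would fix $h\in\cH$, let $H(t)=P_t^{-1}h(t)$ be the vector field along $\gamma$ associated to $h$, and compute $D_H F$ directly by the chain rule, using that each $P_{t_j}:T_{\gamma(t_j)}M\to T_xM$ is a linear isometry:
\[
D_H F(\gamma)=\sum_{j=1}^N\big\langle \nabla_j u,\,H(t_j)\big\rangle_{T_{\gamma(t_j)}M}=\sum_{j=1}^N\big\langle P_{t_j}\nabla_j u,\,h(t_j)\big\rangle_{T_xM}\, .
\]
Since $h(0)=0$ we may write $h(t_j)=\int_0^{t_j}\dot h(s)\,ds$; substituting this, interchanging the finite sum with the integral, and grouping the terms with $t_j\ge s$ gives
\[
D_H F(\gamma)=\int_0^\infty\Big\langle\,\sum_{t_j\ge s}P_{t_j}\nabla_j u,\ \dot h(s)\,\Big\rangle\,ds=\int_0^\infty\big\langle \nabla_s F(\gamma),\,\dot h(s)\big\rangle\,ds\, ,
\]
where the last equality is exactly Proposition \ref{p:parallel_gradient}. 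On the other hand, by the definition of the $H^1_x$-gradient together with the $H^1_0$ inner product, $D_H F=\langle\nabla F,h\rangle_{H^1_0}=\int_0^\infty\big\langle \tfrac{d}{ds}\nabla F(\gamma),\dot h(s)\big\rangle\,ds$. Comparing the two expressions for every $h\in\cH$ — equivalently, for $\dot h$ ranging over a dense subset of $L^2([0,\infty),T_xM)$ — yields $\tfrac{d}{ds}\nabla F(\gamma)=\nabla_s F(\gamma)$ for a.e.\ $s$ and a.e.\ $\gamma$. Transporting this equality back along $\gamma$ and using that $P_t$ intertwines $\tfrac{d}{dt}$ with the covariant derivative along $\gamma$, it is precisely the claimed identity $\tfrac{d}{dt}\nabla F(\gamma)=P_t^{-1}\nabla_t F(\gamma)$.

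Given (1), part (2) is immediate. Since $\nabla F(\gamma)\in\cH$ one has $|\nabla F|^2(\gamma)=\int_0^\infty\big|\tfrac{d}{ds}\nabla F(\gamma)\big|^2\,ds$; by (1) the integrand equals $|P_s^{-1}\nabla_s F(\gamma)|^2$, and since each $P_s$ is an isometry this is $|\nabla_s F|^2(\gamma)$, so $|\nabla F|^2(\gamma)=\int_0^\infty|\nabla_s F|^2\,ds$ for a.e.\ $\gamma$.

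I expect the only genuine subtlety — and the point worth spelling out — to be the almost-everywhere bookkeeping. The relations $D_H F=\langle\nabla F,h\rangle_{H^1_0}$ and Proposition \ref{p:parallel_gradient} hold only $\Gamma_x$-a.e.\ in $\gamma$; one must note $D_H F$ is unambiguously defined even though $H$ is merely continuous (for a cylinder function only the finitely many values $H(t_j)$ enter); and the conclusion $\tfrac{d}{ds}\nabla F=\nabla_s F$ is an equality of $L^2([0,\infty),T_xM)$-valued measurable functions of $\gamma$, so the quantifiers "for a.e.\ $s$'' and "for a.e.\ $\gamma$'' have to be organized via Fubini. No curvature hypothesis enters anywhere: this is purely an identification of two differential operators already constructed on the cylinder functions, so the bound $-\kappa g\le\Ric+\nabla^2 f\le\kappa g$ plays no role.
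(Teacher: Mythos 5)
Your proof is correct and follows essentially the same route as the paper: both test the defining relation $\langle\nabla F,h\rangle_{H^1_0}=D_HF$ against arbitrary $h\in\cH$, compute $D_HF=\sum_j\langle P_{t_j}\nabla_j u,h(t_j)\rangle$, and identify $\tfrac{d}{ds}\nabla F$ with the piecewise-constant field $\sum_{t_j\ge s}P_{t_j}\nabla_j u=\nabla_s F$ of Proposition \ref{p:parallel_gradient}, with (2) then following from the isometry of $P_s$. The only cosmetic difference is that you substitute $h(t_j)=\int_0^{t_j}\dot h$ and swap the finite sum with the integral, whereas the paper integrates by parts and reads off the (distributional) second derivative; these are the same computation.
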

\begin{remark}
One can use this to give a new geometric interpretation of the Clark-Ocone formula.  This could be used to give some distinct proofs of a few of the results of this paper, though we focus on proofs whose morals carry over to the nonsmooth case.
\end{remark}

\begin{proof}

Let $F=e_\bt^*u$ be a smooth cylinder function on $P_x(M)$ and let $\nabla F:P_x(M)\to \cH$ be its Malliavin derivative.  Then we have that
\begin{align}
&\langle \nabla F,h\rangle_{H^1_0}(\gamma) = \int_0^\infty \big\langle\frac{d}{dt}{\nabla F(t)},\frac{d}{dt}h(t)\big\rangle\,dt = D_H F\, ,\notag\\
&\implies \int_0^\infty \big\langle-\frac{d^2}{dt^2}\nabla F, h\big\rangle\, dt +\langle \nabla F(0),h(0)\rangle = \sum \langle h(t_j),\nabla_j f\rangle\, .
\end{align}
By integrating both sides we get that
\begin{align}
\frac{d}{dt}\nabla F(t) = \sum_{t_j\geq t} P^{-1}_tP_{t_j}\nabla_j f\, .
\end{align}
Using Proposition \ref{p:parallel_gradient} this gives the first result.  To get the second result we take norms and integrate to get
\begin{align}
|\nabla F|^2_{H^1_x}&=\int_0^\infty |\frac{d}{dt}\nabla F|^2\,dt = \sum^{|\bt|}_{j=1} (t_j-t_{j-1})\big|\sum_{k\geq j}P_{t_j}\nabla_j f\big|^2\, ,\notag\\
&=\sum_j (t_j-t_{j-1}) |\nabla_{t_j}F|^2=\int_0^\infty |\nabla_t F|^2\, dt\, .
\end{align}
\end{proof}

\subsection{The twisted Ornstein-Uhlenbeck Operators on Smooth Metric-Measure Spaces}\label{ss:OU_smooth}

In this subsection we outline the structure needed to define the twisted Ornstein-Uhlenbeck operators on path space.  The constructions follow very similarly the construction of the classical Ornstein-Uhlenbeck operator.  The twisted Ornstein-Uhlenbeck operators are closed, nonnegative, self-adjoint operator on the Hilbert space $L^2(P_x(M),\Gamma_x)$ of $L^2$ functions on path space with respect to the Wiener measure $\Gamma_x$.  

The construction of the twisted Ornstein-Uhlenbeck operators $L^{t_1}_{t_0,\kappa}$ relies on the ability to write down the appropriate Dirichlet forms.  Since we have already discussed the construction of the Wiener measure and we have seen in Section \ref{ss:parallel_gradient} that the parallel gradients $\nabla_t$ are closed derivations, we are in a good position to do this.  First recall from Section \ref{sss:char_ricci_OU} that the operators $L^{t_1}_{t_0}=L^{t_1}_{t_0,0}$ are meant to represent the part of the Ornstein-Uhlenbeck operator restricted to the time interval $[t_0,t_1]$, and hence we are interested in the Dirichlet forms $E_{t_0}^{t_1}:L^2(P_{x}(M))\otimes L^2(P_{x}(M))\to \dR$ on path space defined by
\begin{align}
E_{t_0}^{t_1}[F,G]= E_{t_0,0}^{t_1}[F,G]\equiv \int_{P_{x}(M)} \int_{t_0}^{t_1}\langle \nabla_t F,\nabla_t G\rangle\, d\Gamma_{x}\, .
\end{align}
For the basics on Dirichlet forms see \cite{Fukushima_DirichletForms}.  One can now check using that $\nabla_t$ is a closed derivation that $E_{t_0}^{t_1}$ are closed Dirichlet forms with domain $\cD(E_{t_0}^{t_1})$.  In particular, associated with $E_{t_0}^{t_1}$ are unique self-adjoint operators $L_{t_0}^{t_1}$ such that for every $F\in \cD(E_{t_0}^{t_1})$ and $G\in \cD(L_{t_0}^{t_1})\subseteq \cD(E_{t_0}^{t_1})$ we have that
\begin{align}
E_{t_0}^{t_1}[F,G] = \int_{P_{x}(M)} \langle F, L_{t_0}^{t_1}G\rangle d\Gamma_{x}\, .
\end{align}
Let us remark on the following. If $\Delta_t:L^2(P_xM,\Gamma_x)\to L^2(P_xM,\Gamma_x)$ are the closed $t$-laplace operators associated to the parallel gradient $\nabla_t$, see Section \ref{ss:parallel_gradient}, then we have that
\begin{align}
L_{t_0}^{t_1} = \int_{t_0}^{t_1} \Delta_t\, ,
\end{align}
and in particular that we can write the classical Ornstein-Uhlenbeck operator as
\begin{align}
L_x = L_0^\infty = \int_{0}^{\infty} \Delta_t\, . \\ \notag
\end{align}

Finally, let us end by discussing the slightly more involved operators $L_{t_0,\kappa}^{t_1}$.  For $\kappa\neq 0$ one should simply interpret these as perturbations of the operators $L_{t_0}^{t_1}$.  We begin again by introducing the Dirichlet forms $E_{t_0,\kappa}^{t_1}:L^2(P_{x}(M))\otimes L^2(P_{x}(M))\to \dR$ defined on path space by
\begin{align}
E^{t_1}_{t_0,\kappa}[F,G]\equiv \int_{P_x(M)}\Bigg( \int_{t_0}^{t_1}\cosh\big(\frac{\kappa}{2}(s-t_0)\big)\langle \nabla_s F,\nabla_s G\rangle +\big(1-e^{-\frac{\kappa}{2}(t_1-t_0)}\big)\int^{\infty}_{t_1} e^{\frac{\kappa}{2}(s-t_1)}\langle \nabla_s F,\nabla_s G\rangle\Bigg)\,d\Gamma_x\, .
\end{align}
It is again fairly easy to check that these actually define Dirichlet forms because $\nabla_s$ are closed derivations.  Thus there exists a unique closed linear operator $L^{t_1}_{t_0,\kappa}: L^2(P_xM)\to L^2(P_xM)$ such that
\begin{align}
E^{t_1}_{t_0,\kappa}[F,G] = \int_{P_xM}\langle F,L^{t_1}_{t_0,\kappa} G\rangle \, d\Gamma_x\, ,
\end{align}
for all $F,G$ in the appropriate domains.  It is not hard to check that the operators $L^{t_1}_{t_0,\kappa}$ preserve $\cF^T$ measurable functions, and therefore define operators on time restricted path space $L^2(P_{x}^T(M),\Gamma_x)$ for every $T>0$.

\subsection{Proof that $(R5)\Longleftrightarrow (R6)$:}\label{ss:r5_r6}

In this Section we prove the spectral gap for the family of twisted Ornstein-Uhlenbeck operators is equivalent to the martingale estimate of $(R5)$.  As an application, we conclude from this spectral gap estimates on the classical Ornstein-Uhlenbeck operator which are sharper than those currently in the literature.  We will apply these estimates to the simplest functions on path space in order to recover some of the Bakry-Emery estimates.  Let us begin by proving the spectral gap of $(R6)$:

\begin{proof}[Proof that $(R5)\implies (R6)$]
Let $F$ be a $\cF^T$-measurable smooth cylinder function, and let $F^t$ be the martingale induced by letting $F^t$ be the function obtained by projecting $F$ to the $\cF^t$-measurable functions. Applying the Ito formula \cite{Kuo_book} to the function $|F^t|^2$ gives the formula 
\begin{align}\label{e:r5_r6:1}
\int_{P(M)} |F^{t_1}-F^{t_0}|^2\,d\Gamma_x = \int_{P(M)} |F^{t_1}|^2d\Gamma_x-\int_{P(M)} |F^{t_0}|^2\,d\Gamma_x = \int_{P(M)} \int_{t_0}^{t_1} [dF^t]\,d\Gamma_x\, .
\end{align}
Now let us recall that $(R5)$ gives us the estimate
\begin{align}
\int_{P(M)}[dF^t]\, d\Gamma_x \leq e^{\frac{\kappa}{2}(T-t)}\int_{P(M)} |\nabla_t F|^2+\int_t^T \frac{\kappa}{2}e^{\frac{\kappa}{2}(s-t)}|\nabla_s F|^2\, d\Gamma_x\, ,
\end{align}

Plugging this into (\ref{e:r5_r6:1}) gives the estimate
\begin{align}
\int_{P(M)} |F^{t_1}&-F^{t_0}|^2\,d\Gamma_x \leq \notag\\
&\leq e^{\frac{\kappa}{2}T}\int_{P(M)}\bigg(\int_{t_0}^{t_1} e^{-\frac{\kappa}{2}t}|\nabla_t F|^2+\int_{t_0}^{t_1}\int_t^T\frac{\kappa}{2}e^{\frac{\kappa}{2}(s-2t)}|\nabla_s F|^2ds\,dt\bigg)\,d\Gamma_x\notag\\
&= e^{\frac{\kappa}{2}T}\int_{P(M)}\bigg(\int_{t_0}^{t_1} e^{-\frac{\kappa}{2}t}|\nabla_t F|^2+\int_{t_0}^{t_1}\int_t^{t_1}\frac{\kappa}{2}e^{\frac{\kappa}{2}(s-2t)}|\nabla_s F|^2+\int_{t_0}^{t_1}\int_{t_1}^{T}\frac{\kappa}{2}e^{\frac{\kappa}{2}(s-2t)}|\nabla_s F|^2\bigg)\,d\Gamma_x \notag\\
&= e^{\frac{\kappa}{2}T}\int_{P(M)}\bigg(\int_{t_0}^{t_1} e^{-\frac{\kappa}{2}t}|\nabla_t F|^2+\frac{1}{2}\int_{t_0}^{t_1}\int_t^{t_1}\frac{\kappa}{2}e^{\frac{\kappa}{2}(s-2t)}|\nabla_s F|^2+\int_{t_0}^{t_1}\int_{t_1}^{T}\frac{\kappa}{2}e^{\frac{\kappa}{2}(s-2t)}|\nabla_s F|^2\bigg)\,d\Gamma_x \notag\\
&= e^{\frac{\kappa}{2}T}\int_{P(M)}\bigg(\int_{t_0}^{t_1} e^{-\frac{\kappa}{2}t}|\nabla_t F|^2+\int_{t_0}^{t_1}\int_{t_0}^{s}\frac{\kappa}{2}e^{\frac{\kappa}{2}(s-2t)}|\nabla_s F|^2dtds+\int_{t_1}^{T}\int_{t_0}^{t_1}\frac{\kappa}{2}e^{\frac{\kappa}{2}(s-2t)}|\nabla_s F|^2dtds\bigg)\,d\Gamma_x \notag\\
&= e^{\frac{\kappa}{2}(T-t_0)}\int_{P(M)}\bigg(\int_{t_0}^{t_1} e^{-\frac{\kappa}{2}(t-t_0)}|\nabla_t F|^2+\int_{t_0}^{t_1}\frac{1}{2}\big(e^{\frac{\kappa}{2}(s-t_0)}-e^{-\frac{\kappa}{2}(s-t_0)}\big)|\nabla_s F|^2\notag\\
&\;\;\;\;\;\;\;\;\;\;\;\;\;\;\;\;\;\;\;\;\;\;\;+\int_{t_1}^{T}\frac{1}{2}e^{\frac{\kappa}{2}(s-t_0)}\big(1-e^{-\kappa(t_1-t_0)}\big)|\nabla_s F|^2\bigg)\,d\Gamma_x \notag\\
&= e^{\frac{\kappa}{2}(T-t_0)}\int_{P(M)}\bigg(\int_{t_0}^{t_1} \cosh\big(\frac{\kappa}{2}(t-t_0)\big)|\nabla_t F|^2+\big(1-e^{-\kappa(t_1-t_0)}\big)\int_{t_1}^{T}\frac{1}{2}e^{\frac{\kappa}{2}(s-t_0)}|\nabla_s F|^2\bigg)\,d\Gamma_x\, ,\notag\\
&= e^{\frac{\kappa}{2}(T-t_0)} E_{t_0,\kappa}^{t_1}[F,F]\notag\\
&= e^{\frac{\kappa}{2}(T-t_0)}\int \langle F, L_{t_0,\kappa}^{t_1}\rangle \, d\Gamma_x\, ,
\end{align}
which proves the desired estimate.\\
\end{proof}

Let us now remark on the following corollary.  By letting $t_0=0$ and $t_1=T$, and making the observation that the Ornstein-Uhlenbeck operator $L_x$ satisfies the estimate
\begin{align}
L_{0,\kappa}^T \leq \cosh(\frac{\kappa}{2}T)L_x,
\end{align}
we immediately obtain the following:
\begin{corollary}\label{c:OU_spectral_gap}
If $(M^n,g,e^{-f}dv_g)$ is a smooth metric measure space with $|\Ric+\nabla^2 f|\leq \kappa$, then for the standard Ornstein-Uhlenbeck operator we have the spectral-gap estimate for all $\cF^T$-measurable functions:
\begin{align}
\int_{P_xM} \big|F-\int F\big|^2\, d\Gamma_x\leq \frac{1}{2}\big(e^{\kappa T}+1\big)\int_{P_xM} |\nabla_{H^1} F|^2\, d\Gamma_x\, .
\end{align}
\end{corollary}
\vspace{.5 cm}

Before continuing and proving the implication $(R6)\implies (R5)$ let us first use $(R6)$ to recover one of the classical Bakry-Emery estimates:

\begin{theorem}\label{t:bounded_ricci_implies_lower_ricci_spec_gap}
If $(R6)$ holds, then for each smooth $u:M\to\dR$ and $t>0$ we have the estimate
\begin{align}
\int_M |u-\int_M u\, \rho_t|^2\rho_t(x,dy) \leq  \kappa^{-1}\left(e^{\kappa t}-1\right)\int_M |\nabla u|^2 \rho_t(x,dy)\, .
\end{align}
\end{theorem}
\begin{proof}
The proof follows the same structure as Theorems \ref{t:boundedricci_BE_implies_lowerricci} and \ref{t:boundedricci_BE2_implies_lowerricci}.  Namely, if $F(\gamma)=u(\gamma(t))$ then we have that
\begin{align}
\int_{P(M)} F\, d\Gamma_x = \int_M u\rho_t(x,dy)\, ,
\int_{P(M)} F^2\,d\Gamma_x = \int_M u^2\rho_t(x,dy)\, ,
\end{align}
as well as
\begin{align}
|\nabla_s F| = |\nabla u|(\gamma(t))\, ,
\end{align}
for every $s\leq t$.  Plugging this into $(R7)$ gives
\begin{align}
\int_M |u-\int_M u\, \rho_t|^2\rho_t(x,dy) &= \int_{P(M)} |F-\int F|^2d\Gamma_x = \int_{P(M)} |F^T-F^0|^2d\Gamma_x \notag\\
&\leq e^{\frac{\kappa}{2}T}\int_{P(M)}\bigg(\int_0^T \cosh(\frac{\kappa}{2}t)|\nabla_t F|^2\, dt\bigg)\, d\Gamma_x \notag\\
&= e^{\frac{\kappa}{2}T}\int_0^T\cosh(\frac{\kappa}{2}t)dt\int_M |\nabla u|^2(y) \rho_t(x,dy)\notag\\
&=\kappa^{-1}(e^{\kappa T}-1)\int_M |\nabla u|^2(y) \rho_t(x,dy)\, ,
\end{align}
as claimed.
\end{proof}
\vspace{.5 cm}

Let us now finish this section by proving the converse relation $(R6)\implies (R5)$:

\begin{proof}[Proof that $(R6)\implies (R5)$]
Let $F$ be a $\cF^T$-measurable smooth cylinder function, and let $F^t$ be the martingale induced by letting $F^t$ be the function obtained by projecting $F$ to the $\cF^t$-measurable functions.  The spectral gap of $(R6)$ tells us that we can estimate
\begin{align}
\int_{P(M)} \int_{t_0}^{t_1}& [dF^t]\,d\Gamma_x = \int_{P(M)} |F^{t_1}-F^{t_0}|^2\notag\\
&\leq e^{\frac{\kappa}{2}(T-t_0)}\int_{P(M)}\bigg(\int_{t_0}^{t_1} \cosh\big(\frac{\kappa}{2}(t-t_0)\big)|\nabla_t F|^2+\big(1-e^{-\kappa(t_1-t_0)}\big)\int_{t_1}^{T}\frac{1}{2}e^{\frac{\kappa}{2}(t-t_0)}|\nabla_t F|^2\bigg)\,d\Gamma_x\, .
\end{align}
In particular, dividing both sides by $|t_1-t_0|$ and limiting $|t_1-t_0|\to 0$ we obtain
\begin{align}
\int_{P(M)} [dF^t]\,d\Gamma_x\leq e^{\frac{\kappa}{2}}\int_{P(M)}\Big(|\nabla_t F|^2 + \frac{\kappa}{2}\int_t^T e^{\frac{\kappa}{2}(s-t)}|\nabla_s F|^2\Big)\,d\Gamma_x\, , 
\end{align}
which is precisely the inequality $(R5)$, and proves the result.
\end{proof}
\vspace{.5 cm}

\subsection{Proof that $(R4)\implies (R7)$:}\label{ss:r4_r7}

In this Section we prove the log-Sobolev estimate for the family of twisted Ornstein-Uhlenbeck operators.  As an application, we conclude from this a log-Sobolev estimate on the classical Ornstein-Uhlenbeck operator which are sharper than those currently in the literature. We will also apply these estimates to recover the Bakry-Ledoux log-sobolev estimate for the heat kernel on a space with lower Ricci curvature bounds.  In particular, this will show us that these estimates on path space imply the correct lower Ricci curvature estimate.  Let us begin by deriving the log-Sobolev estimates:

\begin{proof}[Proof that $(R4)\implies (R7)$]
Let $F$ be a $\cF^T$-measurable smooth cylinder function, and let $H^t\equiv (F^2)^t$ be the martingale induced by projecting $F^2$ to the $\cF^t$-measurable functions.  Applying the Ito formula \cite{Kuo_book} to the function $H^t\ln H^t$ gives the formula
\begin{align}\label{e:r4_r7:1}
\int_{P(M)} H^{t_1}\ln H^{t_1}\,d\Gamma_x-\int_{P(M)} H^{t_0}\ln H^{t_0}\,d\Gamma_x = \frac{1}{2}\int_{P(M)} \int_{t_0}^{t_1} (H^t)^{-1}[dH^t]\,d\Gamma_x\, .
\end{align}
Now let us recall that $(R4)$, and in particular its consequence in Theorem \ref{t:pointwise_r4}, gives us the estimate
\begin{align}
\sqrt{[dH^t]}(\gamma)&\leq \int_{P(M)}|\nabla_t F^2|(\gamma_t\circ\sigma)+\int_t^T\frac{\kappa}{2}e^{\frac{\kappa}{2}(s-t)}|\nabla_s F^2|\,d\Gamma_{\gamma(t)}\notag\\
&= 2\int_{P(M)}F\bigg(|\nabla_t F|(\gamma_t\circ\sigma)+\int_t^T\frac{\kappa}{2}e^{\frac{\kappa}{2}(s-t)}|\nabla_s F|\bigg)\,d\Gamma_{\gamma(t)}\, .
\end{align}
Applying the Cauchy-Schwartz and the same computational scheme as in Section \ref{ss:r2_r3} we then have that
\begin{align}
[dH^t](\gamma)&\leq 4\int_{P(M)}F^2\,d\Gamma_{\gamma(t)}\cdot e^{\frac{\kappa}{2}(T-t)}\int_{P(M)}\bigg(|\nabla_t F|^2(\gamma_t\circ\sigma)+\int_t^T\frac{\kappa}{2}e^{\frac{\kappa}{2}(s-t)}|\nabla_s F|^2\bigg)\,d\Gamma_{\gamma(t)}\notag\\
&=4H^t(\gamma)\cdot e^{\frac{\kappa}{2}(T-t)}\int_{P(M)}\bigg(|\nabla_t F|^2(\gamma_t\circ\sigma)+\int_t^T\frac{\kappa}{2}e^{\frac{\kappa}{2}(s-t)}|\nabla_s F|^2\bigg)\,d\Gamma_{\gamma(t)}\, .
\end{align}

Plugging this into (\ref{e:r4_r7:1}) gives the estimate
\begin{align}
\int_{P(M)} H^{t_1}&\ln H^{t_1}\,d\Gamma_x-\int_{P(M)} H^{t_0}\ln H^{t_0}\,d\Gamma_x \leq \notag\\
&\leq 2\int_{P(M)}\bigg(\int_{t_0}^{t_1} e^{\frac{\kappa}{2}(T-t)}\int_{P(M)}\bigg(|\nabla_t F|^2(\gamma_t\circ\sigma)+\int_t^T\frac{\kappa}{2}e^{\frac{\kappa}{2}(s-t)}|\nabla_s F|^2\bigg)\,d\Gamma_{\gamma(t)}\bigg)\,d\Gamma_x\notag\\
&=2e^{\frac{\kappa}{2}T}\int_{P(M)}\bigg(\int_{t_0}^{t_1} e^{-\frac{\kappa}{2}t}|\nabla_t F|^2+\int_{t_0}^{t_1}\int_t^T\frac{\kappa}{2}e^{\frac{\kappa}{2}(s-2t)}|\nabla_s F|^2ds\,dt\bigg)\,d\Gamma_x\notag\\
&= 2e^{\frac{\kappa}{2}T}\int_{P(M)}\bigg(\int_{t_0}^{t_1} e^{-\frac{\kappa}{2}t}|\nabla_t F|^2+\int_{t_0}^{t_1}\int_t^{t_1}\frac{\kappa}{2}e^{\frac{\kappa}{2}(s-2t)}|\nabla_s F|^2+\int_{t_0}^{t_1}\int_{t_1}^{T}\frac{\kappa}{2}e^{\frac{\kappa}{2}(s-2t)}|\nabla_s F|^2\bigg)\,d\Gamma_x \notag\\
&= 2e^{\frac{\kappa}{2}T}\int_{P(M)}\bigg(\int_{t_0}^{t_1} e^{-\frac{\kappa}{2}t}|\nabla_t F|^2+\frac{1}{2}\int_{t_0}^{t_1}\int_t^{t_1}\frac{\kappa}{2}e^{\frac{\kappa}{2}(s-2t)}|\nabla_s F|^2+\int_{t_0}^{t_1}\int_{t_1}^{T}\frac{\kappa}{2}e^{\frac{\kappa}{2}(s-2t)}|\nabla_s F|^2\bigg)\,d\Gamma_x \notag\\
&= 2e^{\frac{\kappa}{2}T}\int_{P(M)}\bigg(\int_{t_0}^{t_1} e^{-\frac{\kappa}{2}t}|\nabla_t F|^2+\int_{t_0}^{t_1}\int_{t_0}^{s}\frac{\kappa}{2}e^{\frac{\kappa}{2}(s-2t)}|\nabla_s F|^2dtds+\int_{t_1}^{T}\int_{t_0}^{t_1}\frac{\kappa}{2}e^{\frac{\kappa}{2}(s-2t)}|\nabla_s F|^2dtds\bigg)\,d\Gamma_x \notag\\
&= 2e^{\frac{\kappa}{2}(T-t_0)}\int_{P(M)}\bigg(\int_{t_0}^{t_1} e^{-\frac{\kappa}{2}(t-t_0)}|\nabla_t F|^2+\int_{t_0}^{t_1}\frac{1}{2}\big(e^{\frac{\kappa}{2}(s-t_0)}-e^{-\frac{\kappa}{2}(s-t_0)}\big)|\nabla_s F|^2\notag\\
&\;\;\;\;\;\;\;\;\;\;\;\;\;\;\;\;\;\;\;\;\;\;\;+\int_{t_1}^{T}\frac{1}{2}e^{\frac{\kappa}{2}(s-t_0)}\big(1-e^{-\kappa(t_1-t_0)}\big)|\nabla_s F|^2\bigg)\,d\Gamma_x \notag\\
&= 2e^{\frac{\kappa}{2}(T-t_0)}\int_{P(M)}\bigg(\int_{t_0}^{t_1} \cosh\big(\frac{\kappa}{2}(t-t_0)\big)|\nabla_t F|^2+\big(1-e^{-\kappa(t_1-t_0)}\big)\int_{t_1}^{T}\frac{1}{2}e^{\frac{\kappa}{2}(s-t_0)}|\nabla_s F|^2\bigg)\,d\Gamma_x\, ,\notag\\
&= 2e^{\frac{\kappa}{2}(T-t_0)} E_{t_0,\kappa}^{t_1}[F,F]\notag\\
&= 2e^{\frac{\kappa}{2}(T-t_0)}\int \langle F, L_{t_0,\kappa}^{t_1}\rangle \, d\Gamma_x\, ,
\end{align}
which proves the estimate.\\
\end{proof}

Before continuing let us remark on the following corollary.  By letting $t_0=0$ and $t_1=T$, and making the observation that the Ornstein-Uhlenbeck operator $L_x$ satisfies the estimate
\begin{align}
L_{0,\kappa}^T \leq \cosh(\frac{\kappa}{2}T)L_x,
\end{align}
we immediately obtain the following log-sobolev estimate on the classical Ornstein-Uhlenbeck operator:
\begin{corollary}\label{c:OU_log_sob}
If $(M^n,g,e^{-f}dv_g)$ is a smooth metric measure space with $|\Ric+\nabla^2 f|\leq \kappa$, then for the standard Ornstein-Uhlenbeck operator we obtain the log-Sobolev estimate
\begin{align}
\int_{P_xM} F^2\ln F^2\, d\Gamma_x\leq \big(e^{\kappa T}+1\big)\int_{P_xM} |\nabla_{H^1} F|^2\, d\Gamma_x\, ,
\end{align}
for every $\cF^T$-measurable $F$ with $\int F^2 = 1$.
\end{corollary}
\vspace{.5 cm}

Let us end this subsection by observing how $(R7)$ implies the classical log-sobolev estimate for functions on $M$ with respect to the heat kernel estimate.  In particular, by theorem \ref{t:lower_ricci} we will then see that $(R7)$ implies the correct lower bound on the Ricci curvature of $M$:

\begin{theorem}\label{t:bounded_ricci_implies_lower_ricci_logsov}
If $(R7)$ holds, then for each smooth $u:M\to\dR$ and $t>0$ we have the estimate
\begin{align}
\int_M u^2\ln u^2 \rho_t(x,dy) \leq 2 \kappa^{-1}\left(e^{\kappa t}-1\right)\int_M |\nabla u|^2 \rho_t(x,dy)\, ,
\end{align}
if $\int_M u^2\,\rho_t=1$.
\end{theorem}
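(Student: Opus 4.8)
The plan is to apply $(R7)$ to the simplest possible cylinder function, namely $F(\gamma) \equiv u(\gamma(t))$ for a fixed smooth function $u$ on $M$ and a fixed time $t>0$, and then to identify all the path-space quantities appearing in $(R7)$ with their finite-dimensional analogues on $M$. This is exactly the scheme used in the proofs of Theorem \ref{t:boundedricci_BE_implies_lowerricci} and Theorem \ref{t:boundedricci_BE2_implies_lowerricci}, so it should be essentially a computation once the right substitutions are made. The one subtlety is matching the constants in the middle term of $(R7)$ so that the resulting log-Sobolev constant comes out as the sharp $2\kappa^{-1}(e^{\kappa t}-1)$, rather than something weaker like $e^{\kappa t}+1$ divided through by a time factor.

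First I would record the two identities we need. By the definition of the Wiener measure in Section \ref{ss:diffusion_measures} we have $\int_{P(M)} F^2 \, d\Gamma_x = \int_M u^2(y)\, \rho_t(x,dy) = H_t u^2(x)$, so the normalization hypothesis $\int_M u^2\, \rho_t = 1$ is exactly $\int_{P(M)} F^2\, d\Gamma_x = 1$, which is what $(R7)$ requires; similarly $\int_{P(M)} F^2 \ln F^2 \, d\Gamma_x = \int_M u^2 \ln u^2 \, \rho_t(x,dy)$. Next, by Proposition \ref{p:parallel_gradient} we have $|\nabla_s F|(\gamma) = |\nabla u|(\gamma(t))$ for $s \le t$ and $|\nabla_s F|(\gamma) = 0$ for $s > t$, so in particular $|\nabla F|^2_{H^1_x}(\gamma) = \int_0^\infty |\nabla_s F|^2\, ds = t\, |\nabla u|^2(\gamma(t))$ and hence $\int_{P(M)} |\nabla F|^2_{H^1_x}\, d\Gamma_x = t\, H_t|\nabla u|^2(x)$.

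Now I would choose $T = t$ in $(R7)$, since $F$ is $\cF^t$-measurable, and compute the middle term. We get
\begin{align}
2e^{\frac{\kappa}{2}t}\int_{P(M)}\bigg(\int_0^t \cosh(\tfrac{\kappa}{2}s)|\nabla_s F|^2\, ds\bigg)\, d\Gamma_x
&= 2e^{\frac{\kappa}{2}t}\bigg(\int_0^t \cosh(\tfrac{\kappa}{2}s)\, ds\bigg) H_t|\nabla u|^2(x)\notag\\
&= 2e^{\frac{\kappa}{2}t}\cdot \tfrac{2}{\kappa}\sinh(\tfrac{\kappa}{2}t)\cdot H_t|\nabla u|^2(x)
= 2\kappa^{-1}\big(e^{\kappa t}-1\big) H_t|\nabla u|^2(x)\, ,
\end{align}
using $\int_0^t \cosh(\tfrac{\kappa}{2}s)\, ds = \tfrac{2}{\kappa}\sinh(\tfrac{\kappa}{2}t)$ and $2e^{\frac{\kappa}{2}t}\sinh(\tfrac{\kappa}{2}t) = e^{\kappa t}-1$. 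Substituting the three identities into the first inequality in $(R7)$, namely $\int_{P_x(M)} F^2\ln F^2 \, d\Gamma_x \le 2e^{\frac{\kappa}{2}T}\int_{P(M)}\big(\int_0^T \cosh(\tfrac{\kappa}{2}s)|\nabla_s F|^2\, ds\big)\, d\Gamma_x$, yields exactly
\begin{align}
\int_M u^2 \ln u^2 \, \rho_t(x,dy) \le 2\kappa^{-1}\big(e^{\kappa t}-1\big)\int_M |\nabla u|^2 \, \rho_t(x,dy)\, ,
\end{align}
which is the claim.

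The main obstacle, such as it is, is purely bookkeeping: one must be careful to apply $(R7)$ with $T$ equal to the evaluation time $t$ (so that $F$ is genuinely $\cF^T$-measurable and no extra exponential factor $e^{\kappa T}$ with $T>t$ degrades the constant), and one must use the tighter first inequality in $(R7)$ — the $\cosh$ expression — rather than the coarser second inequality $\le (e^{\kappa t}+1)\int |\nabla F|^2_{H^1_x}$, since only the former produces the sharp constant $2\kappa^{-1}(e^{\kappa t}-1)$. The computation $\int_0^t \cosh(\tfrac{\kappa}{2}s)\, ds = \tfrac{2}{\kappa}\sinh(\tfrac{\kappa}{2}t)$ together with the identity $2e^{\kappa t/2}\sinh(\kappa t/2) = e^{\kappa t}-1$ is what makes the constants line up, and reconciling this with the known Bakry-Ledoux log-Sobolev constant in Theorem \ref{t:lower_ricci}(5) confirms we have the right normalization. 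Since $(R7)$ is already known to follow from bounded Ricci curvature, this also re-derives the lower Ricci curvature log-Sobolev inequality and, via \cite{BakryLedoux_logSov}, shows the path-space estimate recovers the lower bound $\Ric + \nabla^2 f \ge -\kappa g$ as advertised.
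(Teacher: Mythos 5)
Your proof is correct and follows essentially the same route as the paper's: apply $(R7)$ with $T=t$ to the cylinder function $F(\gamma)=u(\gamma(t))$, identify $\int F^2\ln F^2\,d\Gamma_x$ with $\int_M u^2\ln u^2\,\rho_t(x,dy)$ and $|\nabla_s F|$ with $|\nabla u|(\gamma(t))$ for $s\leq t$, and evaluate $2e^{\frac{\kappa}{2}t}\int_0^t\cosh(\frac{\kappa}{2}s)\,ds=2\kappa^{-1}(e^{\kappa t}-1)$ using the first (cosh) inequality in $(R7)$. Your remarks about taking $T=t$ and using the tighter inequality are exactly the points that make the constant come out sharp, as in the paper.
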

\begin{proof}
The proof follows the same structure as Theorems \ref{t:boundedricci_BE_implies_lowerricci} and \ref{t:boundedricci_BE2_implies_lowerricci}.  Namely, if $F(\gamma)=u(\gamma(t))$ then we have that
\begin{align}
\int_{P(M)} F^2\ln F^2 d\Gamma_x = \int_M u^2\ln u^2 \rho_t(x,dy)\, ,
\end{align}
as well as
\begin{align}
|\nabla_s F| = |\nabla u|(\gamma(t))\, ,
\end{align}
for every $s\leq t$.  Plugging this into $(R7)$ gives
\begin{align}
\int_M u^2\ln u^2 \rho_t(x,dy) &= \int_{P(M)} F^2\ln F^2 d\Gamma_x \notag\\
&\leq 2e^{\frac{\kappa}{2}T}\int_{P(M)}\bigg(\int_0^T \cosh(\frac{\kappa}{2}t)|\nabla_t F|^2\, dt\bigg)\, d\Gamma_x \notag\\
&= 2e^{\frac{\kappa}{2}T}\int_0^T\cosh(\frac{\kappa}{2}t)dt\int_M |\nabla u|^2(y) \rho_t(x,dy)\notag\\
&=2\kappa^{-1}(e^{\kappa T}-1)\int_M |\nabla u|^2(y) \rho_t(x,dy)\, ,
\end{align}
as claimed.
\end{proof}
\vspace{.5 cm}

\subsection{$(R7)\implies (R3)$}\label{ss:r7_r3}

Now let us end this Section by proving the relation $(R7)\implies (R3)$:\\

\begin{proof}[Proof that $(R7)\implies (R3)$]
Let $F$ be a $\cF^T$-measurable smooth cylinder function, and we can assume without loss that $\int F\,d\Gamma_x=0$.  Let $H^t$ be the martingale induced by projecting $F^2$ to the $\cF^t$-measurable functions.  The log-Sobolev of $(R7)$ tells us that we can estimate
\begin{align}
\frac{1}{2}\int_{P(M)} \int_{t_0}^{t_1} &(H^t)^{-1}[dH^t]\,d\Gamma_x = \int_{P(M)} H^{t_1}\ln H^{t_1}\,d\Gamma_x-\int_{P(M)} H^{t_0}\ln H^{t_0}\,d\Gamma_x\, ,\notag\\
&\leq 2e^{\frac{\kappa}{2}(T-t_0)}\int_{P(M)}\bigg(\int_{t_0}^{t_1} \cosh\big(\frac{\kappa}{2}(t-t_0)\big)|\nabla_t F|^2+\big(1-e^{-\kappa(t_1-t_0)}\big)\int_{t_1}^{T}\frac{1}{2}e^{\frac{\kappa}{2}(s-t_0)}|\nabla_s F|^2\bigg)\,d\Gamma_x\, .
\end{align}
In particular, dividing both sides by $|t_1-t_0|$ and limiting $|t_1-t_0|\to 0$ we obtain
\begin{align}
\int_{P(M)} (H^t)^{-1}[dH^t]\,d\Gamma_x\leq 4e^{\frac{\kappa}{2}T}\int_{P(M)}\Big(|\nabla_t F|^2 + \frac{\kappa}{2}\int_t^T e^{\frac{\kappa}{2}(s-t)}|\nabla_s F|^2\Big)\,d\Gamma_x\, .
\end{align}
Applying this to $t=0$ and using \eqref{e:r3_r5:equivalence} applied to $F^2$ gives us the estimate
\begin{align}\label{e:r6_r5:1}
\big(H^0\big)^{-1}\big|\nabla_x \int F^2\,d\Gamma_x\big|^2 = \big(H^0\big)^{-1}[dH^0]\leq 4e^{\frac{\kappa}{2}T}\int_{P(M)}\Big(|\nabla_0 F|^2 + \frac{\kappa}{2}\int_0^T e^{\frac{\kappa}{2}s}|\nabla_s F|^2\Big)\,d\Gamma_x\, .
\end{align}
Now let us choose a family of functions $F_\epsilon$ such that
\begin{align}
&F_\epsilon\equiv 1+\epsilon F+O(\epsilon^2)\, ,\notag\\
&\int F^2_\epsilon\equiv 1\, ,
\end{align}
which is possible because $\int F=0$.  Plugging $F_\epsilon$ into \eqref{e:r6_r5:1} we obtain
\begin{align}
\big|\nabla_x \int 2\epsilon F +O(\epsilon^2)\,d\Gamma_x\big|^2\leq 4\epsilon^2e^{\frac{\kappa}{2}T}\int_{P(M)}\Big(|\nabla_0 F|^2 + \frac{\kappa}{2}\int_0^T e^{\frac{\kappa}{2}s}|\nabla_s F|^2\Big)\,d\Gamma_x\, ,
\end{align}
which if we divide by $\epsilon^2$ and limit gives us
\begin{align}
\big|\nabla_x \int F\,d\Gamma_x\big|^2\leq e^{\frac{\kappa}{2}T}\int_{P(M)}\Big(|\nabla_0 F|^2 + \frac{\kappa}{2}\int_0^T e^{\frac{\kappa}{2}s}|\nabla_s F|^2\Big)\,d\Gamma_x\, ,
\end{align}
which is precisely $(R3)$.

\end{proof}

\vspace{1 cm}
 
\section{Finishing the Proof of Theorem \ref{t:smooth_bounded_ricci}}\label{s:finish_maintheorem}

Throughout the paper, with the goal of proving Theorem \ref{t:smooth_bounded_ricci}, we have shown the implications
\begin{align}
(R1)\implies (R2)\implies (R3)\Leftrightarrow (R5) \Leftrightarrow (R6)\, ,\notag\\
(R1)\implies (R2)\Leftrightarrow (R4)\implies (R7)\implies (R3)\, .
\end{align}
Hence, to finish the proof of Theorem \ref{t:smooth_bounded_ricci} we need the implication
\begin{align}
(R3)\implies (R1)\, .
\end{align}

The main goal of this Section is therefore to prove this implication.  At the end of the proofs of each of the estimates of Theorem \ref{t:smooth_bounded_ricci} we have compared the estimates to the lower Ricci curvature case.  This was done by picking test functions on path space that were particularly simple, and depend on only a single time $t>0$.  As a consequence we have recovered the classical Bakry-Emery-Ledoux estimates, and in particular we have shown that $(R3)$ implies the appropriate lower bound on the Ricci curvature in $(R1)$.  We will see in Section \ref{ss:r3_r1} how to recover the upper bound by picking test functions which depend on only two times, which will finish the proof of Theorem \ref{t:smooth_bounded_ricci}.  

\subsection{Proof that (R3) $\implies$ (R1)}\label{ss:r3_r1}

We wish to close the circle for our characterization of bounded Ricci curvature in this section.  We will describe a family of test functions which will allow us to recover the upper bound on the Ricci curvature from the gradient estimate.  As we have seen, we can recover the Bakry-Emery gradient estimate, and in particular lower Ricci curvature bound, from a cylinder function which depends on just one time.  We will see how to recover the upper Ricci curvature bound from a series of cylinder functions which depend on only two times.  

Let us begin with a little necessary computational background.  Let us fix $x\in M$ with $v\in T_xM$ a unit vector.  As in Lemma \ref{l:lower_ricci:2} let us consider a compactly supported function $u:M\to \dR$ such that
\begin{align}\label{e:R3_R1_test}
&u(x)=0\, ,\notag\\
&\nabla u(x) = v\, ,\notag\\
&\nabla^2 u(x) = 0\, .
\end{align}
Then let us consider the function on path space given by
\begin{align}\label{e:R3_R1_test_2}
F(\gamma) \equiv u(\gamma(0))+ cu(\gamma(t))\, ,
\end{align}
where $t>0$ is arbitrary but fixed and $c\in \dR$ is to be determined later.  Let us compute the following expansion:

\begin{lemma}\label{l:R3_R1_test_expansion}
Let $F:P(M)\to \dR$ be given by $F(\gamma) \equiv u(\gamma(0))+ cu(\gamma(t))$, where $u$ is as in \eqref{e:R3_R1_test}.  Then the following hold:
\begin{enumerate}
\item We have the following expansion:
\begin{align}
|\nabla \int F d\Gamma_x|^2(x) = |1+c|^2+(1+c)c\langle\nabla\Delta_f u(x),v\rangle t+O(t^2)\, .
\end{align}
\item We have the following expansion:
\begin{align}
\int |\nabla_s F|^2 d\Gamma_x = \begin{cases}
 &|1+c|^2+(1+c)c\langle\Delta_f\nabla u(x),v\rangle t+O(t^2)\, ,\text{ if }s=0\, ,\notag\\
 &c^2+O(t)\, ,\text{ if }0<s\leq t\, .
 \end{cases}
\end{align}
\end{enumerate}
\end{lemma}
\begin{proof}
Let us begin by proving $(1)$.  Indeed, for this for notice that
\begin{align}
\int F d\Gamma_x = u(x)+cH_tu(x)\, ,
\end{align}
and thus
\begin{align}
\nabla \int F d\Gamma_x = \nabla u(x)+c\nabla H_t u(x) = (1+c)\nabla u(x)+ \frac{1}{2}c\nabla\Delta_f u(x) t+O(t^2)\, .
\end{align}
Squaring leads to $(1)$.  To prove $(2)$ involves a little more structure, namely, we want to do the computations on the frame bundle.  Let us begin by noting that we can write the $s$-parallel gradient of $F$ by the formula
\begin{align}
\nabla_s F = \begin{cases}
 &\nabla u(x)+cP_t\nabla u(\gamma(t))\text{ if }s=0\, ,\notag\\
 &cP_t\nabla u(\gamma(t))\, ,\text{ if }0<s\leq t\, ,
 \end{cases}
\end{align}
where $P_{t}(\gamma):T_{\gamma(t)}M\to T_xM$ is the stochastic parallel translation map discussed in Section \ref{ss:stoc_par_trans}.  To write this in a computationally more friendly manner we proceed as follows.  Let $FM$ be the frame bundle over $M$ with $H_1,\ldots,H_n$ the canonical horizontal vector fields.  With $x\in M$ let $\tilde x\in F_x M$ be a fixed frame.  Note that $\tilde x$ gives us an isometric identification $\dR^n\equiv T_xM$. 

Now given $\gamma\in P_xM$ let us denote by $\tilde\gamma\in F_{\tilde x}M$ its (stochastic) horizontal lift.  Then we can identify the parallel translations of the gradients by
\begin{align}
\tilde x\circ P_{t}\nabla u_i(\gamma(t)) = H_\alpha u(\tilde\gamma(t))\in \dR^n\, ,
\end{align}
and hence we can rewrite
\begin{align}
\tilde x\circ\nabla_s F = \begin{cases}
 &H_\alpha u(\tilde x)+ c H_\alpha u(\tilde \gamma(t))\text{ if }s=0\, ,\notag\\
 &c H_\alpha u(\tilde \gamma(t))\, ,\text{ if }0<s\leq t\, .
 \end{cases}
\end{align}
In particular, we get that
\begin{align}
|\nabla_s F|^2(\gamma(t)) = \begin{cases}
 &|H_\alpha u|^2(\tilde x)+2c\langle H_\alpha u(\tilde x), H_\alpha u(\tilde\gamma(t))\rangle+ c^2 |H_\alpha u|^2(\tilde \gamma(t))\text{ if }s=0\, ,\notag\\
 &c^2 |H_\alpha u|^2(\tilde \gamma(t))\, ,\text{ if }0<s\leq t\, .
 \end{cases}
 \\ \notag\
\end{align}
Now let $\tilde\rho_t(\tilde x,d\tilde y)$ be the heat kernel on $FM$ with respect to $\frac{1}{2}\Delta_{H,f} = \frac{1}{2}\sum\big( H_\alpha H_\alpha - H_\alpha \tilde f\cdot H_\alpha\big)$.  In particular, if $\pi_{FM}:FM\to M$ is the projection map then we get that $\pi_{FM,*}\tilde\rho_t(\tilde x,d\tilde y) = \rho_t(x,dy)$, and thus if $\tilde \Gamma_{\tilde x}$ is the induced Wiener measure on $FM$, then $\pi_{FM,*}\tilde \Gamma_{\tilde x}=\Gamma_{x}$.  Then we can compute for $s=0$
\begin{align}
\int_{P_xM} &|\nabla_0 F|^2(\gamma(t))\,d\Gamma_x = \int_{P_{\tilde x}FM} |\nabla_0 F|^2(\tilde\gamma(t))\,d\tilde\Gamma_{\tilde x}\notag \\
&= \int_{FM}\Big( |H_\alpha u|^2(\tilde x)+2c\langle H_\alpha u(\tilde x), H_\alpha u(\tilde y)\rangle+ c^2 |H_\alpha u|^2(\tilde y)\Big)\tilde\rho_t(\tilde x,d\tilde y)\, ,\notag \\
&= |1+c|^2|H_\alpha u|^2(\tilde x)+2c\langle H_\alpha u(\tilde x), \frac{1}{2}\Delta_{H,f}H_\alpha u(\tilde x)\rangle t+\frac{1}{2}\Delta_{H,f}|H_\alpha u|^2(\tilde x)+O(t^2)\, ,\notag\\
& = |1+c|^2|\nabla u|^2(x)+\big(c(1+c)\langle \nabla u,\Delta_f\nabla u\rangle(x)+|\nabla^2 u|^2(x)\big)t+O(t^2)\, .
\end{align}
Using that $\nabla^2 u(x)=0$ we have shown $(2)$ for $s=0$.  A verbatim computation for $s>0$ proves the other estimate.
\end{proof}
\vspace{.5 cm}

With this in hand let us prove the implication $(R3)\implies (R1)$, and thus finish the proof of Theorem \ref{t:smooth_bounded_ricci}:

\begin{proof}[Proof of Theorem \ref{t:smooth_bounded_ricci}]

We have seen that $(R3)\implies \Ric+\nabla^2 f\geq -\kappa g$, and thus we need to show that $(R3)\implies \Ric+\nabla^2 f\leq \kappa g$ to close the circle.  Let $x\in M$ with $v\in T_xM$ a unit vector, and let $F:P(M)\to \dR$ be as in \eqref{e:R3_R1_test_2}.  Then $(R3)$ is the estimate
\begin{align}
\big|\nabla_x \int_{P(M)} F\, d\Gamma_x\big|^2 \leq e^{\frac{\kappa}{2}t}\int_{P(M)}\,|\nabla_0 F|^2+\int_0^t \frac{\kappa}{2}e^{\frac{\kappa}{2}s}|\nabla_s F|^2\, ds\cdot d\Gamma_x\, .
\end{align}

Now using Lemma \ref{l:R3_R1_test_expansion} we can expand both sides to get
\begin{align}
|1+c|^2+(1+c)c\langle\nabla\Delta_f u(x),v\rangle t+O(t^2)\leq (1+\frac{\kappa}{2}t)\big(|1+c|^2+(1+c)c\langle\Delta_f\nabla u(x),v\rangle t+O(t^2)\big)+\frac{\kappa}{2}tc^2+ O(t^2)\, ,\notag 
\end{align}
which by collecting terms gives us that
\begin{align}
(1+c)c\Big(\langle\nabla\Delta_f u(x),v\rangle-\langle\Delta_f\nabla u(x),v\rangle\Big)t \leq \frac{\kappa}{2}\Big(1+2c+2c^2\Big)t+O(t^2)\, ,
\end{align}
or that
\begin{align}
-(1+c)c\Big(Rc+\nabla^2 f\Big)(v,v) \leq \frac{\kappa}{2}\Big(1+2c+2c^2\Big)+O(t)\, .
\end{align}
Now let us choose $c=-\frac{1}{2}$ in our computation.  Then we arrive at the estimate
\begin{align}
\frac{1}{4}\Big(Rc+\nabla^2 f\Big)(v,v)\leq \frac{\kappa}{4}+O(t)\, ,
\end{align}
or that
\begin{align}
\Big(Rc+\nabla^2 f\Big)(v,v)\leq \kappa+O(t)\, .
\end{align}
By letting $t\to 0$ we arrive at the result.
\end{proof}
\vspace{1 cm}

\section{$d$-dimensional Ricci Curvature and the Proof of Theorem \ref{t:smooth_bounded_d_ricci}}\label{s:smooth_d_ricci}

In this Section we show how to extend the results of Theorem \ref{t:smooth_bounded_ricci} to the case where the $d$-dimensional Ricci curvature is bounded.  In fact, the proof of Theorem \ref{t:smooth_bounded_d_ricci} is essentially just a combination of Theorem \ref{t:smooth_bounded_ricci} and Theorem \ref{t:lower_d_ricci} once a few observations are made.

\begin{proof}[Proof of Theorem \ref{t:smooth_bounded_d_ricci}]

Let $F$ be a smooth cylinder function given by
\begin{align}
F(\gamma) = u(\gamma(t_1),\ldots,\gamma(t_N))\, .
\end{align}
Let us observe that if $\int_{P(M)} F\,d\Gamma_x$ is induced the function on $M$, then we have the equality
\begin{align}
H_t\int_{P(M)} F\,d\Gamma_x = \int_{P(M)} F_{+t}\,d\Gamma_x\, ,
\end{align}
where $F_{+t}$ is the smooth cylinder function given by
\begin{align}
F_{+t}(\gamma) = u(\gamma(t_1+t),\ldots,\gamma(t_N+t))\, .
\end{align}
Conversely, it is then clear that if $F$ is a smooth cylinder function which is $\cF_t^T$-measurable, then there exists a smooth cylinder function $F_{-t}$ which is $\cF^{T-t}_0$-measurable such that
\begin{align}
\int_{P(M)} F\,d\Gamma_x = H_t\int_{P(M)} F_{-t}\, d\Gamma_x\, .
\end{align}

Now let us assume the $d$-dimensional Ricci curvature bound
\begin{align}
-\kappa g+ \frac{1}{d-n}\nabla f\otimes \nabla f\leq \Ric+\nabla^2f\leq \kappa g\, .
\end{align}
In particular the lower bound gives us that Theorem \ref{t:lower_d_ricci} holds and the bound gives us that Theorem \ref{t:smooth_bounded_ricci} holds.  To prove Theorem \ref{t:smooth_bounded_d_ricci}.2 let $F$ be a $\cF^T_t$-measurable function and let us apply Theorem \ref{t:lower_d_ricci}.2 to the function $\int F_{-t}\,d\Gamma_x$ at time $t$ to get the inequality
\begin{align}
&|\nabla H_t \int F_{-t}\,d\Gamma_x|^2+\frac{e^{\kappa t}-1}{d\kappa}\big|\Delta_f H_t \int F_{-t}\,d\Gamma_x\big|^2 \leq e^{\kappa t}H_t|\nabla \int F_{-t}\,d\Gamma_x|^2\, \notag\\
&\implies |\nabla \int F\,d\Gamma_x|^2+\frac{e^{\kappa t}-1}{d\kappa}\big|\Delta_f \int F\,d\Gamma_x\big|^2 \leq e^{\kappa t}\bigg(e^{\frac{\kappa}{2}(T-t)}\int_{P(M)}|\nabla_0 F_{-t}|^2+\int_0^{T-t}\frac{\kappa}{2}e^{\frac{\kappa}{2}s}|\nabla_s F_{-t}|^2\,d\Gamma_x\bigg)\notag\\
&=e^{\kappa t}\bigg(e^{\frac{\kappa}{2}(T-t)}\int_{P(M)}|\nabla_0 F|^2+\int_t^{T}\frac{\kappa}{2}e^{\frac{\kappa}{2}(s-t)}|\nabla_s F|^2\,d\Gamma_x\bigg)\notag\\
&=e^{\frac{\kappa}{2}T}\int_{P(M)}e^{\frac{\kappa}{2}t}|\nabla_0 F|^2+\int_t^{T}\frac{\kappa}{2}e^{\frac{\kappa}{2}s}|\nabla_s F|^2\,d\Gamma_x\notag\\
&=e^{\frac{\kappa}{2}T}\int_{P(M)}|\nabla_0 F|^2+\int_0^{T}\frac{\kappa}{2}e^{\frac{\kappa}{2}s}|\nabla_s F|^2\,d\Gamma_x\, ,
\end{align}
as claimed.  To prove $(3)$ from $(2)$ we proceed as in the proof of $(R3)\implies (R5)$.  To prove $(4)$ we proceed as above, but use Theorem \ref{t:lower_d_ricci}.3 and the techniques of $(R5)\implies (R6)$.

To prove the converse direction let us assume that Theorem \ref{t:smooth_bounded_d_ricci}.2 holds, and see from this that $-\kappa g+ \frac{1}{d-n}\nabla f\otimes \nabla f\leq \Ric+\nabla^2f\leq \kappa g$.  The other implications are proved in a similar fashion.  Now if Theorem \ref{t:smooth_bounded_d_ricci}.2 holds, then in particular so does (R3).  It follows in particular from Theorem \ref{t:smooth_bounded_ricci} that $\Ric+\nabla^2f\leq \kappa g$.  Also by applying Theorem \ref{t:smooth_bounded_d_ricci}.2 to the function $F(\gamma)\equiv u(t)$, we see in a manner similar to Theorem \ref{t:boundedricci_BE_implies_lowerricci} that Theorem \ref{t:lower_d_ricci}.2 holds, and hence we have the lower bound $-\kappa g+ \frac{1}{d-n}\nabla f\otimes \nabla f\leq \Ric+\nabla^2f$, as claimed.

\end{proof}

\newpage

\part{The Case of Nonsmooth Metric-Measure Spaces}\label{part:nonsmooth}

In this part of the paper we focus on metric measure spaces $(X,d,m)$ and we make the basic assumptions

\begin{align}\label{e:mms_assumptions}
(X,&d,m) \text{ is a locally compact, complete length space such that}\notag\\ 
&\text{$m$ is a locally finite, $\sigma$-finite Borel measure with supp}\,m=X\, .
\end{align}

The primary objective of this part of the paper is to provide the necessary tools so that we can use Theorem \ref{t:smooth_bounded_ricci} in order to define the notion of bounded Ricci curvature on a metric measure space.  We will then spend the rest of this part of the paper analyzing the properties of such spaces.

In Section \ref{s:prelim_nonsmooth} we remark on some preliminaries.  Most of the notions in Section \ref{s:prelim_nonsmooth} have appeared elsewhere in one form or another, even if not so systematically or in the same context.  In Section \ref{s:lowerricci_nonsmooth} we recall the notion of a lower Ricci curvature bound for metric measure spaces.  In Section \ref{s:variation} we introduce the notion of a variation of a curve, and discuss many of their properties.  In particular we define in Section \ref{ss:parallel_variation_rect_curv} the notion of a parallel variation.  In Section \ref{s:parallel_gradient_nonsmooth} we use these notions in order to construct the parallel gradients of functions on path space, and then we spend some time discussing their properties and the properties of the associated energy functions.  

In Section \ref{s:bounded_ricci_mms} we use all of this to give the formal definition of a metric measure space with bounded Ricci curvature.  We begin by proving some basic structure on such spaces, and in particular Theorem \ref{t:boundedricci_implies_BE} and Theorem \ref{t:boundedricci_implies_LVS} that such spaces have lower Ricci curvature bounds.  

Section \ref{s:ricci_martingales} is dedicated to studying the relationship between bounded Ricci curvature and martingales in the nonsmooth case.  In particular Section \ref{ss:martingale_lowerricci} is dedicated to proving that spaces with lower Ricci curvature bounds have the continuous martingale property, and Section \ref{ss:martingale_holder} is dedicated to proving that spaces with bounded Ricci curvature have the H\"older martingale property.

In Section \ref{s:bounded_ricci_analysis} we use the structure of Section \ref{s:parallel_gradient_nonsmooth} to introduce the Ornstein-Uhlenbeck operator on the path space of a metric-measure space and prove Theorem \ref{t:OU_boundedricci} on the properties of this operator.

\section{Preliminaries on Nonsmooth Metric-Measure Spaces}\label{s:prelim_nonsmooth}

Let us record here a variety of basic notation and structure which will be used frequently.      Section \ref{ss:basic_notation} is dedicated to basic notation which is relatively commonplace.  In Section \ref{ss:weakly_riemannian} we introduce and discuss a little the notion of a weakly Riemannian and almost Riemannian space. The terminology is not completely standard, and some mild variations appear elsewhere in the literature.  For our purposes we will see that being weakly Riemannian is the minimum structure on a metric-measure space needed to make sense of a Wiener measure.

\subsection{Basic Notation}\label{ss:basic_notation}

The structure in this Section is all either common or slight adaptations of common notation.

\subsubsection{The $\Delta$-Simplex}\label{sss:simplex}  In the first part of the paper we used commonly partitions of intervals when discussing the evaluation maps and cylinder functions.  We discuss the collection of partitions in more detail here, since this structure will be important in the second part of the paper.  Let us begin by describing the standard simplex.  Let us denote by 
\begin{align}
\Delta^N[0,T]\equiv \{\bt=(t_1,\ldots,t_N):0\leq t_1\leq\cdots\leq t_N\leq T\}\, ,
\end{align}
the $N$-simplex of partitions of the interval $[0,T]$, and by
\begin{align}
\Delta[0,T] \equiv \bigcup \Delta^N[0,T]\, ,
\end{align}
the collection of all partitions of $[0,T]$, where $0<T\leq\infty$.  For an arbitrary partition $\bt\subseteq [0,T]$ we denote by $|\bt|$ the length of the partition.  That is, we say $|\bt|=N$ if $\bt\in \Delta^N[0,T]$.  

A particularly important structure on $\Delta[0,T]$ that will play a role is that it is a directed set.  Namely, we have a partial ordering on $\Delta[0,T]$ given by ${\bf s}\leq \bt$ iff $s_a\in \bt$ for each $s_a\in {\bf s}$, and further given any two partitions ${\bf s},\bt\in \Delta[0,T]$ there always exists a third partition ${\bf r}\in \Delta[0,T]$ such that ${\bf s}\leq {\bf r}$ and $\bt\leq {\bf r}$.  We call a function
\begin{align}
f:\Delta[0,T]\to \dR\, ,
\end{align}
a $\Delta$-net.  Since $\Delta[0,T]$ is a direct set it makes sense to ask if such a function $f$ has a limit.  If so we denote by
\begin{align}
\lim_{\bt\to \Delta} f(\bt)\, ,
\end{align}
the limit of $f$.  Similarly since $\Delta[0,\infty)$ is a directed set we may for every $\Delta$-net $f$ consider $\limsup_{\bt\to \Delta} f(\bt)$ and $\liminf_{\bt\to \Delta} f(\bt)$.


\subsubsection{Cylinder Functions}\label{sss:cylinder_function} As in most cases when one does analysis it is important to have a collection of well behaved functions which are dense in the various topologies.  In Section \ref{s:smooth_bounded_ricci_intro} we had described the collection of smooth cylinder functions on the path space of a smooth manifold.  On a general metric space this collection needs to be replaced by a similar but more appropriate collection.  Thus recall for each partition $\bt\in \Delta[0,T]$ that we have the associated evaluation map 
\begin{align}
e_\bt:P(X)\to X^{|\bt|}\, ,
\end{align}
given by
\begin{align}
e_\bt(\gamma)\equiv \big(\gamma(t_1),\ldots,\gamma(t_{|\bt|})\big)\, .
\end{align}

As in Part \ref{part:smooth} we denote by $\cF^T_t$ the bi-family of $\sigma$-algebras on $P(X)$ generated by the mappings $e_\bt$ with $\bt$ a partition of $[t,T]$.  Now we consider the collection of cylinder functions associated to the evaluation maps given by

\begin{align}
\Cyl(X)\equiv \bigg\{F:P(X)\to\dR:\, \exists\,\bt\in\Delta[0,\infty) \text{ and } u\in Lip(X^{|\bt|}) \text{ with } F\equiv e_\bt^*u\bigg\}\, ,
\end{align}
where $Lip(X^{|\bt|})$ is the space the Lipschitz functions with compact support.  

Note that the cylinder functions $\Cyl(X)\subseteq C^0(P(X))$ are continuous functions, and further it can be checked without too much difficulty that they form a subalgebra of $C^0(P(X))$.  It is clear that the cylinder functions also define continuous functions on based path spaces $P_x(X)$, and that a cylinder function $F\in\Cyl(X)$ is $\cF^T_t$-measurable if and only if we can write $F=e_\bt^*u$, where $\bt$ is a partition of $[t,T]$.

\subsection{Weakly Riemannian and Almost Riemannian Metric-Measure Spaces}\label{ss:weakly_riemannian}

We introduce in this Section two types of metric-measure spaces which are especially important to study.  The first are the weakly Riemannian spaces.  In short, we will see that these are precisely the metric-measure spaces whose laplace operator is a linear operator.  The second class we will introduce are the almost Riemannian spaces.  These are weakly Riemannian spaces whose energy structure and metric structure agree, see Section \ref{sss:almost_riemannian_spaces} for more.  

\subsubsection{The Cheeger Energy}\label{sss:cheeger_energy}
We begin in this subsection by recalling the Cheeger energy of a metric measure space $(X,d,m)$.  We use this to define when such a metric-measure space is weakly Riemannian, and prove some basic properties about such spaces.  We will study the diffusion measures on general metric-measure spaces, and in particular we will see in Section \ref{ss:Diff_Meas_weaklyRiemannian} that there exists diffusion measures on path space $P(X)$ if and only if $X$ is weakly Riemannian.  The basic sources which are most relevant for this section are \cite{Cheeger_DiffLipFun} ,\cite{Ambrosio_Calculus_Ricci},\cite{Fukushima_DirichletForms}.

As always we let $(X,d,m)$ be a metric-measure space which satisfies (\ref{e:mms_assumptions_intro}).  Following \cite{Cheeger_DiffLipFun} we define an upper gradient for a function by the following:

\begin{definition}\label{d:cheeger_gradient}
We define
\begin{enumerate}
\item Let $u,G:X\to\dR$ be Borel functions with $G$ bounded and nonnegative.  Then we say $G$ is an upper gradient for $u$ if we have that $|u(x)-u(y)|\leq \int_\gamma G|\dot\gamma|dt$ for all rectifiable curves connecting $x$ and $y$.
\item Given $u,G\in L^2(X,m)$ with $G$ nonnegative, we say that $G$ is a weak upper gradient if there exists a sequence $u_i,G_i$ with $u_i\to f$ in $L^2(X,m)$ and $G_i \rightharpoonup G$ weakly in $L^2(X)$.
\item Given $u\in L^2(X,m)$ we define its Cheeger gradient $|\nabla u|$ to be the (unique) weak upper gradient of $u$ with minimal $L^2$-norm. 
\end{enumerate}
\end{definition}

It is a consequence of \cite{Cheeger_DiffLipFun} that $(3)$ is well defined above.  Now given a function $u\in L^2(X)$ we define its Cheeger energy by the formula
\begin{align}
E_X[u]\equiv \frac{1}{2}\int_X |\nabla u|^2 dm_X\, ,
\end{align}

The fundamental result for our purposes is the following:

\begin{theorem}[\cite{Cheeger_DiffLipFun}]\label{t:classic_dirichlet_form}
The energy function $E_X:\cD(E_X)\subseteq L^2(X,m)\to \dR$ is convex, nonnegative, $2$-homogeneous and lower-semicontinuous.  Furthermore, the following hold:
\begin{enumerate}
\item (closed) The functional $||u||_1\equiv \sqrt{||u||_{L^2}+E_X[u]}$ defines a complete norm on $\cD(E)$.
\item (regular) The continuous functions with compact support $C_c(X)\subseteq \cD(E)$ form a dense subset of $\cD(E)$.
\item (strongly local) If $u,w\in \cD(E)$ are such that $g$ is a constant on $supp(u)\subseteq X$, then $E[u+w] = E[u]+E[w]$.
\end{enumerate}
\end{theorem}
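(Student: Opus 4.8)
\textbf{Proof plan for Theorem \ref{t:classic_dirichlet_form}.}

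The plan is to follow the structure of Cheeger's original argument in \cite{Cheeger_DiffLipFun}, adapted to the functional-analytic language of the Cheeger energy as reformulated in \cite{Ambrosio_Calculus_Ricci}. The first block of assertions — that $E_X$ is $2$-homogeneous, nonnegative, and convex — I would establish directly from the definition of the minimal weak upper gradient: $2$-homogeneity is immediate since $G$ is a weak upper gradient for $u$ iff $|\lambda| G$ is one for $\lambda u$; nonnegativity is trivial; convexity follows because if $G_u, G_w$ are weak upper gradients for $u,w$ then $\theta G_u + (1-\theta) G_w$ is a weak upper gradient for $\theta u + (1-\theta)w$ (additivity and scaling of the defining integral inequality pass to the $L^2$-weak limits), and then minimality of $|\nabla\cdot|$ gives $|\nabla(\theta u + (1-\theta)w)| \le \theta|\nabla u| + (1-\theta)|\nabla w|$ $m$-a.e., whence the energy inequality by squaring and integrating.

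The central point is lower semicontinuity, from which closedness in (1) follows formally. Here I would argue as follows: suppose $u_i \to u$ in $L^2(X,m)$ with $\liminf_i E_X[u_i] =: C < \infty$. Passing to a subsequence we may assume $E_X[u_i] \to C$, so the minimal weak upper gradients $|\nabla u_i|$ are bounded in $L^2(X,m)$; by weak compactness a further subsequence has $|\nabla u_i| \rightharpoonup G$ for some nonnegative $G \in L^2(X,m)$ with $\|G\|_{L^2}^2 \le 2C$ by weak lower semicontinuity of the norm. By definition, $G$ is then a weak upper gradient for $u$ (this is precisely clause (2) of Definition \ref{d:cheeger_gradient}, applied with the diagonal-type sequence $u_i, |\nabla u_i|$), hence $|\nabla u| \le G$ in the $L^2$-minimal sense and $2E_X[u] = \||\nabla u|\|_{L^2}^2 \le \|G\|_{L^2}^2 \le 2C$. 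This gives lower semicontinuity. Completeness of the norm $\|u\|_1 = \sqrt{\|u\|_{L^2}^2 + E_X[u]}$ then follows: a Cauchy sequence in $\|\cdot\|_1$ converges in $L^2$ to some $u$, the gradients form a Cauchy (hence convergent) sequence in $L^2$, and lower semicontinuity identifies the limit gradient with $|\nabla u|$.

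For regularity (2), I would use that Lipschitz functions with compact support have the Lipschitz slope $|\mathrm{Lip}\,u|$ as an upper gradient, so $\mathrm{Lip}_c(X) \subseteq \cD(E_X)$, and then invoke the approximation results of \cite{Cheeger_DiffLipFun}, \cite{Ambrosio_Calculus_Ricci} showing that Lipschitz functions are $\|\cdot\|_1$-dense in $\cD(E_X)$ (by truncation and mollification along the heat flow, or by the McShane-type extension combined with the density of $\mathrm{Lip}_c$ in $C_c$); since $C_c(X)$ contains $\mathrm{Lip}_c(X)$ this yields (2). For strong locality (3) I would verify first on Lipschitz functions, where $|\mathrm{Lip}(u+w)| = |\mathrm{Lip}\,w|$ on $\{u \text{ const}\}$ since $u$ contributes no slope there and the supports interact only on the overlap, and then pass to the general case by the density just established together with the locality properties of the minimal weak upper gradient proved in \cite{Cheeger_DiffLipFun}. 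The main obstacle is the lower semicontinuity/closure step: it rests on the nontrivial fact from \cite{Cheeger_DiffLipFun} that the class of weak upper gradients is closed under the $L^2$-weak-limit operation in clause (2) of Definition \ref{d:cheeger_gradient}, and that the minimal one is well defined; I would cite this rather than reprove it, but it is what makes the theorem work.
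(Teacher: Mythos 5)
The paper gives no proof of this theorem at all --- it is stated purely as a citation to \cite{Cheeger_DiffLipFun}, so there is no internal argument to compare against. Your sketch is a correct outline of the standard proof from that reference (and its reformulation in \cite{Ambrosio_Calculus_Ricci}): homogeneity, nonnegativity and convexity from minimality of the weak upper gradient, lower semicontinuity and completeness from weak compactness of bounded sets of gradients together with the closure of the class of weak upper gradients under $L^2$-weak limits (usually made rigorous via Mazur's lemma to pass to strongly convergent convex combinations), and regularity and strong locality via Lipschitz approximation; you correctly identify the weak-limit closure as the crux and defer to the cited source, which is precisely what the paper itself does.
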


The above allows one to apply standard techniques and ideas from the theory of convex functionals on Hilbert spaces to deduce the existence of densely defined mapping $\Delta_X\equiv \nabla E:\cD(\nabla E)\subseteq L^2(X)\to L^2(X)$ such that at each point $u\in \cD(\nabla E)$ of the domain, $\nabla E(u)$ is the unique element of $L^2(X)$ with minimal norm which satisfies the functional inequality
\begin{align}
E(u)+\langle\nabla E(u),v-u\rangle\leq E(v)\, ,
\end{align}
for each $v\in L^2(X)$.  Of course, where $E$ is differentiable we have that $\nabla E$ simply corresponds to the gradient.  Further we have for each $t> 0$ the induced gradient flow of $\frac{1}{2}\nabla E$ given by
\begin{align}
H_t:L^2(X)\to L^2(X)\, .
\end{align}
See \cite{Fukushima_DirichletForms} for a more complete introduction, but note in particular that $H_tu\to u$ as $t\to 0$ for each $u\in L^2(X)$, and $||H_t||\leq 1$ is a contraction mapping.

\subsubsection{Weakly Riemannian Spaces}\label{sss:weakly_riemannian}

Classically, we identify $\cD(E)$ with the Sobolev space $W^{1,2}(X)$.  If we are dealing with a smooth metric-measure space then it is well known that $W^{1,2}(X)$ is a Hilbert space.  In general, this can fail and $\cD(E)$ may only be a Banach space.  That is, the parallelogram law 
\begin{align}
2E[u]+2E[w] = E[u+w] + E[u-w]\, ,
\end{align}
may fail.  Equivalently, the laplace operator $\Delta_X$ defined in the previous section is not linear.  This brings us to the notion of a weakly Riemannian space:

\begin{definition}\label{d:weakly_riemannian}
We say a metric-measure space $(X,d,m)$ satisfying (\ref{e:mms_assumptions}) is {\it weakly Riemannian} if one of the following equivalent conditions is satisfied:
\begin{enumerate}
\item $(\cD(E),||\cdot||_1)$ is a Hilbert space.
\item For each $u,w\in \cD(E)$ we have the identity $2E[u]+2E[w] = E[u+w]+E[u-w]$.
\item $\Delta_{X}$ is a self-adjoint linear mapping.
\item $H_t$ are linear contractions.
\end{enumerate}
\end{definition}

In the case where $E_X$ satisfies the parallelogram law we can write 
\begin{align}
E_X[u,w]\equiv \frac{1}{2}\bigg(E_X[u+w]-E_X[u-w]\bigg)\, ,
\end{align}
and we see that $E_X[u,w]$ is a closed bilinear form with $E_X[u,u]=E_X[u]$.  In the rest of this subsection we will assume $(X,d,m)$ is weakly Riemannian and discuss ideas from \cite{Ambrosio_Ricci}.  

In particular, we wish to understand the energy measure $[u]$ from \cite{Fukushima_DirichletForms} and its relationship to the cheeger energy.  Beginning with the definition, we have for $u\in W^{1,2}(X,m)$ the measure $[u]$ defined in \cite{Fukushima_DirichletForms} by
\begin{align}
[u](\phi) \equiv 2E_X[u,u\phi]-E_X[u^2,\phi]\, .
\end{align}
Apriori the above is only well defined for sufficiently regular $\phi$, but it is seen in \cite{Fukushima_DirichletForms} it extends to a measure.  In comparison to the smooth manifold case one would hope for the equality $[u]\equiv |\nabla u|^2m$, and in particular that $[u]$ can be identified with an $L^1$ function.  It is an important result of \cite{Ambrosio_Ricci} that this can in fact be done, giving us the following.

\begin{theorem}[\cite{Ambrosio_Ricci}]\label{t:energy_measure}
Let $X$ be a weakly Riemannian space with $u,w\in W^{1,2}(X,m)$, then it holds that $[u]=|\nabla u|^2m$.
\end{theorem}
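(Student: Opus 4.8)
The plan is to prove Theorem \ref{t:energy_measure}, which identifies the Dirichlet energy measure $[u]$ of Fukushima with the measure $|\nabla u|^2 m$ coming from the Cheeger energy, on a weakly Riemannian space. The statement is a known result of \cite{Ambrosio_Ricci}, so the proof should follow that circle of ideas rather than attempt anything new. First I would reduce to a suitable dense class: by Theorem \ref{t:classic_dirichlet_form}, bounded functions in $\cD(E_X)$ are dense, and the energy measure, the Cheeger gradient, and all the bilinear forms involved are continuous under the natural $W^{1,2}$-type convergence, so it suffices to establish the identity for $u$ bounded (and then truncate/localize freely). The object $[u]$ is, by definition, the unique Borel measure with $[u](\phi) = 2E_X[u,u\phi] - E_X[u^2,\phi]$ for $\phi$ in a suitable algebra of test functions; the claim is that this functional equals $\int_X \phi\, |\nabla u|^2\, dm$.

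The key steps, in order: (1) Establish the \emph{chain rule} and \emph{Leibniz rule} for the Cheeger gradient on a weakly Riemannian space — that is, for $u \in W^{1,2}\cap L^\infty$ and smooth $\varphi$ one has $|\nabla \varphi(u)| = |\varphi'(u)|\,|\nabla u|$ a.e., and more generally a bilinear "carré du champ" $\langle \nabla u, \nabla w\rangle$ obtained by polarization of $|\nabla u|^2$ satisfies the Leibniz rule $\langle \nabla(uw), \nabla \phi\rangle = u\langle\nabla w,\nabla\phi\rangle + w\langle \nabla u,\nabla\phi\rangle$ a.e. This is where the parallelogram law (i.e. the weakly Riemannian hypothesis) is essential: only then is $E_X$ a quadratic form and $\langle\nabla u,\nabla w\rangle$ genuinely bilinear. (2) Using these calculus rules, compute directly: $2E_X[u, u\phi] = \int_X \langle \nabla u, \nabla(u\phi)\rangle\, dm = \int_X \phi|\nabla u|^2\, dm + \int_X u\langle\nabla u,\nabla\phi\rangle\, dm$, while $E_X[u^2,\phi] = \int_X \langle \nabla(u^2),\nabla\phi\rangle\, dm = \int_X 2u\langle\nabla u,\nabla\phi\rangle\, dm$. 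Subtracting yields $2E_X[u,u\phi] - E_X[u^2,\phi] = \int_X \phi|\nabla u|^2\, dm$, which is exactly $[u](\phi)$. (3) Conclude by density of test functions $\phi$ and the uniqueness part of the Riesz-type representation that $[u] = |\nabla u|^2 m$ as measures, then remove the boundedness assumption on $u$ by truncation, using that $|\nabla u| = |\nabla u|$ on $\{|u|\le k\}$ and monotone convergence.

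The main obstacle is genuinely Step (1): proving the chain and Leibniz rules for the \emph{minimal weak upper gradient} (as opposed to the local-Lipschitz-constant gradient, where these are easy). The minimal weak upper gradient is defined by an $L^2$-relaxation procedure, and one cannot manipulate it pointwise along curves without first showing it is "local" (vanishes on level sets), satisfies a pointwise chain rule, and behaves additively. In \cite{Ambrosio_Ricci} this is handled via the identification of the Cheeger energy with the Dirichlet form built from test plans / the equivalence of several notions of weak upper gradient, together with the fact that a quadratic Cheeger energy automatically admits a strongly local, regular Dirichlet form structure (Theorem \ref{t:classic_dirichlet_form} already records closedness, regularity and strong locality). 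Given that infrastructure, the carré du champ operator exists and its Leibniz rule is standard Dirichlet-form theory; the remaining content is matching it with $|\nabla u|^2$, which is precisely the relaxation identity established in \cite{Ambrosio_Calculus_Ricci}. So in the write-up I would cite \cite{Ambrosio_Calculus_Ricci} and \cite{Ambrosio_Ricci} for the calculus rules and present Step (2)–(3) as the short computation that assembles them into the stated identity.
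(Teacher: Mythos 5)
The paper offers no proof of this statement at all: Theorem \ref{t:energy_measure} is quoted verbatim from \cite{Ambrosio_Ricci}, so there is nothing internal to compare your argument against. Your outline is a correct reconstruction of the standard proof in that reference: the identity $[u](\phi)=2E_X[u,u\phi]-E_X[u^2,\phi]=\int_X\phi\,|\nabla u|^2\,dm$ does follow from exactly the computation in your Step (2), once the carr\'e du champ $\langle\nabla u,\nabla w\rangle$ is known to exist pointwise and to satisfy the chain and Leibniz rules. You correctly isolate the real content in Step (1); the one refinement worth making explicit there is that the weakly Riemannian hypothesis only gives the parallelogram law for the \emph{integrated} energy, and upgrading it to a pointwise a.e.\ identity $|\nabla(u+w)|^2+|\nabla(u-w)|^2=2|\nabla u|^2+2|\nabla w|^2$ (so that polarization yields a genuine bilinear density) requires the locality and truncation arguments of \cite{Ambrosio_Calculus_Ricci}, which you are entitled to cite just as the paper does.
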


Let us end this subsection by remarking that using \cite{Fukushima_DirichletForms} one can also define the energy measure $[u]$ through the heat flow or laplace operator by
\begin{align}
[u]\equiv \frac{1}{2}\big(\Delta_X u^2-2u\Delta_X u\big)\, .
\end{align}

\subsubsection{Almost Riemannian Metric-Measure Spaces}\label{sss:almost_riemannian_spaces}


To motivate the definition of an almost Riemannian metric-measure space let us first illustrate a particular degeneracy which may occur with an example\footnote{The author is in debt to Luigi Ambrosio to many useful conversations on this issue.}:  

\begin{example}
Take $(X,d)\equiv \dR^n$ to be the standard geometry on $\dR^n$, and let $m\equiv\sum 2^{-j}\delta_{q_j}$ be the probability measure obtained given an enumeration $\{q_j\}$ of the rationals and their associated dirac-delta measures $\delta_{q_j}$.  It is trivially clear that as a metric space $X$ is a length space and satisfies any other criteria of 'nice' as a metric space.  However, it is also not difficult to check that given any lipschitz function $f:\dR^n\to \dR$ the cheeger gradient $|\nabla f|\equiv 0$ is identically zero.  In particular, the metric measure space $(X,d,m)$ is weakly Riemannian, and even satisfies the Bakry-Emery criteria $|\nabla H_t u|\leq H_t|\nabla u|$ for nonnegative Ricci curvature.  On the other hand, $(X,d,m)$ does not satisfy the criteria for a lower Ricci curvature bound in the sense of Lott-Villani-Sturm.
\end{example}

The above example illustrates that for a given metric-measure space it is possible that the energy function $E_X$ is not compatible with the underlying geometry of the space.  To make this more precise we define:

\begin{definition}\label{d:distance_energy}
We define the energy distance on $X$ by
\begin{align}
d_{E}(x,y)\equiv \sup\{\big|u(x)-u(y)\big|:u\in C^0(X)\text{ with }|\nabla u|\leq 1 \text{ a.e.}\}\, .
\end{align}
\end{definition}

Of course on a smooth metric-measure space it is standard that the above distance function agrees with the underlying distance function.  On a general metric-measure space this may not be the case.  The following is a relatively simple and follows from just playing with the definitions.

\begin{theorem}\label{t:almost_riemannian_space}
Let $(X,d,m)$ satisfy (\ref{e:mms_assumptions}), then the following are all equivalent:
\begin{enumerate}
\item The energy distance function agrees with the standard distance function on $X$, that is, $d_E(x,y)=d(x,y)$.
\item A function $f\in W^{1,2}(X)$ satisfies $|\nabla f|\leq 1$ a.e. iff $Lip f\leq 1$.
\item For a lipschitz function $u$ we have for $a.e. x\in X$ the equality
\begin{align}
|\nabla u|(x)=|\text{Lip}\, u|(x)\equiv \limsup_{y\to x}\frac{|u(y)-u(x)|}{d(x,y)}\, .
\end{align}
\end{enumerate}
\end{theorem}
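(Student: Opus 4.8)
The plan is to prove the cyclic chain of implications $(1)\Rightarrow (2)\Rightarrow (3)\Rightarrow (1)$, which are all essentially tautological once one unwinds the definitions of the Cheeger gradient, the slope $|\text{Lip}\,u|$, and the energy distance $d_E$. None of the steps requires a serious analytic idea beyond the results already recorded, in particular the fact from \cite{Cheeger_DiffLipFun} that for a Lipschitz function the local Lipschitz constant $|\text{Lip}\,u|$ is always an upper gradient, and hence $|\nabla u|\le |\text{Lip}\,u|$ $m$-a.e. for every Lipschitz $u$.

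\medskip

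First I would prove $(3)\Rightarrow(2)$. If for every Lipschitz $u$ we have $|\nabla u| = |\text{Lip}\,u|$ a.e., then a Lipschitz function with $|\nabla u|\le 1$ a.e. has $|\text{Lip}\,u|\le 1$ a.e., and since $X$ is a length space this forces $\text{Lip}\,u\le 1$ (the global Lipschitz constant equals the supremum of the local one along rectifiable curves, whose infimal length between two points approximates the distance). Conversely if $\text{Lip}\,u\le 1$ then $|\nabla u|\le |\text{Lip}\,u|\le 1$ a.e. One must also address general $f\in W^{1,2}(X)$ as opposed to Lipschitz $f$; here I would note that the statement $(2)$ for $W^{1,2}$ functions follows from the Lipschitz case by the standard truncation/approximation in \cite{Cheeger_DiffLipFun},\cite{Ambrosio_Calculus_Ricci} together with the locality of the Cheeger gradient. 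Next, $(2)\Rightarrow(1)$: by $(2)$, the class of competitors $\{u\in C^0(X): |\nabla u|\le 1 \text{ a.e.}\}$ appearing in Definition \ref{d:distance_energy} coincides (among Lipschitz $u$) with $\{u : \text{Lip}\,u\le 1\}$, and the supremum of $|u(x)-u(y)|$ over $1$-Lipschitz functions is exactly $d(x,y)$ by the length space structure (take $u(\cdot)=d(\cdot,x)$ for the lower bound, the Lipschitz condition for the upper bound), so $d_E(x,y)=d(x,y)$. One should check that restricting to continuous versus Lipschitz competitors does not change the supremum, which again uses that a continuous $u$ with $|\nabla u|\le 1$ a.e. has a $1$-Lipschitz representative on a length space.

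\medskip

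Finally $(1)\Rightarrow(3)$. Assume $d_E = d$. Given a Lipschitz $u$, we already have $|\nabla u|\le |\text{Lip}\,u|$ a.e., so it suffices to prove the reverse inequality $m$-a.e. Suppose not: then on a set of positive measure $|\nabla u|(x) \le |\text{Lip}\,u|(x) - \epsilon$; using a Lebesgue-point/localization argument one can rescale $u$ near such a point to produce a function $v$ with $|\nabla v|\le 1$ a.e. but with $v$ separating two nearby points $x,y$ at a rate strictly larger than $d(x,y)$, contradicting $d_E(x,y) = d(x,y)$. More precisely, since $|\text{Lip}\,u|$ is the upper relaxed slope and the Cheeger gradient is its lower semicontinuous (relaxed) envelope, the identity $|\nabla u| = |\text{Lip}\,u|$ $m$-a.e. for all Lipschitz $u$ is known to be equivalent to the minimal weak upper gradient being "geodesic" in exactly the sense captured by $d_E = d$; I would cite the relevant lemma from \cite{Ambrosio_Calculus_Ricci} rather than reproduce the relaxation argument.

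\medskip

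The main obstacle, such as it is, is bookkeeping about which function class (continuous, Lipschitz, $W^{1,2}$) the various suprema and a.e. statements range over, and making sure the length-space hypothesis is invoked correctly to pass between the global Lipschitz constant $\text{Lip}\,u$, the pointwise slope $|\text{Lip}\,u|$, and the distance $d$. There is no deep step: the equivalences are structural, and the only external inputs are $|\nabla u|\le|\text{Lip}\,u|$ a.e. for Lipschitz $u$ and the standard relaxation description of the Cheeger gradient from \cite{Cheeger_DiffLipFun},\cite{Ambrosio_Calculus_Ricci}.
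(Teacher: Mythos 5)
The paper offers no proof of this theorem to compare against (it is asserted to ``follow from just playing with the definitions''), so your attempt stands on its own; and as written it has genuine gaps at two of the three links of your cycle. Only $(2)\Rightarrow(1)$ is essentially complete: the lower bound $d_E\geq d$ via $u=d(\cdot,x)$ (truncated to lie in $W^{1,2}$) together with the upper bound from applying $(2)$ to each continuous competitor is fine, as is the free inequality $|\nabla u|\leq|\text{Lip}\,u|$ $a.e.$ for Lipschitz $u$.

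The first gap is in $(3)\Rightarrow(2)$, where you pass from ``$|\text{Lip}\,u|\leq 1$ $m$-a.e.'' to ``$\text{Lip}\,u\leq 1$'' by integrating the pointwise slope along almost-minimizing curves. That argument requires the bound to hold $\mathcal{H}^1$-a.e.\ \emph{along the curve}, whereas you only have it on an $m$-conull subset of $X$; an $m$-null set can contain entire rectifiable curves, which is exactly the degeneracy this section of the paper is about (cf.\ the example $m=\sum 2^{-j}\delta_{q_j}$, and note that already on $\dR$ a Lipschitz $u$ with $u'=2$ on a nowhere dense set of positive measure has $|\text{Lip}\,u|=0$ on a dense open set). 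So ``$m$-a.e.\ slope bound $\Rightarrow$ global Lipschitz bound'' is essentially the conclusion you are trying to reach, not a consequence of the length-space structure. The second gap is $(1)\Rightarrow(3)$: the localization you sketch does not go through, because cutting off $u-u(x_0)$ by $\phi$ with $|\nabla\phi|\sim r^{-1}$ produces an error term $|u-u(x_0)|\cdot|\nabla\phi|$ of size comparable to $\text{Lip}(u)$ that does not vanish as $r\to 0$, so you never obtain a global competitor $v$ with $|\nabla v|\leq 1$ a.e.\ separating points faster than $d$; and the lemma you propose to cite from \cite{Ambrosio_Calculus_Ricci} is not there --- that paper identifies $|\nabla u|$ as the $L^2$-relaxation of $|\text{Lip}\,\cdot|$, but does not prove that $|\nabla u|=|\text{Lip}\,u|$ a.e.\ for all Lipschitz $u$ is equivalent to $d_E=d$. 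You need either an actual construction for $(1)\Rightarrow(3)$ or a different arrangement of the cycle; at present both of these implications are asserted rather than proved.
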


We therefore end up with the following definition of an almost Riemannian space:

\begin{definition}\label{d:almost_riemannian}
We call a weakly Riemannian metric-measure space $(X,d,m)$ an almost Riemannian space if any of the equivalent conditions of Theorem \ref{t:almost_riemannian_space} hold.
\end{definition}

This is a well studied notion, and we end this Section with some examples.  We begin with the following, which is a result of \cite{Cheeger_DiffLipFun}:

\begin{example}[Doubling+Poincare]\label{e:almost_riemannian_doubling}
Let $(X,d,m)$ be a weakly Riemannian metric-measure space such that $m$ satisfies a doubling condition 
\begin{align}
m(B_{2r}(x))\leq C\, m(B_r(x))\, ,
\end{align}
and a local weak Poincare
\begin{align}
\int_{B_{r}(x)}|u-\int_{B_r}u|\,dm\leq Cr^{-2}\int_{B_{2r}(x)}|\nabla u|^2\, dm\, .
\end{align}
Then $(X,d,m)$ is an almost Riemannian metric-measure space.  See \cite{Cheeger_DiffLipFun}.
\end{example}

We also have the following, which is a result of \cite{Ambrosio_BE_vs_LVS}:

\begin{example}[Dirichlet Forms]\label{e:almost_riemannian_dirichlet}
Let $(X,m)$ be a Polish measure space with $\text{supp}\,m=X$, and let $E$ be a regular strongly local dirichlet form on $L^2(X,m)$.  Let us also assume that the induced distance
\begin{align}
d_{E}(x,y)\equiv \sup\{\big|u(x)-u(y)\big|:u\in C^0(X)\text{ with }[u]\leq m \text{ a.e.}\}\, ,
\end{align}
where $[u]$ is the energy measure defined by $[u](f)\equiv 2E(u,uf)-E(u^2,f)$, induces the same topology on $X$.  Then the triple $(X,d_E,m)$ is an almost Riemannian space if and only if $E$ satisfies the additional upper semicontinuity property
\begin{align}
&\text{For every }f\in \cD(E)\text{ }\exists\, f_j\in\cD(E)\cap C(X)\text{ and upper semicontinuous } g_j:X\to\dR \\ 
&\text{such that }[f_j]\leq g_j^2\, m\, ,\, f_j\to f\text{ in }L^2(X,m)\text{, and }  \limsup \int_X g_n^2\,dm\leq E(f,f)\, .
\end{align}
\end{example}

Now we end with a final example, which is an application of the above and the standard properties of the cheeger energy:

\begin{example}[Nondegenerate Weakly Riemannian Space]\label{e:almost_riemannian_nondegenerate}
Let $(X,d,m)$ be a weakly Riemannian space, and let us assume that the energy distance $d_E$ from Definition \ref{d:distance_energy} is nondegenerate in that it induces the same topology on $X$.  Then the triple $(X,d_E,m)$ is an almost Riemannian space.
\end{example}

\subsection{Diffusion Measures on Weakly Riemannian Spaces}\label{ss:Diff_Meas_weaklyRiemannian}

In the previous Section we saw that a weakly Riemannian metric space is one for which the heat flow map $H_t$ is linear.  In fact, once it is known that the energy functional $E_X$ is quadratic, then the content of Theorem \ref{t:classic_dirichlet_form} is that $E_X$ defines a regular, strongly local Dirichlet form on $L^2(X,m)$.  We can obtain from this a good deal more information than just linearity of the heat flow.  

To begin with, associated with the heat flow has a kernel \cite{Fukushima_DirichletForms}.  More precisely, for each $x\in X$ and $t>0$ there exists a measure $\rho_t(x,dy)$ with the property that for every continuous function $u\in C^0(X)$ we have that
\begin{align}
H_tu(x)=\int_{X} u(y)\rho_t(x,dy)\, .
\end{align}
The kernel may be viewed as a function $\rho_t:X\times \cB(X)\to \dR^+$ is such that $\rho_t(x,\cdot)$ is a measure for each $x\in X$ and $\rho_t(\cdot,U)$ is a measurable function for each Borel set $U\in \cB(X)$.  Note that using this we can extend the heat flow to a contraction mapping $H_t:C^0(X)\to C^0(X)$ on the bounded continuous functions, that is $||H_t||_{C^0}\leq 1$.

Using the theory of Dirichlet forms these ideas may be pushed further.  As in Section \ref{ss:diffusion_measures} one would like to build the diffusion measures on $P(X)$.  Namely, for each measure $\mu$ on $X$ we would like there to be a corresponding measure $\Gamma_\mu$ such that for every partition $\bt\in \Delta[0,\infty)$ we have that the pushforward measure $e_{\bt,*}\Gamma_\mu$ on $X^{|\bt|}$ is given by
\begin{align}\label{e:diffusion_measure}
e_{\bt,*}\Gamma_\mu = \int_M \rho_{t_1}(x,dy_1)\rho_{t_2-t_1}(y_1,dy_2)\cdots\rho_{t_k-t_{k-1}}(y_{k-1},dy_k) d\mu(x)\, .
\end{align}
Using \cite{Fukushima_DirichletForms} and Theorem \ref{t:classic_dirichlet_form} we see that such a measure does exist, and given that the evaluation maps $e_\bt$ generate the standard $\sigma$-algebra on $P(X)$ it is clear that it is unique.  Conversely, if such measures exist then for each $x\in X$ we can consider the diffusion measure $\Gamma_x\equiv \Gamma_{\delta_x}$.  Using (\ref{e:diffusion_measure}) we see that there exists a kernel for $H_t$, and in particular that $H_t$ is linear.  Hence, in this case we have that $X$ is weakly Riemannian.  Summarizing we have the following

\begin{theorem}
Let $(X,d,m)$ satisfy (\ref{e:mms_assumptions}), then there exists for each Borel measure $\mu$ on $X$ a diffusion measure $\Gamma_\mu$ on $P(X)$ satisfying (\ref{e:diffusion_measure}) if and only if $X$ is weakly Riemannian.
\end{theorem}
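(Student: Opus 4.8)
The statement to prove is the equivalence: diffusion measures $\Gamma_\mu$ satisfying the Chapman--Kolmogorov--type consistency relation \eqref{e:diffusion_measure} exist for every Borel measure $\mu$ on $X$ if and only if $(X,d,m)$ is weakly Riemannian. The proof proceeds in two directions, and the core of the argument is a bookkeeping translation between three equivalent pieces of data: the linearity of $H_t$, the existence of a Markovian transition kernel $\rho_t(x,dy)$ for $H_t$, and the existence of the measures $\Gamma_\mu$. First I would isolate the two elementary reductions that do all the work: (i) if $H_t$ is a linear, $L^2$-contraction semigroup arising from a \emph{regular, strongly local} Dirichlet form — which is exactly the content of Theorem \ref{t:classic_dirichlet_form} once $E_X$ is known to be quadratic — then by the standard theory of \cite{Fukushima_DirichletForms} there is an associated Hunt process, hence a Markovian transition kernel $\rho_t:X\times\cB(X)\to[0,1]$ with $H_t u(x) = \int_X u(y)\,\rho_t(x,dy)$ for $u\in C_b(X)$, and the sample paths of this process are $m$-a.e.\ continuous (using that $X$ is a length space, so points are non-polar and the process does not jump); (ii) conversely, if a kernel $\rho_t$ representing $H_t$ exists, then $H_t$ is manifestly linear, and linearity of $H_t$ is one of the four equivalent conditions in Definition \ref{d:weakly_riemannian} characterizing weakly Riemannian spaces.

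\textbf{Forward direction (weakly Riemannian $\Rightarrow$ diffusion measures exist).} Assume $X$ is weakly Riemannian. By Definition \ref{d:weakly_riemannian}, $E_X$ satisfies the parallelogram law, so it is a bilinear form; combined with Theorem \ref{t:classic_dirichlet_form} (closed, regular, strongly local), $E_X$ is a regular strongly local Dirichlet form on $L^2(X,m)$. Invoking \cite{Fukushima_DirichletForms}, I would produce the transition kernel $\rho_t(x,dy)$ and the associated diffusion process. The measure $\Gamma_\mu$ is then constructed by the Kolmogorov extension theorem applied on the projective system of finite-dimensional distributions on $X^{|\bt|}$ defined by the right-hand side of \eqref{e:diffusion_measure}: one must check (a) that these finite-dimensional laws are a genuine consistent family — this is precisely the semigroup identity $\int_X \rho_s(x,dy)\rho_t(y,\cdot) = \rho_{s+t}(x,\cdot)$, i.e.\ $H_sH_t=H_{s+t}$ — and (b) that the resulting measure on $X^{[0,\infty)}$ is in fact supported on the continuous paths $P(X)\subseteq X^{[0,\infty)}$, which is the path-regularity statement from the Dirichlet form theory (again \cite{Fukushima_DirichletForms}, using local compactness and that $(X,d,m)$ is a length space). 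Uniqueness is immediate since the evaluation maps $e_\bt$ generate the Borel $\sigma$-algebra of $P(X)$, so any two measures agreeing on all $e_{\bt,*}$ coincide.

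\textbf{Converse direction.} Suppose diffusion measures $\Gamma_\mu$ exist for all $\mu$. Taking $\mu=\delta_x$ and the one-point partition $\bt=\{t\}$, formula \eqref{e:diffusion_measure} reads $e_{t,*}\Gamma_{\delta_x}=\rho_t(x,\cdot)$, which exhibits a kernel representing $H_t$ on continuous functions; since integration against a kernel is a linear operation in $u$, $H_t$ is linear, hence $X$ is weakly Riemannian by Definition \ref{d:weakly_riemannian}(4). One small point to address: a priori we are given the measures $\Gamma_\mu$ and the formula \eqref{e:diffusion_measure} involves the $\rho_t$ built from $H_t$, so the converse is really the trivial observation that the \emph{existence} of such $\rho_t$ already forces linearity; I would phrase it so that the hypothesis is ``there is a kernel $\rho_t$ for which \eqref{e:diffusion_measure} defines a measure on $P(X)$,'' which is the honest content.

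\textbf{Main obstacle.} The genuinely substantive step — everything else being formal category-theoretic or measure-theoretic bookkeeping — is the path-continuity claim in the forward direction: that the Kolmogorov-extension measure, built only from finite-dimensional marginals, concentrates on $C^0([0,\infty),X)$ rather than merely on the larger product space. Over a general metric measure space this requires the Dirichlet-form regularity theory of \cite{Fukushima_DirichletForms} together with local compactness from \eqref{e:mms_assumptions} and the absence of jumps (strong locality of $E_X$), and a little care is needed because the process is only defined up to $m$-equivalence, so ``$\rho_t(x,\cdot)$'' must be chosen as a genuine kernel (a pointwise-defined family), which is again part of the quasi-regular/regular Dirichlet form package. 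I would state this cleanly as a citation to \cite{Fukushima_DirichletForms} and \cite{Ambrosio_Ricci} rather than reproving it, since the paper explicitly directs the reader to \cite{Fukushima_DirichletForms} for this material.
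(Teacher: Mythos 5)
Your proposal is correct and follows essentially the same route as the paper: the forward direction is the observation that weak Riemannian-ness makes $E_X$ a regular, strongly local Dirichlet form (Theorem \ref{t:classic_dirichlet_form}), whence the kernel $\rho_t$ and the diffusion measures come from the standard theory of \cite{Fukushima_DirichletForms}, with uniqueness from the fact that the evaluation maps generate the Borel $\sigma$-algebra; the converse is the same one-line extraction of a kernel from $e_{t,*}\Gamma_{\delta_x}$, forcing linearity of $H_t$. The paper is in fact terser than you are — it delegates both the Kolmogorov-extension consistency check and the path-continuity/support question entirely to the citation — so your added care on those points (and your honest remark about what the converse really asserts) only improves on the original.
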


\subsubsection{Stochastic Completeness}\label{sss:stochastic_completeness} We end this Section by having a brief discussion of stochastic completeness.  To describe this let us denote by
\begin{align}
X^*\equiv X\cup\{*\}\, ,
\end{align}
the one point compactification of $X$.  In the case where $X$ is already compact we simply let $X^*\equiv X$.  The point $*$ is often referred to as the cemetery in the Dirichlet form literature.  In the general case, even for a complete smooth manifold, if $\mu$ is a probability measure on $X$ then the diffusion measure $\Gamma_\mu$ need not be a probability measure on $P(X)$.  However, it turns out that $\Gamma_\mu$ extends uniquely to a probability measure on $P(X^*)$ such that for $\Gamma_\mu$-$a.e.$ $\gamma\in P(X^*)$ we have that if $\gamma(t)=*$ for some $t\geq 0$, then $\gamma(t')=*$ for all $t'\geq t$.

Using this it is natural to define for $\gamma\in P(X^*)$ the hitting map
\begin{align}
T^*(\gamma)\equiv \min\big\{t>0:\gamma(t)\in *\big\}\, .
\end{align}
The mapping $T^*$ is often referred to as the lifetime of $\gamma$.  

The following characterizes the notion of stochastic completeness, it tells us when the diffusion measures $\Gamma_\mu$ are probability measures on $P(X)$:

\begin{definition}\label{d:stochastic_completeness}
We say that $X$ is stochastically complete if one of the following equivalent conditions is satisfied:
\begin{enumerate}
\item $H_t1=1$ $a.e.$
\item $\rho_t(x,\cdot)$ is a probability measure on $X$ for each $t>0$ and $x\in X$.
\item For each measure $\mu$ on $X$ we have that $T^*(\gamma)=\infty$ for $\Gamma_\mu$-a.e. $\gamma\in P(X^*)$.
\item For each probability measure $\mu$ on $X$ we have that $\Gamma_\mu$ a.e. $\gamma\in P(X^*)$ satisfies $\gamma\in P(X)$. 
\end{enumerate}
\end{definition}
\begin{remark}
One also says the Dirichlet form $E_X$ is conservative if any of the above conditions is satisfied.
\end{remark}

The metric-measure space $X$ need not be stochastically complete, even for a complete Riemannian manifold.  It is known on a complete Riemannian manifold with Ricci curvature bounded from below that $X$ is stochastically complete, and similarly in Section \ref{ss:stochastic_completeness} we will see that a metric-measure space with bounded Ricci curvature is stochastically complete.

\section{Lower Ricci Curvature on Metric-Measure Spaces}\label{s:lowerricci_nonsmooth}

As in the smooth case we give a brief introduction to lower Ricci curvature in the context of nonsmooth metric-measure spaces.  Primarily, this gives us an excuse to introduce some terminology and notation which will be used later.  There are many possible notions of lower Ricci curvature bounds in the metric-measure setting.  The three that will play the most important role for us are the Bakry-Emery conditions introduced in \cite{BakryEmery_diffusions}, the curvature dimension $CD(n,\kappa)$ condition introduced in \cite{LV_OptimalRicci}, \cite{Sturm_GeomMetricMeasSpace}, \cite{Sturm_KE_LVS}, and the Riemannian curvature dimension $RCD(n,\kappa)$ condition introduced in \cite{Ambrosio_Ricci},\cite{Sturm_KE_LVS}.

The Bakry-Emery criteria for a lower Ricci curvature bound can be summarized with the results of Theorem \ref{t:boundedricci_implies_BE} and Theorem \ref{t:lower_implies_bounded_d}.  The one observation we make is that to truthfully equate this with the notion of a lower Ricci curvature bound as introduced in \cite{BakryEmery_diffusions} one needs Theorem \ref{t:energy_measure}, namely that the energy measure $[u]\equiv \frac{1}{2}\big(\Delta_X u^2 -2u\Delta_X u\big)$ can be identified with the measure $|\nabla u|^2 m$.

To discuss the curvature dimension or Riemannian curvature dimension criteria for a lower Ricci curvature bound we recall first the Wasserstein distance and the space of probability measures.  Recall that $\cP_2(X)$ denotes the space of probability measures on $X$ with finite second moments.  That is, $\mu\in \cP_2(X)$ if $\mu$ satisfies $\int_X d^2(x_0,y)\,d\mu(y)<\infty$.  On the space $\cP_2(X)$ we denote the Wasserstein distance by
\begin{align}
W_2(\mu,\nu)\equiv \inf_{\pi} \int_{X\times X} d^2(x,y)\,d\pi\, ,
\end{align}
where the infimum is over all probability measures $\pi$ on $X\times X$ whose marginals are $\mu$ and $\nu$.  There are many other characterizations of the Wasserstein distance, but we will not discuss them here.  Recall that since $X$ is a separable complete length space, so is $\cP_2(X)$.

The notion of a lower Ricci curvature bound is now tied in with the entropy functional defined by
\begin{align}
\Ent_m(\rho m)\equiv \int_X \rho\ln\rho \,dm\, 
\end{align}
on measures $\rho m$ which are absolutely continuous with respect to $m$, and $Ent_m\equiv \infty$ otherwise.  To understand the connection between Ricci curvature and the entropy functional recall that a real valued function $u:I\to \dR$ defined on an interval $I\subseteq \dR$ is called $\kappa$-convex if $u''\geq \kappa$.  Following \cite{Sturm_KE_LVS} we also call the function $(d,\kappa)$-convex if $u''\geq \kappa+\frac{1}{d}(u')^2$.  Similarly, given a function $u$ on a length space we call $u$ weakly $(d,\kappa)$-convex if for any two points there exists {\it some} unit speed minimizing geodesic $\gamma(t)$ connecting the points such that $u(\gamma(t))$ is $(d,\kappa)$-convex.  We call $u$ strongly $(d,\kappa)$-convex if for {\it every} minimizing geodesic $\gamma(t)$ we have that $u(\gamma(t))$ is $(d,\kappa)$-convex.  Now following \cite{LV_OptimalRicci},\cite{Sturm_GeomMetricMeasSpace}, \cite{Ambrosio_Ricci},\cite{Sturm_KE_LVS} we define the following:
\begin{definition}
Given a metric-measure space $(X,d,m)$ satisfying (\ref{e:mms_assumptions}) we say:
\begin{enumerate}
\item $X$ satisfies the curvature dimension $CD(d,\kappa)$ criteria if $Ent_m$ is weakly $(d,\kappa)$-convex on $P_2(X)$ with respect to the Wasserstein geometry.
\item $X$ satisfies the Riemannian curvature dimension $RCD(d,\kappa)$ criteria if $X$ is weakly Riemannian and $Ent_m$ is strongly $(d,\kappa)$-convex on $P_2(X)$ with respect to the Wasserstein geometry.
\end{enumerate}
\end{definition}
\begin{remark}
The criteria $CD(d,\kappa)$ was defined slightly differently in \cite{Sturm_GeomMetricMeasSpace}, and strictly speaking the notion defined is called the entropic curvature dimension condition in \cite{Sturm_KE_LVS}.  In this paper we will primarily be interested in the condition $RCD(d,\kappa)$, which is stronger than any of the other notions anyway.  See \cite{Ambrosio_BE_vs_LVS}, \cite{Sturm_KE_LVS} for more on that.
\end{remark}

\section{Variations of a Curve}\label{s:variation}

In this section we consider a complete metric space $(X,d)$ and are interested in finding replacements for the notion of a vector field along a continuous curve $\gamma$, we will call these objects variations of $\gamma$.  Once these are introduced and some basic structure is proven, we will define the notion of a parallel variation, which will of course take the place of a parallel translation invariant vector field along a curve.

We begin in Section \ref{ss:variation_point} by discussing some preliminaries, and in particular we will consider the space of point variations $\Sigma X$ on $X$.  In essence this is nothing more than the space of Cauchy sequences on $X$, however it will be useful to consider and describe a variety of structure on this space.  In particular, we will describe an equivalence relation on the space which will be particularly important later when describing variations of curves.  On a general metric space we will view $\Sigma X$ as a replacement for a tangent space.

In Section \ref{ss:variation_curve} we extend the notion of a variation of a point to a variation of a curve.  These variations have apriori little or no regularity, and can be viewed in the smooth case as corresponding, up to equivalence, to measurable vector fields along a curve.  We will again consider an equivalence classes of such variations, a point which will be important for the regularity theory of variations as well as for comparisons in the smooth case and seeing that up to equivalence the parallel variations may be identified with the parallel translation invariant vector fields along a nice curve.

In Section \ref{ss:parallel_variation_rect_curv} we introduce the notion of a parallel variation of a rectifiable curve.  There will be several structural theorems which we will prove about such variations, and we will end the subsection with a discussion of the smooth case.  In the smooth case it is important to extend this to more general continuous curves, however we will see how to avoid in this in Section \ref{s:parallel_gradient_nonsmooth}.

\subsection{Variations of a Point}\label{ss:variation_point}

A variation of a point is meant to replace the notion of a tangent vector at a point.  The natural replacement of such a notion on a metric space is a Cauchy sequence.  In the same manner that two vectors at a point are the same iff their induced directional derivatives act identically on all smooth functions, we will want to say two Cauchy sequences are equivalent if their induced actions on all lipschitz functions are equivalent.  More specifically, we start with the following:

\begin{definition}\label{d:variations_point}
If $X$ is a complete metric space, then we make the following definitions:
\begin{enumerate}
\item  We denote by $\Sigma X$ the space of all Cauchy sequences $\bv \equiv \{x_j\}$ on $X$ such that $x_j=x_\infty$ for at most a finite number of $j$.
\item  We let $\Sigma_x X\subseteq \Sigma X$ be the subset of Cauchy sequences $\bv\equiv \{x_j\}$ such that $x_j\to x$.
\item  If $f:X\to \dR$ is a Lipschitz function and $\bv\in \Sigma_x X$ then we denote the directional derivative by
\begin{align}
|d_\bv f|\equiv \limsup \frac{\big|f(x_j)-f(x)\big|}{d(x_j,x)}\, .
\end{align}
\item We say two Cauchy sequences $\bv,\bw\in \Sigma_x X$ are equivalent and write $\bv\sim \bw$ iff for every Lipschitz function $f:X\to \dR$ we have that $|d_\bv f|=|d_\bw f|$. 
\end{enumerate}
\end{definition}

Since we will generally only be interested in equivalence classes of variations, one could easily have defined $\Sigma X$ as the equivalence classes of such variations.  However, though this is possible it adds little to the discussion while making each proof more convoluted than is necessary, so we avoid this.

Let us remark on a few properties of $\Sigma X$.  Let $\bv=\{x_j\}$ and $\bw=\{y_j\}$ be Cauchy sequences and assume that $y_j\in \bv$ for all but a finite number of $j$, and conversely that $x_k\in \bw$ for all but a finite number of $k$.  Then we have that $\bv\sim \bw$.  In particular, up to equivalence the elements of $\Sigma X$ only depend on the asymptotic behavior of the sequence, and only up to rearrangement and repetition.  Note also that there is a canonical mapping
\begin{align}
\Sigma X\to X\, ,
\end{align}
given by $\bx\to x_\infty$, whose fiber above $x\in X$ is $\Sigma_x X$.  We call elements of $\Sigma_x X$ variations of $x$.  

The following gives us a basic characterization of when two variations are equivalent.    

\begin{lemma}\label{l:CS_equivalence_properties}
For a complete metric space $X$ and two variations $\bv=\{x_j\}, \bw=\{y_j\}\in \Sigma_x X$, we have that the following are equivalent:
\begin{enumerate}
\item The sequences $\bv\sim \bw$ define the same equivalence class.
\item There exists variations $\bv'\leq \bv$ and $\bw'\leq \bw$ with $\bv'\sim \bv$ and $\bw'\sim \bw$, such that
\begin{align}\label{e:CS_equiv2}
\lim_{j\to\infty}\frac{d(x'_{j},y'_{j})}{d(y'_j,x)}= 0\, .
\end{align}
\end{enumerate}
\end{lemma}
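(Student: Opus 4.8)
The plan is to prove the two-directional implication $(1)\Leftrightarrow(2)$ in Lemma \ref{l:CS_equivalence_properties}, where the substance is clearly in $(1)\implies(2)$ since $(2)\implies(1)$ is a short estimate.

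\textbf{The easy direction $(2)\implies(1)$.} Suppose $\bv'\leq\bv$, $\bw'\leq\bw$ with $\bv'\sim\bv$, $\bw'\sim\bw$, and the ratio condition (\ref{e:CS_equiv2}) holds. Since $\sim$ depends only on the directional derivative functionals $|d_\bullet f|$, it suffices to show $|d_{\bv'}f|=|d_{\bw'}f|$ for every Lipschitz $f$, and then transport along $\sim$. For a Lipschitz $f$ with Lipschitz constant $L$, write
\begin{align}
\frac{|f(x'_j)-f(x)|}{d(x'_j,x)}\leq \frac{|f(y'_j)-f(x)|}{d(x'_j,x)}+\frac{|f(x'_j)-f(y'_j)|}{d(x'_j,x)}\leq \frac{d(y'_j,x)}{d(x'_j,x)}\cdot\frac{|f(y'_j)-f(x)|}{d(y'_j,x)}+\frac{L\,d(x'_j,y'_j)}{d(x'_j,x)}\, .
\end{align}
By (\ref{e:CS_equiv2}) we have $d(x'_j,y'_j)/d(y'_j,x)\to 0$, hence $d(x'_j,x)/d(y'_j,x)\to 1$ and the last term tends to $0$. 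Taking $\limsup$ gives $|d_{\bv'}f|\leq|d_{\bw'}f|$, and the symmetric estimate gives the reverse inequality; this yields $(1)$.

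\textbf{The hard direction $(1)\implies(2)$.} This is where the real work lies. Assume $\bv\sim\bw$, i.e. $|d_\bv f|=|d_\bw f|$ for all Lipschitz $f$. We must manufacture subsequences $\bv'\leq\bv$, $\bw'\leq\bw$ — necessarily equivalent to $\bv$, $\bw$ respectively since passing to a subsequence preserves $\sim$ (a point worth recording as a preliminary observation, using that $|d_\bullet f|$ is defined via $\limsup$ and that $\bv'\leq\bv$ forces $|d_{\bv'}f|\leq|d_\bv f|$ while a diagonal argument across all Lipschitz $f$ recovers equality, or more simply: choose the subsequence achieving each relevant $\limsup$). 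The strategy for (\ref{e:CS_equiv2}) is to apply the hypothesis to the distance function itself: set $f(\cdot)\equiv d(\cdot,x)$, which is $1$-Lipschitz, so $|d_\bv f|=|d_\bw f|=1$ along appropriate subsequences. More delicately, for a chosen point $y_j$ in the sequence $\bw$ consider the $1$-Lipschitz function $f_j(\cdot)\equiv d(\cdot,y_j)$; since $\bv\sim\bw$ and $|d_\bw f_j|$ can be made close to $1$ (as $y_k\to x=y_\infty$), the equivalence forces $|d_\bv f_j|$ to be close to $1$ as well, which says: points $x_k$ far enough out in $\bv$ satisfy $d(x_k,y_j)\approx d(x_k,x)$, i.e. $y_j$ lies "almost on the ray" toward $x_k$. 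Interleaving these estimates as $j\to\infty$ — choosing for each $y'_j$ in $\bw'$ a matching $x'_j$ in $\bv'$ deep enough that $d(x'_j,y'_j)-\big|d(x'_j,x)-d(y'_j,x)\big|$ is controlled, and simultaneously that $d(y'_j,x)\to 0$ faster — produces the ratio $d(x'_j,y'_j)/d(y'_j,x)\to 0$. The main obstacle is precisely the bookkeeping of this interleaving: one needs a diagonal/greedy extraction that simultaneously (a) keeps both extracted sequences genuine subsequences of the originals, (b) drives the error terms coming from applying $\sim$ to the family $\{d(\cdot,y_j)\}$ to zero at a rate beating $d(y'_j,x)\to 0$, and (c) handles the possibility that $\bv$ and $\bw$ approach $x$ at very different speeds, which may require first reparametrizing by an appropriate monotone selection of indices so that $d(x'_j,x)$ and $d(y'_j,x)$ are comparable. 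I expect this extraction argument — making the quantitative "almost on the same ray" statements uniform enough to combine — to be the crux; the individual Lipschitz-function estimates are routine triangle-inequality manipulations.

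\textbf{Assembling the proof.} I would structure it as: (i) a short remark that $\sim$ is preserved under passing to subsequences and under rearrangement/repetition (already essentially noted after Definition \ref{d:variations_point}); (ii) prove $(2)\implies(1)$ by the two-line triangle estimate above; (iii) prove $(1)\implies(2)$ by the ray-function extraction, organized as a lemma-internal claim that for each $\varepsilon>0$ and each tail index $N$ there exist indices $k\geq N$, $j\geq N$ with $d(x_k,y_j)\leq \varepsilon\, d(y_j,x)$ and $d(y_j,x)\leq\varepsilon$, obtained by applying the hypothesis $|d_\bv d(\cdot,y_j)|=|d_\bw d(\cdot,y_j)|$ for suitable $j$; then iterate with $\varepsilon=1/n$ to build $\bv'$, $\bw'$. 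Finally conclude (\ref{e:CS_equiv2}) and observe $\bv'\sim\bv$, $\bw'\sim\bw$ by (i). Throughout I would only invoke the definition of $|d_\bullet f|$ and elementary metric-space facts, so no deeper results from the excerpt are needed.
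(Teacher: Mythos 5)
Your direction $(2)\implies(1)$ is correct and is essentially the paper's argument (triangle inequality plus $d(x'_j,x)/d(y'_j,x)\to 1$), so no issue there.

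The gap is in $(1)\implies(2)$, and it is in the choice of test functions. Your plan is to apply the hypothesis to the radial functions $f_j(\cdot)=d(\cdot,y_j)$ and to deduce from $|d_\bv f_j|\approx 1$ that "points $x_k$ far enough out satisfy $d(x_k,y_j)\approx d(x_k,x)$, i.e.\ $y_j$ lies almost on the ray toward $x_k$," and from this extract indices with $d(x_k,y_j)\leq\epsilon\,d(y_j,x)$. This fails for two reasons. First, $|d_\bv f_j|\approx 1$ only says $|d(x_k,y_j)-d(x,y_j)|\approx d(x_k,x)$ along a subsequence, i.e.\ $x_k$ is radially aligned with respect to $y_j$; it gives no upper bound on $d(x_k,y_j)$ in terms of $d(y_j,x)$. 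Second, and fatally, the family $\{d(\cdot,y_j)\}$ cannot separate inequivalent variations: take $X=\dR$, $x=0$, $\bv=\{2^{-j}\}$, $\bw=\{\tfrac32\cdot 2^{-j}\}$. Here every $y_j$ is at distance $\tfrac12\cdot 2^{-j}=\tfrac13 d(y_j,0)$ from the nearest point of $\bv$, so no extraction can achieve (\ref{e:CS_equiv2}) and the sequences are not equivalent; yet $|d_\bv f_j|=|d_\bw f_j|=1$ for every $j$ (for $k$ large, $|\,|t-y_j|-|0-y_j|\,|=|t|$ for $t$ between $0$ and $y_j$). So your intermediate claim cannot be established from these functions, and the subsequent "interleaving/extraction" step — which you acknowledge you have not carried out — has nothing to feed on.

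What is needed, and what the paper does, is a test function \emph{localized} at the putative bad points of $\bw$: assuming $\liminf d(x_{I(j)},y_j)/d(y_j,x)>\epsilon$ along a subsequence $y_{j_k}$ (where $I(j)$ picks the nearest $x$-index to $y_j$), one sets $f(y)=d(x,y)\varphi(y)$ with $\varphi$ the indicator of $\bigcup_k B_{r_k}(y_{j_k})$, $r_k=\tfrac{\epsilon}{10}d(y_{j_k},x)$. Then $f$ vanishes on all of $\bv$ but equals $d(x,y_{j_k})$ at $y_{j_k}$, giving $|d_\bv f|=0\neq 1=|d_\bw f|$ and contradicting $(1)$. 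Some version of this localization (a function supported on balls whose radii are a fixed small fraction of the distance to $x$) is unavoidable; the purely radial functions you propose are insensitive to exactly the separation that $(2)$ is measuring.
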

\begin{remark}
The second statement tells us that two Cauchy sequences satisfy $\bv\sim \by$ iff up to rearrangement they differ by an error which decays faster than they converge.
\end{remark}

\begin{proof}
Let us first prove $(2)\implies (1)$.  Specifically let $\{x\}$ and $\{y\}$ be two Cauchy sequences which satisfy $(2)$ and let $f$ be a Lipschitz function, then we have that 
\begin{align}
\bigg|\frac{\big|f(x'_{j})-f(x)\big|}{d(x'_j,x)}-\frac{\big|f(y'_j)-f(x)\big|}{d(y'_j,x)}\bigg| &\leq \frac{\big|f(x'_{j})-f(y'_j)\big|}{d(x'_{j},x)}+\frac{\big|f(y'_j)-f(x)\big|}{d(x'_{j},x)}\bigg(1-\frac{d(x'_{j},x)}{d(y'_j,x)}\bigg)\, ,\notag\\
&\leq 2\,Lip(f)\frac{d(x'_{j},y'_j)}{d(x'_{j},x)}\to 0\, ,
\end{align}
as claimed.



Let us prove $(1)\implies (2)$.  Define a mapping $I:\dN\to\dN$ where $I(k)$ is defined to be the integer $j$ which minimizes $\min_{j}d(x_k,y_j)$.  Let us first see that the variation $\bv'\equiv\{x_{I(j)}\}$ satisfies
\begin{align}\label{e:CS_equiv3}
\lim_{j\to\infty}\frac{d(x_{I(j)},y_{j})}{d(y_j,x)}= 0\, .
\end{align}
In particular, this implies that $\bv'\sim\bw$ and hence $\bv'\sim\bv$.  So assume (\ref{e:CS_equiv3}) fails, so that we can find a subsequence $y_{j_k}$ such that
\begin{align}
\liminf_{k\to\infty}\frac{d(x_{I(j_k)},y_{j_k})}{d(y_{j_k},x)}>\epsilon\, ,
\end{align}
for some $\epsilon>0$. Now let $r_k\equiv \frac{\epsilon}{10}d(y_{j_k},x)$, and let $\varphi$ be the $L^\infty$ function such that $\varphi(y)=1$ if for some $k$ we have that $y\in B_{r_k}(y_{j_k})$ and $\varphi\equiv 0$ otherwise.  Then we can consider the lipschitz function $f(y)\equiv d(x,y)\varphi(y)$.  Note that for every $j$ we have that 
\begin{align}
\frac{|f(x_{j})-f(x)|}{d(x_{j},x)}=0\, ,
\end{align}
while for every $k$ we have that
\begin{align}
\frac{|f(y_{j_k})-f(x)|}{d(y_{j_k},x)}=1\, ,
\end{align}
which is a contradiction to the equivalence of $\bv$ and $\bw$, and hence shows (\ref{e:CS_equiv3}) and proves the Lemma with $\bw'\equiv \bw$.
\end{proof}

\subsection{Variations of a Curve}\label{ss:variation_curve}

We want to introduce the notion of a variation of a curve.  Such a variation of a continuous curve $\gamma$ is our replacement for a vector field along $\gamma$, and is nothing more than the assignment to each point $\gamma(t)$ a Cauchy sequence in $X$ which converges to $\gamma(t)$.  That is, a variation $V$ is a section of the bundle $\Sigma X$ above $\gamma$.  Let us begin with the definition:

\begin{definition}\label{d:variation}
If $\gamma\in P(X)$ is a continuous curve, then a variation of $\gamma$ is a mapping $V:[0,\infty)\to \Sigma X$ such that $\lim V_j(t)=\gamma(t)$ for each $t$.  We denote by $\Sigma_\gamma X$ the collection of all variations of $\gamma$.  
\end{definition}

Equivalently, a variation $V$ is a sequence of mappings $\{V_j\}:[0,\infty)\to X$ such that $\lim V_j(t)=\gamma(t)$.  Notice from this point of view that {\it apriori} we are not even assuming the mappings $V_j$ are measurable.  In principle, we do not want to force too much regularity on the mappings $V_j$, for instance continuity, as this will not be the case for $s$-parallel variations.  On the other hand, it will be not so hard to see that for reasonable variations, for instance parallel variations, there will exist an {\it equivalent} variation which is continuous.


It will turn out to be a useful observation that for any partition $\bt\in \Delta[0,\infty)$ we have that $V(\bt)\in \Sigma_{\gamma(\bt)} X^{|\bt|}$.  That is, we may view $V(\bt)$ as a point variation in $X^{|\bt|}$.  With this in mind let us quickly address the correct notion of equivalence for variations:

\begin{definition}\label{d:variation_equivalence}
We say two variations $V,V'\in \Sigma_\gamma X$ of a continuous curve $\gamma$ are equivalent, and write $V\sim V'$ if for each $\bt\in \Delta[0,\infty)$ there exists $\bt\leq \bt'$ such that $V_j(\bt')\sim V'_j(\bt')$ as elements of $\Sigma_{\gamma(\bt')} X^{|\bt'|}$.
\end{definition}

It will be convenient when studying a variation to consider its pointwise length, namely we have the simple notation that if $V=\{V_j\}$ is a variation then we denote
\begin{align}
|V_j|(t)\equiv d(V_j(t),\gamma(t))\, .
\end{align}

Now let us consider the prototypical example:

\begin{example}\label{e:parallel_variation_smooth}
Let $X$ be a smooth manifold with $\gamma\in P(X)$ and let $v_j(t)$ a sequence of vector fields, not necessarily continuous, along $\gamma$ such that $v_j(t)\to 0$ pointwise.  Then we have that
\begin{align}
V_j(t)\equiv \exp_{\gamma(t)}\big(v_j(t)\big)\, ,
\end{align}
is a variation of $\gamma$.  In particular, if $v_j(t)$ are all parallel translation invariant vector fields then we might call $V$ a parallel variation of $\gamma$.  This notation will be made more rigorous and clear in the next subsection.
\end{example}

\subsection{Parallel Variations of Rectifiable Curves}\label{ss:parallel_variation_rect_curv}

In Section \ref{ss:variation_curve} we introduced the notion of a variation of a curve.  In this subsection we discuss the notion of a parallel variation $V$ over a rectifiable curve $\gamma\in P(X)$.  We will be particularly interested in applying this to piecewise geodesics, and in Section \ref{s:parallel_gradient_nonsmooth} we will see how the results of this Section can be used to help define the parallel gradient in the nonsmooth context.

The notion of a parallel variation is not completely well defined on a nonsmooth space, and there are various conditions, some more restrictive and some less, which could be used.  For the purposes of this paper we will want to stick with a definition which assumes as little as possible.  In fact, although there are many properties one might expect to hold for a parallel variation, there are only two conditions that {\it must} be satisfied for a parallel variation $V\equiv \{V_j\}$ of a rectifiable curve in order for the Theorems of the remainder of the paper to hold.  These are the following:
\begin{enumerate}[(A)]
\item (parallel norm)  For any $s,t\geq 0$ we have $$\lim \frac{\big|\, |V_j(t)|-|V_j(s)|\big|}{|V_j(s)|}=0\, .$$
\item (reduction in smooth case)  If $X$ is a smooth manifold then up to equivalence a parallel variation $V$ is equivalent to a variation of $\gamma$ induced by a parallel translation invariant vector field, see Example \ref{e:parallel_variation_smooth} and Theorem \ref{t:parallel_smooth_rect}.
\end{enumerate}

In Section \ref{sss:parallelogram} we discuss some elementary properties of parallelograms in $\dR^n$.  This gives us a geometric way of identifying parallel vectors in $\dR^n$.  Using the properties discussed there we will define the notion of a parallel translation invariant variation in Section \ref{sss:parallel_variation_rect_curv}.

\subsubsection{Parallelograms and Parallel Translation}\label{sss:parallelogram}

Given points $x,y\in \dR^n$ and a variation $v_x=\{x_i\}$ of $x$, it is clear that up to equivalence the only variation of $y$ which could reasonably be considered the parallel translation of $v_x$ is the variation $v_y= \{y_i\}\equiv\{x_i-x+y\}$.  One can also identify the point $y_i$ as the unique element of $\dR^n$ such that the quadruple $(x,y,y_i,x_i)$ is a parallelogram.  To generalize this to more complicated situations let us begin with the following very classical statement:

\begin{theorem}[Parallelogram Law]\label{t:parallelogram_law}
Let $\bx= (x_1,x_2,x_3,x_4)$ be a quadruple in $\dR^n$.  Then the quadruple forms a parallelogram if and only if for any $x_j$ we have that 
\begin{align}
e_j(\bx)\equiv 2|x_{j+1}-x_j|^2+2|x_j-x_{j-1}|^2 -|x_3-x_1|^2-|x_4-x_2|^2=0\, .
\end{align}
\end{theorem}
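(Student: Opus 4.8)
\textbf{Proof proposal for the Parallelogram Law (Theorem \ref{t:parallelogram_law}).}

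The plan is to prove both directions by a direct computation with the polarization identity in $\dR^n$. The key observation is that $e_j(\bx)$ is, up to sign, exactly the squared norm of a single vector, and that vector vanishes precisely when the quadruple closes up into a parallelogram. Specifically, for a quadruple $\bx = (x_1,x_2,x_3,x_4)$, set $a \equiv x_2 - x_1$, $b \equiv x_3 - x_2$, $c \equiv x_4 - x_3$, and note $x_1 - x_4 = -(a+b+c)$; these are the four edge vectors of the (closed) quadrilateral $\bx$, so they satisfy $a+b+c+(x_1-x_4)=0$ automatically. The quadruple is a parallelogram iff opposite sides are equal and antiparallel, i.e. iff $c = -a$ (equivalently $x_4 - x_3 = x_1 - x_2$), which is the same as saying $x_4 - x_1 = x_3 - x_2$.

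First I would expand $e_j(\bx)$ for, say, $j=2$. The two diagonals are $x_3 - x_1 = a+b$ and $x_4 - x_2 = b+c$, while $x_{j+1}-x_j = b$ and $x_j - x_{j-1} = a$. Thus
\begin{align*}
e_2(\bx) &= 2|b|^2 + 2|a|^2 - |a+b|^2 - |b+c|^2 \\
&= 2|a|^2 + 2|b|^2 - |a|^2 - 2\langle a,b\rangle - |b|^2 - |b|^2 - 2\langle b,c\rangle - |c|^2 \\
&= |a|^2 - |c|^2 - 2\langle a,b\rangle - 2\langle b,c\rangle \\
&= |a|^2 - |c|^2 - 2\langle a+c, b\rangle.
\end{align*}
Now use the constraint $a + b + c = x_4 - x_1$, which I will write as $b = (x_4-x_1) - (a+c)$; substituting is one route, but the cleaner route is to observe that one can complete the square: writing $w \equiv x_4 - x_1 - (x_3 - x_2) = a + c$ (this is $c - (-a)$, the ``failure to be a parallelogram'' vector, after using $b$ cancels), a short rearrangement gives $e_2(\bx) = -|w|^2 + (\text{terms that vanish by the closure of the quadrilateral})$. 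I would carry out this algebra carefully so that $e_2(\bx) = -|x_4 - x_1 - (x_3 - x_2)|^2$, or a fixed positive multiple thereof; the sign and any constant can be pinned down by testing on a degenerate example (e.g. $x_1=x_2=x_3=x_4$, and a genuine square). Once $e_j(\bx) = -\lambda\, |x_4 - x_1 - (x_3-x_2)|^2$ with $\lambda > 0$ is established, the equivalence $e_j(\bx) = 0 \iff x_4 - x_1 = x_3 - x_2 \iff \bx$ is a parallelogram is immediate.

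Finally, I would check that the expression $e_j(\bx)$ is independent of the choice of vertex $j$ (indices mod $4$): by the symmetry of the parallelogram condition under cyclic relabeling and the fact that the two diagonals $x_3-x_1$ and $x_4-x_2$ enter $e_j$ symmetrically for every $j$, one gets $e_1 = e_2 = e_3 = e_4$, so it suffices to verify the identity for a single $j$, which the computation above does. The only mildly delicate point — and the one I would be most careful about — is bookkeeping the signs and cross terms in the polarization expansion so that the right-hand side really collapses to (a negative multiple of) a single squared norm rather than an indefinite quadratic form; everything else is routine. This is purely an identity in Euclidean space, so no completeness or metric-space subtleties enter; the statement is invoked later (for quadruples $\bx = (x_1,x_2,y_i,x_i)$) precisely to characterize parallel translation via the vanishing of $e_j$.
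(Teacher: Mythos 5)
Your reduction of $e_2$ to $|a|^2-|c|^2-2\langle a+c,b\rangle$ is correct, but the step you flag as "routine algebra" — collapsing this to $-\lambda\,|a+c|^2$ with $a+c=x_4-x_1-(x_3-x_2)$ — is where the argument breaks: the identity is false. Comparing the two expressions, $e_2+|a+c|^2 = 2\langle a-b,\,a+c\rangle$, which is an indefinite cross term that does not vanish in general. Concretely, take $x_1=(0,0)$, $x_2=(1,0)$, $x_3=(2,-1)$, $x_4=(2,0)$: then $e_2=2\cdot 2+2\cdot 1-5-1=0$, yet $x_4-x_1=(2,0)\neq(1,-1)=x_3-x_2$, so the quadruple is not a parallelogram (and $-|a+c|^2=-2\neq 0$). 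The same example shows your claim that $e_1=e_2=e_3=e_4$ is also false ($e_1=4$ here); indeed Lemma \ref{l:parallelogram_basics} of the paper records $e_2-e_1=2\bigl(|x_3-x_2|^2-|x_4-x_1|^2\bigr)$, which is nonzero whenever the two transversal sides have different lengths. So no choice of sign or constant $\lambda$, pinned down on test cases, can rescue the single-$e_j$ identity, and the "for some $j$" version of the converse is genuinely false.

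The correct mechanism, which is what the paper's Lemma \ref{l:parallelogram_basics} is recording, is that it is the \emph{sum of opposite} error terms that is a perfect square: $e_1+e_3=e_2+e_4=2\,|(x_3-x_2)-(x_4-x_1)|^2$. (Your edge-vector bookkeeping makes this a two-line check: $e_1+e_3=2(|a|^2+|b|^2+|c|^2+|a+b+c|^2)-2(|a+b|^2+|b+c|^2)=2|a+c|^2$.) With that identity the theorem, read with "for every $j$", follows: if all $e_j$ vanish then $e_1+e_3=0$ forces $x_4-x_1=x_3-x_2$, i.e.\ a parallelogram; conversely, for a parallelogram one has $c=-a$, so the diagonals are $a+b$ and $b-a$ and each $e_j=0$ is exactly the classical parallelogram law $|a+b|^2+|b-a|^2=2|a|^2+2|b|^2$. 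The paper itself states the theorem as classical and omits the proof, so the substance you need to supply is precisely these identities — but organized around $e_j+e_{j+2}$, not a single $e_j$.
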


Hence at least in $\dR^n$ the numbers $e_j(\bx)$ give a quantitative measurement of how close $\bx$ is to a parallelogram.  To exploit this in the context of a metric space let us record the following, which gives a more complete understanding the error functions $e_j$:\\

\begin{lemma}\label{l:parallelogram_basics}
Let $\bx= (x_1,x_2,x_3,x_4)$ be a quadruple in $\dR^n$, and let $v_1\equiv x_4-x_1$, $v_2\equiv x_3-x_2$.  Then the following hold:
\begin{enumerate}
\item $e_1+e_2=4\langle v_1-v_2, x_2-x_1\rangle$.
\item $e_3+e_4=4\langle v_2-v_1, x_3-x_4\rangle$.
\item $e_2-e_1=2\big(|v_2|^2-|v_1|^2\big)$.
\item $e_3-e_4=2\big(|v_2|^2-|v_1|^2\big)$.
\item $e_1+e_3=e_2+e_4=2|v_2-v_1|^2$.
\end{enumerate}
\end{lemma}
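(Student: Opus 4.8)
The plan is to prove all five identities at once by a single explicit expansion, using that both the error functions $e_j$ and the claimed identities are invariant under translating the quadruple $\bx$. First I would normalize by replacing $\bx$ with $\bx - x_1$ (equivalently, pass to difference vectors), and adopt the notation of the statement together with $a \equiv x_2 - x_1$, so that $v_1 = x_4 - x_1$ and $v_2 = x_3 - x_2$. All edges and the two diagonals of $\bx$ are then immediately expressible in $a, v_1, v_2$: one has $x_2 - x_1 = a$, $x_1 - x_4 = -v_1$, $x_3 - x_2 = v_2$, $x_4 - x_3 = v_1 - v_2 - a$, $x_3 - x_1 = v_2 + a$, and $x_4 - x_2 = v_1 - a$.

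Next I would substitute these into the definition $e_j(\bx) = 2|x_{j+1}-x_j|^2 + 2|x_j - x_{j-1}|^2 - |x_3-x_1|^2 - |x_4-x_2|^2$ (cyclic indices) and expand each squared norm by polarization, $|u-w|^2 = |u|^2 - 2\langle u,w\rangle + |w|^2$. The one point requiring a little care is that the two diagonal terms are independent of $j$ and contribute $-|v_1|^2 - |v_2|^2 - 2|a|^2 - 2\langle v_2 - v_1, a\rangle$ to every $e_j$; the $2|a|^2$ there cancels against the $2|a|^2$ coming from the edges at the vertices $x_1$ and $x_2$. Writing $w \equiv v_1 - v_2$, the computation yields
\begin{align*}
e_1 &= |v_1|^2 - |v_2|^2 + 2\langle w, a\rangle, & e_2 &= |v_2|^2 - |v_1|^2 + 2\langle w, a\rangle, \\
e_3 &= |v_2|^2 - |v_1|^2 + 2|w|^2 - 2\langle w, a\rangle, & e_4 &= |v_1|^2 - |v_2|^2 + 2|w|^2 - 2\langle w, a\rangle.
\end{align*}

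From these four formulas the five identities are read off by inspection: $e_1 + e_2 = 4\langle w, a\rangle = 4\langle v_1 - v_2, x_2 - x_1\rangle$ gives (1); since $w - a = v_1 - v_2 - a = x_4 - x_3$, $e_3 + e_4 = 4|w|^2 - 4\langle w, a\rangle = 4\langle w, w - a\rangle = 4\langle v_2 - v_1, x_3 - x_4\rangle$ gives (2); $e_2 - e_1 = e_3 - e_4 = 2(|v_2|^2 - |v_1|^2)$ gives (3) and (4); and $e_1 + e_3 = e_2 + e_4 = 2|w|^2 = 2|v_2 - v_1|^2$ gives (5). There is no real obstacle here — the lemma is a bookkeeping statement — so the only thing to be careful about is the cyclic indexing and keeping track of which of the two fixed diagonals appears; this is exactly the expansion underlying the forward direction of Theorem \ref{t:parallelogram_law}. (A slightly slicker organization, which I might use instead, is to first observe the vertex-by-vertex identity $e_j = \bigl|(\text{sum of the two edges of }\bx\text{ at }x_j)\bigr|^2 - \bigl|(\text{the diagonal through }x_j)\bigr|^2$; then (1), (2) follow by expanding a single square, and (5) follows after noting $x_1 + x_3 = x_2 + x_4$ after recentering, with $(x_2+x_4)-(x_1+x_3) = v_1 - v_2$.)
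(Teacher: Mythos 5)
Your proof is correct — I have checked the four displayed formulas for $e_1,\dots,e_4$ and all five identities follow from them exactly as you say. The paper states this lemma without proof, and your direct expansion (after translating so that $x_1=0$ and writing everything in terms of $a$, $v_1$, $v_2$) is precisely the routine verification the author is leaving to the reader, so there is nothing to compare beyond noting that your argument supplies the omitted computation.
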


Now with the above in hand we will make sense of a parallelogram in a metric space $X$.  More completely, we would like to define a quantitative version of a parallelogram in a metric space.  Thus let us consider a quadruple $\bx=(x_1,x_2,x_3,x_4)$ in a metric space $X$, and let us denote the error functions by
\begin{align}
e_j(\bx)\equiv 2d(x_{j+1},x_j)^2+2d(x_j,x_{j-1})^2 -d(x_3,x_1)^2-d(x_4,x_2)^2\, ,
\end{align}
as well as the perimeter functions
\begin{align}
&P_v\equiv \max\{d(x_1,x_4),d(x_2,x_3)\}\, ,\notag\\
&P_x\equiv \max\{d(x_1,x_2),d(x_3,x_4)\}\, .
\end{align}

Then motivated by Lemma \ref{l:parallelogram_basics} and the constructions of the next subsection we make the following definition:

\begin{definition}\label{d:almost_parallelogram}
Let $\bx=(x_1,x_2,x_3,x_4)$ be a quadruple in a metric space $X$, then we say that $\bx$ is an $\epsilon$-parallelogram if the following inequalities hold:
\begin{enumerate}
\item $|e_1+e_2|\leq \epsilon\, P_x\cdot P_v$.
\item $|e_3+e_4|\leq \epsilon\, P_x\cdot P_v$.
\item $|e_2-e_1|\leq \epsilon\, P_v^2$.
\item $|e_4-e_3|\leq \epsilon\, P_v^2$.
\item $|e_1+e_3|=|e_2+e_4|\leq \epsilon\, P_v^2$.
\end{enumerate}
\end{definition}

\subsubsection{Parallel Variation along Rectifiable Curves}\label{sss:parallel_variation_rect_curv}

Now let us use the notion of an $\epsilon$-parallelogram to define a parallel variation along a finite length rectifiable curve.  Since we will mainly be interested in applying this to piecewise geodesics, one could easily restrict to this set as well.  First we recall that a curve $\gamma:[0,T]\to X$ with $T<\infty$ rectifiable if we have that
\begin{align}
\lim_{\bt\in \Delta[0,T]} \sum d(\gamma(t_j),\gamma(t_{j+1}))<\infty \, .
\end{align}
Since this is a monotone function on the directed set $\Delta[0,T]$ there therefore exists a limit, which we denote by $\ell(\gamma)$.  

Now we define the notion a parallel variation $V$ of a rectifiable curve $\gamma$.  Roughly, it is just the statement that the quadrilaterals $(\gamma(t),\gamma(t),V_j(t),V_j(s))$ are converging toward parallelograms.  Precisely:

\begin{definition}\label{d:parallel_variation_rect_curve}
Let $\gamma:[0,T]\to X$ with $T<\infty$ be a rectifiable curve with $V$ a variation of $\gamma$.  Then we say that $V$ is a parallel variation if for every $\epsilon>0$  there exists a partition $\bt'\in \Delta[0,T]$ such that for all partitions $\bt'\leq \bt$ and all $j\geq J(\bt,\epsilon)$ sufficiently large we have that $\big(\gamma(t_a),\gamma(t_{a+1}),V_j(t_{a+1}),V_j(t_a)\big)$ is a $\epsilon\cdot d(\gamma(t_a),\gamma(t_{a+1}))$-parallelogram.
\end{definition}

Now we spend the rest of this section exploring properties of a parallel variation.  First we see the following, which is almost immediate from the definition

\begin{lemma}
Let $\gamma:[0,T]\to X$ be a rectifiable curve with $V$ a parallel variation of $\gamma$.  Then if $V'$ is a variation of $\gamma$ which is equivalent to $V$, then $V'$ is also a parallel variation.
\end{lemma}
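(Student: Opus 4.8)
The statement to prove is: if $\gamma:[0,T]\to X$ is rectifiable, $V$ is a parallel variation of $\gamma$, and $V'\sim V$ is equivalent to $V$, then $V'$ is also a parallel variation. The plan is to unwind Definition~\ref{d:parallel_variation_rect_curve} for $V$ and Definition~\ref{d:variation_equivalence} for the equivalence $V\sim V'$, and show that the quadruples built from $V'$ differ from the corresponding quadruples built from $V$ by an error that is negligible on the scale $d(\gamma(t_a),\gamma(t_{a+1}))$ of the side lengths, so that the $\epsilon$-parallelogram inequalities of Definition~\ref{d:almost_parallelogram} are preserved up to a change in $\epsilon$.

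First I would fix $\epsilon>0$ and record what $V$ being a parallel variation gives: there is a partition $\bt_0\in\Delta[0,T]$ such that for all refinements $\bt\geq \bt_0$ and all $j\geq J(\bt,\epsilon)$, the quadruple $(\gamma(t_a),\gamma(t_{a+1}),V_j(t_{a+1}),V_j(t_a))$ is an $\epsilon\, d(\gamma(t_a),\gamma(t_{a+1}))$-parallelogram. Next I would invoke $V\sim V'$: given any partition, there is a further refinement $\bt$ on which $V_j(\bt)\sim V'_j(\bt)$ as point variations in $X^{|\bt|}$, and by Lemma~\ref{l:CS_equivalence_properties} this means (after passing to equivalent subsequences, which does not affect the conclusion since the $\epsilon$-parallelogram condition is asymptotic in $j$) that $\lim_j \frac{d(V_j(t_a),V'_j(t_a))}{|V_j|(t_a)}=0$ for each vertex $t_a\in\bt$, where I use the abbreviation $|V_j|(t)=d(V_j(t),\gamma(t))$. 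Here I would also use property (A) of a parallel variation — the parallel-norm condition $\lim_j \frac{\big|\,|V_j(t)|-|V_j(s)|\,\big|}{|V_j(s)|}=0$ — so that all the lengths $|V_j|(t_a)$ are comparable to a single scale, call it $r_j\to 0$; this lets me say $d(V_j(t_a),V'_j(t_a))=o(r_j)$ uniformly over the finitely many vertices of $\bt$.

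The core estimate is then purely metric: in the definition of the error functions $e_a$ and the perimeters $P_x,P_v$ attached to the quadruple $(\gamma(t_a),\gamma(t_{a+1}),V'_j(t_{a+1}),V'_j(t_a))$, each distance appearing differs from the corresponding distance in the $V$-quadruple by at most $d(V_j(t_\ast),V'_j(t_\ast))=o(r_j)$ via the triangle inequality; since each such distance is either $\leq \ell(\gamma)$ (for the $\gamma$-sides, fixed) or comparable to $r_j$ (for the $V$-sides and diagonals), the squared distances differ by $o(r_j^2 + r_j\,\ell(\gamma))$, and after summing the bounded number of terms in each $e_a$ one gets $|e_a(\text{$V'$-quad}) - e_a(\text{$V$-quad})|\leq o(1)\cdot \big(d(\gamma(t_a),\gamma(t_{a+1}))\cdot r_j + r_j^2\big)$, which is absorbed into the right-hand sides $\epsilon\,P_x P_v$ and $\epsilon\,P_v^2$ of Definition~\ref{d:almost_parallelogram} for $j$ large; similarly $P_x,P_v$ for the $V'$-quadruple are within $o(r_j)$ of those for the $V$-quadruple. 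Hence for each refinement $\bt$ (chosen first to witness the equivalence) and each $j$ large, $(\gamma(t_a),\gamma(t_{a+1}),V'_j(t_{a+1}),V'_j(t_a))$ is a $2\epsilon\, d(\gamma(t_a),\gamma(t_{a+1}))$-parallelogram, which (since $\epsilon$ was arbitrary) is exactly the assertion that $V'$ is a parallel variation.

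\textbf{Main obstacle.} The delicate point is uniformity: the equivalence $V\sim V'$ is stated partition-by-partition and only after passing to subsequences of the index $j$, so I must be careful that the subsequence extraction needed to apply Lemma~\ref{l:CS_equivalence_properties} is compatible with the "for all $j\geq J(\bt,\epsilon)$" quantifier in the parallel-variation definition. I expect this to be handled by noting that the $\epsilon$-parallelogram property for $V$ is a tail property in $j$ and is inherited by every subsequence, and conversely if $V'$ fails to be a parallel variation one extracts a bad subsequence of partitions-and-indices and derives a contradiction with the equivalence along that subsequence; phrasing the argument contrapositively may be cleaner than tracking the quantifiers directly. The other mild technical nuisance is ruling out degenerate scales $|V_j|(t_a)=0$, which is excluded because elements of $\Sigma_x X$ are genuine Cauchy sequences with $x_j=x_\infty$ only finitely often, together with property (A).
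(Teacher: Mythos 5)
Your overall strategy --- perturb each quadruple by the $o(r_j)$ vertex displacements supplied by the equivalence (via Lemma \ref{l:CS_equivalence_properties} and the parallel-norm property) and absorb the resulting change in the error functions into the right-hand sides of Definition \ref{d:almost_parallelogram} --- is the natural one, and your handling of the quantifier and degenerate-scale issues is fine; for what it is worth, the paper offers no proof at all, asserting the lemma is ``almost immediate from the definition.'' The gap is in your core estimate. Writing $d_a=d(\gamma(t_a),\gamma(t_{a+1}))$, your bound $|e_b(V'\text{-quad})-e_b(V\text{-quad})|\le o(1)(d_a r_j+r_j^2)$ is correct, and it is indeed absorbed by the right-hand side $\epsilon d_a\, P_xP_v\sim \epsilon d_a^2 r_j$ of conditions (1)--(2). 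It is \emph{not} absorbed by the right-hand side $\epsilon d_a\, P_v^2\sim\epsilon d_a r_j^2$ of conditions (3)--(5): that would require $o(1)\,d_a r_j\le\epsilon d_a r_j^2$, i.e. $o(1)\le\epsilon r_j$, which fails as $r_j\to 0$. Conditions (3)--(4) can be rescued by noting which distances actually appear: $e_2-e_1=2d(x_2,x_3)^2-2d(x_1,x_4)^2$ involves only the two short sides of length $\sim r_j$, so its true change is $o(r_j^2)\le\epsilon d_a r_j^2$ for $j$ large.

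Condition (5) is the genuine problem. There $e_1+e_3=2\sum_{\rm sides}d^2-2\sum_{\rm diag}d^2$ contains the terms $2d(x_3,x_4)^2-d(x_1,x_3)^2-d(x_2,x_4)^2$, each of size $d_a^2$, and an $o(r_j)$ displacement of $x_3=V_j(t_{a+1})$, $x_4=V_j(t_a)$ changes each of them by up to $o(r_j)\cdot d_a$, which dwarfs the allowed $\epsilon d_a r_j^2$. In $\dR^n$ this is saved by the identity $e_1+e_3=2|v_2-v_1|^2$ of Lemma \ref{l:parallelogram_basics}: the perturbation moves $v_1,v_2$ by $o(r_j)$, so $|v_2'-v_1'|\le|v_2-v_1|+o(r_j)\le\sqrt{\epsilon d_a/2}\,P_v+o(P_v)$ and the condition survives with $2\epsilon$ in place of $\epsilon$. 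That identity, i.e. the first-order cancellation between the variations of $d(x_3,\cdot)^2$ and $d(x_2,\cdot)^2$ at $x_4$, is exactly what is unavailable in a general metric space, and no triangle-inequality manipulation reproduces it: for instance, in the metric graph consisting of two edges of length $d_a$ joining the same pair of vertices, with $x_2,x_3$ near one vertex on different edges and $x_4$ near the other, moving $x_4$ a distance $\delta$ along the edge through $x_2$ increases $d(x_3,x_4)$ by $\delta$ while decreasing $d(x_2,x_4)$ by $\delta$, so $2d(x_3,x_4)^2-2d(x_2,x_4)^2$ shifts by roughly $8d_a\delta=o(r_j)\,d_a$. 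To close the argument you must either extract the needed cancellation for condition (5) from the hypothesis that the unperturbed quadruple is already an $\epsilon d_a$-parallelogram --- which your purely metric bookkeeping does not do --- or explain why condition (5) can be weakened for the purposes this lemma serves.
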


Hence, we see that the notion of a parallel variation is independent of equivalence class.  Now let us prove the parallel norm property from the introduction:

\begin{theorem}\label{t:parallel_norm_rect}
Let $\gamma:[0,T]\to X$ be a rectifiable curve with $V$ a parallel variation of $\gamma$.  Then for any $s,t\geq 0$ we have
\begin{align}
\lim \frac{\big|\, |V_j(t)|-|V_j(s)|\big|}{|V_j(s)|}=0\, .
\end{align}
\end{theorem}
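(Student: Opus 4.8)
The plan is to extract the parallel-norm estimate from the defining $\epsilon$-parallelogram condition by chaining the error-function bounds along a fine partition. The key algebraic input is Lemma \ref{l:parallelogram_basics}.3 in its metric form: for a quadruple $\bx = (x_1,x_2,x_3,x_4)$ the difference $e_2 - e_1$ controls the difference of squared ``side lengths'' $d(x_1,x_4)^2 - d(x_2,x_3)^2$, up to the error built into Definition \ref{d:almost_parallelogram}. Applied to the quadruple $\big(\gamma(t_a),\gamma(t_{a+1}),V_j(t_{a+1}),V_j(t_a)\big)$, whose ``vertical'' sides have lengths $|V_j(t_a)|$ and $|V_j(t_{a+1})|$ and whose ``horizontal'' sides have length roughly $d(\gamma(t_a),\gamma(t_{a+1}))$, condition (3) of Definition \ref{d:almost_parallelogram} together with the matching of the perimeter $P_v$ gives
\begin{align}
\big|\,|V_j(t_{a+1})|^2 - |V_j(t_a)|^2\,\big| \leq C\,\epsilon\, d(\gamma(t_a),\gamma(t_{a+1}))\cdot \big(\max_b |V_j(t_b)|\big)\, .
\end{align}

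First I would fix $s < t$ (without loss of generality) and, given $\epsilon > 0$, choose the partition $\bt'$ from Definition \ref{d:parallel_variation_rect_curve}, then refine it to a partition $\bt$ containing both $s$ and $t$, so that the quadruples between consecutive partition points are genuine $\epsilon\cdot d(\gamma(t_a),\gamma(t_{a+1}))$-parallelograms for all $j$ large. Summing the displayed bound over the partition points between $s$ and $t$ and using that $\sum_a d(\gamma(t_a),\gamma(t_{a+1})) \leq \ell(\gamma) < \infty$ by rectifiability, I obtain
\begin{align}
\big|\,|V_j(t)|^2 - |V_j(s)|^2\,\big| \leq C\,\epsilon\,\ell(\gamma)\cdot \big(\max_a |V_j(t_a)|\big)\, .
\end{align}
To convert this into the desired ratio estimate, the second step is to control $\max_a |V_j(t_a)|$ by a constant multiple of $|V_j(s)|$: iterating the per-edge estimate shows that for $j$ large, $|V_j(t_a)|^2 \leq |V_j(s)|^2 + C\epsilon\ell(\gamma)\max_b|V_j(t_b)|$ uniformly in $a$, so taking the maximum over $a$ and absorbing (using that $\max_a |V_j(t_a)| \to 0$, hence is eventually much smaller than $\ell(\gamma)$, or by a direct algebraic manipulation of the quadratic inequality) yields $\max_a |V_j(t_a)| \leq 2|V_j(s)|$ for all $j$ sufficiently large (depending on $\epsilon$ and $\bt$). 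Substituting back,
\begin{align}
\frac{\big|\,|V_j(t)|^2 - |V_j(s)|^2\,\big|}{|V_j(s)|^2} \leq 2C\,\epsilon\,\ell(\gamma)\cdot \frac{1}{|V_j(s)|} \cdot \frac{1}{|V_j(s)|}\cdot |V_j(s)|\, ,
\end{align}
which I must be careful with — the cleaner route is to write $|V_j(t)|^2 - |V_j(s)|^2 = (|V_j(t)| - |V_j(s)|)(|V_j(t)| + |V_j(s)|)$ and divide by $|V_j(s)|(|V_j(t)|+|V_j(s)|) \geq |V_j(s)|^2$, getting
\begin{align}
\frac{\big|\,|V_j(t)| - |V_j(s)|\,\big|}{|V_j(s)|} \leq \frac{\big|\,|V_j(t)|^2 - |V_j(s)|^2\,\big|}{|V_j(s)|^2} \leq 4C\,\epsilon\,\ell(\gamma)\, ,
\end{align}
for all $j$ large. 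Since $\epsilon$ was arbitrary and $\ell(\gamma)$ is a fixed finite constant, letting $j \to \infty$ and then $\epsilon \to 0$ gives $\limsup_j \big|\,|V_j(t)| - |V_j(s)|\,\big|/|V_j(s)| = 0$, which is the claim.

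The main obstacle I anticipate is the bookkeeping in the ``absorption'' step: one needs to show $\max_a |V_j(t_a)| \leq C|V_j(s)|$ uniformly in $j$ (for $j$ large relative to the chosen partition), and this requires either that $\epsilon \ell(\gamma)$ be small enough to close a quadratic inequality, or an a priori argument. A clean way to handle it is to first prove the estimate only for $\epsilon$ with $C\epsilon\ell(\gamma) \leq 1/4$, which is harmless since we will let $\epsilon \to 0$ anyway; then the quadratic inequality $M^2 \leq |V_j(s)|^2 + \tfrac14 M\,|V_j(s)|$ (where $M = \max_a|V_j(t_a)|$) forces $M \leq 2|V_j(s)|$ by the quadratic formula. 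A secondary subtlety is that the perimeter $P_v$ appearing in Definition \ref{d:almost_parallelogram} for the $a$-th quadruple is $\max\{|V_j(t_a)|, |V_j(t_{a+1})|\}$ rather than $|V_j(s)|$, so the per-edge bound genuinely involves local vertical lengths; this is exactly why the absorption step (bounding all local vertical lengths by a multiple of the single length $|V_j(s)|$) is needed before the telescoping sum cleanly produces the global statement. Apart from that, the argument is a routine telescoping estimate, and I would suppress the explicit constants in the write-up.
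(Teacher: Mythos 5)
Your argument is correct and follows essentially the same route as the paper's proof: the per-edge bound $\big|\,|V_j(t_{a+1})|^2-|V_j(t_a)|^2\big|\lesssim \epsilon\, d(\gamma(t_a),\gamma(t_{a+1}))\max\{|V_j(t_a)|,|V_j(t_{a+1})|\}^2$ extracted from condition (3) of Definition \ref{d:almost_parallelogram}, a telescoping sum controlled by $\ell(\gamma)$ via rectifiability, and an absorption step showing $\max_a|V_j(t_a)|\leq 2|V_j(s)|$ before dividing. The paper runs the absorption by comparing all vertices to $t_{max}$ rather than closing a quadratic inequality, but this is only a cosmetic difference.
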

\begin{proof}
Fix $\epsilon>0$ and let $\bt$ be a partition with $s,t\in \bt$ such that for $j$ sufficiently large we have, as in Definition \ref{d:parallel_variation_rect_curve}, that $\big(\gamma(t_a),\gamma(t_{a+1}),V_j(t_{a+1}),V_j(t_a)\big)$ are $\epsilon\cdot d(\gamma(t_a),\gamma(t_{a+1}))$-parallelograms.  In particular, using Definition \ref{d:almost_parallelogram} we have that
\begin{align}
\big| |V_j(t_{a+1})|^2-|V_j(t_a)|^2\big|\leq \epsilon \max\{|V_j(t_{a})|^2,|V_j(t_{a+1})|^2\}\,d(\gamma(t_{a}),\gamma(t_{a+1}))\, ,
\end{align}
which gives us that
\begin{align}\label{e:propertyA:1}
\big| |V_j(t_{a+1})|-|V_j(t_a)|\big|\leq \epsilon |V_j(t_{a+1})|\cdot d(\gamma(t_{a}),\gamma(t_{a+1}))\, .
\end{align}
Now first let $t_{max}\in\bt$ be such that $|V_j(t_{max})|\equiv\max\{|V_j(t_a)|\}$.  Then for any other element of the partition we have that
\begin{align}
\frac{\big| |V_j(t_{max})|-|V_j(t_a)|\big|}{|V_j(t_{max})|}\leq \epsilon \sum  d(\gamma(t_{a}),\gamma(t_{a+1}))\leq \epsilon \ell(\gamma)\, .
\end{align}
In particular, for $\epsilon$ sufficiently small and $j$ sufficiently large we have that for all $|V_j(t_a)|$ that
\begin{align}
\frac{1}{2}|V_j(t_{max})|< |V_j(t_a)| \leq |V_j(t_{max})|.
\end{align}
Now returning to (\ref{e:propertyA:1}) and summing between all elements of the partition between $s$ and $t$ we have the estimate
\begin{align}
\big| |V_j(t)|-|V_j(s)|\big|\leq \epsilon |V_j(t_{max})| \sum  d(\gamma(t_{a}),\gamma(t_{a+1}))\leq 2\epsilon |V_j(s)|\cdot \ell(\gamma)\, ,
\end{align}
or that
\begin{align}
\frac{\big| |V_j(t)|-|V_j(s)|\big|}{|V_j(s)|}\leq 2\epsilon\cdot \ell(\gamma)\, ,
\end{align}
for all $j$ sufficiently large.  Since $\epsilon>0$ was arbitrary, we have proved the result.

\end{proof}

Now we end this subsection by studying a parallel variation along a rectifiable curve in a smooth manifold:

\begin{theorem}\label{t:parallel_smooth_rect}
Let $(M^n,g)$ be a smooth manifold with $\gamma:[0,T]\to M$ a piecewise smooth curve.  Then
\begin{enumerate}
\item If $v_j\in T_{\gamma(0)}M$ is any sequence of tangent vectors with $v_j\to 0$, then the variation $V_j(t)\equiv \exp_{\gamma(t)}(P_t^{-1}v_j)$, where $P_t$ is the parallel translation map, is a parallel variation.
\item If $V$ is any parallel variation of $\gamma$ then there exists $v'_j\in T_{\gamma(0)}M$ such that the induced parallel variation $V'$ as above is equivalent to $V$.
\end{enumerate}
\end{theorem}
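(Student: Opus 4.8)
The plan is to reduce everything to a local computation in Riemannian normal coordinates, where the metric agrees with the Euclidean metric up to second order, and then to invoke the Euclidean Parallelogram Law (Theorem \ref{t:parallelogram_law}) together with the quantitative refinements recorded in Lemma \ref{l:parallelogram_basics}. Since a piecewise smooth curve is a finite concatenation of smooth arcs, and since being a parallel variation is a condition that can be verified by refining partitions, it suffices to treat a single smooth arc $\gamma:[0,T]\to M$; the parallel variations on the pieces then glue, because on the overlap vertices the parallel translation maps compose.

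For part (1): fix $\epsilon>0$. Because $\gamma$ is smooth and $P_t$ is smooth along $\gamma$, the variation $V_j(t)=\exp_{\gamma(t)}(P_t^{-1}v_j)$ satisfies $|V_j|(t)=|v_j|$ exactly (parallel translation is an isometry), and the ``velocity'' $\tfrac{d}{dt}V_j(t)$ is, to first order in $|v_j|$, the parallel translate of $v_j$ along $\gamma$. Thus for two partition points $t_a<t_{a+1}$ the quadruple $\bigl(\gamma(t_a),\gamma(t_{a+1}),V_j(t_{a+1}),V_j(t_a)\bigr)$, read in normal coordinates centered near $\gamma(t_a)$, is a quadruple in $\dR^n$ whose two ``side vectors'' $v_1=V_j(t_a)-\gamma(t_a)$ and $v_2=V_j(t_{a+1})-\gamma(t_{a+1})$ differ by $O(|v_j|\cdot d(\gamma(t_a),\gamma(t_{a+1}))^2)$ (the curvature term, from non-flatness of $M$) plus $O(|v_j|^2)$ (the error from $\exp$ versus the linear structure). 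Feeding these estimates into the five identities of Lemma \ref{l:parallelogram_basics}, each error function $e_k$ is controlled by $\epsilon$ times the appropriate product of perimeters once $\bt$ is fine enough and $j$ is large enough, which is exactly the $\epsilon\cdot d(\gamma(t_a),\gamma(t_{a+1}))$-parallelogram condition of Definition \ref{d:almost_parallelogram}. Hence $V$ is a parallel variation.

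For part (2): given an arbitrary parallel variation $V=\{V_j\}$ of $\gamma$, I would first extract from the parallelogram condition at the vertex $t=0$ a candidate sequence $v'_j\equiv \exp_{\gamma(0)}^{-1}(V_j(0))\in T_{\gamma(0)}M$ (well-defined for $j$ large since $|V_j|(0)\to 0$), then let $V'$ be the parallel variation it induces as in part (1). To see $V'\sim V$, fix a partition $\bt$ and refine it so that the parallelogram estimates for $V$ hold with a small $\epsilon$; in normal coordinates these estimates force, via Lemma \ref{l:parallelogram_basics}(3)--(5), that $|V_j(t_{a+1})|$ and the ``side-difference'' $v_2-v_1$ are both pinned down to within $o(|V_j(t_a)|)$ by the data at $t_a$, which propagates (by the same summation-over-partition argument as in the proof of Theorem \ref{t:parallel_norm_rect}) to the statement that $V_j(t_a)$ and $V'_j(t_a)$ differ by an amount decaying faster than $|V_j(t_a)|$ at every vertex. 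By Lemma \ref{l:CS_equivalence_properties} this gives $V_j(\bt)\sim V'_j(\bt)$ in $\Sigma_{\gamma(\bt)}X^{|\bt|}$ for a cofinal family of $\bt$, i.e. $V\sim V'$ in the sense of Definition \ref{d:variation_equivalence}.

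The main obstacle is the bookkeeping in part (2): the parallelogram condition only controls \emph{distances}, not vectors, so one must convert the five scalar inequalities of Definition \ref{d:almost_parallelogram} into a genuine comparison of the two variations up to the equivalence relation, and do so uniformly along the (infinitely many) partition points while absorbing the non-flatness of $M$. The clean way to organize this is to work throughout in a single normal coordinate chart covering the (compact) image of the arc, treat the curvature contribution as a quadratic-in-$d$ perturbation exactly as in part (1), and then reuse verbatim the telescoping estimate already carried out in the proof of Theorem \ref{t:parallel_norm_rect} to pass from vertex-by-vertex control to global control; the remaining point, that the faster-than-convergence error persists under the equivalence of Lemma \ref{l:CS_equivalence_properties}, is then formal.
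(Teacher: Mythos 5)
Your proposal is correct and follows essentially the same route as the paper: work in normal coordinates where the metric is Euclidean to second order, use Lemma \ref{l:parallelogram_basics} to convert the $\epsilon$-parallelogram conditions into a comparison $|v'_1-Pv'_2|\leq(\epsilon+Cr^2)\max\{|v'_1|,|v'_2|\}$ of the exponential preimages, verify (1) by checking the quadruples built from a parallel field are $C(|t-s|^2+|v|^2)$-parallelograms, and prove (2) by telescoping the vertex-by-vertex estimate via the norm control of Theorem \ref{t:parallel_norm_rect} to conclude equivalence. The only cosmetic difference is that the paper defines $v'_j(t)\equiv\exp_{\gamma(t)}^{-1}(V_j(t))$ at every time and compares it directly to the parallel field launched from $v'_j(0)$, which is the same bookkeeping you describe.
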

\begin{proof}
To begin let $x\in M$ and let us consider exponential coordinates centered at $x$ on the ball $B_{\iota_x}(x)$, where $\iota_x\leq \min\{\frac{1}{2}\inj(x)\}$ is such that $B_{\iota_x}(x)$ is a convex set.  Standard computations tell us that the metric $g_{ij}$ is such coordinates may be written
\begin{align}\label{e:parallel_variation:1}
&\big|g_{ij}(y)-\delta_{ij}\big| \leq C\, d(x,y)^2\, ,\notag\\
&\big|\partial_k g_{ij}\big|(y)\leq Cd(x,y)\, ,\notag\\
&\big|\partial_k\partial_\ell g_{ij}\big|(y)\leq C\, ,
\end{align}
where in general the $C$ depends on the full curvature tensor bounds of $M$ in the neighborhood of $x$.  Now using (\ref{e:parallel_variation:1}) let us observe the following properties.  First if $(x_1,x_2,x_3,x_4)\subseteq B_r(x)\subseteq B_{\iota_x}(x)$ then let us denote by $v_1\equiv x_4-x_1$ and $v_2\equiv x_3-x_2\in \dR^n$ the coordinate difference, and by $v'_1\in T_{x_1}M$, $v'_2\in T_{x_2}M$ the vector difference defined by $x_4\equiv \exp_{x_1}(v'_1)$, $x_3\equiv \exp_{x_2}(v'_2)$.  Then if $P:T_{x_2}M\to T_{x_1}M$ is the isometry defined by parallel translation along the unique geodesic connecting $x_1,x_2$, then using (\ref{e:parallel_variation:1}) we have the estimates
\begin{align}\label{e:parallel_variation:2}
&C^{-1}r^2 \max\{|v_1|,|v_2|\}\leq \big||v_1-v_2|-|v'_1-Pv'_2|\big|\leq Cr^2 \max\{|v_1|,|v_2|\}\, ,\notag\\
&1-Cr^2\leq \frac{|v_1|}{|v'_1|},\frac{|v_2|}{|v'_2|}\leq 1+Cr^2\, .
\end{align}
Note in particular that by using this and Lemma \ref{l:parallelogram_basics} this then tells us that if $\bx\subseteq B_r(x)\subseteq B_{\iota_x}(x)$ is an $\epsilon$-parallelogram, then we have the estimate
\begin{align}\label{e:parallel_variation:3}
|v'_1-Pv'_2|\leq (\epsilon+Cr^2)\max\{|v'_1|,|v'_2|\}\, .
\end{align}
 
Similarly, we have from (\ref{e:parallel_variation:1}) and (\ref{e:parallel_variation:2}) that if $v(t)$ is a parallel translation invariant vector field along $\gamma$ and $s,t\in [0,T]$, then the quadruple $\bx=(\gamma(s),\gamma(t),\exp_{\gamma(t)}(v),\exp_{\gamma(s)}(v))$ is a $C\big(|t-s|^2+|v|^2\big)$-parallelogram.  In particular, if $v_j\in T_{\gamma(0)}M$ with $|v_j|\to 0$ then for every partition $\bt$ we see that by letting $j$ be sufficiently large, namely such that $|v_j|<\max |t_{a+1}-t_a|$, then this gives us that the variation $V$ given by $V_j(t)=\exp_{\gamma(t)}(P_t^{-1}v_j)$ is a parallel variation, as claimed.

To prove the second claim let $V$ be a variation of $\gamma$ and let $v'_j(t)\in T_{\gamma(t)}M$ be defined by $\exp_{\gamma(t)}(v'_j)=V_j(t)$.  Using (\ref{e:parallel_variation:3}) we therefore see that for all partitions $\bt$ that if $j$ is sufficiently large then
\begin{align}\label{e:parallel_variation:4}
|v'_j(t_a)-Pv'_j(t_{a+1})|\leq C|t_{a+1}-t_a|^2\max\{|v'_j|(t_a),|v'_j|(t_{a+1})\}\, .
\end{align}
Now by Theorem \ref{t:parallel_norm_rect} we know that $\frac{|v'_j|(t)}{|v'_j|(0)}\to 1$, which in combination with the above tells us that if $v_j(t)\equiv P_t^{-1}v_j$ is the parallel translation invariant vector field along $\gamma$ with $v_j(0)=v'_j(0)$, then
\begin{align}
\lim_j \frac{\big|v_j(t)-v'_j(t)\big|}{|v_j|(t)}\to 0\, ,
\end{align}
which precisely proves that the variation $V'$ given by $V'_j(t)=\exp_{\gamma(t)}(v_j(t))$ is equivalent to $V$, as claimed.
\end{proof}

\section{The Parallel Gradient on Path Space}\label{s:parallel_gradient_nonsmooth}

One of the key purposes of the the structure of the previous Sections has been to build a geometric structure on path space $P(X)$.  Even in the smooth case this required some work since the geometry of interest is not compatible with the structure of the underlying curves.  In this Section we give a construction of the parallel gradients of a function on $P(X)$ that will work on an arbitrary metric-measure space $X$, without the need for a smooth structure.  We will show in Section \ref{ss:parallel_grad_comparison} that the constructions of this Section and those of Section \ref{ss:parallel_gradient} give rise to the same gradients on the path space of a smooth manifold.  These constructions will be used in Section \ref{s:bounded_ricci_mms} to define the notion of bounded Ricci curvature on a metric-measure space, and they will be generalized in Section \ref{s:bounded_ricci_analysis} to define the $H^1$-gradient and Ornstein-Uhlenbeck operators on path space.

In Section \ref{ss:parallel_slope} we introduce the parallel slope and discuss its basic properties.  As in the case of the cheeger gradient on a metric-measure space, one must first define the slope operator, and then take the lower semicontinuous refinement in order to define the gradient, which is done in Section \ref{ss:parallel_gradient_nonsmooth}.  On a smooth metric-measure space this definition of gradient is {\it apriori} quite different than the one given in Section \ref{ss:parallel_gradient}.  However, in Section \ref{ss:parallel_grad_comparison} we show the two definitions agree on a smooth metric-measure space.

\subsection{The Slope on Path Space}\label{ss:parallel_slope}

In this section we introduce the parallel slopes for a cylinder function on path space and prove some basic properties about them.  As was previously remarked, we will take the lower semicontinuous refinement in order to define the gradient.  On a sufficiently nice metric-measure space, for instance a smooth metric-measure space, the slope and gradient will coincide, but apriori this may not be the case and cannot be assumed.

We use heavily the notation and constructions of Section \ref{s:prelim_nonsmooth} and Section \ref{s:variation}.  We begin by introducing some terminology, in particular, it will be important to consider approximations of continuous curves by piecewise geodesics.  Precisely:

\begin{definition}
We make the following definitions:
\begin{enumerate}
\item Given a partition $\bt\in \Delta[0,\infty)$ we call a curve $\gamma:[0,t_{|\bt|}]\to X$ a $\bt$-geodesic if the restriction of $\gamma$ to each interval $[t_a,t_{a+1}]$ is a minimizing geodesic.  If $\gamma$ is a $\bt$-geodesic with respect to some partition we may just call $\gamma$ a piecewise geodesic.
\item Given a curve $\gamma\in P(X)$ and a partition $\bt\in \Delta[0,\infty)$ we call a curve $\gamma_{\bt}:[0,t_{|\bt|}]\to X$ a $\bt$-approximation of $\gamma$ if $\gamma_{\bt}$ is a $\bt$-geodesic with $\gamma_{\bt}(t_a)=\gamma(t_a)$ for each $t_a\in \bt$.
\end{enumerate}
\end{definition}

To define the parallel slopes we will need to define the directional derivative of a cylinder function with respect to a parallel variation.  We will be primarily interested in studying these along $\bt$-geodesics.  Specifically, let $F\in\Cyl(X)$ be a cylinder function with $\gamma\in P(X)$ a $\bt$-geodesic and $V=\{V_j\}$ a $s$-parallel variation of $\gamma$.  We define the (normalized) directional derivative of $F$ in the direction $V$ by the formula
\begin{align}\label{e:variation_directional_derivative}
|D_V F|(\gamma)\equiv \limsup_{j\to\infty} \frac{|F(\gamma)-F(V_j)|}{|V_j|(s)}\, .
\end{align}

Now we are in a position to use the above to define the parallel slopes of a cylinder function:

\begin{definition}
Let $F\in \Cyl(X)$ be a cylinder function on path space and $\gamma\in P(X)$ a continuous curve.  Then we define the parallel slope $|\partial_s F|:P(X)\to \dR$ by the formula
\begin{align}\label{e:parallel_slope_nonsmooth}
|\partial_s F|(\gamma)\equiv \limsup_{\bt\to\Delta}\{|D_{V_\bt} F|(\gamma_\bt):\gamma_\bt\text{ is a $\bt$-approximation of $\gamma$, and }V_\bt\text{ is a $s$-parallel variation of $\gamma_\bt$}\}.
\end{align}
\end{definition}

Recall in the above definition that $\Delta[0,\infty)$ is a directed set, and therefore we may consider limits with respect it, see Section \ref{sss:simplex}.  

Let us begin with a few simple estimates that will be useful throughout.  Among other things they tell us that a cylinder function $F$ is a lipschitz function with respect to any of the parallel slopes.

\begin{lemma}\label{l:slope_lipschitz}
Let $F=e_\bt^*u\in \Cyl(X)$ be a cylinder function with $\bt\in\Delta[0,T]$ and $u\in Lip_c(X^{|\bt|})$.  Then the following hold:
\begin{enumerate}
\item For all $0\leq s\leq T$ we have that $|\partial_s F|(\gamma)\leq\sqrt{|\bt|}\cdot |\Lip\, u|(\gamma(\bt))$. 
\item For all $s>T$ we have that $|\partial_s F| = 0$.
\item If $\bt = \{0\leq t_1<\ldots<t_{|\bt|}\leq T\}$ and if for some $k$ we have that $t_k< s<s'\leq t_{k+1}$, then we have that $|\partial_s F|(\gamma)\leq |\partial_{s'} F|$.
\end{enumerate}
\end{lemma}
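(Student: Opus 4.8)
The plan is to prove all three statements directly from the definition \eqref{e:parallel_slope_nonsmooth} of $|\partial_s F|$, using only the parallel norm property of Theorem \ref{t:parallel_norm_rect} and elementary manipulations of the $\limsup$ over the directed set $\Delta[0,\infty)$. Throughout, $\bt=\{t_1<\dots<t_{|\bt|}\}$ denotes the partition of $F$ as in the statement, and the running symbol in the $\limsup$ of \eqref{e:parallel_slope_nonsmooth} will be a refining partition $\bt'$; since the set of $\bt'\supseteq\bt$ is cofinal in $\Delta[0,\infty)$, it suffices to bound $|D_{V}F|(\gamma_{\bt'})$ uniformly over all such $\bt'$, all $\bt'$-approximations $\gamma_{\bt'}$, and all $s$-parallel variations $V=\{V_j\}$ of $\gamma_{\bt'}$. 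Note that for $\bt'\supseteq\bt$ one has $\gamma_{\bt'}(t_a)=\gamma(t_a)$, hence $F(\gamma_{\bt'})=u(\gamma(\bt))$ and $F(V_j)=u(V_j(t_1),\dots,V_j(t_{|\bt|}))$.

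For (1): by definition of the pointwise Lipschitz slope, and since $V_j(t_a)\to\gamma(t_a)$ for each $a$, we have $|u(\gamma(\bt))-u(V_j(\bt))|\le \big(|\Lip\,u|(\gamma(\bt))+o(1)\big)\,\big(\sum_a|V_j(t_a)|^2\big)^{1/2}$ as $j\to\infty$. Because $V$ is $s$-parallel, $|V_j(t_a)|=0$ for $t_a<s$; for $t_a\ge s$ the parallel norm property of Theorem \ref{t:parallel_norm_rect}, applied to the restriction of $V$ to $[s,t_{|\bt'|}]$ (a parallel variation of the rectifiable curve $\gamma_{\bt'}|_{[s,t_{|\bt'|}]}$), gives $|V_j(t_a)|=(1+o(1))|V_j|(s)$. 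Hence $\big(\sum_a|V_j(t_a)|^2\big)^{1/2}\le (1+o(1))\sqrt{|\bt|}\,|V_j|(s)$, and dividing by $|V_j|(s)$ and taking $\limsup_j$ yields $|D_VF|(\gamma_{\bt'})\le \sqrt{|\bt|}\,|\Lip\,u|(\gamma(\bt))$. As the right side is independent of $\bt',\gamma_{\bt'},V$, the $\limsup$ in \eqref{e:parallel_slope_nonsmooth} satisfies the same bound, proving (1). Statement (2) is immediate: if $s>T$ then every $t_a\le T<s$, so an $s$-parallel variation has $V_j(t_a)=\gamma_{\bt'}(t_a)$ for all $a$, whence $F(V_j)=F(\gamma_{\bt'})$ and $|D_VF|=\limsup_j 0/|V_j|(s)=0$.

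For (3): the point is that since $t_k<s<s'\le t_{k+1}$, the cylinder function $F$ does not see the interval $[s,s')$, so any $s$-parallel variation can be truncated to an $s'$-parallel one without changing the relevant quantities. Given $\bt'\supseteq\bt$, a $\bt'$-approximation $\gamma_{\bt'}$, and an $s$-parallel variation $V$ of $\gamma_{\bt'}$, define $V'$ by $V'_j(t)\equiv\gamma_{\bt'}(t)$ for $t<s'$ and $V'_j(t)\equiv V_j(t)$ for $t\ge s'$; one checks $V'$ is an $s'$-parallel variation of $\gamma_{\bt'}$, its restriction to $[s',t_{|\bt'|}]$ coinciding with that of $V$, which is parallel there. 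Since $t_a<s<s'$ for $a\le k$ and $t_a\ge t_{k+1}\ge s'$ for $a\ge k+1$, we get $V'_j(t_a)=\gamma(t_a)=V_j(t_a)$ for $a\le k$ and $V'_j(t_a)=V_j(t_a)$ for $a\ge k+1$, so $F(V'_j)=F(V_j)$; moreover $|V'_j|(s')=|V_j|(s')=(1+o(1))|V_j|(s)$ by the parallel norm property. Therefore $|D_{V'}F|(\gamma_{\bt'})=\limsup_j\frac{|F(\gamma_{\bt'})-F(V'_j)|}{|V'_j|(s')}=\limsup_j\frac{|F(\gamma_{\bt'})-F(V_j)|}{|V_j|(s)}=|D_VF|(\gamma_{\bt'})$, and taking suprema over variations and $\bt'$-approximations, then the $\limsup$ over $\bt'$, gives $|\partial_{s'}F|(\gamma)\ge|\partial_sF|(\gamma)$.

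The only mildly delicate step is (3): one must verify carefully that the truncated $V'$ genuinely meets Definition \ref{d:parallel_variation_rect_curve} of an $s'$-parallel variation of the piecewise-geodesic curve $\gamma_{\bt'}$ — in particular that the $\epsilon$-parallelogram condition on $[s',t_{|\bt'|}]$ is inherited from that of $V$ on $[s,t_{|\bt'|}]$ — and that the substitution $F(V'_j)=F(V_j)$ and the norm-ratio limit interact correctly with the double $\limsup$ in \eqref{e:parallel_slope_nonsmooth}. Parts (1) and (2) are routine once Theorem \ref{t:parallel_norm_rect} is in hand.
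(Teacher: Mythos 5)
Your proof is correct and follows essentially the same route as the paper: parts (1) and (2) come from the parallel norm property of Theorem \ref{t:parallel_norm_rect} applied at the partition times (the paper gets the slightly sharper constant $\sqrt{|\bt|-k}$ before discarding it), and part (3) is exactly the paper's truncation of an $s$-parallel variation to an $s'$-parallel one using that $F$ has no partition points in $[s,s')$. Your explicit remark that it suffices to work with partitions refining $\bt$, by cofinality in the directed set $\Delta[0,\infty)$, is a point the paper leaves implicit.
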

\begin{remark}
The first property tells us in particular that if $F$ is a cylinder function then $|\partial_s F|(\gamma)$ is uniformly bounded independent of $\gamma$ and $s$.
\end{remark}

\begin{proof}
Let $\gamma\in P(X)$ be a piecewise geodesic with $V\equiv \{V_j\}$ a $s$-parallel variation of $\gamma$.  Note now that if $t_k\in\bt$ is an element of the partition with $t_k<s$ then $V_j(t_k)=\gamma(t_k)$.  On the other hand, if $t_k\geq s$ then by Theorem \ref{t:parallel_norm_rect} we have that
\begin{align}
\frac{|V_j|(t_k)-|V_j|(s)}{|V_j|(s)}\stackrel{j\to\infty}{\longrightarrow} 0\, .
\end{align}
In particular, let $k$ be the largest integer such that $t_k<s$.  Then the above observation gives us that
\begin{align}
\lim_{j\to\infty} \frac{\sqrt{\sum_a d(\gamma(t_a),V_{j}(t_a))^2}}{d(\gamma(s),V_j(s))}\to \sqrt{|\bt|-k}\, ,
\end{align}
and in particular we have
\begin{align}
|D_V F|(\gamma)&\equiv \lim_{j\to\infty}\frac{|F(\gamma)-F(V_j)|}{|V_j|(s)}=\lim_{j\to\infty} \frac{|u(\gamma(\bt))-u(\gamma_j(\bt))|}{(|\bt|-k)^{-1/2}d_{X^{|\bt|}}(\gamma(\bt),V_j(\bt))}\leq \sqrt{|\bt|-k}\cdot |\Lip\, u|(\gamma(\bt))\, .
\end{align}
By estimating $|\bt|-k\leq |\bt|$ and observing that this holds for an arbitrary $\bt'$-geodesic we obtain the first claim, while if $T<s$ and so $k=|\bt|$ we obtain the second claim.  

To prove the third claim we require the following observation.  Let $V_s\equiv {V_j}$ be a $s$-parallel variation.  Then the it follows that the variation $V_{s'}\equiv \{V'_j\}$ defined by $V_j'(t)=0$ if $t< s'$ and $V'_j(t)=\gamma_j(t)$ if $t\geq s$ is a $s'$-parallel variation.  Further using Theorem \ref{t:parallel_norm_rect} once again we have that
\begin{align}
|D_{V_s} F|(\gamma) &=\limsup\frac{|F(\gamma)-F(V_j)|}{|V_j|(s)}\notag\\
&= \limsup\frac{|F(\gamma)-F(V'_j)|}{|V'_j|(s)} = |D_{V_{s'}} F|(\gamma)\, .
\end{align}
Since this held for every piecewise geodesic $\gamma$ and any $s$-parallel variation of $\gamma$ we immediately get $|\partial_s F|\leq |\partial_{s'} F|$ as claimed.

\end{proof}

Now that we've seen a few basic estimates on the parallel slope, and that in particular the cylinder functions are well behaved with respect to it, let us now discuss a few more refined properties.

\begin{theorem}\label{t:slope_properties}
The following properties hold for the parallel slopes:
\begin{enumerate}

\item (Convexity) If $F,G\in \Cyl(X)$ are cylinder functions, then we have the convexity estimates
\begin{align}
&|\partial_s (F+G)|(\gamma)\leq |\partial_s F|_{}(\gamma)+|\partial_s G|_{}(\gamma)\, .
\end{align}

\item (Strongly Local) If $F,G\in \Cyl(X)$ are cylinder functions with $F=\text{const}$ on a neighborhood of the support of $G$, then
\begin{align}
|\partial_s (F+G)|(\gamma) = |\partial_s F|(\gamma)+|\partial_s G|(\gamma)\, .
\end{align}

\item (Stability under Lipschitz Calculus) If $F\in \Cyl(X)$ is a cylinder function and $\phi:\dR\to\dR$ is lipschitz, then
\begin{align}
|\partial_s\big( \phi\circ F\big)|(\gamma)\leq ||\phi||_{Lip}\cdot |\partial_s F|(\gamma)\, .
\end{align}

\item (Strong Convexity) If $F,G,\chi\in \Cyl(X)$ are cylinder functions with $0\leq \chi\leq 1$, then we have the pointwise convexity estimate
\begin{align}
|\partial_s (\chi F+(1-\chi)G)|\leq \chi|\partial_s F|+(1-\chi)|\partial_s G|+|\partial_s\chi|\cdot|F-G|\, .
\end{align}
\end{enumerate}
\end{theorem}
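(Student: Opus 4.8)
All four statements reduce, via the defining formula for the parallel slope \eqref{e:parallel_slope_nonsmooth}, to elementary inequalities for the difference quotients $|D_V F|$, so the strategy is the same throughout: fix a partition $\bt$, a $\bt$-approximation $\gamma_\bt$ of $\gamma$, and an $s$-parallel variation $V=\{V_j\}$ of $\gamma_\bt$; prove the desired bound at the level of $|D_V(\cdot)|(\gamma_\bt)$ using the \emph{same} $\gamma_\bt$ and $V$ for every cylinder function appearing; then take the supremum over admissible $(\gamma_\bt,V)$ for fixed $\bt$ and finally the $\limsup$ over the directed set $\Delta[0,\infty)$, using repeatedly that $\limsup$ over a directed set is subadditive. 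For convexity (1): since $|V_j|(s)>0$, the triangle inequality applied inside the difference quotient gives $|D_V(F+G)|(\gamma_\bt)\le |D_V F|(\gamma_\bt)+|D_V G|(\gamma_\bt)$ directly from \eqref{e:variation_directional_derivative}; taking $\sup$ and $\limsup$ as above yields the claim. Property (3) is proved the same way starting from the pointwise bound $|\phi(F(\gamma_\bt))-\phi(F(V_j))|\le \|\phi\|_{\Lip}\,|F(\gamma_\bt)-F(V_j)|$, after subtracting the constant $\phi(0)$ so that $\phi\circ F\in\Cyl(X)$ (which does not change the slope).

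For strong locality (2) the only point requiring care is the reduction of the hypothesis, which is stated in $P(X)$, to a finite-dimensional statement: writing $F=e_\bt^{*}u$ and $G=e_\bt^{*}v$ on a common partition with $u,v\in\Lip_c(X^{|\bt|})$, the hypothesis becomes that $u$ is constant on an open $U\subseteq X^{|\bt|}$ with $\text{supp}\,v\subseteq U$. Given $\gamma\in P(X)$, one distinguishes two cases. If the evaluation of $\gamma$ at this partition lies in $U$, then every sufficiently fine approximation $\gamma_{\bt'}$ agrees with $\gamma$ there, and since $V_j\to\gamma_{\bt'}$ pointwise and $U$ is open one has $F(V_j)=F(\gamma_{\bt'})$ for $j$ large; hence $|D_{V}F|(\gamma_{\bt'})=0$ and $|D_{V}(F+G)|(\gamma_{\bt'})=|D_{V}G|(\gamma_{\bt'})$, which on passing to slopes gives $|\partial_s F|(\gamma)=0$ and $|\partial_s(F+G)|(\gamma)=|\partial_s G|(\gamma)$. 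If instead the evaluation lies outside $U$, it lies outside $\text{supp}\,v$, so $v$ vanishes on a neighborhood of it and the symmetric argument gives $|\partial_s G|(\gamma)=0$ and $|\partial_s(F+G)|(\gamma)=|\partial_s F|(\gamma)$. Combined with the $\le$ inequality from (1), this proves (2).

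Strong convexity (4) is the step I expect to be the main obstacle, since it combines a short algebraic identity with bookkeeping of the coefficients in the passage to the limit. Writing $\chi_0=\chi(\gamma_\bt)$, $\chi_1=\chi(V_j)$ and similarly $F_0,F_1,G_0,G_1$, one has the identity
\begin{align}
(\chi F+(1-\chi)G)(\gamma_\bt)-(\chi F+(1-\chi)G)(V_j)=\chi_0(F_0-F_1)+(1-\chi_0)(G_0-G_1)+(\chi_0-\chi_1)(F_1-G_1).
\end{align}
Dividing by $|V_j|(s)$, using $0\le\chi_0\le 1$, taking absolute values and $\limsup_j$, and using that $F_1\to F_0$ and $G_1\to G_0$ by continuity of cylinder functions along $V_j\to\gamma_\bt$, I obtain
\begin{align}
|D_{V}(\chi F+(1-\chi)G)|(\gamma_\bt)\le \chi_0\,|D_{V}F|(\gamma_\bt)+(1-\chi_0)\,|D_{V}G|(\gamma_\bt)+|D_{V}\chi|(\gamma_\bt)\cdot|F_0-G_0|.
\end{align}
Taking the supremum over $(\gamma_\bt,V)$ and then $\limsup$ over $\bt$: once $\bt$ refines the common partition through which $\chi,F,G$ factor, the quantities $\chi_0=\chi(\gamma_\bt)$ and $|F_0-G_0|=|F(\gamma_\bt)-G(\gamma_\bt)|$ equal $\chi(\gamma)$ and $|F(\gamma)-G(\gamma)|$, so these may be pulled out of the $\sup$ and $\limsup$, yielding the stated pointwise inequality. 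I note in passing that (1) can also be recovered as the special case of (4) with $\chi\equiv\tfrac12$ together with $2$-homogeneity of the slope, which is itself the special case $\phi(x)=2x$ of (3).
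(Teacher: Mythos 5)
Your proof is correct and follows essentially the same route as the paper: each property is established by the corresponding pointwise inequality (triangle inequality, the Lipschitz bound, and the same three-term algebraic identity for strong convexity) applied to the difference quotients $|D_V(\cdot)|$ along a common $s$-parallel variation of a piecewise geodesic, and then passed to the slope via subadditivity of $\sup$ and $\limsup$ over the directed set of partitions. Your two-case analysis for strong locality is just a more detailed writeup of the paper's observation that, for $j$ large, either $F$ is constant along the variation or $G$ vanishes along it.
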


\begin{proof}
Throughout we let  $\gamma\in P(X)$ be a piecewise geodesic with $V=\{\gamma_j\}$ a $s$-parallel variation of $\gamma$.  The first statement follows easily from the triangle inequality
\begin{align}
|D_V (F+G)|=\limsup\frac{\big|(F+G)(\gamma)-(F+G)(\gamma_j)\big|}{d(\gamma(s),\gamma_j(s))}&\leq \limsup\frac{\big|F(\gamma)-F(\gamma_j)\big|}{d(\gamma(s),\gamma_j(s))}+\limsup\frac{\big|G(\gamma)-G(\gamma_j)\big|}{d(\gamma(s),\gamma_j(s))}\notag\\
&\leq |\partial_s F|+|\partial_s G|\, .
\end{align}

To prove the second statement let us first note that there is no harm in assuming $F=e_{\bt}^*f$ and $G=e_{\bt}^*g$ for some common $\bt\in \Delta[0,T]$.  Of course, this can always be forced by taking a common refinement of partitions.  If $F$ is constant on a neighborhood of the support of $G$, then this implies that $f$ is constant on a neighborhood of the support of $g$.  Hence for each variation $V=\{\gamma_j\}$ we have that for $j$ sufficiently large that for each $t_a\in \bt$ that either $F(\gamma(t_a))=F(\gamma_{j}(t_a))=\text{const}$ or $G(\gamma(t_a))=G(\gamma_{j}(t_a))=0$.  The result then easily follows.\\

The third statement is proved in the same manner as the first with the pointwise estimate
\begin{align}
\frac{\big|\phi\circ F(\gamma)-\phi\circ F(\gamma_j)\big|}{d(\gamma(s),\gamma_j(s))}\leq ||\phi||_{Lip}\frac{\big|F(\gamma)-F(\gamma_j)\big|}{d(\gamma(s),\gamma_j(s))}\, .
\end{align}

The fifth statement is also proved in the same manner as the first with the estimate
\begin{align}
&\frac{\big|\big(\chi F+(1-\chi)G\big)(\gamma)-\big(\chi F+(1-\chi)G\big)(\gamma_j)\big|}{d(\gamma(s),\gamma_j(s))}\notag\\
&=\frac{\big|\chi(\gamma)\big(F(\gamma)-F(\gamma_j)\big)+(1-\chi(\gamma))\big(G(\gamma)-G(\gamma_j)\big)  +\big(\chi(\gamma)-\chi(\gamma_j)\big)\big(F(\gamma_j)-G(\gamma_j)\big)\big|}{d(\gamma(s),\gamma_j(s))}\notag\\
&\leq \frac{\big|\chi(\gamma)\big|\cdot\big|F(\gamma)-F(\gamma_j)\big|}{d(\gamma(s),\gamma_j(s))}  +\frac{\big|1-\chi(\gamma)\big|\cdot\big|G(\gamma)-G(\gamma_j)\big|}{d(\gamma(s),\gamma_j(s))}   +\frac{\big|\chi(\gamma)-\chi(\gamma_j)\big|\cdot\big|F(\gamma_j)-G(\gamma_j)\big|}{d(\gamma(s),\gamma_j(s))}\, .
\end{align}
\end{proof}

\subsection{The Parallel Gradients}\label{ss:parallel_gradient_nonsmooth}

Section \ref{ss:parallel_slope} was dedicated to defining the slope and proving some basic properties about it.  This will be done by taking the lower semicontinuous refinement of the slope, see \cite{Cheeger_DiffLipFun} and \cite{Ambrosio_Calculus_Ricci}.

In addition to the standard assumptions about the metric measure space $(X,d,m)$, in this Section we assume that $X$ is weakly Riemannian.  That is, the laplacian $\Delta_X$ of the metric measure space is linear, see Section \ref{ss:weakly_riemannian}.  As was shown this condition is equivalent to the existence of the diffusion measures $\Gamma_\mu$ on $P(X)$, where $\mu$ is a measure on $X$.  In particular, this condition is equivalent to the existence of the Wiener measures $\Gamma_x$ on $P(X)$.  Throughout this Section we will be pairing path space $P(X)$ with the diffusion measure $\Gamma_m$.  Let us begin by defining the upper parallel gradient.

\begin{definition}
Given $F\in L^2(P(X),\Gamma_m)$ we say that $G\in L^2(P(X),\Gamma_m)$ is a upper $s$-parallel gradient for $F$ if there exists a sequence of cylinder functions $F_i\in \Cyl(X)$ such that $F_i\to F$ strongly in $L^2(P(X))$ and $|\partial_s F_i|\rightharpoonup G'$ weakly in $L^2(P(X))$ with $G'\leq G$ $a.e.$
\end{definition}

We will want to define the parallel gradient of $F$ as the unique {\it minimal} upper parallel gradient for $F$.  First we must study some basic properties of the upper gradients, and in particular using Theorem \ref{t:slope_properties} we arrive at the following:

\begin{lemma}\label{l:upper_grad_prop1}
For $F\in L^2(P(X),\Gamma_m)$ and each $s\geq 0$ the following hold:
\begin{enumerate}
\item The collection of upper $s$-parallel gradients for $F$ is a closed convex subset of $L^2(P(X))$.
\item If $G_1$, $G_2$ are upper $s$-parallel gradients for $F$ then so is $G(x)\equiv \min\{G_1(x),G_2(x)\}$.
\end{enumerate}
\end{lemma}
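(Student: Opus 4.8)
The plan is to run the relaxation scheme of Cheeger \cite{Cheeger_DiffLipFun} and Ambrosio--Gigli--Savar\'e \cite{Ambrosio_Calculus_Ricci}, using the structural estimates for the parallel slope in Theorem \ref{t:slope_properties} together with the fact that $L^2(P(X),\Gamma_m)$ is a Hilbert space, hence reflexive: bounded sequences have weakly convergent subsequences, and if $a_i\le b_i$ a.e.\ with $a_i\rightharpoonup a$, $b_i\rightharpoonup b$, then $a\le b$ a.e.\ (test against nonnegative functions). I will use freely that the set $\mathcal{S}_s(F)$ of upper $s$-parallel gradients of $F$ is \emph{upward closed}, that $\Cyl(X)$ is a vector space, and that the parallel slope is $1$-homogeneous (immediate from (\ref{e:parallel_slope_nonsmooth})--(\ref{e:variation_directional_derivative})) in addition to being subadditive (Theorem \ref{t:slope_properties}.1), so that $|\partial_s(\sum_k\alpha_k F_k)|\le\sum_k\alpha_k|\partial_s F_k|$ for finitely many $\alpha_k\ge0$. \emph{Convexity of $\mathcal{S}_s(F)$} is then quick: given $G_1,G_2\in\mathcal{S}_s(F)$ with witnessing cylinder sequences $F_i^{(a)}\to F$ in $L^2$ and $|\partial_s F_i^{(a)}|\rightharpoonup G_a'\le G_a$, and $\lambda\in[0,1]$, set $K_i:=\lambda F_i^{(1)}+(1-\lambda)F_i^{(2)}\in\Cyl(X)$; then $K_i\to F$ in $L^2$ and $|\partial_s K_i|\le\lambda|\partial_s F_i^{(1)}|+(1-\lambda)|\partial_s F_i^{(2)}|$, whose right side is $L^2$-bounded and converges weakly to $\lambda G_1'+(1-\lambda)G_2'$, so a weakly convergent subsequence of $|\partial_s K_i|$ has limit $\le\lambda G_1+(1-\lambda)G_2$, giving $\lambda G_1+(1-\lambda)G_2\in\mathcal{S}_s(F)$.

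\emph{Closedness.} Let $G_n\to G$ in $L^2$ with $G_n\in\mathcal{S}_s(F)$; after passing to a subsequence, $\sup_n\|G_n\|_{L^2}<\infty$. For each $n$, with witness $F_{n,k}\to F$ in $L^2$ and $|\partial_s F_{n,k}|\rightharpoonup G_n'\le G_n$, apply Mazur's lemma to produce a finite convex combination $h_n\in\Cyl(X)$ of the $F_{n,k}$ with arbitrarily large indices such that $\|h_n-F\|_{L^2}\le 2^{-n}$, the corresponding convex combination $\phi_n$ of the slopes $|\partial_s F_{n,k}|$ satisfying $\|\phi_n-G_n'\|_{L^2}\le 2^{-n}$. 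By convexity of the slope, $|\partial_s h_n|\le\phi_n$, and $\|\phi_n\|_{L^2}\le\|G_n\|_{L^2}+2^{-n}$ is uniformly bounded, so $\{|\partial_s h_n|\}$ is bounded in $L^2$. Extract a subsequence with $|\partial_s h_{n_j}|\rightharpoonup H$ and $\phi_{n_j}\rightharpoonup\widetilde{G}$; since $\phi_{n_j}-G_{n_j}'\to 0$ strongly and $G_{n_j}'\le G_{n_j}\to G$ strongly, one gets $H\le\widetilde{G}\le G$, and with $h_{n_j}\to F$ this shows $G\in\mathcal{S}_s(F)$.

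\emph{The minimum.} Put $A:=\{G_1\le G_2\}$, so $\min(G_1,G_2)=\mathbf{1}_AG_1+\mathbf{1}_{A^c}G_2$. Using density of cylinder functions in $L^2(P(X),\Gamma_m)$ and clamping into $[0,1]$ (which keeps a cylinder function a cylinder function), choose $\chi_l\in\Cyl(X)$ with $0\le\chi_l\le1$ and $\chi_l\to\mathbf{1}_A$ in $L^2$ and a.e. For fixed $l$ put $H_i^{(l)}:=\chi_l F_i^{(1)}+(1-\chi_l)F_i^{(2)}\in\Cyl(X)$, so $H_i^{(l)}\to F$ in $L^2$, and by the strong convexity estimate Theorem \ref{t:slope_properties}.4,
\[
|\partial_s H_i^{(l)}|\;\le\;\chi_l\,|\partial_s F_i^{(1)}|+(1-\chi_l)\,|\partial_s F_i^{(2)}|+|\partial_s\chi_l|\cdot\big|F_i^{(1)}-F_i^{(2)}\big|\,.
\]
As $i\to\infty$ the last term tends to $0$ in $L^2$ ($|\partial_s\chi_l|$ is bounded by Lemma \ref{l:slope_lipschitz}, $F_i^{(1)}-F_i^{(2)}\to0$ in $L^2$) and the first two converge weakly to $\chi_lG_1'+(1-\chi_l)G_2'$; a weak subsequential limit of the bounded left side then shows $\chi_lG_1+(1-\chi_l)G_2\in\mathcal{S}_s(F)$. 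Finally $\chi_lG_1+(1-\chi_l)G_2-\min(G_1,G_2)=(\chi_l-\mathbf{1}_A)(G_1-G_2)\to0$ in $L^2$ by dominated convergence, so closedness (part $(1)$) gives $\min(G_1,G_2)\in\mathcal{S}_s(F)$.

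Everything reduces to Theorem \ref{t:slope_properties}, weak compactness in the Hilbert space $L^2(P(X),\Gamma_m)$, and the upward closedness of $\mathcal{S}_s(F)$. The one genuinely delicate point I expect is the closedness step: combining the Mazur/diagonal construction with a \emph{uniform} $L^2$ bound on the approximating slopes $|\partial_s h_n|$ so that a weak limit exists and can be compared to $G$ — the bound $\|\phi_n\|_{L^2}\le\|G_n\|_{L^2}+2^{-n}$ is what makes this work, and it is what forces the order (first convexity and the Mazur trick, then closedness, then the minimum).
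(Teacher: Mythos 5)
Your proof is correct and follows essentially the same route as the paper: convex combinations of the witnessing cylinder sequences together with subadditivity of the slope give convexity, a diagonalization gives closedness, and the strong-convexity estimate applied to cutoff approximations of $\mathbf{1}_{\{G_1\le G_2\}}$ followed by closedness gives the minimum. The only differences are in execution: your Mazur-based closedness argument (stated for strong convergence of the $G_n$, which suffices since the set is convex so norm-closed and weakly closed coincide) is a more careful version of the paper's one-line ``usual diagonalization procedure,'' and you approximate the indicator directly by clamped cylinder functions in $L^2$, where the paper first reduces to compact cylinder sets $e_\bt^*\cB$ and uses Lipschitz cutoffs of neighborhoods in $X^{|\bt|}$.
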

\begin{proof}
To prove the first statement let us remark on the convexity first.  Namely assume $G_1$ and $G_2$ are upper $s$-parallel gradients for $F$ and let $F_{1i}$, $F_{2i}\in \Cyl(X)$ be cylinder functions with 
\begin{align}
|\partial_s F_{1i}|\rightharpoonup G_1'\, ,\notag\\
|\partial_s F_{2i}|\rightharpoonup G_2'\, .
\end{align}
For any $0\leq t\leq 1$ if we consider the sequence $F_i\equiv tF_{1i}+(1-t)F_{2i}$, then we clearly still have that $F_i\to F$ strongly.  Further, by Theorem \ref{t:slope_properties} we have that
\begin{align}
|\partial_s F_i|\leq t|\partial_s F_{1i}|+(1-t)|\partial_s F_{2i}|\rightharpoonup tG'_1+(1-t)G'_2\leq tG_1+(1-t)G_2\, ,
\end{align}
which proves the convexity claim.  To prove that the set is closed let $G_j$ be a sequence of upper $s$-parallel gradients for $F$ with $G_j\rightharpoonup G$.  If $F_{ij}\in \Cyl(X)$ are cylinder functions with $F_{ij}\stackrel{i\to\infty}{\rightarrow} F$ and $|\partial_s F_{ij}|\stackrel{i\to\infty}{\rightharpoonup} G_j'\leq G_j$.  Now we can use the usual diagonalization procedure to pick a subsequence $F_i=F_{ij_i}$ such that $F_{i}\rightarrow F$ and $|\partial_s F_{i}|\rightharpoonup G'\leq G$ as claimed.

To prove the second claim we prove the following stronger statement.  Namely, let $\cB\subseteq P(X)$ be a Borel set, then if $G_1,G_2$ are upper $s$-parallel gradients for $F$, then so is $\chi_{\cB}G_1+\chi_{P(X)\setminus\cB}G_2$, where $\chi_\cB$ is the characteristic function of the set $\cB$.  To prove this it is enough, by the closed property of the set of upper gradients, to show this for cylinder sets $e_{\bt}^*\cB$, where $\cB\subseteq X^{|\bt|}$ is a compact subset.  That is, since the collection of cylinder sets is a an algebra of sets which generates the Borel $\sigma$-algebra on $P(X)$, if $\cB$ is a Borel subset of $P(X)$ then there exists compact Borel cylinder sets $\cB_j$ which converge in measure to $\cB$.  In particular, we have that $\chi_{\cB_j}G_1+\chi_{\cB_j^c}G_2\rightharpoonup \chi_{\cB}G_1+\chi_{\cB^c}G_2$, so it is enough to prove that for each $j$ that $\chi_{\cB_j}G_1+\chi_{\cB_j^c}G_2$ is an upper $s$-parallel gradient.

Now let $\cB$ be a compact cylinder set, and for each $\epsilon>0$ let $\chi_\epsilon$ be a lipschitz cutoff function on $X^{|\bt|}$ with $\chi_\epsilon\equiv 1$ on $\cB$ and $\chi_\epsilon\equiv 0$ outside of $B_\epsilon(\cB)$ and $Lip(\chi_\epsilon)<2\epsilon^{-1}$.  We will show for each $\epsilon>0$ that $\chi_\epsilon G_1+(1-\chi_\epsilon)G_2$ is a weak upper gradient for $F$.  Again, by using the closed property of the upper gradients this then proves the claim.  To see this let $F_{1i}\to F$ and $F_{2i}\to F$ such that $|\partial_s F_{1i}|\rightharpoonup G_1'\leq G_1$ and $|\partial_s F_{2i}|\rightharpoonup G_2'\leq G_2$.  Let us consider the sequence $F_i\equiv \chi_\epsilon F_{1i}+(1-\chi_\epsilon)F_{2i}$.  Clearly we have $F_i\to F$, and further by using Theorem \ref{t:slope_properties}.5 we have that 
\begin{align}
|\partial_s F_i|&\leq \chi_\epsilon \,|\partial_s F_{1i}|+(1-\chi)|\partial_s F_{2i}|+|\partial_s \chi_\epsilon|\cdot|F_{1i}-F_{2i}|\notag\\
&\rightharpoonup \chi_\epsilon G_1'+(1-\chi_\epsilon)G_2'\leq \chi_\epsilon G_1+(1-\chi_\epsilon)G_2\, ,
\end{align}
where we have used from Lemma \ref{l:slope_lipschitz} that $|\partial_s \chi_\epsilon|$ is uniformly bounded, which proves the claim.
\end{proof}

Let us write down the primary application of the above:

\begin{theorem}\label{t:minimal_upper_gradient}
Let $F\in L^2(P(X),\Gamma_m)$, then there exists a unique upper $s$-parallel gradient $G\in L^2(P(X),\Gamma_m)$ such that for any other upper $s$-parallel gradient $G'$ we have that $G\leq G'$ $a.e.$  Further, there exists a sequence of cylinder functions $F_i\to F$ such that $|\partial_s F_i|\to G$ strongly.
\end{theorem}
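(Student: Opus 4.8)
The statement is the standard "existence of a minimal object in a closed convex lattice" argument, now applied to the convex set of upper $s$-parallel gradients produced in Lemma \ref{l:upper_grad_prop1}. The plan is to mimic the classical Cheeger construction of the minimal weak upper gradient, so I would first reduce to a minimization problem in $L^2$, then use the lattice property to upgrade the minimizer to a pointwise-minimal element, and finally extract a recovery sequence by a diagonal argument.

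\emph{Step 1: the minimizer.} Let $\cG_F\subseteq L^2(P(X),\Gamma_m)$ denote the set of upper $s$-parallel gradients for $F$. If $\cG_F=\emptyset$ there is nothing to prove (or one declares $|\nabla_s F|=\infty$), so assume it is nonempty. By Lemma \ref{l:upper_grad_prop1}.(1), $\cG_F$ is a closed convex subset of the Hilbert space $L^2(P(X),\Gamma_m)$, hence there is a unique element $G\in\cG_F$ of minimal $L^2$-norm, obtained as the projection of $0$ onto $\cG_F$.

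\emph{Step 2: $G$ is pointwise minimal.} Suppose $G'\in\cG_F$ is another upper $s$-parallel gradient. By Lemma \ref{l:upper_grad_prop1}.(2), the pointwise minimum $\tilde G\equiv\min\{G,G'\}$ is again an element of $\cG_F$, and clearly $\|\tilde G\|_{L^2}\leq\|G\|_{L^2}$. By uniqueness of the norm-minimizer we get $\tilde G=G$ $\Gamma_m$-a.e., i.e. $G\leq G'$ $\Gamma_m$-a.e. This shows $G$ is the (necessarily unique) a.e.-minimal upper $s$-parallel gradient.

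\emph{Step 3: strong recovery sequence.} It remains to promote the definitional weak-convergence recovery to strong convergence, as in \cite{Cheeger_DiffLipFun},\cite{Ambrosio_Calculus_Ricci}. By definition of $G$ being an upper $s$-parallel gradient there is a sequence $F_i\in\Cyl(X)$ with $F_i\to F$ strongly in $L^2$ and $|\partial_s F_i|\rightharpoonup G'$ weakly with $G'\leq G$ a.e.; combined with Step 2, $G'=G$ a.e., so $|\partial_s F_i|\rightharpoonup G$ weakly, which forces $\liminf_i\||\partial_s F_i|\|_{L^2}\geq\|G\|_{L^2}$. On the other hand, by Mazur's lemma one can pass to convex combinations $\hat F_i\equiv\sum_k \lambda^{(i)}_k F_{k}$ (finite, indices $\geq i$) so that $|\partial_s \hat F_i|\leq\sum_k\lambda^{(i)}_k|\partial_s F_k|$ by Theorem \ref{t:slope_properties}.(1) and the right-hand sides converge strongly in $L^2$ to $G$; since $|\partial_s\hat F_i|$ is then an upper $s$-parallel gradient "witness" with $\limsup_i\||\partial_s\hat F_i|\|_{L^2}\leq\|G\|_{L^2}$, combining with the weak lower-semicontinuity $\liminf_i\||\partial_s\hat F_i|\|_{L^2}\geq\|G\|_{L^2}$ gives convergence of norms, and weak convergence plus norm convergence in a Hilbert space yields $|\partial_s\hat F_i|\to G$ strongly. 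Relabelling $\hat F_i$ as $F_i$ (note $\hat F_i\to F$ strongly as well, being convex combinations of terms each close to $F$) gives the desired strong recovery sequence.

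\emph{Main obstacle.} The delicate point is Step 3: the passage from the definitional weak convergence to strong convergence. One must be careful that the convex-combination trick is compatible with the subadditivity of the parallel slope on $\Cyl(X)$ (Theorem \ref{t:slope_properties}.(1)) and that the resulting combinations still approximate $F$ in $L^2$; the lattice argument of Step 2 is what guarantees the weak limit is exactly $G$ rather than something merely $\leq G$, which is precisely what makes the norm-convergence comparison go through. Everything else is the routine Hilbert-space functional analysis already used in \cite{Cheeger_DiffLipFun}.
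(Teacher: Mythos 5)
Your proposal is correct and follows essentially the same route as the paper: minimal-norm element of the closed convex set of upper gradients, the lattice property from Lemma \ref{l:upper_grad_prop1}.(2) to upgrade norm-minimality to pointwise a.e.\ minimality, and Mazur's lemma combined with the subadditivity of the slope (Theorem \ref{t:slope_properties}.(1)) and minimality of $G$ to sandwich the convex combinations and obtain strong convergence. The only difference is cosmetic — you phrase the last step via weak convergence plus norm convergence, while the paper sandwiches $|\partial_s F'_i|$ between $G$ and a strongly converging majorant — and both arguments are sound.
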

\begin{proof}
To prove the first statement we note that since the set of upper $s$-parallel gradients is a closed convex subset by Lemma \ref{l:upper_grad_prop1}, there exists an element $G$ with minimal $L^2$ norm.  If $G'$ is any other upper gradient, then since $\min\{G',G\}$ is also a upper gradient, we must have that $\min\{G,G'\}=G$ a.e. 

The second statement is a standard application of Mazur's theorem.  Namely, let $F_i\to F$ be any sequence such that $|\partial_s F_i|\rightharpoonup G$.  By Mazur's theorem we can find convex combinations $\sum_{j=i}^{N(i)} c^i_j |\partial_s F_j|$ with $c^i_j\stackrel{j\to\infty}{\longrightarrow}0$ which converge strongly to $G$.  In particular if we define the new sequence
\begin{align}
F'_i\equiv \sum_{j=i}^{N(i)} c^i_j F_j\, ,
\end{align}
then clearly $F'_i\to F$ strongly still, while by using Theorem \ref{t:slope_properties} we have that
\begin{align}
|\partial_s F'_i| \leq \sum_{j=i}^{N(i)} c^i_j |\partial_s F_j|\to G\, .
\end{align}
Because $G$ is minimal, we also have that $\liminf |\partial_s F'_i|\geq G$, and hence $|\partial_s F'_i|\to G$.
\end{proof}

Using the above we can make rigorous the notion of the parallel gradient.

\begin{definition}\label{d:parallel_gradient}
Given $F\in L^2(P(X),\Gamma_m)$ we define the $s$-parallel gradient $|\nabla_s F|$ as the unique minimal upper parallel gradient of $F$ as in Theorem \ref{t:minimal_upper_gradient}.
\end{definition}

Let us observe the following simple estimate:

\begin{lemma}\label{l:slope_grad_est}
For any cylinder function $F\in\Cyl(X)$ we have that the gradient satisfies the estimate
\begin{align}
|\nabla_s F|\leq |\partial_s F|\, ,
\end{align}
\end{lemma}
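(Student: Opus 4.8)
The plan is to exhibit $|\partial_s F|$ itself as an upper $s$-parallel gradient of $F$ and then invoke the minimality built into Definition \ref{d:parallel_gradient}. The natural choice of approximating sequence is simply the constant one.

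First I would check that $|\partial_s F|\in L^2(P(X),\Gamma_m)$, so that it is a legitimate candidate for an upper gradient. Writing $F=e_\bt^*u$ with $\bt\in\Delta[0,T]$ and $u\in Lip_c(X^{|\bt|})$, Lemma \ref{l:slope_lipschitz}.1 gives the pointwise bound $|\partial_s F|(\gamma)\leq \sqrt{|\bt|}\,|\Lip\, u|(\gamma(\bt))$. Since $|\Lip\, u|$ is a bounded, compactly supported function on $X^{|\bt|}$, the function $\sqrt{|\bt|}\,e_\bt^*|\Lip\, u|$ is again a cylinder function, hence lies in $L^2(P(X),\Gamma_m)$; therefore $|\partial_s F|$, being dominated by it, belongs to $L^2(P(X),\Gamma_m)$ as well.

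Next, taking $F_i\equiv F$ for all $i$, one trivially has $F_i\in\Cyl(X)$, $F_i\to F$ strongly in $L^2(P(X),\Gamma_m)$, and $|\partial_s F_i|=|\partial_s F|\to |\partial_s F|$ strongly, hence also weakly, in $L^2(P(X),\Gamma_m)$. By the definition of an upper $s$-parallel gradient (with $G=G'=|\partial_s F|$) this shows that $|\partial_s F|$ is an upper $s$-parallel gradient for $F$. Finally, since by Theorem \ref{t:minimal_upper_gradient} the gradient $|\nabla_s F|$ is the unique minimal upper $s$-parallel gradient of $F$, we conclude $|\nabla_s F|\leq |\partial_s F|$ $a.e.$

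The argument is essentially immediate once the integrability of $|\partial_s F|$ is established; the only point requiring any care — and hence the (minor) main obstacle — is verifying $|\partial_s F|\in L^2(P(X),\Gamma_m)$, which as above follows from the uniform bound in Lemma \ref{l:slope_lipschitz} together with the fact that cylinder functions built from compactly supported Lipschitz data are square integrable against the diffusion measure $\Gamma_m$.
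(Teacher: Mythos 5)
Your proof is correct and is essentially the paper's own argument: the paper simply observes that $|\partial_s F|$ is the upper $s$-parallel gradient obtained from the constant sequence $F_i\equiv F$, and concludes by minimality. Your additional verification that $|\partial_s F|\in L^2(P(X),\Gamma_m)$ via Lemma \ref{l:slope_lipschitz} is a reasonable bit of extra care but does not change the approach.
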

\begin{proof}
Note that $|\partial_s F|$ is the upper $s$-parallel gradient obtained by taking the constant sequence $F_i\equiv F$.  
\end{proof}

Now using Theorem \ref{t:slope_properties} we immediately have the following important properties of the parallel gradients:

\begin{theorem}\label{t:parallel_grad_properties}
The following properties hold for the $s$-parallel gradients:
\begin{enumerate}
\item (Convexity) Let $F,G\in L^2(P(X),\Gamma_m)$, then we have the convexity estimate
\begin{align}
|\nabla_s (F+G)|\leq |\nabla_s F|+|\nabla_s G|\, .
\end{align}

\item (Strongly Local) If $F,G\in L^2(P(X),\Gamma_m)$ with $F=\text{const}$ on a neighborhood of the support of $G$, then
\begin{align}
|\nabla_s (F+G)| = |\nabla_s F|+|\nabla_s G|\, .
\end{align}

\item (Stability under Lipschitz Calculus) If $F\in L^2(P(X),\Gamma_m)$ and $\phi:\dR\to\dR$ is lipschitz, then
\begin{align}
|\nabla_s\big( \phi\circ F\big)|\leq ||\phi||_{Lip}\cdot |\nabla_s F|\, .
\end{align}

\item (Leibnitz) If $F,G\in L^2(P(X),\Gamma_m)$ then we have the estimate
\begin{align}
|\nabla (F\cdot G)|_{H^1}\leq |F|\cdot |\nabla G|_{H^1}+ |G|\cdot |\nabla F|_{H^1}\, .
\end{align}
\end{enumerate}
\end{theorem}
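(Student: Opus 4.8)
The plan is to deduce Theorem \ref{t:parallel_grad_properties} directly from Theorem \ref{t:slope_properties} together with the approximation statement in Theorem \ref{t:minimal_upper_gradient}, exactly in the spirit of the analogous passage from slopes to Cheeger gradients in \cite{Cheeger_DiffLipFun}, \cite{Ambrosio_Calculus_Ricci}. In each case one fixes the functions $F,G\in L^2(P(X),\Gamma_m)$, invokes Theorem \ref{t:minimal_upper_gradient} to obtain sequences of cylinder functions $F_i\to F$ and $G_i\to G$ strongly in $L^2(P(X),\Gamma_m)$ with $|\partial_s F_i|\to |\nabla_s F|$ and $|\partial_s G_i|\to |\nabla_s G|$ strongly, applies the corresponding pointwise/slope inequality from Theorem \ref{t:slope_properties} to the cylinder approximants, and then passes to the limit, using that strong $L^2$ convergence on both sides is preserved under the operations in question. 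Finally one observes that the resulting limiting function is an upper $s$-parallel gradient of the limiting function (by definition of upper $s$-parallel gradient, realized via exactly these approximating sequences), so minimality of $|\nabla_s(\cdot)|$ gives the stated inequality.

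In more detail: for (1), set $H_i\equiv F_i+G_i\to F+G$ strongly; Theorem \ref{t:slope_properties}.1 gives $|\partial_s H_i|\le |\partial_s F_i|+|\partial_s G_i|\to |\nabla_s F|+|\nabla_s G|$ in $L^2$, so $|\nabla_s F|+|\nabla_s G|$ is (dominates) an upper $s$-parallel gradient of $F+G$, whence $|\nabla_s(F+G)|\le |\nabla_s F|+|\nabla_s G|$ by Definition \ref{d:parallel_gradient}. For (3), apply Theorem \ref{t:slope_properties}.3 to $\phi\circ F_i$, noting $\phi\circ F_i\to \phi\circ F$ strongly in $L^2$ since $\phi$ is Lipschitz, to get $|\partial_s(\phi\circ F_i)|\le \|\phi\|_{Lip}|\partial_s F_i|\to \|\phi\|_{Lip}|\nabla_s F|$. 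For (2), one needs a small extra point: given $F$ constant on a neighborhood of the support of $G$, choose the cylinder approximants so that $F_i$ is constant (equal to that constant) on a slightly smaller neighborhood still containing the support of $G_i$ for $i$ large — this is arranged by the standard cylinder-truncation used in Lemma \ref{l:upper_grad_prop1} — and then Theorem \ref{t:slope_properties}.2 gives $|\partial_s(F_i+G_i)|=|\partial_s F_i|+|\partial_s G_i|$, which passes to the limit to give the equality $|\nabla_s(F+G)|=|\nabla_s F|+|\nabla_s G|$ (the $\le$ direction is (1), and the $\ge$ direction follows from strong locality applied in reverse, i.e. writing $F=(F+G)+(-G)$ near $\mathrm{supp}\,G$, together with $|\nabla_s(-G)|=|\nabla_s G|$ from (3) with $\phi(t)=-t$).

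The Leibnitz estimate (4) for the $H^1$-gradient is slightly different in flavor, since the statement is about $|\nabla(\cdot)|_{H^1}$ rather than a fixed $s$-parallel gradient. Here I would first record $|\partial(F_iG_i)|_{H^1_0}\le |F_i||\partial G_i|_{H^1_0}+|G_i||\partial F_i|_{H^1_0}$ at the level of cylinder functions — this follows pointwise from the slope-level Leibnitz rule $|D_V(FG)|\le |F||D_V G|+|G||D_V F|$ (immediate from the product rule applied to $FG(\gamma)-FG(V_j)$ exactly as in the proof of Theorem \ref{t:slope_properties}) combined with the defining formula $|\partial F|^2_{H^1_0}=\int_0^\infty |\partial_s F|^2\,ds$ and Minkowski's inequality in $s$ — and then pass to the limit, choosing the $F_i,G_i$ uniformly bounded (by truncating; using stability under Lipschitz calculus this does not change the gradients) so that $|F_i||\partial G_i|_{H^1_0}\to |F||\nabla G|_{H^1_0}$ strongly in $L^2(P(X),\Gamma_m)$, and similarly for the other term; minimality of $|\nabla(FG)|_{H^1_0}$ then finishes it.

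The routine points — strong $L^2$ convergence of products and compositions under the boundedness reductions, and the fact that the limiting function genuinely qualifies as an upper $s$-parallel gradient — are all of the kind already handled in Lemma \ref{l:upper_grad_prop1} and Theorem \ref{t:minimal_upper_gradient}. The one step requiring genuine care, and the main obstacle, is (2): arranging that the cylinder approximants $F_i$ can be taken literally constant on a neighborhood of $\mathrm{supp}\,G_i$ while still converging to $F$ strongly, so that the strong-locality equality from Theorem \ref{t:slope_properties}.2 (which is an exact identity, not an inequality) survives the limiting process. This is where one must lean on the truncation construction for cylinder functions, and one should check that the reverse inequality in the claimed equality is obtained correctly rather than only the convexity bound from (1).
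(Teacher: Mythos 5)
The paper offers no written proof of this theorem: it simply asserts that the properties follow ``immediately'' from Theorem \ref{t:slope_properties} together with the construction of the gradient as a minimal upper gradient. Your strategy --- take the approximating sequences of cylinder functions supplied by Theorem \ref{t:minimal_upper_gradient} with $|\partial_s F_i|\to|\nabla_s F|$ strongly, apply the slope-level inequality, pass to the limit, and invoke minimality --- is exactly the intended route, and it is correct for (1), (3) and (4) modulo the routine points you already flag (truncation to keep $F_i,G_i$ uniformly bounded for the Leibnitz estimate, and replacing $\phi$ by $\phi-\phi(0)$ so that $\phi\circ F_i$ remains a compactly supported cylinder function).

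The one place your argument genuinely fails is the reverse inequality in (2). First, the decomposition $F=(F+G)+(-G)$ does not satisfy the hypothesis of strong locality, because $F+G$ is \emph{not} constant on a neighborhood of $\mathrm{supp}(-G)=\mathrm{supp}\,G$; and even setting that aside, convexity applied to this decomposition yields $|\nabla_s F|\leq|\nabla_s(F+G)|+|\nabla_s G|$, i.e.\ $|\nabla_s F|-|\nabla_s G|\leq|\nabla_s(F+G)|$, which is not the inequality you need. Second, the first half of your argument for (2) --- arranging the slope-level \emph{identity} $|\partial_s(F_i+G_i)|=|\partial_s F_i|+|\partial_s G_i|$ along the approximating sequence --- only exhibits $|\nabla_s F|+|\nabla_s G|$ as \emph{an} upper $s$-parallel gradient of $F+G$, hence only reproves the $\leq$ direction already contained in (1); an equality of slopes does not survive the lower semicontinuous refinement for free, since the refinement is an infimum over all approximating sequences. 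To obtain $\geq$ one must show that \emph{every} upper $s$-parallel gradient of $F+G$ dominates $|\nabla_s F|+|\nabla_s G|$ a.e., which requires an actual localization argument: for instance, using the cut-and-paste construction from the proof of Lemma \ref{l:upper_grad_prop1}.2 together with the cutoff functions $\chi_\epsilon$ there, one shows that the restriction of an upper gradient of $F+G$ to (a cylinder neighborhood of) $\mathrm{supp}\,G$ is an upper gradient of $G$, and its restriction to the complement is an upper gradient of $F$, and then adds the two. That step is the real content of (2) and is missing from your proposal.
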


\subsubsection{Expressions of the Parallel Gradients}\label{sss:expressions_parallel}

In this Section we discuss a notational convention of the paper.  We will often consider expressions of the form
\begin{align}
\int_0^\infty |\nabla_s F|\,d\mu(s)\, ,
\end{align}
or
\begin{align}
\int_0^\infty |\nabla_s F|^2\,d\mu(s)\, ,
\end{align}
where $\mu(s)$ is a measure on $\dR^+$.  By definition we mean this to be the lower semicontinuous refinement of the corresponding slope expressions.  That is, in the spirit of the previous section let us call $G$ a $(s,\mu)$-upper gradient for $F$ if there exists $F_j\to F$ with $\int_0^\infty |\partial_s F_j|\,d\mu(s)\rightharpoonup G'$ and such that $G\leq G'$.  Then following the verbatim techniques as the last Section we end up with the following:

\begin{theorem}
There exists for $F\in L^2(P(X),\Gamma_m)$ a unique $(s,\mu)$-upper gradient, which we denote by $\int |\nabla_s F|\,d\mu(s)$, such that for any other $(s,\mu)$-upper gradient $G$ we have that $\int_0^\infty |\nabla_s F|\,d\mu(s)\leq G$ a.e.
\end{theorem}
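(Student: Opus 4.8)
This theorem is a direct analogue of Theorem~\ref{t:minimal_upper_gradient}, with the single slope $|\partial_s F|$ replaced by the $\mu$-weighted expression $\Phi_\mu(F)\equiv \int_0^\infty |\partial_s F|\,d\mu(s)$ (or its quadratic version). The plan is to mimic the three-step structure of Section~\ref{ss:parallel_gradient_nonsmooth} verbatim: first establish that the set of $(s,\mu)$-upper gradients is closed and convex and stable under pointwise minima; then extract the minimal element; then upgrade weak convergence to strong convergence via Mazur's theorem.

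First I would record the analogue of Lemma~\ref{l:upper_grad_prop1}. Convexity: given upper gradients $G_1,G_2$ for $F$ with witnessing sequences $F_{1i},F_{2i}\to F$ and $\Phi_\mu(F_{1i})\rightharpoonup G_1'\le G_1$, $\Phi_\mu(F_{2i})\rightharpoonup G_2'\le G_2$, the sequence $F_i\equiv tF_{1i}+(1-t)F_{2i}$ still converges strongly to $F$, and by the convexity clause of Theorem~\ref{t:slope_properties} applied pointwise in $s$ and then integrated against $d\mu(s)$ we get $\Phi_\mu(F_i)\le t\Phi_\mu(F_{1i})+(1-t)\Phi_\mu(F_{2i})\rightharpoonup tG_1'+(1-t)G_2'$. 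Closedness follows from the same diagonalization argument used in Lemma~\ref{l:upper_grad_prop1}. For the pointwise minimum one again reduces, using closedness, to showing that $\chi_{\cB}G_1+\chi_{P(X)\setminus\cB}G_2$ is an upper gradient for compact cylinder sets $\cB$, using lipschitz cutoffs $\chi_\epsilon$ and the strong convexity clause (Theorem~\ref{t:slope_properties}.4) together with the uniform bound on $|\partial_s\chi_\epsilon|$ from Lemma~\ref{l:slope_lipschitz}; the weighted integral against $d\mu$ preserves these inequalities since the error term $|\partial_s\chi_\epsilon|\cdot|F_{1i}-F_{2i}|$ is controlled uniformly in $s$.

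Next, the minimal $(s,\mu)$-upper gradient exists exactly as in Theorem~\ref{t:minimal_upper_gradient}: among the closed convex set of $(s,\mu)$-upper gradients pick $G$ of minimal $L^2$-norm; for any other upper gradient $G'$, the pointwise minimum $\min\{G,G'\}$ is again an upper gradient, so minimality of $\|G\|_{L^2}$ forces $G\le G'$ a.e. Finally, to produce a sequence with $\Phi_\mu(F_i)\to G$ strongly, start with any $F_i\to F$ with $\Phi_\mu(F_i)\rightharpoonup G$, apply Mazur's theorem to pass to convex combinations $\sum_{j=i}^{N(i)}c_j^i\,\Phi_\mu(F_j)$ converging strongly to $G$, set $F_i'\equiv\sum_{j=i}^{N(i)}c_j^i F_j$, note $F_i'\to F$ strongly, and use the convexity of $\Phi_\mu$ to get $\Phi_\mu(F_i')\le\sum c_j^i\Phi_\mu(F_j)\to G$; combined with $\liminf\Phi_\mu(F_i')\ge G$ by minimality this gives strong convergence. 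The one place requiring a word of care — the ``main obstacle'', such as it is — is the measurability and $L^2$-integrability of $\Phi_\mu(F)=\int_0^\infty|\partial_s F|\,d\mu(s)$ itself for a cylinder function $F$: by Lemma~\ref{l:slope_lipschitz} the integrand $|\partial_s F|(\gamma)$ vanishes for $s>T$ when $F=e_\bt^*u$ with $\bt\subseteq[0,T]$ and is uniformly bounded, so assuming $\mu([0,T])<\infty$ the integral is finite and the joint measurability in $(s,\gamma)$ is inherited from that of the slopes (this is implicit in the expressions already written throughout the paper, cf.\ the remark in Section~\ref{sss:expressions_parallel}); the quadratic version $\int_0^\infty|\partial_s F|^2\,d\mu(s)$ is handled identically, using that $\phi\mapsto\phi^2$ preserves the convexity inequalities on the nonnegative slopes.
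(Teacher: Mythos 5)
Your proposal is correct and takes exactly the route the paper intends: the paper itself gives no separate argument for this theorem, stating only that it follows "following the verbatim techniques" of Section \ref{ss:parallel_gradient_nonsmooth}, and your write-up is precisely that adaptation (closed convex set of upper gradients, minimal-norm element, Mazur's theorem), with the added and welcome remark on integrability of the weighted slope.
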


The above defines for us the $(s,\mu)$-parallel gradient of $F$.  A verbatim statement may be made for $\int_0^\infty |\nabla_s F|^2\,d\mu(s)$.

\subsubsection{The $L_s$-Laplace operator}
We end this Section with the following construction of the Dirichlet energy associated with the parallel gradients and a listing of its basic properties, most of which are immediate from Theorem \ref{t:parallel_grad_properties}.  We begin with a definition:

\begin{definition}
Let $\cD(E_s)\subseteq L^2(P(X),\Gamma_m)$ be the subset of functions $F$ with upper $s$-parallel gradients.  We define the path space energy functional $E_s:\cD(E_s)\to \dR$ by
\begin{align}
E_s[F]\equiv \int_{P(X)} |\nabla_s F|^2\,d\Gamma_x\, .
\end{align}
\end{definition}

As an immediate consequence of Theorem \ref{t:parallel_grad_properties} we have the following:

\begin{theorem}\label{t:sDirichlet_form_pathspace}
The energy function $E_s:\cD(E_s)\to \dR$ is convex, nonnegative, $2$-homogeneous and lower-semicontinuous.  Furthermore, the following hold:
\begin{enumerate}
\item (closed) The functional $||F||_s\equiv \sqrt{||F||_{L^2}+E_s(F)}$ defines a complete norm on $\cD(E_s)$.
\item (stability under lipschitz calculus) Given a $1$-lipschitz function $\phi:\dR\to\dR$ with $\phi(0)=0$ we have that $E_s[\phi\circ F]\leq E_s[F]$.
\item (strongly local) If $F,G\in \cD(E)$ are such that $G$ is a constant on $\text{supp}(F)\subseteq P(X)$, then $E(F+G) = E(F)+E(G)$.
\end{enumerate}
\end{theorem}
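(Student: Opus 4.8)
The plan is to deduce each of the listed properties directly from the corresponding property of the $s$-parallel gradient $|\nabla_s\cdot|$ established in Theorem \ref{t:parallel_grad_properties}, exactly as one deduces the properties of the Cheeger energy from those of the Cheeger gradient in \cite{Cheeger_DiffLipFun},\cite{Ambrosio_Calculus_Ricci}. Throughout I work on $L^2(P(X),\Gamma_m)$, and I note first that $2$-homogeneity and nonnegativity are immediate from the definition $E_s[F]=\int_{P(X)}|\nabla_s F|^2\,d\Gamma_x$, since $|\nabla_s(cF)|=|c|\,|\nabla_s F|$ (which follows from the stability under Lipschitz calculus in Theorem \ref{t:parallel_grad_properties}, applied to $\phi(t)=ct$).

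For convexity and lower semicontinuity, the key mechanism is Theorem \ref{t:minimal_upper_gradient}: for $F\in\cD(E_s)$ the minimal upper $s$-parallel gradient $|\nabla_s F|$ is realized as a strong $L^2$ limit $|\partial_s F_i|\to|\nabla_s F|$ along a sequence of cylinder functions $F_i\to F$. First I would prove lower semicontinuity: if $F_j\to F$ in $L^2$ with $\liminf E_s[F_j]<\infty$, then after passing to a subsequence the gradients $|\nabla_s F_j|$ converge weakly to some $G$, and a diagonalization over the approximating cylinder sequences for each $F_j$ (as in the proof that the set of upper gradients is closed, Lemma \ref{l:upper_grad_prop1}) exhibits $G$ as an upper $s$-parallel gradient of $F$; hence $E_s[F]\le\int G^2\le\liminf\int|\nabla_s F_j|^2=\liminf E_s[F_j]$ by weak lower semicontinuity of the $L^2$-norm. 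Convexity of $E_s$ follows from the convexity estimate $|\nabla_s(tF+(1-t)G)|\le t|\nabla_s F|+(1-t)|\nabla_s G|$ of Theorem \ref{t:parallel_grad_properties}.1 together with convexity of $r\mapsto r^2$ on $[0,\infty)$: $E_s[tF+(1-t)G]\le\int\big(t|\nabla_s F|+(1-t)|\nabla_s G|\big)^2\le t E_s[F]+(1-t)E_s[G]$.

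For (1), closedness of the norm $\|F\|_s\equiv\sqrt{\|F\|_{L^2}+E_s[F]}$: if $F_j$ is $\|\cdot\|_s$-Cauchy then it is $L^2$-Cauchy, so $F_j\to F$ in $L^2$; the gradients $|\nabla_s F_j|$ are $L^2$-Cauchy hence converge strongly to some $G$, and the closedness argument above (again via diagonalization of cylinder approximations) shows $G$ is an upper $s$-parallel gradient of $F$, while minimality combined with the reverse inequality forces $|\nabla_s F|=G$; thus $F\in\cD(E_s)$ and $\|F_j-F\|_s\to0$. For (2), stability under Lipschitz calculus is immediate from Theorem \ref{t:parallel_grad_properties}.3 with $\|\phi\|_{Lip}\le1$, which gives $|\nabla_s(\phi\circ F)|\le|\nabla_s F|$ pointwise and hence $E_s[\phi\circ F]\le E_s[F]$ (the hypothesis $\phi(0)=0$ guarantees $\phi\circ F\in L^2$). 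For (3), strong locality follows by integrating the pointwise strong-locality identity of Theorem \ref{t:parallel_grad_properties}.2: if $G$ is constant on $\mathrm{supp}(F)$ then $|\nabla_s(F+G)|=|\nabla_s F|+|\nabla_s G|$ with the two summands having disjoint supports up to the set where both vanish, so squaring and integrating yields $E_s[F+G]=E_s[F]+E_s[G]$; here I would be slightly careful to phrase the locality so that the cross term genuinely vanishes, using that $|\nabla_s F|$ is supported (up to $\Gamma_m$-null sets) where $F$ is non-locally-constant.

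The main obstacle I anticipate is the diagonalization step underlying both lower semicontinuity and closedness: one must pass from sequences of cylinder functions approximating each $F_j$ to a single cylinder sequence approximating the limit $F$ whose parallel slopes have the right weak/strong limit. This is exactly the subtlety handled in Lemma \ref{l:upper_grad_prop1} and Theorem \ref{t:minimal_upper_gradient}, so the proof should consist largely of invoking those results with the appropriate choice of subsequences, but some care with the order of limits (strong $L^2$ convergence of functions versus weak convergence of gradients, and Mazur's lemma to upgrade weak to strong) is required to make the argument clean.
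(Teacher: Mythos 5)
Your proposal is correct and follows exactly the route the paper intends: the paper states this theorem as ``an immediate consequence of Theorem \ref{t:parallel_grad_properties}'' (and proves the parallel Theorem \ref{t:Dirichlet_form_pathspace} by the same two-line appeal to \cite{Cheeger_DiffLipFun} and the gradient properties), and your write-up simply supplies the standard details -- lower semicontinuity and closedness via the diagonalization/Mazur mechanism of Lemma \ref{l:upper_grad_prop1} and Theorem \ref{t:minimal_upper_gradient}, and the remaining items by integrating the pointwise estimates of Theorem \ref{t:parallel_grad_properties}. Nothing in your argument deviates from or adds to what the paper implicitly relies on.
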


Now we can apply standard techniques from the theory of convex functionals on Hilbert spaces \cite{Fukushima_DirichletForms} to build a laplace operator on path space associated to the $s$-parallel gradients.  Namely, we can define the subgradient of $E_s$ at a point in the usual manner
\begin{align}
\partial E_s [F]\equiv \{G:E_s(F)+\langle G, H-F\rangle\leq E_s(H)\text{ for every }H\in L^2(P(X),\Gamma_m)\}\, .
\end{align}

Theorem \ref{t:sDirichlet_form_pathspace} tells us, among other things, that the set $\partial E_s[F]$ is a convex subset, and thus there exists a unique element of minimal $L^2$ norm, which we define as the gradient $\nabla E_s[F]\equiv L_s F$.  Using the standard theory of convex functionals on a Hilbert space we therefor obtain the following:

\begin{theorem}
There exists a densely defined operator $L_{s}:\cD(L_{s})\subseteq L^2(P(X),\Gamma_m)\to\dR$ .
\end{theorem}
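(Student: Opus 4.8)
The statement to prove is the existence of a densely defined operator $L_s : \cD(L_s) \subseteq L^2(P(X),\Gamma_m) \to \dR$ (more precisely, into $L^2(P(X),\Gamma_m)$, realized as the minimal selection $\nabla E_s$ of the subgradient $\partial E_s$). The plan is to invoke the standard theory of lower semicontinuous convex functionals on a Hilbert space, exactly as was done for the Cheeger energy $E_X$ in Section~\ref{sss:cheeger_energy} and for the $s$-parallel energy $E_s$ in the smooth case. The only work is to check that the hypotheses of that machinery are in force, and these are precisely the contents of Theorem~\ref{t:sDirichlet_form_pathspace}: the functional $E_s$ is convex, nonnegative, $2$-homogeneous, and lower semicontinuous on the Hilbert space $L^2(P(X),\Gamma_m)$, and its domain $\cD(E_s)$ is dense (the cylinder functions $\Cyl(X) \subseteq \cD(E_s)$ are dense in $L^2(P(X),\Gamma_m)$ by the remarks of Section~\ref{sss:cylinder_function} and Lemma~\ref{l:slope_lipschitz}).

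First I would recall that for a proper, convex, lower semicontinuous functional $E_s$ on a Hilbert space $\cH = L^2(P(X),\Gamma_m)$, the subdifferential
\begin{align}
\partial E_s[F] \equiv \{G \in \cH : E_s[F] + \langle G, H - F\rangle \leq E_s[H] \text{ for all } H \in \cH\}
\end{align}
is, for each $F$, a closed convex (possibly empty) subset of $\cH$. The set $\cD(\partial E_s) \equiv \{F : \partial E_s[F] \neq \emptyset\}$ is dense in $\cD(E_s)$ — this follows from the resolvent/Moreau-Yosida approximation: the maps $(I + t\,\partial E_s)^{-1}$ are everywhere defined single-valued contractions, and $(I + t\,\partial E_s)^{-1}F \to F$ as $t \to 0$ for every $F \in \overline{\cD(E_s)} = \cH$, with $(I + t\,\partial E_s)^{-1}F \in \cD(\partial E_s)$. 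Hence $\cD(\partial E_s)$ is dense in $\cH$. Then, since $\partial E_s[F]$ is closed convex and nonempty for $F \in \cD(\partial E_s)$, it contains a unique element of minimal $L^2$-norm; defining $L_s F$ to be this element and $\cD(L_s) \equiv \cD(\partial E_s)$ gives the asserted densely defined operator. (One also gets for free that $\frac12 L_s$ generates a strongly continuous contraction semigroup $\cT_t$ on $L^2(P(X),\Gamma_m)$, the Ornstein-Uhlenbeck flow, though that is not needed for the bare statement here.)

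The steps, in order, would be: (i) quote Theorem~\ref{t:sDirichlet_form_pathspace} to obtain that $E_s$ is proper, convex, $2$-homogeneous, and lower semicontinuous on $L^2(P(X),\Gamma_m)$, hence extends from cylinder functions to a genuine lower semicontinuous functional with closed domain; (ii) note $\cD(E_s) \supseteq \Cyl(X)$ is dense in $L^2(P(X),\Gamma_m)$, so $E_s$ is proper and $\overline{\cD(E_s)} = L^2(P(X),\Gamma_m)$; (iii) invoke the Brezis/Komura theory of maximal monotone operators (or equivalently the general theory of convex functionals in \cite{Fukushima_DirichletForms}) to conclude $\partial E_s$ is a maximal monotone operator with dense domain; (iv) take the minimal-norm selection $\nabla E_s = L_s$, which is well defined on the dense set $\cD(L_s) = \cD(\partial E_s)$.

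Since this is really a direct application of established abstract theory once Theorem~\ref{t:sDirichlet_form_pathspace} is available, there is no substantial obstacle. The one point worth being careful about — and which I would expect to be the mild sticking point — is the density of $\cD(\partial E_s)$, i.e.\ making sure the Moreau-Yosida resolvent argument genuinely applies in this $L^2$ setting; but this is automatic because $L^2(P(X),\Gamma_m)$ is a Hilbert space, $E_s$ is proper lower semicontinuous convex, and $\overline{\cD(E_s)} = L^2(P(X),\Gamma_m)$, so the resolvents converge strongly to the identity on the whole space and land in $\cD(\partial E_s)$. Thus the proof is essentially a citation of \cite{Fukushima_DirichletForms} combined with Theorem~\ref{t:sDirichlet_form_pathspace}, analogous to the construction of $\Delta_X$ in Section~\ref{sss:cheeger_energy}.
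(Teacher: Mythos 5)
Your proposal is correct and is essentially the paper's own argument: the paper likewise defines $L_s$ as the unique minimal-norm element of the subgradient $\partial E_s[F]$ and deduces the theorem from Theorem \ref{t:sDirichlet_form_pathspace} together with a citation of the standard theory of convex, lower semicontinuous functionals on a Hilbert space. The only difference is that you spell out the density of $\cD(\partial E_s)$ via the Moreau--Yosida resolvents, which is precisely the content the paper leaves implicit in that citation.
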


Let us remark that if $F$ is $\cF^{s-}$-measurable, then we have $L_sF=0$.

\subsection{The Parallel Gradient on a Smooth Manifold}\label{ss:parallel_grad_comparison}

Let us now address the issue of the parallel gradient on a smooth metric-measure space.  In particular, we will see that the $s$-parallel gradient as defined in Section \ref{ss:parallel_gradient} and as defined in this Section agree.  For the sake of this Section let us denote by $|\nabla_s F|^*$ the $s$-parallel gradient as defined in Section \ref{ss:parallel_gradient}.  Then, we prove the following:

\begin{theorem}\label{t:parallel_gradient_smooth_vs_nonsmooth}
Let $(X,d,m)\equiv (M^n,g,e^{-f}dv_g)$ be a smooth metric-measure space with $F\in L^2(P(M),\Gamma_m)$.  Then for a.e. $\gamma\in P(M)$ we have that $|\nabla F|(\gamma)\equiv |\nabla F|^*(\gamma)$.
\end{theorem}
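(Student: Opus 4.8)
The plan is to reduce the statement to an equality of the two notions of parallel \emph{slope} on cylinder functions, and then to promote that equality to the level of gradients by the fact that both $|\nabla_s F|$ and $|\nabla_s F|^*$ are defined as minimal upper gradients (lower semicontinuous refinements) arising from the \emph{same} slope on the \emph{same} dense class $\Cyl(M)\subseteq L^2(P(M),\Gamma_m)$. So the heart of the matter is to prove: for $F=e_\bt^*u$ a smooth cylinder function, $|\partial_s F|(\gamma) = |\nabla_s F|^*(\gamma)$ for $\Gamma_m$-a.e. $\gamma\in P(M)$, where $|\partial_s F|$ is the geometric slope of Definition in Section \ref{ss:parallel_slope} and $|\nabla_s F|^*$ is given by (\ref{e:parallel_slope_smooth}), i.e.\ the supremum of $|D_V F|$ over stochastic-$s$-parallel vector fields $V$ with $|V|(s)=1$, which by Proposition \ref{p:parallel_gradient} equals $\big|\sum_{t_j\geq s} P_{t_j}\nabla_j u\big|$.

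First I would fix a sequence of increasingly dense partitions $\bt^k$ refining $\bt$; for $\Gamma_m$-a.e.\ $\gamma$ the approximate horizontal lifts $H^{\bt^k}$ converge (in measure, hence pointwise along a subsequence) to the stochastic horizontal lift $H$, by the results recalled in Section \ref{ss:stoc_par_trans}. Fix such a $\gamma$ and let $\gamma_{\bt^k}$ be the $\bt^k$-approximation. The one direction, $|\partial_s F|(\gamma)\le |\nabla_s F|^*(\gamma)$, comes from Theorem \ref{t:parallel_smooth_rect}: along the piecewise geodesic $\gamma_{\bt^k}$, any $s$-parallel variation $V$ is, up to equivalence, induced by an $s$-parallel translation invariant vector field (classical parallel translation along $\gamma_{\bt^k}$); computing $|D_V F|(\gamma_{\bt^k})$ on the finite-dimensional chart where $F$ lives and letting $k\to\infty$, the classical parallel transport along $\gamma_{\bt^k}$ converges to the stochastic parallel transport $P_{t_j}$ along $\gamma$ (this is essentially the content of the $H^{\bt^k}\to H$ convergence), so $\limsup_k |D_V F|(\gamma_{\bt^k})\le \big|\sum_{t_j\geq s}P_{t_j}\nabla_j u\big| = |\nabla_s F|^*(\gamma)$. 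For the reverse inequality, I would run the same chart computation backwards: given the stochastic $s$-parallel field realizing $|\nabla_s F|^*(\gamma)$, its restriction to the vertices $\bt^k$ (classically parallel-transported along $\gamma_{\bt^k}$) provides, for each $k$, an $s$-parallel variation of $\gamma_{\bt^k}$ whose directional derivative converges to $|\nabla_s F|^*(\gamma)$; hence $\limsup_{\bt\to\Delta}$ over such variations is at least $|\nabla_s F|^*(\gamma)$. Care is needed with the normalization ($|V|(s)=1$ versus dividing by $|V_j|(s)$) and with the parallel-norm property Theorem \ref{t:parallel_norm_rect}, which guarantees the denominators behave correctly along the approximations; these are routine once the convergence of parallel transports is in hand. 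This establishes the pointwise equality of slopes on cylinder functions.

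Next, observe that the map $F\mapsto |\partial_s F|$ is literally the same object entering both constructions: in Section \ref{ss:parallel_gradient_nonsmooth}, $|\nabla_s F|$ is the minimal $L^2$-weak limit of $|\partial_s F_i|$ over cylinder approximations $F_i\to F$; and on the smooth side, since $\nabla_s^*$ was shown in Theorem \ref{t:s_parallel_grad_closed} to be a \emph{closed} operator with $\Cyl(M)$ a core, $|\nabla_s F|^*$ for general $F\in L^2$ is obtained as the $L^2$-limit $\lim_i |\nabla_s^* F_i|$ (equivalently a minimal weak limit) along cylinder approximations $F_i\to F$ for which $\nabla_s^*F_i$ converges. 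By the slope equality just proved, $|\nabla_s^* F_i| = |\partial_s F_i|$ for each cylinder $F_i$ (using also Lemma \ref{l:slope_grad_est} and that on a smooth manifold $|\partial_s F_i| = |\nabla_s^* F_i|$ pointwise — i.e.\ the slope is already lower semicontinuous in the smooth case, which follows from the explicit formula of Proposition \ref{p:parallel_gradient} and smoothness of the stochastic parallel transport in an appropriate sense). Therefore the two minimization procedures act on the same sequences with the same values, so they produce the same minimal element: $|\nabla_s F| = |\nabla_s F|^*$ a.e. Finally, by the notational convention of Section \ref{sss:expressions_parallel}, the same identification passes through to expressions like $\int_0^\infty|\nabla_s F|\,d\mu(s)$.

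\textbf{Main obstacle.} The crux — and the only genuinely nontrivial point — is the a.e.\ convergence of classical parallel transport along the piecewise-geodesic approximations $\gamma_{\bt^k}$ to the stochastic parallel transport along $\gamma$, uniformly enough to pass to the limit inside the difference quotients defining $|D_V F|$, together with showing that on the smooth manifold the geometric slope $|\partial_s F|$ is \emph{already} equal to $|\nabla_s^*F|$ (no refinement needed). Both hinge on the fine properties of the stochastic parallel translation map from Section \ref{ss:stoc_par_trans} (convergence $H^{\bt^k}\to H$ in measure, plus the parallel-norm control of Theorem \ref{t:parallel_norm_rect} to handle the normalizations), and I expect the bookkeeping to control the error terms in the finite-dimensional chart computation — using the curvature estimates (\ref{e:parallel_variation:1})–(\ref{e:parallel_variation:3}) from the proof of Theorem \ref{t:parallel_smooth_rect} — to be the most delicate part of the write-up.
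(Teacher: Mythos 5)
Your overall strategy is the same as the paper's: reduce to the equality of slope and smooth parallel gradient on cylinder functions, establish that equality first along piecewise geodesics via Theorem \ref{t:parallel_smooth_rect}, and then pass to the limit over partitions; the final promotion to general $F\in L^2$ via matching minimal upper gradients is also essentially what the paper does. However, there is a genuine gap in the step you yourself flag as the crux, and your proposed fix does not close it. You write that you fix \emph{a} sequence of partitions $\bt^k$ and use that $H^{\bt^k}\to H$ in measure, ``hence pointwise along a subsequence.'' That gives you control of $|D_{V}F|(\gamma_{\bt})$ only along one subsequence of one chosen sequence. But $|\partial_s F|(\gamma)$ is defined as a $\limsup$ over the entire directed set $\Delta[0,\infty)$ of partitions, so for the inequality $|\partial_s F|(\gamma)\le |\nabla_s F|^*(\gamma)$ you must bound $|\nabla_s F|^*(\gamma_\bt)$ for \emph{all} sufficiently fine $\bt$ simultaneously, on a single full-measure set of curves fixed in advance. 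Subsequence extraction cannot deliver this: the exceptional null set depends on the sequence, and the curve $\gamma$ is fixed before the partitions are chosen. (For the reverse inequality one cofinal sequence does suffice, since a limsup over a directed set dominates the limit along any cofinal subnet — so that half of your argument is fine.)

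The paper closes exactly this gap with Proposition \ref{p:stochastic_parallel_translation_pointwise}: it first gets a.e.\ pointwise convergence for every sequence of \emph{rational} partitions by a countability argument (the collection of all such sequences is countable, so one null set handles them all), and then upgrades to arbitrary sequences of partitions by approximating each $\bt^m$ by a rational partition with a quantitative closeness estimate on the resulting horizontal lifts, summing a $2^{-m}$ series of exceptional sets. This yields Corollary \ref{c:parallel_grad_semicontinuity}, i.e.\ $|\nabla_s F|^*(\gamma)=\lim_{\bt\to\Delta}|\nabla_s F|^*(\gamma_\bt)$ for a.e.\ $\gamma$ with the limit taken over the full net, which is what the upper bound on the limsup actually requires. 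Your write-up treats this as ``essentially the content of the $H^{\bt^k}\to H$ convergence,'' but it is strictly stronger than convergence in measure and needs this separate argument. One smaller point: your parenthetical claim that on a smooth manifold the slope ``is already lower semicontinuous'' overstates what is true — the paper explicitly remarks after Corollary \ref{c:parallel_grad_semicontinuity} that $|\nabla_s F|^*$ need not be continuous on $P(M)$; what you actually need (and what the corollary provides) is only the a.e.\ convergence along piecewise-geodesic approximations of the same curve.
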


In fact, the main estimate of this Section will be to see that for a smooth cylinder function $F$ that
\begin{align}
|\partial_s F| = |\nabla_s F|^*\, ,
\end{align}
where $|\partial_s F|$ is the parallel slope as defined in Section \ref{ss:parallel_slope}.  To see this requires several steps.  To begin with, let us use Theorem \ref{t:parallel_smooth_rect} and Theorem \ref{t:parallel_norm_rect} in order to see the following connection between the smooth parallel gradient and the parallel slope along a piecewise geodesic:

\begin{lemma}\label{l:parallel_gradient_smooth_vs_nonsmooth}
Let $F$ be a smooth cylinder function and $\gamma\in P(M)$ a piecewise geodesic in $M$, then we have that
\begin{align}
|\nabla_s F|^*(\gamma) = |\partial_s F|(\gamma)\, .
\end{align}
\end{lemma}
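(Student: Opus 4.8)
The plan is to establish the pointwise identity $|\nabla_s F|^*(\gamma) = |\partial_s F|(\gamma)$ for a smooth cylinder function $F$ along a piecewise geodesic $\gamma$ by carefully comparing the two supremum characterizations. Recall from Proposition \ref{p:parallel_gradient} that $|\nabla_s F|^*(\gamma) = \sup\{|D_V F|: V \text{ is an } s\text{-parallel translation invariant vector field with } |V|(s)=1\}$, where the parallel translation is the usual (geometric) one since $\gamma$ is piecewise geodesic, and along such a curve this is entirely classical. On the other hand, $|\partial_s F|(\gamma)$ is defined through $\limsup_{\bt \to \Delta}$ over $\bt$-approximations and $s$-parallel variations of those approximations. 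The first observation is that since $\gamma$ is already piecewise geodesic, for any partition $\bt$ that contains the vertices of $\gamma$ the only $\bt$-approximation of $\gamma$ is $\gamma$ itself; thus the $\limsup$ over $\bt$ stabilizes and $|\partial_s F|(\gamma) = \sup\{|D_V F|(\gamma): V \text{ a } s\text{-parallel variation of } \gamma\}$, where now $D_V F$ is the normalized directional derivative \eqref{e:variation_directional_derivative}.

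First I would prove the inequality $|\partial_s F|(\gamma) \leq |\nabla_s F|^*(\gamma)$. Take any $s$-parallel variation $V = \{V_j\}$ of $\gamma$. By Theorem \ref{t:parallel_smooth_rect}.2 there exist $v_j' \in T_{\gamma(0)}M$ (or rather at $T_{\gamma(s)}M$, appropriately) such that the induced parallel-translation-invariant variation $V'$ is equivalent to $V$; equivalence of variations preserves the directional derivative of a smooth cylinder function, because $D_V F$ only depends on the asymptotics of $V_j(\bt)$ up to the equivalence in Lemma \ref{l:CS_equivalence_properties} — the error in the quadruple decays faster than $|V_j|(s)$, so the difference quotients have the same $\limsup$. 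Then, writing $v_j = P_s^{-1} v_j'/|v_j'|$, I compute $D_{V'}F(\gamma) = \lim_j |F(\gamma) - F(\exp(\cdot))|/|v_j'|$, which by a Taylor expansion of the smooth function $u$ defining $F = e_\bt^* u$ converges to $|\langle \nabla_s F(\gamma), \hat v\rangle|$ for the unit vector $\hat v = \lim v_j/|v_j|$ (passing to a subsequence), and this is $\leq |\nabla_s F|^*(\gamma)$. Here I use Theorem \ref{t:parallel_norm_rect} to control that $|V_j|(t)/|V_j|(s) \to 1$ so the normalization at $s$ is the right one.

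Next, for the reverse inequality $|\nabla_s F|^*(\gamma) \leq |\partial_s F|(\gamma)$: given a maximizing unit vector $v \in T_{\gamma(s)}M$ for the classical $s$-parallel gradient, I produce the $s$-parallel variation $V_j(t) = \exp_{\gamma(t)}(\tfrac{1}{j} P_t^{-1} v)$ for $t \geq s$ and $V_j(t) = \gamma(t)$ for $t < s$; by Theorem \ref{t:parallel_smooth_rect}.1 this is a genuine $s$-parallel variation, and a direct Taylor computation gives $|D_V F|(\gamma) = |\langle \nabla_s F(\gamma), v\rangle| = |\nabla_s F|^*(\gamma)$. This proves Lemma \ref{l:parallel_gradient_smooth_vs_nonsmooth}. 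To then deduce Theorem \ref{t:parallel_gradient_smooth_vs_nonsmooth}, I would argue: by Lemma \ref{l:slope_grad_est}, $|\nabla_s F| \leq |\partial_s F| = |\nabla_s F|^*$ a.e.\ for cylinder functions. For the opposite direction, one invokes that $|\nabla_s F|^*$ is itself a closed operator (Theorem \ref{t:s_parallel_grad_closed}), hence the energy functional $F \mapsto \int |\nabla_s F|^{*2} d\Gamma_m$ is already lower semicontinuous on $L^2$, so taking its lower semicontinuous refinement (which is what defines $|\nabla_s F|$) changes nothing: $|\nabla_s F| = |\nabla_s F|^*$ for all $F \in L^2$, not just cylinder functions. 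Finally, $|\nabla F|_{H^1_x}^2 = \int_0^\infty |\nabla_s F|^2 ds$ by Proposition \ref{p:parallel_H1_relation}, and the same integral formula defines $|\nabla F|_{H^1}$ in the nonsmooth construction, giving the full-gradient statement.

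The main obstacle I anticipate is the reduction step using Theorem \ref{t:parallel_smooth_rect}.2 together with the equivalence relation: one must verify that passing from an arbitrary $s$-parallel variation to its equivalent parallel-translation-invariant model genuinely does not change $D_V F$, which requires that the ``faster-than-convergence'' error from Lemma \ref{l:CS_equivalence_properties}(2), applied at the finitely many evaluation times in $\bt$, is swept into the $\limsup$ harmlessly — this is a quantitative estimate on how the Lipschitz (indeed smooth) function $u$ sees the perturbed quadruple, and getting the normalization by $|V_j|(s)$ rather than by the individual $|V_j|(t_a)$ correct is the delicate point. A secondary, more bookkeeping-type difficulty is handling the case $s \in (t_k, t_{k+1})$ is not a vertex, where one uses Lemma \ref{l:slope_lipschitz}.3 to reduce to an adjacent vertex, and making sure the $\limsup_{\bt}$ really does stabilize once the partition refines past the fixed finite data of $F$ and the breakpoints of $\gamma$.
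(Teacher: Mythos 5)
Your proof is correct and follows essentially the same route as the paper's: both directions rest on Theorem \ref{t:parallel_smooth_rect}, identifying $s$-parallel variations of a piecewise geodesic (up to equivalence) with the variations induced by parallel translation invariant vector fields, and then computing the normalized directional derivatives of the cylinder function in both pictures. The only divergence is in handling the $\limsup_{\bt}$ in the definition of $|\partial_s F|$: you argue that for sufficiently fine partitions the $\bt$-approximations of a piecewise geodesic are $\gamma$ itself so the net stabilizes, whereas the paper proves $\sup_{V_s}|D_{V_s}F|(\gamma_\bt)=|\nabla_s F|^*(\gamma_\bt)$ for every $\bt$-approximation and then passes to the limit using the continuity of $|\nabla_s F|^*$ along piecewise smooth curves $\gamma_\bt\to\gamma$ --- both are valid.
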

\begin{proof}
Let us first observe by Theorem \ref{t:parallel_smooth_rect} we have that for a piecewise geodesic $\gamma\in P(M)$ that if $v\in T_{\gamma(s)}M$ is a vector and $s_j\to 0$ is any sequence, then the variation $V\equiv\{V_j\}$ defined by $V_j(t)=0$ for $t<s$ and $V_j(t)\equiv \exp_{\gamma(t)}(s_j P_{t}^{-1}P_s v)$ is a $s$-parallel variation of $\gamma$, where $P_t$ is the usual parallel translation map.  In particular, we have by (\ref{e:parallel_slope_smooth}) and (\ref{e:variation_directional_derivative}) that
\begin{align}
\sup_{V_s}\{|D_{V_s}F|(\gamma):V_s\text{ is a $s$-parallel variation}\}\geq |\nabla_s F|^*(\gamma)\, .
\end{align}

Conversely, let $V$ be a $s$-parallel variation of a piecewise geodesic $\gamma$.  Then by again by Theorem \ref{t:parallel_smooth_rect} we have that there exists a Cauchy sequence $v'_j\in T_{\gamma(s)} M$ with $v'_j\to 0$ such that if we consider the $s$-parallel variation $V'$ given by $V'_{j}(t)=\exp_{\gamma(t)}(P^{-1}_tP_s v'_j)$, then $V'$ is equivalent to $V$ in the sense of Definition \ref{d:variation_equivalence}.  In particular, we get easily from this, (\ref{e:parallel_slope_smooth}), and (\ref{e:variation_directional_derivative}) the reverse inequality from above, and hence
\begin{align}
\sup_{V_s}\{|D_{V_s}F|(\gamma):V_s\text{ is a $s$-parallel variation}\} = |\nabla_s F|^*(\gamma)\, .
\end{align}

Now the above holds for any piecewise geodesic.  In particular, if we fix a piecewise geodesic $\gamma$ and a partition $\bt\in \Delta[0,\infty)$ we can apply the above to the $\bt$-approximation $\gamma_\bt$ of $\gamma$ to obtain 
\begin{align}
\sup_{V_s}\{|D_{V_s}F|(\gamma_\bt):V_s\text{ is a $s$-parallel variation}\} = |\nabla_s F|^*(\gamma_\bt)\, .
\end{align}
Now in the case of a piecewise geodesic $\gamma$, or indeed any piecewise smooth curve, we have that 
\begin{align}
\lim_{\bt\to\Delta}|\nabla_s F|^*(\gamma_\bt)\to |\nabla_s F|^*(\gamma)\, .
\end{align}
In particular, combining this with the above gives
\begin{align}
\limsup_{\bt\to \Delta}|D_{V_s}F|(\gamma_\bt) = \limsup_{\bt\to\Delta}|\nabla_s F|^*(\gamma_\bt)=\lim_{\bt\to\Delta}|\nabla_s F|^*(\gamma_\bt)=|\nabla_s F|(\gamma)\, ,
\end{align}
which proves the Lemma.
\end{proof}

On a smooth curve $\gamma$ in $M$ with $v\in T_{\gamma(0)}M$ it is completely clear that if one considers any sequence of $\bt$-approximations $\gamma_\bt\to\gamma$ which converge to $\gamma$, then the parallel vector fields $P_t^{-1}v$ along $\gamma_\bt$ converge uniformly to $P_t^{-1}v$ along $\gamma$.  Much less clear apriori is that there is a set of curves $\gamma\in P(M)$ of full measure along with vector fields $V(t)$ along these curves such that if one again considers any sequence of $\bt$-approximations $\gamma_\bt\to\gamma$ which converge to $\gamma$, then the parallel vector fields $P_t^{-1}v$ along $\gamma_\bt$ converge uniformly to $V$ along $\gamma$.  Given this, it is maybe less surprising that $V$ agrees with the stochastic parallel translation $P_t^{-1}v$ of $v$ along $\gamma$.  This is a key point in the proof of Theorem \ref{t:parallel_gradient_smooth_vs_nonsmooth}.  Precisely we have the following: 

\begin{proposition}\label{p:stochastic_parallel_translation_pointwise}
Let $(M^n,g,e^{-f}dv_g)$ be a smooth metric-measure space with $x\in M$.  Then for a.e. $\gamma\in P_x(M)$ and every $v\in T_xM$ we have that for every increasing dense sequence of partitions $\bt\in \Delta[0,\infty)$ that the sequence of vector fields $V_\bt\equiv P_t^{-1}v$ along the $\bt$-approximations $\gamma_\bt$ converges uniformly to the vector field $V\equiv P_t^{-1} v$ along $\gamma$, where $P_t$ is usual parallel translation map along $\gamma_\bt$ and the stochastic parallel translation map along $\gamma$.
\end{proposition}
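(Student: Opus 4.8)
\textbf{Proof proposal for Proposition \ref{p:stochastic_parallel_translation_pointwise}.}

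The plan is to bootstrap from the known convergence-in-measure statement for the stochastic parallel translation map (Section \ref{ss:stoc_par_trans}) to a pointwise-a.e. uniform convergence statement, using a uniform modulus-of-continuity estimate together with a Borel--Cantelli argument. First I would recall the setup: fix an increasing dense sequence of partitions $\bt^k$, let $H^{\bt^k}:P_x(M)\to P_{\tilde x}(FM)$ be the approximate horizontal lifting maps, and recall that $H^{\bt^k}\to H$ in measure. Passing to a subsequence, $H^{\bt^k}(\gamma)\to H(\gamma)$ for $\Gamma_x$-a.e.\ $\gamma$, which gives pointwise convergence of frames $F_t^{\bt^k}\to F_t$ for a.e.\ $\gamma$ and all $t$ in a countable dense set. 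The issue is to upgrade this to \emph{uniform} convergence in $t$ along \emph{every} increasing dense sequence of partitions, not just a subsequence of one, and to handle the supremum over $v$ in the unit ball of $T_xM$ — but the latter is free, since parallel translation (stochastic and ordinary) is an isometry, so it suffices to control the operator norm of $P^{\bt^k}_t(P_t)^{-1} - \mathrm{Id}$ on $T_xM$.

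Next I would establish the key quantitative input: for a piecewise-geodesic approximation $\gamma_\bt$ of $\gamma$, the ordinary parallel translation $P^\bt_t$ along $\gamma_\bt$ satisfies an estimate of the form
\begin{align}
\big\| P^{\bt}_t (P^{\bt}_s)^{-1} - \mathrm{Id}\big\| \leq C(\gamma)\,\big(|t-s| + \text{mesh}(\bt)\big)\, ,
\end{align}
valid uniformly on $\Gamma_x$-a.e.\ $\gamma$ once one localizes to a compact set and uses that for a.e.\ Brownian path the quadratic variation controls the holonomy picked up over small time intervals — this is essentially the content of the structure equations for the horizontal lift, where the curvature term contributes at order $\int \mathrm{Rm}(\circ dW, \circ dW)$, i.e.\ at order $|t-s|$. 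With this equicontinuity in hand, the convergence $P^{\bt^k}_t \to P_t$ on a countable dense set of times $t$, together with the uniform modulus estimate, gives uniform convergence on $[0,T]$ for each $T$ by an Arzel\`a--Ascoli / three-epsilon argument, and the limit is forced to be the stochastic parallel translation map $P_t$ because that is the a.e.-limit already known along one subsequence. To remove the dependence on the particular sequence of partitions, I would observe that any two increasing dense sequences have a common refinement, so the a.e.\ limit is independent of the sequence; hence the exceptional set can be taken once and for all.

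The main obstacle I anticipate is the \emph{uniform} (in the partition and, crucially, $\Gamma_x$-a.e.\ in $\gamma$) control of the holonomy estimate above: one must show that the ``constant'' $C(\gamma)$ is finite for a.e.\ $\gamma$ and that the error genuinely goes to zero as the mesh shrinks, which requires care with the Stratonovich-vs-It\^o correction terms appearing in the structure equations (cf.\ the computations in Lemma \ref{l:brownian_derivative}) and with the fact that Brownian paths are only $C^{\alpha}$ for $\alpha<1/2$, not Lipschitz. The clean way to handle this is to work with a localizing sequence of stopping times so that the relevant curvature and its derivatives are bounded along the path, prove the estimate on each localized piece with explicit $L^2$ bounds on the It\^o integrals (so Borel--Cantelli applies after passing to a fast-enough subsequence of partitions), and then let the localization exhaust $P_x(M)$. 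Once the quantitative equicontinuity is secured, the rest is a routine compactness argument, and Theorem \ref{t:parallel_gradient_smooth_vs_nonsmooth} then follows by combining this proposition with Lemma \ref{l:parallel_gradient_smooth_vs_nonsmooth}.
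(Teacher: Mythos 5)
There is a genuine gap, and it sits exactly at the crux of the proposition: the quantifier ``for \emph{every} increasing dense sequence of partitions.'' Your argument produces a.e.\ convergence only along a subsequence of one fixed sequence of partitions (that is all that convergence in measure of $H^{\bt^m}$ gives), and your device for removing the dependence on the sequence --- ``any two increasing dense sequences have a common refinement, so the a.e.\ limit is independent of the sequence; hence the exceptional set can be taken once and for all'' --- does not work. Convergence of $H^{\bt^m\cup\bs^m}(\gamma)$ along the common refinement says nothing about convergence of $H^{\bt^m}(\gamma)$ or $H^{\bs^m}(\gamma)$ along the original, coarser sequences; and even if it did for each pair, the family of increasing dense sequences is uncountable, so the union of the sequence-dependent null sets could a priori have full measure. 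The paper's proof is built precisely to defeat this: it first observes that the collection of sequences of \emph{rational} partitions is countable, so one fixed full-measure set $\cS$ works for all of them simultaneously (with a subsequence-of-subsequence argument upgrading subsequential to full convergence on $\cS$); it then handles an arbitrary partition $\bt$ by choosing a rational partition $\bt'$ so close that $d_{C^0}(H^{\bt}(\gamma),H^{\bt'}(\gamma))<2^{-m}$ outside a set of measure $2^{-m}$, and concludes by Borel--Cantelli. Some reduction of this kind from an uncountable to a countable family of partition sequences is unavoidable, and it is absent from your proposal.

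The quantitative input you propose in its place is also not believable as stated. The estimate $\|P^{\bt}_t(P^{\bt}_s)^{-1}-\mathrm{Id}\|\le C(\gamma)(|t-s|+\mathrm{mesh}(\bt))$ cannot hold with an a.e.-finite $C(\gamma)$: the holonomy accumulated by ordinary parallel translation along the piecewise geodesic $\gamma_{\bt}$ over $[s,t]$ is controlled by curvature integrated against the path, and the total length of $\gamma_{\bt}$ restricted to $[s,t]$ diverges as $\mathrm{mesh}(\bt)\to 0$ for a.e.\ Brownian path (the cancellations that save the limit are exactly the It\^o--Stratonovich corrections, which produce a limit object but not a Lipschitz-in-$t$ bound on the approximants). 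You acknowledge the $C^{\alpha}$, $\alpha<\tfrac12$, regularity issue, but your proposed repair (localization, $L^2$ bounds, Borel--Cantelli ``after passing to a fast-enough subsequence of partitions'') again yields only a subsequential statement, returning you to the gap in the first paragraph. A genuinely uniform-over-partitions modulus estimate of Wong--Zakai type could in principle substitute for the paper's soft argument, but it is a substantial piece of stochastic analysis that is not supplied here, and even then one would still need the countability reduction to pass from ``for one sequence'' to ``for every sequence'' on a single exceptional null set.
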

\begin{proof}

Let us recall some basics of the construction of the stochastic parallel translation map.  In the construction one considers a sequence of increasingly dense partitions $\bt^m\subseteq \Delta[0,\infty)$ and the corresponding space $P^{\bt^m}_x(M)\subseteq P_x(M)$ of piecewise geodesics with vertices given by $\bt^m$.  On this space one can consider the horizontal lifting map $P^{\bt^m}_x(M)\to P_x(FM)$ to the frame bundle, which itself can be extended to a mapping $H^{\bt^m}:P_x(M)\to P_x(FM)$ by composing with the projection map $P_x(M)\to P_x^{\bt^m}(M)$ which takes a curve $\gamma$ to its $\bt^m$-approximation (which is unique away from a set of measure zero).  The basic result is that $H^{\bt^m}$ converges in measure to a mapping $H:P_x(M)\to P_x(FM)$, see \cite{Stroock_book}.  In particular, for every sequence of partitions $\bt^m$ then there exists a subsequence such that $H^{\bt^m}$ converges pointwise a.e. in $P_x(M)$.  The stochastic parallel translation map is nothing more than the identification of frames given by this lifting map. 

Now let us consider sequences of increasingly dense partitions $\bt^m\in \Delta[0,\infty)$ such that each element $t^m_a$ is rational.  The collection of all such sequences of rational partitions is itself a countable set.  To see this let $\bt^m$ be such a sequence and denote by $|\bt^m|\equiv N^m$.  Then we see that the sequence $\bt^m$ defines an element of $\dQ^{N^1}\times \dQ^{N^2}\times\cdots$, which is a countable set.  Therefore the collection of all such partitions is contained in the countable union of countable sets given by $\bigcup_{\vec N}\dQ^{N^1}\times \dQ^{N^2}\times\cdots$ with $\vec N\in \dN\times\dN\times\cdots$, and thus is itself countable.

Applying the above tells us that there is a set of full measure $\cS\subseteq P_x(M)$ such that for every sequence of rational partitions $\bt^m$ there exists a subsequence which converges pointwise on $\cS$ to the limit $H(\gamma)$.  We will now see that on $\cS$ we must therefore have the stronger statement that for {\it every} sequence of increasingly dense partitions $\bt^m$ we have that $H^{\bt^m}(\gamma)\to H(\gamma)$, without necessarily passing to a subsequence.  This will of course prove the Theorem.  

First let us apply a standard argument to make the following claim, namely that on $\cS$ we have that for every sequence of rational partitions $\bt^m$ that the sequence $H^{\bt^m}(\gamma)\to H(\gamma)$ converges, without passing to a subsequence.  Indeed, imagine this were not the case for some sequence $\bt^m$, then we can pick a subsequence $\bt^{',m}$ such that $d_{C^0}(H^{\bt^{',m}}(\gamma),H(\gamma))>\epsilon>0$ for all $m$.  However since $\bt^{',m}$ is itself a sequence of rational partitions, there exists a subsequence which contradicts this. 

Now let $\bt\in \Delta[0,\infty)$ be an arbitrary partition.  Note that because the evaluation maps are continuous and $\Gamma_x$ is a Borel probability measure that if $\bt'\to\bt$ then $e_{\bt'}\to e_{\bt}$ in measure.  In particular, for each $\epsilon>0$ we can find a rational partition $\bt'$ with $|\bt|=|\bt'|$ such that away from a set of measure $\epsilon$ we have for every curve $\gamma\in P_x(M)$ that $d_{C^0}(\gamma_\bt,\gamma_{\bt'})<\epsilon$, where $\gamma_\bt,\gamma_{\bt'}$ are the respective piecewise geodesic approximations of $\gamma$.  Applying this to $\delta>0$ sufficiently small gives us that  we can pick a rational partition such that away from a set of measure $\epsilon>0$ we have that $d_{C^0}(H^{\bt}(\gamma),H^{\bt'}(\gamma))<\epsilon$.

Now let $\bt^m$ be an arbitrary sequence of increasingly dense partitions.  Applying the previous paragraph for each $m$ tells us that we can find a sequence of rational partitions $\bt^{',m}$ with $|\bt^m|=|\bt^{',m}|$ such that for each $m$ we have that away from a set of measure $2^{-m}$ we have that for each $\gamma\in P_x(M)$ that 
$$
d_{C^0}(H^{\bt^m}(\gamma),H^{\bt^{',m}}(\gamma))<2^{-m}\, .
$$ 
In particular, away from a set of measure $0$ in $\cS$ we have that every curve satisfies the above for all but at most a finite number of $m$.  Since $H^{\bt^{',m}}(\gamma)\to H(\gamma)$ for every $\gamma\in \cS$ we must therefore have that away from a set of measure $0$ in $\cS$ that $H^{\bt^m}(\gamma)\to H(\gamma)$, as claimed.
\end{proof}

Our main application of the above is the following semi-continuity result, which will be the main lemma allowing us to prove the main Theorem of the subsection:

\begin{corollary}\label{c:parallel_grad_semicontinuity}
Let $(M^n,g,e^{-f}dv_g)$ be a smooth metric-measure space with $F$ a smooth cylinder function.  Then for a.e. $\gamma\in P(M)$ we have that
\begin{align}
|\nabla_s F|^*(\gamma) = \lim_{\bt\to \Delta} |\nabla_s F|^*(\gamma_{\bt})\, ,
\end{align}
where $\gamma_\bt$ is the $\bt$-approximation of $\gamma$.
\end{corollary}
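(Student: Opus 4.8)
\textbf{Proof proposal for Corollary \ref{c:parallel_grad_semicontinuity}.}

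The plan is to combine the pointwise formula for the smooth parallel gradient from Proposition \ref{p:parallel_gradient}, which expresses $\nabla_s F(\gamma) = \sum_{t_j \geq s} P_{t_j}\nabla_j u$ in terms of the stochastic parallel translation maps $P_{t_j}$, with the key convergence statement of Proposition \ref{p:stochastic_parallel_translation_pointwise}, which says that along $\bt$-approximations $\gamma_\bt$ of $\gamma$ the (ordinary) parallel translation maps converge to the stochastic parallel translation map along $\gamma$, for $a.e.$ $\gamma$ and every increasing dense sequence of partitions. First I would fix a smooth cylinder function $F = e_\br^* u$ with data partition $\br = \{r_1 < \cdots < r_N\}$ and note that for $\gamma_\bt$ any $\bt$-approximation with $\br \subseteq \bt$, the evaluation $e_\br(\gamma_\bt) = e_\br(\gamma)$ is literally unchanged since $\gamma_\bt(r_i) = \gamma(r_i)$, so the only thing that varies with $\bt$ in the formula $\nabla_s F(\gamma_\bt) = \sum_{r_j \geq s} P^{\gamma_\bt}_{r_j}\nabla_j u$ is the parallel translation map $P^{\gamma_\bt}_{r_j}: T_{\gamma(r_j)}M \to T_{\gamma(r_1 \text{ or } 0)}M$ computed along the piecewise-geodesic curve $\gamma_\bt$.

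The main step is then the following: by Proposition \ref{p:stochastic_parallel_translation_pointwise}, there is a set $\cS \subseteq P_x(M)$ of full measure such that for every $\gamma \in \cS$ and every increasing dense sequence of partitions $\bt^m \to \Delta$, the parallel translation vector fields along the approximations $\gamma_{\bt^m}$ converge uniformly to the stochastic parallel translation vector field along $\gamma$; in particular $P^{\gamma_{\bt^m}}_{r_j} \to P^\gamma_{r_j}$ as linear isometries for each fixed time $r_j$. Since $\nabla_j u$ is a fixed vector in $T_{\gamma(r_j)}M$ (depending only on $\gamma$, not on $\bt$), we conclude $P^{\gamma_{\bt^m}}_{r_j}\nabla_j u \to P^\gamma_{r_j}\nabla_j u$, and summing the finitely many terms with $r_j \geq s$ gives $\nabla_s F(\gamma_{\bt^m}) \to \nabla_s F(\gamma)$. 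Because $\Delta[0,\infty)$ is a directed set and the convergence holds along every increasing dense sequence, it holds as a net limit, so $|\nabla_s F|^*(\gamma) = \lim_{\bt \to \Delta} |\nabla_s F|^*(\gamma_\bt)$ for $a.e.$ $\gamma$. A minor bookkeeping point to address: one should restrict attention to partitions $\bt$ refining $\br$ (cofinal in $\Delta[0,\infty)$, so harmless for the net limit), which ensures both that $e_\br(\gamma_\bt) = e_\br(\gamma)$ exactly and that each $r_j$ is a vertex of $\gamma_\bt$ so that $P^{\gamma_\bt}_{r_j}$ is well-defined by honest parallel transport; one also invokes the earlier observation (as in Lemma \ref{l:parallel_gradient_smooth_vs_nonsmooth}) that for a piecewise smooth curve $\gamma_\bt$ the smooth parallel gradient is continuous under further subdivision, though here that is automatic since further subdivision does not change the piecewise-geodesic curve's parallel transport.

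I expect the only genuine obstacle to be correctly quoting the uniform-convergence conclusion of Proposition \ref{p:stochastic_parallel_translation_pointwise} in the precise form needed — namely that convergence of the horizontal lifts $H^{\bt^m}(\gamma) \to H(\gamma)$ in $C^0$ yields convergence of the induced frame identifications $P^{\gamma_{\bt^m}}_{t} \to P^\gamma_t$ at each fixed time $t$, uniformly in $t$ on compact intervals — but this is exactly the content of that proposition (the stochastic parallel translation map "is nothing more than the identification of frames given by this lifting map"), so no new work is required. Everything else is the elementary observation that a finite sum of convergent sequences of vectors converges, together with the cofinality remark that lets us pass from sequential limits along rational-then-arbitrary partitions to the net limit over $\Delta[0,\infty)$.
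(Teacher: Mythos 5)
Your proof is correct, but it takes a genuinely different route from the paper's. The paper does not invoke the explicit formula of Proposition \ref{p:parallel_gradient} at all; instead it runs a two-sided sup argument: it first observes that for a cylinder function, uniform convergence of curves and of vector fields forces convergence of the directional derivatives $D_{V_m}F(\gamma_{\bt^m})\to D_VF(\gamma)$, then (i) takes $v_m$ to be the maximizing unit directions for $|\nabla_s F|^*(\gamma_{\bt^m})$, passes to a subsequence $v_m\to v'$, and gets $\limsup|\nabla_s F|^*(\gamma_{\bt^m})\leq D_{V'}F(\gamma)\leq |\nabla_s F|^*(\gamma)$, and (ii) transports the maximizing direction for $|\nabla_s F|^*(\gamma)$ along the approximations to get the reverse inequality for the $\liminf$. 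Your argument short-circuits this: by fixing the data partition $\br$ and restricting to the cofinal family $\bt\geq\br$ (so $\gamma_\bt(\br)=\gamma(\br)$ and the $\nabla_j u$ are literally unchanged), Proposition \ref{p:parallel_gradient} reduces the statement to convergence of the finitely many vectors $P^{\gamma_{\bt^m}}_{r_j}\nabla_j u\to P^{\gamma}_{r_j}\nabla_j u$, which is exactly what Proposition \ref{p:stochastic_parallel_translation_pointwise} supplies (convergence of the isometries $P_{r_j}^{-1}$ at the fixed times $r_j$, where the two curves agree, gives convergence of their inverses applied to a fixed vector). This buys you slightly more than the paper proves — convergence of the gradient vectors themselves in $T_xM$, not merely of their norms — and avoids the subsequence extraction; the cost is that it is special to cylinder functions with a fixed finite data partition, whereas the paper's sup/directional-derivative argument is the template it reuses in the nonsmooth setting. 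Two cosmetic points: your parenthetical that "further subdivision does not change the piecewise-geodesic curve's parallel transport" is not needed and is not quite the right justification (a $\bt'$-approximation of $\gamma$ for $\bt'\geq\bt$ is a different curve from $\gamma_\bt$; what you actually use is that Proposition \ref{p:stochastic_parallel_translation_pointwise} applies directly to the sequence $\gamma_{\bt^m}$), and the target of $P^{\gamma_\bt}_{r_j}$ should be $T_{\gamma(0)}M$ as in Proposition \ref{p:parallel_gradient}, not $T_{\gamma(r_1)}M$.
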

\begin{remark}
Note that this does not claim that $|\nabla F|^*$ is a continuous function on $P(M)$, which need not be true.
\end{remark}
\begin{proof}
For a smooth cylinder function $F=e_\bt^*u$ let $\gamma_m$ be any sequence of curves with $\gamma_m\to \gamma$ in $P(M)$, and let $V_m$ be any sequence of vector fields with $V_m\to V$ converging uniformly to a vector field $V$ along $\gamma$.  Then in particular $V_m(\gamma_m(\bt))$ is converging uniformly to $V(\gamma(\bt))$ and thus
\begin{align}
D_{V_m} F(\gamma_{\bt^m})\to D_V F(\gamma)\, .
\end{align}

Now by Proposition \ref{p:stochastic_parallel_translation_pointwise} we have that for a.e. $\gamma\in P_x(M)$ and any sequence $\bt^m\in \Delta[0,\infty)$ with $v_m\to v\in T_xM$, we have that the $s$-parallel vector fields $V_m(t)$ defined by $V_m(t)=P_t^{-1}v_m$ for $s\leq t$ converge uniformly to the $s$-parallel vector field $V(t)$ defined by $V(t)\equiv P_t^{-1} v$ along $\gamma$.  In particular, let $v_m\equiv \frac{\nabla_s F(\gamma_{\bt^m})}{|\nabla_s F(\gamma_{\bt^m})|}$ if $\nabla_s F(\gamma_{\bt^m})\neq 0$, with $v_m\equiv 0$ otherwise.  By the first paragraph, and choosing an appropriate $\limsup$ subsequence so that $v_m\to v'\in T_xM$, we therefore have that 
$$
\limsup |\nabla_s F|^*(\gamma_{\bt^m})\leq D_{V'}F(\gamma)\leq |\nabla_s F|^*(\gamma)\, .
$$
On the other hand, let $v\equiv \frac{\nabla_s F(\gamma)}{|\nabla_s F(\gamma)|}$ if $\nabla_s F(\gamma)\neq 0$, with $v\equiv 0$ otherwise.  Then by considering the $s$-parallel vectorfields $V_m(t)\equiv P_t^{-1}v$ along $\gamma_{\bt_m}$ and again applying the first paragraph we have that
\begin{align}
|\nabla_s F|^*(\gamma)=\lim D_{V_m}F(\gamma_{\bt^m})\leq \liminf |\nabla_s F|^*(\gamma_{\bt^m})\, ,
\end{align}
where the $\liminf$ is obtained by passing to the appropriate subsequence.  Thus we have proved 
\begin{align}
|\nabla_s F|^*(\gamma)=\lim_m |\nabla_s F|^*(\gamma_{\bt^m})\, .
\end{align}
Since this held for any increasingly dense sequence $\bt^m$ this proves the Corollary.
\end{proof}

We are now in a position to prove the main Theorem of this subsection:

\begin{proof}[Proof of Theorem \ref{t:parallel_gradient_smooth_vs_nonsmooth}]
Let $F$ be a smooth cylinder function. In Lemma \ref{l:parallel_gradient_smooth_vs_nonsmooth} we proved the Theorem for any piecewise geodesic $\gamma\in P(M)$.  Now let $\gamma\in P(M)$ satisfy the conditions of Corollary \ref{c:parallel_grad_semicontinuity}.  For any such curve we therefore have
\begin{align}
|\partial_s F|(\gamma)=\limsup_{\bt\to \Delta}|\partial_s F|(\gamma_\bt)=\limsup_{\bt\to \Delta} |\nabla_s F|^*(\gamma_\bt) = |\nabla_s F|^*(\gamma)\, .
\end{align}
Since this is a set of full measure this finishes the Theorem.
\end{proof}

\section{Bounded Ricci Curvature on a Metric-Measure Space}\label{s:bounded_ricci_mms}

In this Section we introduce the notion of bounded Ricci curvature on a metric-measure space, and study some of its basic properties.  Specifically, let us recall from Section \ref{s:nonsmooth_bounded_ricci_intro} the following:

\begin{definition}\label{d:bounded_ricci_mms}
Let $(X,d,m)$ be a metric measure space which satisfies (\ref{e:mms_assumptions_intro}) and which is weakly Riemannian.  Then we say that $X$ is a $BR(\kappa,\infty)$ space if for every function $F\in L^2(P(X),\Gamma_m)$ we have the inequality
\begin{align}\label{e:bounded_ricci_mms}
|\Lip_x \int_X F\,d\Gamma_x|\leq \int_{P(X)} |\nabla_0 F|+\int_0^\infty \frac{\kappa}{2}e^{\frac{\kappa}{2}s}|\nabla_s F|\,d\Gamma_x\, ,
\end{align}
for $m-a.e.$ $x\in X$, where $|\Lip_x\cdot|$ is the lipschitz slope as in Section \ref{ss:weakly_riemannian} and the expression on the right hand side is as in Section \ref{sss:expressions_parallel} .
\end{definition}

One consequence is that if $F$ is a cylinder function on path space, then using Lemma \ref{l:slope_lipschitz} we see that the induced function $\int_{P(M)} F\,d\Gamma_x$ on $X$ is a lipschitz function.

Now having made rigorous sense of (\ref{e:bounded_ricci_mms}) in Section \ref{s:parallel_gradient_nonsmooth}, the goal of this Section is to prove the basic properties of such spaces.  We begin in Section \ref{ss:bounded_implies_lower} by proving that spaces with bounded Ricci curvature in the sense of (\ref{e:bounded_ricci_mms}) have Ricci curvature bounded from below in the sense of Bakry-Emery.  With the help of some estimates from this Section we prove in Section \ref{ss:stochastic_completeness} that a metric measure space $X$ with bounded Ricci curvature is stochastically complete, and therefore using the results of \cite{Ambrosio_BE_vs_LVS} we will see that spaces with bounded Ricci curvature are $RCD(-\kappa,\infty)$-spaces, and in particular have lower Ricci bounded from below in the sense of Lott-Villani-Sturm.  In Section \ref{ss:existence_parallel_variations} we discuss the relationship between bounded Ricci curvature and parallel translation invariant variations on $P(X)$.

\subsection{$BR(\kappa,\infty)$ $\implies$ Bakry-Emery}\label{ss:bounded_implies_lower}

In this Section we prove Theorem \ref{t:boundedricci_implies_BE}.  That is, we prove that a metric-measure space with Ricci curvature bounded by $\kappa$ has lower Ricci curvature bounded from below in the sense of Bakry-Emery.  The proofs follow the same moral lines as Theorems \ref{t:boundedricci_BE_implies_lowerricci}, \ref{t:boundedricci_BE2_implies_lowerricci} in Part \ref{part:smooth} of the paper, however there are technical issues that must be addressed in the nonsmooth cases.

We begin with the following important structural result.

\begin{lemma}\label{l:parallel_grad_comp}
Given a cylinder function $F(\gamma)\equiv u(\gamma(t))$ we have for every $0\leq s\leq t$ and $a.e.$ $\gamma\in P(X)$ that
\begin{align}
|\nabla_s F|(\gamma)\leq |\nabla u|(\gamma(t))\, .
\end{align}
\end{lemma}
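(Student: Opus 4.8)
The goal is to show that for a cylinder function $F(\gamma)\equiv u(\gamma(t))$ depending only on the value at time $t$, the $s$-parallel gradient $|\nabla_s F|(\gamma)$ is bounded by $|\nabla u|(\gamma(t))$ for $0\le s\le t$ and a.e.\ $\gamma$. Since $|\nabla_s F|$ is by definition the minimal upper $s$-parallel gradient (the lower semicontinuous refinement of the slope, Definition \ref{d:parallel_gradient} and Theorem \ref{t:minimal_upper_gradient}), it suffices to exhibit \emph{some} upper $s$-parallel gradient dominated by the function $\gamma\mapsto |\nabla u|(\gamma(t))$; the minimality then gives the claim. The natural candidate is obtained by approximating $u$ on $X$ by Lipschitz functions whose Lipschitz constants converge in $L^2$ to $|\nabla u|$, pushing these through the evaluation map to cylinder functions on $P(X)$, and controlling the parallel slopes of those cylinder functions via Lemma \ref{l:slope_lipschitz}.

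The plan is as follows. First, recall from the definition of the Cheeger gradient on $X$ (Definition \ref{d:cheeger_gradient}) that since $u\in W^{1,2}(X,m)$ there exist Lipschitz functions $u_i$ with $u_i\to u$ in $L^2(X,m)$ and $|\Lip\, u_i|\rightharpoonup G'$ weakly in $L^2(X,m)$ with $G'\le |\nabla u|$ a.e.; indeed this is exactly the definition of a weak upper gradient, and $|\nabla u|$ is the minimal one. Second, set $F_i(\gamma)\equiv u_i(\gamma(t))$, a cylinder function with partition $\bt=\{t\}$ so $|\bt|=1$. By the definition of the diffusion measure $\Gamma_m$ and the heat kernel, the pushforward $e_{t,*}\Gamma_m$ equals $\int_X \rho_t(x,\cdot)\,dm(x)$, so $u_i(\gamma(t))\to u(\gamma(t))$ in $L^2(P(X),\Gamma_m)$ follows from $u_i\to u$ in $L^2(X,m)$ together with the contraction property $\|H_t\|\le 1$ (equivalently, $\|u_i\circ e_t - u\circ e_t\|_{L^2(P(X),\Gamma_m)}^2 = \int_X |u_i-u|^2\, d(e_{t,*}\Gamma_m) = \int_X H_t(|u_i-u|^2)\,dm \le \int_X |u_i-u|^2\,dm$). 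Third, by Lemma \ref{l:slope_lipschitz}.1 with $|\bt|=1$ we get $|\partial_s F_i|(\gamma)\le |\Lip\, u_i|(\gamma(t))$ for $0\le s\le t$. Now I need $|\Lip\, u_i|(\gamma(t))$, viewed as a function on $P(X)$, to converge weakly in $L^2(P(X),\Gamma_m)$ to $G'\circ e_t \le |\nabla u|\circ e_t$: this follows because $e_{t,*}$ intertwines weak convergence (test against bounded measurable functions $\phi$ on $P(X)$ of the form $\psi\circ e_t$ is enough after checking these are dense in the relevant sense, or more directly: $\int_{P(X)} |\Lip\, u_i|(\gamma(t))\,\psi\,d\Gamma_m = \int_X |\Lip\, u_i|\cdot (e_{t,*}(\psi\Gamma_m)/dm)\,dm$ and the Radon--Nikodym density $H_t$-type kernel is fixed). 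Passing to a subsequence realizing the weak limit, $G'\circ e_t$ is then an upper $s$-parallel gradient for $F$, and since $|\partial_s F_i|\le |\Lip\,u_i|\circ e_t$ we conclude $|\nabla_s F| \le G'\circ e_t \le |\nabla u|\circ e_t$ a.e., which is the assertion. For $s> t$ Lemma \ref{l:slope_lipschitz}.2 gives $|\nabla_s F|=0$, consistent with the bound.

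The main obstacle is the second and third steps: verifying that weak $L^2(X,m)$-convergence of $|\Lip\, u_i|$ transfers to weak $L^2(P(X),\Gamma_m)$-convergence of $|\Lip\, u_i|\circ e_t$. The subtlety is that the pushforward $e_{t,*}\Gamma_m$ is $\int_X H_t(\cdot)\,dm$, i.e.\ the Radon--Nikodym derivative of $e_{t,*}(h\,\Gamma_m)$ with respect to $m$ involves the heat kernel, so one must check that testing $|\Lip\,u_i|\circ e_t$ against an arbitrary $L^2(P(X),\Gamma_m)$ function reduces to testing $|\Lip\,u_i|$ against an $L^2(X,m)$ function (its $\cF^{\{t\}}$-conditional expectation, realized as $H_t$ of the relevant density). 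Concretely: for $\Phi\in L^2(P(X),\Gamma_m)$ let $\Phi^t = v\circ e_t$ be its projection onto $\cF^{\{t\}}$-measurable functions; then $\int |\Lip\,u_i|(\gamma(t))\,\Phi\,d\Gamma_m = \int_X |\Lip\,u_i|(y)\,v(y)\,d(e_{t,*}\Gamma_m)(y)$, and $e_{t,*}\Gamma_m$ is a fixed finite (or $\sigma$-finite) measure on $X$, absolutely continuous-compatibly with $m$, against which $|\Lip\,u_i|\rightharpoonup G'$ weakly provided the $|\Lip\,u_i|$ are bounded in $L^2(e_{t,*}\Gamma_m)$ --- which holds since $e_{t,*}\Gamma_m = H_t^*m$-type measure is comparable on compact sets and $|\Lip\,u_i|$ is $L^2(m)$-bounded and supported in a fixed compact set (as $u\in \Cyl$ forces compact support). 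Handling the passage from weak convergence against $m$ to weak convergence against $e_{t,*}\Gamma_m$ — i.e.\ that the density is a legitimate $L^2$ (or bounded) multiplier — is the one genuinely technical point; everything else is bookkeeping with Lemma \ref{l:slope_lipschitz} and the definition of the minimal upper gradient.
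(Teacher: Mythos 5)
Your argument is correct and follows essentially the same route as the paper: approximate $u$ by Lipschitz functions $u_i$, bound the parallel slope of $u_i\circ e_t$ by $|\Lip\, u_i|(\gamma(t))$, and pass to the limit using the minimality of the upper $s$-parallel gradient. The only cosmetic differences are that the paper re-derives the slope bound directly from Theorem \ref{t:parallel_norm_rect} rather than quoting Lemma \ref{l:slope_lipschitz}, and it chooses the approximating sequence so that $|\Lip\, u_a|\to|\nabla u|$ strongly (hence pointwise a.e.\ along a subsequence), which sidesteps the weak-convergence-transfer issue you correctly flag as the one technical point.
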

\begin{proof}
Let us begin with the estimate
\begin{align}\label{e:exist_paral:1}
|\partial_s F|(\gamma)\leq |\Lip\, u|(\gamma(t))\, ,
\end{align}
for a.e. $\gamma$.  To see this let $V=\{V_j\}$ be a $s$-parallel variation of a piecewise geodesic $\gamma$ with $v=\{V_j(t)\}$ the associated variation of $\gamma(t)$, then we can compute
\begin{align}
|D_V F|&\equiv \lim\sup_j \frac{|F(V_j)-F(\gamma)|}{d(V_j(s),\gamma(s))} \notag\\
&= \lim\sup_j \frac{|u(V_j(t))-u(\gamma(t))|}{d(V_j(s),\gamma(s))}\notag\\
&=\lim\sup_j \frac{|u(V_j(t))-u(\gamma(t))|}{d(V_j(t),\gamma(t))}\notag\\
&= |D_v u|(\gamma(t))\leq |\Lip\, u|(\gamma(t))\, ,
\end{align}
where we have used Theorem \ref{t:parallel_norm_rect} in the third line.  Since the variation $V$ and piecewise geodesic $\gamma$ was arbitrary we get the claimed estimate $|\partial_s F|(\gamma)\leq |\Lip\, u|(\gamma(t))$.  By definition $|\nabla_s F|$ is the lower semicontinuous refinement of $|\partial_s F|$, and using \cite{Ambrosio_Calculus_Ricci} we have similarly that $|\nabla u|$ is the lower semi-continuous refinement of $|Lip\, u|$.  Thus, we can let $u_a$ be a sequence of lipschitz functions on $X$ such that
\begin{align}
&u_a\to u \text{ in } L^2(X,m)\, ,\notag\\
&|\Lip\, u_a|\to |\nabla u| \text{ in } L^2(X,m)\, .
\end{align}

Recall from Section \ref{s:parallel_gradient_nonsmooth} that to define $|\nabla_s F|$ we consider all sequences of cylinder functions $F_a\to F$, which converge in $L^2(P(X),\Gamma_m)$ to $F$, and then we consider upper gradient defined as the weak limit $G\equiv \lim |\partial_s F_a|$, if it exists.   We have from Theorem \ref{t:minimal_upper_gradient} that $|\nabla_s F|$ is the unique upper gradient with $|\nabla_s F|(\gamma)\leq G(\gamma)$ for every other weak upper gradient.  In particular, consider the sequence of cylinder functions $F_a\equiv u_a(\gamma(t))$.  Then after passing to subsequences we have by using (\ref{e:exist_paral:1}) that for $a.e.$ $\gamma\in P(X)$ 
\begin{align}
|\nabla_s F|(\gamma)\leq \lim |\partial_s F_a|(\gamma) \leq \lim |\Lip\, u_a|(\gamma(t)) = |\nabla u|(\gamma(t))\, ,
\end{align}
for a.e. $\gamma\in P(X)$, which proves the Lemma.
\end{proof}

This enables us to take the first step and prove a strong version of Theorem \ref{t:boundedricci_implies_BE}.2, which also proves Theorem \ref{t:br_basic_properties}.1:

\begin{theorem}
Let $(X,d,m)$ be a $BR(\kappa,\infty)$ space, then for every lipschitz $u\in L^2(X,m)$ we have that
\begin{align}
|\nabla H_t u|(x)\leq |\Lip\, H_t u|(x)\leq e^{\frac{\kappa}{2}t}H_t|\nabla u|\, ,
\end{align}
for a.e. $x\in X$.  In particular, using the above for $t=0$ gives the equality $|\Lip\, u|(x) = |\nabla u|(x)$ for a.e. $x\in X$ and proves Theorem \ref{t:br_basic_properties}.1.
\end{theorem}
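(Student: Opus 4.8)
The plan is to feed the simplest class of cylinder functions into the defining inequality of a $BR(\kappa,\infty)$ space, Definition \ref{d:bounded_ricci_mms}, and read off the Bakry-Emery gradient estimate, exactly in the spirit of Theorem \ref{t:boundedricci_BE_implies_lowerricci}. Fix a lipschitz $u\in L^2(X,m)$ and $t>0$, and consider the cylinder function $F(\gamma)\equiv u(\gamma(t))$. First I would observe, using the construction of the diffusion measure $\Gamma_m$ through the pushforward formula (\ref{e:diffusion_measure}) together with the kernel representation of $H_t$, that
\begin{align}
\int_{P(X)} F\,d\Gamma_x = \int_X u(y)\rho_t(x,dy) = H_tu(x)\, ,
\end{align}
so that the function on $X$ produced by $\Gamma_x$-averaging $F$ is precisely $H_tu$. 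Then I would invoke Lemma \ref{l:parallel_grad_comp} to control the parallel gradients of $F$: for every $0\leq s\leq t$ and $\Gamma_m$-a.e. $\gamma$ one has $|\nabla_s F|(\gamma)\leq |\nabla u|(\gamma(t))$, while Lemma \ref{l:slope_lipschitz}.(2) (or again Lemma \ref{l:parallel_grad_comp}) gives $|\nabla_s F|=0$ for $s>t$.

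Plugging these into (\ref{e:bounded_ricci_mms}) and computing the time integral,
\begin{align}
|\Lip_x H_tu|(x) &\leq \int_{P(X)} |\nabla_0 F| + \int_0^\infty \tfrac{\kappa}{2}e^{\frac{\kappa}{2}s}|\nabla_s F|\,ds\,\, d\Gamma_x\notag\\
&\leq \int_{P(X)} |\nabla u|(\gamma(t)) + \Big(\int_0^t \tfrac{\kappa}{2}e^{\frac{\kappa}{2}s}\,ds\Big)|\nabla u|(\gamma(t))\,d\Gamma_x\notag\\
&= \int_{P(X)} e^{\frac{\kappa}{2}t}\,|\nabla u|(\gamma(t))\,d\Gamma_x = e^{\frac{\kappa}{2}t}\,H_t|\nabla u|(x)\, ,
\end{align}
where in the last step I again use (\ref{e:diffusion_measure}) to rewrite the $\Gamma_x$-average of $|\nabla u|(\gamma(t))$ as $H_t|\nabla u|(x)$. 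The inequality $|\nabla H_tu|\leq |\Lip\, H_tu|$ is immediate from \cite{Cheeger_DiffLipFun} since the Cheeger gradient is the lower-semicontinuous refinement of the lipschitz slope; chaining the two inequalities gives the claim for $t>0$.

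For $t=0$ one takes the limit: as $t\to 0$ the right-hand side $e^{\frac{\kappa}{2}t}H_t|\nabla u|\to |\nabla u|$ in $L^2$ (using $H_tv\to v$ for $v\in L^2(X,m)$, recalled in Section \ref{sss:cheeger_energy}), while $H_tu\to u$ so that $|\Lip_x H_tu|$ should be compared with $|\Lip_x u|$; combined with the always-valid inequality $|\nabla u|\leq |\Lip\, u|$ one obtains $|\nabla u|=|\Lip\, u|$ a.e., which is Theorem \ref{t:br_basic_properties}.1. The main obstacle I anticipate is the $t=0$ limiting argument: passing from the family of estimates for $t>0$ to a pointwise a.e. equality of slopes requires care, since $|\Lip_x\cdot|$ is not lower semicontinuous under $L^2$ convergence. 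The clean way around this is not to take a naive limit of $|\Lip\, H_tu|$, but rather to note that for any lipschitz $u$ one can feed $F(\gamma)=u(\gamma(t))$ directly and let $t\to 0$ inside the integral using dominated convergence (the integrand $e^{\frac{\kappa}{2}t}|\nabla u|(\gamma(t))$ is bounded by $e^{\frac{\kappa}{2}}\mathrm{Lip}(u)$ on compact time intervals once $u$ has compact support, and $\gamma(t)\to\gamma(0)$ by continuity), yielding $|\Lip_x u|(x)\leq \int_{P_x(X)}|\nabla_0 F|\,d\Gamma_x$ and then invoking Theorem \ref{t:br_basic_properties}.6 (existence of parallel translation invariant variations, giving $|\nabla_0 F|=|\nabla u|(x)$ a.e.) to close the loop; alternatively one simply cites that $d_E = d$ forces equality of slopes via Theorem \ref{t:almost_riemannian_space}. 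Since the almost-Riemannian conclusion is proven elsewhere in this section, the cleanest exposition is to prove the $t>0$ estimate here and deduce the $t=0$ case from $|\nabla u|=|\Lip\,u|$ once that is available.
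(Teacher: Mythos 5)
Your overall strategy --- plug the cylinder function $F(\gamma)=u(\gamma(t))$ into Definition \ref{d:bounded_ricci_mms}, identify $\int F\,d\Gamma_x$ with $H_tu(x)$, bound the parallel gradients by $|\nabla u|(\gamma(t))$ for $s\le t$ and by $0$ for $s>t$, and integrate the exponential weight --- is exactly the paper's. But there are two points where the execution has a gap.

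First, the step ``plugging these into (\ref{e:bounded_ricci_mms})'' is not licensed by termwise bounds on the individual gradients. By the convention of Section \ref{sss:expressions_parallel}, the right-hand side of (\ref{e:bounded_ricci_mms}) is the lower semicontinuous refinement of the \emph{combined} slope expression $|\partial_0 F_j|+\int_0^\infty\frac{\kappa}{2}e^{\frac{\kappa}{2}s}|\partial_s F_j|\,ds$ over sequences $F_j\to F$, and this is in general \emph{larger} than the naive integral $|\nabla_0 F|+\int_0^\infty\frac{\kappa}{2}e^{\frac{\kappa}{2}s}|\nabla_s F|\,ds$ of the individual refined gradients. So an upper bound on each $|\nabla_s F|$ from Lemma \ref{l:parallel_grad_comp} does not directly upper-bound the definitional expression. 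The fix is to run the argument of Lemma \ref{l:parallel_grad_comp} at the level of slopes on the combined expression: take $F_j=e_t^*u_j$ with $u_j\to u$ and $|\Lip\,u_j|\to|\nabla u|$ in $L^2$, use $|\partial_s e_t^*u_j|\le|\Lip\,u_j|(\gamma(t))$ for $s\le t$ and $=0$ for $s>t$, and pass to the limit. This is precisely how the paper proceeds (``as in Lemma \ref{l:parallel_grad_comp}''), and it is fillable, but as written your inequality chain skips it.

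Second, your treatment of $t=0$ is backwards and unnecessarily delicate. There is no limiting argument to make: the defining inequality applies directly to the $\cF^0$-measurable function $F(\gamma)=u(\gamma(0))$, for which $\int F\,d\Gamma_x=u(x)$, $|\nabla_s F|=0$ for $s>0$, and $|\nabla_0 F|\le|\nabla u|(\gamma(0))$ by Lemma \ref{l:parallel_grad_comp}; this gives $|\Lip\,u|(x)\le|\nabla u|(x)$ at once, hence equality, with no appeal to $H_t$, to dominated convergence in $t$, or to Theorem \ref{t:br_basic_properties}.6 (whose proof is itself this very computation, so citing it here inverts the logical order of the section). The paper in fact proves the $t=0$ case \emph{first} and then uses the resulting identity $|\Lip|=|\nabla|$ to close the $t>0$ estimate; your closing sentence gestures at the reverse order, which can be made to work, but only after the first gap above is repaired so that the $t>0$ bound lands on $H_t|\nabla u|$ rather than $H_t|\Lip\,u|$.
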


\begin{proof}

Let us begin with the $t=0$ case by applying (\ref{e:bounded_ricci_mms}) to the test function $F(\gamma)\equiv u(\gamma(0))$.  Note that
\begin{align}\label{e:bounded_implies_lower:1}
\int_{P(X)} F\,d\Gamma_x = u(x)\, ,
\end{align}
and hence by (\ref{e:bounded_ricci_mms}) we have the estimate
\begin{align}
|\nabla u|\leq |\Lip\, u|\leq \int_{P_x(X)} |\nabla_0 F|\,d\Gamma_x\, ,
\end{align}
for a.e. $x\in X$.  Using Lemma \ref{l:parallel_grad_comp} therefore gives us for a.e. $x\in X$ that
\begin{align}
|\nabla u|\leq |\Lip\, u|\leq \int_{P(X)} |\nabla u|(\gamma(0))\, d\Gamma_x=|\nabla u|(x)\, .
\end{align}
In particular we see that $X$ is an almost Riemannian space, which proves Theorem \ref{t:br_basic_properties}.1.

Now we will apply (\ref{e:bounded_ricci_mms}) to the test function $F(\gamma)\equiv u(\gamma(t))$ where $t\geq 0$ in order to prove the remaining part of the Theorem.  Note that
\begin{align}\label{e:bounded_implies_lower:1}
\int_{P(X)} F\,d\Gamma_x = \int_X u(y)\rho_t(x,dy)=H_tu(x)\, ,
\end{align}
and hence by (\ref{e:bounded_ricci_mms}) we have the estimate
\begin{align}
|\nabla H_tu|(x)\leq |\Lip\, H_t u|(x)=|\Lip\, \int_{P(X)} F\,d\Gamma_x|\leq \int_{P(X)} |\nabla_0 F|+\int_0^\infty \frac{\kappa}{2}e^{\frac{\kappa}{2}s}|\nabla_s F|\,d\Gamma_x\, .
\end{align}
Now as in Lemma \ref{l:parallel_grad_comp} we further have for a.e. $x\in X$ the estimate
\begin{align}
\int_{P(X)} |\nabla_0 F|+\int_0^\infty \frac{\kappa}{2}e^{\frac{\kappa}{2}s}|\nabla_s F|\,d\Gamma_x &\equiv \inf_{F_j\to F}\int_{P(X)} |\partial_0 F_j|+\int_0^\infty \frac{\kappa}{2}e^{\frac{\kappa}{2}s}|\partial_s F_j|\,d\Gamma_x\notag\\
&\leq \inf_{u_j\to u}\int_{P(X)} |\partial_0 e_t^*u_j|+\int_0^\infty \frac{\kappa}{2}e^{\frac{\kappa}{2}s}|\partial_s e_t^*u_j|\,d\Gamma_x\, ,
\end{align}
where in the second infinimum we are considering all sequences $u_j\in L^2(X,m)$ be such that $u_j\to u$.  Then as in Lemma \ref{l:slope_lipschitz} we have for $s>t$ that $|\partial_s e_t^*u_j|=0$, while for $s\leq t$ we have the estimate $|\partial_s e_t^* u_j|\leq |\Lip\, u|(\gamma(t))$.  Plugging this in and using that we have now proved that $X$ is almost Riemannian yields for a.e. $x\in X$ that
\begin{align}
|\nabla H_tu|= |\Lip\, H_t u|&\leq \inf_{u_j\to u} \int_{P(X)}|\Lip\,u_j|(\gamma(t))+\int_0^\infty \frac{\kappa}{2}e^{\frac{\kappa}{2}s}|\Lip\,u_j|(\gamma(t))\,d\Gamma_x\, ,\notag\\
&\leq e^{\frac{\kappa}{2}t} H_t|\Lip\, u| =e^{\frac{\kappa}{2}t}H_t |\nabla u|\, ,
\end{align}
which proves the Theorem.
\end{proof}

To prove Theorem \ref{t:boundedricci_implies_BE}.3 and Theorem \ref{t:boundedricci_implies_BE}.4 we rely on the following corollary of Theorem \ref{t:boundedricci_implies_BE}.2:

\begin{lemma}\label{l:heat_estimates}
Let $X$ be a $BR(\kappa,\infty)$ space, then the following estimates hold for every $u\in W^{1,2}(X,m)$:
\begin{enumerate}
\item $|\nabla H_t u|^2\leq e^{\kappa t}H_t|\nabla u|^2$.
\item $H_s|\nabla H_{t-s}u|^2\leq e^{\kappa (t-s)}H_t|\nabla u|^2$.
\item $H_s|\nabla H_{t-s}u|^2\geq e^{-\kappa s}|\nabla H_t u|^2$.
\end{enumerate}
\end{lemma}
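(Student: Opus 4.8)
\textbf{Proof plan for Lemma \ref{l:heat_estimates}.}

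The plan is to deduce all three estimates from the Bakry-Emery gradient inequality $|\nabla H_t u|\le e^{\frac{\kappa}{2}t}H_t|\nabla u|$ already established as Theorem \ref{t:boundedricci_implies_BE}.2 above. The key point is that all three assertions are purely in the world of lower Ricci curvature in the sense of Bakry-Emery, so the standard semigroup manipulations from \cite{BakryEmery_diffusions}, \cite{BakryLedoux_logSov} apply verbatim; I only need to be careful because the underlying space is nonsmooth and the functional calculus is the Cheeger one, so I should invoke Theorem \ref{t:energy_measure} to identify the energy measure $[u]$ with $|\nabla u|^2 m$ wherever a Bochner-type argument would otherwise require the smooth Bochner identity.

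First, for $(1)$: apply the Cauchy-Schwarz (or Jensen) inequality to the kernel representation $H_t|\nabla u|(x)=\int_X |\nabla u|(y)\,\rho_t(x,dy)$, which gives $\big(H_t|\nabla u|\big)^2\le H_t|\nabla u|^2$ since $\rho_t(x,\cdot)$ is a subprobability measure (a probability measure once stochastic completeness is known, but the inequality direction we need holds regardless). Squaring the estimate from Theorem \ref{t:boundedricci_implies_BE}.2 then yields $|\nabla H_t u|^2\le e^{\kappa t}\big(H_t|\nabla u|\big)^2\le e^{\kappa t}H_t|\nabla u|^2$, which is $(1)$.

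Next, for $(2)$: fix $t$ and consider the function $\phi(s)\equiv H_s|\nabla H_{t-s}u|^2$ for $s\in[0,t]$. I would like to show $\phi$ is, up to the exponential factor, monotone; the cleanest route is to apply $(1)$ with the initial datum $H_{t-s}u$ in place of $u$ and with time parameter $s$: this gives $|\nabla H_s(H_{t-s}u)|^2\le e^{\kappa s}H_s|\nabla H_{t-s}u|^2$, i.e. $|\nabla H_t u|^2\le e^{\kappa s}\phi(s)$, which is already $(3)$ after rearranging (with $s$ there playing the role in the statement). For $(2)$ itself, apply $(1)$ with initial datum $u$, time $t-s$, and then push forward by $H_s$ using that $H_s$ is a positivity-preserving linear contraction commuting with itself: $H_s|\nabla H_{t-s}u|^2\le H_s\big(e^{\kappa(t-s)}H_{t-s}|\nabla u|^2\big)=e^{\kappa(t-s)}H_t|\nabla u|^2$, using the semigroup property $H_sH_{t-s}=H_t$. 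This is exactly $(2)$. So in fact $(2)$ and $(3)$ are immediate once $(1)$ is in hand, applied to shifted initial data and composed with the semigroup.

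The only genuine subtlety — and the step I expect to need the most care — is justifying that $(1)$ may legitimately be applied with $H_{t-s}u$ as initial datum: one needs $H_{t-s}u\in W^{1,2}(X,m)$ with $|\nabla H_{t-s}u|^2\in L^1$, and one needs the map $s\mapsto \phi(s)$ to be well-defined and the manipulations (monotonicity of $H_s$, commutation, etc.) to hold at the level of $L^1$ functions rather than just pointwise. This is handled by the regularizing properties of the gradient flow of the Cheeger energy recorded in Section \ref{sss:cheeger_energy}: $H_{t-s}u\in \cD(E_X)\subseteq W^{1,2}(X,m)$ for every $s<t$, and $H_s$ is a contraction on $L^1$ (by interpolation / the Markov property). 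Once these membership and contractivity facts are noted, the chain of inequalities above is purely formal. I would therefore organize the write-up as: (i) prove $(1)$ via Jensen plus squaring; (ii) note the regularity of $H_\bullet u$; (iii) derive $(2)$ by composing $(1)$ with $H_s$ and using the semigroup law; (iv) derive $(3)$ by applying $(1)$ with initial datum $H_{t-s}u$ and rearranging.
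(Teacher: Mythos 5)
Your proposal is correct and follows essentially the same route as the paper: part (1) is obtained exactly as in the paper by squaring the Bakry--Emery estimate of Theorem \ref{t:boundedricci_implies_BE}.2 and applying H\"older/Jensen to the kernel $\rho_t(x,dy)$ (noting only that $\int_X\rho_t(x,dy)\leq 1$ suffices, since stochastic completeness is not yet available), and parts (2) and (3) are the same semigroup manipulations the paper uses, namely applying (1) to shifted initial data and composing with $H_s$ via $H_sH_{t-s}=H_t$. The extra regularity remarks you include are harmless but not needed beyond what the paper already records in Section \ref{sss:cheeger_energy}.
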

\begin{proof}
The first estimate is simply an application of H\"olders inequality.  That is, by Theorem \ref{t:boundedricci_implies_BE}.2 we have
\begin{align}
|\nabla H_t u|^2&\leq \bigg(e^{\frac{\kappa}{2}t}H_t|\nabla u|\bigg)^2\notag\\
&=e^{\kappa t}\bigg(\int_X |\nabla u|\rho_t(x,dy)\bigg)^2\leq e^{\kappa t}\int_X\rho_t(x,dy)\int_X |\nabla u|^2 \rho_t(x,dy)\notag\\
&\leq e^{\kappa t} H_t|\nabla u|^2\, ,
\end{align}
for a.e. $x\in X$.  There is only a little care needed since recall we have not yet proved $X$ is stochastically complete.  That is, the heat kernel is not {\it apriori} a probability measure.  However we always have the estimate $\int_X \rho_t(x,dy)\leq 1$, which is sufficient.  For $(2)$ we directly apply $(1)$ to the function $H_{t-s}u$.  Similarly, for $(3)$ we use $(1)$ and the convolution property of the heat flow to conclude
\begin{align}
|\nabla H_t u|^2 = |\nabla H_sH_{t-s}u|^2\leq e^{\kappa s}H_s|\nabla H_{t-s}u|^2\, ,
\end{align}
for a.e. $x\in X$,as claimed.
\end{proof}

Now we can prove Theorem \ref{t:boundedricci_implies_BE}.3-\ref{t:boundedricci_implies_BE}.5:

\begin{proof}[Proof of Theorem \ref{t:boundedricci_implies_BE}.3-\ref{t:boundedricci_implies_BE}.5]

The argument follows closely the original homotopy argument of \cite{BakryEmery_diffusions}, however a key point from \cite{Ambrosio_Calculus_Ricci} is the ability to identify the energy measure $[u]$ of a function $u\in W^{1,2}(X,m)$ with the Dirichlet energy.  Specifically, let $u\in W^{1,\infty}(X,m)\cap \cD(\Delta_X)$ and $t>0$ be fixed.  Then for $s\in [0,t]$ we can consider the family of functions
\begin{align}
H_s(H_{t-s}u)^2\, .
\end{align}
The restriction on $u$ gives us that the family of functions is differentiable in $s$ and a simple computation gives that
\begin{align}
\frac{d}{ds}H_s(H_{t-s}u)^2=\frac{1}{2}H_s\bigg(\Delta_X(H_{t-s}u)^2-2H_{t-s}u\Delta_X H_{t-s}u\bigg)\, .
\end{align}
Now by Theorem \ref{t:energy_measure} we can then write 
\begin{align}
\frac{d}{ds}H_s(H_{t-s}u)^2 = H_s[H_{t-s}u] = H_s|\nabla H_{t-s}u|^2\, ,
\end{align}
which by integration gives us
\begin{align}
H_tu^2-\big(H_tu\big)^2 = \int_0^tH_s|\nabla H_{t-s}u|^2\, .
\end{align}
Now by applying Lemma \ref{l:heat_estimates} we get the estimates
\begin{align}
&H_tu^2-\big(H_tu\big)^2\leq \kappa^{-1}(e^{\kappa t}-1)H_t|\nabla u|^2\, ,\notag\\
&H_tu^2-\big(H_tu\big)^2\geq \kappa^{-1}(1-e^{-\kappa t})|\nabla H_t u|^2\, ,
\end{align}
which completes the proofs of Theorem \ref{t:boundedricci_implies_BE}.3 and Theorem \ref{t:boundedricci_implies_BE}.4.  To prove Theorem \ref{t:boundedricci_implies_BE}.5 one argues in the same way with respect to the family of functions $H_s\big(H_{t-s}u\ln H_{t-s}u\big)$.
\end{proof}

Now let us remark that the almost Riemannian property of Theorem \ref{t:br_basic_properties}.1 and Theorem \ref{t:boundedricci_implies_BE}.4 implies the strong Feller property of Theorem \ref{t:br_basic_properties}.3:

\begin{corollary}\label{c:feller}
If $(X,d,m)$ is a $BR(\kappa,\infty)$ space, then the heat flow to any $L^2$ function immediately makes the function become lipschitz.
\end{corollary}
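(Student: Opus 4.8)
The plan is to show that for $u\in L^2(X,m)$ and $t>0$ the function $H_tu$ admits a Lipschitz representative, which by Theorem \ref{t:br_basic_properties}.1 (the almost Riemannian property) and the characterization in Theorem \ref{t:almost_riemannian_space} is equivalent to the bound $|\nabla H_tu|\in L^\infty(X,m)$. Indeed, once $|\nabla H_tu|$ is essentially bounded by some constant $C$, almost Riemannianness gives $|\Lip\, H_tu|=|\nabla H_tu|\le C$ a.e., and since $X$ is a complete length space this forces (a representative of) $H_tu$ to be globally $C$-Lipschitz. So the whole matter reduces to a pointwise upper bound on $|\nabla H_tu|$.

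To obtain it I would first regularize using the semigroup property, writing $H_tu=H_{t/2}v$ with $v\equiv H_{t/2}u$; by the standard energy estimate for the heat flow $v\in W^{1,2}(X,m)$ with $E_X[v]$ controlled by $\|u\|_{L^2}^2/t$. Applying the local Poincar\'e inequality Theorem \ref{t:boundedricci_implies_BE}.3 to $v$ at time $t/2$ — extended from $W^{1,\infty}\cap\cD(\Delta_X)$ to all of $L^2$ by density, using that both sides are continuous in the relevant topologies (and Theorem \ref{t:energy_measure} to identify the energy measure) — gives, for a.e. $x$,
\begin{align}
|\nabla H_tu|^2(x)=|\nabla H_{t/2}v|^2(x)\le \frac{e^{\kappa t/2}}{\kappa^{-1}(e^{\kappa t/2}-1)}\Big(H_{t/2}(v^2)(x)-\big(H_{t/2}v\big)^2(x)\Big)\le \frac{e^{\kappa t/2}}{\kappa^{-1}(e^{\kappa t/2}-1)}\,H_{t/2}(v^2)(x).
\end{align}
Alternatively one may feed Theorem \ref{t:boundedricci_implies_BE}.4 into the middle term to re-express the right side through $H_{t/2}|\nabla v|^2$, which amounts to the squared Bakry--Emery estimate Theorem \ref{t:boundedricci_implies_BE}.2. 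Either way the pointwise control of $|\nabla H_tu|$ is reduced to a pointwise bound on $H_{t/2}(v^2)=H_{t/2}\big((H_{t/2}u)^2\big)$.

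When $u$ is bounded this is immediate: $H_{t/2}$ is an $L^\infty$-contraction (Section \ref{ss:Diff_Meas_weaklyRiemannian}), so $(H_{t/2}u)^2\le\|u\|_{L^\infty}^2$ a.e. and hence $H_{t/2}(v^2)\le\|u\|_{L^\infty}^2$, yielding $|\nabla H_tu|^2\le \frac{e^{\kappa t/2}}{\kappa^{-1}(e^{\kappa t/2}-1)}\|u\|_{L^\infty}^2$ and finishing the bounded case. For a general $u\in L^2(X,m)$ one cannot simply truncate, since the resulting bound is not uniform in the truncation level; instead I would smooth over a short initial time and apply the bounded case to $H_\varepsilon u$, for which one needs that $H_\varepsilon$ maps $L^2(X,m)$ into $L^\infty(X,m)$. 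I expect this last point — the $L^2$-to-$L^\infty$ regularization (ultracontractivity) of the heat semigroup — to be the main obstacle, since it fails on an arbitrary weakly Riemannian space and must be extracted from the structure at hand, e.g. from the tight logarithmic Sobolev inequalities for the heat kernel in Theorem \ref{t:boundedricci_implies_BE}.5 via a Davies--Gross type argument, exploiting that the constant $\kappa^{-1}(e^{\kappa t}-1)$ vanishes linearly as $t\to0$. The remaining ingredients — the reduction via almost Riemannianness and the bounded-function case — are routine consequences of the results already established.
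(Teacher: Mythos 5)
Your reduction and your treatment of the bounded case are exactly the paper's intended argument: the corollary is presented as an immediate consequence of the almost Riemannian property (Theorem \ref{t:br_basic_properties}.1) together with the reverse Poincar\'e inequality of Theorem \ref{t:boundedricci_implies_BE}.3 (the text cites item 4, but item 3 is the one that gives a gradient bound), and your chain $|\nabla H_tu|^2\le \tfrac{e^{\kappa t/2}}{\kappa^{-1}(e^{\kappa t/2}-1)}\,H_{t/2}\big((H_{t/2}u)^2\big)\le C(t,\kappa)\|u\|_{L^\infty}^2$, followed by the identification of the minimal weak upper gradient with the Lipschitz slope on a length space, is precisely how that implication goes. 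Up to this point you have written out more detail than the paper does.

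The genuine problem is the step you yourself flag: passing from $L^\infty$ to $L^2$ requires $H_\varepsilon:L^2\to L^\infty$ (ultracontractivity), and this cannot be extracted from Theorem \ref{t:boundedricci_implies_BE}.5 by a Davies--Gross argument. The inequalities there are log-Sobolev inequalities for the heat kernel measures $\rho_t(x,dy)$, i.e.\ hypercontractivity-type estimates, and they are satisfied with $\kappa=1$ by the Gaussian space $(\dR^n,|\cdot|,e^{-|x|^2/2}dx)$ --- a smooth $BR(1,\infty)$ space since $\Ric+\nabla^2f=g$ --- whose Ornstein--Uhlenbeck semigroup is not ultracontractive. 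Concretely, $u(x)=e^{\alpha|x|^2}$ with $0<\alpha<1/4$ lies in $L^2$ there, yet by the Mehler formula $H_tu$ grows like $e^{c(t)|x|^2}$ with $c(t)>0$, so $|\nabla H_tu|$ is unbounded and $H_tu$ is not Lipschitz. Thus the corollary as literally stated for all of $L^2$ is false in the $d=\infty$ setting, and no argument can close the gap you identified. The correct and provable statement --- and the only one the paper actually uses downstream, e.g.\ in the proof of Theorem \ref{t:martingale_continuous_lowerricci}, where the relevant function is bounded --- is the $L^\infty\to\Lip$ Feller property that your first two paragraphs establish.
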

\begin{remark}
Of course all that is being used in the above are the implied lower Ricci curvature bounds given by the previous results of the Section.
\end{remark}

\subsection{$BR(\kappa,\infty)$ $\implies$ $RCD(-\kappa,\infty)$ $\implies$ Lott-Villani-Sturm}\label{ss:stochastic_completeness}

In this Section we begin by proving Theorem \ref{t:br_basic_properties}.2, that a metric measure space with bounded Ricci curvature is stochastically complete.  We will get as a corollary, when combined with the results of Section \ref{ss:bounded_implies_lower}, that $X$ has the lower Ricci curvature bound $RCD(-\kappa,\infty)$.  In particular, $X$ has a lower Ricci curvature bound of $-\kappa$ in the sense of Lott-Villani-Sturm.  \\

We begin with the following

\begin{theorem}\label{t:stochastic_completeness}
Let $(X,d,m)$ be a $BR(\kappa,\infty)$ space.  Then the following, equivalent, conditions all hold:
\begin{enumerate}
\item For $x\in X$ and $t\geq 0$ we have that the heat kernel measures $\rho_t(x,dy)$ on $X$ are probability measures.  That is, $\int_X \rho_t(x,dy)=1$.
\item For $\Gamma_x$ a.e. $\gamma\in P_x(X^*)$ we have that the lifetime $T(\gamma)=\infty$ is not finite.
\item For each probability measure $\mu$ on $X$ we have that the corresponding diffusion measure $\Gamma_\mu$ is a probability measure on $P(X)$.
\end{enumerate}
\end{theorem}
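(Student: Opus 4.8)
The plan is to prove stochastic completeness by exploiting the Bakry-Emery gradient estimate already established in Section~\ref{ss:bounded_implies_lower}, following the classical strategy for complete manifolds with Ricci bounded below. The three conditions are equivalent by Definition~\ref{d:stochastic_completeness}, so it suffices to prove~(1), namely $H_t1 = 1$ a.e. Since $H_t$ is a contraction and positivity-preserving we always have $0 \le H_t 1 \le 1$, so the content is the reverse inequality. First I would fix a point $x_0 \in X$ and a large radius, and use the gradient estimate $|\nabla H_t u| \le e^{\frac{\kappa}{2}t} H_t|\nabla u|$ (Theorem~\ref{t:boundedricci_implies_BE}.2, just proved) together with the local compactness and completeness of $(X,d,m)$ to run a cutoff/maximum-principle argument. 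Concretely, take a sequence of Lipschitz cutoff functions $\chi_R$ with $\chi_R \equiv 1$ on $B_R(x_0)$, $\chi_R \equiv 0$ outside $B_{2R}(x_0)$, $|\nabla \chi_R| \le 2/R$; the functions $u_R \equiv H_t\chi_R$ converge to $H_t 1$ and one estimates $1 - H_t 1$ by controlling how much heat "escapes to infinity," which is governed by a volume-growth bound. The needed volume growth $m(B_R(x_0)) \le e^{CR^2}$ is a standard consequence of $RCD(-\kappa,\infty)$, or can be derived directly from the Bishop--Gromov-type volume bounds implied by the lower Ricci bound from Section~\ref{ss:bounded_implies_lower}.

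Alternatively, and perhaps more cleanly in this setting, I would prove condition~(1) via a Dirichlet-form conservativeness criterion. Since $E_X$ is a regular, strongly local Dirichlet form (Theorem~\ref{t:classic_dirichlet_form}) and $X$ is weakly Riemannian, one can apply a Grigor'yan-type criterion: if there is a point $x_0$ and constants so that $m(B_R(x_0)) \le e^{CR^2}$ for all large $R$, then $E_X$ is conservative, i.e.\ $H_t 1 = 1$. The volume-growth input is supplied by the lower Ricci bound $RCD(-\kappa,\infty)$, which in turn we get from Theorem~\ref{t:boundedricci_implies_BE} combined with Theorem~\ref{t:br_basic_properties}.1 and the equivalence results of~\cite{Ambrosio_BE_vs_LVS},~\cite{Sturm_KE_LVS}. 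There is a mild circularity to watch here: Theorem~\ref{t:boundedricci_implies_LVS} uses stochastic completeness, so in the volume-growth route I must make sure the measure-growth estimate itself only uses the Bakry-Emery inequalities of Theorem~\ref{t:boundedricci_implies_BE} (which hold without stochastic completeness, as noted in the proof of Lemma~\ref{l:heat_estimates}) and the local-to-global comparison of balls, not the full $RCD$ package.

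Once condition~(1) is in hand I would record the immediate consequences needed elsewhere: the heat kernels $\rho_t(x,\cdot)$ are genuine probability measures, so by the characterization of diffusion measures via~(\ref{e:diffusion_measure}) any diffusion measure $\Gamma_\mu$ with $\mu$ a probability measure is itself a probability measure on $P(X)$ (no mass escapes to the cemetery $*$), which is exactly Theorem~\ref{t:br_basic_properties}.2. Then I would close the section by invoking Theorem~\ref{t:boundedricci_implies_BE}, Theorem~\ref{t:br_basic_properties}.1 (the almost Riemannian property, giving $|\nabla u| = |\mathrm{Lip}\,u|$), and the equivalence of Bakry--Emery and $RCD$ from~\cite{Ambrosio_BE_vs_LVS},~\cite{Sturm_KE_LVS} to conclude $X$ is an $RCD(-\kappa,\infty)$ space, hence satisfies the Lott--Villani--Sturm lower bound $CD(-\kappa,\infty)$; this is the statement of Theorem~\ref{t:boundedricci_implies_LVS}.

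The main obstacle is the volume-growth estimate and keeping the logical dependencies straight: I need $m(B_R(x_0)) \le e^{CR^2}$ purely from the consequences of $BR(\kappa,\infty)$ that are available \emph{before} stochastic completeness is known, i.e.\ from the gradient estimate and the weak-Bochner inequality of Theorem~\ref{t:boundedricci_implies_BE}.1. The cleanest way is probably to run the maximum-principle escape-of-heat argument directly with the gradient estimate and a Gaffney-type energy cutoff, which avoids invoking any volume comparison at all: one integrates $\frac{d}{dt}\int (1-H_t1)^2\chi_R^2\,dm$ against the cutoff, uses $|\nabla \chi_R|\le 2/R$ and the $L^2$-contractivity of $H_t$, and lets $R\to\infty$, with the finiteness of $m$ on bounded sets (assumption~(\ref{e:mms_assumptions_intro})) ensuring the boundary term controls the error. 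This sidesteps the circularity entirely and is the route I would write up.
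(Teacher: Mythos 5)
Your preferred route --- the Gaffney-type cutoff computation at the end --- has a genuine gap. Writing $w_t \equiv 1 - H_t1$ and differentiating $\int w_t^2\chi_R^2\,dm$, the cross term you must absorb is controlled by $\int w_t^2|\nabla\chi_R|^2\,dm \le \tfrac{4}{R^2}\,m(B_{2R}(x_0))$, so after integrating in time you get $\int w_t^2\chi_R^2\,dm \le \tfrac{Ct}{R^2}m(B_{2R}(x_0))$, which tends to zero only if $m(B_R)=o(R^2)$. Local finiteness of $m$ (assumption (\ref{e:mms_assumptions_intro})) gives you nothing here --- if it did, every locally finite weakly Riemannian space would be conservative, which is false. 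To reach the Grigor'yan--Davies threshold $m(B_R)\le e^{CR^2}$ you need the refined weighted argument, and that volume bound is exactly the input you acknowledge you have not derived without circling through the $RCD$ machinery. So none of your three routes actually closes as written.

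The paper avoids volume growth entirely by using the variance estimate of Theorem \ref{t:boundedricci_implies_BE}.4, $H_tu^2-(H_tu)^2\le \kappa^{-1}(e^{\kappa t}-1)H_t|\nabla u|^2$, which was proved using only $\int_X\rho_t(x,dy)\le 1$ and hence is available before conservativeness is known. Apply it to $u_r=\phi_r(d(x_0,\cdot))$ with $|\nabla u_r|\le r^{-1}$: the right side is $\le \kappa^{-1}(e^{\kappa t}-1)r^{-2}\to 0$, while $H_tu_r\to\rho_t(x,X)$ and $H_tu_r^2\to\rho_t(x,X)$ by dominated convergence. This forces $\rho_t(x,X)\bigl(1-\rho_t(x,X)\bigr)=0$, so the total heat kernel mass is $0$ or $1$ for each $(x,t)$; continuity of $t\mapsto\rho_t(x,X)$ together with $\rho_0(x,X)=1$ then pins it to $1$. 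I suggest you adopt this argument: the point is that the \emph{two-sided} Poincar\'e-type consequence of bounded Ricci curvature rules out partial mass loss directly, with no uniqueness-class or volume-comparison input. Your concluding paragraph (deducing (2), (3), and then $RCD(-\kappa,\infty)$ once (1) is known) is fine.
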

\begin{remark}
We refer to Section \ref{sss:stochastic_completeness} for the terminology.
\end{remark}

\begin{proof}
Having proved Theorem \ref{t:boundedricci_implies_BE} in Section \ref{ss:bounded_implies_lower} we have in particular for every $u\in W^{1,2}(X,m)$ the estimate
\begin{align}\label{e:stoc_comp:1}
H_t u^2-\big(H_t u\big)^2 \leq \kappa^{-1}(e^{\kappa t}-1)H_t|\nabla u|^2\, .
\end{align}

If $x_0\in X$ is a fixed point let us denote by $d_0(x)\equiv d(x_0,x)$ the distance function to some point $x_0$.  Now let $u_r(y)\equiv\phi_r(d_0(y))$ where $\phi_r(s)$ is a cutoff function on $\dR$ with $\phi_r=1$ on $[-r,r]$, $\phi_r=0$ outside of $[-2r,2r]$, and $|\dot \phi_r|\leq r^{-1}$.  Therefore we have the pointwise estimate
\begin{align}
|\nabla u_r|(x)\leq |\Lip\, u_r|(x)\leq r^{-1}\, .
\end{align}

Now if we apply the test functions $u_r$ to (\ref{e:stoc_comp:1}) we obtain the estimate
\begin{align}
H_t u_r^2-\big(H_t u_r\big)^2 \leq \kappa^{-1}(e^{\kappa t}-1)r^{-2}\, .
\end{align}
Note the following estimates
\begin{align}
\lim_{r\to\infty} H_t u_r(x) = \int_X\rho_t(x,dy)=\rho_t(x,X)\, ,\notag\\
\lim_{r\to\infty} H_t u^2_r(x) = \int_X\rho_t(x,dy)=\rho_t(x,X)\, ,
\end{align}
and therefore by letting $r$ tend to infinity we get
\begin{align}
\rho_t(x,X)\bigg(1-\rho_t(x,X)\bigg)=0\, .
\end{align}
Finally, since $\rho_t(x,X)$ is continuous in time and $\rho_0(x,X)=1$ \cite{Fukushima_DirichletForms}, we have for all $t\geq 0$ and $x\in X$ that
\begin{align}
\int_X \rho_t(x,dy) = 1\, ,
\end{align}
as claimed.  To get $(2)$ we recall that the diffusion measure satisfies the condition 
\begin{align}
\rho_t(x,dy)\equiv e_{t,*}\Gamma_x\, .
\end{align}
Hence if $(2)$ failed then for some $t>0$ we would have that $\Gamma_x$, as a measure on $P(X)$, is not a probability measure, and hence $\rho_t(x,dy)$ is not a probability measure, which is a contradiction.  To get $(3)$ we just note that
\begin{align}
\Gamma_\mu =\int_X \Gamma_x\,d\mu(x)\, ,
\end{align}
and hence since we just argued that $\Gamma_x$ are all probability measures, then so is $\Gamma_\mu$ .
\end{proof}

Now by applying Theorem \ref{t:boundedricci_implies_BE}, Theorem \ref{t:br_basic_properties}.2 and \cite{Ambrosio_Ricci} we can immediately conclude Theorem \ref{t:boundedricci_implies_LVS}.

\subsection{Bounded Ricci Curvature and the Existence of Parallel Translation Invariant Variations}\label{ss:existence_parallel_variations}

On a smooth manifold $M$ if one were to pick a smooth curve $\gamma$ and a vector $v\in T_{\gamma(0)}M$ then there exists a parallel translation invariant vector field $V(t)$ along $\gamma$ for which $V(0)=v$.  More generally, the stochastic parallel translation map shows us this is still true for at least $a.e.$ continuous $\gamma\in P(M)$.  

One could pose a similar question on a metric-measure space $X$.  Given a continuous curve $\gamma\in P(X)$ and a variation $v$ of $\gamma(0)$, does there exist arbitrarily dense partitions $\bt$ such that there exists a parallel translation invariant variation $V$ of $\gamma_\bt$ such that $V(0)\equiv v$.  For a general metric-measure space there is likely quite uncommon.  However, in analogy with the smooth manifold case, we will see in this Section that at least for $a.e.$ $\gamma\in P(X)$ such an extension exists for $a.e.$ variation of $\gamma(0)$.  The key point is to make rigorous the meaning of $a.e.$ variation of $\gamma(0)$.  The only natural way to do this is to use $W^{1,2}(X,m)$ functions.  More precisely we prove Theorem \ref{t:br_basic_properties}.4:

\begin{theorem}
Given any $u\in W^{1,2}(X,m)$ let us consider the function $F(\gamma)\equiv u(\gamma(0))$.  Then for $a.e.$ $\gamma\in P(X)$ we have that 
\begin{align}
|\nabla u|(\gamma(0))=|\nabla_0 F|(\gamma(0))\, .
\end{align}
\end{theorem}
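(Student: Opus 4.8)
The plan is to prove the two inequalities $|\nabla u|(\gamma(0))\leq |\nabla_0 F|(\gamma(0))$ and $|\nabla_0 F|(\gamma(0))\leq |\nabla u|(\gamma(0))$ separately, for $\Gamma_m$-a.e.\ $\gamma$, where $F(\gamma)\equiv u(\gamma(0))$. The second inequality is the easy direction and was essentially already carried out in Lemma \ref{l:parallel_grad_comp} with $t=0$: for a cylinder function $F=e_0^*u$ with $u$ lipschitz one has $|\partial_0 F|(\gamma)\leq |\Lip\, u|(\gamma(0))$ by the parallel-norm property (Theorem \ref{t:parallel_norm_rect}), and then passing to the lower semicontinuous refinement on both sides—using that $|\nabla u|$ is the lsc refinement of $|\Lip\, u|$ by \cite{Ambrosio_Calculus_Ricci} and that $|\nabla_0 F|$ is the lsc refinement of $|\partial_0 F|$ by Definition \ref{d:parallel_gradient}—gives $|\nabla_0 F|(\gamma)\leq |\nabla u|(\gamma(0))$ for a.e.\ $\gamma$. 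Since $X$ is a $BR(\kappa,\infty)$-space it is almost Riemannian by Theorem \ref{t:br_basic_properties}.1, so $|\nabla u|=|\Lip\, u|$ $m$-a.e., which will let us freely interchange the two when convenient.

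First I would establish the reverse inequality $|\nabla u|(\gamma(0))\leq |\nabla_0 F|(\gamma(0))$, which is where the content of Definition \ref{d:bounded_ricci_mms} enters. Apply the defining inequality (\ref{e:bounded_ricci_mms}) to the test function $F(\gamma)\equiv u(\gamma(0))$. Since $\int_{P(X)}F\,d\Gamma_x=u(x)$, the left side is $|\Lip_x u|$, which by almost Riemannianity equals $|\nabla u|(x)$; on the right side, since $F$ is $\cF^0_0$-measurable we have $|\nabla_s F|=0$ for $s>0$ (by Lemma \ref{l:slope_lipschitz}.2 at the slope level, hence also for the gradient by monotonicity of the lsc refinement), so the inequality reduces to
\begin{align}
|\nabla u|(x)\leq \int_{P_x(X)}|\nabla_0 F|\,d\Gamma_x\, ,
\end{align}
for $m$-a.e.\ $x$. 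Combined with the pointwise bound $|\nabla_0 F|(\gamma)\leq |\nabla u|(\gamma(0))=|\nabla u|(x)$ for $\Gamma_x$-a.e.\ $\gamma\in P_x(X)$ established in the previous paragraph, and the fact that $\Gamma_x$ is a probability measure (Theorem \ref{t:br_basic_properties}.2, stochastic completeness), this forces
\begin{align}
|\nabla u|(x)=\int_{P_x(X)}|\nabla_0 F|\,d\Gamma_x\, ,
\end{align}
for $m$-a.e.\ $x$; since $|\nabla_0 F|(\gamma)\leq |\nabla u|(\gamma(0))$ pointwise and the two have equal integral over $P_x(X)$, they must agree $\Gamma_x$-a.e. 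Integrating over $x$ with respect to $m$ and using the disintegration $\Gamma_m=\int_X\Gamma_x\,dm(x)$ then gives $|\nabla_0 F|(\gamma)=|\nabla u|(\gamma(0))$ for $\Gamma_m$-a.e.\ $\gamma\in P(X)$, which is the claim.

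The main obstacle I anticipate is not the equality-from-integral-comparison argument, which is soft, but rather the careful bookkeeping at the level of lower semicontinuous refinements: one must check that the pointwise slope bound $|\partial_0 e_0^*u_j|(\gamma)\leq|\Lip\,u_j|(\gamma(0))$ survives the two independent minimization procedures (the one defining $|\nabla_0 F|$ over sequences of cylinder functions $F_j\to F$ in $L^2(P(X),\Gamma_m)$, and the one defining $|\nabla u|$ over $u_j\to u$ in $L^2(X,m)$), and that a diagonal/Mazur argument as in Theorem \ref{t:minimal_upper_gradient} produces a single sequence realizing both. This is exactly the type of argument appearing in the proof of Lemma \ref{l:parallel_grad_comp}, so I would essentially invoke that lemma (with $t=0$) for the $\leq$ direction and spend the remaining effort only on the reverse direction via (\ref{e:bounded_ricci_mms}). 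One subtlety worth flagging: one should confirm that $u\in W^{1,2}(X,m)$ (not merely lipschitz) is handled by approximating $u$ by lipschitz functions in $W^{1,2}$ and using closedness of the minimal upper parallel gradient (Lemma \ref{l:upper_grad_prop1} and Theorem \ref{t:minimal_upper_gradient}), together with the almost Riemannian property to match slopes and gradients in the limit.
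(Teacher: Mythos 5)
Your proposal is correct and follows essentially the same route as the paper: the pointwise bound $|\nabla_0 F|(\gamma)\leq|\nabla u|(\gamma(0))$ from Lemma \ref{l:parallel_grad_comp}, the integral lower bound $|\nabla u|(x)\leq\int_{P_x(X)}|\nabla_0 F|\,d\Gamma_x$ from applying Definition \ref{d:bounded_ricci_mms} to $F(\gamma)=u(\gamma(0))$ with $|\nabla_s F|=0$ for $s>0$, and the conclusion by comparing a pointwise-dominated function with its integral. Your explicit appeal to stochastic completeness (so that $\Gamma_x$ is a probability measure) and the remarks on the lsc-refinement bookkeeping are sensible clarifications of steps the paper leaves implicit, but they do not change the argument.
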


\begin{proof}[Proof of Theorem \ref{t:br_basic_properties}.4]
Note that for $F(\gamma)\equiv u(\gamma(0))$ we have that
\begin{align}
\int_{P(X)} F\, d\Gamma_x = u(x)\, ,
\end{align}
and by using Lemma \ref{l:slope_lipschitz} we have for $s>0$ the estimate
\begin{align}
|\nabla_s F|=0\, .
\end{align}
Thus by using (\ref{e:bounded_ricci_mms}) we have for $a.e.$ $x\in X$ the estimate
\begin{align}
|\nabla u|(x)\leq \int_{P(M)} |\nabla_0 F|\,d\Gamma_x\, .
\end{align}
However using Lemma \ref{l:parallel_grad_comp} we have for $a.e.$ $x\in X$ and a.e. $\gamma\in P_x(M)$ the pointwise estimate
\begin{align}
|\nabla_0 F|(\gamma)\leq |\nabla u|(x)\, .
\end{align}
Combining these yields for $a.e.$ $x\in X$ and a.e. $\gamma\in P_x(M)$ the estimate
\begin{align}
|\nabla u|(x)=|\nabla_0 F|(\gamma)\, ,
\end{align}
as claimed.
\end{proof}

\section{Ricci Curvature and Martingales on $P(X)$}\label{s:ricci_martingales}

In this Section we explore the relationship between Ricci curvature and martingales on $P(X)$.  In particular, we will see that there is a strong connection between both the lower and bounded Ricci curvature of a metric-measure space, and the regularity of a martingale on $P(X)$.  As was discussed in the introduction, in the smooth case we focued on martingales on the spaces $P_x(X)$ with respect to the measure $\Gamma_x$.  Because we are working with metric-measure spaces it is more convenient not to focus on based path space and instead study to full path space.  That is, we will instead study martingales on $P(X)$ with respect to the measure $\Gamma_m$.  In fact, there is little difference as if we take a continuous martingale $F^t$ on $P(X)$ with respect to $\Gamma_m$, then it is easy to check that its restriction to each $P_x(X)$ is a martingale on $P_x(X)$ with respect to $\Gamma_x$.  That is, we are in effect studying martingales on all the $P_x(X)$ simultaneously.  With a lower Ricci curvature bound in effect, and hence a certain regularity control on the Wiener measure, it is easy to revert back to the study of $P_x(X)$ martingales by the same methods, however we find it more natural to consider the full path space so that such assumptions are not apriori necessary.

To understand the relationship between martingales and ricci curvature let us recall the following.  The one parameter family of $\sigma$-algebras $\cF^t$ on $P(X)$ gives rise to a method for decomposing functions $F\in L^2(P(X),\Gamma_m)$ on path space into a one parameter family of functions $F^t$ by projecting $F$ to the closed subspace of $\cF^t$-measurable functions.  The family $F^t$ is a martingale, see Section \ref{ss:martingale_lowerricci} for a more general definition.  In general, it is a reasonable question to ask about the time-regularity of the evolution of the family $F^t$.  The first results of this Section are for lower Ricci curvature, and tell us that on a metric-measure space with a lower Ricci curvature bound that {\it every} martingale $F^t$ is pointwise continuous in time.  That is, for $\gamma\in P(X)$ we have that $F^t(\gamma)$ is a continuous function of time, see Section \ref{ss:martingale_lowerricci} for a precise statement and proof.

In Section \ref{ss:martingale_quad} we study more refined regularity properties of a martingale $F^t$ associated with {\it bounds} on the ricci curvature.  Specifically, recall that associated with every martingale is its quadratic variation $[F^t]$ and its infinitesmal $[dF^t]$.  The family $F^t$ is highly nondifferentiable in $t$, see Part \ref{part:smooth} for a discussion of this, and in essence $[dF^t]$ is the appropriate replacement for the time derivative of $F^t$.  The first connection between martingales and bounded Ricci curvature is given in Section \ref{ss:martingale_quad}, where Theorem \ref{t:ricci_quad_nonsmooth} is proved and it is shown that a bound on the Ricci curvature is {\it equivalent} to estimates on $[dF^t]$ by the parallel gradients of $F$.  In particular, a consequence of this is that for a nice martingale $F^t$, in particular those generated by cylinder functions, we have as a mapping $[0,\infty)\to L^2(P(X),\Gamma_m)$ that $F^t$ is $C^{\frac{1}{2}}$-H\"older continuous.

In Section \ref{ss:martingale_holder} we see that the results of Section \ref{ss:martingale_lowerricci} for spaces with lower Ricci curvature may be refined for spaces with bounded Ricci curvature.  Namely, as was discussed it is shown in Section \ref{ss:martingale_lowerricci} that for a general martingale $F^t$ that $F^t(\gamma)$ is a continuous function of time for $\gamma\in P(X)$.  In Section \ref{ss:martingale_holder} we refine this on spaces with bounded Ricci curvature and prove Theorem \ref{t:br_basic_properties}.4.  That is, for martingales $F^t$ generated by functions $F$ which are lipschitz with respect to the parallel gradient, in particular cylinder functions, we have that $F^t(\gamma)$ is $C^\alpha$-H\"older continuous for every $\alpha<\frac{1}{2}$.

\subsection{Lower Ricci Curvature and the Continuous Martingale Property}\label{ss:martingale_lowerricci}

Let us first recall from Section \ref{ss:function_spaces} that for a function $F\in L^1(P(X),\Gamma_m)$ on path space that the $\cF^t$-expectation of $F$ may be written
\begin{align}\label{e:expectation:1}
F^t(\gamma) = \int_{P(X)} F_{\gamma_t}(\sigma) \,d\Gamma_{\gamma(t)}\equiv \int_{P(X)} F(\gamma_{[0,t]}\circ\sigma) \,d\Gamma_{\gamma(t)}\, .
\end{align}
We call $F^t$ the martingale generated by $F$.  If $F\in L^2(P(X),\Gamma_m)$ then this is the same as the projection of $F$ to the closed subspace $\cF^t$-measurable functions, and we call $F^t$ a $L^2$-martingale.  More generally, a family $F^t$ of $\cF^t$-measurable functions in $L^1(P(X),\Gamma_m)$ is called a martingale if for each $t<T$ we have that $F^t=(F^T)^t$ a.e.  We will be especially interested in Theorem \ref{t:martingale_continuous_lowerricci} in the time regularity of such a family.  Because each $F^t$ is only defined up to a set of measure zero, let us begin with a formal definition that will be convenient when discussing this issue:
\begin{definition}
Let $X^t,\tilde X^t:P(X)\to \dR$ be stochastic processes, that is, each is a one parameter family of functions such that $X^t,\tilde X^t$ are $\cF^t$-measurable.  We say $X^t$ and $\tilde X^t$ are versions of each other if for each $t\geq 0$ we have that $X^t=\tilde X^t$ $\Gamma_m$-a.e.
\end{definition}

We will often be interested in the pointwise time regularity of a martingale $F^t$, and so by this we mean of some version.  An interesting and very general theorem, which in particular applies to all weakly Riemannian metric-measure spaces $X$, is that for every martingale $F^t$ there exists a {\it cadlag} version \cite{Kuo_book}.  That is, for $\gamma\in P(X)$ we have that $F^t(\gamma):[0,\infty)\to \dR$ is right continuous with left limits.  In general this is the most regularity one could hope for when studying a martingale on a general metric-measure space. 

It turns out that a good deal of stochastic analysis focuses on {\it continuous} martingales, and that they enjoy many additional structural properties not satisfied generally. That is, a continuous martingale is one such that there exists a version such that for each $\gamma\in P(X)$ we have that $F^t(\gamma)$ is a continuous function.  The following generalization of Theorem \ref{t:br_basic_properties}.3 is the continuous martingale property.  Namely, it tells us that on a space with a lower Ricci curvature bound, a martingale always has a continuous version:

\begin{theorem}\label{t:martingale_continuous_lowerricci}
Let $(X,d,m)$ be a $RCD(\kappa,\infty)$ space.  That is, let $X$ be a weakly Riemannian space with lower Ricci curvature bounded from below in the sense of \cite{Ambrosio_Ricci}.   If $F^t$ is a martingale on $X$, then there exists a continuous version of $F^t$.
\end{theorem}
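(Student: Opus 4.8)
The plan is to establish pointwise continuity of a martingale $F^t$ on a space satisfying $RCD(\kappa,\infty)$ by first proving an $L^2$-in-time modulus of continuity estimate for $F^t$, and then promoting this to a pointwise statement via a Kolmogorov-type continuity criterion. The key structural input is that on an $RCD(\kappa,\infty)$ space the heat semigroup satisfies the Bakry-Emery gradient estimate $|\nabla H_t u|^2 \leq e^{-2\kappa t} H_t |\nabla u|^2$ (with our sign conventions, a lower bound $-\kappa$ giving $e^{\kappa t}$), which is exactly the regularity control on the Wiener measure we need. The martingale $F^t$ has the explicit representation $F^t(\gamma) = \int_{P(X)} F(\gamma_{[0,t]}\circ\sigma)\,d\Gamma_{\gamma(t)}$, and the plan is to exploit this formula together with the semigroup estimates.

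First I would reduce to the case where $F$ is a bounded cylinder function, since these are dense in $L^2(P(X),\Gamma_m)$ and the general case follows by a standard approximation and diagonalization argument (every $L^2$-martingale is an $L^2$-limit of cylinder-function martingales, uniformly in $t$ on bounded intervals by Doob's inequality, and a uniform limit of continuous functions is continuous). For a cylinder function $F = e_{\bt}^* u$ with $\bt = \{0 \le t_1 < \cdots < t_N\}$, I would compute $F^t$ explicitly: for $t$ between consecutive partition points $t_k \le t < t_{k+1}$, $F^t$ is itself a cylinder function obtained by integrating out the variables $y_{k+1},\ldots,y_N$ against the heat kernels, so $F^t(\gamma) = v_t(\gamma(t_1),\ldots,\gamma(t_k),\gamma(t))$ for a smooth (Lipschitz, under iterated heat flow) function $v_t$ on $X^{k+1}$. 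The increment $F^{t'} - F^t$ for $t, t'$ in the same interval is then controlled by the oscillation of $v_t$ in its last variable along the Brownian trajectory, and using the heat kernel estimate $H_s u^2 - (H_s u)^2 \le \kappa^{-1}(e^{\kappa s}-1) H_s|\nabla u|^2$ from the lower Ricci bound (available on $RCD(\kappa,\infty)$ spaces), I would obtain $\int_{P(X)} |F^{t'} - F^t|^2 \, d\Gamma_m \le C|t'-t|$, and more precisely the quadratic-variation type identity $\int |F^{t'}-F^t|^2 \, d\Gamma_m = \int (F^{t'})^2 - (F^t)^2 \, d\Gamma_m$. For a finer pointwise control, I would estimate higher moments: since $v_t$ has bounded gradient and the heat kernel on $RCD$ spaces has Gaussian-type concentration, one gets $\mathbb{E}[|F^{t'}-F^t|^{2p} \mid \cF^t] \le C_p |t'-t|^p$ for the martingale, which by the martingale Kolmogorov-Centsov criterion yields a Hölder-continuous version on each interval $[t_k, t_{k+1}]$. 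Finally I would check continuity across the partition points $t_k$: at such a point the function $v_t$ itself changes (a new variable is integrated out), but the one-sided limits agree because $v_{t_k^-}$ evaluated at $\gamma(t_k)$ equals $v_{t_k^+}$ evaluated there by the semigroup property, so the cadlag version is in fact continuous.

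The main obstacle I anticipate is the pointwise (rather than merely $L^2$-in-time or $L^2$-in-$\gamma$) statement: one needs a version of $F^t$ that is continuous simultaneously in $t$ for $\Gamma_m$-almost every $\gamma$. The clean way is the martingale Kolmogorov continuity theorem, which requires a moment bound of the form $\mathbb{E}|F^{t'}-F^t|^{2p} \le C|t'-t|^{1+\epsilon}$ with $p$ large enough; producing such a bound on a general $RCD$ space (as opposed to a smooth manifold, where one has explicit Gaussian heat kernel bounds and Itô calculus) is where the real work lies, since on nonsmooth spaces one only has the functional-analytic Bakry-Emery estimates rather than pointwise kernel asymptotics. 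I expect this can be handled by iterating the gradient contraction estimate: writing $F^{t'} - F^t$ as a telescoping sum over a fine subpartition and using $H_s u^2 - (H_s u)^2 \le \kappa^{-1}(e^{\kappa s}-1)H_s|\nabla u|^2$ together with the boundedness of $F$ (so that $F^t$ is uniformly bounded, giving control of all moments from the $L^2$ bound by interpolation against the $L^\infty$ bound), one obtains the needed higher-moment estimates with the correct power of $|t'-t|$. An alternative route, avoiding Kolmogorov entirely, is to use the a priori cadlag version that exists on any such space and show its jumps have zero quadratic variation, using the $L^2$ continuity estimate and a Doob maximal inequality to rule out jumps; this is perhaps the more robust approach in the metric-measure setting and is the one I would present as the fallback. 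Either way, the proof is deferred in the text, so here I would indicate the strategy and the reduction to cylinder functions, and defer the moment estimates to the detailed proof in Section~\ref{ss:martingale_lowerricci}.
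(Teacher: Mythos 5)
Your reduction to cylinder functions via Doob's maximal inequality is exactly the paper's second step and is fine. The gap is in the cylinder-function case, where you replace the paper's soft argument by a Kolmogorov--\v{C}entsov scheme, and neither of your two routes to the required moment bounds closes. First, the interpolation you propose — controlling $\int|F^{t'}-F^t|^{2p}$ by the $L^\infty$ bound times the $L^2$ bound — yields only $C|t'-t|^{1}$, not $C|t'-t|^{1+\epsilon}$, so Kolmogorov does not apply; to get the exponent $p$ you would need the It\^o formula or Burkholder--Davis--Gundy, and the clean (jump-free) versions of those presuppose that the martingale is already continuous, which is what you are trying to prove (this is precisely why the paper proves the moment estimates of Theorem \ref{t:martingale_holder} only \emph{after} Theorem \ref{t:martingale_continuous_lowerricci} and Corollary \ref{c:quad_continuous_lowerricci} are in hand). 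Second, your fallback — ``use the $L^2$ continuity estimate and Doob's inequality to rule out jumps of the cadlag version'' — is false as stated: a compensated Poisson process satisfies $\mathds{E}|M^{t'}-M^t|^2=|t'-t|$, hence has no fixed times of discontinuity, yet jumps almost surely. An $L^2$-in-time modulus never excludes jumps at random times.

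The paper's argument avoids all of this. For $F=e_{\bt}^*u$ and $t_k\le t<t_{k+1}$ one writes $F^t(\gamma)=\int_X v(y)\,\rho_{t_{k+1}-t}(\gamma(t),dy)$ where $v$ is continuous with compact support by the Feller property (Corollary \ref{c:feller}). The only input from $RCD(\kappa,\infty)$ is then Lemma \ref{l:heatkernel_continuity}: identifying $\rho_t(x,dy)$ with the entropy gradient flow started at $\delta_x$, the $\kappa$-convexity gives the contraction $W_2(\rho_t(x_0,\cdot),\rho_t(x_1,\cdot))\le e^{\kappa t}d(x_0,x_1)$ and hence joint $W_2$-continuity of $(t,x)\mapsto\rho_t(x,\cdot)$. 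Pairing a $W_2$-continuous family of measures against a fixed $C_c$ function makes $F^t(\gamma)$ continuous in $t$ for \emph{every} $\gamma$ — including across the partition points, where your appeal to ``the semigroup property'' is really this joint continuity as $t_{k+1}-t\to 0$ and $\gamma(t)\to\gamma(t_{k+1})$ simultaneously. No moment estimates, no Kolmogorov criterion, and a strictly stronger conclusion (everywhere rather than a.e.) for cylinder functions. If you want to salvage your route, you would need to first establish the $2p$-moment bounds by a purely conditional-increment argument (e.g.\ Garsia-type estimates for the increasing process $[F^t]$ using $\mathds{E}[[F^{t'}]-[F^t]\mid\cF^t]\le C|t'-t|$), but this is substantially more work than the Wasserstein argument.
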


To prove the above we begin with the following well understood estimates, see for instance \cite{LV_OptimalRicci}, \cite{Sturm_GeomMetricMeasSpace}, \cite{Ambrosio_Ricci}.

\begin{lemma}\label{l:heatkernel_continuity}
Let $(X,d,m)$ be a weakly Riemannian space with Ricci curvature bounded from below in the $RCD(-\kappa,\infty)$ sense.  Then the following hold:
\begin{enumerate}
\item For each $t\geq 0$ fixed we have that the mapping $\rho_t(\cdot,dy):X\to \cP_2(X)$ is $e^{\kappa t}$-lipschitz with respect 2-Wasserstein distance on $\cP_2(X)$.
\item The mapping $\rho_{\cdot}(\cdot,dy):\dR^+\times X\to \cP_2(X)$ is continuous with respect 2-Wasserstein distance on $\cP_2(X)$.
\end{enumerate}
\end{lemma}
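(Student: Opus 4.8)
\textbf{Proof proposal for Lemma \ref{l:heatkernel_continuity}.}

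The plan is to deduce both statements from the interpretation of the heat flow $H_t$ as the gradient flow of the entropy functional $\Ent_m$ on the Wasserstein space $\cP_2(X)$, which is precisely the content of the $RCD(-\kappa,\infty)$ condition together with the identification $[u]=|\nabla u|^2 m$ from Theorem \ref{t:energy_measure}. Indeed, under $RCD(-\kappa,\infty)$ the entropy is strongly $(\infty,-\kappa)$-convex along Wasserstein geodesics, and by the general theory of gradient flows of $\lambda$-convex functionals on metric spaces (contraction of the flow, existence and uniqueness of the EVI solution) one obtains that the heat semigroup, viewed as a flow on $\cP_2(X)$, is a contraction semigroup up to the exponential factor $e^{\kappa t}$. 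The key input is that for $x\in X$ the curve $t\mapsto \rho_t(x,dy)$ is exactly the gradient flow trajectory of $\Ent_m$ starting from the Dirac mass $\delta_x$ (which has finite entropy $+\infty$ in the naive sense but is still a legitimate starting point of the flow for $t>0$, by the regularizing property of the flow).

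For part (1), I would argue as follows. Fix $t>0$ and take $x,x'\in X$. By the $\lambda$-contraction estimate for the entropy gradient flow with $\lambda=-\kappa$, we have for the two trajectories $\mu_s\equiv\rho_s(x,dy)$ and $\mu'_s\equiv\rho_s(x',dy)$ the bound
\begin{align}
W_2(\mu_s,\mu'_s)\leq e^{\kappa s}\,W_2(\mu_0,\mu'_0)\, .
\end{align}
Since $\mu_0=\delta_x$ and $\mu'_0=\delta_{x'}$, and $W_2(\delta_x,\delta_{x'})=d(x,x')$, this yields $W_2(\rho_s(x,dy),\rho_s(x',dy))\leq e^{\kappa s} d(x,x')$, which is exactly the claimed $e^{\kappa t}$-Lipschitz property at time $t$. (A point needing a remark: the contraction estimate must be taken in the ``from $\delta_x$'' sense, which is standard since the flow regularizes instantly; alternatively one can start from mollified measures $\rho_\epsilon(x,dy)$ and let $\epsilon\to 0$, using lower semicontinuity of $W_2$ and the semigroup property $\rho_{s+\epsilon}(x,dy)=\int \rho_s(z,dy)\rho_\epsilon(x,dz)$.)

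For part (2), continuity in $x$ is immediate from part (1), so the issue is joint continuity, and it suffices to establish continuity in $t$ locally uniformly in $x$. Here I would use two facts: first, that each trajectory $s\mapsto\rho_s(x,dy)$ is a continuous (indeed $\tfrac12$-H\"older on compact time intervals away from $0$, and continuous up to $0$ in the $W_2$ topology by the stochastic completeness in Theorem \ref{t:stochastic_completeness} ensuring $\rho_s(x,\cdot)\to\delta_x$ narrowly with second moments as $s\to 0$) curve in $\cP_2(X)$; and second, that the modulus of continuity in $t$ is controlled uniformly for $x$ in a fixed compact set. For the latter, the EVI formulation gives a quantitative bound of the form $W_2^2(\rho_{s}(x,dy),\rho_{s+h}(x,dy))\leq C\,h\,\big(\Ent_m(\rho_{s_0}(x,dy))-\inf\Ent_m\big)$ for $s\geq s_0>0$, with $C$ depending only on $\kappa$ and $s_0$; combined with part (1) this upgrades pointwise-in-$x$ continuity to the desired joint continuity on $[s_0,\infty)\times X$, and then $s_0\to 0$ is handled by the uniform-in-$x$ convergence $\rho_s(x,dy)\to\delta_x$ on compacta (again using local compactness of $X$ and the Feller property from Corollary \ref{c:feller}). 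The main obstacle I anticipate is the careful justification near $t=0$: one must ensure that $\delta_x$ genuinely serves as the initial datum for the gradient flow and that the contraction/convergence estimates pass to this limit uniformly in $x$ on compact sets; everything for $t$ bounded away from $0$ is routine $RCD$ theory, but the boundary behavior at $t=0$ is where the argument requires the extra structure (stochastic completeness, local compactness, strong Feller) that we have already established for $BR(\kappa,\infty)$ spaces.
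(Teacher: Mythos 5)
Your part (1) is exactly the paper's argument: identify $t\mapsto\rho_t(x,dy)$ with the gradient flow of $\Ent_m$ started at $\delta_x$ (via \cite{Ambrosio_Calculus_Ricci}) and invoke the $e^{\kappa t}$-contraction for flows of $(-\kappa)$-convex functionals, together with $W_2(\delta_{x_0},\delta_{x_1})=d(x_0,x_1)$. For part (2), however, you take a genuinely different and substantially heavier route than necessary. You split $W_2(\rho_{t_j}(x_j,dy),\rho_t(x,dy))$ by first moving $t_j$ to $t$ at the varying point $x_j$, which is why you are forced to establish a modulus of continuity in $t$ that is \emph{uniform} in $x$ on compacta, and hence to invoke EVI energy-dissipation bounds, uniform entropy bounds on $\rho_{s_0}(x,dy)$ for $x$ in a compact set, and a separate uniform treatment of the boundary $t=0$. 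The paper instead splits in the other order: first move $x_j$ to $x$ at the fixed time $t_j$, which by your part (1) costs at most $e^{\kappa t_j}d(x_j,x)\to 0$ (the constant stays bounded since $t_j\to t$), and then move $t_j$ to $t$ at the \emph{fixed} point $x$, which only requires pointwise-in-$x$ continuity of the trajectory $s\mapsto\rho_s(x,dy)$ — a property that comes for free from the gradient-flow identification, including at $s=0$ where EVI solutions attain their initial datum $\delta_x$ in $W_2$. With that ordering the triangle inequality finishes the proof and none of your uniformity machinery is needed. Your route can be made to work, but as written it has soft spots you would have to firm up (the constant in your EVI bound depends on $\Ent_m(\rho_{s_0}(x,dy))$, which must be shown uniformly bounded for $x$ in a compact set, and $\inf\Ent_m$ need not be finite on a non-compact space); I would recommend reorganizing the triangle inequality as above so these issues never arise.
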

\begin{proof}
The first estimate follows from two points.  First from \cite{Ambrosio_Calculus_Ricci} we see that for $x\in X$ fixed we have that $\rho_t(x,dy)$, as a function of $t$, is gradient flow of $\delta_x$ by the entropy functional with respect to the distance function $W_2$ on $\cP_2(X)$.  In particular, since the entropy functional is $-\kappa$-convex one has the contraction property
\begin{align}
W_2(\rho_t(x_0,dy),\rho_t(x_1,dy))\leq e^{\kappa t}W_2(\delta_{x_0},\delta_{x_1})=e^{\kappa t}d(x_0,x_1)\, ,
\end{align}
as claimed.

To prove the second claim note that the flow of $\rho_t(x,dy)$ in the $t$ variable, with $x$ held fixed, is always continuous since it may be identified with the gradient flow of the entropy functional.  Since $\cP_2(X)$ is a complete metric space it is enough to prove sequential continuity.  Thus let $(t_j,x_j)\to (t,x)$ and fix $\epsilon>0$.  By the previous statement we know for all $j$ sufficiently large that
\begin{align}
W_2\bigg(\rho_{t_j}(x,dy),\rho_{t}(x,dy)\bigg)<\frac{\epsilon}{2}\, .
\end{align}
Further by the first part of the Lemma we have for all $j$ sufficiently large that
\begin{align}
W_2\bigg(\rho_{t_j}(x_j,dy),\rho_{t_j}(x,dy)\bigg)<e^{\kappa t_j}d(x_j,x)<\frac{\epsilon}{2}\, .
\end{align}
Combining these two with a triangle inequality gives the desired result.
\end{proof}

Now we can finish the proof of Theorem \ref{t:martingale_continuous_lowerricci}:

\begin{proof}[Proof of Theorem \ref{t:martingale_continuous_lowerricci}]
Let us begin by assuming that $F=e_\bt^*u\in\Cyl(X)$ is a cylinder function and consider the martingale $F^t$ generated by $F$.  Note then that using (\ref{e:expectation:1}) we may explicitly write a version of $F^t$ by

\begin{align}\label{e:ricci_quad_proof:1}
F^t(\gamma) &= \int_{X^{|\bt|-k-1}} u(\gamma(t_1),\ldots,\gamma(t_k),y_{k+1},\ldots,y_{|\bt|})\rho_{t_{k+1}-t}(\gamma(t),dy_{k+1})\cdots\rho_{t_{|\bt|}-t_{|\bt|-1}}(y_{|\bt|-1},dy_{|\bt|})\notag\\
&= \int_X\bigg(\int_{X^{|\bt|-k-2}} u(\gamma(t_1),\ldots,\gamma(t_k),y_{k+1},\ldots,y_{|\bt|})\cdots\rho_{t_{k+2}-t_{k+1}}(y_{k+1},dy_{k+2})\bigg)\rho_{t_{k+1}-t}(\gamma(t),dy_{k+1})\, ,
\end{align}
where $t_{k+1}$ is the smallest element of the partition with $t<t_{k+1}$.  Note that by the Feller property of Corollary \ref{c:feller} that 
\begin{align}
v(y_{k+1})\equiv \int_{X^{|\bt|-k-2}} u(\gamma(t_1),\ldots,\gamma(t_k),y_{k+1},\ldots,y_{|\bt|})\cdots\rho_{t_{k+1}-t_k}(y_{k+1},dy_{k+2})\, ,
\end{align}
is a continuous function of $y_{k+1}$ with compact support.  Thus, it is reasonable to write
\begin{align}
F^t(\gamma)=\int_X v(y) \rho_{t_{k+1}-t}(\gamma(t),dy)\, .
\end{align}
Now let us fix $\gamma\in P(X)$ and view $F^t(\gamma)$ as a function of $t$.  Now by Lemma \ref{l:heatkernel_continuity} we have that $\rho_{t_{k+1}-t}(\gamma(t),dy)$ is continuous in $t$ as a family of measures with respect to the $2$-Wasserstein distance.  On the other hand, since $v(y)$ is a continuous function with compact support, it therefore follows that $\int_X v(y) \rho_{t_{k+1}-t}(\gamma(t),dy)$ is a continuous function of $t$.  In particular, for {\it every} $\gamma\in P(X)$ we have that $F^t(\gamma)$ is a continuous function of $t$, as claimed.

Now let $F^t$ be a martingale and let $T>0$ be fixed.  Note that there always exists a {\it cadlag} version \cite{Kuo_book}, which we assume we are working with.  Let us choose a sequence of cylinder functions $F_j\in \Cyl(X)$ such that $F_j\to F^T$ in $L^1(X,\Gamma_m)$.  Note that $G^t_j\equiv F_j^t-F^t$ is a martingale with $G^T_j\to 0$ in $L^1$.  Let $S^T_j\equiv \sup_{0,T}|G^t_j|$, then we can apply the Doob inequality \cite{Kuo_book} to get that
\begin{align}
\Gamma_m\big\{S^T_j\geq \epsilon\big\} \leq \frac{||G^T_j||_{L^1}}{\epsilon}\to 0\, .
\end{align}
Thus for a.e. $\gamma\in P(X)$ we have that $S^T_j(\gamma)\to 0$, and in particular that $F^t_j(\gamma)\to F^t(\gamma)$ uniformly.  Since $F^t_j(\gamma)$ are continuous, therefore so is $F^t(\gamma)$.
 
\end{proof}

An important corollary of Theorem \ref{t:martingale_continuous_lowerricci} is the following:

\begin{corollary}\label{c:quad_continuous_lowerricci}
Let $(X,d,m)$ be a $RCD(\kappa,\infty)$ space.  If $F^t$ is a martingale on $X$ and $[F^t]$ is its quadratic variation, then there exists a continuous version of $[F^t]$.  That is, for a.e. $\gamma\in P(X)$ we have that $[F^t](\gamma)$ is a continuous function of time.
\end{corollary}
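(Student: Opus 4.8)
The plan is to deduce continuity of the quadratic variation $[F^t]$ from the continuity of the martingale $F^t$ established in Theorem \ref{t:martingale_continuous_lowerricci}, together with the standard fact that the quadratic variation of a \emph{continuous} $L^2$-martingale is itself continuous. First I would reduce to the case that $F\in L^2(P(X),\Gamma_m)$; indeed the quadratic variation $[F^t]$ is only defined (as a limit in measure of the sums $\sum (F^{t_{k+1}}-F^{t_k})^2$) for square-integrable $F$, so this is no real restriction, and by Theorem \ref{t:martingale_continuous_lowerricci} such an $F^t$ already admits a continuous version, which I fix from now on.

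The core step is the following classical statement, valid on any weakly Riemannian metric-measure space since $\Gamma_m$ is just a probability measure (by stochastic completeness this need not even be invoked — for the $RCD$ case it follows from Theorem \ref{t:br_basic_properties}.2, but more simply one may work on each based space $P_x(X)$ where $\Gamma_x$ is a probability measure, or truncate): if $M^t$ is a continuous $L^2$-martingale with respect to a filtration $\cF^t$, then the process
\begin{align}
[M^t] = \lim_{\bt\subseteq[0,t]}\sum_k \big(M^{t_{k+1}}-M^{t_k}\big)^2
\end{align}
has a version which is continuous and nondecreasing in $t$; this is the Doob--Meyer decomposition together with the continuity of the compensator for continuous martingales, see \cite{Kuo_book}. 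I would quote this directly. Applying it to $M^t\equiv F^t$ gives a continuous version of $[F^t]$, which is exactly the claim.

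The one point requiring a little care is the passage from the abstract Doob--Meyer statement to the explicit limiting description of $[F^t]$ used elsewhere in the paper (Section \ref{ss:martingale_quad}), i.e.\ that the partition sums converge in measure, uniformly on compact time intervals, to the continuous process just produced. For a continuous $L^2$-martingale this uniform-in-time convergence in probability is again standard \cite{Kuo_book}, so the limit process inherits continuity; alternatively one can argue as in the proof of Theorem \ref{t:martingale_continuous_lowerricci} by first treating cylinder functions $F=e_\bt^*u$, where $F^t$ is given explicitly by the heat-kernel formula (\ref{e:ricci_quad_proof:1}) and $[F^t]$ is a finite sum of manifestly continuous contributions built from $\int_X v(y)\rho_{\cdot}(\gamma(t),dy)$ via Lemma \ref{l:heatkernel_continuity}, and then passing to a general $F$ by an $L^1$-approximation together with Doob's maximal inequality applied to the martingales $F_j^t-F^t$ and to their quadratic variations. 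I expect the main (though minor) obstacle to be this approximation argument at the level of quadratic variations rather than martingales, since one must control $[F_j^t]-[F^t]$; this is handled by the Burkholder--Davis--Gundy inequalities \cite{Kuo_book}, which bound $\|\sup_{[0,T]}([F_j^t]-[F^t])\|$ by $\|F_j^T-F^T\|_{L^2}^2$-type quantities and hence give uniform convergence on compacts for $\Gamma_m$-a.e.\ $\gamma$, preserving continuity in the limit.
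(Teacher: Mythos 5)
Your proposal is correct and follows essentially the same route as the paper: both deduce continuity of $[F^t]$ from the continuity of $F^t$ supplied by Theorem \ref{t:martingale_continuous_lowerricci} together with the standard characterization of the quadratic variation of a continuous martingale (the paper phrases this via the jump identity $\Delta[F^t]=\Delta(F^t)^2$ for the unique right-continuous compensator of $(F^t)^2$, which is exactly the Doob--Meyer statement you quote). The additional care you take in reconciling the partition-sum definition with the compensator, via BDG and approximation by cylinder functions, is sound but not needed for the paper's one-line argument.
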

\begin{proof}
Recall that if $F^t$ is {\it cadlag} then $[F^t]$ can be defined as the unique right continuous process with $[F^0]=0$, $(F^t)^2-[F^t]$ is a martingale, and such that the jumps of $[F^t]$ satisfy $\Delta [F^t]=\Delta (F^t)^2$.  In particular, if $F^t(\gamma)$ is continuous then we see that $[F^t](\gamma)$ is as well. 
\end{proof}

\subsection{Bounded Ricci Curvature and Quadratic Variation}\label{ss:martingale_quad}

Recall from Part \ref{part:smooth} of the paper that the quadratic variation of a $L^2$-martingale $F^t$ is defined by
\begin{align}\label{e:quad_variation}
[F^t]\equiv \lim_{\ell(\bt)\to 0}\sum \big(F^{t_{a+1}}-F^{t_a}\big)^2\, ,
\end{align}
where $\bt\in \Delta[0,t]$ is a partition of $[0,t]$ and $\ell(\bt)\equiv\sup|t_{a+1}-t_a|$.  Note as in \cite{Kuo_book} that this limit exists at least in measure.  From this one can define the infinitesmal quadratic variation by
\begin{align}
[dF^t]\equiv \lim\frac{\big(F^{t+s}-F^{t}\big)^2}{s}\, ,
\end{align}
which is nonnegative and exists for a.e. time.  The main result of this Section is the proof of Theorem \ref{t:ricci_quad_nonsmooth}.  More specifically, we will prove the following, which we will see is a slight generalization.

\begin{theorem}\label{t:ricci_quad_equivalence}
Let $(X,d,m)$ be a weakly Riemannian metric-measure space.  Then the following are equivalent:
\begin{enumerate}
\item For every $F\in L^2(P(X),\Gamma_m)$ we have the estimate $$|\nabla_x \int_{P(X)}F\,d\Gamma_x|\leq \int_{P(X)} |\nabla_0 F|+\int_0^{\infty}\frac{\kappa}{2}e^{\frac{\kappa}{2}s}|\nabla_{s} F|\,d\Gamma_{\gamma(t)}\, .$$
\item For every $F\in L^2(P(X),\Gamma_m)$ we have the estimate $$\sqrt{[dF^t]}(\gamma)\leq \int_{P(X)} |\nabla_t F|(\gamma_{[0,t]}\circ \sigma)+\int_t^{\infty}\frac{\kappa}{2}e^{\frac{\kappa}{2}(s-t)}|\nabla_{s} F|(\gamma_{[0,t]}\circ \sigma)\,d\Gamma_{\gamma(t)}\, .$$
\end{enumerate}
\end{theorem}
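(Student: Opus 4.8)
\textbf{Proof proposal for Theorem \ref{t:ricci_quad_equivalence}.}

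The plan is to establish the equivalence by following the same strategy as in the smooth case, namely the proof of Theorem \ref{t:pointwise_r4}, but replacing every appeal to the smooth structure by the parallel-gradient machinery of Section \ref{s:parallel_gradient_nonsmooth}. The key observation that drives everything is the reduction principle: both statements (1) and (2) are, at bottom, a single inequality at $t=0$, and the general-$t$ version follows by applying the $t=0$ version to the shifted function $F_{\gamma_t}(\sigma)\equiv F(\gamma_{[0,t]}\circ\sigma)$ on the based path space $P_{\gamma(t)}(X)$, using the identity $[dF^t](\gamma)=[dF^0_{\gamma_t}]$ together with the matching of parallel slopes $|\nabla_s F_{\gamma_t}|(\sigma)=|\nabla_{s+t} F|(\gamma_{[0,t]}\circ\sigma)$ that follows directly from Definition \ref{d:parallel_gradient} and the concatenation structure of cylinder functions. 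So the core of the proof is the case $t=0$, which asserts
\begin{align}
\sqrt{[dF^0]}(\gamma) = \Big|\nabla_x \!\!\int_{P(X)} F\,d\Gamma_x\Big|(\gamma(0)) \quad\text{(for a suitable version)}\, ,
\end{align}
after which (1) and (2) become literally the same assertion about this quantity.

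First I would prove the $t=0$ identity for cylinder functions $F=e_\bt^*u$ with $t<t_1$. Writing $v\equiv e_{t}^*(\cdot)$ for the $\cF^t$-projection, we have $F^t(\gamma)=\int_{X^{|\bt|}} u\,\rho_{t_1-t}(\gamma(t),dy_1)\cdots$, which is again a cylinder function $e_t^* v$ with $v$ lipschitz (using the strong Feller property, Corollary \ref{c:feller}, which holds on any $BR(\kappa,\infty)$-space but more primitively follows just from the $RCD$ structure we may assume available through statement (1)). Then the computation
\begin{align}
[dF^0](\gamma) = \lim_{s\to 0}\frac{1}{s}\int_X\Big(v(y)-\int_X v(z)\,\rho_s(x,dz)\Big)^2\rho_s(x,dy) = |\nabla v|^2(x)
\end{align}
reproduces Lemma \ref{l:r4_r5_t0} in the nonsmooth setting, where the last equality is the standard Dirichlet-form characterization of $|\nabla v|^2$ as the density of the energy measure (Theorem \ref{t:energy_measure}), valid since $X$ is weakly Riemannian. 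Combining with $v=\int_{P(X)}F\,d\Gamma_x$ and $|\nabla v|=|\nabla_x\int F\,d\Gamma_x|$ gives the $t=0$ identity on cylinder functions; extension to all of $L^2(P(X),\Gamma_m)$ is by density, using that both sides are continuous (lower semicontinuous for the right side) in the appropriate $L^2$ topologies, exactly as in Theorem \ref{t:minimal_upper_gradient}. With the identity in hand, the implication $(1)\Rightarrow(2)$ at $t=0$ is immediate — statement (1) applied to $F$ bounds $|\nabla_x\int F\,d\Gamma_x|$ by precisely the right side of (2) at $t=0$ — and $(2)\Rightarrow(1)$ at $t=0$ is the same statement read backwards. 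The general-$t$ cases then follow by the shift argument above, applied to $F_{\gamma_t}$ which is $\cF^{T-t}$-measurable when $F$ is $\cF^T$-measurable (or to the appropriate analogue for functions on the total path space).

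The main obstacle I anticipate is not the algebra but the measurability/versions bookkeeping: the quantity $[dF^t](\gamma)$ is only defined for a.e. $\gamma$ and a.e. $t$, and one must argue that the shift identity $[dF^t](\gamma)=[dF^0_{\gamma_t}]$ holds on a set of full $\Gamma_m$-measure, simultaneously with the parallel-slope shift identity, in a way that is uniform enough to pass through the density arguments. On a smooth manifold this is handled by explicit stochastic parallel translation; here one must instead rely on the Fubini-type disintegration $\Gamma_m=\int_X\Gamma_x\,dm(x)$ together with the concatenation structure $\Gamma_{\gamma(t)}$-almost everywhere, and check that the lower semicontinuous refinement defining $|\nabla_s F|$ behaves well under this disintegration. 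This is the step that most needs care, and where I would lean on the stability and locality properties collected in Theorem \ref{t:parallel_grad_properties} and Theorem \ref{t:sDirichlet_form_pathspace}, plus the continuous-version results of Theorem \ref{t:martingale_continuous_lowerricci} and Corollary \ref{c:quad_continuous_lowerricci} to make pointwise-in-$t$ statements legitimate. A secondary technical point is the passage from the limit defining $[dF^t]$ "in measure" to the pointwise inequality in (2); this is handled, as in \cite{Kuo_book} and the smooth proof, by extracting along partitions and using that for cylinder functions the relevant limits exist pointwise.
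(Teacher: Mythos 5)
Your overall architecture matches the paper's: reduce everything to the $t=0$ case, where the identity $\sqrt{[dF^0]}(\gamma)=|\nabla_x\int F\,d\Gamma_x|(\gamma(0))$ for cylinder functions (the paper's Lemma \ref{l:quad_var_cylinder}, proved exactly as you describe via the energy-measure identification of Theorem \ref{t:energy_measure}) makes (1) and (2) literally the same statement, and then obtain general $t$ by applying the $t=0$ estimate to the shifted function $F_{\gamma_t}$. This is what the paper does.

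The gap is in the shift step. You assert that the gradient-level identity $|\nabla_s F_{\gamma_t}|(\sigma)=|\nabla_{s+t}F|(\gamma_{[0,t]}\circ\sigma)$ ``follows directly from Definition \ref{d:parallel_gradient} and the concatenation structure of cylinder functions.'' It does not: what follows directly is only the slope-level identity $|\partial_s F_{\gamma_t}|=|\partial_{s+t}F|$. The gradient is the lower semicontinuous refinement of the slope, and for $F_{\gamma_t}$ that refinement is an infimum over approximating sequences of cylinder functions in $L^2(P_{\gamma(t)}(X),\Gamma_{\gamma(t)})$, whereas for $F$ it is taken in $L^2(P(X),\Gamma_m)$; there is no reason these infima commute with the concatenation map, and the paper explicitly warns that the refinement ``may apriori destroy the equality'' $|\nabla_s F_{\gamma_t}|=|\nabla_{t+s}F|$. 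The paper's proof is structured precisely to avoid ever needing this identity: from statement (1) applied to $F_{\gamma_t}$ together with Lemma \ref{l:quad_var_cylinder} it first deduces the inequality $\sqrt{[dF^t]}\leq\int_{P(X)} |\partial_t F|+\int_t^\infty\frac{\kappa}{2}e^{\frac{\kappa}{2}(s-t)}|\partial_s F|\,d\Gamma_{\gamma(t)}$ with \emph{slopes} on the right, valid for all cylinder functions; it then chooses a sequence of cylinder functions $F_j\to F$ realizing the lower semicontinuous refinement of the entire right-hand expression (in the sense of Section \ref{sss:expressions_parallel}), converging strongly and pointwise a.e., and passes to the limit on the left using that $[dF_j^t]\to[dF^t]$ a.e. along a subsequence. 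You do flag the behavior of the refinement under disintegration as ``the step that most needs care,'' but flagging it is not the same as supplying the workaround; as written, your argument rests on an identity the paper treats as unavailable, and this is the one genuinely nontrivial point of the nonsmooth proof.
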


Let us first see that this implies Theorem \ref{t:ricci_quad_nonsmooth}.  
\begin{proof}[proof of Theorem \ref{t:ricci_quad_nonsmooth} given Theorem \ref{t:ricci_quad_equivalence}]

Let us first assume that $X$ is a $BR(\kappa,\infty)$ space.  Then we have that Theorem \ref{t:ricci_quad_equivalence}.1 holds, and hence that Theorem \ref{t:ricci_quad_equivalence}.2 holds.  Further, since we proved Theorem \ref{t:br_basic_properties}.1 in the previous Section we know that $X$ is an almost Riemannian space.  Hence we have proved one direction of Theorem \ref{t:ricci_quad_nonsmooth}.  

On the other hand,  if $X$ is an almost Riemannian space and Theorem \ref{t:ricci_quad_equivalence}.2 holds, then by Theorem \ref{t:ricci_quad_equivalence} we have that Theorem \ref{t:ricci_quad_equivalence}.1 holds.  Further, using that $X$ is almost Riemannian we have that $|\Lip_x \int_{P(X)}F\,d\Gamma_x|=|\nabla_x \int_{P(X)}F\,d\Gamma_x|$, and hence $X$ is a $BR(\kappa,\infty)$ space, which finishes the proof of Theorem \ref{t:ricci_quad_nonsmooth}.
\end{proof}

To prove Theorem \ref{t:ricci_quad_equivalence} we will focus on cylinder functions $F\in \Cyl(X)$.  From this one can extend in the usual fashion to $L^2(P(X),\Gamma_m)$.  The first point to address is to compute the quadratic variation of a martingale induced by a cylinder function.

\begin{lemma}\label{l:quad_var_cylinder}
Let $F=e_\bt^*u\in \Cyl(X)$ be a cylinder function, then for $t\geq 0$ we have that
\begin{align}
[dF^t](\gamma) = \big|\nabla_x \int_{P(X)} F_{\gamma_t}\,d\Gamma_{x}\big|^2(\gamma(t))\, , 
\end{align}
for a.e. $\gamma\in P(X)$.
\end{lemma}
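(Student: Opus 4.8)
\textbf{Proof proposal for Lemma \ref{l:quad_var_cylinder}.}

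The plan is to reduce the computation of $[dF^t]$ to a short-time heat-kernel expansion of a conditional variance, exactly as was done in the smooth case in Lemma \ref{l:r4_r5_t0} and Theorem \ref{t:boundedricci_BE2_implies_lowerricci}, but now using only the nonsmooth tools available for weakly Riemannian spaces. First I would fix a cylinder function $F = e_\bt^* u$ with $\bt = \{0 \le t_1 < \cdots < t_N\}$ and a time $t \ge 0$, and let $k$ be the largest index with $t_k \le t$. Using the martingale formula (\ref{e:expectation:1}), write
\begin{align}
F^t(\gamma) = \int_{P(X)} F_{\gamma_t}(\sigma)\, d\Gamma_{\gamma(t)} = \int_X v(y)\, \rho_{t_{k+1}-t}(\gamma(t), dy)\, ,
\end{align}
where $v(y)$ is the function on $X$ obtained by integrating out the coordinates at times $t_{k+2}, \ldots, t_N$; by the strong Feller property of Corollary \ref{c:feller}, $v$ is (bounded and) lipschitz, so $v \in W^{1,2}(X,m)$ on the relevant support. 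The key observation is that $F^t(\gamma)$ depends on $\gamma$ only through $\gamma(t)$ once we know $\gamma$ restricted to $[0,t]$ — more precisely, $F_{\gamma_t}$ and hence $\int_{P(X)} F_{\gamma_t}\,d\Gamma_{\gamma(t)}$ depend only on $\gamma([0,t_k]) $, which is what lets us identify $[dF^t](\gamma)$ with the $t=0$ quadratic variation of the shifted function $F_{\gamma_t}$ based at $\gamma(t)$.

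Next I would compute $[dF^0]$ for a cylinder function based at $x$. As in Lemma \ref{l:r4_r5_t0}, for small $s>0$ the projection of $F$ to $\cF^s$ is again a cylinder function $e_s^* v_s$ where $v_s = H_{t_1 - s}(\cdots) \to v$ as $s \to 0$, and
\begin{align}
\int_{P(X)} [dF^0]\, d\Gamma_x = \lim_{s\to 0} \frac{1}{s}\int_X \Big( v_s(y) - \int_X v_s(z)\, \rho_s(x,dz)\Big)^2 \rho_s(x,dy)\, .
\end{align}
The inner double integral is precisely $H_s v_s^2(x) - (H_s v_s(x))^2$, the heat-kernel variance of $v_s$ at scale $s$. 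To extract the limit I would invoke the Bakry-Emery estimates already proven in Theorem \ref{t:boundedricci_implies_BE} for a $BR(\kappa,\infty)$ space — specifically the two-sided bound $\kappa^{-1}(1-e^{-\kappa s})|\nabla H_s u|^2 \le H_s u^2 - (H_s u)^2 \le \kappa^{-1}(e^{\kappa s}-1) H_s |\nabla u|^2$ — together with the energy-measure identity of Theorem \ref{t:energy_measure} and the continuity of the heat flow, to conclude that $s^{-1}\big(H_s v^2 - (H_s v)^2\big) \to |\nabla v|^2$ pointwise a.e. as $s\to 0$; the fact that $v_s \to v$ in $W^{1,2}$ handles replacing $v_s$ by $v$. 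Since $v(y) = \int_{P(X)} F_{x}\,d\Gamma_y$ viewed as a function of the initial point, this gives $[dF^0](\gamma) = |\nabla_x \int_{P(X)} F\,d\Gamma_x|^2(x)$ for a.e. $\gamma \in P_x(X)$, where I use Theorem \ref{t:br_basic_properties}.1 (almost Riemannian) to freely identify the Cheeger gradient of this function with its lipschitz slope.

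Finally I would transfer the $t=0$ statement to general $t$ by the shifting argument of Theorem \ref{t:pointwise_r5}: for fixed $\gamma$, the cylinder function $F_{\gamma_t}(\sigma) \equiv F(\gamma_{[0,t]}\circ \sigma)$ on $P(X)$ is a cylinder function on path space based at $\gamma(t)$, and there is an identity $[dF^t](\gamma) = [d(F_{\gamma_t})^0](\gamma')$ valuated along the shifted curve, because the martingale of $F$ at times near $t$, restricted to curves agreeing with $\gamma$ on $[0,t]$, is literally the martingale of $F_{\gamma_t}$ at times near $0$. Applying the $t=0$ case to $F_{\gamma_t}$ then yields
\begin{align}
[dF^t](\gamma) = \big|\nabla_x \int_{P(X)} F_{\gamma_t}\, d\Gamma_x\big|^2(\gamma(t))
\end{align}
for a.e. $\gamma$. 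I expect the main obstacle to be making the short-time limit $s^{-1}(H_s v^2 - (H_s v)^2) \to |\nabla v|^2$ rigorous a.e. in the purely metric-measure setting — in the smooth case this is a one-line Taylor expansion, but here it requires carefully combining the two-sided heat-flow variance bounds from Theorem \ref{t:boundedricci_implies_BE}, the identification $[v] = |\nabla v|^2 m$ from Theorem \ref{t:energy_measure}, lower semicontinuity of the Cheeger energy, and the $W^{1,2}$-convergence $v_s \to v$; a secondary technical point is justifying that the pointwise (rather than merely in-measure) limit defining $[dF^t]$ exists for cylinder functions, which again follows from the regularizing estimates of Section \ref{ss:bounded_implies_lower} applied to the smooth-in-$y$ structure of $v$.
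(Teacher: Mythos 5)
Your overall computational route is the same as the paper's: write $F^{t+s}$ explicitly via the heat kernels, observe that the conditional increment $(F^{t+s}-F^t)^2/s$ reduces to a heat-kernel variance $\tfrac{1}{s}\bigl(H_s w^2 - (H_s w)^2\bigr)$ of the one-variable function $w(y)=\int_{P(X)}F_{\gamma_t}\,d\Gamma_y$ at the point $\gamma(t)$, and identify the $s\to 0$ limit with $|\nabla w|^2(\gamma(t))$. The shift-to-$t=0$ step is also consistent with what the paper does.

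The genuine gap is in how you evaluate the limit $s^{-1}\bigl(H_s w^2-(H_s w)^2\bigr)\to|\nabla w|^2$. You propose to sandwich it between the two-sided Bakry--Emery variance bounds of Theorem \ref{t:boundedricci_implies_BE} and to use the strong Feller property of Corollary \ref{c:feller} to control $w$. Both of those results are consequences of the $BR(\kappa,\infty)$ hypothesis, but Lemma \ref{l:quad_var_cylinder} is stated — and must be proved — for an arbitrary weakly Riemannian space: it is the key computational input to Theorem \ref{t:ricci_quad_equivalence}, which asserts an \emph{equivalence} between the gradient estimate (1) and the quadratic-variation estimate (2). In the direction $(2)\Rightarrow(1)$ one does not yet know that $X$ is a $BR(\kappa,\infty)$ space, so invoking Theorem \ref{t:boundedricci_implies_BE} there is circular, and your version of the lemma would only be usable in the forward direction. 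The paper avoids this entirely: the identity
\begin{align}
\lim_{s\to 0}\frac{1}{s}\int_X\Bigl(w(y)-\int_X w(z)\,\rho_s(x,dz)\Bigr)^2\rho_s(x,dy)=[w](x)
\end{align}
is a curvature-free fact about the energy measure of a regular strongly local Dirichlet form (from \cite{Fukushima_DirichletForms}, as recalled in Section \ref{ss:weakly_riemannian}), and the identification $[w]=|\nabla w|^2 m$ is Theorem \ref{t:energy_measure}, valid for any weakly Riemannian space. To repair your argument you should replace the Bakry--Emery sandwich and the Feller property by this Dirichlet-form identification; the only regularity needed on $w_s$ is membership in $\cD(E_X)$ and differentiability in $s$, which follows from $t_{k+1}-t>0$ and the heat-semigroup structure rather than from any curvature bound.
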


\begin{proof}

Using (\ref{e:expectation:1}) we can compute that if $F=e_{\bt}^* u$ is a cylinder function then for all $t\geq 0$ and $s$ sufficiently small so is $F^{t+s}$ with
\begin{align}
F^{t+s}(\gamma) &= v_s(\gamma(t_1),\ldots,\gamma(t_{k}),\gamma(t+s)) \notag\\
&= \int_{X^{|\bt|-k-1}} u(\gamma(t_1),\ldots,\gamma(t_k),y_{k+1},\ldots,y_{|\bt|})\rho_{t_{k+1}-t-s}(\gamma(t+s),dy_{k+1})\cdots\rho_{t_{|\bt|}-t_{|\bt|-1}}(y_{|\bt|-1},dy_{|\bt|})\, ,
\end{align}
where $t_{k}$ is the smallest element of the partition $\bt$ with $t_{k}\leq t$.  From this we can write
\begin{align}
F^t(\gamma)\equiv \int_X v_s(\gamma(t_1),\ldots,\gamma(t_k),y)\rho_s(\gamma(t),dy)\, .
\end{align}
Now combining these and observing that 
$$v_0(\gamma(t_1)\ldots,\gamma(t_k),\gamma(t))=\int_{P(X)}F_{\gamma_t}\,d\Gamma_{\gamma(t)}\, ,$$ 
allows us to compute
\begin{align}
[dF^t](\gamma)&= \lim\int_{P(X)}\frac{\big(F^{t+s}-F^{t}\big)^2}{s}\,d\Gamma_{\gamma(t)}\notag\\ 
&=\lim\frac{1}{s}\int_X\bigg(v(\gamma(t_1),\ldots,\gamma(t_k),y_s)-\int_M v(\gamma(t_1),\ldots,\gamma(t_k),z_s)\rho_s(\gamma(t),dz_s)\bigg)^2\,\rho_s(\gamma(t),dy_s)\, ,\notag\\
&=\lim\frac{1}{s}\int_X\bigg(w(y_s)-\int_X w(z_s)\rho_s(\gamma(t),dz_s)\bigg)^2\,\rho_s(\gamma(t),dy_s)\, ,
\end{align}
where with $\gamma$ fixed we have written $w_s(y)\equiv v_s(\gamma(t_1),\ldots,\gamma(t_k),y)$.  To deal with the limit on the last line we see from \cite{Fukushima_DirichletForms} and Section \ref{ss:weakly_riemannian} that
\begin{align}
\lim\frac{1}{s}\int_X\bigg(w_s(y_s)-\int_X w_s(z_s)\rho_s(\gamma(t),dz_s)\bigg)^2\,\rho_s(\gamma(t),dy_s) = [w_0](\gamma(t))\, ,
\end{align}
where $[w_0]$ is the energy measure of $w_0$, see Section \ref{ss:weakly_riemannian}, and we have used that $w_s$ is differentiable in $s$ a.e because $t_{k+1}-t>0$.  Using \cite{Ambrosio_Ricci} we are therefore able to compute for $m$-a.e. $\gamma(t)$, and hence $\Gamma_m$-a.e. $\gamma\in P(X)$, that
\begin{align}
[dF^t](\gamma)=|\nabla w_0|^2(\gamma(t)) =|\nabla_x \int_{P(X)} F_{\gamma_t}\,d\Gamma_{x}|^2(\gamma(t))\, ,
\end{align}
as claimed.

\end{proof}

Now let us prove Theorem \ref{t:ricci_quad_equivalence}:

\begin{proof}[proof of Theorem \ref{t:ricci_quad_equivalence}]
Note that if $F\in \Cyl(X)$ is a cylinder function then by applying Lemma \ref{l:quad_var_cylinder} to $F$ at $t=0$ we have that $[dF^0]$ is a function on $X$ given by
\begin{align}
\sqrt{[dF^0]}(x) = |\nabla \int_{P(X)} F\,d\Gamma_x|\, .
\end{align}
In particular, we see that Theorem \ref{t:ricci_quad_equivalence}.2 at $t=0$ is equivalent to Theorem \ref{t:ricci_quad_equivalence}.1.  We need therefore to see that Theorem \ref{t:ricci_quad_equivalence}.1 implies Theorem \ref{t:ricci_quad_equivalence}.2 for $t>0$. 

Let us first use Lemma \ref{l:quad_var_cylinder} as well as the assumed bound on the Ricci curvature in order to conclude
\begin{align}
|\nabla \int_{P(X)} F_{\gamma_t}\,d\Gamma_{\gamma(t)}|\leq\int_{P(X)} |\nabla_0 F_{\gamma_t}|+\int_0^{\infty}\frac{\kappa}{2}e^{\frac{\kappa}{2}s}|\nabla_{s} F_{\gamma_t}|\,d\Gamma_{\gamma(t)} \, .
\end{align}
In the smooth case one could conclude $|\nabla_s F_{\gamma_t}|=|\nabla_{t+s} F|$ and therefore the main result.  In the nonsmooth case things are more subtle.  We clearly still have the estimate $|\partial_s F_{\gamma_t}|=|\partial_{t+s} F|$, however the procedure of taking the lower semi-continuous refinement may apriori destroy the equality of$|\nabla_s F_{\gamma_t}|=|\nabla_{t+s} F|$.  Thus, let us begin by seeing that the above implies the seemingly weaker inequality
\begin{align}\label{e:ricci_quad_weak}
\sqrt{[dF^t]}\leq\int_{P(X)} |\partial_t F|+\int_t^{\infty}\frac{\kappa}{2}e^{\frac{\kappa}{2}(s-t)}|\partial_s F|\,d\Gamma_{\gamma(t)} \, ,
\end{align}
for all cylinder functions $F$.  Recall that by definition
\begin{align}
|\nabla_t F|+\int_t^{\infty}\frac{\kappa}{2}e^{\frac{\kappa}{2}(s-t)}|\nabla_s F| = \liminf_{F_j\to F}\bigg( |\partial_t F_j|+\int_t^{\infty}\frac{\kappa}{2}e^{\frac{\kappa}{2}(s-t)}|\partial_s F_j|\bigg)\, ,
\end{align}
see Section \ref{s:parallel_gradient_nonsmooth}.  Now to finish let $F\in L^2(P(X),\Gamma_m)$ with $F_j\in L^2(P(X),\Gamma_m)$ cylinder functions be such that $F_j\to F$, and such that $|\partial_t F_j|+\int_t^{\infty}\frac{\kappa}{2}e^{\frac{\kappa}{2}(s-t)}|\partial_s F_j|\to |\nabla_t F|+\int_t^{\infty}\frac{\kappa}{2}e^{\frac{\kappa}{2}(s-t)}|\nabla_s F|$ strongly in $L^2$ and pointwise a.e, see Theorem \ref{t:minimal_upper_gradient}.  After passing to another subsequence we can assume by the usual methods that for a.e. $t\geq 0$ that $[dF^t_j]\to [dF^t]$ a.e.  Plugging this into (\ref{e:ricci_quad_weak}) gives for a.e. $\gamma\in P(X)$ that
\begin{align}
\sqrt{[dF^t]}\leq\int_{P(X)} |\nabla_t F|+\int_t^{\infty}\frac{\kappa}{2}e^{\frac{\kappa}{2}(s-t)}|\nabla_s F|\,d\Gamma_{\gamma(t)} \, ,
\end{align}
as claimed.
\end{proof}

\subsection{Bounded Ricci Curvature and H\"older Continuous Martingale Property}\label{ss:martingale_holder}

The main Theorem of this Section is to study the time regularity of a martingale on $P(X)$.  That is, we wish to prove Theorem \ref{t:br_basic_properties}.4, which is restated below:

\begin{theorem}\label{t:martingale_holder}
Let $X$ be a $BR(\kappa,\infty)$ space with $F\in L^2(P(X),\Gamma_m)$ a $\cF^T$-measurable function for some $T<\infty$, which satisfies the uniform lipschitz condition $\sup |\nabla_s F|< A$.  Then the induced martingale $F^t$ has a representative such that for a.e. $\gamma\in P(X)$ we have that $F^t(\gamma)$ is a $C^\alpha$-Holder continuous for all $\alpha<\frac{1}{2}$.
\end{theorem}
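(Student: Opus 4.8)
\textbf{Proof proposal for Theorem \ref{t:martingale_holder}.}

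The plan is to upgrade the quadratic variation estimate of Theorem \ref{t:ricci_quad_nonsmooth} (equivalently Theorem \ref{t:ricci_quad_equivalence}) into a pointwise H\"older bound by a Kolmogorov-type continuity argument, combined with the continuous martingale property of Theorem \ref{t:martingale_continuous_lowerricci}. First I would record that since $X$ is a $BR(\kappa,\infty)$ space it is in particular $RCD(-\kappa,\infty)$ by Theorem \ref{t:boundedricci_implies_LVS}, so Theorem \ref{t:martingale_continuous_lowerricci} already gives a continuous version of $F^t$; it remains only to prove the quantitative H\"older bound for that version. The starting point is the pointwise estimate from Theorem \ref{t:ricci_quad_nonsmooth}, which under the hypothesis $\sup_s |\nabla_s F| < A$ and $\cF^T$-measurability of $F$ yields, for $a.e.$ $\gamma$ and $a.e.$ $t\leq T$,
\begin{align}
[dF^t](\gamma) \leq e^{\frac{\kappa}{2}(T-t)}\int_{P(X)} |\nabla_t F|^2 + \int_t^T \frac{\kappa}{2}e^{\frac{\kappa}{2}(s-t)}|\nabla_s F|^2\, d\Gamma_{\gamma(t)} \leq C(\kappa,T)\, A^2\, ,
\end{align}
where $C(\kappa,T)$ depends only on $\kappa$ and $T$. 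Integrating in $t$ over any subinterval $[t_1,t_2]\subseteq[0,T]$ gives the uniform bound on increments of the quadratic variation,
\begin{align}
\int_{P(X)} \big([F^{t_2}] - [F^{t_1}]\big)\, d\Gamma_m = \int_{P(X)}\big(F^{t_2}-F^{t_1}\big)^2\, d\Gamma_m \leq C(\kappa,T)\, A^2\, |t_2-t_1|\, .
\end{align}

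Next I would bootstrap this $L^2$-increment bound to an $L^p$-bound for all $p<\infty$. The key is that $F^t$, being $\cF^T$-measurable with $\sup_s|\nabla_s F|<A$, is bounded (a cylinder-type function with controlled parallel gradients is bounded on $\cF^T$-measurable sets of finite measure, and more generally the boundedness passes through the martingale projections), so $F^{t_2}-F^{t_1}$ is a bounded martingale increment; by the Burkholder-Davis-Gundy inequalities (valid for continuous $L^2$-martingales on any measure space with a filtration, see \cite{Kuo_book}) one has
\begin{align}
\int_{P(X)} \big|F^{t_2}-F^{t_1}\big|^{2p}\, d\Gamma_m \leq C_p \int_{P(X)} \big([F^{t_2}]-[F^{t_1}]\big)^p\, d\Gamma_m \leq C_p\, \|F\|_\infty^{2p-2}\, \big(C(\kappa,T)A^2\big)\,|t_2-t_1|\, .
\end{align}
Here I am using that $([F^{t_2}]-[F^{t_1}])\leq 4\|F\|_\infty^2$ pointwise together with the $L^1$ bound above. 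Choosing $p$ large, say $2p = N$ with $N$ as large as we like, gives $\mathbb{E}_{\Gamma_m}|F^{t_2}-F^{t_1}|^{N} \leq C_N |t_2-t_1|$, which is precisely the Kolmogorov-\v{C}entsov moment hypothesis. Applying the Kolmogorov continuity theorem (again a purely measure-theoretic statement, \cite{Kuo_book}) then produces a version of $F^t$ which for $\Gamma_m$-a.e.\ $\gamma$ is $C^\alpha$-H\"older in $t$ on $[0,T]$ for every $\alpha < \frac{1}{N} \cdot \frac{N-2}{2}$... more carefully, for every $\alpha < \frac{1}{2} - \frac{1}{N}$; letting $N\to\infty$ through a countable sequence and taking the intersection of the corresponding full-measure sets yields H\"older continuity for all $\alpha<\frac12$. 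Finally, since by Theorem \ref{t:martingale_continuous_lowerricci} a continuous version exists, the Kolmogorov version and the continuous version agree for $a.e.$ $\gamma$ (two continuous versions of the same martingale are indistinguishable), so $F^t(\gamma)$ is genuinely $C^\alpha$-H\"older for every $\alpha<\frac12$ for $a.e.$ $\gamma$.

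The main obstacle I anticipate is the passage from the $L^2$-increment bound to the full $L^p$-increment bound: one must be careful that the Burkholder-Davis-Gundy machinery is being applied to an honest continuous $L^2$-martingale (which requires knowing the continuous version exists, hence the order of the argument matters — $RCD$ continuity first), and that $\|F\|_\infty<\infty$ really follows from the $\cF^T$-measurability plus $\sup_s|\nabla_s F|<A$ hypothesis; alternatively one can avoid the $L^\infty$ bound by truncating $F$ at level $R$, running the argument with constants depending on $R$, and observing the truncations converge, though this adds bookkeeping. A secondary point requiring care is that the quadratic variation estimate of Theorem \ref{t:ricci_quad_nonsmooth} is stated for $F\in L^2(P(X),\Gamma_m)$ and holds $a.e.$ $\gamma$ and $a.e.$ $t$; to integrate it cleanly in $t$ and get the increment bound I would invoke Fubini together with the isometry $\int [F^{t}]\,d\Gamma_m = \int (F^t)^2\,d\Gamma_m - \int(F^0)^2\,d\Gamma_m$ from Section \ref{ss:martingale_quad}, which sidesteps any measurability subtlety in the $(t,\gamma)$ variables.
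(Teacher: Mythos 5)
Your overall strategy is the same as the paper's: bound $[dF^t]$ uniformly by $C=C(\kappa,T)A^2$ via Theorem \ref{t:ricci_quad_nonsmooth}, convert this into moment estimates on the increments $F^{t_2}-F^{t_1}$, and feed those into the Kolmogorov continuity theorem, with the $RCD$-continuity of the martingale (Theorem \ref{t:martingale_continuous_lowerricci} and Corollary \ref{c:quad_continuous_lowerricci}) invoked first so that the stochastic calculus applies. However, there is a genuine gap in your bootstrapping step, and it kills the conclusion. You estimate
\begin{align}
\int_{P(X)} \big([F^{t_2}]-[F^{t_1}]\big)^p\, d\Gamma_m \leq \big(4\|F\|_\infty^2\big)^{p-1}\int_{P(X)}\big([F^{t_2}]-[F^{t_1}]\big)\,d\Gamma_m \leq C_p\,\|F\|_\infty^{2p-2}\,|t_2-t_1|\, ,
\end{align}
which produces only the \emph{first} power of $|t_2-t_1|$ on the $2p$-th moment. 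The Kolmogorov hypothesis $\mathbb{E}|X^t-X^s|^a\leq C|t-s|^{1+b}$ then forces $b=0$, and the theorem yields H\"older exponents $\alpha<b/a=0$, i.e.\ nothing; the exponent $\alpha<\frac12-\frac1N$ you quote would require $\mathbb{E}|F^{t_2}-F^{t_1}|^{N}\lesssim|t_2-t_1|^{N/2}$, which your estimate does not give. Two further problems with that step: the pointwise bound $[F^{t_2}]-[F^{t_1}]\leq 4\|F\|_\infty^2$ is false for general bounded martingales (the quadratic variation of a bounded martingale is not pointwise controlled by its sup-norm), and $\|F\|_\infty<\infty$ does not obviously follow from $\cF^T$-measurability plus $\sup_s|\nabla_s F|<A$ on a noncompact space.

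The repair is short and is essentially what the paper does. The uniform bound $[dF^u]\leq C$ gives the \emph{pointwise} estimate $[F^{t_2}]-[F^{t_1}]=\int_{t_1}^{t_2}[dF^u]\,du\leq C|t_2-t_1|$ for a.e.\ $\gamma$, so no $L^\infty$ bound on $F$ is needed and the $p$-th power of the quadratic variation increment is already $\leq C^p|t_2-t_1|^p$ pointwise. Then either Burkholder--Davis--Gundy or, as in the paper, an induction on $k$ using the Ito formula applied to $x\mapsto x^{2(k+1)}$ for the continuous martingale $(F^t-F^s)^u$ yields
\begin{align}
\int_{P(X)}\big|F^t-F^s\big|^{2k}\,d\Gamma_m\leq \frac{(2k)!}{2^k}\,C^{k}\,|t-s|^{k}\, ,
\end{align}
and Lemma \ref{l:Kolmogorov_continuity} with $a=2k$, $b=k-1$ gives $\alpha<\frac{k-1}{2k}\to\frac12$. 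With that correction your argument coincides with the paper's.
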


Note in particular that by Lemma \ref{l:slope_lipschitz} and Lemma \ref{l:slope_grad_est} that the above holds for all cylinder functions $F\in \Cyl(X)$.

The proof of the Theorem requires the various structure that has been built throughout Section \ref{s:ricci_martingales}.  Additionally we require the following H\"older version of the Kolmogorov continuity theorem.  

\begin{lemma}[Kolmogorov H\"older Continuity Theorem]\label{l:Kolmogorov_continuity}
Let $X^t:P(X)\to \dR$ be a {\it cadlag} process, and assume there exists $a,b,C>0$ such that
\begin{align}
\int_{P(X)} |X^t-X^s|^a \leq C|t-s|^{1+b}\, ,
\end{align}
then for a.e. $\gamma\in P(X)$ we have that $X^t(\gamma)$ is $C^\alpha$-H\"older continuous for every $\alpha<\frac{b}{a}$.
\end{lemma}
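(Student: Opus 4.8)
\textbf{Proof plan for the Kolmogorov--H\"older continuity theorem (Lemma \ref{l:Kolmogorov_continuity}).}
The plan is to follow the classical chaining/dyadic argument of the Kolmogorov continuity theorem, adapted to the fact that $X^t$ is already known to be \emph{cadlag} (so we need not worry about constructing a separable modification from scratch --- the given process is the version we work with). First I would restrict attention to a fixed compact time interval, say $[0,1]$; since H\"older continuity is a local property in $t$, once we have it on $[0,1]$ we obtain it on $[N,N+1]$ for every integer $N$ by the identical argument applied to the shifted process $X^{N+t}$, and a countable union of null sets is null. So fix $\alpha<b/a$ and choose $\beta$ with $\alpha<\beta<b/a$; the target is that for a.e. $\gamma$ the restriction $t\mapsto X^t(\gamma)$ is $C^\alpha$ on $[0,1]$.

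The key steps, in order, are as follows. (i) For each $n\geq 1$ and each $k\in\{0,1,\dots,2^n-1\}$ consider the increment $\Delta_{n,k}X\equiv X^{(k+1)2^{-n}}-X^{k2^{-n}}$. By hypothesis $\int_{P(X)}|\Delta_{n,k}X|^a\,d\Gamma_m\leq C\,2^{-n(1+b)}$. (ii) Define $M_n\equiv\max_{0\leq k<2^n}|\Delta_{n,k}X|$; then $\int_{P(X)}M_n^a\,d\Gamma_m\leq\sum_{k}\int|\Delta_{n,k}X|^a\leq C\,2^n\,2^{-n(1+b)}=C\,2^{-nb}$, so by Chebyshev $\Gamma_m\{M_n>2^{-n\beta}\}\leq C\,2^{-nb}2^{na\beta}=C\,2^{-n(b-a\beta)}$, and $b-a\beta>0$ by the choice of $\beta$. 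Since $\sum_n 2^{-n(b-a\beta)}<\infty$, Borel--Cantelli gives a full-measure set $\Omega_0\subseteq P(X)$ on which, for each $\gamma\in\Omega_0$, there is $n_0(\gamma)$ with $M_n(\gamma)\leq 2^{-n\beta}$ for all $n\geq n_0(\gamma)$. (iii) Now run the standard dyadic chaining: for $\gamma\in\Omega_0$ and two dyadic rationals $s<t$ in $[0,1]$ with $2^{-(m+1)}<t-s\leq 2^{-m}$, $m\geq n_0(\gamma)$, write $s$ and $t$ in binary and connect them through a finite chain of consecutive dyadic points at successively finer levels; telescoping and using $M_n(\gamma)\leq 2^{-n\beta}$ for $n\geq m$ yields
\begin{align}
|X^t(\gamma)-X^s(\gamma)|\leq 2\sum_{n\geq m}2^{-n\beta}=\frac{2}{1-2^{-\beta}}\,2^{-m\beta}\leq C_\beta\,|t-s|^{\beta}\, .
\end{align}
Thus $t\mapsto X^t(\gamma)$ is $\beta$-H\"older, hence uniformly continuous, on the dyadic rationals of $[0,1]$.

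Finally I would upgrade from the dyadics to all of $[0,1]$. Because $X^t(\gamma)$ is cadlag, it has left and right limits everywhere and is right-continuous; the uniform continuity on the dense set of dyadics forces the left and right limits to agree with each other and with $X^t(\gamma)$ at every $t$ (a cadlag function which is uniformly continuous along a dense set has no jumps), so $t\mapsto X^t(\gamma)$ is continuous on $[0,1]$, and passing to the limit in the displayed inequality extends the $\beta$-H\"older bound to all $s,t\in[0,1]$. Since $\alpha<\beta$, a fortiori $X^\cdot(\gamma)$ is $C^\alpha$-H\"older on $[0,1]$ for every $\gamma\in\Omega_0$. Taking a countable intersection over all integer shifts $N$ and over a sequence $\alpha_j\uparrow b/a$ yields a single full-measure set on which $X^t(\gamma)$ is $C^\alpha$ in $t$ on all of $[0,\infty)$ for every $\alpha<b/a$, as claimed. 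I expect the only genuinely delicate point to be step (iii) together with the cadlag-to-continuous upgrade: one must be careful that the chaining estimate is stated only for dyadic arguments and that the exceptional level $n_0(\gamma)$ is absorbed into the constant $C_\beta$ (handling the finitely many coarse levels $n<n_0(\gamma)$ crudely via boundedness of $X$ on compacts, which holds since a cadlag function is bounded on $[0,1]$); everything else is routine measure theory.
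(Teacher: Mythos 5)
Your proof is correct and is precisely the standard dyadic chaining argument that the paper itself does not write out: the paper merely cites \cite{Kuo_book} and remarks that the proof there "actually gives the stated result." Your steps (i)--(iii) are the classical Kolmogorov scheme, and the one point the paper flags as needing care --- upgrading from the stated (weaker) version to the cadlag setting --- is exactly what you handle correctly via the observation that a cadlag function uniformly continuous along the dyadics has coinciding left and right limits everywhere, together with absorbing the finitely many coarse levels $n<n_0(\gamma)$ into a $\gamma$-dependent constant.
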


We refer the reader to \cite{Kuo_book} for a proof of the lemma.  Strictly speaking, a weaker version of the Kolmogorov continuity theorem is stated in \cite{Kuo_book}, however it can be checked that the proof actually gives the stated result.

Let us now prove Theorem \ref{t:martingale_holder}:

\begin{proof}[Proof of Theorem \ref{t:martingale_holder}:]

If $F\in L^2(P(X),\Gamma_m)$ then by Theorem \ref{t:ricci_quad_nonsmooth} we have that there exists $C(\kappa,A,T)\equiv e^{\kappa T}A^2$, where $\kappa$ is the Ricci bound of the metric-measure space, such that we have the uniform estimate

\begin{align}
[dF^t]\leq C\, ,
\end{align}
for all $t$ and a.e. $\gamma\in P(X)$.

Our main estimate required to prove the Theorem is the following.  We will show for each integer $k\in \dN$ that 
\begin{align}
\int_{P(X)}\big|F^t-F^s\big|^{2k}\,d\Gamma_m\leq \frac{(2k)!}{2^k} C^{k}\,|t-s|^k\, .
\end{align}
Then by applying lemma \ref{l:Kolmogorov_continuity} to all $a=2k$ and $b=k-1$ we will have proved the Theorem.  We will prove the estimate by induction, so let us begin with the $k=1$ base case.  Here we can use the defining properties of the quadratic variation to compute
\begin{align}
\int_{P(X)}\big|F^t-F^s\big|^{2}\,d\Gamma_m &= \int_{P(X)}[F^t]-[F^s]\,d\Gamma_m\notag\\
&=\int_s^t\int_{P(X)} [dF^u]\,d\Gamma_m \leq C|t-s|\, ,
\end{align}
as claimed.

Now to prove the induction step, let us fix $s<t$ and consider the martingale generated by $F^t-F^s$.  Thus for $u\in (s,t]$ we have that $\big(F^t-F^s\big)^u=F^u-F^s$ and for $u\leq s$ we have that $\big(F^t-F^s\big)^u=0$.  Further, we have for $u\in (s,t]$ that that the quadratic variation is given by $[\big(F^t-F^s\big)^u]=[F^u]$.  

Let us note from Theorem \ref{t:martingale_continuous_lowerricci} and Corollary \ref{c:quad_continuous_lowerricci} that both the martingale $\big(F^t-F^s\big)^u$ and its quadratic variation are continuous in time.  In particular, we can apply the Ito formula \cite{Kuo_book} with respect to a smooth function $f:\dR\to\dR$ to obtain 
\begin{align}
\int_{P(X)}f(F^t-F^s)\, d\Gamma_m=\frac{1}{2}\int_{P(X)} \int_s^tf''(F^u-F^s)[dF^u]\,d\Gamma_m\, .
\end{align}
Now assume we have proved the claim for some $k$, and we wish to show the claim for $k+1$.  Then applying the Ito formula to $f(x)\equiv x^{2(k+1)}$ we obtain
\begin{align}
\int_{P(X)} |F^t-F^s|^{2(k+1)}\,d\Gamma_m &= \frac{(2k+2)(2k+1)}{2}\int_{P(X)} \int_s^t|F^u-F^s|^{2k}[dF^u]\,d\Gamma_m\notag\\
&\leq \frac{(2k+2)(2k+1)}{2}C|t-s|\int_{P(X)}|F^t-F^s|^{2k}\,d\Gamma_m \notag\\
&\leq \frac{(2k+2)(2k+1)}{2}C^{k+1}\frac{(2k)!}{2^k} \,|t-s|^{k+1} = \frac{(2k+2)!}{2^{k+1}}C^{k+1}\, |t-s|^{k+1}\, ,
\end{align}
as claimed, which therefore finishes the proof of the Theorem.

\end{proof}

\section{Bounded Ricci Curvature and Analysis on Path Space}\label{s:bounded_ricci_analysis}

In this Section we introduce the Ornstein-Uhlenbeck operator on the path space of a metric-measure space, and discuss some of its properties.  Classically, on a smooth manifold, the Ornstein-Uhlenbeck operator is defined on based path space.  As in our discussion of martingales, it is better on nonsmooth spaces to begin by considering all the based path spaces simultaneously and to discuss the Ornstein-Uhlenbeck operator as an operator on $L^2(P(X),\Gamma_m)$.  Also as with the martingale case, once some basic structure is in place, for instance a lower Ricci curvature bound, one can argue by methods similar to this section to then consider the operator on each based path space individually if one is interested.

In Section \ref{ss:H1_grad_nonsmooth} we introduce the $H^1_0$-gradient on a general metric space, and prove some of its basic properties.  In particular, we show that on a smooth metric-measure space the definition agrees with the standard one.  Much of the work for this construction is analogous to Section \ref{s:parallel_gradient_nonsmooth} when we constructed variations and the parallel gradient.  In Section \ref{sss:OU_construction} we use the $H^1_0$-gradient to build a Dirichlet form on $L^2(P(X),\Gamma_m)$, which allows us to define the Ornstein-Uhlenbeck operator and discuss some of its basic points.  In Section \ref{ss:boundedricci_OU_nonsmooth} we show, as in the smooth case, that on a metric-measure space with bounded Ricci curvature one has a spectral gap and log-sobolev inequalities for the Ornstein-Uhlenbeck operators.

\subsection{The $H^1_0$-Gradient}\label{ss:H1_grad_nonsmooth}

In Section \ref{s:parallel_gradient_nonsmooth} we introduced the parallel gradient norm $|\nabla_s F|$ of a function on path space.  Following a structural pattern similar to Section \ref{s:parallel_gradient_nonsmooth} we begin in Section \ref{sss:H1_slope} by defining the $H^1_0$-slope of a cylinder function.  In Section \ref{sss:H10_gradient} we use the slope to define the $H^1_0$-gradient by taking the lower semicontinuous refinement.  We end the Section by showing that on a smooth metric-measure space the definition of the $H^1_0$-gradient given in this Section agrees with the standard definition.

\subsubsection{The $H^1_0$-Slope}\label{sss:H1_slope}
The definition of the parallel gradient began with the introduction of the parallel slopes $|\partial_s F|$ given in Section \ref{ss:parallel_slope}.  It is possible to use the structure of Section \ref{s:variation} to define the $H^1_0$-slope analogously by supping over a larger class of variations.  However, it will instead be more useful to use Proposition \ref{p:parallel_H1_relation} from Part \ref{part:smooth} of the paper as motivation for the definition.  Thus we define the $H^1_0$-slope of a cylinder function $F\in\Cyl(X)$ by
\begin{align}\label{e:H1_slope}
|\partial F|^2_{H^1_0}(\gamma)\equiv \int_0^\infty |\partial_s F|^2(\gamma)\, .
\end{align}

Before using this to define the $H^1_0$-gradient, let us explore some properties of the slope.  The first is a simple estimate which tells us that cylinder functions have bounded $H^1_0$-slope:

\begin{lemma}\label{l:H1_slope_lipschitz}
Let $F=e_\bt^*u\in \Cyl(X)$ be a cylinder function with $\bt\in\Delta[0,T]$ and $u\in Lip_c(X^{|\bt|})$.  Then we have that
\begin{align}
|\partial F|_{H^1_x}(\gamma)\leq \sqrt{|\bt|\, T}\cdot ||u||_{Lip}\, .
\end{align}
\end{lemma}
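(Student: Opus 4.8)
The plan is to bound $|\partial F|_{H^1_0}$ pointwise by combining the definition \eqref{e:H1_slope} with the two estimates from Lemma \ref{l:slope_lipschitz}. First I would recall that by Lemma \ref{l:slope_lipschitz}(2), for any $s > T$ we have $|\partial_s F|(\gamma) = 0$, so the integral defining $|\partial F|^2_{H^1_0}(\gamma) = \int_0^\infty |\partial_s F|^2(\gamma)\, ds$ is actually supported on $[0,T]$. Next, by Lemma \ref{l:slope_lipschitz}(1), for every $s \in [0,T]$ we have the uniform bound $|\partial_s F|(\gamma) \leq \sqrt{|\bt|}\cdot|\Lip\, u|(\gamma(\bt)) \leq \sqrt{|\bt|}\cdot \|u\|_{Lip}$, where the last step uses that the global Lipschitz constant dominates the pointwise Lipschitz slope.

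Putting these together, I would estimate
\begin{align}
|\partial F|^2_{H^1_0}(\gamma) = \int_0^\infty |\partial_s F|^2(\gamma)\, ds = \int_0^T |\partial_s F|^2(\gamma)\, ds \leq \int_0^T |\bt|\cdot \|u\|_{Lip}^2\, ds = |\bt|\, T\, \|u\|_{Lip}^2\, .
\end{align}
Taking square roots gives $|\partial F|_{H^1_0}(\gamma) \leq \sqrt{|\bt|\, T}\cdot \|u\|_{Lip}$, which is exactly the claimed bound. Since this holds for every $\gamma \in P(X)$, the cylinder function $F$ has uniformly bounded $H^1_0$-slope.

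There is essentially no obstacle here: the statement is a direct consequence of the already-established slope estimates, and the only mild point worth noting is that the integration in \eqref{e:H1_slope} must be interpreted correctly (an integral over $[0,\infty)$ that collapses to $[0,T]$ by the support property). If one prefers, the bound $|\Lip\, u| \le \|u\|_{Lip}$ can be spelled out from the definition of the global Lipschitz seminorm, but this is routine. I would present the argument essentially as the short display above, perhaps preceded by one sentence invoking Lemma \ref{l:slope_lipschitz}.
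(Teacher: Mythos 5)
Your argument is correct and is exactly the paper's proof: apply Lemma \ref{l:slope_lipschitz}(1) to bound $|\partial_s F|\leq\sqrt{|\bt|}\cdot\|u\|_{Lip}$ on $[0,T]$, use part (2) to kill the integrand for $s>T$, and integrate in (\ref{e:H1_slope}). No gaps.
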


\begin{proof}
Using Lemma \ref{l:slope_lipschitz} we have for $0\leq s\leq T$ that $|\partial_s F|\leq \sqrt{|\bt|}\cdot||u||_{Lip}$, while for $s>T$ we have that $|\partial_s F|=0$.  Plugging this into (\ref{e:H1_slope}) gives the desired estimate.
\end{proof}

Now in analogy with Section \ref{ss:parallel_slope} we discuss the convexity properties of the $H^1_0$-slope.

\begin{theorem}\label{t:H1_slope_properties}
The following properties hold for the $H^1_0$-slope:
\begin{enumerate}

\item (Convexity) If $F,G\in \Cyl(X)$ are cylinder functions, then we have the convexity estimates
\begin{align}
&|\partial (F+G)|_{H^1_0}\leq |\partial F|_{H^1_0}+|\partial G|_{H^1_0}\, .
\end{align}

\item (Strongly Local) If $F,G\in \Cyl(X)$ are cylinder functions with $F=\text{const}$ on a neighborhood of the support of $G$, then
\begin{align}
|\partial (F+G)|_{H^1_0}= |\partial F|_{H^1_0}+|\partial G|_{H^1_0}\, .
\end{align}

\item (Stability under Lipschitz Calculus) If $F\in \Cyl(X)$ is a cylinder function and $\phi:\dR\to\dR$ is lipschitz, then
\begin{align}
|\partial\big( \phi\circ F\big)|_{H^1_0}\leq ||\phi||_{Lip}\cdot |\partial F|_{H^1_0}\, .
\end{align}

\item (Strong Convexity) If $F,G,\chi\in \Cyl(X)$ are cylinder functions with $0\leq \chi\leq 1$, then we have the pointwise convexity estimate
\begin{align}
|\partial(\chi F+(1-\chi)G)|_{H^1_0}\leq \chi|\partial F|_{H^1_0}+(1-\chi)|\partial G|_{H^1_0}+|\partial\chi|_{H^1_0}\cdot|F-G|\, .
\end{align}
\end{enumerate}
\end{theorem}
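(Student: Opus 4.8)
The plan is to deduce Theorem \ref{t:H1_slope_properties} almost immediately from the corresponding properties of the parallel slopes established in Theorem \ref{t:slope_properties}, using the definition $|\partial F|^2_{H^1_0}(\gamma)\equiv \int_0^\infty |\partial_s F|^2(\gamma)\,ds$. The point is that the $H^1_0$-slope is an $L^2(ds)$-norm of the family $\{|\partial_s F|\}_{s\geq 0}$, so convexity statements for the slopes transfer to the $H^1_0$-slope via the triangle inequality in $L^2([0,\infty),ds)$ applied pointwise in $\gamma$. For each cylinder function the relevant family is supported in $[0,T]$ by Lemma \ref{l:slope_lipschitz}(2) and is bounded by Lemma \ref{l:H1_slope_lipschitz}, so all integrals are finite and these manipulations are legitimate.

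First, for the convexity statement (1): by Theorem \ref{t:slope_properties}(1) we have $|\partial_s(F+G)|(\gamma)\leq |\partial_s F|(\gamma)+|\partial_s G|(\gamma)$ for every $s$ and $\gamma$. Squaring, integrating in $s$, and applying the Minkowski inequality $\|a+b\|_{L^2(ds)}\leq \|a\|_{L^2(ds)}+\|b\|_{L^2(ds)}$ with $a(s)=|\partial_s F|(\gamma)$, $b(s)=|\partial_s G|(\gamma)$ gives exactly $|\partial(F+G)|_{H^1_0}(\gamma)\leq |\partial F|_{H^1_0}(\gamma)+|\partial G|_{H^1_0}(\gamma)$. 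Second, for the strongly local statement (2): if $F$ is constant on a neighborhood of $\mathrm{supp}(G)$, then Theorem \ref{t:slope_properties}(2) gives $|\partial_s(F+G)|(\gamma)=|\partial_s F|(\gamma)+|\partial_s G|(\gamma)$ pointwise; but more is true in this situation — at each $\gamma$ at least one of $|\partial_s F|(\gamma)$, $|\partial_s G|(\gamma)$ vanishes (as in the proof of Theorem \ref{t:slope_properties}(2), where for $j$ large each vertex sees either $F$ locally constant or $G$ locally zero), so $|\partial_s(F+G)|^2=|\partial_s F|^2+|\partial_s G|^2$, and even if one only uses $|\partial_s(F+G)|=|\partial_s F|+|\partial_s G|$ with disjoint supports of the two slopes in $s$, integrating $|\partial_s(F+G)|^2$ gives $\int |\partial_s F|^2+\int|\partial_s G|^2$, i.e. $|\partial(F+G)|_{H^1_0}^2=|\partial F|_{H^1_0}^2+|\partial G|_{H^1_0}^2$, which is the claimed identity after taking square roots. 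Third, for (3): Theorem \ref{t:slope_properties}(3) gives $|\partial_s(\phi\circ F)|\leq \|\phi\|_{Lip}|\partial_s F|$ pointwise in $s$, so squaring and integrating yields $|\partial(\phi\circ F)|_{H^1_0}\leq \|\phi\|_{Lip}|\partial F|_{H^1_0}$ directly.

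For the strong convexity statement (4): by Theorem \ref{t:slope_properties}(4), for each $s$ and $\gamma$,
\begin{align}
|\partial_s(\chi F+(1-\chi)G)|(\gamma)\leq \chi(\gamma)|\partial_s F|(\gamma)+(1-\chi(\gamma))|\partial_s G|(\gamma)+|\partial_s\chi|(\gamma)\cdot|F-G|(\gamma)\, .
\end{align}
Fixing $\gamma$, view the three terms on the right as functions of $s$ in $L^2([0,\infty),ds)$ — the first is $\chi(\gamma)$ times $s\mapsto|\partial_s F|(\gamma)$, the second is $(1-\chi(\gamma))$ times $s\mapsto|\partial_s G|(\gamma)$, and the third is $|F-G|(\gamma)$ times $s\mapsto|\partial_s\chi|(\gamma)$ — and apply Minkowski's inequality in $L^2(ds)$. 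Since $\chi(\gamma),1-\chi(\gamma),|F-G|(\gamma)$ are nonnegative scalars independent of $s$, this gives precisely $|\partial(\chi F+(1-\chi)G)|_{H^1_0}(\gamma)\leq \chi|\partial F|_{H^1_0}(\gamma)+(1-\chi)|\partial G|_{H^1_0}(\gamma)+|\partial\chi|_{H^1_0}(\gamma)\cdot|F-G|(\gamma)$.

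There is no serious obstacle here; the proof is a routine transfer of pointwise inequalities through the $L^2(ds)$-norm, and the only care needed is to verify that all the integrals are finite so that Minkowski's inequality applies — which is guaranteed by Lemma \ref{l:slope_lipschitz} and Lemma \ref{l:H1_slope_lipschitz} for cylinder functions. The one point deserving a sentence of attention is statement (2): one should note explicitly that the hypothesis forces, along any piecewise geodesic and for variations sufficiently refined, the slopes $|\partial_s F|$ and $|\partial_s G|$ to have disjoint supports as functions on $P(X)$ (pointwise in $\gamma$, one factor is zero), which is exactly what upgrades the subadditivity $\leq$ to the Pythagorean identity needed to get equality after integrating squares and taking square roots. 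I would present this as a short remark within the proof rather than a separate lemma.
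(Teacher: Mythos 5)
Your proof is correct and follows essentially the same route as the paper: statements (1), (3), (4) are obtained by passing the pointwise-in-$s$ inequalities of Theorem \ref{t:slope_properties} through Minkowski's inequality in $L^2(ds)$, and statement (2) rests on the observation that, for each fixed $\gamma$, one of $|\partial_s F|(\gamma)$, $|\partial_s G|(\gamma)$ vanishes for \emph{every} $s$, so the equality holds trivially pointwise in $\gamma$ — this is exactly the paper's argument.

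One caution on your parenthetical fallback in (2): mere disjointness of the supports of $s\mapsto|\partial_s F|(\gamma)$ and $s\mapsto|\partial_s G|(\gamma)$ in the $s$-variable would only yield $|\partial(F+G)|_{H^1_0}^2=|\partial F|_{H^1_0}^2+|\partial G|_{H^1_0}^2$, and taking square roots of a Pythagorean identity does \emph{not} give the additive identity $|\partial(F+G)|_{H^1_0}=|\partial F|_{H^1_0}+|\partial G|_{H^1_0}$ unless one of the two terms is zero. So that alternative argument does not stand on its own; what carries the proof is precisely your primary observation that at each $\gamma$ one of the two $H^1_0$-slopes vanishes outright.
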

\begin{proof}
The first statement follows immediately from Theorem \ref{t:slope_properties}.1.  That is,
\begin{align}
|\partial (F+G)|_{H^1_x} &= \sqrt{\int_0^\infty |\partial_s(F+G)|^2}\leq \sqrt{\int_0^\infty \big(|\partial_s F|+|\partial_s G|\big)^2}\notag\\
&\leq \sqrt{\int_0^\infty|\partial_s F|^2}+\sqrt{\int_0^\infty|\partial_s G|^2}=|\partial F|_{H^1_x}+|\partial G|_{H^1_x}\, .
\end{align}
For the second statement we observe, as in the proof of Theorem \ref{t:slope_properties}.2, that for every $\gamma\in P(X)$ we have the stronger statement that either $|\partial_s (F+G)|=|\partial_s F|$ for every $s$ or $|\partial_s (F+G)|=|\partial_s G|$ for every $s$.  Combining with (\ref{e:H1_slope}) this gives the second statement.  The third and fourth statements follow immediately from Theorem \ref{t:slope_properties}.3, (\ref{e:H1_slope}) and arguing as above.
\end{proof}

\subsubsection{The $H^1_0$-gradient}\label{sss:H10_gradient}

Completely analogous to Section \ref{ss:parallel_gradient_nonsmooth} we now introduce the $H^1_0$-gradient.  In particular, the proofs of the corresponding theorems are almost verbatim, so we will not include all the details.

As in Section \ref{ss:parallel_gradient_nonsmooth} we will assume throughout this Section that $(X,d,m)$ is a weakly Riemannian space, so that we may equip path space $P(X)$ with the Wiener measure $\Gamma_m$.

\begin{definition}
Given $F\in L^2(P(X),\Gamma_m)$ we say that $G\in L^2(P(X),\Gamma_m)$ is a upper $H^1_0$-gradient for $F$ if there exists a sequence of cylinder functions $F_j\in \Cyl(X)$ such that $F_j\to F$ strongly in $L^2(P(X),\Gamma_m)$ and $|\partial F_j|_{H^1_0}\rightharpoonup G'$ weakly in $L^2(P(X),\Gamma_m)$ with $G'\leq G$ $a.e.$
\end{definition}

We will define the $H^1_0$-gradient of $F$ as the unique {\it minimal} upper parallel gradient for $F$.  First we must study some basic properties of the upper gradients, and in particular using Theorem \ref{t:H1_slope_properties} and arguing as in Section \ref{ss:parallel_gradient_nonsmooth} we arrive at the following:

\begin{lemma}\label{l:H1_upper_grad_prop1}
For $F\in L^2(P(X),\Gamma_m)$ the following hold:
\begin{enumerate}
\item The collection of upper $H^1_0$-gradients for $F$ is a closed convex subset of $L^2(P(X),\Gamma_m)$.
\item If $G_1$, $G_2$ are upper $H^1_0$-gradients for $F$ then so is $G(\gamma)\equiv \min\{G_1(\gamma),G_2(\gamma)\}$.
\end{enumerate}
\end{lemma}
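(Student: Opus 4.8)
The plan is to mirror, essentially verbatim, the proof of the analogous statement for the parallel gradients, namely Lemma~\ref{l:upper_grad_prop1}, using the properties of the $H^1_0$-slope established in Theorem~\ref{t:H1_slope_properties} in place of those for the parallel slope from Theorem~\ref{t:slope_properties}. The two claims are of the same shape, and the only inputs needed are: convexity of the slope under linear combinations, the strongly local property, stability under Lipschitz calculus, the strong convexity estimate, and the uniform boundedness of slopes of cylinder functions (here Lemma~\ref{l:H1_slope_lipschitz}, which replaces the uniform bound coming from Lemma~\ref{l:slope_lipschitz}).

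For part (1), I would argue convexity and closedness separately. For convexity, given two upper $H^1_0$-gradients $G_1,G_2$ for $F$ with approximating cylinder functions $F_{1i},F_{2i}\to F$ and $|\partial F_{1i}|_{H^1_0}\rightharpoonup G_1'\le G_1$, $|\partial F_{2i}|_{H^1_0}\rightharpoonup G_2'\le G_2$, take $F_i\equiv tF_{1i}+(1-t)F_{2i}$ for $t\in[0,1]$; then $F_i\to F$ strongly, and by Theorem~\ref{t:H1_slope_properties}.1 together with $2$-homogeneity of the slope one has $|\partial F_i|_{H^1_0}\le t|\partial F_{1i}|_{H^1_0}+(1-t)|\partial F_{2i}|_{H^1_0}\rightharpoonup tG_1'+(1-t)G_2'\le tG_1+(1-t)G_2$, so the convex combination is again an upper $H^1_0$-gradient. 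For closedness, given a sequence $G_j\rightharpoonup G$ of upper $H^1_0$-gradients with approximating families $F_{ij}\stackrel{i}{\to}F$ and $|\partial F_{ij}|_{H^1_0}\stackrel{i}{\rightharpoonup}G_j'\le G_j$, one runs the standard diagonal extraction to get a single sequence $F_i=F_{ij_i}$ with $F_i\to F$ strongly and $|\partial F_i|_{H^1_0}\rightharpoonup G'\le G$.

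For part (2), I would follow the same route as in Lemma~\ref{l:upper_grad_prop1}: reduce to showing that for a Borel set $\mathcal{B}\subseteq P(X)$ and upper $H^1_0$-gradients $G_1,G_2$, the function $\chi_{\mathcal{B}}G_1+\chi_{P(X)\setminus\mathcal{B}}G_2$ is again an upper $H^1_0$-gradient; this clearly yields the $\min$ statement by choosing $\mathcal{B}=\{G_1\le G_2\}$. By the closedness just proved, and since compact cylinder sets generate the Borel $\sigma$-algebra and can approximate an arbitrary Borel set in measure, it suffices to treat $\mathcal{B}$ a compact cylinder set. For such $\mathcal{B}$ pick Lipschitz cutoffs $\chi_\epsilon$ on the relevant finite power $X^{|\bt|}$ with $\chi_\epsilon\equiv1$ on $\mathcal{B}$, supported in $B_\epsilon(\mathcal{B})$, and $\mathrm{Lip}(\chi_\epsilon)<2\epsilon^{-1}$, and consider $F_i\equiv \chi_\epsilon F_{1i}+(1-\chi_\epsilon)F_{2i}$ where $F_{1i},F_{2i}$ approximate $F$ with slopes weakly converging below $G_1,G_2$. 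Then $F_i\to F$ strongly, and by the strong convexity estimate Theorem~\ref{t:H1_slope_properties}.4 one gets $|\partial F_i|_{H^1_0}\le \chi_\epsilon|\partial F_{1i}|_{H^1_0}+(1-\chi_\epsilon)|\partial F_{2i}|_{H^1_0}+|\partial\chi_\epsilon|_{H^1_0}\cdot|F_{1i}-F_{2i}|$, where the last term tends weakly to zero because $|\partial\chi_\epsilon|_{H^1_0}$ is uniformly bounded (by Lemma~\ref{l:H1_slope_lipschitz}) while $F_{1i}-F_{2i}\to0$ in $L^2$; hence $|\partial F_i|_{H^1_0}\rightharpoonup \chi_\epsilon G_1'+(1-\chi_\epsilon)G_2'\le \chi_\epsilon G_1+(1-\chi_\epsilon)G_2$, and letting $\epsilon\to0$ and using closedness finishes the argument.

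The main technical point — and the only place where a little care is genuinely required — is the product-rule step in part (2): controlling the cross term $|\partial\chi_\epsilon|_{H^1_0}\cdot|F_{1i}-F_{2i}|$ and verifying that the strong convexity inequality of Theorem~\ref{t:H1_slope_properties}.4 applies after passing to a common partition $\bt$ for $\chi_\epsilon$, $F_{1i}$, $F_{2i}$ (so that the pointwise manipulations make sense). Everything else is a direct transcription of the parallel-gradient argument, so the writeup will explicitly say ``arguing verbatim as in Lemma~\ref{l:upper_grad_prop1}, using Theorem~\ref{t:H1_slope_properties} and Lemma~\ref{l:H1_slope_lipschitz} in place of Theorem~\ref{t:slope_properties} and Lemma~\ref{l:slope_lipschitz}'' and only spell out the cutoff/cross-term estimate in detail.
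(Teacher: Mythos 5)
Your proposal is correct and is exactly what the paper does: the paper gives no separate proof of this lemma, stating only that one argues verbatim as in Lemma \ref{l:upper_grad_prop1} with Theorem \ref{t:H1_slope_properties} and Lemma \ref{l:H1_slope_lipschitz} substituted for Theorem \ref{t:slope_properties} and Lemma \ref{l:slope_lipschitz}, which is precisely your plan. Your identification of the cutoff/cross-term estimate as the one step worth spelling out is also the right call.
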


The primary application of the above is the following, which is obtained by arguing as in Section \ref{ss:parallel_gradient_nonsmooth}

\begin{theorem}\label{t:H1_minimal_upper_gradient}
Let $F\in L^2(P(X),\Gamma_m)$, then there exists a unique upper $H^1_0$-gradient $G\in L^2(P(X),\Gamma_m)$ such that for any other upper $H^1_0$-gradient $G'$ we have that $G\leq G'$ $a.e.$  Further, there exists a sequence of cylinder functions $F_j\to F$ such that $|\partial F_j|_{H^1_0}\to G$ strongly.
\end{theorem}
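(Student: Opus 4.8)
The plan is to mirror, essentially verbatim, the argument used for Theorem~\ref{t:minimal_upper_gradient} in Section~\ref{ss:parallel_gradient_nonsmooth}, since the only input to that argument was the convexity package for the relevant slope (there Theorem~\ref{t:slope_properties}, here Theorem~\ref{t:H1_slope_properties}) together with the uniform boundedness of the slope on cylinder functions (there Lemma~\ref{l:slope_lipschitz}, here Lemma~\ref{l:H1_slope_lipschitz}). So first I would invoke Lemma~\ref{l:H1_upper_grad_prop1}: the collection $\mathcal{G}_F$ of upper $H^1_0$-gradients for $F$ is a closed convex subset of $L^2(P(X),\Gamma_m)$, and it is nonempty because any sequence $F_j\in\Cyl(X)$ with $F_j\to F$ in $L^2$ has, by Lemma~\ref{l:H1_slope_lipschitz} applied along a bounded subnet (or after truncating $F$ and using density of bounded cylinder functions), a weakly convergent subsequence $|\partial F_j|_{H^1_0}\rightharpoonup G'$, whose weak limit furnishes a member of $\mathcal{G}_F$. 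A nonempty closed convex subset of a Hilbert space has a unique element $G$ of minimal norm.

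Next I would check that this $G$ is pointwise minimal, not merely norm-minimal. Given any other $G'\in\mathcal{G}_F$, Lemma~\ref{l:H1_upper_grad_prop1}(2) gives $\min\{G,G'\}\in\mathcal{G}_F$; since $\|\min\{G,G'\}\|_{L^2}\le\|G\|_{L^2}$ and $G$ was norm-minimal, the two $L^2$ norms agree, which forces $\min\{G,G'\}=G$ $a.e.$, i.e. $G\le G'$ $a.e.$ This proves the first assertion.

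For the second assertion I would run the Mazur's theorem argument exactly as in the proof of Theorem~\ref{t:minimal_upper_gradient}. Pick any sequence $F_j\in\Cyl(X)$ with $F_j\to F$ strongly in $L^2$ and $|\partial F_j|_{H^1_0}\rightharpoonup G$ weakly. By Mazur's theorem there are finite convex combinations $\sum_{k=j}^{N(j)}c^j_k\,|\partial F_k|_{H^1_0}$ converging strongly to $G$ in $L^2$. Set $F'_j\equiv\sum_{k=j}^{N(j)}c^j_k F_k$; then $F'_j\to F$ strongly, while by the convexity estimate Theorem~\ref{t:H1_slope_properties}(1) we have the pointwise bound
\begin{align}
|\partial F'_j|_{H^1_0}\le\sum_{k=j}^{N(j)}c^j_k\,|\partial F_k|_{H^1_0}\longrightarrow G
\end{align}
strongly in $L^2$. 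Hence $\limsup_j|\partial F'_j|_{H^1_0}\le G$ $a.e.$ (along a subsequence, in the $L^2$ sense). On the other hand $F'_j\to F$ exhibits, after extracting a further weakly convergent subsequence of $|\partial F'_j|_{H^1_0}$, an upper $H^1_0$-gradient dominated by $\liminf_j|\partial F'_j|_{H^1_0}$, which by minimality of $G$ must be $\ge G$ $a.e.$ Combining the two inequalities gives $|\partial F'_j|_{H^1_0}\to G$ strongly, as claimed.

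The only place where any genuine care is needed — and hence the main obstacle — is the nonemptiness / relative weak compactness step: Lemma~\ref{l:H1_slope_lipschitz} bounds $|\partial F|_{H^1_0}$ by $\sqrt{|\bt|\,T}\,\|u\|_{Lip}$, a constant that depends on the chosen cylinder representation of $F$ and need not stay bounded along an arbitrary approximating sequence $F_j\to F$. This is already implicitly handled in Section~\ref{ss:parallel_gradient_nonsmooth} for the parallel gradient, and the same device applies here: one first establishes the existence of \emph{some} upper $H^1_0$-gradient for $F$ in a dense class (bounded, or Lipschitz-in-the-parallel-sense, cylinder functions, for which a uniform bound is available), and then the closedness in Lemma~\ref{l:H1_upper_grad_prop1}(1) propagates existence to all of $L^2(P(X),\Gamma_m)$ by a diagonal argument. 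I would state this reduction explicitly and then quote the closedness lemma; everything else is the routine Hilbert-space-plus-Mazur bookkeeping above.
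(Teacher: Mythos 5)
Your argument is exactly the paper's: the paper proves Theorem \ref{t:H1_minimal_upper_gradient} by declaring it "obtained by arguing as in Section \ref{ss:parallel_gradient_nonsmooth}," i.e.\ by the same minimal-norm-element-of-a-closed-convex-set argument plus the $\min\{G,G'\}$ trick for pointwise minimality and Mazur's theorem for the strong convergence, which is precisely what you wrote. Your closing remark about nonemptiness is a legitimate point the paper glosses over (it implicitly restricts attention to $\cD(E)$, the set of $F$ admitting some upper $H^1_0$-gradient), but it does not change the substance of the proof.
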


Thus we can define the $H^1_0$-gradient:

\begin{definition}\label{d:H1_gradient}
Given $F\in L^2(P(X),\Gamma_m)$ we define its $H^1_0$-gradient $|\nabla F|_{H^1_0}$ as the unique minimal upper $H^1_0$-gradient of $F$ as in Theorem \ref{t:H1_minimal_upper_gradient}.
\end{definition}

Let us list some basic properties of the $H^1_0$-gradient.  These follow immediately from Theorem \ref{t:H1_slope_properties}, Theorem \ref{t:H1_minimal_upper_gradient} and the techniques of Theorem \ref{t:slope_properties}:

\begin{theorem}\label{t:H1_grad_properties}
The following properties hold for the $H^1_0$ gradient:
\begin{enumerate}
\item (Convexity) Let $F,G\in L^2(P(X),\Gamma_m)$, then we have the convexity estimate
\begin{align}
|\nabla (F+G)|_{H^1_0}\leq |\nabla F|_{H^1_0}+|\nabla G|_{H^1_x}\, .
\end{align}

\item (Strongly Local) If $F,G\in L^2(P(X),\Gamma_m)$ with $F=\text{const}$ on a neighborhood of the support of $G$, then
\begin{align}
|\nabla (F+G)|_{H^1_0} = |\nabla F|_{H^1_0}+|\nabla G|_{H^1_0}\, .
\end{align}

\item (Stability under Lipschitz Calculus) If $F\in L^2(P(X),\Gamma_m)$ and $\phi:\dR\to\dR$ is lipschitz, then
\begin{align}
|\nabla\big( \phi\circ F\big)|_{H^1_0}\leq ||\phi||_{Lip}\cdot |\nabla F|_{H^1_0}\, .
\end{align}

\item (Leibnitz) If $F,G\in L^2(P(X),\Gamma_m)$ then we have the estimate
\begin{align}
|\nabla (F\cdot G)|_{H^1_0}\leq |F|\cdot |\nabla G|_{H^1_0}+ |G|\cdot |\nabla F|_{H^1_0}\, .
\end{align}
\end{enumerate}
\end{theorem}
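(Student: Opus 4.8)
\textbf{Proof proposal for Theorem \ref{t:H1_grad_properties}.}

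The plan is to deduce all four properties of the $H^1_0$-gradient from the corresponding properties of the $H^1_0$-slope in Theorem \ref{t:H1_slope_properties}, exactly as Theorem \ref{t:parallel_grad_properties} was deduced from Theorem \ref{t:slope_properties} in Section \ref{ss:parallel_gradient_nonsmooth}. The general mechanism is that the $H^1_0$-gradient of $F$ is a weak limit, realized via Mazur's theorem (Theorem \ref{t:H1_minimal_upper_gradient}) as a strong $L^2$ limit after taking convex combinations, of $H^1_0$-slopes of cylinder functions $F_j\to F$; any inequality that is stable under taking finite convex combinations of the approximants and then passing to weak (hence, after Mazur, strong) limits descends from the slope to the gradient.

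Concretely, here is the order in which I would carry out the four items. For \emph{(1) Convexity}: given $F,G\in L^2(P(X),\Gamma_m)$, pick by Theorem \ref{t:H1_minimal_upper_gradient} cylinder functions $F_j\to F$, $G_j\to G$ strongly with $|\partial F_j|_{H^1_0}\to |\nabla F|_{H^1_0}$ and $|\partial G_j|_{H^1_0}\to |\nabla G|_{H^1_0}$ strongly in $L^2$; then $F_j+G_j\to F+G$, and by Theorem \ref{t:H1_slope_properties}.1 we have $|\partial(F_j+G_j)|_{H^1_0}\le |\partial F_j|_{H^1_0}+|\partial G_j|_{H^1_0}$, whose right side converges strongly to $|\nabla F|_{H^1_0}+|\nabla G|_{H^1_0}$; since $|\nabla(F+G)|_{H^1_0}$ is the \emph{minimal} upper $H^1_0$-gradient it is $\le$ any weak subsequential limit of $|\partial(F_j+G_j)|_{H^1_0}$, giving the claim. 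For \emph{(3) Stability under Lipschitz calculus}: with $F_j\to F$ as above, $\phi\circ F_j\to \phi\circ F$ strongly (as $\phi$ is Lipschitz, so this is an $L^2$ estimate plus dominated convergence), and Theorem \ref{t:H1_slope_properties}.3 gives $|\partial(\phi\circ F_j)|_{H^1_0}\le \|\phi\|_{Lip}|\partial F_j|_{H^1_0}\to \|\phi\|_{Lip}|\nabla F|_{H^1_0}$; minimality again finishes it. For \emph{(2) Strongly Local}: the inequality $\le$ follows from (1); for the reverse, one uses the refined pointwise dichotomy already exploited in the proof of Theorem \ref{t:H1_slope_properties}.2 — for a common refinement $F=e_\bt^*f$, $G=e_\bt^*g$ with $f$ constant near $\mathrm{supp}(g)$, for the approximating sequences and $j$ large one has, $\gamma$-by-$\gamma$, either $|\partial_s(F_j+G_j)|=|\partial_s F_j|$ for all $s$ or $|\partial_s(F_j+G_j)|=|\partial_s G_j|$ for all $s$, hence $|\partial(F_j+G_j)|_{H^1_0}$ splits pointwise, and one passes to the limit using the strong local property at the level of cylinder functions together with disjointness of supports; this reproduces the equality. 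For \emph{(4) Leibnitz}: approximate $F,G$ by bounded cylinder functions (truncating if necessary, which is admissible by item (3) applied to a Lipschitz truncation $\phi_N$), note $F_jG_j\to FG$ strongly, and apply the pointwise Leibnitz estimate for the slope — which itself follows from Theorem \ref{t:H1_slope_properties}.4 (strong convexity) or directly from the product-rule computation $\frac{|F_jG_j(\gamma)-F_jG_j(\gamma')|}{d(\gamma(s),\gamma'(s))}\le |F_j(\gamma)|\frac{|G_j(\gamma)-G_j(\gamma')|}{d}+|G_j(\gamma')|\frac{|F_j(\gamma)-F_j(\gamma')|}{d}$ integrated over $s$ — and pass to the limit, the boundedness of the approximants ensuring the products $|F_j|\cdot|\partial G_j|_{H^1_0}$ converge in the appropriate sense.

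The main obstacle I anticipate is item (4), the Leibnitz rule, and more precisely controlling the products $F_j\cdot|\partial G_j|_{H^1_0}$ when passing to limits: weak convergence of $|\partial G_j|_{H^1_0}$ alone does not multiply well against the only-$L^2$-convergent $F_j$, so one genuinely needs the strong convergence supplied by Mazur/Theorem \ref{t:H1_minimal_upper_gradient} together with an $L^\infty$ bound on the $F_j$, obtained by a preliminary truncation step justified by item (3). A secondary, more bookkeeping-level subtlety is item (2): one must be careful that the refinement used to put $F$ and $G$ over a common partition $\bt$ is compatible with the approximating sequences (i.e.\ choose the $F_j,G_j$ cylindrical over $\bt$, or further refined), so that the pointwise dichotomy of Theorem \ref{t:H1_slope_properties}.2 really applies to the sums $F_j+G_j$. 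Apart from these points, everything is a verbatim transcription of the arguments of Section \ref{ss:parallel_gradient_nonsmooth}, with $|\partial_s\cdot|$ replaced by $|\partial\cdot|_{H^1_0}$ and Theorem \ref{t:slope_properties} replaced by Theorem \ref{t:H1_slope_properties}.
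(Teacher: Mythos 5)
Your proposal is correct and follows exactly the route the paper intends: the paper gives no detailed argument here, stating only that the properties ``follow immediately from Theorem \ref{t:H1_slope_properties}, Theorem \ref{t:H1_minimal_upper_gradient} and the techniques of Theorem \ref{t:slope_properties},'' which is precisely the Mazur-theorem/minimal-upper-gradient descent from slope to gradient that you carry out. Your added care about the equality in the strongly local property and about truncation in the Leibnitz rule only fills in steps the paper leaves implicit.
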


Let us remark on the following, which is an obvious consequence of the definition:

\begin{lemma}\label{l:H1_slope_grad_est}
For any cylinder function $F\in\Cyl(X)$ we have that the $H^1_0$-gradient satisfies the pointwise estimate
\begin{align}
|\nabla F|_{H^1_0}\leq |\partial F|_{H^1_0}\, .
\end{align}
\end{lemma}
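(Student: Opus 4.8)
\textbf{Proof proposal for Lemma \ref{l:H1_slope_grad_est}.}

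The plan is to observe that this is an immediate consequence of Definition \ref{d:H1_gradient} together with Theorem \ref{t:H1_minimal_upper_gradient}, in exact parallel with the proof of Lemma \ref{l:slope_grad_est} for the parallel gradients. First I would recall that, by Definition \ref{d:H1_gradient}, the $H^1_0$-gradient $|\nabla F|_{H^1_0}$ is the unique minimal upper $H^1_0$-gradient of $F$; that is, it is the smallest (a.e.) function $G\in L^2(P(X),\Gamma_m)$ which arises as a weak $L^2$-limit of slopes $|\partial F_j|_{H^1_0}$ along a sequence of cylinder functions $F_j\to F$ strongly in $L^2(P(X),\Gamma_m)$.

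Next I would exhibit a single admissible competitor: when $F\in \Cyl(X)$ is itself a cylinder function, take the constant sequence $F_j\equiv F$. Then trivially $F_j\to F$ strongly in $L^2(P(X),\Gamma_m)$, and $|\partial F_j|_{H^1_0}=|\partial F|_{H^1_0}$ for all $j$, so $|\partial F|_{H^1_0}$ is (weakly, in fact strongly) the limit of this sequence, hence $|\partial F|_{H^1_0}$ is an upper $H^1_0$-gradient for $F$. Note that this uses Lemma \ref{l:H1_slope_lipschitz} only implicitly, to know $|\partial F|_{H^1_0}\in L^2(P(X),\Gamma_m)$ in the first place so that it qualifies as a candidate gradient. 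Since $|\nabla F|_{H^1_0}$ is the minimal upper $H^1_0$-gradient and $|\partial F|_{H^1_0}$ is some upper $H^1_0$-gradient, the minimality clause of Theorem \ref{t:H1_minimal_upper_gradient} gives $|\nabla F|_{H^1_0}\leq |\partial F|_{H^1_0}$ $a.e.$, which is exactly the claimed pointwise estimate.

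There is essentially no obstacle here — the content is entirely definitional, and the only thing one must be careful about is that the constant sequence is a legitimate choice in the definition of an upper $H^1_0$-gradient, which it plainly is. The proof is therefore one line once the definitions are unwound, and I would present it in a single short paragraph mirroring the proof of Lemma \ref{l:slope_grad_est}.
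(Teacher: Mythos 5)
Your proof is correct and is exactly the argument the paper intends: the paper states this lemma as "an obvious consequence of the definition," and its proof of the analogous Lemma \ref{l:slope_grad_est} for the parallel gradients is precisely your constant-sequence observation. Nothing is missing.
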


We end this Section by seeing the $H^1_0$-gradient defined in this Section for a weakly Riemannian metric-measure space $X$ agrees with the standard definition when $X$ is a smooth metric-measure space.  Recall that on a smooth metric-measure space we have classically defined the $H^1_0$-gradient as living on based path space $P_x(M)$.  Therefore, it will be convenient notation in the next Theorem to denote by $|\nabla F|_{H^1_x}$ the $H^1_0$ gradient as originally defined in \cite{Malliavin_StocAnal} on based path space $P_x(M)$ of a smooth manifold.  Our result is the following:

\begin{theorem}\label{t:H10_grad_comparison}
Let $(X,d,m)\equiv (M^n,g,e^{-f}dv_g)$ be a smooth metric-measure space with $F\in L^2(P(M),\Gamma_m)$.  Then for a.e. $\gamma\in P(M)$ we have that $|\nabla F|_{H^1_0}(\gamma) = |\nabla F|_{H^1_{\gamma(0)}}(\gamma)$.
\end{theorem}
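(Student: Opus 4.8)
The plan is to reduce Theorem \ref{t:H10_grad_comparison} to the already-established comparison for the parallel gradients, namely Theorem \ref{t:parallel_gradient_smooth_vs_nonsmooth}, together with the smooth-manifold identity from Proposition \ref{p:parallel_H1_relation}.2 that $|\nabla F|^2_{H^1_x}(\gamma) = \int_0^\infty |\nabla_s F|^{*2}(\gamma)\,ds$ for smooth cylinder functions, where $|\nabla_s F|^*$ denotes the $s$-parallel gradient in the classical sense of Section \ref{ss:parallel_gradient}. The rough strategy is: first settle the claim on the dense class of smooth cylinder functions by an explicit identification of all three objects (the nonsmooth $H^1_0$-slope, the classical $H^1_x$-gradient, and the integral of parallel gradients); then upgrade from cylinder functions to general $F \in L^2(P(M),\Gamma_m)$ by the lower-semicontinuous-refinement machinery of Section \ref{sss:H10_gradient}, using that both notions of $H^1_0$-gradient are obtained as minimal upper gradients with respect to the same dense subalgebra $\Cyl(M) \subseteq C^0(P(M))$.

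First I would prove the cylinder-function case. Fix a smooth cylinder function $F \in \Cyl(M)$. By Theorem \ref{t:parallel_gradient_smooth_vs_nonsmooth} (more precisely the key estimate in its proof, Lemma \ref{l:parallel_gradient_smooth_vs_nonsmooth} together with Corollary \ref{c:parallel_grad_semicontinuity}), we have $|\partial_s F|(\gamma) = |\nabla_s F|^*(\gamma)$ for a.e. $\gamma \in P(M)$ and every $s \geq 0$. Squaring and integrating in $s$ (the integrand is bounded uniformly in $s$ and $\gamma$ and supported on a bounded $s$-interval by Lemma \ref{l:slope_lipschitz}, so dominated convergence / Fubini apply freely), we get $|\partial F|^2_{H^1_0}(\gamma) = \int_0^\infty |\partial_s F|^2(\gamma)\,ds = \int_0^\infty |\nabla_s F|^{*2}(\gamma)\,ds$ for a.e. $\gamma$. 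Now Proposition \ref{p:parallel_H1_relation}.2 identifies the right-hand side with $|\nabla F|^2_{H^1_{\gamma(0)}}(\gamma)$. Hence $|\partial F|_{H^1_0}(\gamma) = |\nabla F|_{H^1_{\gamma(0)}}(\gamma)$ for a.e. $\gamma \in P(M)$, for every $F \in \Cyl(M)$. In particular the nonsmooth $H^1_0$-slope of a cylinder function already coincides a.e. with the classical Malliavin gradient norm; no refinement is needed on this class.

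Next I would pass to general $F \in L^2(P(M),\Gamma_m)$. On one hand, $|\nabla F|_{H^1_0}$ is defined in Section \ref{sss:H10_gradient} as the minimal element $G$ such that there exist $F_j \in \Cyl(M)$ with $F_j \to F$ in $L^2(P(M),\Gamma_m)$ and $|\partial F_j|_{H^1_0} \rightharpoonup G' \leq G$ weakly in $L^2$. On the other hand, on a smooth metric-measure space the classical theory (see \cite{Malliavin_StocAnal}, \cite{Hsu_book}, and Section \ref{ss:H1_gradient}) gives that $\nabla$ closes as an operator on $L^2(P_x(M),\Gamma_x)$ with $\Cyl(M)$ dense in its domain; equivalently, since the Dirichlet form $E[F] = \frac12\int |\nabla F|^2_{H^1_x}\,d\Gamma_x$ is closed and convex, $|\nabla F|_{H^1_x}$ is itself the minimal weak upper gradient relative to $L^2$-approximation by cylinder functions, and this relaxation procedure is insensitive to which equivalent dense class of cylinder functions one uses. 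Integrating in the base point $x$ against $e^{-f}dv_g$, the same holds for $\Gamma_m$ on $P(M)$. Since by the previous paragraph $|\partial F_j|_{H^1_0} = |\nabla F_j|_{H^1_{\gamma(0)}}$ a.e. for each cylinder function $F_j$, the two relaxation problems — minimize the weak $L^2$-limit of $|\partial F_j|_{H^1_0}$ versus minimize that of $|\nabla F_j|_{H^1_{\gamma(0)}}$ — are literally the same problem, so their minimal solutions agree a.e. This yields $|\nabla F|_{H^1_0}(\gamma) = |\nabla F|_{H^1_{\gamma(0)}}(\gamma)$ for a.e. $\gamma \in P(M)$, completing the proof.

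The main obstacle, and the place I would be most careful, is the last paragraph: one must verify that the classical Malliavin gradient $|\nabla F|_{H^1_x}$ really does coincide with the lower-semicontinuous relaxation of the cylinder-function slope $|\partial F|_{H^1_0}$, rather than merely dominating it. The subtlety is the usual one flagged throughout Section \ref{s:parallel_gradient_nonsmooth}: a priori the relaxation could be strictly smaller than the pointwise slope. On a smooth manifold this does not happen, but the cleanest justification uses that the Malliavin energy $E$ is already a closed form with $\Cyl(M)$ as a form core (this is exactly the content invoked in \cite{DriverRockner_Diff} and recalled in Section \ref{ss:OU_smooth}), so $|\nabla F|_{H^1_x}$ is automatically the $L^2$-lower-semicontinuous relaxation of $F \mapsto |\nabla F|_{H^1_x}$ restricted to cylinder functions — and since that restricted functional equals $|\partial F|_{H^1_0}$ on cylinder functions by the first step, the two relaxations coincide. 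Spelling this closedness-equals-relaxation argument out carefully (including that the diagonalization / Mazur arguments of Theorem \ref{t:H1_minimal_upper_gradient} produce the same minimal object as the classical closure) is where the real work lies; everything else is bookkeeping with Proposition \ref{p:parallel_H1_relation} and Theorem \ref{t:parallel_gradient_smooth_vs_nonsmooth}.
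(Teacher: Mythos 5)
Your proof is correct and follows essentially the same route as the paper: the identification $|\partial_s F|=|\nabla_s F|^*$ from Theorem \ref{t:parallel_gradient_smooth_vs_nonsmooth} combined with Proposition \ref{p:parallel_H1_relation}.2 on cylinder functions, followed by the relaxation/closure machinery to pass to general $F\in L^2(P(M),\Gamma_m)$. The only real difference is how the cylinder-function case is closed: the paper pins down $|\nabla F|_{H^1_0}=|\partial F|_{H^1_0}$ via the sandwich $|\nabla F|^2_{H^1_0}\leq \int_0^\infty |\partial_s F|^2 = \int_0^\infty |\nabla_s F|^2 \leq |\nabla F|^2_{H^1_0}$ (Lemma \ref{l:H1_slope_grad_est} plus the fact that the refinement of the integral dominates the integral of the refinements), whereas you instead argue that the classical closed Malliavin form and the nonsmooth minimal upper gradient arise from the identical relaxation problem over $\Cyl(M)$ --- both are valid, and your version is somewhat more explicit about the extension beyond cylinder functions, which the paper leaves implicit.
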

\begin{proof}
The majority of the work for this Theorem was accomplished in Section \ref{s:variation} and with Proposition \ref{p:parallel_H1_relation}.  Namely, let us assume $F\in L^2(P(X),\Gamma_m)$, then in this case we have that $|\partial_s F|=|\nabla_s F|$ for each $s$.  Hence we have that $|\nabla F|_{H^1_{\gamma(0)}}(\gamma)=|\partial F|_{H^1_0}(\gamma)$.  On the other hand, by Lemma \ref{l:H1_slope_grad_est} and Proposition \ref{p:parallel_H1_relation} we have that 
\begin{align}
|\nabla F|^2_{H^1_0}&\leq |\partial F|^2_{H^1_0}\equiv \int_0^\infty |\partial_s F|^2\notag\\
& = \int_0^\infty |\nabla_s F|^2 \leq |\nabla F|^2_{H^1_0}\, ,
\end{align}
and hence $|\nabla F|_{H^1_0}=|\partial F|_{H^1_0}$.  Combining these gives the desired result.
\end{proof}

\subsection{The Energy Functional and the Ornstein-Uhlenbeck Operator}\label{ss:OU_nonsmooth_definition}

We use the results of Section \ref{ss:H1_grad_nonsmooth}, and in particular the $H^1_0$ gradient, to define the associated energy functional and Sobolev space in Section \ref{sss:energy_functional}.  In Section \ref{sss:OU_construction} we will use this to construction the Ornstein-Uhlenbeck operator on a general metric-measure space.

\subsubsection{The Energy Functional}\label{sss:energy_functional}

Having defined the $H^1_0$-gradient in Section \ref{ss:H1_grad_nonsmooth} let us begin by defining the associated energy:

\begin{definition}\label{d:dirichlet_pathspace}
Let $\cD(E)\subseteq L^2(P(X),\Gamma_m)$ be defined as the subset of functions for which there exists a $H^1_0$-upper gradient.  Then given $F\in \cD(E)$ we define the path space energy function $E:\cD(E)\to \dR^+$ by
\begin{align}
E[F]\equiv \frac{1}{2}\int_{P(X)} |\nabla F|_{H^1_0}^2 \,d\Gamma_m\, .
\end{align}
\end{definition}

Note from Lemma \ref{l:H1_slope_grad_est} that we clearly have that $\Cyl(X)\subseteq \cD(E)$, and so in particular the domain is dense.  The basic structure theorem about the energy function is the following:

\begin{theorem}\label{t:Dirichlet_form_pathspace}
The energy function $E:\cD(E)\to \dR$ is convex, nonnegative, $2$-homogeneous and lower-semicontinuous.  Furthermore, the following hold:
\begin{enumerate}
\item (closed) The functional $||F||_1\equiv \sqrt{||F||_{L^2}+E(F)}$ defines a complete norm on $\cD(E)\subseteq L^2(P(X),\Gamma_m)$.
\item (stability under lipschitz calculus) Given a $1$-lipschitz function $\phi:\dR\to\dR$ with $\phi(0)=0$ we have that $E[\phi\circ F]\leq E[F]$.
\item (strongly local) If $F,G\in \cD(E)$ are such that $G$ is a constant on $supp(F)\subseteq P(X)$, then $E(F+G) = E(F)+E(G)$.
\end{enumerate}
\end{theorem}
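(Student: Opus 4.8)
The plan is to transfer the structural properties of the energy functional $E$ from the corresponding properties of the $H^1_0$-slope and the $H^1_0$-gradient that were established in Section \ref{ss:H1_grad_nonsmooth}, mirroring exactly the argument scheme used for the Cheeger energy in Theorem \ref{t:classic_dirichlet_form} and for the parallel-gradient energy in Theorem \ref{t:sDirichlet_form_pathspace}. First I would record that $2$-homogeneity is immediate: for a cylinder function $F$ one has $|\partial(cF)|_{H^1_0} = |c|\,|\partial F|_{H^1_0}$ directly from (\ref{e:H1_slope}) and the analogous homogeneity of $|\partial_s F|$, and this passes to $|\nabla (cF)|_{H^1_0} = |c|\,|\nabla F|_{H^1_0}$ by the minimality characterization in Theorem \ref{t:H1_minimal_upper_gradient}; hence $E[cF] = c^2 E[F]$. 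Nonnegativity is trivial. Convexity of $E$ follows from the convexity estimate of Theorem \ref{t:H1_grad_properties}.1: for $t\in[0,1]$, $|\nabla(tF+(1-t)G)|_{H^1_0} \le t|\nabla F|_{H^1_0} + (1-t)|\nabla G|_{H^1_0}$ pointwise $\Gamma_m$-a.e., and then $E[tF+(1-t)G]\le tE[F]+(1-t)E[G]$ by monotonicity and convexity of $r\mapsto r^2$ on $\mathbb{R}^+$, combined with Minkowski's inequality in $L^2(P(X),\Gamma_m)$.

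Lower semicontinuity is the heart of the matter, and this is where the minimal-upper-gradient machinery of Section \ref{ss:parallel_gradient_nonsmooth} and Section \ref{sss:H10_gradient} does the real work. I would argue as follows: suppose $F_i\to F$ in $L^2(P(X),\Gamma_m)$ with $\sup_i E[F_i] < \infty$, so that $|\nabla F_i|_{H^1_0}$ is bounded in $L^2$; pass to a subsequence so that $|\nabla F_i|_{H^1_0} \rightharpoonup G$ weakly in $L^2(P(X),\Gamma_m)$. For each $i$, by Theorem \ref{t:H1_minimal_upper_gradient} choose cylinder functions $F_{ij} \to F_i$ in $L^2$ with $|\partial F_{ij}|_{H^1_0} \to |\nabla F_i|_{H^1_0}$ strongly; a diagonal extraction produces cylinder functions $\tilde F_i \to F$ in $L^2$ with $|\partial \tilde F_i|_{H^1_0} \rightharpoonup G$ weakly. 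Thus $G$ is an upper $H^1_0$-gradient for $F$, so $F\in\cD(E)$ and $|\nabla F|_{H^1_0} \le G$ a.e.\ by minimality; by weak lower semicontinuity of the $L^2$-norm under weak convergence, $\int_{P(X)} |\nabla F|_{H^1_0}^2\,d\Gamma_m \le \int_{P(X)} G^2\,d\Gamma_m \le \liminf_i \int_{P(X)} |\nabla F_i|_{H^1_0}^2\,d\Gamma_m$, which is exactly $E[F]\le\liminf_i E[F_i]$. Closedness (property (1)) is then a formal consequence: a $\|\cdot\|_1$-Cauchy sequence converges in $L^2$, and lower semicontinuity plus the Cauchy bound on energies forces $\|\cdot\|_1$-convergence to the $L^2$-limit, so $(\cD(E),\|\cdot\|_1)$ is complete; density of the domain follows from $\Cyl(X)\subseteq\cD(E)$ via Lemma \ref{l:H1_slope_grad_est}.

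For the remaining two items: stability under Lipschitz calculus (property (2)) is immediate from Theorem \ref{t:H1_grad_properties}.3 applied with the $1$-Lipschitz $\phi$, since $|\nabla(\phi\circ F)|_{H^1_0}\le |\nabla F|_{H^1_0}$ a.e.\ gives $E[\phi\circ F]\le E[F]$ (the hypothesis $\phi(0)=0$ is only needed to ensure $\phi\circ F\in L^2$ when $m$ is infinite). The strongly local property (property (3)) follows from Theorem \ref{t:H1_grad_properties}.2: if $G$ is constant on $\mathrm{supp}(F)$, then for the sum one has $|\nabla(F+G)|_{H^1_0} = |\nabla F|_{H^1_0} + |\nabla G|_{H^1_0}$ a.e., but moreover on $\Gamma_m$-a.e.\ curve at most one of the two gradients is nonzero — the supports are essentially disjoint — so the cross term vanishes after squaring and integrating, yielding $E[F+G] = E[F] + E[G]$; here one uses, as in the proof of Theorem \ref{t:slope_properties}.2, that cylinder approximants can be taken with a common partition and that $F$ constant near $\mathrm{supp}(G)$ propagates to the approximants. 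The main obstacle is the diagonal-extraction step in the lower semicontinuity argument: one must be careful that the double approximation (cylinder functions approximating $F_i$, and $F_i$ approximating $F$) can be collapsed to a single sequence of cylinder functions while controlling both the $L^2$ convergence of the functions and the weak $L^2$ convergence of the slopes simultaneously; this is standard but is the one place requiring genuine care, and it is handled exactly as in Lemma \ref{l:upper_grad_prop1} and Theorem \ref{t:minimal_upper_gradient}.
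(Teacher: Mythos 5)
Your proposal is correct and follows essentially the same route as the paper: the paper's proof is only a two-line remark deferring (1) to the standard closedness-from-lower-semicontinuity argument of \cite{Cheeger_DiffLipFun} and deriving (2), (3) from Theorem \ref{t:H1_grad_properties}, which is exactly what you do, with the details (weak compactness, diagonal extraction against the minimal-upper-gradient characterization of Theorem \ref{t:H1_minimal_upper_gradient}) filled in correctly.
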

\begin{proof}
The proof of (1) is standard given the lower semicontinuity of $E$, see for instance \cite{Cheeger_DiffLipFun}.  The proofs of (2) and (3) follow easily from Theorem \ref{t:H1_grad_properties}.
\end{proof}

Given the above it makes sense to make the following definition:

\begin{definition}
We identify the Sobolev class $W^{1,2}(P(X),\Gamma_m)\equiv \cD(E)$.
\end{definition}

\subsubsection{The Construction of the Ornstein Uhlenbeck Operator}\label{sss:OU_construction}

Proceeding as in the standard theory of convex functionals on a Hilbert space, let us define the subgradient of $E$ at a point by
\begin{align}
\partial E [F]\equiv \{G:E(F)+\langle G, H-F\rangle\leq E(H)\text{ for every }H\in L^2(P(X),\Gamma_m)\}\, .
\end{align}

Theorem \ref{t:Dirichlet_form_pathspace} tells us, among other things, that the set $\partial E[F]$ is a convex subset, and thus there exists a unique element of minimal $L^2$ norm, which we define as the gradient $\nabla E[F]\equiv L F$.  We define this gradient to be the Ornstein-Uhlenbeck operator on path space.  Using Theorem \ref{t:Dirichlet_form_pathspace} and the standard theory of convex functionals on a Hilbert space we obtain the following:

\begin{theorem}\label{t:OU_definition}
There exists a densely defined operator $L\equiv \nabla E:\cD(\Delta)\subseteq L^2(P(X),\Gamma_m)\to\dR$ which is densely defined and preserves the $\cF^T$-measurable functions.
\end{theorem}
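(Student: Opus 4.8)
The plan is to deduce Theorem \ref{t:OU_definition} directly from Theorem \ref{t:Dirichlet_form_pathspace} together with the standard theory of convex, lower semicontinuous functionals on a Hilbert space (as developed, e.g., in \cite{Fukushima_DirichletForms}). First I would recall that $E:\cD(E)\subseteq L^2(P(X),\Gamma_m)\to\dR$ is, by Theorem \ref{t:Dirichlet_form_pathspace}, convex, nonnegative, $2$-homogeneous and lower semicontinuous, with $\cD(E)$ dense since $\Cyl(X)\subseteq \cD(E)$ by Lemma \ref{l:H1_slope_grad_est}. From general theory, the subdifferential
\begin{align}
\partial E[F]\equiv \{G\in L^2(P(X),\Gamma_m):E[F]+\langle G,H-F\rangle\leq E[H]\text{ for all }H\in L^2(P(X),\Gamma_m)\}\, ,
\end{align}
is, for each $F$, a closed convex (possibly empty) subset of the Hilbert space $L^2(P(X),\Gamma_m)$; convexity and lower semicontinuity of $E$ guarantee that $\partial E$ is a maximal monotone operator whose domain $\cD(L)\equiv\{F:\partial E[F]\neq\emptyset\}$ is dense in $\overline{\cD(E)}=L^2(P(X),\Gamma_m)$. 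On that domain we may select the unique element of minimal $L^2$-norm, defining $LF\equiv \nabla E[F]$, which is then a densely defined operator. (When $E$ is in addition a quadratic form this $L$ is linear and self-adjoint, but the statement here only asserts density and the $\cF^T$-invariance, so I would not need linearity.)

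Next I would address the claim that $L$ preserves $\cF^T$-measurable functions. The key observation is that for a cylinder function $F=e_\bt^*u$ with $\bt$ a partition of $[0,T]$, one has $|\partial_s F|=0$ for $s>T$ by Lemma \ref{l:slope_lipschitz}.2, hence $|\partial F|_{H^1_0}^2=\int_0^\infty|\partial_s F|^2 = \int_0^T|\partial_s F|^2$ depends only on the $\cF^T$-measurable data of $\gamma$; passing to the lower semicontinuous refinement (and using that the conditional expectation $\Pi^T:L^2(P(X),\Gamma_m)\to L^2(P^T(X),\Gamma_m)$ is a norm-one projection commuting with the $L^2$-approximation procedure in Theorem \ref{t:H1_minimal_upper_gradient}) shows that $|\nabla (\Pi^T F)|_{H^1_0}\leq |\nabla F|_{H^1_0}$ pointwise, so $E[\Pi^T F]\leq E[F]$ for every $F$. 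This energy-contraction property under the projection $\Pi^T$, combined with the variational characterization of $\partial E$, yields that if $F$ is $\cF^T$-measurable then so is $LF$: indeed for $G\in\partial E[F]$ and any $H$ one tests the subgradient inequality at $\Pi^T H$, uses $E[\Pi^T H]\leq E[H]$ and $\langle G,\Pi^T H-F\rangle=\langle \Pi^T G,\Pi^T H-F\rangle$, to conclude $\Pi^T G\in \partial E[F]$ as well; minimality of norm then forces $LF=\Pi^T(LF)$, i.e.\ $LF$ is $\cF^T$-measurable.

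Finally I would remark that density of $\cD(L)$ follows from density of $\cD(E)$ in $L^2$ together with the standard fact that the resolvents $(I+tL)^{-1}$ of a maximal monotone operator are everywhere defined contractions converging strongly to the identity as $t\to 0$; since each $(I+tL)^{-1}F\in\cD(L)$, the domain is dense. The main obstacle is the second part: making the commutation of the $H^1_0$-gradient construction with the conditional expectation $\Pi^T$ genuinely rigorous, since the $H^1_0$-gradient is defined via a lower semicontinuous relaxation of the slope and one must check that this relaxation is compatible with restriction to $\cF^T$-measurable approximating sequences — that is, that one may always choose the approximating cylinder functions $F_j\to F$ in Theorem \ref{t:H1_minimal_upper_gradient} to be $\cF^T$-measurable when $F$ is. This reduces to the density of $\cF^T$-measurable cylinder functions in $L^2(P^T(X),\Gamma_m)$ (clear, since cylinder functions with partition in $[0,T]$ are dense) and the energy-contraction $E[\Pi^T(\cdot)]\leq E[\cdot]$ established above; once that is in place, everything else is routine convex-analysis bookkeeping.
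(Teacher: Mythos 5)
Your construction of $L$ as the minimal selection of the subdifferential $\partial E$, and the density of $\cD(L)$ via maximal monotonicity of the subdifferential of a proper, convex, lower semicontinuous functional, is exactly the route the paper takes (the paper simply invokes Theorem \ref{t:Dirichlet_form_pathspace} together with the standard theory of convex functionals, and does not spell out the $\cF^T$-invariance at all). Your reduction of the $\cF^T$-invariance to the energy-contraction inequality $E[\Pi^T H]\leq E[H]$ is also correct convex analysis: testing the subgradient inequality at $\Pi^T H$, using self-adjointness of $\Pi^T$ and $\Pi^T F=F$, gives $\Pi^T G\in\partial E[F]$, and minimality of the norm together with $\|\Pi^T G\|\leq\|G\|$ then forces $LF=\Pi^T(LF)$.

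The gap is in the contraction itself, which is the actual content of the invariance claim. The pointwise inequality $|\nabla(\Pi^T F)|_{H^1_0}(\gamma)\leq|\nabla F|_{H^1_0}(\gamma)$ you invoke is false as stated: already for $F(\gamma)=u(\gamma(t_2))$ with $t_2>T$ one has $|\nabla \Pi^T F|^2_{H^1_0}(\gamma)=T\,|\nabla H_{t_2-T}u|^2(\gamma(T))$ while $|\nabla F|^2_{H^1_0}(\gamma)=t_2\,|\nabla u|^2(\gamma(t_2))$; these are evaluated at different points of the path and admit no pointwise comparison, and for general cylinder functions the conditional expectation does not commute with the parallel gradients even on a smooth manifold, where curvature correction terms appear (cf.\ Lemma \ref{l:gradient_estimate}). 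What your convex-analysis step actually needs is only the integrated inequality $\int_{P(X)}|\nabla \Pi^T H|^2_{H^1_0}\,d\Gamma_m\leq\int_{P(X)}|\nabla H|^2_{H^1_0}\,d\Gamma_m$, but that requires a genuine argument: in the example above it amounts to $T\int_X|\nabla H_\tau u|^2\,dm\leq(T+\tau)\int_X|\nabla u|^2\,dm$ (true by spectral calculus for the invariant measure), while for general $F$ one needs something like a Clark--Ocone identity or the quadratic-variation machinery of Section \ref{s:ricci_martingales} relating $[dF^t]$ to the parallel gradients. None of this is supplied, and your further claim that the approximating sequences in Theorem \ref{t:H1_minimal_upper_gradient} can be chosen $\cF^T$-measurable compatibly with the relaxation depends on the same missing estimate. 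So the $\cF^T$-preservation assertion remains unproven; you have correctly isolated the key lemma but not established it.
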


Let us list a couple of interesting properties of the Ornstein-Uhlenbeck operator.  The following are related to our defining the Ornstein-Uhlenbeck operator on $P(X)$ as opposed to based path space $P_x(X)$, and gives a direct way to recapture the based operators:

\begin{lemma}\label{l:OU_properties}
The following hold:
\begin{enumerate}
\item Let $F$ be $\cF^0$-measurable, then $F\in \cD(E)$ and $LF=0$.
\item Let $F$ be $\cF^0$-measurable and $G\in \cD(E)$.  Then $L(F\cdot G)=F\cdot LG$
\end{enumerate}
\end{lemma}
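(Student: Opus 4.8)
\textbf{Proof proposal for Lemma \ref{l:OU_properties}.}

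The plan is to reduce both statements to the structure of the energy functional $E$ and the $H^1_0$-slope, exploiting the fact that an $\cF^0$-measurable function depends only on $\gamma(0)$, so its parallel slopes all vanish. First I would record the key slope identity: if $F$ is $\cF^0$-measurable, then $F=e_{\{0\}}^*v$ for some $v\in \Lip_c(X)$, and by Lemma \ref{l:slope_lipschitz}(2) we have $|\partial_s F|=0$ for all $s>0$, hence by \eqref{e:H1_slope} that $|\partial F|_{H^1_0}(\gamma)=0$ for every $\gamma$. Passing to the lower semicontinuous refinement via Lemma \ref{l:H1_slope_grad_est} gives $|\nabla F|_{H^1_0}=0$ $\Gamma_m$-a.e., so $F\in \cD(E)$ with $E[F]=0$. (For a general $\cF^0$-measurable $F\in L^2$ one approximates $v$ by $v_j\in \Lip_c(X)$ in $L^2(X,m)$, notes $e_{\{0\}}^*v_j\to F$ in $L^2(P(X),\Gamma_m)$ by pushing forward under $e_0$, and uses that the zero function is an upper $H^1_0$-gradient for each, whence for $F$ by closedness — Lemma \ref{l:H1_upper_grad_prop1}.)

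For statement (1), once $E[F]=0$ and $F$ minimizes $E$ globally (since $E\geq 0$), the subgradient $\partial E[F]$ contains $0$: indeed $E[F]+\langle 0,H-F\rangle = 0 \leq E[H]$ for every $H$. Since $0$ is trivially the element of $\partial E[F]$ of minimal norm, $LF=\nabla E[F]=0$ by Theorem \ref{t:OU_definition}.

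For statement (2), the strategy is to use the strong locality and Leibniz-type properties of the $H^1_0$-gradient together with the vanishing of $|\nabla F|_{H^1_0}$. The main computation is to show that for $F$ $\cF^0$-measurable and $G\in \cD(E)$ one has the pointwise identity $|\nabla(F\cdot G)|_{H^1_0} = |F|\cdot|\nabla G|_{H^1_0}$ $\Gamma_m$-a.e.; the inequality $\leq$ is the Leibniz rule Theorem \ref{t:H1_grad_properties}(4) combined with $|\nabla F|_{H^1_0}=0$, and the reverse inequality follows by applying the same Leibniz estimate to $G = F^{-1}\cdot(F G)$ on the region $\{F\neq 0\}$ (after a standard truncation argument to stay bounded away from zero, then letting the truncation level degenerate) and using strong locality on $\{F=0\}$. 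Granting this, for any $H\in L^2(P(X),\Gamma_m)$ and any $\cF^0$-measurable $F$, write $H = F\cdot \tilde H + R$ where formally $\tilde H = F^{-1}H$; more robustly, one tests the subgradient inequality directly. The cleanest route: verify that $F\cdot LG \in \partial E[FG]$. By definition this means
\begin{align}
E[FG] + \langle F\cdot LG,\ H-FG\rangle \leq E[H]\quad\text{for all }H\in L^2(P(X),\Gamma_m)\, .
\end{align}
Since multiplication by the $\cF^0$-measurable bounded function $F$ preserves $L^2$ and commutes with the conditional expectation structure, substituting $H=F\cdot K$ reduces the inequality (using $E[FK]=\tfrac12\int |F|^2|\nabla K|_{H^1_0}^2\,d\Gamma_m$ from the identity above and $\langle F\cdot LG, F(K-G)\rangle = \int |F|^2\langle LG,K-G\rangle\,d\Gamma_m$, fibering over $\gamma(0)$) to the $\cF^0$-a.e.\ pointwise (in $\gamma(0)$) inequality $E_{\gamma(0)}[G]+\langle LG,K-G\rangle \leq E_{\gamma(0)}[K]$, which is exactly the subgradient inequality defining $LG$ on each fiber. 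For general $H$ not of the form $FK$ one splits $H = F\cdot(F^{-1}H)\chi_{\{F\neq 0\}} + H\chi_{\{F=0\}}$ and uses strong locality (Theorem \ref{t:H1_grad_properties}(2)) to handle the two pieces separately. Finally, minimality: $F\cdot LG$ is the element of $\partial E[FG]$ of minimal $L^2$-norm because $LG$ is minimal in $\partial E[G]$ fiberwise and the $L^2$-norm decomposes as an integral over fibers weighted by $|F|^2$.

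\textbf{Main obstacle.} The delicate point is statement (2): making the heuristic $\tilde H = F^{-1}H$ rigorous when $F$ vanishes on a set of positive measure, and establishing the exact Leibniz \emph{equality} $|\nabla(FG)|_{H^1_0} = |F|\,|\nabla G|_{H^1_0}$ rather than just the inequality. I expect this to require the truncation/strong-locality argument sketched above, plus a careful check that the fibered decomposition $L^2(P(X),\Gamma_m) = \int^\oplus_X L^2(P_x(X),\Gamma_x)\,dm(x)$ — which underlies the phrase "recapture the based operators" and is implicit in Section \ref{ss:OU_nonsmooth_definition} — interacts correctly with the subgradient, i.e.\ that $L$ indeed acts fiberwise on $\cF^0$-products. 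Everything else is a routine consequence of Theorems \ref{t:H1_grad_properties}, \ref{t:Dirichlet_form_pathspace}, and \ref{t:OU_definition}.
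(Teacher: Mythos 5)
Your proof of part (1) is correct and is essentially the paper's argument: the $s$-parallel slopes of an $\cF^0$-measurable function vanish for $s>0$, so $|\partial F|_{H^1_0}=0$, hence $|\nabla F|_{H^1_0}=0$, $E[F]=0$, and $0$ is the (minimal) subgradient.

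For part (2) you have made the argument considerably harder than it needs to be, and the step you yourself flag as the main obstacle is the one that does not go through as written. The paper's route is to work at the \emph{slope} level: for $s>0$ an $s$-parallel variation $V=\{V_j\}$ of a piecewise geodesic $\gamma$ satisfies $V_j(t)=\gamma(t)$ for $t<s$, in particular $V_j(0)=\gamma(0)$, so $F(V_j)=F(\gamma)$ and the difference quotient factors exactly:
\begin{align}
\frac{|(FG)(\gamma)-(FG)(V_j)|}{|V_j|(s)}=|F(\gamma)|\cdot\frac{|G(\gamma)-G(V_j)|}{|V_j|(s)}\, ,
\end{align}
giving the identity $|\partial_s(F\cdot G)|=|F|\cdot|\partial_s G|$ for all $s>0$ with no Leibniz inequality and no reverse inequality needed; integrating in $s$ gives $|\partial(FG)|_{H^1_0}=|F|\,|\partial G|_{H^1_0}$ on the nose, and one then passes to the gradient and the subgradient inequality exactly as in part (1). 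By contrast, your proposed derivation of the reverse inequality via $G=F^{-1}\cdot(FG)$ is not sound as stated: $x\mapsto x^{-1}$ is not Lipschitz near $0$, so Theorem \ref{t:H1_grad_properties}(3) does not apply even after truncation without a genuinely new limiting argument, and the strong locality property (Theorem \ref{t:H1_grad_properties}(2)) is stated for a function constant on a neighborhood of the support of the other, which is not the hypothesis you need to split along the zero set $\{F=0\}$. Your fibered-decomposition verification of the subgradient inequality is a reasonable (if more detailed than the paper's) way to finish once the exact product identity is in hand, but as a whole the proof of (2) should be rebuilt on the direct slope computation rather than on the Leibniz inequality plus inversion.
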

\begin{proof}
Note that if $F=u(\gamma(0))$ is $\cF^0$-measurable, then by Lemma \ref{l:slope_lipschitz} we have that $|\partial_s F|=0$ for all $s>0$.  In particular, by (\ref{e:H1_slope}) we have that $|\partial F|_{H^1_0}=0$, and hence by the definition of the gradient that $|\nabla F|_{H^1_0}\equiv 0$.  It follows from this that $0$ is a subgradient of $E$ at $F$, and since it is clearly the minimal subgradient we have that $LF\equiv 0$.  To prove the second part observe that $|\partial_s (F\cdot G)|=|F|\cdot|\partial_s G|$ for $s>0$, and arguing as above we can conclude the second part of the Lemma.
\end{proof}

So in particular the above tells us that the $\cF^0$-measurable functions are in the kernel of $L$.  We will see in the next Section that on spaces with bounded Ricci curvature that these are the only elements in the kernel of $L$.

Let us end this Section with the following, which tells us that on a smooth metric-measure space the Ornstein-Uhlenbeck operator as defined above is the same as the standard definition.  As with the $H^1_0$-gradient recall we have defined Ornstein-Uhlenbeck operator on total path space $P(X)$.  Therefore in the next Theorem we will denote by $L_x:L^2(P_x(M),\Gamma_x)\to L^2(P_x(M),\Gamma_x)$ the classical Ornstein-Uhlenbeck operator on a smooth space as defined in \cite{DriverRockner_Diff}.  The proof of the next Theorem is an immediate application of Theorem \ref{t:H10_grad_comparison}:

\begin{theorem}\label{t:OU_comparison}
Let $(M^n,g,e^{-f}dv_g)$ be a smooth complete metric-measure space, and let $F\in L^2(P(M),\Gamma_m)$.  Then for a.e. $\gamma\in P(M)$ we have that $L F(\gamma) = L_{\gamma(0)} F(\gamma)$.   
\end{theorem}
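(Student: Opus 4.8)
\textbf{Proof Proposal for Theorem \ref{t:OU_comparison}.}

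The plan is to reduce the statement to the comparison already established for the $H^1_0$-gradient in Theorem \ref{t:H10_grad_comparison}, using the fact that both the abstract Ornstein-Uhlenbeck operator $L$ of Section \ref{sss:OU_construction} and the classical operator $L_x$ of \cite{DriverRockner_Diff} are defined as the minimal element of the subgradient of one and the same energy functional. First I would recall the two constructions side by side. On the smooth manifold $M$, the classical Ornstein-Uhlenbeck operator $L_x$ on $L^2(P_x(M),\Gamma_x)$ is, by Section \ref{ss:OU_smooth}, the self-adjoint operator associated to the Dirichlet form $E_x[F] = \frac{1}{2}\int_{P_x(M)} |\nabla F|^2_{H^1_x}\, d\Gamma_x$; equivalently, since this form is convex, nonnegative and lower semicontinuous, $L_x F$ is the unique element of minimal $L^2(P_x(M),\Gamma_x)$-norm in the subgradient $\partial E_x[F]$. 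On the other side, $L$ is defined on $L^2(P(M),\Gamma_m)$ as $\nabla E[F]$, the minimal element of $\partial E[F]$ where $E[F] = \frac{1}{2}\int_{P(M)}|\nabla F|^2_{H^1_0}\,d\Gamma_m$.

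The key step is to disintegrate the total path space picture over $M$. Since $\Gamma_m = \int_M \Gamma_x\, e^{-f}dv_g$, the Hilbert space $L^2(P(M),\Gamma_m)$ decomposes as the direct integral $\int^\oplus_M L^2(P_x(M),\Gamma_x)\, e^{-f}dv_g$, and the energy functional $E$ disintegrates accordingly: by Theorem \ref{t:H10_grad_comparison} we have $|\nabla F|_{H^1_0}(\gamma) = |\nabla F|_{H^1_{\gamma(0)}}(\gamma)$ for $\Gamma_m$-a.e. $\gamma$, hence by Fubini
\begin{align}
E[F] = \frac{1}{2}\int_M\bigg(\int_{P_x(M)} |\nabla F|^2_{H^1_x}\,d\Gamma_x\bigg)e^{-f}dv_g = \int_M E_x[F|_{P_x(M)}]\, e^{-f}dv_g\, .
\end{align}
A subgradient of a direct-integral energy of this form is obtained fiberwise, i.e. $G \in \partial E[F]$ if and only if for $m$-a.e. $x$ the restriction $G|_{P_x(M)}$ lies in $\partial E_x[F|_{P_x(M)}]$; and the minimal-norm element is likewise obtained by taking the minimal-norm element in each fiber, because the $L^2(P(M),\Gamma_m)$-norm is the integral of the fiberwise norms squared. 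Therefore $(LF)(\gamma) = (L_{\gamma(0)}F)(\gamma)$ for $\Gamma_m$-a.e. $\gamma$. Finally, one observes using Lemma \ref{l:OU_properties} (that $\cF^0$-measurable functions behave multiplicatively under $L$) that this fiberwise identification is consistent with the remark in Theorem \ref{t:OU_definition} that $L$ preserves the $\cF^T$-measurable functions, so no issue arises from the domains.

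The main obstacle I anticipate is making the direct-integral disintegration of the subgradient fully rigorous: one must check that a convex lower semicontinuous functional which is a measurable field of convex lower semicontinuous functionals has its subdifferential computed fiberwise, and that minimal selections commute with the disintegration. This is a standard fact about integral functionals on direct integrals of Hilbert spaces (cf. the theory of normal integrands), but it requires verifying measurability of the fiber energies $x \mapsto E_x[F|_{P_x(M)}]$ and of the associated resolvents, which in turn uses that the Wiener measures $\Gamma_x$ depend measurably on $x$ — a point already implicit in the construction of $\Gamma_m$ in Section \ref{ss:diffusion_measures}. Once that machinery is in place the theorem is immediate from Theorem \ref{t:H10_grad_comparison}, so the bulk of the argument is this measurable-selection bookkeeping rather than any new geometric input.
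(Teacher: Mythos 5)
Your proposal is correct and follows the same route as the paper, which derives the theorem as an immediate consequence of Theorem \ref{t:H10_grad_comparison} by identifying the two energy functionals and noting that both operators are the (minimal) subgradient of that common energy. The fiberwise disintegration of $\Gamma_m=\int_M\Gamma_x\,e^{-f}dv_g$ and the measurable-selection bookkeeping you spell out are exactly the details the paper leaves implicit.
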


\subsection{Bounded Ricci Curvature and the Ornstein-Uhlenbeck Operator}\label{ss:boundedricci_OU_nonsmooth}

In this Section we will prove that bounds on the Ricci curvature of $X$ imply the appropriate Poincare and log-Sobolev estimates on $L$.  That is, we will prove Theorem \ref{t:OU_boundedricci}.

The based path space spectral gap and log-sobolev inequalities of Theorem \ref{t:OU_boundedricci} will be proved in two steps.  We will first prove the following global version, which is a true poincare and log-sobolev for the operator $L$, viewed as the Ornstein-Uhlenbeck operator on global path space $P(X)$.  We will then use this to prove the based pathspace estimates of Theorem \ref{t:OU_boundedricci}.

\begin{theorem}\label{t:OU_global_logsob}
Let $(X,d,m)$ be a $BR(\kappa,\infty)$ space.  Then for every $F\in L^2(P(X),\Gamma_m)$ which is $\cF^T$-measurable we have the Poincare estimate
\begin{align}
\int_{P(X)}\bigg|F - \int_{P_x(X)}F\,d\Gamma_x\bigg|^2 d\Gamma_m \leq e^{\frac{\kappa}{2}T}\int_{P(X)}\bigg(\int_0^T \cosh(\frac{\kappa}{2}t)|\nabla_t F|^2\, dt\bigg)\, d\Gamma_m\, ,
\end{align}
as well as the log-sobolev estimate
\begin{align}
\int_{P(X)}F^2\ln F^2 d\Gamma_m - \int_X \bigg(\int F^2\,d\Gamma_x\cdot\ln\big(\int F^2\,d\Gamma_x\big)\bigg)\,dm\leq 2e^{\frac{\kappa}{2}T}\int_{P(X)}\bigg(\int_0^T \cosh(\frac{\kappa}{2}t)|\nabla_t F|^2\, dt\bigg)\, d\Gamma_m\, .
\end{align}
\end{theorem}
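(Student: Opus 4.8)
The plan is to deduce Theorem \ref{t:OU_global_logsob} from the pointwise quadratic-variation estimate of Theorem \ref{t:ricci_quad_nonsmooth}, following the template of the smooth arguments in Sections \ref{ss:r4_r7} and \ref{ss:r5_r6}, but now with the martingale regularity developed in Section \ref{s:ricci_martingales} doing the work that stochastic analysis did in the smooth case. First I would reduce to cylinder functions $F \in \Cyl(X)$ with $F$ being $\cF^T$-measurable, since these are dense and all the quantities involved are lower semicontinuous; the general case then follows by the same extension procedure used throughout Part \ref{part:nonsmooth} (pass to a sequence $F_j \to F$ realizing the gradients, pass to a subsequence so the quadratic variations converge, and take limits). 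For such an $F$, since $X$ is $BR(\kappa,\infty)$ it is $RCD(-\kappa,\infty)$ by Theorem \ref{t:boundedricci_implies_LVS}, so by Theorem \ref{t:martingale_continuous_lowerricci} and Corollary \ref{c:quad_continuous_lowerricci} both the martingale $F^t$ and its quadratic variation $[F^t]$ have continuous versions. This is exactly what licenses the use of the Itô formula \cite{Kuo_book} for continuous martingales.

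For the Poincaré estimate, the key identity is the defining property of quadratic variation: for the martingale generated by $F - F^0$, one has $\int_{P(X)}|F - F^0|^2 \, d\Gamma_m = \int_{P(X)} \int_0^T [dF^t] \, d\Gamma_m$, where $F^0(\gamma) = \int_{P_{\gamma(0)}(X)} F \, d\Gamma_{\gamma(0)}$. Then I would feed in the pointwise estimate from Theorem \ref{t:ricci_quad_nonsmooth}, namely $[dF^t](\gamma) \leq e^{\frac{\kappa}{2}(T-t)}\int_{P(X)} |\nabla_t F|^2 + \int_t^\infty \frac{\kappa}{2}e^{\frac{\kappa}{2}(s-t)}|\nabla_s F|^2 \, d\Gamma_{\gamma(t)}$, integrate in $t$ over $[0,T]$, and do the Fubini computation that appears verbatim in the proof of $(R4)\implies(R7)$ in Section \ref{ss:r4_r7}. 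That computation collapses the double integral $\int_0^T e^{\frac{\kappa}{2}(T-t)}|\nabla_t F|^2 + \int_0^T \int_t^T \frac{\kappa}{2}e^{\frac{\kappa}{2}(T+s-2t)}|\nabla_s F|^2 \, ds \, dt$ into $e^{\frac{\kappa}{2}T}\int_0^T \cosh(\frac{\kappa}{2}t)|\nabla_t F|^2 \, dt$, using the martingale property to identify $\int_{P(X)} \int_0^T [dF^t] \, d\Gamma_m$ with $\int_{P(X)}|F - F^0|^2 \, d\Gamma_m$ and the layering of the Wiener measures $\Gamma_m = \int_X \Gamma_x \, dm$ to push the inner $\Gamma_{\gamma(t)}$-integrals back to $\Gamma_m$.

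For the log-Sobolev estimate I would run the parallel argument with the function $H^t \equiv (F^2)^t$ in place of $F^t$. Applying the Itô formula to $H^t \ln H^t$ gives $\int_{P(X)} F^2 \ln F^2 \, d\Gamma_m - \int_X (\int F^2 d\Gamma_x)\ln(\int F^2 d\Gamma_x)\, dm = \int_{P(X)} \int_0^T (H^t)^{-1}[dH^t] \, d\Gamma_m$, exactly as in \eqref{e:r4_r7:1} but integrated against $\Gamma_m$. Then I would estimate $\sqrt{[dH^t]} = \sqrt{[d(F^2)^t]} \leq \int_{P(X)} |\nabla_t F^2| + \int_t^\infty \frac{\kappa}{2}e^{\frac{\kappa}{2}(s-t)}|\nabla_s F^2| \, d\Gamma_{\gamma(t)}$ using Theorem \ref{t:ricci_quad_nonsmooth}, bound $|\nabla_s F^2| \leq 2|F|\,|\nabla_s F|$ via the Leibniz rule (Theorem \ref{t:H1_grad_properties}.4 applied pointwise, or the analogous fact for parallel gradients, Theorem \ref{t:parallel_grad_properties}), apply Cauchy-Schwarz to pull out $\int_{P(X)} F^2 \, d\Gamma_{\gamma(t)} = H^t(\gamma)$, cancel the $(H^t)^{-1}$, and finish with the same Fubini computation as before. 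The main obstacle I anticipate is the careful bookkeeping of the lower semicontinuous refinements: on a general metric-measure space $|\nabla_t F_{\gamma_t}| = |\nabla_{s} F|$ fails after refinement even though the slopes agree, and the Leibniz inequality $|\nabla_s F^2| \leq 2|F||\nabla_s F|$ is only known for the gradients, not obviously compatible with the restriction-to-fiber operation $F \mapsto F_{\gamma_t}$; handling this requires running the whole estimate first at the level of slopes $|\partial_s \cdot|$ for cylinder functions (where the fiber identities are exact) and only then passing to the refinement and to general $F$, exactly the subtlety dealt with in the proof of Theorem \ref{t:ricci_quad_equivalence}. Everything else is the same Jensen/Cauchy-Schwarz/Fubini machinery already executed in Part \ref{part:smooth}.
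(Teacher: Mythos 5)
Your proposal is correct and follows essentially the same route as the paper: the log-Sobolev part is exactly the paper's argument (Itô formula for $H^t\ln H^t$ justified by the continuity results of Section \ref{s:ricci_martingales}, the quadratic-variation bound of Theorem \ref{t:ricci_quad_nonsmooth}, Leibniz plus Cauchy--Schwarz, and the Fubini computation from Section \ref{ss:r4_r7}), and your Poincar\'e argument via $\int|F-F^0|^2 = \int\int_0^T[dF^t]$ is precisely the ``same methods / as in $(R5)\implies(R6)$'' alternative the paper invokes without writing out. You also correctly flag the lower-semicontinuous-refinement bookkeeping as the genuine technical point, which is the same caveat the paper itself raises.
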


In particular, the above proves the claim after Lemma \ref{l:OU_properties} that the Ornstein-Uhlenbeck operator $L$ satisfies $\ker L \equiv L^2(X,m)$.  We will only focus on proving the log-sobolev inequality, as the spectral gap may be proved either by the same methods or as in Section \ref{ss:r4_r7}.

\begin{proof}
The overall structure of the proof is essentially verbatim as in Section \ref{ss:r4_r7}.  The primary difficulty is in seeing that we are allowed to make the same arguments, which is often times a subtle point, and requires the various structure built in Section \ref{s:bounded_ricci_analysis} and Section \ref{s:ricci_martingales}.  So let $F\in \Cyl(X)$ be a $\cF^T$-measurable function on path space $P(X)$, and let $H^t\equiv (F^2)^t$ be the martingale induced by projecting $F^2$ to the $\cF^t$-measurable functions.  By Theorem \ref{t:martingale_continuous_lowerricci} the martingale $H^t$ as well as its quadratic variation are continuous, and thus we may apply the Ito formula \cite{Kuo_book} to the function $H^t\ln H^t$ arrive at
\begin{align}\label{e:global_logsob:1}
\int_{P(X)} F^2\ln F^2\,d\Gamma_m - \int_X \bigg(\int F^2\,d\Gamma_x\cdot\ln\big(\int F^2\,d\Gamma_x\big)\bigg)\,dm = \int_{P(X)} \int_0^T (H^t)^{-1}[dH^t]\,d\Gamma_m\, .
\end{align}
Now using Theorem \ref{t:ricci_quad_nonsmooth} we have the estimate
\begin{align}
\sqrt{[dH^t]}(\gamma)&\leq \int_{P(X)}|\nabla_t F^2|(\gamma_t\circ\sigma)+\int_t^T\frac{\kappa}{2}e^{\frac{\kappa}{2}(s-t)}|\nabla_s F^2|\,d\Gamma_{\gamma(t)}\notag\\
&= 2\int_{P(X)}F\bigg(|\nabla_t F|(\gamma_t\circ\sigma)+\int_t^T\frac{\kappa}{2}e^{\frac{\kappa}{2}(s-t)}|\nabla_s F|\bigg)\,d\Gamma_{\gamma(t)}\, .
\end{align}
Thus we have that
\begin{align}
[dH^t](\gamma)&\leq \int_{P(X)}F^2\,d\Gamma_{\gamma(t)}\cdot e^{\frac{\kappa}{2}(T-t)}\int_{P(X)}\bigg(|\nabla_t F|^2(\gamma_t\circ\sigma)+\int_t^T\frac{\kappa}{2}e^{\frac{\kappa}{2}(s-t)}|\nabla_s F|^2\bigg)\,d\Gamma_{\gamma(t)}\notag\\
&=H^t(\gamma)\cdot e^{\frac{\kappa}{2}(T-t)}\int_{P(X)}\bigg(|\nabla_t F|^2(\gamma_t\circ\sigma)+\int_t^T\frac{\kappa}{2}e^{\frac{\kappa}{2}(s-t)}|\nabla_s F|^2\bigg)\,d\Gamma_{\gamma(t)}\, .
\end{align}
Plugging this into (\ref{e:global_logsob:1}) gives the estimate
\begin{align}
\int_{P(X)} F^2\ln F^2\,d\Gamma_m &- \int_X \bigg(\int F^2\,d\Gamma_x\cdot\ln\big(\int F^2\,d\Gamma_x\big)\bigg)\,dm \notag\\
&\leq 2\int_{P(X)}\bigg(\int_0^T e^{\frac{\kappa}{2}(T-t)}\int_{P(M)}\bigg(|\nabla_t F|^2(\gamma_t\circ\sigma)+\int_t^T\frac{\kappa}{2}e^{\frac{\kappa}{2}(s-t)}|\nabla_s F|^2\bigg)\,d\Gamma_{\gamma(t)}\bigg)\,d\Gamma_m\notag\\
&=2\int_{P(X)}\bigg(\int_0^T e^{\frac{\kappa}{2}(T-t)}|\nabla_t F|^2+\int_0^T\int_t^T\frac{\kappa}{2}e^{\frac{\kappa}{2}(T+s-2t)}|\nabla_s F|^2ds\,dt\bigg)\,d\Gamma_m\notag\\
&=2e^{\frac{\kappa}{2}T}\int_{P(X)}\bigg(\int_0^T e^{-\frac{\kappa}{2}t}|\nabla_t F|^2dt+\int_0^T\int_0^se^{\frac{\kappa}{2}(s-2t)}|\nabla_s F|^2dt\,ds\bigg)\,d\Gamma_m\notag\\
&=2e^{\frac{\kappa}{2}T}\int_{P(X)}\bigg(\int_0^T e^{-\frac{\kappa}{2}t}|\nabla_t F|^2dt+\int_0^T e^{\frac{\kappa}{2}t}\big(\int_0^t \frac{\kappa}{2}e^{-\kappa s}ds\big)|\nabla_t F|^2\,dt\bigg)\,d\Gamma_m\notag\\
&=2e^{\frac{\kappa}{2}T}\int_{P(X)}\bigg(\int_0^T e^{-\frac{\kappa}{2}t}|\nabla_t F|^2dt+\int_0^T\frac{1}{2}\big(e^{\frac{\kappa}{2}t}-e^{-\frac{\kappa}{2}t}\big)|\nabla_t F|^2ds\,dt\bigg)\,d\Gamma_m\notag\\
&=2e^{\frac{\kappa}{2}T}\int_{P(X)}\bigg(\int_0^T \cosh(\frac{\kappa}{2}t)|\nabla_t F|^2\bigg)d\Gamma_m\, ,
\end{align}
as claimed.
\end{proof}

Now let us use this to finish the proof of Theorem \ref{t:OU_boundedricci}:

\begin{proof}[Proof of Theorem \ref{t:OU_boundedricci}]
We again focus on the log-sobolev inequality, as we can either prove the spectral gap by the same methods or as in Section \ref{ss:r4_r7}.  Let us begin by letting $\varphi:X\to \dR^+$ be an arbitrary lipschitz function and considering the function on path space given by
\begin{align}
G(\gamma)=F(\gamma)\varphi(\gamma(0))\, ,
\end{align}
where $\int_{P_x(X)} F^2\,d\Gamma_x = 1$ for each $x$.  Note then that 
\begin{align}
\int_{P(X)} G^2\ln G^2\,d\Gamma_m &- \int_X \bigg(\int G^2\,d\Gamma_x\cdot\ln\big(\int G^2\,d\Gamma_x\big)\bigg)\,dm\notag\\
&= \int_X\Bigg[\int_{P_x(X)}F^2\ln F^2 +F^2\ln\varphi^2\,d\Gamma_x\Bigg]\varphi^2(x)\,dm -\int_X \varphi^2\ln\varphi^2\,dm \notag\\
&=\int_X\Bigg[\int_{P_x(X)} F^2\ln F^2\Bigg]\varphi^2(x)\,dm\, .
\end{align}
Note for $t>0$ that we have that $|\partial_t G|=\varphi|\partial_t F|$, and thus for every such $\varphi$ we can compute that
\begin{align}
\int_X\Bigg[\int_{P_x(X)}F^2\ln F^2 d\Gamma_x- 2e^{\frac{\kappa}{2}T}\int_{P_x(X)}\bigg(\int_0^T \cosh(\frac{\kappa}{2}t)|\nabla_t F|^2\, dt\bigg)\, d\Gamma_x\Bigg]\varphi^2(x)\,dm\leq 0\, .
\end{align}
Thus by letting $\varphi$ approximate the characteristic functions of open sets $U$ we get that
\begin{align}
\int_U\Bigg[\int_{P_x(X)}F^2\ln F^2 d\Gamma_x- 2e^{\frac{\kappa}{2}T}\int_{P_x(X)}\bigg(\int_0^T \cosh(\frac{\kappa}{2}t)|\nabla_t F|^2\, dt\bigg)\, d\Gamma_x\Bigg]\varphi^2(x)\,dm\leq 0\, .
\end{align}
Since the set $U$ is arbitrary we can therefore conclude that for a.e. $x\in X$ that
\begin{align}
\int_{P_x(X)}F^2\ln F^2 d\Gamma_x- 2e^{\frac{\kappa}{2}T}\int_{P_x(X)}\bigg(\int_0^T \cosh(\frac{\kappa}{2}t)|\nabla_t F|^2\, dt\bigg)\, d\Gamma_x\leq 0\, ,
\end{align}
which finishes the proof of the Theorem.
\end{proof}


\bibliographystyle{plain}

\end{document}